\DeclareSymbolFontAlphabet{\mathbb}{AMSb}%
\setlist{itemsep=4pt}
\DeclareSymbolFont{eulerletters}{U}{zeur}{m}{n}
\DeclareMathSymbol{\eulL}{\mathord}{eulerletters}{`L}
  \renewcommand{\bm}[1]{#1}%
\DeclareMathOperator*{\argmax}{argmax}
\newcommand{\hair}{\ifmmode\mskip1mu\else\kern0.08em\fi}
\renewcommand{\P}{\mathbb{P}}
\newcommand{\E}{\mathbb{E}}
\newcommand{\F}{\mathcal{F}}
\newcommand{\Fext}{\mathcal{F}_{\mathrm{ext}}}
\newcommand{\PF}{\mathbb P_{\mathcal{F}}}
\newcommand{\EF}{\mathbb E_{\mathcal{F}}}
\newcommand{\R}{\mathbb{R}}
\newcommand{\N}{\mathbb{N}}
\newcommand{\Z}{\mathbb{Z}}
\newcommand{\Q}{\mathbb{Q}}
\newcommand{\Cov}{\mathrm{Cov}}
\newcommand{\one}{\mathbbm{1}}
\newcommand{\intint}[1]{\llbracket #1 \rrbracket}
\newcommand{\mrm}{\mathrm}
\newcommand{\msf}{\mathsf}
\newcommand{\mc}{\mathcal}
\newcommand{\mf}{\mathfrak}
\newcommand{\cP}{\mathcal P}
\newcommand{\cL}{\mathfrak L}
\renewcommand{\S}{\mathcal S}
\newcommand{\cA}{\mathcal A}
\newcommand{\h}{\mathfrak h}
\newcommand{\diff}{\mathrm d}
\newcommand{\midd}{\ \Big|\  }
\newcommand{\floor}[1]{\lfloor #1 \rfloor}
\newcommand{\ceil}[1]{\lceil #1 \rceil}
\newcommand{\Bbr}{B^{\mrm{Br}}}
\newcommand{\Brw}{B^{\mrm{RW}}}
\newcommand{\hssv}{h^{\mrm{S6V}}}
\newcommand{\hasep}{h^{\mrm{ASEP},\theta}}
\newcommand{\oldN}{\scalebox{1.05}{\mbox{\ensuremath{\mathtt{N}}}}}
\newcommand{\bigL}{\scalebox{1.05}{\mbox{\ensuremath{\mathtt{L}}}}}
\newcommand{\bigR}{\scalebox{1.05}{\mbox{\ensuremath{\mathtt{R}}}}}
\newcommand{\rrparen}{\Rparen}
\newcommand{\llparen}{\scalebox{0.97}{$\Lparen$}}
\newcommand{\intray}[1]{\llbracket #1,\infty\rrparen}
\renewcommand{\th}{\textsuperscript{th}\xspace}
\newcommand{\st}{\textsuperscript{st}\xspace}
\colorlet{lightgray}{gray!20!white}
\DeclareMathAlphabet{\mathdutchcal}{U}{dutchcal}{m}{n}
\newtheoremstyle{indented}{4pt}{4pt}{}{}{\bfseries}{.}{.5em}{}
\newtheorem{theorem}{Theorem}[section]
\newtheorem*{theorem*}{Theorem}
\newtheorem*{proposition*}{Proposition}
\newtheorem{proposition}[theorem]{Proposition}
\newtheorem*{corollary*}{Corollary}
\newtheorem{corollary}[theorem]{Corollary}
\newtheorem{lemma}[theorem]{Lemma}
\theoremstyle{definition}
\newtheorem{definition}[theorem]{Definition}
\newtheorem{remark}[theorem]{Remark}
\numberwithin{equation}{section}
\theoremstyle{indented}
\newtheorem{assumption}{Assumption}
\title[Scaling limit of the colored ASEP and stochastic six-vertex models]{Scaling limit of the \\colored ASEP and stochastic six-vertex models}
\author{Amol Aggarwal$^{1,2}$, Ivan Corwin$^1$, Milind Hegde$^1$}
\address{$^1$ Department of Mathematics, Columbia University, New York, NY USA. \newline $^2$ Clay Mathematics Institute, Denver, Colorado USA.}
\email{amolagga@gmail.com, ivan.corwin@gmail.com, mh4259@columbia.edu}
\newcommand{\PT}{\mrm{PT}}
\newcommand{\slope}{\beta}
\begin{document}
\lineskiplimit=0pt

\begin{abstract}

We consider the colored asymmetric simple exclusion process (ASEP) and stochastic six vertex (S6V) model with fully packed initial conditions; the states of these models can be encoded by 2-parameter height functions. We show under Kardar-Parisi-Zhang (KPZ) scaling of time, space, and fluctuations  that these height functions converge to the Airy sheet.

Several corollaries follow. (1) For ASEP and the S6V model under the basic coupling, we consider the 4-parameter height function at position $y$
and time $t$ with a step initial condition at position $x$ and time $s < t$, and prove that under KPZ scaling it converges to the directed landscape. 
(2) We prove that ASEPs under the basic coupling, with multiple general initial data, converge to KPZ fixed points coupled through the directed landscape. 
(3) We prove that the colored ASEP stationary measures converge to the stationary horizon. 
(4) We prove a strong form of decoupling for the colored ASEP height functions, as well as for the stationary two-point function, as broadly predicted by the theory of non-linear fluctuating hydrodynamics.

The starting point for our Airy sheet convergence result is an embedding of these colored models into a larger structure---a color-indexed family of coupled line ensembles with an explicit Gibbs property, i.e., a colored Hall-Littlewood line ensemble. The core of our work then becomes to develop a framework to analyze the edge scaling limit of these ensembles.

\end{abstract}

\maketitle

 \setcounter{tocdepth}{1}
 \tableofcontents

\section{Introduction}\label{s.intro}

\subsection{Preface}

The asymmetric simple exclusion process (ASEP) was introduced by MacDonald-Gibbs-Pipkin \cite{MacDonald1968} in 1968 in the biology literature, and by Spitzer \cite{SPITZER1970246} in 1970 in the probability literature. The stochastic six-vertex (S6V) model was introduced in 1992 by Gwa-Spohn \cite{PhysRevLett.68.725} in the physics literature (as a Markovian analog of the square ice model introduced by Pauling in 1935 in the chemistry literature \cite{doi:10.1021/ja01315a102}). Interest in these models has come from applications across science, as well as connections to areas of mathematics such as interacting particle systems, conservation law differential equations, random matrix theory and quantum integrable systems.

In this paper we focus on the colored (also called multi-species, -type, or -class) versions of the above models, in which each particle is assigned an integer ``color'' (or ``priority''). The colored ASEP was introduced soon after its uncolored version, arising naturally from the basic coupling developed by Liggett and Harris in the 1970s \cite{d1f5897c-df90-3f6d-83e8-ff67f002d4ad,harris1978additive}. Since then, colored ASEP has been intensely investigated, both as a means to analyze uncolored ASEP and as a multi-component lattice gas of intrinsic interest, including as an archetypal model for understanding non-linear fluctuating hydrodynamics; see the survey of Spohn \cite{Spohn2014}. The colored S6V model weights were introduced in the 1980s by Bazhanov \cite{BAZHANOV1985321} and Jimbo \cite{Jimbo1986}, which were used to define a Markov process more recently by Kuniba-Mangazeev-Maruyama-Okado \cite{KUNIBA2016248} and Borodin-Wheeler \cite{borodin2018coloured}.

A basic question about both systems is to understand their limiting fluctuations, after being run for a long time. Especially over the past decade, a fairly complete answer to this question has been attained for uncolored ASEP. This topic has an extensive history (dating at least back to the physics work of van Beijeren-Kutner-Spohn \cite{ENDDS}). The first one-point fluctuation results under step initial conditions were proven by Johansson \cite{johansson2000shape} for the uncolored totally asymmetric simple exclusion process (TASEP) and by Tracy-Widom \cite{tracy2009asymptotics} for uncolored ASEP. The most general results now known have identified the full scaling limit for the fluctuations of uncolored ASEP under arbitrary initial conditions, after dividing time by a large parameter $t$, space by $t^{2/3}$, and fluctuations by $t^{1/3}$; these are commonly called the Kardar-Parisi-Zhang (KPZ) \cite{kardar1986dynamic} exponents in one dimension (see the surveys of Corwin \cite{corwin2012kardar} and Quastel-Spohn \cite{quastel2015one}). This was shown for uncolored TASEP in \cite{matetski2016kpz} by Matetski-Quastel-Remenik, where the scaling limit was introduced and named the KPZ fixed point, using exact determinantal formulas for TASEP due to Borodin-Ferrari-Pr\"{a}hofer-Sasamoto \cite{FPPIC}. Quastel-Sarkar later extended this to the uncolored ASEP (and KPZ equation) in \cite{quastel2022convergence}, by a careful comparison of its generator to that of uncolored TASEP (for the KPZ equation, a different proof was also given by Vir\'{a}g \cite{virag2020heat}). 

Less is known for the uncolored  S6V model. One-point fluctuation results were proven by Borodin-Corwin-Gorin \cite{borodin2016stochastic} under step initial conditions, by adapting the coordinate Bethe ansatz analysis implemented in \cite{tracy2009asymptotics} for ASEP. This exact analysis was later extended by Dimitrov \cite{dimitrov2023two} to obtain the scaling limit of its two-point height fluctuations (assuming the S6V asymmetry parameter $q \in [0, 1)$ is sufficiently small). 

In spite of considerable activity in recent years, progress on the colored ASEP and S6V model has been more limited. Stochastic dualities for them were uncovered  by Belitsky-Sch\"{u}tz \cite{SSD}, Kuan \cite{ADF}, and Chen-de Gier-Wheeler \cite{ISD}. Explicit formulas for ($q$-deformed) moments of their height functions were found by Borodin-Wheeler \cite{BorodinWheelerObserve}, Bufetov-Korotkikh \cite{Bufetov2021}, and de Gier-Mead-Wheeler \cite{deGier2023}.  However, even for colored TASEP, its remains open how to extract asymptotics from these results (outside of work by Chen-de Gier-Hiki-Sasamoto-Usui \cite{Chen2022} proving one-point convergence for a different colored TASEP model from the one studied here). Instead, the most fruitful way of obtaining fluctuation results for colored models had previously been through matching results, identifying certain marginals of the colored model with those of the uncolored ones; see the works of Borodin, Bufetov, Galashin, Gorin, Wheeler, and Zhang \cite{borodin2018coloured,10.1214/20-AOP1463,borodin2022shift,galashin2021symmetries,SICFT}. This matching framework is potent but, since it can only capture specific marginals of the colored models, it cannot identify their full universal scaling limit.

Even from a conjectural standpoint, what this universal scaling limit should exactly be has only come into focus within the past few years. It is given by the Airy sheet and directed landscape, introduced in 2018 by Dauvergne-Ortmann-Vir\'ag \cite{dauvergne2018directed}. These objects were originally constructed in a different context, namely, as scaling limits for Brownian last passage percolation (LPP). This is a model on $\mathbb{R} \times \mathbb{N}$, in which one has a notion of directed paths and an environment of independent Brownian motions $\bm{B} = (B_1, B_2, \ldots )$; for each $k$, we view $B_k$ as associated with the $k$\th row of the system. Any directed path is assigned a random weight, given by an integral of the noise along it. The LPP value between two given points $(x, i)$ and $(y, j)$, denoted by $\bm{B} [(x, i) \to (y,j)]$, is the maximum weight over all directed paths connecting them; all of these LPP values between different pairs of points are coupled through the common random environment $\bm{B}$. The Airy sheet $\mathcal{S} : \mathbb{R}^2 \to \mathbb{R}$ is then the random function prescribing the $n \to \infty$ limit, under KPZ scaling, of these coupled LPP values across all pairs of points on the $1$\st and $n$\th rows. Specifically, setting $\varepsilon = n^{-1}$,
\begin{flalign*} 
\varepsilon^{1/3} \Big( \bm{B} \big[ (2\varepsilon^{-2/3} x, \varepsilon^{-1}) \to (2\varepsilon^{-2/3}y + \varepsilon^{-1}, 1) \big] - 2\varepsilon^{-1} + 2(x-y) \varepsilon^{-2/3} \Big) \quad \text{converges to} \quad \mathcal{S}(x, y),
\end{flalign*} 

\noindent as $\varepsilon \to 0$. The directed landscape is more generally the joint KPZ scaling limit of this coupling of LPP values across all possible pairs of points in the system (on possibly different pairs of rows). 

The Airy sheet and directed landscape have since been shown to be scaling limits for various other solvable LPP and polymer models. These include Poissonian, exponential, and geometric LPP, and the Sepp\"{a}l\"{a}inen-Johansson and Poisson lines models, by Dauvergne-Nica-Vir\'{a}g \cite{dauvergne2023uniform,dauvergne2021scaling}, as well as the KPZ equation by Wu \cite{wu2023kpz}. These works all use the Robinson-Schensted-Knuth (RSK) correspondence (or its geometric analog due to Noumi-Yamada \cite{NY04}) to equate the original LPP (or polymer) model with a ``dual'' one, whose environment is given by an ensemble of correlated random curves (such as Dyson Brownian motion in the case of Brownian LPP). In the dual model, all LPP values happen to be asymptotically determined by only the environment given by the curves near the top of the ensemble. The ensemble's top edge converges to a universal scaling limit called the Airy line ensemble, which was originally introduced by Pr\"{a}hofer-Spohn \cite{prahofer2002PNG} in terms of its finite-dimensional distributions, and later realized as an ensemble of continuous random curves by Corwin-Hammond \cite{corwin2014brownian}. This suggests that the scaling limit of the original model is given by an LPP across the Airy line ensemble; indeed, this is how the Airy sheet was defined in \cite{dauvergne2018directed}.

While this RSK framework is well-suited to (solvable) LPP and polymer models, the Airy sheet and directed landscape are also believed to arise as scaling limits for other models in the KPZ universality class. This particularly includes colored interacting particle systems, such as ASEP and S6V. However, an appropriate variant of the RSK correspondence has not been found in such contexts; in fact, the relevant hypothetical output of such a correspondence, an exact representation for the system as a ``dual'' LPP or polymer, is not known (or really expected) to exist for colored ASEP or S6V in general. Partly for this reason, until now there had been no proof of convergence to the Airy sheet or directed landscape for any colored interacting particle system. 

The purpose of this paper is to provide one for the colored ASEP and S6V. \\

\subsection{Sample result} \label{s.intro.sample result}

Here we state one of our results for the colored ASEP; our conventions for this system are as follows. Across every bond of $\mathbb{Z}$, independently and in continuous time, we swap colors at rates $1$ or $q \in [0, 1)$ depending on whether the colors along the bond are in reverse order (e.g., $2$---$1$) or order (e.g., $1$---$2$), respectively; see Figure~\ref{f.colored ASEP}. We start under the ``packed'' initial condition, with site $k$ occupied by a particle of color $-k$, for each $k \in \mathbb{Z}$; this is equivalent to infinitely many uncolored ASEPs, the $k$\th one with a step initial condition at $-k \in \mathbb{Z}$, evolving jointly under the basic coupling. The colored ASEP height function $h(x, 0;y,t)$ is defined as the number of particles of color at least $-x$, whose location at time $t$ exceeds $y$. See Figure~\ref{f.sim} for a simulation.

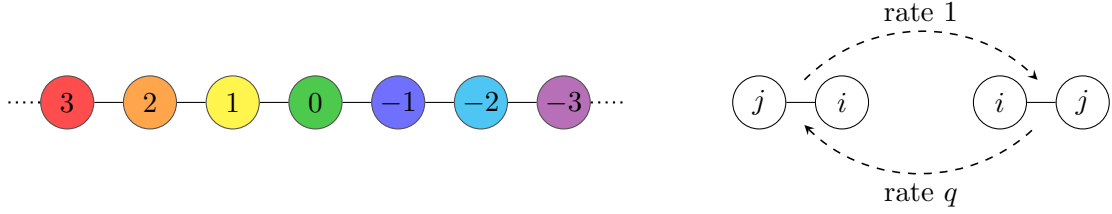
\begin{figure}[h]
\begin{tikzpicture}
\foreach \thecolor [count=\y] in {violet!80!white, cyan, blue!80!white, green!70!black, yellow, orange, red}
{ 
  \pgfmathtruncatemacro{\x}{\y-4}
  \node[circle, inner sep=7.1pt, draw=black, fill=\thecolor, opacity=0.7] (A\x) at (-1.1*\x, 0) {};
  \node at (-1.1*\x, 0) {$\x$};
}

\foreach \x in {-3,...,2}
{ 
  \pgfmathtruncatemacro{\nextx}{\x+1}
  \draw (A\x) -- (A\nextx);
}

\draw[dotted, thick] (A3) -- ++(-0.85,0);
\draw[dotted, thick] (A-3) -- ++(0.85,0);

\begin{scope}[shift={(7,0)}]
\node[circle, inner sep=7.1pt, draw=black] (aL) at (-1.1, 0) {};
\node at (-1.1, 0) {$j$};

\node[circle, inner sep=7.1pt, draw=black] (bL) at (0, 0) {};
\node at (0, 0) {$i$};

\draw (aL) -- (bL);

\node[circle, inner sep=7.1pt, draw=black] (bR) at (1.1+1, 0) {};
\node at (1.1+1, 0) {$i$};

\node[circle, inner sep=7.1pt, draw=black] (aR) at (2.2+1, 0) {};
\node at (2.2+1, 0) {$j$};

\draw (aR) -- (bR);

\draw[-stealth, dashed, semithick] (-0.5,0.3) to[out=45,in=135] node[midway, above] {rate $1$} (2.6,0.3) ;
\draw[stealth-, dashed, semithick] (-0.5,-0.3) to[in=-135,out=-45] node[midway, below] {rate $q$}  (2.6,-0.3);
\end{scope}
\end{tikzpicture}
\caption{The colored ASEP. On the left is the packed initial condition; the number inside the circle is the color of the particle. On the right are the transition rates, where $i < j$.}\label{f.colored ASEP}
\end{figure}

\begin{figure}[b]
\includegraphics[width=0.27\textwidth]{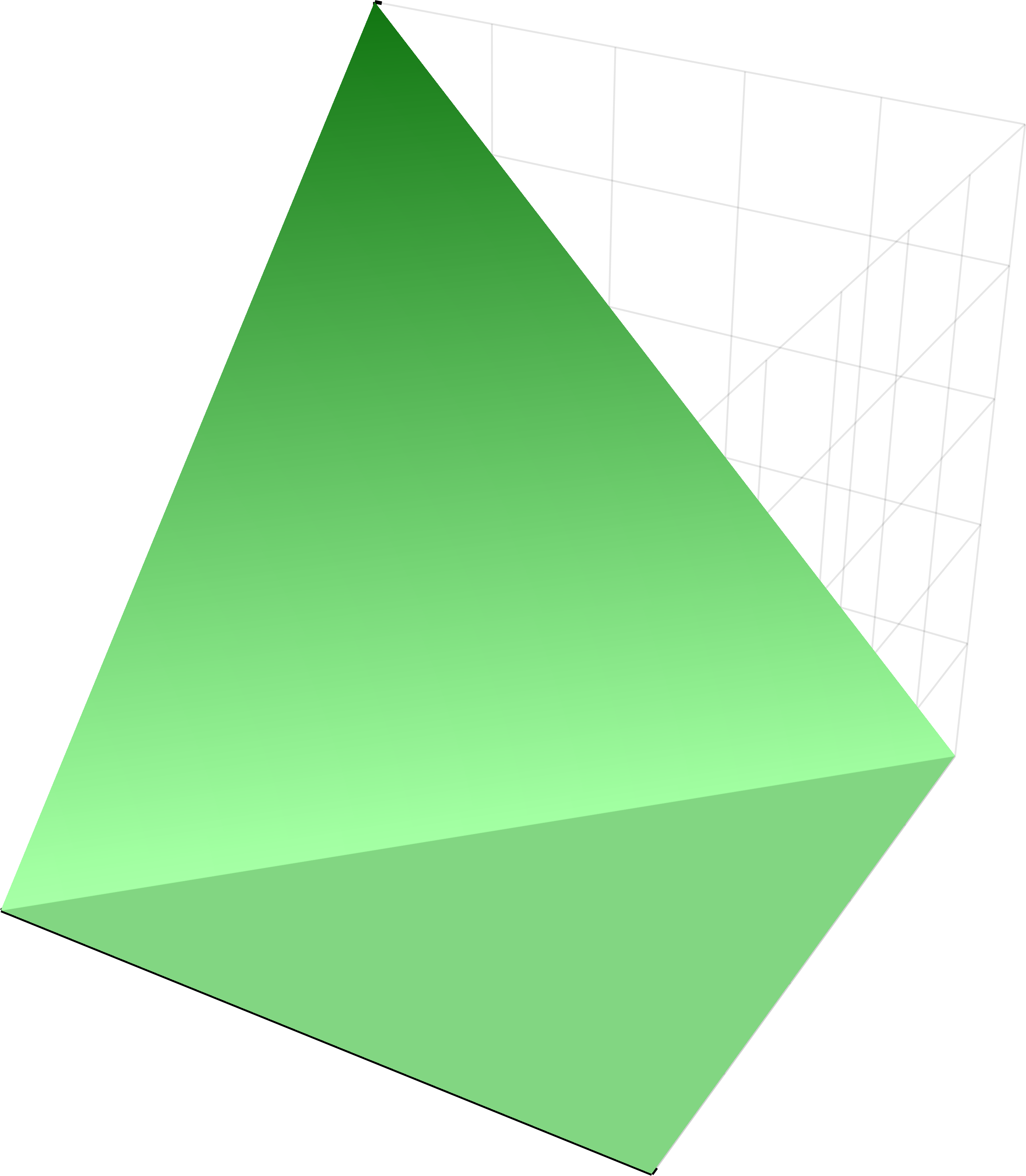}\hspace*{0.5cm}
\includegraphics[width=0.27\textwidth]{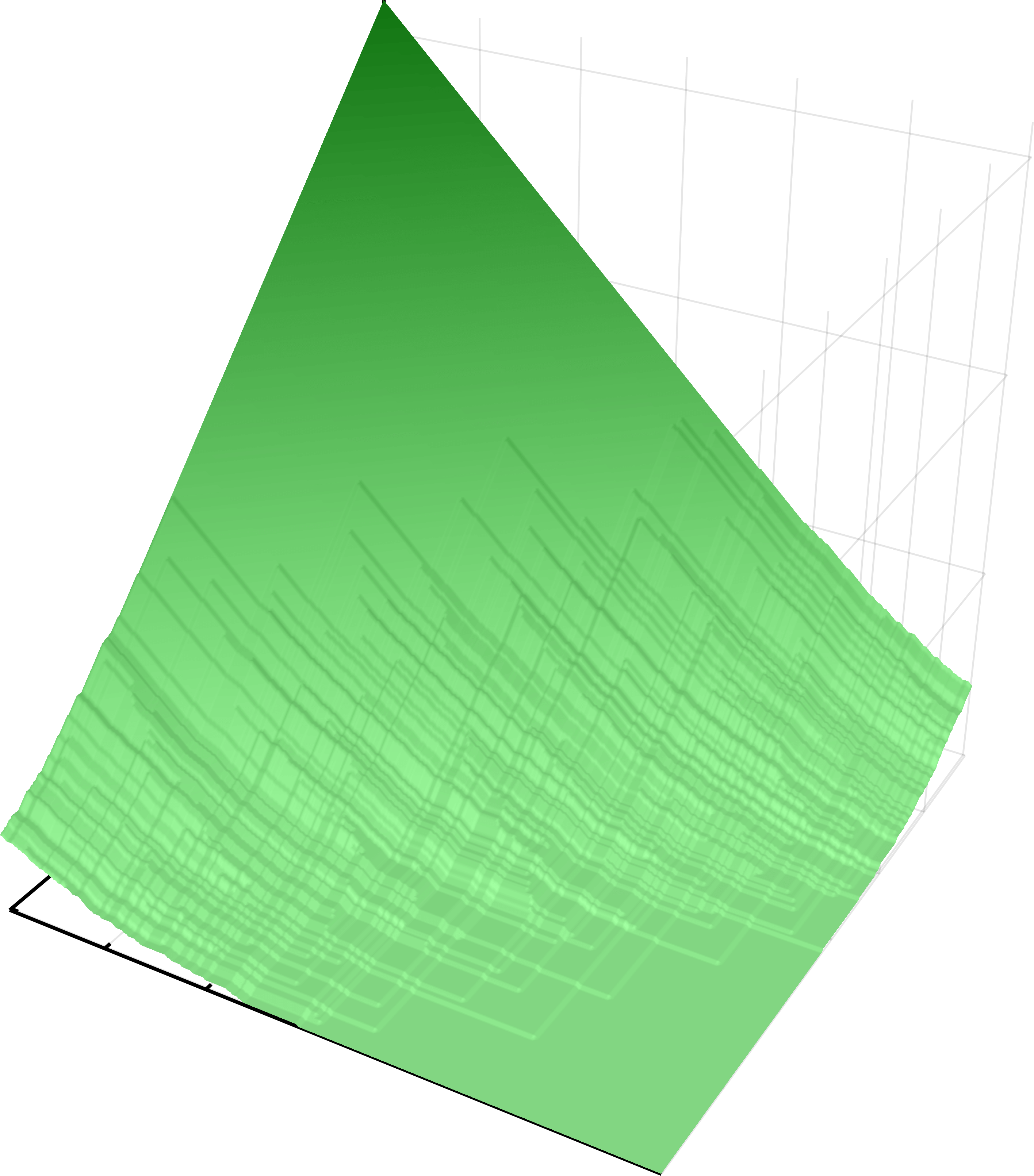}\hspace*{0.5cm}
\includegraphics[width=0.33\textwidth]{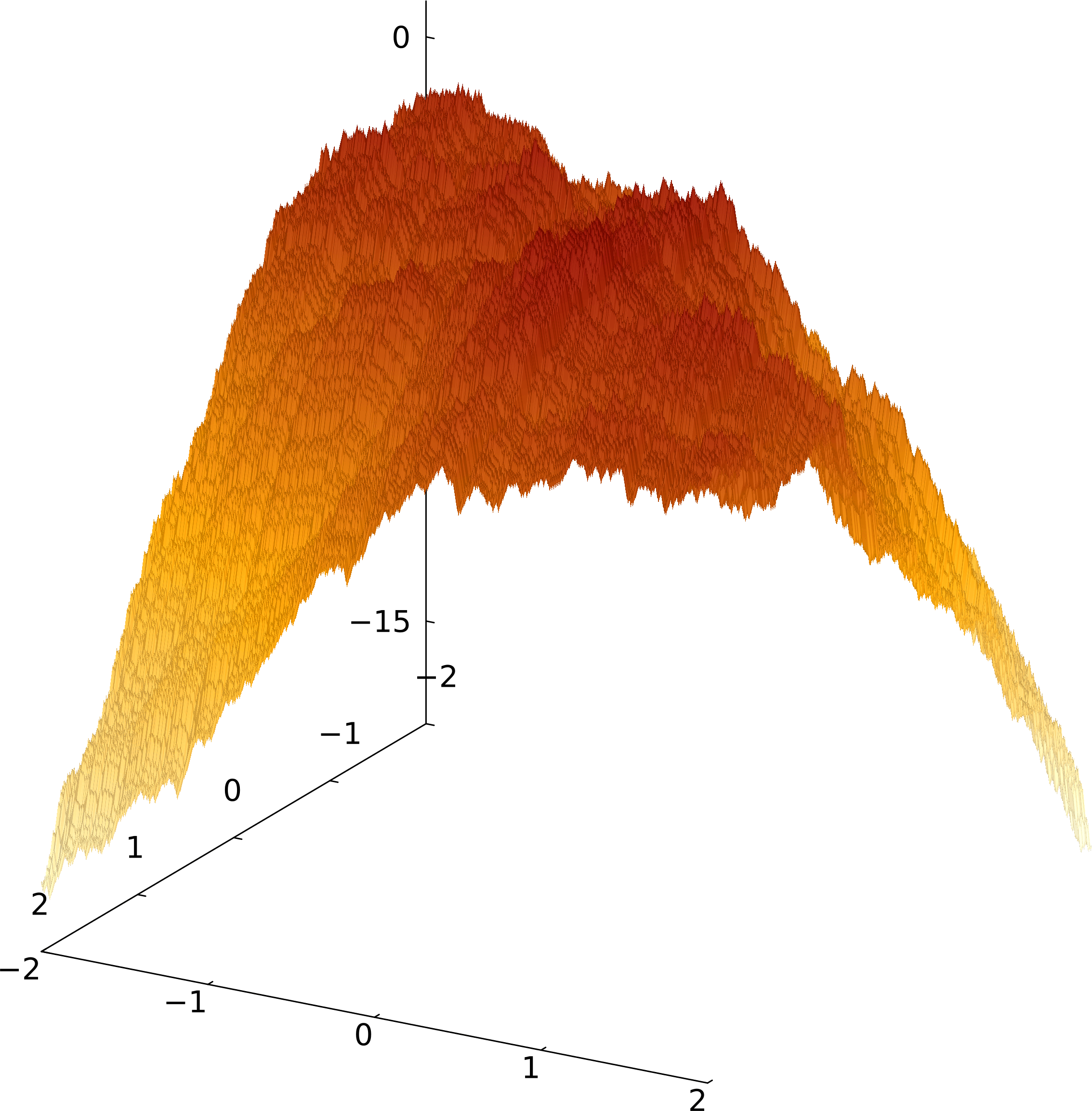}
\caption{Simulations of the colored ASEP height function $(x,y)\mapsto h(x,0;y,t)$ under packed initial conditions, with $q=0.3$. The vertical axis is the height, and the other two axes are the $x$ and $y$ coordinates. On the left is the height function at $t=0$, and in the middle is its crop at $t\approx300$. On the right is its KPZ rescaling, on $[-2,2]^2$ and at time $t=4000$.
}\label{f.sim}
\end{figure}

One of our main results states that, after KPZ rescaling, the colored ASEP height function $h (\bm\cdot, 0; \bm\cdot, t)$ converges to the Airy sheet $\mathcal{S}$ (see Definition~\ref{d.airy sheet} for its precise description). To lighten the notation in this introductory section, we only state a special case of that result (corresponding to the choice $\alpha=0$ for the velocity in the rarefaction fan); this will make the underlying normalization constants as simple as possible. For the most general version of the below result, see Theorem~\ref{t.asep airy sheet}.

\begin{theorem}[Theorem~\ref{t.asep airy sheet}, $\alpha=0$ case]

Fix $q \in [0, 1)$. The rescaled colored ASEP height function 
\begin{flalign*}
  \varepsilon^{1/3} \Big( \varepsilon^{-1} + 2(x-y) \varepsilon^{-2/3} - 2 h \big( 2 \varepsilon^{-2/3} x, 0; 2 \varepsilon^{-2/3} y , 2 \varepsilon^{-1} (1-q)^{-1} \big) \Big)  \quad \text{converges to} \quad \mathcal{S}(x, y),
\end{flalign*}

\noindent where $\S(x,y)$ is the Airy sheet,  weakly  as $\varepsilon \to 0$ as continuous functions on $\mathbb{R}^2$, under the topology of uniform convergence on compact sets.
\end{theorem}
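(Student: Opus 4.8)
Since the displayed statement is the $\alpha = 0$ specialization of Theorem~\ref{t.asep airy sheet} (the scaling constants in the display being exactly the $\alpha=0$ evaluation of those appearing there), I would prove the general Theorem~\ref{t.asep airy sheet}; the specialization is then immediate. Following the paper's own outline, the strategy is to realize the colored ASEP height function inside a colored Hall--Littlewood line ensemble---a color-indexed family $\{\mathcal L^{(x)}\}$ of coupled line ensembles carrying an explicit Gibbs property---and then to analyze the edge (KPZ) scaling limit of this family and identify it with the Airy sheet of Definition~\ref{d.airy sheet}. Concretely, I would first set up (or degenerate from the colored S6V model, where the vertex weights make the structure cleanest) the colored Hall--Littlewood line ensemble, in which, for each color threshold $x$, the top curve of $\mathcal L^{(x)}$ evaluated at the spatial coordinate $y$ reproduces a transform of $h(x,0;y,t)$; each single-color slice is just an uncolored ASEP with a step initial condition, whose line-ensemble structure is classical. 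The key structural input is that $\{\mathcal L^{(x)}\}$ obeys an explicit Gibbs property: conditioned on the curves outside a spatial window and on the neighboring curves and colors, each curve is a random-walk bridge reweighted by an explicit Boltzmann factor enforcing the ordering within and across colors, degenerating under edge scaling to a Brownian-type Gibbs property.

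Next I would feed in the known one-point asymptotics for each single-color marginal---Tracy--Widom GUE fluctuations with a parabolic shift, from \cite{tracy2009asymptotics} for ASEP and \cite{borodin2016stochastic} for S6V---which also supply uniform-in-$\e$ upper and lower tail bounds for the rescaled curves near the relevant location. Combining these tail bounds with the pre-limit Gibbs property, I would run the Gibbsian-line-ensemble bootstrap (in the spirit of \cite{corwin2014brownian} and its random-walk extensions) to obtain a uniform Brownian modulus of continuity, hence tightness of $\{\mathcal L^{(x)}\}$---and in particular of the two-parameter rescaled height function $\varepsilon^{1/3}(\varepsilon^{-1}+2(x-y)\varepsilon^{-2/3}-2h(2\varepsilon^{-2/3}x,0;2\varepsilon^{-2/3}y,2\varepsilon^{-1}(1-q)^{-1}))$---in the uniform-on-compacts topology on $\R^2$. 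Finally, I would show that every subsequential limit is a coupled family of parabolic Airy line ensembles satisfying the limiting Brownian Gibbs property together with the monotonicity and compatibility relations inherited from the basic coupling, and invoke the characterization of the Airy sheet developed earlier in the paper to conclude that this limit is unique and that its two-parameter readoff equals $\S(x,y)$.

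The single-color edge limit (top curve converging to the parabolic Airy line ensemble) is by now routine; the genuine difficulty, where essentially all the work lies, is handling all colors simultaneously. The ensembles $\mathcal L^{(x)}$ for different $x$ are coupled in a way that is far from a product structure, so both formulating a workable Gibbs property for the full colored ensemble and controlling the inter-color interactions uniformly in $\e$---well enough to get tightness and then to pin down the limit---are the crux of the argument. Proving the accompanying characterization theorem for the Airy sheet (that a coupled family of parabolic Airy line ensembles with the above properties is unique) is the other main hurdle. A secondary but nontrivial point is the transfer from the colored S6V model to the colored ASEP and the verification that the scaling constants collapse to those in the displayed formula when $\alpha = 0$.
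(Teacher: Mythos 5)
Your proposal correctly identifies the broad scaffolding (embed the height function into a colored Hall--Littlewood line ensemble via the colored $q$-Boson model, exploit its Gibbs structure, prove tightness under KPZ scaling, transfer from S6V to ASEP, specialize to $\alpha=0$), and the treatment of the uncolored marginal---tightness via the Hall--Littlewood Gibbs property plus one-point tails, then identification of the limit as $\bm\cP$ via the strong characterization of Brownian Gibbsian line ensembles---is essentially what the paper does in Theorems~\ref{t.tightness} and \ref{t.line ensemble convergence to parabolic Airy}. However, the crux of your plan for handling multiple colors does not match the paper and, as stated, has a genuine gap. You propose to show that subsequential limits form a ``coupled family of parabolic Airy line ensembles satisfying the limiting Brownian Gibbs property together with monotonicity and compatibility relations,'' and to ``invoke [a] characterization of the Airy sheet'' as the unique such coupled family. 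No such characterization theorem is stated or proved in the paper (Proposition~\ref{p.strong characterization} characterizes a \emph{single} Brownian Gibbsian line ensemble, not a color-indexed coupling), and the route would fail for a structural reason: the higher-color line ensembles $\bm L^{\mathrm{cHL},(j)}$, $j\ge 2$, do \emph{not} marginally satisfy a Gibbs property (the paper makes this explicit), so there is no ``colored Brownian Gibbs property'' in the limit that one could use for a characterization. Moreover, the Hall--Littlewood Gibbs property lacks stochastic monotonicity, so relations ``inherited from the basic coupling'' are not available in the naive way.

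What the paper actually does instead is Theorem~\ref{t.approxmate LPP problem representation general}: the higher-color line ensembles are shown to be \emph{approximately determined}, with exponentially good error, by a last-passage-percolation (exact Pitman-transform when $q=0$) functional of the single lowest-color ensemble $\bm L^{\mathrm{cHL},(1)}$. That is, the inter-color coupling degenerates to an (approximate) deterministic LPP relationship, and this matches exactly the variational coupling that \emph{defines} the Airy sheet in Definition~\ref{d.airy sheet}(ii). The proof then combines this approximate LPP representation with the convergence of $\bm L^{\mathrm{cHL},(1)}$ to $\bm\cP$ and with maximizer-localization estimates (Proposition~\ref{p.maximizer location}) to pass to the limit along the lines of \cite{dauvergne2018directed,dauvergne2021scaling}; no new uniqueness/characterization result for the Airy sheet is needed. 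You would need to replace your ``characterization of coupled Airy line ensembles'' step with this LPP-representation argument (including the $q>0$ combinatorial estimate showing that deviations from the Pitman transform incur a weight penalty of order $q^{cm^2}$) for the proof to go through.
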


All other results about ASEP that we prove herein follow as quick corollaries of this theorem (sometimes, together with existing results in the literature). First, while the above result states a scaling limit in space, it directly implies that the space-time scaling limit for the colored ASEP under packed initial conditions is given by the directed landscape (Corollary~\ref{c.asep general initial condition} (1)). It follows by projection (also called color merging) that for any $k \ge 1$ the space-time scaling limit of ASEP with $k$ colors under step initial condition is (a marginal of) the directed landscape (Corollary~\ref{c.asep general initial condition} (2) and Remark~\ref{r.asep finitely many colors}). Second, we generalize the initial conditions of these results by combining them with those in \cite{quastel2022convergence} that showed KPZ fixed point convergence for a single uncolored ASEP under arbitrary initial conditions; we deduce (Corollary~\ref{c.asep general initial condition} (3)) that multiple uncolored ASEPs under the basic coupling, each started from an arbitrary initial condition, converge to KPZ fixed points coupled through the directed landscape. Third, by combining this with results of Busani-Sepp{\"a}l{\"a}inen-Sorensen \cite{busani2022scaling, busani2023scaling}, we deduce (Corollary~\ref{c.SH convergence}) that the colored ASEP stationary measures converge to the stationary horizon, a limit introduced by Busani in \cite{busani2021diffusive}. Fourth, by combining our convergence results with geometric properties of the directed landscape \cite{dauvergne2018directed,NTM}, we deduce a strong form of decoupling for the colored ASEP height functions (Corollary~\ref{p.asymptotic independence ASEP}), as well as for the stationary two-point function (Corollary~\ref{p.covariance matrix}), as broadly predicted by the theory of non-linear fluctuating hydrodynamics \cite{Spohn2014}.

For the colored S6V model, we similarly prove its height function converges to the Airy sheet, after KPZ rescaling; see Theorem~\ref{t.s6v airy sheet}. The directed landscape space-time scaling limits under packed initial data (Corollary~\ref{c.s6v landscape}) then follows in a similar way as for ASEP. The stationary measures for the colored S6V coincide with those of the colored ASEP, so the stationary horizon convergence of the latter implies the same for the former. We do not prove the S6V analogs of convergence from general initial data under the basic coupling, as we do not currently have a S6V counterpart of the result from \cite{quastel2022convergence}.\\

To briefly describe the proofs of our results, we focus on the colored S6V model, whose discrete structure enables a more direct analysis. Our results on colored S6V can then be transferred to colored ASEP through an effective coupling between ASEP and S6V (see Lemma \ref{l.asep s6v comparison}).

The starting point for analyzing the colored S6V with $n \ge 1$ colors is an embedding, originally due to Aggarwal-Borodin \cite{aggarwalborodin}, of its height function as the top curves of a structure that we call a ``colored Hall-Littlewood line ensemble.'' In \cite{aggarwalborodin} this was proved as a consequence of the Yang-Baxter equation (we also provide a short, self-contained explanation here in Appendix~\ref{s.yang-baxter}, tailored to our special case), extending prior works of Borodin-Bufetov-Wheeler \cite{borodin2016between}, Bufetov-Mucciconi-Petrov \cite{FSSF,BUFETOV2021107865}, and Aggarwal-Borodin-Bufetov \cite{Aggarwal2019} proving similar results in the uncolored case. 

The colored Hall-Littlewood line ensemble is a family of $n$ line ensembles, one indexed by each color, where a ``line ensemble'' is a random infinite collection of Bernoulli paths; see the right side of Figure~\ref{0lmu}, where $n=2$ (and only the top three walks in each ensemble are depicted). The law of this colored line ensemble is described through certain local and explicit Boltzmann weights, which provide it with a Gibbs property that gives the law of its paths (of every color) in any given interval, conditional on the colored line ensemble outside of that interval. This couples its $n$ constituent line ensembles (of all colors) in a concrete but highly nontrivial way. We refer to any such structure as a ``colored Gibbsian line ensemble'' (see  \cite{aggarwalborodin} for other examples).

Due to this embedding, to identify the scaling limit of the colored S6V model, it suffices to pinpoint the scaling limit for the top curves of the colored Hall-Littlewood line ensemble. Most of this paper is devoted to this task. Over the past decade, a substantial theory has developed around the asymptotic analysis of Gibbsian line ensembles that are uncolored (that is, have $n=1$ colors), since the papers \cite{corwin2014brownian,corwin2016kpz} initiating a framework for proving their tightness. That uncolored context differs from our colored one, in that we must now understand how the $n$ lines ensembles in our colored family asymptotically couple together, as opposed to only understanding how any individual line ensemble in the family behaves marginally. 

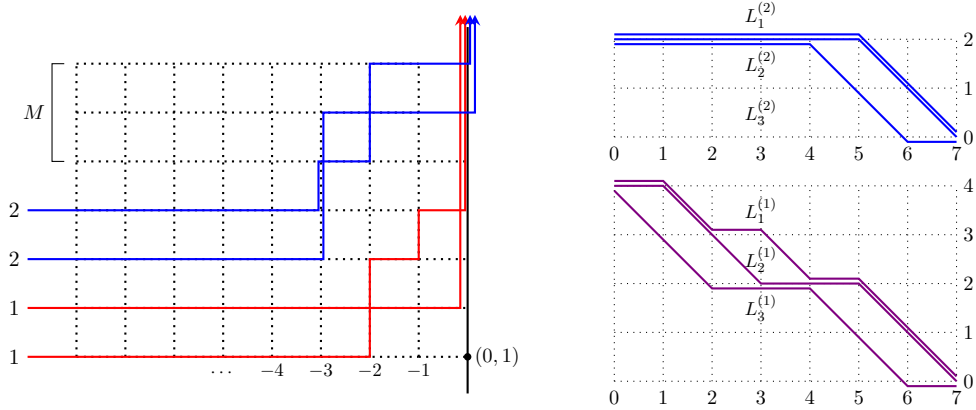
\begin{figure}[t]
  
  \begin{center}
    \begin{tikzpicture}[
      >=stealth,
      scale = .65]{ 
        
        \draw[thick] (23, -.25) -- (23, 7.25);
        
        \draw[black, thick, dotted] (15, .5) -- (15, 6.5);
        \draw[black, thick, dotted] (16, .5) -- (16, 6.5);
        \draw[black, thick, dotted] (17, .5)  -- (17, 6.5);
        \draw[black, thick, dotted] (18, .5) node[below = 2, scale = .65]{$\cdots$} -- (18, 6.5);
        \draw[black, thick, dotted] (19, .5) node[below, scale = .65]{$-4$}  -- (19, 6.5);
        \draw[black, thick, dotted] (20, .5) node[below, scale = .65]{$-3$} -- (20, 6.5);
        \draw[black, thick, dotted] (21, .5) node[below, scale = .65]{$-2$} -- (21, 6.5);
        \draw[black, thick, dotted] (22, .5) node[below, scale = .65]{$-1$} -- (22, 6.5);
        
        \draw[black, thick, dotted] (15, .5) -- (22, .5);
        \draw[black, thick, dotted] (15, 1.5) -- (22, 1.5);
        \draw[black, thick, dotted] (15, 2.5) -- (22, 2.5);
        \draw[black, thick, dotted] (15, 3.5) -- (22, 3.5);
        \draw[black, thick, dotted] (15, 4.5) -- (22, 4.5);
        \draw[black, thick, dotted] (15, 5.5) -- (22, 5.5);
        \draw[black, thick, dotted] (15, 6.5) -- (22, 6.5);
        
        \draw[thick, dotted] (22, 2.5) -- (23, 2.5);
        \draw[thick, dotted] (22, 4.5) -- (23, 4.5);
        \draw[thick, dotted] (22, .5) -- (23, .5);
        \draw[thick, dotted] (22, 1.5) -- (23, 1.5);
        
        \draw[->, red, thick] (14, .5) node[left, scale = .75, black]{$1$} -- ++(1, 0) -- ++(6, 0) -- ++(0, 1) -- ++(1.85, 0) -- ++(0, 6);
        \draw[->, red, thick] (14, 1.5) node[left, scale = .75, black]{$1$} -- ++(1, 0) -- ++(6, 0) -- ++(0, 1) -- ++(1, 0) -- ++(0, 1) -- ++(0.95, 0) -- ++(0, 4);
        \draw[->, blue, thick] (14, 2.5) node[left, scale = .75, black]{$2$} -- ++(1, 0) -- ++(5.05, 0) -- ++(0, 3) -- ++(0.95, 0) -- ++(0, 1) -- ++(1, 0) -- ++(1.05, 0) -- ++(0, 1);
        \draw[->, blue, thick] (14, 3.5) node[left, scale = .75, black]{$2$} -- (15, 3.5) -- (19.95, 3.5) -- (19.95, 4.5) -- (21, 4.5) -- (21, 5.5) -- (23.15, 5.5) -- (23.15, 7.5);
        
        \draw[-] (14.75, 4.5) -- (14.5, 4.5) -- (14.5, 6.5) -- (14.75, 6.5);
        \draw[] (14.5, 5.5) circle [radius = 0] node[left, scale = .7]{$M$};

        \node[circle, fill, inner sep = 1pt] at (23, 0.5) {};
        \node[anchor=west, scale=0.7] at (23,0.5) {$(0,1)$};

        \foreach \x in {0,...,4}
          \draw[dotted] (26, \x) -- ++(7, 0) node[right, scale = .7]{$\x$};

        \foreach \x in {0,...,7}
          \draw[dotted] (26+\x, 0) node[below = 1, scale = .7]{$\x$} -- ++(0, 4);

        \foreach \x in {0,...,2}
          \draw[dotted] (26, 5+\x) -- ++(7, 0) node[right, scale = .7]{$\x$};

        \foreach \x in {0,...,7}
          \draw[dotted] (26+\x, 5) node[below = 1, scale = .7]{$\x$} -- ++(0, 2);

        \draw[ thick, blue] (26, 7.1) -- (31, 7.1) -- (33, 5.1);
        \draw[ thick, blue] (26, 7) -- (31, 7) -- (33, 5);
        \draw[ thick, blue] (26, 6.9) -- (30, 6.9) -- (32, 4.9) -- (33, 4.9);

        \draw[ thick, violet] (26, 4.1) -- ++(1, 0) -- ++(1, -1) -- ++(1, 0) -- ++(1, -1) -- ++(1, 0) -- ++(1, -1) -- ++(1, -1);
        \draw[ thick, violet] (26, 4) -- (27, 4) -- (28, 3) -- (29, 2) -- (31, 2) -- (32, 1) -- (33, 0);
        \draw[ thick, violet] (26, 3.9) -- (27, 2.9) -- (28, 1.9) -- (29, 1.9) -- (30, 1.9) -- (31, .9) -- (32, -.1) -- (33, -.1);
        
        \draw[] (29, 7.1) circle [radius = 0] node[above, scale = .65]{$L_1^{(2)}$};
        \draw[] (29, 6.5) circle [radius = 0] node[scale = .65]{$L_2^{(2)}$};
        \draw[] (29, 5.9) circle [radius = 0] node[below, scale = .65]{$L_3^{(2)}$};
        
        \draw[] (29, 3.1) circle [radius = 0] node[above, scale = .65]{$L_1^{(1)}$};
        \draw[] (29, 2.5) circle [radius = 0] node[scale = .65]{$L_2^{(1)}$};
        \draw[] (29, 1.9) circle [radius = 0] node[below, scale = .65]{$L_3^{(1)}$};
      }
    \end{tikzpicture}
  \end{center}
  
  \caption{\label{0lmu} On the left is a colored $q$-Boson model (where color $1$ is red and color $2$ is blue). On the right is its associated colored line ensemble $\bm{L} = \big( \bm{L}^{(1)}, \bm{L}^{(2)} \big)$ (where $\bm{L}^{(1)}$ is purple,  and $\bm{L}^{(2)}$ is blue).}
  
\end{figure}

To do so, we will address these two issues separately, by proving the following two statements.

\begin{enumerate} 
  \item \emph{Identifying the coupling} (Theorem \ref{t.approxmate LPP problem representation general}): All of the line ensembles in the colored family are approximately determined via an LPP problem across just one of them (namely, the line ensemble indexed by the minimal color).
  \item \emph{Airy limit of the determining ensemble} (Theorem \ref{t.line ensemble convergence to parabolic Airy}): This ``determining'' line ensemble converges around its top edge to the parabolic Airy line ensemble.
\end{enumerate}

Hence, the other colored line ensembles converge to limits determined by LPP problems in the parabolic Airy line ensemble; as discussed above, this is how the Airy sheet was defined \cite{dauvergne2018directed}. 

Let us briefly comment on the two statements above. The first can be viewed as a substitute for the above-mentioned ``dual'' LPP (or polymer) representation that was used in \cite{dauvergne2018directed,dauvergne2023uniform,dauvergne2021scaling,wu2023kpz} to analyze LPP (and polymer) models, though now its underlying algebraic source is the Yang-Baxter equation instead of the RSK correspondence. Indeed, it enables the colored S6V (and ASEP) height functions to be interpreted as LPP values across a dual environment given by the determining line ensemble in the colored family, but now only approximately (instead of exactly). 

On the second statement, this determining line ensemble can be shown to marginally have the structure of an uncolored Gibbsian line ensemble that was first studied by Corwin-Dimitrov \cite{corwin2018transversal}; they observed that it does not satisfy the Fortuin-Kasteleyn-Ginibre (FKG) inequality, i.e., does not admit monotone couplings. To prove its tightness (which is the main new input needed to access its edge scaling limit, due to characterization results of Dimitrov-Matetski \cite{dimitrov2021characterization} or Aggarwal-Huang \cite{aggarwal2023strong}), we must thus circumvent many prior uses of monotonicity, which requires an appreciable reworking of the theory developed in \cite{corwin2014brownian,corwin2016kpz}. See Section \ref{s.intro.line ensemble tightness} for further discussion.

Before proceeding, let us make two remarks. First, as shown in \cite[Figure 1]{aggarwalborodin}, the structure of colored Gibbsian line ensembles is common to a wide class of stochastic models studied in integrable probability. These colored ensembles are of differing complexities but are likely to be of a similar qualitative nature to the colored Hall-Littlewood line ensemble studied here. Thus, this paper could serve as a blueprint for accessing the Airy sheet scaling limits for these other models (where the strong characterization result for the Airy line ensemble proven in \cite{aggarwal2023strong} is likely to play a more vital role). Second, the approximate LPP representation of the colored line ensembles in terms of a ``determining'' one applies for quite general initial data, not only the packed one; this might serve as a starting point for the direct analysis of these models from general initial conditions. We do not pursue either of these points further here and leave them as future directions.

The remainder of this section is a more detailed proof outline, to act as an informal guide (focusing on the main ideas and intuition) for readers examining the arguments of this paper in greater detail.

\subsection{Proof outline} 

\label{Modelq}

\subsubsection{Colored Hall-Littlewood line ensemble}

\label{ModelColoredEnsemble}

The colored S6V model (see Section~\ref{s.intro.cS6V} for a precise definition) is a system of up-right directed random paths, each assigned an integer color. At every vertex, a weight is assigned depending on the configuration of colored paths adjacent to it. Given parameters $q \in [0, 1)$ and $z \in (0, 1)$, these weights are depicted below, where we assume $i<j$:

\medskip

\begin{center}
  \begin{tikzpicture}
    \newcommand{\thedim}{4.2}
    \begin{scope}[scale=0.5]
      
      \draw[step=\thedim] (0,0) grid (5*\thedim, \thedim);
      
      \draw[->, line width=1.2pt, red] (0.5*\thedim, 0.25*\thedim) -- ++(0, 0.5*\thedim);
      \draw[->, line width=1.2pt, red] (0.25*\thedim, 0.5*\thedim) -- ++(0.5*\thedim, 0);
      \node[scale=0.8, anchor=west] at (0.75*\thedim+0.1, 0.5*\thedim) {$i$};
      \node[scale=0.8, anchor=south] at (0.5*\thedim, 0.75*\thedim+0.1) {$i$};
      \node[scale=0.8, anchor=east] at (0.25*\thedim-0.1, 0.5*\thedim) {$i$};
      \node[scale=0.8, anchor=north] at (0.5*\thedim, 0.25*\thedim-0.1) {$i$};
      
      \draw[->, line width=1.2pt, blue] (1.5*\thedim, 0.25*\thedim) -- ++(0, 0.5*\thedim);
      \draw[->, line width=1.2pt, red] (1.25*\thedim, 0.5*\thedim) -- ++(0.5*\thedim, 0);
      \node[scale=0.8, anchor=west] at (1.75*\thedim+0.1, 0.5*\thedim) {$i$};
      \node[scale=0.8, anchor=south] at (1.5*\thedim, 0.75*\thedim+0.1) {$j$};
      \node[scale=0.8, anchor=east] at (1.25*\thedim-0.1, 0.5*\thedim) {$i$};
      \node[scale=0.8, anchor=north] at (1.5*\thedim, 0.25*\thedim-0.1) {$j$};
      
      \draw[->, line width=1.2pt, red] (2.5*\thedim, 0.25*\thedim) -- ++(0, 0.5*\thedim);
      \draw[->, line width=1.2pt, blue] (2.25*\thedim, 0.5*\thedim) -- ++(0.5*\thedim, 0);
      \node[scale=0.8, anchor=west] at (2.75*\thedim+0.1, 0.5*\thedim) {$j$};
      \node[scale=0.8, anchor=south] at (2.5*\thedim, 0.75*\thedim+0.1) {$i$};
      \node[scale=0.8, anchor=east] at (2.25*\thedim-0.1, 0.5*\thedim) {$j$};
      \node[scale=0.8, anchor=north] at (2.5*\thedim, 0.25*\thedim-0.1) {$i$};
      
      \draw[->, line width=1.2pt, red] (3.25*\thedim, 0.5*\thedim) -- ++(0.25*\thedim, 0) -- ++(0,0.25*\thedim);
      \draw[->, line width=1.2pt, blue] (3.5*\thedim, 0.25*\thedim) -- ++(0, 0.25*\thedim) -- ++(0.25*\thedim, 0);
      \node[scale=0.8, anchor=west] at (3.75*\thedim+0.1, 0.5*\thedim) {$j$};
      \node[scale=0.8, anchor=south] at (3.5*\thedim, 0.75*\thedim+0.1) {$i$};
      \node[scale=0.8, anchor=east] at (3.25*\thedim-0.1, 0.5*\thedim) {$i$};
      \node[scale=0.8, anchor=north] at (3.5*\thedim, 0.25*\thedim-0.1) {$j$};
    
      \draw[->, line width=1.2pt, blue] (4.25*\thedim, 0.5*\thedim) -- ++(0.25*\thedim, 0) -- ++(0,0.25*\thedim);
      \draw[->, line width=1.2pt, red] (4.5*\thedim, 0.25*\thedim) -- ++(0, 0.25*\thedim) -- ++(0.25*\thedim, 0);
      \node[scale=0.8, anchor=west] at (4.75*\thedim+0.1, 0.5*\thedim) {$i$};
      \node[scale=0.8, anchor=south] at (4.5*\thedim, 0.75*\thedim+0.1) {$j$};
      \node[scale=0.8, anchor=east] at (4.25*\thedim-0.1, 0.5*\thedim) {$j$};
      \node[scale=0.8, anchor=north] at (4.5*\thedim, 0.25*\thedim-0.1) {$i$};
      
      \draw[xstep=\thedim, ystep=0.35*\thedim] (0,0) grid (5*\thedim, -0.35*\thedim);
      
      \newcommand{\they}{-0.175*\thedim}
      
      \node[scale=0.7] at (0.5*\thedim, \they) {$1$};
      \node[scale=0.7] at (1.5*\thedim, \they) {$\displaystyle\frac{q(1-z)}{1-qz}$};
      \node[scale=0.7] at (2.5*\thedim, \they) {$\displaystyle\frac{1-z}{1-qz}$};
      \node[scale=0.7] at (3.5*\thedim, \they) {$\displaystyle\frac{1-q}{1-qz}$};
      \node[scale=0.7] at (4.5*\thedim, \they) {$\displaystyle\frac{z(1-q)}{1-qz}$};
      
    \end{scope}
  \end{tikzpicture}
\end{center}

\medskip

These weights satisfy a Yang-Baxter equation that relates them to those of a different system called the (discrete) colored $q$-Boson model. This is also a system of colored paths, but where vertical edges may now accommodate arbitrarily many colored arrows (while horizontal edges still accommodate at most one); see the left side of Figure~\ref{0lmu}. The explicit forms for its weights, which are not stochastic, lead to the colored Hall-Littlewood Gibbs property; the probability of a configuration of paths in this model is equal to the product of all vertex weights, suitably normalized. These vertex weights are given below, where we again assume $i < j$, and we let $A_{[k, \ell]}$ denote the number of arrows of color in the interval $[k, \ell]$ that are vertically entering the vertex (writing $A_k$ if $k = \ell$):

\medskip

\begin{center}
  \begin{tikzpicture}
    \newcommand{\thedim}{4.2}
    \begin{scope}[scale=0.6]
      
      \draw[step=\thedim] (0,0) grid (6*\thedim, \thedim);
      
      \draw[->, very thick, blue] (0.5*\thedim, 0.2*\thedim) -- ++(0, 0.6*\thedim);
      \draw[->, very thick, green!80!black] (0.5*\thedim+0.15, 0.2*\thedim) -- ++(0, 0.6*\thedim);
      
      \draw[->, very thick, blue] (1.5*\thedim-0.075, 0.2*\thedim) -- ++(0, 0.6*\thedim);
      \draw[->, very thick, green!80!black] (1.5*\thedim+0.075, 0.2*\thedim) -- ++(0, 0.3*\thedim) -- ++(0.3*\thedim, 0);
      \node[scale=0.6, anchor=west] at (1.8*\thedim+0.1, 0.5*\thedim) {$i$};
      
      \draw[->, very thick, blue] (2.5*\thedim +0.075, 0.2*\thedim) -- ++(0, 0.6*\thedim);
      \draw[->, very thick, green!80!black] (2.2*\thedim, 0.5*\thedim) -- ++(0.3*\thedim +0.2, 0) -- ++(0, 0.3*\thedim);
      \node[scale=0.6, anchor=east] at (2.2*\thedim, 0.5*\thedim+0.1) {$i$};
      
      \draw[->, very thick, blue] (3.5*\thedim, 0.2*\thedim) -- ++(0, 0.6*\thedim);
      \draw[->, very thick, green!80!black] (3.2*\thedim, 0.5*\thedim) -- ++(0.3*\thedim -0.15, 0) -- ++(0, 0.3*\thedim);
      \draw[->, very thick, orange!90!black] (3.5*\thedim+0.15, 0.2*\thedim) -- ++(0, 0.3*\thedim) -- ++(0.3*\thedim, 0);
      \node[scale=0.6, anchor=east] at (3.2*\thedim, 0.5*\thedim+0.1) {$i$};
      \node[scale=0.6, anchor=west] at (3.8*\thedim+0.075, 0.5*\thedim) {$j$};
      
      \draw[->, very thick, blue] (4.5*\thedim, 0.2*\thedim) -- ++(0, 0.6*\thedim);
      \draw[->, very thick, orange!90!black] (4.2*\thedim, 0.5*\thedim+0.1) -- ++(0.3*\thedim +0.15, 0) -- ++(0, 0.3*\thedim-0.1);
      \draw[->, very thick, green!80!black] (4.5*\thedim-0.15, 0.2*\thedim) -- ++(0, 0.3*\thedim) -- ++(0.3*\thedim, 0);
      \node[scale=0.6, anchor=east] at (4.2*\thedim, 0.5*\thedim+0.1) {$j$};
      \node[scale=0.6, anchor=west] at (4.75*\thedim, 0.5*\thedim) {$i$};
      
      \draw[->, very thick, blue] (5.5*\thedim, 0.2*\thedim) -- ++(0, 0.6*\thedim);
      \draw[->, very thick, green!80!black] (5.5*\thedim+0.15, 0.2*\thedim) -- ++(0, 0.6*\thedim);
      \draw[->, very thick, orange!90!black] (5.2*\thedim, 0.5*\thedim) -- ++(0.6*\thedim, 0);
      \node[scale=0.6, anchor=west] at (5.8*\thedim, 0.5*\thedim) {$i$};

      \draw[xstep=\thedim, ystep=0.25*\thedim] (0,0) grid (6*\thedim, -0.25*\thedim);
      
      \newcommand{\they}{-0.125*\thedim}
      
      \node[scale=0.775] at (0.5*\thedim, \they) {$1$};
      \node[scale=0.775] at (1.5*\thedim, \they) {$(1-q^{A_i})q^{A_{[i+1,N]}}$};
      \node[scale=0.775] at (2.5*\thedim, \they) {$1$};
      \node[scale=0.775] at (3.5*\thedim, \they) {$(1-q^{A_j})q^{A_{[j+1,N]}}$};
      \node[scale=0.775] at (4.5*\thedim, \they) {$0$};
      \node[scale=0.775] at (5.5*\thedim, \they) {$q^{A_{[i+1,N]}}$};
      
    \end{scope}
  \end{tikzpicture}
\end{center}

\medskip

In \cite{aggarwalborodin} the Yang-Baxter equation was shown to imply a distributional match between the colored S6V model and a colored $q$-Boson model on the negative quadrant $\mathbb{Z}_{< 0} \times \mathbb{Z}$ (see the left side of Figure~\ref{0lmu}), identifying the law of the former's height function with the marginal law of the latter's rightmost column; this colored $q$-Boson model is also sometimes called a ``colored Hall-Littlewood process'' \cite[Section 1.6]{borodin2018coloured}; the $n=1$ case of this object (with just one color) is called an uncolored Hall-Littlewood process. The paper \cite{aggarwalborodin} developed these types of matching results for considerably more general fused vertex models. Here, we only require the colored S6V model special case of that framework, whose Yang-Baxter equation origin is relatively quick to explain conceptually. Appendix~\ref{s.yang-baxter} provides a self-contained proof of the result we use from \cite{aggarwalborodin}.

The colored $q$-Boson model on the negative quadrant gives rise in a fairly direct way to a colored line ensemble, which we call the colored Hall-Littlewood line ensemble (see Section~\ref{s.colored line ensemble}). Specifically, we set the location of the $i$\th curve of the color $j$ line ensemble at site $y$ to be the number of paths in the colored $q$-Boson model of color at least $j$ that pass strictly above the point $(-i, y)$; see Figure~\ref{0lmu}. The fact that the colored $q$-Boson model is prescribed by local Boltzmann weights  implies a local Gibbs property for the full color-indexed collection of line ensembles. However, each constituent line ensemble in this colored family (except for the ``first'' one corresponding to all the colors in the system) will usually not marginally satisfy a Gibbs property on its own.

This correspondence identifies the rightmost column of the colored $q$-Boson model with the first (topmost) curves of the $n$ line ensembles in the colored family. Therefore, the distributional match described above implies that the height function of the colored S6V model embeds as the first curves in the colored line ensemble (see Proposition~\ref{p.colored line ensembles}). Extracting the large scale limit for the colored S6V model thus reduces to doing so for the above colored Gibbsian line ensemble around its (top) edge. This task simplifies considerably when $q = 0$, so we will first discuss that case (where our results are still new). We then explain what must be done to extend our results to all $q \in [0, 1)$.

\subsubsection{Simplifications and asymptotics when $q=0$}

\label{Asymptoticq0}

The weights of the colored (and uncolored) $q$-Boson model simplify when $q=0$ to be either zero or one. This has several useful consequences.

First, in the case of only one color, the uncolored Hall-Littlewood line ensemble (i.e., uncolored Hall-Littlewood process), whose topmost curve encodes the uncolored S6V height function, becomes at $q=0$ an ensemble of independent Bernoulli random walks conditioned on not intersecting. This is a Schur process, which is known to be a determinantal point process by work of Okounkov-Reshetikhin \cite{OR03}. The finite-dimensional distributions around the top edge of the ensemble can therefore be computed exactly and shown to converge as $t\to \infty$ to those of the Airy line ensemble. The convergence can then be upgraded to functional convergence of the curves by use of the non-intersecting Gibbs property and existing techniques in the theory of Gibbsian line ensembles, facilitated by the key property of monotone coupling that holds in this $q=0$ case: Gibbs measures with ordered boundary conditions can likewise be stochastically ordered.

Now we pass to multiple colors. The colored $q=0$ S6V height function is again related to the top curves of a colored line ensemble, consisting of a collection of several line ensembles indexed by color. The line ensemble corresponding to the lowest color in this family, which we assume to be $1$ (as we may, by a color merging projection), has the marginal law of an uncolored line ensemble, in this case the above Schur process (Proposition \ref{p.L has HL}). The other line ensembles in the colored family are nontrivially coupled to this $1$-colored one via the colored $q = 0$ Boson weights. Let us explain how to extract the joint asymptotics for the $1$-colored and $2$-colored line ensembles; the case for more colors then follows inductively.

We begin by examining the $1$-colored line ensemble and conclude (as described above) that it marginally converges around its top edge to the parabolic Airy line ensemble. Next, conditioned on the $1$-colored line ensemble, we study the $2$-colored one. As all $q=0$ Boson weights are either zero or one, this second line ensemble is seen to be determined completely by the first one. We then prove (Theorem~\ref{t.approxmate LPP problem representation general}), through a careful inspection of these $q=0$ Boson weights, that the $2$-colored line ensemble can be realized as an LPP problem (via a suitable ``Pitman transform'') across the $1$-colored one. Therefore, the first and second line ensembles jointly converge in the scaling limit to a parabolic Airy line ensemble and an LPP problem across it, giving the Airy sheet.

Turning this back into the $q=0$ colored S6V model, this implies the convergence of its height function under KPZ scaling to the Airy sheet. In the limit to ASEP, this shows that the height function for the colored TASEP under packed initial condition also converges to the Airy sheet.

When $q>0$, there are two obstructions in implementing the approach sketched above. The first is in extracting the edge limit of the uncolored Hall-Littlewood line ensemble; the second is in extracting the joint limit of the coupled line ensembles.

\subsubsection{Edge limit for uncolored line ensembles ($q>0$)}\label{s.intro.line ensemble tightness}

For $q > 0$ the basic input in Section~\ref{Asymptoticq0}, the convergence of the uncolored Hall-Littlewood line ensemble to the parabolic Airy one, is no longer known; the reason is that the former (and likewise, S6V and ASEP) is not determinantal. 

Hence, we adopt a more probabilistic route, by showing that any subsequential edge limit of the Hall-Littlewood line ensemble is Brownian Gibbsian; this means that it satisfies the Gibbs property for non-intersecting Brownian bridges, called the Brownian Gibbs property. This, with the recent strong characterization for Brownian Gibbsian line ensembles established in \cite{aggarwal2023strong}, would then uniquely identify the limit as the parabolic Airy line ensemble.
For ASEP, the use of \cite{aggarwal2023strong} can be bypassed by instead using results in \cite{quastel2022convergence,dimitrov2021characterization} (though one must still verify the Brownian Gibbs property); see Remark~\ref{r.qs22 for asep}.

Assuming tightness under KPZ scaling of the Hall-Littlewood line ensemble around its top edge, the Brownian Gibbs property for subsequential limits follows from the Gibbs property of prelimits, along with Kolm\'{o}s-Major-Tusn\'{a}dy type couplings between random walks and Brownian bridges. The main effort therefore rests in proving this tightness; the inputs we will need to do this are its Gibbs property and known \cite{borodin2016stochastic} one-point asymptotics for its top curve.

Previous works establishing full tightness for Gibbsian line ensembles all make substantial use of a monotonicity property (FKG inequality), stating that if one increases the boundary data then all of its paths stochastically increase. This monotonicity property is false for the Hall-Littlewood line ensemble, a fact originally observed in \cite{corwin2018transversal}, where this Gibbs property was first studied. That work showed tightness of the top curve of the Hall-Littlewood line ensemble by way of a much weaker form of monotonicity wherein, upon changing boundary conditions, certain one-point probabilities change monotonically up to a large constant. In order to extend tightness to an arbitrary number of curves, one could pursue such a weak form of monotonicity involving more curves. We were unable to produce a suitable generalization.

To explain the method we instead use, we first informally describe the Hall-Littlewood Gibbs property (see Section~\ref{s.line ensemble convergence} for a more precise definition). It states that the law of the top $k$ curves on a spatial interval $(a,b)$ conditioned on the entire external state of the ensemble only depends on the boundary data, i.e., the height of the top $k$ curves at $\{ a, b \}$ and that of the $(k+1)$\st curve on $[a,b]$. The conditioned law is given explicitly by a reweighting of the law for $k$ Bernoulli random walk bridges with the same boundary data, conditioned to neither intersect each other nor the $(k+1)$\st curve. The weight is proportional to a product of factors (one for each position) equal to either $1$ or $1-q^{d}$, where $d$ is the distance between two consecutively labeled curves at that position. The latter factor only occurs when the height difference between two consecutive curves decreases by $1$.

Now, to establish tightness of our line ensemble, we show that it suffices to \emph{a priori} demonstrate a reasonable degree of separation between its curves. Indeed, since $q$ is fixed, the factors $1-q^d$ approach $1$ as the distances $d$ between the curves grows; although many such factors may exist in the product (up to the size of the spatial interval, of order $t^{2/3}$), if we can show a separation of $d\gg \log t$ with high probability, then the reweighting factor becomes very close to $1$. We therefore incur little error by replacing these factors by $1$, after which the tightness analysis follows from arguments similar to those used in previously studied models.

It therefore suffices to show this \emph{a priori} separation between curves. This is the primary technical component of the proof for tightness and is done by induction. Using the Gibbs property and the one-curve weak monotonicity (a minor refinement of that in \cite{corwin2018transversal}), we first prove that the $k$\th curve in the line ensemble is uniformly bounded below, assuming that the first $k-1$ curves are uniformly separated (which holds trivially at $k=1$); then we show that the first $k$ curves are likely separated at one spatial coordinate, using the control on how far down the $k$\th curve can be; and finally we conclude that the first $k$ curves are uniformly separated across the full spatial interval. See Section~\ref{s.lower tail} for further details.

\subsubsection{Approximate LPP representation for other colors  ($q>0$)}

As with the uncolored Gibbs property, the colored Gibbs property becomes more involved when $q > 0$; its weights are not all equal to zero or one. So, the $2$-colored line ensemble in the family is now not deterministic upon conditioning on the $1$-colored one (which has the marginal law of an uncolored Hall-Littlewood line ensemble; see Proposition \ref{p.L has HL}). In particular, the representation of the former as an LPP problem across the latter is no longer exactly true when $q>0$. 

We instead prove that this LPP representation remains approximately true with high probability for any $q \in [0, 1)$; see Theorem~\ref{t.approxmate LPP problem representation general}. Our proof of this fact is combinatorial. While the colored $q$-Boson weights are not all equal to zero or one conditional on the $1$-colored line ensemble, they involve certain powers of $q$ (namely, factors of $q^{A_{[k, \ell]}}$). We show that these powers become quite large, thereby exponentially penalizing the $2$-colored line ensemble, if the latter substantially differs from being an LPP problem across the $1$-colored one. Since $q < 1$, this can be used to prove the above approximate LPP representation; see Section~\ref{s.approximate LPP} for more details. 

Using this approximate LPP representation, and the convergence of the uncolored Hall-Littlewood line ensemble to the parabolic Airy one, our Airy sheet limit results follow just as in the $q=0$ case.

\subsection*{Notation}
For $a,b\in\Z$ with $a<b$, $\intint{a,b} := \{a, \ldots, b\}$, $\llbracket a,\infty \rrparen:= \{a,a+1, \ldots\}$, and  $\llparen {-}\infty, a \rrbracket := \{a,a-1, \ldots\}$. Symbols written in bold face such as $\bm A$ or $\bm v$ will represent vectors or tuples. For a finite set $A$, $\#A$ will denote its cardinality. For $a,b\in\R$, $a\wedge b$ is shorthand for $\min(a,b)$.

For random objects $X$ and $Y$ taking values in some measurable space, $X\stackrel{\smash{d}}{=} Y$ means that their distributions are the same; we will often omit explicitly specifying the $\sigma$-algebra on the target space, but will specify its topology (in which case we endow the space with the associated Borel $\sigma$-algebra). For random objects $X_n$ and $X$ taking values in some common topological space, $\smash{X_n\xrightarrow{d} X}$ means that $X_n$ converges weakly to $X$; the topological space will be specified or obvious in the context. We will sometimes abuse notation and say $\smash{X_n\xrightarrow{d} \mu}$ for some probability measure $\mu$ on the same space. The space of continuous functions from a topological space $\mc X$ to $\R$ will be denoted $\mc C(\mc X, \R)$. $\mc N(m,\sigma^2)$ will denote the normal distribution with mean $m$ and variance $\sigma^2$. For a $\sigma$-algebra $\F$, $\PF$ and $\EF$ are shorthand for the conditional probability and conditional expectation given $\F$. Events will be written in sans serif font, e.g., $\msf E$, and $\msf E^c$ will denote the complement of $\msf E$.

We will sometimes write $f(\bm\cdot)$ for a function of the variable $\bm\cdot$, and sometimes $x\mapsto f(x)$. For $\bm f = (f_1, f_2, \ldots )$ a collection of continuous functions, $\bm{f}[(\bm\cdot, \bm\cdot) \to (\bm\cdot, \bm\cdot)]$ will denote the last passage percolation values in the environment given by $\bm{f}$, and is defined ahead in Definition~\ref{d.lpp}.

\subsection*{Organization of paper} Section~\ref{s.models and results} precisely defines our models and states our main results. Section~\ref{s.key ideas} introduces the colored $q$-Boson model and colored Hall-Littlewood line ensemble; explains the latter's relation to the colored S6V model; sets up a framework for the convergence of the uncolored line ensemble to the parabolic Airy line ensemble (Theorem~\ref{t.line ensemble convergence to parabolic Airy}); and states the approximate last passage percolation representation of the colored Hall-Littlewood line ensemble (Theorem~\ref{t.approxmate LPP problem representation general}). Section~\ref{s.approximate LPP} proves the latter result. Section~\ref{s.properties of S6V and ASEP} explains how to fit ASEP into the vertex model framework of the other arguments. Section~\ref{s.tightness preliminaries} begins setting up the ingredients for the proof of Theorem~\ref{t.line ensemble convergence to parabolic Airy} and gives its proof assuming them. Section~\ref{s.airy sheet} gives the proofs of our main results. Sections~\ref{s.monotonicity}--\ref{s.bg in the limit} concern the proofs of the ingredients of Theorem~\ref{t.line ensemble convergence to parabolic Airy}: Section~\ref{s.monotonicity} explains the weak monotonicity property our arguments rely on; Sections~\ref{s.lower tail} and \ref{s.uniform separation} contain the inductive argument yielding lower tail control of lower curves in the line ensemble as well as their uniform separation; Section~\ref{s.partition function and non-intersection} uses these to obtain control on the partition function; and Section~\ref{s.bg in the limit} uses the same to establish all subsequential limits of the uncolored line ensembles have the Brownian Gibbs property.

There are also five appendices. Appendix~\ref{s.yang-baxter} gives a self-contained proof of the mapping from \cite{aggarwalborodin}, between the colored S6V model and the colored $q$-Boson model, using the Yang-Baxter equation. Appendices~\ref{s.random gibbs} and \ref{s.G_k asymptotics} give routine proofs of certain miscellaneous lemmas and of asymptotics of last passage percolation and maximizer values across the parabolic Airy line ensemble. Appendix~\ref{s.general initial condition} gives the proofs of convergence results of the colored ASEP to the directed landscape and coupled KPZ fixed point and colored S6V to the directed landscape, using an idea communicated to us by Shalin Parekh, combined with an approximate monotonicity result in the S6V case. Appendix~\ref{s.proofs of asymptotic independence} contains the proofs of our results on the decoupling of the colored ASEP height functions and the stationary two-point function.

\subsection*{Code for simulations} Code for simulations of the ASEP sheet (as in the third panel of Figure~\ref{f.sim}) and ASEP landscape is available in the arXiv source for this article.

\subsection*{Acknowledgements}
The authors wish to thank Shalin Parekh for explaining an approach to arrive at directed landscape (and coupled KPZ fixed points) convergence from our main result of Airy sheet convergence (which, with his gracious permission, we have included in this text); Evan Sorensen for discussions related to the stationary horizon and his work \cite{busani2023scaling};  Alexei Borodin for discussions about colored stochastic vertex models and for helpful comments on an earlier draft of this manuscript; and Herbert Spohn and Patrik Ferrari for suggestions to pursue the strong decoupling for colored ASEP and discussions about its relationship to non-linear fluctuating hydrodynamics.
Amol Aggarwal was partially supported by a Packard Fellowship for Science and Engineering, a Clay Research Fellowship, by the NSF through grant DMS-1926686, and by the IAS School of Mathematics.
Ivan Corwin was partially supported by the NSF through grants DMS-1937254, DMS-1811143, DMS-1664650, by the Simons Foundation through an Investigator Award and through the W.M.~Keck Foundation through a Science and Engineering Grant.
Milind Hegde was partially supported by the NSF through grant DMS-1937254.


\section{Models and statements of results}\label{s.models and results}

Here we define our limiting objects (Section~\ref{s.limiting objects}) and the models of study in this article and state our results (Section~\ref{s.intro.colored asep} for ASEP and Section~\ref{s.intro.cS6V} for stochastic six-vertex).

\subsection{The Airy line ensemble, Airy sheet, and directed landscape}\label{s.limiting objects}

In this section we define our limiting objects. We start with the parabolic Airy line ensemble, which is needed to give the definitions of the Airy sheet and the directed landscape.

\subsubsection{Parabolic Airy line ensemble}

\begin{definition}[Parabolic Airy line ensemble]\label{d.parabolic Airy line ensemble}
The \emph{parabolic (and stationary) Airy line ensembles} $\bm\cP, \bm{\mc A}: \N\times\R\to \R$ are $\N$-indexed families of random non-intersecting continuous curves $(\cP_i(\bm\cdot))_{i\in\N} := (\cP(i,\bm\cdot))_{i\in\N}$ and $(\mc A_i(\bm\cdot))_{i\in\N} := (\mc A(i,\bm\cdot))_{i\in\N}$, related by
$$\cP_i(x) := \mc A_i(x) - x^2.$$
The finite-dimensional distributions of $\bm{\mc A}$ are determinantal with kernel the extended Airy kernel $K_\mathrm{Ai}^{\mathrm{ext}}$ given by
$$K_\mathrm{Ai}^{\mathrm{ext}}\bigl((x,t); (y,s)\bigr) := \begin{cases}
\int_0^\infty e^{-\lambda(t-s)}\mathrm{Ai}(x+\lambda)\mathrm{Ai}(y+\lambda)\, \mathrm d\lambda & t\geq s\\
-\int^0_{-\infty} e^{-\lambda(t-s)}\mathrm{Ai}(x+\lambda)\mathrm{Ai}(y+\lambda)\, \mathrm d\lambda & t< s,
\end{cases}$$
where $\mathrm{Ai}$ is the classical Airy function. This means the following (adopting the notation $(t_j, y_j)\in \bm{\mc A}$ for the existence of some $k$ such that $\mc A_k(t_j) = y_j$ and $\mrm d\P$ for the density with respect to Lebesgue measure):
 for every $m\in\N$ and $(t_1, y_1), \ldots, (t_m, y_m)\in\R^2$,
\begin{equation}\label{e.airy multipoint}
\mrm d\P\Bigl((t_j, y_j)\in \bm{\mc A} \text{ for } j=1, \ldots, m\Bigr) = \det\Bigl[ K_{\mrm{Ai}}^{\mrm{ext}}((y_i,t_i); (y_j, t_j))\Bigr]_{1\leq i,j\leq m}\prod_{j=1}^m \mrm d y_j.
\end{equation}

These finite-dimensional distributions were first discovered in \cite{prahofer2002PNG} (see also \cite{Johansson2003}), and the existence of such an ensemble was shown in \cite[Theorem 3.1]{corwin2014brownian} (and the uniqueness follows from the prescription \eqref{e.airy multipoint} of the finite-dimensional distributions).

The distribution of $\mc A_1(0)$ is known as the \emph{GUE Tracy-Widom distribution} (discovered much earlier in random matrix theory \cite{tracy1994level}).
\end{definition}

An alternative, more probabilistic, description of the Airy line ensemble is that it is the unique \cite{aggarwal2023strong} family of continuous random curves satisfying what is known as the Brownian Gibbs property; see Definition~\ref{d.bg} and Proposition~\ref{p.strong characterization} for more details.

\begin{figure}[h]
\includegraphics[scale=0.9]{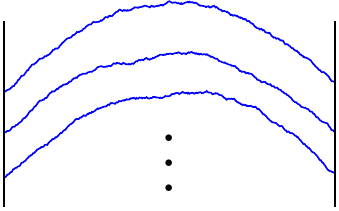}
\caption{A depiction of the parabolic Airy line ensemble.}\label{f.para airy}
\end{figure}

\subsubsection{Last passage percolation}

We next introduce the notion of a last passage percolation (LPP) problem; this will be needed in defining the Airy sheet, as well as in some of our proofs.

\begin{definition}[Last passage percolation]\label{d.lpp}
Given real numbers $u<v$, a sequence of continuous functions $\bm f=(f_1$, $f_2, \ldots, f_n)$ with $f_i:[u,v]\to\R$ for each $i\in\intint{1,n}$,  and natural numbers $j < k \leq n$, we define the LPP value from $(u,k)$ to $(v,j)$ in the environment given by $\bm f$ by (where $t_{j-1} = v$ and $t_k= u$)
\begin{align}\label{e.LPP definition}
\bm f[(u,k) \to (v,j)] = \sup_{t_k < \ldots < t_{j-1}} \sum_{i=j}^k \bigl(f_i(t_{i-1}) - f_i(t_i)\bigr).
\end{align}
\end{definition}

\subsubsection{Airy sheet, directed landscape, and KPZ fixed point}
The Airy sheet is defined via a coupling with the parabolic Airy line ensemble.

\begin{definition}[Airy sheet, {\cite[Definition 8.1]{dauvergne2018directed}}]\label{d.airy sheet}
The \emph{Airy sheet} $\S:\R^2\to\R$ is the unique (in law) random continuous function such that:
\begin{enumerate}
  \item[(i)] \emph{Stationarity}: $\S$ has the same law as $\S(\bm\cdot + x, \bm\cdot + x)$ for all $x\in\R$; and

  \item[(ii)] \emph{Coupling with $\bm\cP$}: $\S$ can be coupled with a parabolic Airy line ensemble $\bm\cP$ such that $\S(0;\bm\cdot) = \cP_1(\bm\cdot)$ and for all $(x,y,z)\in\Q_{>0}\times\Q^2$, almost surely,
  \begin{align}\label{e.airy sheet increment}
  \S(x;y)-\S(x;z) = \lim_{k\to\infty} \left(\bm\cP[(-\sqrt{k/2x}, k) \to (y,1)] - \bm\cP[(-\sqrt{k/2x}, k) \to (z,1)]\right).
  \end{align}

\end{enumerate}
The existence and uniqueness of the Airy sheet is established in  \cite[Proposition 8.2 and Theorem 8.3]{dauvergne2018directed}.
\end{definition}

The Airy sheet can be thought of as a marginal of a four-parameter space-time object (the parameters being a pair of space-time coordinates) when the two time coordinates are set to be $0$ and $1$. This four-parameter object is known as the directed landscape and is in fact constructed from the Airy sheet in a manner analogous to the construction of Brownian motion from the Gaussian distribution. We turn to defining it next.
 In what follows, we let $\R^4_{\shortuparrow} := \left\{(x,s;y,t) \in \R^4: s<t\right\}$.

\begin{definition}[Directed landscape, {\cite[Definition 10.1]{dauvergne2018directed}}]\label{d.directed landscape}
The \emph{directed landscape} is the uni\-que (in law) random continuous function $\mc L : \R^4_{\shortuparrow} \to \R$ that satisfies the following properties.
\begin{enumerate}
  \item[(i)] \emph{Airy sheet marginals}: For any $s \in \R$ and $t > 0$ the increment over time interval $[s, s + t)$ is a rescaled Airy sheet:
 \begin{align*}
  (x,y)\mapsto \mc L(x, s; y, s + t) \stackrel{d}{=} (x,y)\mapsto t^{1/3}\S(xt^{-2/3}; yt^{-2/3}).
 \end{align*}

\item[(ii)] \emph{Independent increments}: For any $k\in\N$ and disjoint time intervals $\{(s_i, t_i) : i \in \intint{1, k}\}$, the random functions
$$(x,y)\mapsto\mc L(x, s_i; y, t_i)$$
are independent as $i \in \intint{1,k}$ varies.

\item[(iii)] \emph{Metric composition law}: Almost surely, for any $r < s < t$ and $x, y \in \R$,
$$\mc L(x, r; y, t) = \max_{z\in\R}\Bigl(\mc L(x, r; z, s) + \mc L(z, s; y, t)\Bigr).$$
\end{enumerate}
The existence and uniqueness of the directed landscape is proved in \cite[Theorem 10.9]{dauvergne2018directed}.
\end{definition}

We also need to introduce height functions from general initial conditions in  the directed landscape.

\begin{definition}[KPZ fixed point] \label{d.kpz fixed point}

Let $\mrm{UC}$ be the space of upper semi-continuous functions
$f : \R \to \R \cup \{-\infty\}$ satisfying $f(x) \leq m|x| + c$ for some $m,c<\infty$ and all $x\in\R$. For a function $\mf h_0 \in\mrm{UC}$, and $t>0$, define the \emph{KPZ fixed point} $(y,t)\mapsto\mf h(\mf h_0; y,t): \R\times(0,\infty)\to \R$ started from $\mf h_0$ at time $0$ by
\begin{align}\label{e.kpz fixed point}
\mf h(\mf h_0; y, t) := \sup_{x\in\R} \Bigl(\h_0(x) + \mc L(x,0;y,t)\Bigr).
\end{align}
\end{definition}

The KPZ fixed point was first defined in \cite{matetski2016kpz} as a Markov process on UC (with the topology of local UC convergence, i.e., local Hausdorff convergence of hypographs; see \cite[Section 3.1]{matetski2016kpz}) by specifying its transition kernel through explicit formulas. This was later shown to be equivalent to \eqref{e.kpz fixed point}, which arises as a scaling limit of solvable last passage percolation models, in \cite[Corollary~4.2]{nica2020one}. Note that a natural coupling of $\smash{\mf h(\mf h^{(j)}_0; y, t)}$ for different initial conditions $\smash{\mf h^{(j)}_0}$ is obtained by using the same directed landscape $\mc L$ in the variational formula.

\subsection{Colored ASEP}\label{s.intro.colored asep}

The colored (also called multi-species, -type, or -class) asymmetric simple exclusion process is an interacting particle system on $\Z$, where each particle has an associated \emph{color}, which is an integer (multiple particles may share the same color). There is also an asymmetry parameter $q\in[0,1)$ which is fixed.  At time zero, we have a particle of some color at each site of $\Z$, which is encoded by the configuration $\eta_0 = (\eta_0(i))_{i\in\Z}\in\Z^{\Z}$, where $\eta_0(i)$ is the color of the particle at site $i$. The state at any later time $t$ will be denoted by $\eta_t$.

\begin{figure}[h]
\begin{tikzpicture}[scale=1.6]

\draw[line width=1.2pt] (-0.5,0) -- ++ (5,0);
\draw[line width=1.2pt, dotted] (-1,0) -- ++(0.5,0);
\draw[line width=1.2pt, dotted] (4.5,0) -- ++(0.5,0);

\foreach \x/\thecolor in {0/red, 1/orange, 2/green!60!black, 3/blue!80!black, 4/cyan}
{
\node[circle, fill, inner sep = 2pt, \thecolor] at (\x,0) {};
}

\draw[->, line width=1.2pt] (1,1) --node[midway, green!60!black, above=-1pt, scale=0.8]{$\bm{\checkmark}$} ++(1,0);

\draw[->, line width=1.2pt] (2,1.7) --node[midway, green!60!black, above=-1pt, scale=0.8]{$\bm{\checkmark}$} ++(-1,0);
\draw[->, line width=1.2pt] (0,0.6) --node[midway, green!60!black, above=-1pt, scale=0.8]{$\bm{\checkmark}$} ++(1,0);
\draw[->, line width=1.2pt] (4,0.5) --node[midway, above=2pt, cross out, inner sep=2pt, outer sep=0pt, draw=red, minimum size=4pt, line width=1.2pt]{} ++(-1,0);
\draw[->, line width=1.2pt] (3,1.2) --node[midway, green!60!black, above=-1pt, scale=0.8]{$\bm{\checkmark}$} ++(1,0);
\draw[->, line width=1.2pt] (0,2.2) --node[midway, above=2pt, cross out, inner sep=2pt, outer sep=0pt, draw=red, minimum size=4pt, line width=1.2pt]{} ++(1,0);

\draw[line width=1.2pt, orange] (1,0) -- (1,0.6);
\draw[line width=1.2pt, orange] (0,0.6) -- (0,2.5);

\draw[line width=1.2pt, red] (0,0) -- (0,0.6);
\draw[line width=1.2pt, red] (1,0.6) -- (1,1);
\draw[line width=1.2pt, red] (2,1) -- (2,1.7);
\draw[line width=1.2pt, red] (1,1.7) -- (1,2.5);

\draw[line width=1.2pt, green!60!black] (2,0) -- (2,1);
\draw[line width=1.2pt, green!60!black] (1,1) -- (1,1.7);
\draw[line width=1.2pt, green!60!black] (2,1.7) -- (2,2.5);

\draw[line width=1.2pt, blue!80!black] (3,0) -- (3,1.2);
\draw[line width=1.2pt, blue!80!black] (4,1.2) -- (4,2.5);

\draw[line width=1.2pt, cyan] (4,0) -- (4,1.2);
\draw[line width=1.2pt, cyan] (3,1.2) -- (3,2.5);
\end{tikzpicture}
\caption{A depiction of the graphical construction of colored ASEP. In this illustration, initially particles are in decreasing color from left to right. The arrows indicate the times where a left clock $\xi^{\mrm{L}}_x$ or right clock $\xi^{\mrm{R}}_x$ rang. If an arrow goes from site $x$ to site $y$ at time $t$ and the color  at time $t^-$ of the particle at $x$ is greater than that of the particle at $y$, the particles swap positions at time $t$ (indicated by a green check mark); if not, nothing happens (red cross).}\label{f.graphical construction}
\end{figure}
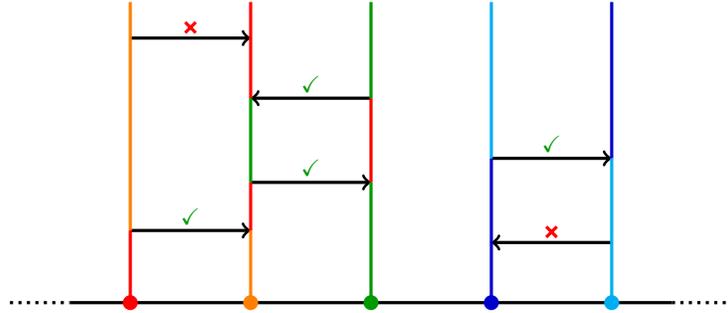

\subsubsection{The dynamics}\label{s.asep dynamics}
We may construct the dynamics of colored ASEP using a variant of Harris' graphical construction \cite{harris1978additive}  (see Figure~\ref{f.graphical construction}). Fix a family of independent Poisson clocks $\bm \xi = \{\xi_x^{\mrm{L}}, \xi_x^{\mrm{R}}\}_{x\in\Z}$, one pair for each site in $\Z$, with $\xi_x^{\mrm{L}}$ of rate $q$ and $\xi_x^{\mrm{R}}$ of rate $1$. Given the randomness of the clocks, the dynamics are deterministic. When the clock $\xi_x^{\mrm{L}}$ rings, the particle at site $x$ attempts a jump to the left by one site, and when $\xi_x^{\mrm{R}}$ rings, it attempts a jump to the right by one site. The attempt succeeds if the particle at the target site is of color lower than the particle attempting to jump, and fails otherwise. If the attempt succeeds, the two particles swap positions. The well-definedness of this description follows from \cite[Section 10]{harris1978additive}. Note that higher colors have higher priority, in contrast to the dynamics in terms of ``classes'', where typically particles of higher class have lower priority (e.g., second class particles in comparison to first class particles). Our convention is consistent with behavior of higher color arrows in the colored S6V model that we will shortly introduce.

The \emph{uncolored (or single-species) ASEP} corresponds to the case when there are only two colors in the system; in the typical terminology, the particles of higher color are referred to as particles, while the particles of lower color are referred to as holes.

\begin{remark}\label{r.asep color merging}
An important property of colored ASEP is known as \emph{color merging}. It says that one can project colored ASEP onto one of fewer colors by declaring all particles of color lying in a given interval $I\subseteq \Z$ to be the same color (which is itself an element of $I$, or, more generally, lies between the largest color below $I$ in the system and the smallest color above $I$ in the system); that this holds is readily apparent from the above description of the dynamics. Equivalently, any function $f:\Z\cup\{-\infty\}\to\Z\cup\{-\infty\}$ which is weakly monotone commutes with the Markov semigroup of the process; in particular, after merging colors we may relabel the groups in such a way that the relabeling maintains the order of the groups. The case of obtaining uncolored ASEP from colored ASEP is a particular example of color merging.
\end{remark}

\subsubsection{Packed and step initial conditions}
The initial condition we will work with from now on unless otherwise specified is the \emph{packed initial condition}, where we have a particle of color $i$ at site $-i$ for each $i\in\Z$. By color merging, we can easily obtain the case of \emph{step initial condition} for finitely many colors (and thus uncolored ASEP in the case of two colors): for integers $\ell_k< \ldots <\ell_1$, we merge the colors in $\llparen {-}\infty, \ell_k\rrbracket$ to have color $k$; those in $\intint{\ell_{i+1}+1,\ell_{i}}$ to have color $i$ for $i=1, \ldots, k-1$; and those in $\llbracket \ell_1+1, \infty\rrparen$ to have color $0$.

We note that under the packed initial condition, $\eta_t$ is a permutation on $\Z$ (i.e., $\eta_t:\Z\to\Z$ is a bijection), and thus colored ASEP can be thought of as a Markov chain on such permutations in which transpositions are applied at rate $q$ or $1$ depending on whether the entries are sorted or not respectively. In this sense, our results can be understood as a scaling limit for this infinite random transposition model.

\subsubsection{The ASEP sheet}
As mentioned, we are now working with the packed initial condition.
The first observable of ASEP that we study, called the \emph{colored height function} is defined, for $t\geq 0$, by
\begin{align}\label{e.intro ASEP height function}
h^{\mrm{ASEP}}(x, 0; y, t) :=
\#\bigl\{z>y: \eta_t(z) \geq -x\bigr\},
\end{align}
or, in words, it is the number of particles of color greater than or equal to $-x$ which are strictly to the right of $y$ at time $t$. Equivalently, it is the number of particles that were initially at or to the left of $x$ which are strictly to the right of $y$ at time $t$. It can also be thought of as a collection indexed by $x\in\Z$ of coupled copies of the uncolored ASEP's height function or current under step initial condition from $x$ (i.e., a particle at every site at or to the left of $x$), by writing it as $(h^{\mrm{ASEP}}(x, 0;\bm\cdot, t))_{x\in\Z}$. We will refer to this coupling as the \emph{colored coupling}; it agrees with the well-known basic coupling, which we will introduce in more detail in Section~\ref{s.asep basic coupling}.

Our results concern the scaling limit of this collection of height functions, which we define next. Let $\gamma = 1-q$ and fix $\alpha\in (-1,1)$. Let $\mu^{\mrm{ASEP}}(\alpha)$ and $\sigma^{\mrm{ASEP}}(\alpha)$ be defined by
\begin{align}\label{e.mu sigma ASEP}
\mu(\alpha) = \mu^{\mrm{ASEP}}(\alpha) := \tfrac{1}{4}(1-\alpha)^2 \qquad\text{and}\qquad \sigma(\alpha) = \sigma^{\mrm{ASEP}}(\alpha) :=\tfrac{1}{2}(1-\alpha^2)^{2/3},
\end{align}
and let the spatial scaling factor $\beta^{\mrm{ASEP}}(\alpha)$ be given by
\begin{align}\label{e.nu ASEP}
\beta(\alpha)=\beta^{\mrm{ASEP}}(\alpha) := \frac{2\sigma(\alpha)^2}{|\mu'(\alpha)|(1-|\mu'(\alpha)|)} = 2(1-\alpha^2)^{1/3}.
\end{align}

\begin{definition}[ASEP sheet]\label{d.asep sheet}
For $\varepsilon>0$, we define the \emph{ASEP sheet} $\S^{\mrm{ASEP}, \varepsilon}: \R^2\to\R$, for $x$ and $y$ such that the arguments of $h^{\mrm{ASEP}}$ below are integers, by
\begin{equation}\label{e.rescaled asep definition}
\begin{split}
\MoveEqLeft[12]
\S^{\mrm{ASEP}, \varepsilon}(x; y) := \sigma(\alpha)^{-1}\varepsilon^{1/3}\Bigl(\mu(\alpha)2\varepsilon^{-1} + \mu'(\alpha)\beta(\alpha)(y-x)\varepsilon^{-2/3}\\
&- h^{\mrm{ASEP}}\bigl(\beta(\alpha)x\varepsilon^{-2/3}, 0; 2\alpha \varepsilon^{-1} + \beta(\alpha)y\varepsilon^{-2/3}, 2\gamma^{-1} \varepsilon^{-1}\bigr)\Bigr);
\end{split}
\end{equation}
the values at all other $x$, $y$ are determined by linear interpolation.
\end{definition}

Thus the ASEP sheet captures the spatial scaling limit of the colored ASEP at time $2\gamma^{-1}\varepsilon^{-1}$.

\subsubsection{An explanation of the scalings}\label{s.asep scalings} Let us say a few words on a heuristic to understand the scalings in Definition~\ref{d.asep sheet}. The $\varepsilon^{-2/3}$ spatial scaling and $\varepsilon^{1/3}$ factor outside come from the KPZ universality class scaling exponents (see the surveys \cite{corwin2012kardar,quastel2015one}). We evaluate the height function at time $2\gamma^{-1}\varepsilon^{-1}$ for the following reasons. The factor of $2$ is essentially just a convention; it allows us to include fewer constant factors in the definition of $\S^{\mrm{ASEP},\varepsilon}$ to ensure that it will converge to $\S$ and matches the conventions in earlier works such as \cite{matetski2016kpz,quastel2022convergence}. On the other hand, the factor of $\gamma^{-1}$ is important as it allows all other scaling constants to not depend on $q$; in other words, by speeding up the process by the single particle drift, all ASEPs (i.e., corresponding to different values of $q$) look the same on the fluctuation scale, a fact which was previously known for various marginals. Finally, $\alpha$ controls the macroscopic velocity of the particles around which we look, and the range $(-1,1)$ of $\alpha$ corresponds to the full ``rarefaction'' fan of the step initial condition's hydrodynamic limit shape (as given by $\mu(\alpha)$).

Next we explain the definitions of $\mu(\alpha)$ and $\sigma(\alpha)$ from \eqref{e.mu sigma ASEP} and $\beta(\alpha)$ from \eqref{e.nu ASEP}, as well as their role in \eqref{e.rescaled asep definition}. Essentially, the latter two are chosen such that in the limit, the local diffusion rate and parabolic curvature of $y\mapsto \S^{\mrm{ASEP},\varepsilon}(0;y)$ are $2$ and $1$ respectively, matching those of the parabolic Airy line ensemble (Definition~\ref{d.parabolic Airy line ensemble}). More precisely, it is known \cite{NBM,HLA,HLAP} (see also \cite[Theorem 3]{tracy2009asymptotics} or \cite[Theorem 11.3]{borodin2017asep}) that the first order behavior of $\smash{h^{\mrm{ASEP}}(0, 0; 2\alpha \varepsilon^{-1}, 2\gamma^{-1}\varepsilon^{-1})}$ is $\smash{\mu(\alpha)\cdot2\varepsilon^{-1}=\frac{1}{4}}(1-\alpha)^2\cdot2\varepsilon^{-1}$ (i.e., $\mu(\alpha) = \lim_{\varepsilon\to 0}\varepsilon\smash{h^{\mrm{ASEP}}(0, 0; \alpha \varepsilon^{-1}, \gamma^{-1}\varepsilon^{-1})}$); thus, locally, one would expect the process to behave like a Bernoulli random walk (with increments of $0$ or $-1$) with drift $\mu'(\alpha)=\frac{\diff\ }{\diff\alpha}\frac{1}{4}\smash{(1-\alpha)^2} = -\frac{1}{2}(1-\alpha)$, which must be subtracted off to see the fluctuations. Finally, note that the local curvature of the limit profile is $\frac{1}{2}\mu''(\alpha)=\smash{\frac{1}{2}\frac{\diff^2\ }{\diff\alpha^2}\frac{1}{4}(1-\alpha)^2 = \frac{1}{4}}$ and the approximately Bernoulli random walk behavior has increment variance $|\mu'(\alpha)|(1-|\mu'(\alpha)|)=\frac{1}{4}(1-\alpha^2)$. The spatial scaling factor of $\beta(\alpha)$ and the external factor of $\sigma(\alpha)^{-1}$ are chosen such that
\begin{align}\label{e.asep scaling relations}
\beta(\alpha) |\mu'(\alpha)|(1-|\mu'(\alpha)|)\sigma(\alpha)^{-2} = 2 \quad\text{and}\quad \tfrac{1}{4}\beta(\alpha)^2\mu''(\alpha)\sigma(\alpha)^{-1} =1,
\end{align}
i.e., the local diffusion rate is $2$ and the local curvature is $1$; for the latter relation, note that there is an extra factor of $\frac{1}{2}$ compared to the curvature discussion above. This arises because we are looking at the process with an extra factor of $2$ in the time coordinate; more precisely, it arises because we expect $\S^{\mrm{ASEP};\varepsilon}(0;y)$ to be approximately
\begin{align*}
\MoveEqLeft[6]
\mu\left(\frac{1}{2\varepsilon^{-1}}(2\alpha \varepsilon^{-1}+ \beta(\alpha)y\varepsilon^{-2/3})\right)\cdot 2\varepsilon^{-1}\\
&\approx \left(\mu(\alpha) + \beta(\alpha)\mu'(\alpha)y \varepsilon^{-2/3}\cdot (2\varepsilon^{-1})^{-1} + \frac{1}{2}\mu''(\alpha)\beta(\alpha)^2y^2\varepsilon^{-4/3}\cdot\frac{1}{(2\varepsilon^{-1})^2} \right)\cdot 2\varepsilon^{-1},
\end{align*}
and we see the factor of $\frac{1}{2}$ coming from simplifying $(2\varepsilon^{-1})/(2\varepsilon^{-1})^2$ in the last term.

These scaling coefficients can also be derived from the general KPZ scaling theory explained in \cite{spohn2012kpz}, which makes use of information about the stationary measures of the system (and indeed the above explanation shares many features with this theory). 
Thus we expect relations of the form \eqref{e.asep scaling relations}, which allows one to derive $\sigma(\alpha)$ from an expression for the law of large numbers $\mu(\alpha)$, to hold in greater generality.

\subsubsection{Main result for ASEP}
Our main result for colored ASEP is proven in  Section~\ref{s.convergence to Airy sheet}.

\begin{theorem}[Airy sheet convergence for ASEP]\label{t.asep airy sheet}
Fix any asymmetry $q \in [0,1)$ and any velocity $\alpha\in(-1,1)$ in the rarefaction fan. Then, as $\varepsilon\to 0$, $\smash{\S^{\mrm{ASEP},\varepsilon} \stackrel{d}{\to} \S}$ weakly in $\mc C(\R^2,\R)$ under the topology of uniform convergence on compact sets.
\end{theorem}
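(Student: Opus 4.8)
The plan is to deduce Theorem~\ref{t.asep airy sheet} from its stochastic six-vertex counterpart, Theorem~\ref{t.s6v airy sheet}, and to prove the latter by embedding the colored S6V height function into the colored Hall--Littlewood line ensemble and extracting its edge scaling limit. Concretely I would proceed in four steps: (1) reduce the ASEP statement to the S6V statement; (2) embed colored S6V into the colored Hall--Littlewood line ensemble; (3) take the edge limit of the determining (minimal-color) ensemble and propagate it to the other colors via the approximate LPP representation, recognizing the resulting two-parameter field as the Airy sheet; and (4) translate back.

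\textbf{Step 1: reduction to S6V.} Using the comparison of Lemma~\ref{l.asep s6v comparison}, which realizes the colored ASEP height function as a degeneration of the colored S6V height function under an appropriate limit of the S6V parameter $z\to 1$ together with a lattice-time rescaling (and which relies on the compatibility of ASEP with the vertex-model framework set up in Section~\ref{s.properties of S6V and ASEP}), one transfers the scaling limit. Given the KPZ-scaled convergence of the colored S6V height function to $\S$ (Theorem~\ref{t.s6v airy sheet}), in which the S6V analog of the velocity $\alpha$ must be chosen to match the ASEP observation point, a two-parameter (diagonal) limit argument — sending the S6V parameter to the ASEP point slowly relative to $\varepsilon\to 0$ — yields $\S^{\mrm{ASEP},\varepsilon}\xrightarrow{d}\S$, once the rescaled height functions are tight in $\mc C(\R^2,\R)$. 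Tightness (inherited from the line-ensemble estimates used below), convergence on a countable dense set, and continuity of the limit $\S$ then upgrade this to weak convergence in $\mc C(\R^2,\R)$ under uniform convergence on compacts.

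\textbf{Steps 2--3: line ensemble, edge limit, and Airy sheet.} For the S6V statement it suffices, by color merging (cf.\ Remark~\ref{r.asep finitely many colors}), to treat $n$ colors for each fixed $n$ and let $n\to\infty$. By Proposition~\ref{p.colored line ensembles} the colored S6V height function with $n$ colors embeds as the top curves of a colored Hall--Littlewood line ensemble $\bm L=(\bm L^{(1)},\dots,\bm L^{(n)})$ carrying the colored Hall--Littlewood Gibbs property, and by Proposition~\ref{p.L has HL} the minimal-color ensemble $\bm L^{(1)}$ has the marginal law of an uncolored Hall--Littlewood line ensemble. By Theorem~\ref{t.line ensemble convergence to parabolic Airy}, under KPZ scaling — with the $\alpha$-dependent constants $\mu(\alpha),\sigma(\alpha),\beta(\alpha)$ chosen as in \eqref{e.asep scaling relations} so the limiting diffusivity and curvature are $2$ and $1$ — the ensemble $\bm L^{(1)}$ converges around its top edge to the parabolic Airy line ensemble $\bm\cP$. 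By Theorem~\ref{t.approxmate LPP problem representation general}, with high probability each $\bm L^{(j)}$ for $j\ge 2$ is determined up to small error by a last passage percolation (Pitman-type) problem across $\bm L^{(1)}$. Passing to the limit, the joint edge law of $(\bm L^{(1)},\dots,\bm L^{(n)})$ converges to $\bm\cP$ together with LPP values read off $\bm\cP$; by Definition~\ref{d.airy sheet} the two-parameter field of top curves obtained this way is exactly the Airy sheet $\S$. Unwinding the embedding gives Theorem~\ref{t.s6v airy sheet}, and Step~1 then gives Theorem~\ref{t.asep airy sheet}.

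\textbf{Main obstacle.} The crux — contained in Theorem~\ref{t.line ensemble convergence to parabolic Airy} — is tightness of the uncolored Hall--Littlewood line ensemble under KPZ scaling, which must be established without the FKG/monotonicity used in all prior Gibbsian-line-ensemble tightness arguments: the Hall--Littlewood Gibbs measure is not monotone (as observed in \cite{corwin2018transversal}), and the model is not determinantal for $q>0$. The approach is: (i) show that an a priori separation $d\gg\log t$ between consecutively labeled curves renders each Hall--Littlewood reweighting factor $1-q^d$ negligibly close to $1$, so tightness reduces to the essentially non-intersecting random-walk case; (ii) establish that separation by induction on the curve index, combining the Gibbs property, the one-curve weak monotonicity refining \cite{corwin2018transversal}, and the one-point asymptotics of \cite{borodin2016stochastic}; and (iii) show subsequential limits satisfy the Brownian Gibbs property via Koml\'os--Major--Tusn\'ady couplings, so the strong characterization of \cite{aggarwal2023strong} (or \cite{dimitrov2021characterization,quastel2022convergence} in the ASEP case) identifies the limit as $\bm\cP$. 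A secondary difficulty is controlling the error in the approximate LPP representation of Theorem~\ref{t.approxmate LPP problem representation general} uniformly over $q\in[0,1)$, where the factors $q^{A_{[k,\ell]}}$ must be shown to penalize deviations of $\bm L^{(j)}$ from the LPP profile strongly enough.
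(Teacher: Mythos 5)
Your high-level architecture is the right one (coupling to S6V, embedding into the colored Hall--Littlewood line ensemble, tightness without FKG, approximate LPP, then characterization via \cite{aggarwal2023strong}). However, your Step~1 has a real gap: you propose to deduce the ASEP limit from Theorem~\ref{t.s6v airy sheet} by a ``diagonal'' argument. Theorem~\ref{t.s6v airy sheet} fixes $z\in(0,1)$ and sends $\varepsilon\to 0$, whereas the ASEP--S6V coupling of Lemma~\ref{l.asep s6v comparison} forces $z=\frac{1-\delta}{1-\delta q}\to 1$ and $N=\floor{\delta^{-1}t}\to\infty$ jointly with $\varepsilon\to 0$ ($\delta=t^{-\theta}$, $\theta$ fixed). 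A family of fixed-$z$ limits does not yield the simultaneous limit without proving uniformity in $z$ of the rate of convergence, and the scaling constants $\mu^{\mrm{S6V}}(\alpha),\sigma^{\mrm{S6V}}(\alpha),\beta^{\mrm{S6V}}(\alpha)$ in \eqref{e.mu and sigma} degenerate as $z\to 1$, so this is not automatic. The paper avoids this by not passing through Theorem~\ref{t.s6v airy sheet} at all: it proves a single abstract statement, Theorem~\ref{t.general Airy sheet convergence}, whose hypotheses are the line-ensemble assumptions (Hall--Littlewood Gibbs, one-point tightness, GUE--TW, the approximate LPP representation, stationarity), none of which reference $z$. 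These are then verified directly for the ASEP-associated ensemble $\bm L^{\mrm{ASEP},(j),\theta,\varepsilon}$ via Lemma~\ref{l.asep line ensemble assumptions}, which uses Tracy--Widom's ASEP one-point theorem (Proposition~\ref{p.asep one-point tightness}, via Corollary~\ref{c.approx asep tightness}) rather than the S6V one-point result at $z(\varepsilon)$. The ASEP and S6V theorems are thus siblings, both instances of the same general result, rather than one a consequence of the other.

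Two smaller points. First, the reduction is not ``treat $n$ colors then let $n\to\infty$'': the paper builds the sheet by defining $\S^N(x;y)=\cL^{(\floor{\beta xN^{2/3}}),N}_1(y)$ and running the LPP argument with the $N^{1/6}$-th curve, so the color index scales with $N$ from the outset; the Pitman-transform decomposition \eqref{e.PT^(j) definition}--\eqref{e.PT LPP representation} already accounts for all intermediate colors. Second, the ASEP proof needs the Airy sheet symmetry $\S(x;y)\stackrel{d}{=}\S(-x;-y)$ (\cite[Proposition~1.23]{dauvergne2021scaling}) to close, because the coupling naturally yields convergence of $\S^{\mrm{ASEP},\theta,\varepsilon}(-\bm\cdot;-\bm\cdot)$ to $\S$; you should flag this.
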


In the rest of Section~\ref{s.intro.colored asep}, we give various consequences of Theorem~\ref{t.asep airy sheet} (some of which are obtained on combining with existing results in the literature). This requires us to first introduce the basic coupling.

\subsubsection{The basic coupling}\label{s.asep basic coupling}

The graphical construction of ASEP provided in Section~\ref{s.asep dynamics} provides a natural way to couple together the evolution of uncolored ASEP from all choices of initial conditions---this is also called the \emph{basic coupling}.

Consider countably many initial conditions $\eta_0^{(1)}, \eta_0^{(2)}, \ldots \in \{0,1\}^\Z$, where $\eta_0^{(j)}(i)$ represents the state at location $i$ in the $j$\th initial condition, with $0$ for a hole and $1$ for a particle. To define the basic coupling, as in Section~\ref{s.asep dynamics}, we fix a family of independent Poisson clocks $\bm \xi = \{\xi_x^{\mrm{L}}, \xi_x^{\mrm{R}}\}_{x\in\Z}$. Then we evolve $\smash{\eta_0^{(j)}}$ for each $j$ according to the dynamics described in Section~\ref{s.asep dynamics}, where we use the same family of clocks $\bm \xi$ for all $j\in\N$. For example, if $\xi_x^{\mrm{L}}$ rings at time $t$, then the particle at location $x$ in the configuration $\smash{\eta^{(j)}_{t^-}}$ attempts a jump to the left for each $j\in\N$.

We observe that this coupling recovers the colored coupling in the case that the set of particle locations in $\smash{\eta_0^{(j+1)}}$ is a subset of those in $\smash{\eta_0^{(j)}}$ for each $j$, i.e., $\smash{\eta_0^{(j+1)}(x) = 1}$ implies $\smash{\eta_0^{(j)}=1}$; in colored ASEP, this corresponds to the initial condition with the particle at site $x$ having color $\smash{\min\{j\geq 0:\smash{\eta^{(j+1)}_0(x) = 0}\} = \#\{j: \eta_0^{(j)}(x)=1\}}$. Of course, the basic coupling itself is more general since it allows the coupling of arbitrary initial conditions.

We also note that we may consider the same evolution, using the fixed Poisson clocks, of an initial condition started at time $s>0$, i.e., we set the configuration at time $s$ to be some $\eta$ and apply the dynamics from Section~\ref{s.asep dynamics} from there. In this way the basic coupling gives a coupling of initial conditions started at different times as well.

Using the basic coupling, we can define a four parameter process extending the ASEP sheet, which we call the ASEP landscape; here, the two additional parameters arise from allowing the time at which the colored height function is evaluated, as well as the time at which the evolution from the step initial condition is begun, to vary. We first define the evolution of the uncolored ASEP height function from an arbitrary initial condition or an initial height function; this definition is more general than \eqref{e.intro ASEP height function} in that it allows initial particle configurations which are not right-finite, i.e., it is possible to have infinitely many particles initially to the right of zero.

\begin{definition}[Bernoulli path]\label{d.bernoulli path}
Let $\Lambda\subseteq \Z$ be a (possibly infinite) interval. We say a function $\gamma:\Lambda\to\Z$ is a \emph{Bernoulli path} if $\gamma(x+1) - \gamma(x)\in\{0,-1\}$ for all $x$ with $x, x+1\in\Lambda$.
\end{definition}

 Let $h_0:\Z\to\Z$ be a Bernoulli path. We define an associated configuration $\eta_{h_0}$ by
\begin{align}\label{e.height to config}
\eta_{h_0}(y) := h_0(y-1) - h_0(y);
\end{align}
see Figure~\ref{f.general height function}. Note that, in the case that $h_0(y) = 0$ for all large enough $y$, $h_0$ is related to $\eta_{h_0}$ by $h_0(y) = \sum_{i=y+1}^\infty \eta_{h_0}(i)$; this is exactly the case of a right-finite initial condition.

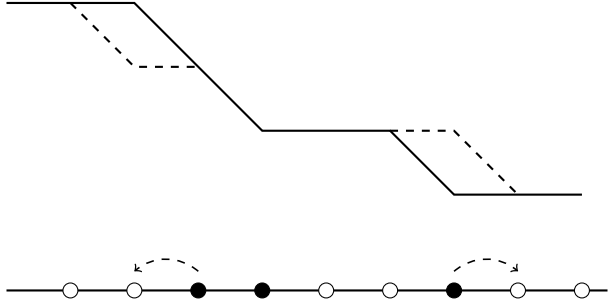
\begin{figure}
\begin{tikzpicture}[scale=0.85]
\draw[thick] (0,0) -- ++(2,0) -- ++(2,-2) -- ++(1,0) -- ++ (1,0) -- ++(1,-1) -- ++(2,0);

\draw[thick] (0,-4.5) -- ++(9.4,0);

\foreach \thecolor [count=\x] in {white, white, black, black, white, white, black, white, white}
\node[circle, fill=\thecolor, inner sep = 2pt, draw=black] at (\x,-4.5) {};

\draw[->, dashed, semithick] (7, -4.2) to[out=40, in=140] (8,-4.2);

\draw[->, dashed, semithick] (3, -4.2) to[out=140, in=40] (2,-4.2);

\draw[thick, dashed] (1,0) -- ++(1,-1) -- ++(1,0);
\draw[thick, dashed] (6,-2) -- ++(1,0) -- ++(1,-1);
\end{tikzpicture}
\caption{A depiction of a Bernoulli path $h_0$ and the associated particle configuration $\eta_{h_0}$. The dashed portions of the path illustrate the definition of the new height function, under the corresponding displayed particle movements.}\label{f.general height function}
\end{figure}

For $h_0:\Z\to\Z$ a Bernoulli path and $s\geq 0$, define the (uncolored) height function $(y,t)\mapsto h^{\mrm{ASEP}}(h_0, s; y,t)$ for $t>s$ and $y\in\Z$ as follows (see also Figure~\ref{f.general height function}). For a function $f$ with left limits, we use the notation $f(r^-) := \lim_{s\shortuparrow r} f(s)$; note that this does not require $f(r)$ to be defined. Start ASEP under basic coupling at time $s$ with initial particle configuration $\eta_{h_0}$. Whenever a particle jumps from $y$ to $y+1$ at time $r>s$, define $h^{\mrm{ASEP}}(h_0, s; y,r) = h^{\mrm{ASEP}}(h_0,s; y,r^-) + 1$ (and $h^{\mrm{ASEP}}(h_0, s; x,r) = h^{\mrm{ASEP}}(h_0, s; x,r^-)$ for all $x\neq y$), and whenever a particle jumps from $y$ to $y-1$ at time $r$, define $h^{\mrm{ASEP}}(h_0, s; y-1, r) = h^{\mrm{ASEP}}(h_0, s; y-1,r^-)-1$ (and $h^{\mrm{ASEP}}(h_0, s; x,r) = h^{\mrm{ASEP}}(h_0, s; x,r^-)$ for all $x\neq y-1$).
The utility of the definition of $h^{\mrm{ASEP}}(h_0, s; y,t)$ is that, while the ASEP dynamics only sees the discrete derivative of $h_0$ (through $\eta_{h_0}$), here we keep track of the constant shift in $h_0$ as well.

For multiple initial conditions $(\smash{h_0^{(j)}})_{j=1}^\infty$ and times $s_j>0$, we couple the height functions $(y,t)\mapsto h^{\mrm{ASEP}}(\smash{h_0^{(j)}}, s_j; y,t)$ using the basic coupling for the underlying ASEP dynamics.

\subsubsection{The ASEP landscape} Now we may define the unscaled version of the ASEP landscape.
For $x,y\in\Z$ with $x<y$ and $0<s<t$, we overload the notation and define
\begin{align*}
h^{\mrm{ASEP}}(x, s; y,t) &:= h^{\mrm{ASEP}}\bigl(h_{0,x},s; y,t\bigr),
\end{align*}
coupled via the basic coupling, where
$$h_{0,x}(z) := (x-z)\one_{z\leq x}.$$
In words, we start the ASEP under basic coupling at time $s$ with a step initial condition at $x$, and then evaluate the height function at location $y$ at time $t$.

It follows immediately that this definition generalizes the definition of $h^{\mrm{ASEP}}(x,0;y,t)$ from \eqref{e.intro ASEP height function} as a process in $x$, $y$, and $t$. We also note that $h^{\mrm{ASEP}}(\bm\cdot, s; \bm\cdot, \bm\cdot)$ corresponds to starting colored ASEP with packed initial condition at time $s$, coupled (for different values of $s$) to the same environment of Poisson clocks.

We now define the ASEP landscape and the scaled general initial condition height function.

\begin{definition}[ASEP landscape]\label{d.asep landscape}
Recall $\mu(\alpha)$, $\sigma(\alpha)$, and $\beta(\alpha)$ from \eqref{e.mu sigma ASEP} and \eqref{e.nu ASEP}. For $\varepsilon>0$, the \emph{ASEP landscape} is defined by
\begin{align*}
\MoveEqLeft[9]
\mc L^{\mrm{ASEP}, \varepsilon}(x,s;y,t) := \sigma(\alpha)^{-1}\varepsilon^{1/3}\Bigl(\mu(\alpha)(t-s)\cdot 2\varepsilon^{-1} + \mu'(\alpha)\beta(\alpha)(y-x)\varepsilon^{-2/3}\\
&- h^{\mrm{ASEP}}\bigl(\beta(\alpha)x\varepsilon^{-2/3}, 2\gamma^{-1}\varepsilon^{-1}s; 2\alpha(t-s) \varepsilon^{-1} + \beta(\alpha)y\varepsilon^{-2/3}, 2\gamma^{-1}\varepsilon^{-1}t\bigr)\Bigr).
\end{align*}
As in the case of $\S^{\mrm{ASEP},\varepsilon}$ in \eqref{e.rescaled asep definition}, this definition is for those $x$, $y$ such that the arguments of $\smash{h^{\mrm{ASEP}}}$ are integers, and the value at all other $x$, $y$ are determined by linear interpolation.
\end{definition}

\begin{definition}\label{d.asep general initial condition}
For general initial condition $h_0$, we define a scaled version of $h^{\mrm{ASEP}}(h_0, 0; y,t)$
\begin{align*}
\MoveEqLeft[18]
\mf h^{\mrm{ASEP}, \varepsilon}(h_0; y,t) := \sigma(\alpha)^{-1}\varepsilon^{1/3}\Bigl(\mu(\alpha)t\cdot2\varepsilon^{-1} + \mu'(\alpha)\beta(\alpha)y\varepsilon^{-2/3}\\
&- h^{\mrm{ASEP}}\bigl(h_0, 0; 2\alpha t\varepsilon^{-1} + \beta(\alpha)y\varepsilon^{-2/3}, 2\gamma^{-1}\varepsilon^{-1}t\bigr)\Bigr),
\end{align*}
for $y$ such that the argument of $h^{\mrm{ASEP}}$ is an integer, and by linear interpolation elsewhere.

\end{definition}

The next corollary of Theorem~\ref{t.asep airy sheet} gives the scaling limit of the ASEP landscape, as well as of the height functions under general initial conditions coupled via the basic coupling. This will be proven in Appendix~\ref{s.general initial condition}, using an idea communicated to us by Shalin Parekh. For a set $\mc T$, we define $\mc T^2_{<} := \{(s,t) \in\mc T^2: s<t\}$. Below, for a topological space $\mc X$ and a set $A$, $\mc C(\mc X, \R)^A$ is endowed the product topology (across $A$) of the topology on $\mc C(\mc X, \R)$ of uniform convergence on compact sets.

\begin{corollary}[Directed landscape and KPZ fixed point convergence for ASEP]\label{c.asep general initial condition}
Fix $q\in$\\ $[0,1)$ and a countable set $\mc T\subseteq [0,\infty)$. Then, the following limits hold as $\varepsilon\to 0$.

\begin{enumerate}
  \item (Directed landscape). For any velocity $\alpha\in(-1,1)$, in $\smash{\mc C(\R^2,\R)^{\mc T^2_<}}$,
  $$\smash{\bigl(\mc L^{\mrm{ASEP},\varepsilon}(\bm\cdot, s; \bm\cdot, t)\bigr)_{(s,t)\in\mc T^2_<}\xrightarrow{d} \bigl(\mc L(\bm\cdot, s; \bm\cdot, t)\bigr)_{(s,t)\in\mc T^2_<}}.$$

  \item As a special case, for any $\alpha\in(-1,1)$, $k\in\N$ and $x_k< \ldots <x_1$, in $\smash{\mc C(\R,\R)^{\intint{1,k}\times\mc T}}$,
  $$\smash{\bigl(\mc L^{\mrm{ASEP},\varepsilon}(x_i, 0; \bm\cdot, t)\bigr)_{i\in\intint{1,k}, t\in\mc T}}\xrightarrow{d} \bigl(\mc L(x_i, 0; \bm\cdot, t)\bigr)_{i\in\intint{1,k}, t\in\mc T}.$$

  \item (Coupled KPZ fixed point). Fix $\alpha=0$. For each $i\in \{1, \ldots, k\}$, suppose $\smash{h_0^{(i),\varepsilon}}:\Z\to\Z$ is a sequence of (possibly random, and independent of the ASEP dynamics) Bernoulli paths and $\smash{\mf h_0^{(i)}:\R\to\R}$ is a (possibly random, and independent of $\mc L$) continuous function. Assume they satisfy $\smash{\mf h_0^{(i)}}(x)\leq C(1+|x|^{1/2})$ for some $C<\infty$ almost surely for all $x\in\R$ and $i\in\intint{1,k}$, such that $x\mapsto \smash{\mf h_0^{(i),\varepsilon}(x)} := -2\varepsilon^{1/3}\smash{(h_0^{(i),\varepsilon}(2x\varepsilon^{-2/3}) + x\varepsilon^{-2/3})}$ converges to $\smash{\mf h_0^{(i)}}$ as $\varepsilon\to0$ for $i=1, \ldots, k$  jointly in distribution under the topology of uniform convergence on compact sets (where we regard $\smash{\mf h_0^{(i),\varepsilon}(\bm\cdot)}$ as a continuous function by linear interpolation). Then, in $\smash{\mc C(\R,\R)^{\intint{1,k}\times\mc T}}$,
  $$\bigl(\mf h^{\mrm{ASEP}, \varepsilon}(h_0^{(i),\varepsilon}; \bm\cdot,t)\bigr)_{{i\in\intint{1,k}, t\in\mc T}} \xrightarrow{d} \smash{\bigl(\mf h(\mf h^{(i)}_0}; \bm\cdot, t)\bigr)_{i\in\intint{1,k}, t\in\mc T},$$
   where the lefthand side is coupled via the basic coupling and the righthand side is coupled as in~\eqref{e.kpz fixed point} under the same directed landscape $\mc L$.
\end{enumerate}

\end{corollary}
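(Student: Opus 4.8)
\textbf{Plan of proof for Corollary~\ref{c.asep general initial condition}.}
The plan is to deduce all three parts from Theorem~\ref{t.asep airy sheet} (Airy sheet convergence of the colored ASEP under packed initial condition) by a bootstrapping argument in which the metric composition / independent-increments structure of the directed landscape is built up from single-increment (Airy sheet) pieces. The key observation is that, thanks to the basic coupling and the definition of $h^{\mrm{ASEP}}(x,s;y,t)$ via a step initial condition at $(x,s)$, the colored ASEP started with packed data at time $s$ is governed by the same Poisson clocks as the one started at time $0$, so the increments of $\mc L^{\mrm{ASEP},\e}$ over disjoint time intervals are built from disjoint (hence independent) families of clocks, while the nested structure of step profiles gives an exact prelimit metric composition inequality in one direction and an approximate one in the other.

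First I would prove part (1). Fix the countable set $\mc T$. For each single pair $(s,t)\in\mc T^2_<$, the process $(x,y)\mapsto \mc L^{\mrm{ASEP},\e}(x,s;y,t)$ is, after the affine change of variables absorbing the drift $2\alpha(t-s)\e^{-1}$, exactly the ASEP sheet for a colored ASEP run for macroscopic time $t-s$; by Brownian scaling of the KPZ scaling exponents this converges in law to $(t-s)^{1/3}\S(x(t-s)^{-2/3};y(t-s)^{-2/3})$, i.e.\ the Airy sheet marginal of $\mc L(\bm\cdot,s;\bm\cdot,t)$, by Theorem~\ref{t.asep airy sheet}. (The rescaling of time by $t-s$ rather than $1$ is handled by noting that $\mc L^{\mrm{ASEP},\e}(\bm\cdot,s;\bm\cdot,t)$ and $(t-s)^{1/3}\S^{\mrm{ASEP},\e/(t-s)}(\bm\cdot(t-s)^{-2/3};\bm\cdot(t-s)^{-2/3})$ agree up to lower-order terms.) For the \emph{joint} convergence over all $(s,t)\in\mc T^2_<$: tightness of the family is immediate from tightness of each marginal (a countable product), so it remains to identify every subsequential limit $(\mc M(\bm\cdot,s;\bm\cdot,t))_{(s,t)\in\mc T^2_<}$ as the directed landscape restricted to $\mc T^2_<$. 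We have already matched the one-interval marginals (Definition~\ref{d.directed landscape}(i)). For independent increments (Definition~\ref{d.directed landscape}(ii)): over disjoint time intervals $(s_i,t_i)$, the ASEP height functions $h^{\mrm{ASEP}}(\bm\cdot,2\gamma^{-1}\e^{-1}s_i;\bm\cdot,2\gamma^{-1}\e^{-1}t_i)$ are measurable with respect to disjoint pieces of the Poisson clock environment, hence exactly independent at the prelimit level, and independence passes to the limit. For the metric composition law (Definition~\ref{d.directed landscape}(iii)): for $r<s<t$ in $\mc T$, the colored ASEP's self-similar structure under the basic coupling gives that for every $z$, the height from a step at $x$ at time $r$ evaluated at $y$ at time $t$ dominates the composition through height $z$ at time $s$ (since paths can be concatenated), yielding, after rescaling, the prelimit inequality $\mc L^{\mrm{ASEP},\e}(x,r;y,t)\ge \mc L^{\mrm{ASEP},\e}(x,r;z,s)+\mc L^{\mrm{ASEP},\e}(z,s;y,t)+o(1)$ uniformly on compacts; and a matching upper bound comes from the fact that a particle initially at or left of $x$ which ends strictly right of $y$ at time $t$ must, at time $s$, occupy some definite site, so choosing $z$ to be the rescaled occupation at the appropriate order of that site recovers the reverse inequality up to $o(1)$. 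Passing to the limit along the subsequence and using the characterization of $\mc L$ (Definition~\ref{d.directed landscape}, \cite[Theorem 10.9]{dauvergne2018directed}) identifies $\mc M$ with $\mc L|_{\mc T^2_<}$, giving (1). Part (2) is the immediate specialization $s=0$, using the color-merging remark (Remark~\ref{r.asep color merging}) to read off finitely many nested step profiles as color-merged marginals of the packed process, together with continuity of $\mc L$ to pass from rational to fixed $x_i$'s.

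Part (3) is the main obstacle, since it combines our Airy sheet convergence with the single-ASEP general-initial-condition result of \cite{quastel2022convergence}. Fix $\alpha=0$. Write, by the definition of $h^{\mrm{ASEP}}(h_0,0;\bm\cdot,\bm\cdot)$ and the basic coupling, the prelimit variational formula
\begin{align*}
\mf h^{\mrm{ASEP},\e}(h_0^{(i),\e};y,t) = \sup_{x\in\Z}\Bigl(\mf h_0^{(i),\e}(x\e^{2/3}) + \mc L^{\mrm{ASEP},\e}(x,0;y,t) + \mrm{err}_\e(x)\Bigr),
\end{align*}
where $\mrm{err}_\e$ collects the discrepancy between starting from a general Bernoulli path $h_0^{(i),\e}$ and from an exact step profile at each $x$ (this is exactly the ``localization'' that makes the general height function a sup of step-initial-condition height functions shifted by the initial data, an observation going back to the variational formulas for TASEP/ASEP). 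The idea communicated by Shalin Parekh is to run this localization at the \emph{prelimit} level: because a single uncolored ASEP from $h_0^{(i),\e}$ converges to the KPZ fixed point $\mf h(\mf h_0^{(i)};\bm\cdot,\bm\cdot)$ by \cite{quastel2022convergence}, and because the collection $(\mc L^{\mrm{ASEP},\e}(x,0;\bm\cdot,\bm\cdot))_x$ converges to the directed landscape by part (1), one first shows that the supremum over $x$ is, with high probability, effectively restricted to a compact $x$-window (using the $C(1+|x|^{1/2})$ growth bound on $\mf h_0^{(i)}$ against the parabolic decay of the Airy sheet; this is the standard compactness-of-maximizer argument as in Appendix~\ref{s.G_k asymptotics} and in the construction of the KPZ fixed point). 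On that compact window one can then invoke joint convergence of $\bigl(\mf h_0^{(i),\e}, (\mc L^{\mrm{ASEP},\e}(x,0;\bm\cdot,\bm\cdot))_x\bigr)$ — the initial data being independent of the clocks, this is joint convergence of independent pieces to $\bigl(\mf h_0^{(i)},\mc L\bigr)$ — and pass the supremum through by continuity of $(f,\mc G)\mapsto \sup_{x\in K}(f(x)+\mc G(x,\bm\cdot,\bm\cdot))$. The coupling across $i$ is automatic because all $k$ ASEPs share the same clocks, hence the same $\mc L^{\mrm{ASEP},\e}$, so the limit uses one directed landscape $\mc L$ as in \eqref{e.kpz fixed point}. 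The genuinely delicate point — and where \cite{quastel2022convergence} is indispensable rather than just \cite{dauvergne2018directed} — is controlling $\mrm{err}_\e$ and the uniformity of the compact-window reduction: one must show that the prelimit sup of step-height-functions-plus-initial-data is not merely close to the KPZ fixed point value pointwise but as a process (uniformly on compacts in $y$ and in $t\in\mc T$), and that the error from replacing a true Bernoulli initial profile by the piecewise-step envelope is $o(\e^{-1/3})$; this is where a careful comparison of ASEP generators (or, alternatively, an attractivity/monotonicity estimate for uncolored ASEP) is needed, and it is the hard technical core of Appendix~\ref{s.general initial condition}.
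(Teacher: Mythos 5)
Your proposal correctly identifies the easy direction of the argument — the prelimit one-sided inequalities coming from height monotonicity under the basic coupling, and the reduction of parts (1)--(2) to Theorem~\ref{t.asep airy sheet} via the scaling relation — but it misses the key trick that makes the proof go through, and the step you sketch in its place does not work as described.

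For the metric composition law in part (1) you claim a matching \emph{upper} bound, arguing that ``a particle initially at or left of $x$ which ends strictly right of $y$ at time $t$ must, at time $s$, occupy some definite site'' and that choosing $z$ accordingly ``recovers the reverse inequality up to $o(1)$.'' This geodesic intuition does not translate to ASEP. In TASEP (or LPP) there is an exact prelimit composition/variational identity because paths decompose cleanly at intermediate times; in ASEP under the basic coupling, particles jump left as well as right, there is no single intermediate site through which the trajectory passes, and no exact (or obviously $o(1)$-approximate) prelimit identity is available. The same problem infects your part (3): the ``prelimit variational formula'' with an $\mrm{err}_\e(x)$ term is asserted but not established, and controlling that error term uniformly is precisely the intractable direction.

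The paper avoids this entirely. It proves \emph{only the one-sided inequality}: $\tilde{\mc L}(x,r;y,t)\ge\sup_z\bigl(\tilde{\mc L}(x,r;z,s)+\tilde{\mc L}(z,s;y,t)\bigr)$ for the subsequential limit (and analogously $\tilde{\mf h}(\mf h_0^{(i)};y,t)\ge\sup_z\bigl(\mf h_0^{(i)}(z)+\mc L(z,0;y,t)\bigr)$ for part (3)), via the height monotonicity Lemma~\ref{l.asep height mono} applied to the nested-profile inequality at the common time $s$. It then observes that both sides are \emph{marginally} equal in distribution — the left side because each $\tilde{\mc L}(\cdot,r;\cdot,t)$ is distributed as an Airy sheet (Theorem~\ref{t.asep airy sheet}), the right side because it is the corresponding composition of $\mc L$-increments (which satisfies the identity with equality by Definition~\ref{d.directed landscape}(iii)), and for part (3) by invoking \cite{quastel2022convergence} for the left side. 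Lemma~\ref{l.X=Y} (if $X\ge Y$ a.s.\ and $\E[X]=\E[Y]$, then $X=Y$ a.s.) then forces equality, first on rationals, then everywhere by continuity. This is the idea communicated by Parekh, which you cite but mischaracterize: it is not a ``localization at the prelimit level'' but a way to \emph{dispense with the hard direction of the variational inequality altogether}. Your proposal needs to be reorganized around this observation; without it the argument has a genuine hole at exactly the step you gloss over.
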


Item (3) is stated only for $\alpha=0$ as its proof will use the marginal convergence (i.e., for each $j$ separately) of $\smash{\mf h^{\mrm{ASEP}, T}(h_0^{(j)}; y,t)}$ to $\smash{\mf h(\mf h^{(j)}_0}; y,t)$ from \cite{quastel2022convergence}, which is stated and proved therein only for $\alpha=0$.

Note that the convergence to the directed landscape and KPZ fixed point stated here is for times in a fixed countable set, and not uniform convergence as a continuous function for all times in an interval. Obtaining the latter form of convergence would require proving certain tightness estimates on ASEP as a temporal process that we do not pursue here.

\begin{remark}[Scaling limit of ASEP with finitely many colors]\label{r.asep finitely many colors}
By color merging, Corollary \ref{c.asep general initial condition} (2) is equivalent to the following statement on the space-time scaling limit of colored ASEP with finitely many colors. Let $k\in\N$, $\alpha\in(-1,1)$, and fix real numbers $x_k <  \ldots < x_1$. Consider colored ASEP with initial condition of holes (or particles of color zero) at locations $\llbracket\floor{\beta(\alpha)x_1\varepsilon^{-2/3}}+1, \infty \rrparen$; particles of color $k$ at locations $\llparen {-}\infty,\floor{\beta(\alpha)x_k\varepsilon^{-2/3}}\rrbracket$; and particles of color $i$ at all locations in $\intint{\floor{\beta(\alpha)x_{i+1}\varepsilon^{-2/3}}+1, \floor{\beta(\alpha)x_i\varepsilon^{-2/3}}}$ for $i=1, \ldots, k-1$.
By color merging, $(y,t)\mapsto(\mc L^{\mrm{ASEP},\varepsilon}(x_1, 0; y, t),  \ldots, \mc L^{\mrm{ASEP},\varepsilon}(x_k,0 ; y,t))$ is exactly a centered and rescaled version of the height functions associated to ASEP with $k$ different colors, evaluated around the location $2\alpha t\varepsilon^{-1}$ and, by Corollary~\ref{c.asep general initial condition} (2), they converge to the directed landscape.
\end{remark}

\subsubsection{Convergence to stationary horizon}\label{s.asep stationary horizon}
Introduced in \cite{busani2021diffusive}, the \emph{stationary horizon} $G = (G_{\xi})_{\xi \in \mathbb{R}}$ is the unique process  (under certain slope conditions) \cite{busani2022stationary} taking values in $\mathcal{C} (\mathbb{R}, \mathbb{R})$ of jointly invariant initial conditions for the KPZ fixed point under the directed landscape coupling \eqref{e.kpz fixed point}. A precise definition of the process can be found, for example, in \cite[Appendix D]{busani2022scaling}. It is expected to be the universal scaling limit of multi-type invariant distributions in the KPZ universality class.

By combining Corollary~\ref{c.asep general initial condition} with recent results of \cite{busani2023scaling}, we can obtain weak convergence to the stationary horizon for the (rescaled) height function corresponding to the stationary colored ASEP. In the case of colored TASEP, this was proven (under a stronger topology) in \cite{busani2022scaling}, using detailed information of the structure of the multi-species invariant measure \cite{ferrari2007stationary}. For colored ASEP, these stationary measures have been the topic of several recent studies \cite{prolhac2009matrix,Cantini_2015,martin2020stationary,FMQ,aggarwal2023colored}, and their structure is considerably more involved. While a direct proof of their convergence to the stationary horizon might take some effort, here we deduce it as a quick consequence of a different convergence statement (Corollary~\ref{c.asep general initial condition}).

Now we give the setup for stating this stationary horizon convergence.
First, given $k\in\N$ and a sequence of densities $\bm \rho=(\rho_1, \ldots, \rho_k) \in (0,1)^k$ such that $\sum_{i=1}^k\rho_i \leq 1$, there exists a unique translation-invariant stationary distribution $\mu_{\bm \rho}$ for colored ASEP with colors $1, \ldots, k$ such that the marginal distribution of the particle configuration obtained by considering only particles of colors $j$ through $k$ (i.e., merging colors $j$ through $k$ and regarding particles of lower colors as holes)  is product Bernoulli with density $\sum_{i=j}^k\rho_i$ \cite[Theorem 3.2]{martin2020stationary}.

In order to obtain the stationary horizon scaling limit, we need the component densities of $\bm \rho$ to be perturbed on the diffusive scale around density $\rho = \frac{1}{2}$; this choice of density corresponds to the choice of slope $\alpha=0$ in Corollary~\ref{c.asep general initial condition} (3), and the extension to general $\rho$ would require an extension of the results of \cite{matetski2016kpz,quastel2022convergence} to general $\alpha$. More precisely, let $\varepsilon$ be a scaling parameter and $\bm \xi = (\xi_1, \ldots, \xi_k) \in \R^k$ with $\xi_1 <  \ldots <\xi_k$. Consider $\bm\rho = \bm \rho^\varepsilon_{\bm \xi} = (\rho_1, \ldots, \rho_k)$ with $\smash{\rho_k = \frac{1}{2} + \frac{1}{2}\xi_1\varepsilon^{1/3}}$ and $\rho_{k-i} = \smash{\frac{1}{2}}(\xi_{i+1}-\xi_{i})\varepsilon^{1/3}$ for $i=1, \ldots, k-1$. Let $\smash{\eta\in\intint{0,k}^{\smash\Z}}$ (0 for holes) be a particle configuration sampled according to $\mu_{\bm\rho}$. Define the initial conditions $h_0^{(j), \varepsilon}$ for $j=1, \ldots, k$~by
\begin{align*}
h_0^{(j), \varepsilon}(0) = 0 \quad\text{and}\quad h_0^{(j), \varepsilon}(x-1) - h_0^{(j)}(x) = \one_{\eta(x) \geq k-j+1}.
\end{align*}
By the stationarity of $\eta$, $(h_0^{(j), \varepsilon})$ satisfies $h^{\mrm{ASEP}}(h_0^{(j), \varepsilon}, 0; \bm\cdot, t) - h^{\mrm{ASEP}}(h_0^{(j), \varepsilon}, 0; 0, t) \stackrel{d}{=} h_0^{(j), \varepsilon}(\bm\cdot)$ jointly over $j=1, \ldots, k$. Further, for each $j$, $\smash{h_0^{(j), \varepsilon}}$ is marginally distributed as a two-sided Bernoulli random walk with drift $-\frac{1}{2}-\frac{1}{2}\xi_j\varepsilon^{1/3}$. Below we regard $h_0^{(j),\varepsilon}$ as a continuous function on $\R$ by linear interpolation.

\begin{corollary}[Stationary horizon convergence for ASEP]\label{c.SH convergence}
Fix $k\in\N$, $\xi_1< \ldots <\xi_k$, and let $\smash{h_0^{(j),\varepsilon}}$ be as above. As $\varepsilon\to0$, in the topology of uniform convergence on compact sets,
\begin{equation}\label{e.SH convergence}
\left(x\mapsto -2\varepsilon^{1/3}\left(h_0^{(j),\varepsilon}\bigl(2x\varepsilon^{-2/3}\bigr) + x\varepsilon^{-2/3}\right)\right)_{j=1}^k \stackrel{d}{\to} (G_{\xi_j})_{j=1}^k.
\end{equation}
\end{corollary}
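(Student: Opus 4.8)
The plan is to deduce Corollary~\ref{c.SH convergence} from Corollary~\ref{c.asep general initial condition}(3) together with the characterization of the stationary horizon as the unique (under the appropriate slope conditions) family of jointly KPZ-fixed-point-invariant initial data. First I would verify that the rescaled initial data $x\mapsto -2\varepsilon^{1/3}(h_0^{(j),\varepsilon}(2x\varepsilon^{-2/3})+x\varepsilon^{-2/3})$ is tight in $\mc C(\R,\R)^{\intint{1,k}}$ under uniform-on-compacts convergence. This is routine: each $h_0^{(j),\varepsilon}$ is a two-sided Bernoulli random walk with drift $-\tfrac12-\tfrac12\xi_j\varepsilon^{1/3}$, so Donsker's theorem (in its two-sided form, with the drift producing the diffusive term) gives that each coordinate converges to a two-sided Brownian motion with diffusivity $2$ and drift $-\xi_j$ (after the centering subtracts the leading linear part $-x\varepsilon^{-2/3}$ and the factor $-2\varepsilon^{1/3}$ rescales); tightness of the $k$-tuple is immediate from tightness of each marginal. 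Hence along any subsequence there is a weak limit $\bm G^\star=(G^\star_j)_{j=1}^k$ taking values in $\mc C(\R,\R)^{\intint{1,k}}$, and to finish it suffices to identify $\bm G^\star$ with $(G_{\xi_j})_{j=1}^k$.

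The identification proceeds by checking that $\bm G^\star$ satisfies the defining properties of the stationary horizon. The key structural input is the stationarity of $\eta\sim\mu_{\bm\rho}$, which gives, for every fixed $t>0$, the exact distributional identity $h^{\mrm{ASEP}}(h_0^{(j),\varepsilon},0;\bm\cdot,t)-h^{\mrm{ASEP}}(h_0^{(j),\varepsilon},0;0,t)\stackrel{d}{=}h_0^{(j),\varepsilon}(\bm\cdot)$ jointly over $j$ (as noted just before the corollary statement). Passing this identity through the scaling in Definition~\ref{d.asep general initial condition} and applying Corollary~\ref{c.asep general initial condition}(3) with $\alpha=0$ (whose hypotheses are met: the $h_0^{(j),\varepsilon}$ are independent of the ASEP dynamics, the limiting growth bound $C(1+|x|^{1/2})$ holds for two-sided Brownian motion with drift, and the required joint convergence of initial data to $\bm G^\star$ holds along the subsequence), we obtain that for each $t>0$ and each subsequential limit,
\begin{equation*}
\bigl(\mf h(G^\star_j;\bm\cdot,t)-\mf h(G^\star_j;0,t)\bigr)_{j=1}^k \stackrel{d}{=} (G^\star_j)_{j=1}^k
\end{equation*}
jointly, where the left side is coupled through a single directed landscape $\mc L$. (The constant shift by $\mf h(G^\star_j;0,t)$ must be tracked carefully through Definition~\ref{d.asep general initial condition}, matching the centering in \eqref{e.SH convergence}; this is where using the ``height-function-with-shift'' convention of Section~\ref{s.asep basic coupling} rather than the configuration-only convention matters.) Thus $\bm G^\star$ is, modulo global height shifts, invariant for the joint KPZ-fixed-point evolution under the directed landscape. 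Together with the fact that each marginal $G^\star_j$ is a two-sided Brownian motion of diffusivity $2$ and drift $-\xi_j$ (which fixes the slope parameters, and places us within the slope regime where \cite{busani2022stationary} asserts uniqueness), the characterization of the stationary horizon from \cite{busani2021diffusive,busani2022stationary} forces $\bm G^\star\stackrel{d}{=}(G_{\xi_j})_{j=1}^k$. Since every subsequential limit is the same, the full sequence converges.

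The main obstacle I anticipate is not the probabilistic content but the bookkeeping around the constant height shift and the precise matching of slope normalizations between the ASEP scaling in Definition~\ref{d.asep general initial condition} and the normalization under which the stationary horizon is defined in \cite{busani2021diffusive,busani2022stationary}: one must confirm that the diffusivity-$2$, curvature-$1$ conventions here produce exactly the parametrization $(G_\xi)_{\xi\in\R}$ of the stationary horizon (and in particular that the index $\xi_j$ we read off from the drift $-\tfrac12-\tfrac12\xi_j\varepsilon^{1/3}$ is the ``correct'' one, up to the sign and factor-of-two conventions). A secondary point requiring care is that Corollary~\ref{c.asep general initial condition}(3) is stated for deterministic or random-but-independent initial data converging \emph{jointly in distribution}; one should note that convergence of $\bm G^{\star}$ along a subsequence is exactly such joint convergence, so the corollary applies verbatim, and one does not need an almost-sure or coupled version. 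Everything else — Donsker for the initial data, tightness of the $k$-tuple, and invoking uniqueness — is standard.
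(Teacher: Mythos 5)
Your conceptual plan --- subsequential compactness plus identification of all limit points via stationarity and the uniqueness of the stationary horizon --- is exactly the idea packaged in \cite[Theorem 2.4]{busani2023scaling}, which is what the paper invokes as a black box. In effect you are reproving the content of that theorem rather than citing it; that is a legitimate alternative, but as written there is a genuine gap in the key step.

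The problem is the growth hypothesis in Corollary~\ref{c.asep general initial condition}~(3). You assert that ``the limiting growth bound $C(1+|x|^{1/2})$ holds for two-sided Brownian motion with drift,'' but it does not: for any drift $-\xi_j$, the linear term violates $C(1+|x|^{1/2})$ on one side of the origin (whichever side the drift points toward $+\infty$), and even for zero drift the law of the iterated logarithm gives $\limsup_{|x|\to\infty} |B(x)|/|x|^{1/2}=\infty$ almost surely. So the subsequential limit $\bm G^\star$ (a $k$-tuple of drifted two-sided Brownian motions) does not satisfy the stated hypothesis of Corollary~\ref{c.asep general initial condition}~(3), and you cannot apply that corollary directly to $\bm G^\star$. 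The paper sidesteps this: it invokes Corollary~\ref{c.asep general initial condition}~(3) only to verify an assumption of the BSS theorem (which concerns convergence from appropriately restricted initial data), and the BSS theorem itself contains the truncation/localization argument that extends the conclusion to the random, Brownian-growth initial data arising from the stationary measure. If you want to carry out the identification by hand, you need to supply that extension step --- e.g., truncate each $\h_0^{(j)}$ to satisfy the growth bound on a large window, apply the corollary there, and show (using localization of the KPZ fixed point maximizer, as in Lemma~\ref{l.localization of maximizer}, or a similar argument) that the error from truncation vanishes --- rather than asserting the growth condition holds.

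The remaining ingredients of your argument --- Donsker-type convergence of the rescaled initial data to drifted Brownian motions, tightness of the $k$-tuple from marginal tightness, passing the prelimiting stationarity through the scaling in Definition~\ref{d.asep general initial condition} (the recentering by $\mf h(\cdot;0,t)$ does absorb the $\tfrac12 t\varepsilon^{-1}$ term cleanly), and invoking the uniqueness characterization of the stationary horizon from \cite{busani2022stationary} --- are all correct and appropriately flagged. Fixing the single gap above would make this a valid self-contained alternative to citing \cite[Theorem 2.4]{busani2023scaling}.
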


\begin{proof}
This is an immediate consequence of Corollary~\ref{c.asep general initial condition} (3) combined with \cite[Theorem~2.4]{busani2023scaling}. Assumption 1 of the latter is satisfied by the fact that the $i$\th component of the lefthand side of \eqref{e.SH convergence} converges to a Brownian motion of drift $2\xi_i$, for each $i$; Assumption 2 is merely that we consider the jointly invariant distribution; and Assumption 3 is satisfied due to Corollary~\ref{c.asep general initial condition}~(3).
\end{proof}

\subsubsection{Decoupling for colored ASEP and non-linear fluctuating hydrodynamics}

Over the past decade, a set of predictions for the asymptotic behavior of multi-component KPZ models has been developed under the name of non-linear fluctuating hydrodynamics---see for instance \cite{Spohn2014,Ferrari2013,mendl2013dynamic,Spohn2015,spohn2016fluctuating}. Among those predictions is the decoupling of height function fluctuations and stationary two-point functions between two components started in initial data with different characteristic lines. 
Here we consider the colored ASEP as a multi-component KPZ model and verify a strong form of such decoupling, focusing on the two-component case for simplicity. Specifically, we run the colored ASEP for time of order $\varepsilon^{-1}$ and examine its two colored height functions when their associated characteristic slopes differ by order $\beta \varepsilon^{1/3}$. We show that they become asymptotically independent, after first sending $\varepsilon \rightarrow 0$ and then $\beta \rightarrow \infty$. The advantage in this double scaling limit is that, after the first KPZ limit $\varepsilon \rightarrow 0$, we can relate the height functions (and stationary two-point functions) to the directed landscape; we then prove the decoupling in the $\beta \rightarrow \infty$ limit by geometric considerations of the latter. Our results show that the asymptotic decoupling of the colored height functions already manifests itself if the associated characteristic slopes differ by an amount growing barely larger than $\varepsilon^{1/3}$. This suggests, though does not directly imply as stated, that this decoupling should also hold when the characteristic slopes are uniformly bounded away from each other (formally corresponding to taking $\beta \rightarrow \infty$ simultaneously as $\varepsilon \rightarrow 0$, at rate $\varepsilon^{-1/3}$); we expect this could be proved by using the proof machinery developed here but do not pursue that.

In order to state our first decoupling result (about height functions), let $\slope\in\R$. Let $\h_0^{(1)}, \h_0^{(2)} : \R\to\R$ be continuous (possibly random) and satisfy the following slope property: there exists $R<\infty$ such that, almost surely, for all $y\in\R$,
\begin{equation}\label{e.uniform slope condition}
\begin{split}
|\h_0^{(1)}(y)| &\leq |y| + R|\slope|\\
|\h_0^{(2)}(y) - 2\slope y| &\leq |y| + R|\slope|.
\end{split}
\end{equation}
The above condition will be assumed on the limit of the ASEP initial condition height functions we work with. We note that the condition does not specify the slopes precisely and only up to a window of width $2$, but this will not be important for our purposes.

The following corollary gives the asymptotic independence of the ASEP height functions as the scaling parameter $\varepsilon\to 0$ followed by taking the slope parameter $\slope\to\infty$. The first part concerns when the height functions are evaluated at the same point, while in the second they are evaluated along their respective (distinct) characteristic lines emanating from the origin (note that $\slope$ controls the slope of the characteristic, so that, at time $2\gamma^{-1}\varepsilon^{-1}t$, the characteristic of the initial condition converging to $\smash{\h_0^{(1)}}$ is near $0$ and that of the initial condition converging to $\smash{\h_0^{(2)}}$ is near $\floor{2\slope\varepsilon^{-2/3}t}$). The corollary will be proven in Appendix~\ref{s.proofs of asymptotic independence}.

\begin{corollary}[Asymptotic independence of colored ASEP height functions]\label{p.asymptotic independence ASEP}
Fix $R>0$, $T\geq 1$, and $\mc T\subseteq [T^{-1}, T]$ finite. For any $\beta\in \R$, $\varepsilon>0$, and $i\in\{1,2\}$, suppose $h_0^{(i),\varepsilon} : \Z\to \Z$ is a Bernoulli path such that, as $\varepsilon\to 0$,
$$-2\varepsilon^{1/3}\left(h_0^{(i),\varepsilon}(\floor{2x\varepsilon^{-2/3}}) + \floor{x\varepsilon^{-2/3}}\right) \stackrel{d}{\to} \h^{(i)}_0$$
in the topology of uniform convergence on compact sets, where $\h^{(1)}_0, \h^{(2)}_0:\R\to\R$ are independent (in particular, they may be deterministic), continuous, and satisfy \eqref{e.uniform slope condition} with $\beta$ and $R$ almost surely. For $i\in\{1,2\}$ and $\varepsilon>0$, define $\h^{(i),\varepsilon}:\R\times[0,\infty)\to\R$ by (recall Definition~\ref{d.asep general initial condition})
$$\h^{(i), \varepsilon}(x,t) = -2\varepsilon^{1/3}\left(h^{\mrm{ASEP}}\bigl(h^{(i),\varepsilon}_0,0; \floor{2x\varepsilon^{-2/3}},2\gamma^{-1}\varepsilon^{-1}t\bigr) + \floor{x\varepsilon^{-2/3}} - \tfrac{1}{2}t\varepsilon^{-1}\right),$$
coupled by the basic coupling across $i\in\{1,2\}$. There exist constants $C,\delta>0$ depending on $R$ and $T$ such that the following holds. (1) For every $\varepsilon>0$, there exist independent processes $\tilde \h^{(1),\varepsilon}, \tilde \h^{(2), \varepsilon}:\R\times[T^{-1},T]\to\R$ and a coupling of $(\h^{(1),\varepsilon}, \h^{(2), \varepsilon})$ with $(\tilde \h^{(1),\varepsilon}, \tilde \h^{(2), \varepsilon})$ such that, for $i\in\{1,2\}$,
\begin{align}\label{e.asymptotic independence asep}
\lim_{|\slope|\to\infty}\,\lim_{\varepsilon\to0}\,\P\left(\max_{s\in\mc T}\sup_{|x|\leq \delta|\slope|^{1/2}} \left|\h^{(i), \varepsilon}(x,s) - \tilde \h^{(i),\varepsilon}(x,s)\right| \geq C|\slope|^{-1/12}\log|\slope|\right) = 0.
\end{align}
(2) For every $\varepsilon>0$, there exist independent processes $\tilde \h^{(1),\varepsilon}_{\mrm{char}}, \tilde \h^{(2),\varepsilon}_{\mrm{char}}:\R\times[T^{-1},T]\to\R$ and a coupling of $(\h^{(1),\varepsilon}, \h^{(2),\varepsilon})$ with $(\tilde \h^{(1),\varepsilon}_{\mrm{char}}, \tilde \h^{(2),\varepsilon}_{\mrm{char}})$ such that,
\begin{equation}\label{e.asymptotic independence along characteristic asep}
\begin{split}
\lim_{|\slope|\to\infty}\,\lim_{\varepsilon\to 0}\,\P\left(\max_{s\in\mc T}\sup_{|x|\leq \delta|\slope|^{1/12} t}\left|\h^{(1), \varepsilon}(x,s) - \tilde \h^{(1), \varepsilon}_{\mrm{char}}(x,s)\right| \geq C|\slope|^{-1/12}\log|\slope|\right) &= 0 \quad\text{and}\\
\lim_{|\slope|\to\infty}\,\lim_{\varepsilon\to 0}\,\P\left(\max_{s\in\mc T}\sup_{|x+\slope s|\leq \delta|\slope|^{1/2} t}\left|\h^{(2), \varepsilon}(x,s) - \tilde \h^{(2), \varepsilon}_{\mrm{char}}(x,s)\right| \geq C|\slope|^{-1/12}\log|\slope|\right) &= 0,
\end{split}
\end{equation}
i.e., asymptotic independence holds as $|\slope|\to\infty$ when evaluating at locations near the respective characteristic lines emanating from the common point $0$.
\end{corollary}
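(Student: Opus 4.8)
The plan is to reduce to the directed landscape via Corollary~\ref{c.asep general initial condition}~(3), establish the decoupling there for the coupled KPZ fixed points using the parabolic behavior of the Airy sheet together with geodesic localization and the spatial independence of the directed landscape, and finally transfer back to ASEP by a soft comparison of laws.

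\emph{Reduction.} Fix $\slope$. After truncating each $\h_0^{(i),\varepsilon}$, hence $\h_0^{(i)}$, far from the origin---which alters $\h^{(i),\varepsilon}$ on the relevant domains negligibly, as both the ASEP height function near its characteristic and the variational formula \eqref{e.kpz fixed point} are localized there---the truncated data have compact support, so they satisfy the growth hypothesis of Corollary~\ref{c.asep general initial condition}~(3) (with a $\slope$-dependent constant, harmless since $\varepsilon\to0$ precedes $\slope\to\infty$). That corollary gives $(\h^{(1),\varepsilon},\h^{(2),\varepsilon})\xrightarrow{d}(\h^{(1)},\h^{(2)})$, where $\h^{(i)}=\mf h(\h_0^{(i)};\bm\cdot,\bm\cdot)$ are driven by a common directed landscape $\mc L$, with $\h_0^{(1)}\perp\h_0^{(2)}$ and both independent of $\mc L$. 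Since the Prokhorov distance between $\mathrm{Law}(\h^{(1),\varepsilon},\h^{(2),\varepsilon})$ and the product of its marginals converges, as $\varepsilon\to0$, to the Prokhorov distance between $\mathrm{Law}(\h^{(1)},\h^{(2)})$ and $\mathrm{Law}(\h^{(1)})\otimes\mathrm{Law}(\h^{(2)})$, it suffices to bound the latter by $C|\slope|^{-1/12}\log|\slope|$ on the relevant domains, taking $\tilde\h^{(i),\varepsilon}$ to be independent copies of the $\varepsilon$-marginals and coupling accordingly.

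\emph{Localization.} The characteristic of $\h^{(2)}$ through the origin is $\{y=-\slope t\}$ and that of $\h^{(1)}$ is $\{y=0\}$. In part~(1) both are evaluated near $y=0$, so $\h^{(1)}$ is on its characteristic and automatically localized (its geodesics to $(x,s)$ with $|x|\le\delta|\slope|^{1/2}$ stay within $O(|\slope|^{1/2})$ of $0$), while $\h^{(2)}$ is off its characteristic: the optimizer in $\h^{(2)}(x,s)=\sup_{x_0}\big(\h_0^{(2)}(x_0)+\mc L(x_0,0;x,s)\big)$ localizes near $x_0\approx x+\slope s$, at distance $\asymp|\slope|$ from $0$. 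Choosing $\eta\asymp|\slope|^{-1/4}$ and writing, via the metric composition law, $\h^{(2)}(x,s)=\max_z\big(\h^{(2)}(z,s-\eta)+\mc L(z,s-\eta;x,s)\big)$, the maximizing $z$ localizes near $\slope\eta\asymp|\slope|^{3/4}$; there $|z-x|\asymp|\slope|^{3/4}$ while $\eta$ is small, so the last increment is deep in the far field and $\mc L(z,s-\eta;x,s)=-(z-x)^2/\eta+O\big(\eta^{1/3}(\log|\slope|)^{2/3}\big)=-(z-x)^2/\eta+O\big(|\slope|^{-1/12}\log|\slope|\big)$, uniformly over the relevant $z$ and over $|x|\le\delta|\slope|^{1/2}$ and $s\in\mc T$ (the error exceeding $C|\slope|^{-1/12}\log|\slope|$ only with probability $\to0$, by the upper tail of the Airy sheet's deviation from its parabola). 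Replacing the last increment by this parabola, $\h^{(2)}$ on its domain becomes, up to $O(|\slope|^{-1/12}\log|\slope|)$, a functional of $\h_0^{(2)}$ and of $\mc L$ restricted to a spatial band around $[\,|\slope|^{3/4}/2,\ C|\slope| s\,]$ over $[0,s-\eta]$, which is separated by $\asymp|\slope|^{3/4}$ from the band around $0$ carrying $\h^{(1)}$. For part~(2), $\h^{(2)}$ is on its own characteristic near $-\slope s$ but its geodesic still emanates from near $(0,0)$; now one cuts off the initial interval $[0,\eta]$ and replaces the first increment $\mc L(x_0,0;z,\eta)$ (with $x_0$ near $0$, $z\approx-\slope\eta$) by $-(x_0-z)^2/\eta$, and the same far-field estimate makes $\h^{(2)}$ on its domain a functional, up to $O(|\slope|^{-1/12}\log|\slope|)$, of $\h_0^{(2)}$ and of $\mc L$ on $[\eta,s]$ in a band around $[-C|\slope| s,\ -|\slope|^{3/4}/2]$, again disjoint from the band near $0$ for $\h^{(1)}$. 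Every ``localization'' step (confining optimizers and geodesics to bands) costs only a super-polynomially small probability by transversal fluctuation bounds for directed-landscape geodesics.

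\emph{Decoupling, transfer, and main obstacle.} By the previous step, outside an event of probability $o_{|\slope|\to\infty}(1)$ and up to a uniform error $O(|\slope|^{-1/12}\log|\slope|)$, $\h^{(1)}$ restricted to its domain is a function of $(\h_0^{(1)},\mc L|_{\mc R_1})$ and $\h^{(2)}$ restricted to its domain is a function of $(\h_0^{(2)},\mc L|_{\mc R_2})$ for spatially separated regions $\mc R_1,\mc R_2$ (within the bounded time window $[T^{-1},T]$) at distance $\asymp|\slope|^{3/4}$. As $\h_0^{(1)}\perp\h_0^{(2)}$, and as the directed landscape restricted to spatially separated regions within a bounded time window can be coupled with independent copies, with error $\to0$ as the separation $\to\infty$ (a geometric property of $\mc L$ that follows by confining the relevant geodesics and passing through an exactly solvable prelimit such as Brownian last passage percolation; cf.\ \cite{dauvergne2018directed,NTM}), these two functionals can be coupled with an independent pair agreeing with them up to $O(|\slope|^{-1/12}\log|\slope|)$ with probability $\to1$. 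Combined with the reduction, this yields \eqref{e.asymptotic independence asep} and \eqref{e.asymptotic independence along characteristic asep} after a union bound over the finite set $\mc T$ and use of the modulus of continuity of $\mc L$ to pass from pointwise to uniform control on the bounded spatial domains. The main obstacle is the localization step: producing, with the sharp error $|\slope|^{-1/12}\log|\slope|$, a thin space-time tube that carries the off-characteristic height function and that genuinely separates from the tube of the other; this requires matching the transversal-fluctuation scale of directed-landscape geodesics with the deviation of the Airy sheet from its limiting parabola over the polynomially large ranges that arise with the choice $\eta\asymp|\slope|^{-1/4}$. The independence input is then comparatively soft, but checking that it applies to the regions produced by the localization, and uniformly in $\varepsilon$ during the transfer, is where the bookkeeping is delicate.
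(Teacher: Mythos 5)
Your proposal follows the same route as the paper: reduce to the limiting KPZ fixed points coupled through the directed landscape via Corollary~\ref{c.asep general initial condition}~(3), prove the decoupling at that level by localizing the variational maximizer (Lemma~\ref{l.localization of maximizer}), splitting by the metric composition law on a short time strip, replacing the short-time increment by its parabola with error governed by the Airy-sheet modulus of continuity, and invoking the asymptotic spatial independence of the directed landscape from~\cite{NTM}, before transferring back to ASEP by Portmanteau. The paper packages the decoupling of the limit as Proposition~\ref{p.asymptotic independence of kpz fixed point} and keeps the proof of the present corollary to two short paragraphs; you have unpacked both pieces in one narrative, with the same ingredients. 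The main deviations are cosmetic: you choose the time offset $\eta\asymp|\slope|^{-1/4}$ where the paper uses the split $s(1-\slope^{-1/2})$ (so your band separation is $\asymp|\slope|^{3/4}$ while the paper's is $\asymp|\slope|^{1/2}$; both diverge, and both give the claimed $|\slope|^{-1/12}$ error from the analog of \eqref{e.airy sup bound}), and you measure the $\varepsilon\to0$ transfer in Prokhorov distance where the paper simply fixes $\tilde\h^{(i),\varepsilon}:=\tilde\h^{(i)}$ and applies Portmanteau. One further difference worth noting: you insert a truncation of the initial data to satisfy the $O(|x|^{1/2})$ growth hypothesis stated in Corollary~\ref{c.asep general initial condition}~(3), since \eqref{e.uniform slope condition} only gives linear growth. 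The paper does not perform this truncation and applies the corollary directly; your instinct that there is a mismatch is reasonable, but your fix is not self-contained as written, since it relies on a prelimiting localization statement for ASEP (that changing the initial data at a fixed scaled distance does not alter the height function near the origin up to $o(1)$) which the paper does not prove and which is not a consequence of the limiting Lemma~\ref{l.localization of maximizer} alone. So that step is a genuine loose end in your argument, though it does not change the overall route, which is the paper's.
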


A version of Corollary~\ref{p.asymptotic independence ASEP} can also be formulated and proved for more than two colors in an entirely similar way, though we do not do so here. Also, though the statement assumes the initial conditions satisfy \eqref{e.uniform slope condition} almost surely for a fixed value of $R$, one can of course apply the result to independent random initial conditions on the event that they satisfy \eqref{e.uniform slope condition} with that $R$, and bound the probability that they violate \eqref{e.uniform slope condition} separately. 
One can also weaken the assumption of independence of the limiting initial conditions $\smash{\h_0^{(1)}}$ and $\smash{\h_0^{(2)}}$ to a suitably quantified form of asymptotic independence as $\slope\to\infty$, e.g., of the form of a coupling with independent processes with quantified error as in \eqref{e.asymptotic independence asep}; see Remark~\ref{r.weakening independence assumption}. 
For instance, such a condition would be satisfied by the multi-color limiting stationary initial condition, the stationary horizon (see Lemma~\ref{l.sh representation}).

Corollary~\ref{p.asymptotic independence ASEP} is really a statement about the limiting objects, KPZ fixed points coupled via the directed landscape. Indeed, Corollary~\ref{p.asymptotic independence ASEP} will be proved via an analogous statement for the limiting objects (Proposition~\ref{p.asymptotic independence of kpz fixed point}).

Next we turn to the statement on the decorrelation of the space-time covariance, or two-point function, under stationary initial data for the colored model.
For each $\slope>0$ and $\varepsilon>0$, let $\tilde\eta_0\in\{0,1,2\}^{\Z}$ be sampled from the two-color stationary distribution $\mu_{\bm\rho}$ of ASEP with density vector $\bm\rho = (\frac{1}{2}\slope \varepsilon^{1/3}, \frac{1}{2})$ as described in Section~\ref{s.asep stationary horizon}. Thus particles of color $2$ form an independent Bernoulli($\frac{1}{2}$) sequence, while particles of color $1$ and $2$ combined form an independent Bernoulli($\frac{1}{2}+\frac{1}{2}\slope \varepsilon^{1/3}$) sequence. The particle configuration at time $t$ after evolution under the colored ASEP dynamics is denoted $\tilde\eta_t$. For $t\geq 0$, we define $\smash{\eta_t^{(1)}},\smash{\eta_t^{(2)}}\in\{0,1\}^\Z$ to be the particles of color at least 1 and particles of color 2, respectively, i.e., by
$$\smash{\eta_t^{(1)}}(x) = \one_{\tilde\eta_t(x) \geq 1} \quad\text{and}\quad\smash{\eta_t^{(2)}(x)} = \one_{\tilde \eta_t(x) = 2}.$$

Then the two-point function (or the space-time covariance matrix) $\bm S^{\slope, \varepsilon} = (S^{\slope,\varepsilon}_{kl})_{k,\ell \in \{1,2\}}$ is defined by 
\begin{align*}
S^{\slope,\varepsilon}_{k\ell}(t,x) = \Cov(\eta^{(k)}_0(0), \eta^{(\ell)}_t(x)).
\end{align*}
In the below, we will say that, for fixed $k,\ell\in\{1,2\}$ and a function $A:\R\to\R$,
$$\lim_{\slope\to\infty} \lim_{\varepsilon\to 0}\varepsilon^{-2/3}S^{\slope, \varepsilon}_{k\ell}(2\gamma^{-1}\varepsilon^{-1}, \floor{2x\varepsilon^{-2/3}}) = A(x)$$
in the sense of integration against smooth, compactly supported test functions if for any $f:\R\to\R$ which is smooth and compactly supported it holds that
\begin{align*}
\lim_{\slope\to\infty}\lim_{\varepsilon\to 0} \varepsilon^{-2/3} \int_{\R} S^{\slope,\varepsilon}_{k\ell}(2\gamma^{-1}\varepsilon^{-1}, \floor{2x\varepsilon^{-2/3}})f(x)\,\diff x = \int_\R A(x)f(x)\,\diff x.
\end{align*}
Equivalently (by discretizing the integral over a mesh of width $\frac{1}{2}\varepsilon^{2/3}$), for all smooth and compactly supported $f:\R\to\R$, with $f_\varepsilon:\Z\to\R$ defined by $f_\varepsilon(x) = f(\varepsilon^{2/3}x)$, 
\begin{align*}
\lim_{\slope\to\infty}\lim_{\varepsilon\to 0} \frac{1}{2}\sum_{i\in\Z} S^{\slope,\varepsilon}_{k\ell}(2\gamma^{-1}\varepsilon^{-1}, i)f_\varepsilon(\tfrac{1}{2}i) = \int_\R A(x)f(x)\,\diff x.
\end{align*}

The following is the statement on the limit of the space-time covariance. It will be proven in Appendix~\ref{s.proofs of asymptotic independence}. In the below, $g_{\mrm{BR}}(x)$ the variance of the Baik-Rains distribution $F_{\mrm{BR}; x}$ as defined, for instance, in \cite[eq. (1.20)]{ferrari2006scaling} (and originally in \cite[Definition 3]{baik2000limiting} under a slightly different scaling and in a different form).

\begin{corollary}[Scaling limit of stationary colored ASEP two-point function]\label{p.covariance matrix}
Let $x\in\R$ be fixed. It holds that, in the sense of integration against smooth compactly supported test functions,
\begin{align}\label{e.on diagonal covvariance}
\lim_{\slope\to\infty} \lim_{\varepsilon\to 0} \varepsilon^{-2/3}\bm S^{\slope, \varepsilon}\bigl(2\gamma^{-1}\varepsilon^{-1}, \floor{2x\varepsilon^{-2/3}}\bigr)
= \begin{bmatrix}
0 & 0\\
0 & \frac{1}{32}g''_{\mrm{BR}}(x).
\end{bmatrix}
\end{align}
Further, for each $\slope\in\R$ and when $\{k,\ell\} = \{1,2\}$, if $x = x(\slope)\in\R$ is such that $|x| \leq \slope^2$, then it holds that, in the sense of integration against smooth compactly supported test functions,
\begin{equation}\label{e.space-time covariance vanishes}
\begin{split}
\lim_{\slope\to\infty}\lim_{\varepsilon\to0} \varepsilon^{-2/3}S^{\slope, \varepsilon}_{kl}(2\gamma^{-1}\varepsilon^{-1}, \floor{2x\varepsilon^{-2/3}}) &= 0.
\end{split}
\end{equation}
In particular, \eqref{e.space-time covariance vanishes} holds when $x = -\slope$, i.e, $x$ is the location on the characteristic line at time $2\gamma^{-1}\varepsilon^{-1}$ associated to an initial density of $\frac{1}{2}+\frac{1}{2}\slope \varepsilon^{1/3}$.
\end{corollary}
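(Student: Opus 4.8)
The plan is to use color merging (Remark~\ref{r.asep color merging}) to recognize $(\eta^{(2)},\eta^{(1)})$ as two coupled single-species stationary ASEPs, and then to identify the scaling limit of each block of $\bm S^{\slope,\varepsilon}$ using the results already established. Since $\mu_{\bm\rho}$ is characterized by its merged marginals, $\eta^{(2)}=\one_{\tilde\eta=2}$ evolves as an uncolored stationary ASEP from product Bernoulli at density $\rho_2=\tfrac12$, while $\eta^{(1)}=\one_{\tilde\eta\ge1}$ evolves as an uncolored stationary ASEP from product Bernoulli at density $\rho_1+\rho_2=\tfrac12+\tfrac12\slope\varepsilon^{1/3}$, the two being coupled through the basic coupling with the particle set of $\eta^{(2)}$ contained in that of $\eta^{(1)}$. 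In the scaling of Definition~\ref{d.asep general initial condition} with $\alpha=0$, the corresponding rescaled height functions $\h^{(2),\varepsilon}$ and $\h^{(1),\varepsilon}$ have (random) initial data converging jointly to a stationary-horizon pair $(G_0,G_\slope)$ (as in Corollary~\ref{c.SH convergence}), and the characteristic of $\h^{(1),\varepsilon}$ emanating from the origin sits, at time $2\gamma^{-1}\varepsilon^{-1}$, near the rescaled location $-\slope$, while that of $\h^{(2),\varepsilon}$ stays near $0$. As a preliminary step I would rewrite $\varepsilon^{-2/3}S^{\slope,\varepsilon}_{k\ell}(2\gamma^{-1}\varepsilon^{-1},\bm\cdot)$ tested against a smooth compactly supported $f$ as a discrete double difference of a covariance of these rescaled height functions, using that $\eta^{(\ell)}_t$ is a discrete spatial derivative of the corresponding height function, summing by parts, and observing that the deterministic centering and slope corrections drop out of covariances; this both exposes the constant $\tfrac1{32}$ and reduces everything to a statement about the limiting objects.

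\textbf{The $(2,2)$ block.} Here there is no $\slope$-dependence, so $\lim_{\slope}\lim_\varepsilon$ is simply $\lim_\varepsilon$, and $\varepsilon^{-2/3}S^{\slope,\varepsilon}_{22}$ is the KPZ-rescaled space-time two-point function of a single stationary uncolored ASEP at density $\tfrac12$, whose characteristic speed vanishes. By Corollary~\ref{c.asep general initial condition}(3) with one color and Brownian (i.e.\ stationary) initial data at $\alpha=0$ --- equivalently by \cite{quastel2022convergence} --- the process $\h^{(2),\varepsilon}(\bm\cdot,1)$ converges to the stationary KPZ fixed point under the directed landscape. To upgrade this process-level convergence to convergence of two-point functions, I would establish uniform integrability of the squared rescaled height increments along the KPZ-scaled family: this is available because, \emph{by stationarity}, the height increment over a box of size of order $\varepsilon^{-2/3}$ is at every time a sum of i.i.d.\ Bernoulli variables, giving Gaussian-type concentration, uniform moment bounds, and control of the tails of the test-function pairing uniformly in $\varepsilon$. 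The limiting two-point function --- that of the stationary KPZ fixed point --- is then identified, via the fluctuation--dissipation relation (the two-point function is the second distributional derivative of the variance of the height increment from the pinned initial point to the evaluation point) together with the Baik--Rains description of the stationary process, with $\tfrac1{32}g''_{\mrm{BR}}$; matching the normalizations of Definition~\ref{d.asep sheet} against those of \cite[eq.~(1.20)]{ferrari2006scaling} pins down the precise constant $\tfrac1{32}$.

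\textbf{The $(1,1)$ and off-diagonal blocks.} The $(1,1)$ block is the same object at density $\tfrac12+\tfrac12\slope\varepsilon^{1/3}$; for fixed $\slope$, as $\varepsilon\to0$ the $\varepsilon^{1/3}$-size density perturbation only translates the limiting profile by $-\slope$ in rescaled space (it moves the characteristic), so $\lim_\varepsilon\varepsilon^{-2/3}S^{\slope,\varepsilon}_{11}(2\gamma^{-1}\varepsilon^{-1},\floor{2x\varepsilon^{-2/3}})=\tfrac1{32}g''_{\mrm{BR}}(x+\slope)$ in the test-function sense; since $g''_{\mrm{BR}}\in L^1$ with rapidly decaying tails, pairing against a fixed compactly supported $f$ and sending $\slope\to\infty$ gives $0$, and the same decay handles the regime $|x|\le\slope^2$ in \eqref{e.space-time covariance vanishes}. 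For the off-diagonal $S_{12},S_{21}$: after $\varepsilon\to0$ these become the cross two-point function of the pair of KPZ fixed points $\bigl(\mf h(G_0;\bm\cdot,1),\mf h(G_\slope;\bm\cdot,1)\bigr)$ run under a common directed landscape. The stationary-horizon pair $(G_0,G_\slope)$ satisfies the quantified asymptotic independence as $\slope\to\infty$ recorded in Lemma~\ref{l.sh representation}, which is exactly the weakened hypothesis on the initial data permitted in Remark~\ref{r.weakening independence assumption}; feeding it into the limiting-object decoupling statement Proposition~\ref{p.asymptotic independence of kpz fixed point} (the analog of Corollary~\ref{p.asymptotic independence ASEP} proved by geometric considerations of the directed landscape \cite{dauvergne2018directed,NTM}), the two KPZ fixed points can be coupled to independent ones with error tending to $0$ as $\slope\to\infty$ on windows $|x|$ within the range of that proposition --- in particular on $|x|\le\slope^2$ --- so their cross two-point function tends to $0$. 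This yields the off-diagonal entries of the matrix and \eqref{e.space-time covariance vanishes}, including the case $x=-\slope$ on the shifted characteristic.

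\textbf{Main obstacle.} The difficult point is the $(2,2)$ block: promoting the process-level convergence of Corollary~\ref{c.asep general initial condition}(3) to convergence of \emph{two-point functions}, i.e.\ controlling uniform integrability of the squared rescaled height increments and the tails of the pairing against $f$ uniformly in $\varepsilon$, and then pinning the limit as $\tfrac1{32}g''_{\mrm{BR}}$ (which ultimately rests on the Baik--Rains limit for the stationary model and a directed-landscape computation). By contrast, once Proposition~\ref{p.asymptotic independence of kpz fixed point} and the stationary-horizon decoupling of Lemma~\ref{l.sh representation} are granted, the vanishing of the $(1,1)$ and off-diagonal entries is soft.
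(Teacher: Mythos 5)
Your overall strategy---color merging, converting the occupation-variable covariance to a height-function covariance via summation by parts, and exploiting stationary-horizon asymptotic independence for the off-diagonal entries---matches the paper's. But there are real gaps in the execution at several points.

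\textbf{The $(2,2)$ block.} You propose to derive the value $\tfrac1{32}g''_{\mathrm{BR}}$ from first principles by combining the process-level convergence in Corollary~\ref{c.asep general initial condition}(3) with a ``fluctuation--dissipation relation'' and the Baik--Rains description of the stationary limit. That is essentially a re-proof of a nontrivial result. The paper simply cites \cite[Corollary~2.6]{landon2023tail}, which establishes precisely this scaling limit for the single-species stationary two-point function. Your route is not impossible, but as written it is a sketch of a separate, substantial theorem rather than a proof step, and the connection between ``two-point function'' and ``second distributional derivative of the variance of the height increment'' for ASEP occupation variables is itself a lemma (this is the content of Lemma~\ref{l.summation by parts} in the paper, which is not a bare summation by parts but requires an averaging-over-shifts device to cancel boundary terms).

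\textbf{The $(1,1)$ block.} You claim the identity $\lim_{\varepsilon\to0}\varepsilon^{-2/3}S^{\slope,\varepsilon}_{11}(\cdot,\floor{2x\varepsilon^{-2/3}})=\tfrac1{32}g''_{\mathrm{BR}}(x+\slope)$. That would require a scaling limit at the $\varepsilon$-perturbed density $\tfrac12+\tfrac12\slope\varepsilon^{1/3}$ with an explicit Baik--Rains profile, which is more than is needed and more than the paper establishes. The paper instead rewrites $S^{\slope,\varepsilon}_{11}$ via Bal\'azs--Sepp\"al\"ainen \cite[eq.~(2.1)]{balazs2009fluctuation} as (a constant times) the marginal probability mass function of a second-class particle and then sums over the support of the test function; a single invocation of the exponential tail bound \cite[Theorem~2.5]{landon2023tail} on the second-class particle's position (away from the characteristic at $\sim -2\slope\varepsilon^{-2/3}$) makes the entire contribution decay like $\exp(-c\slope^3)$. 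This is much shorter than identifying the limit profile. You also slightly misread the scope of \eqref{e.space-time covariance vanishes}: it is stated only for the off-diagonal case $\{k,\ell\}=\{1,2\}$ (and indeed at $x=-\slope$, the $(1,1)$ covariance concentrates near its characteristic and does not vanish), so ``the same decay handles the regime $|x|\le\slope^2$ in \eqref{e.space-time covariance vanishes}'' is not a claim about $S_{11}$.

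\textbf{The off-diagonal blocks.} Here your route differs genuinely from the paper's, and it has two gaps. First, Proposition~\ref{p.asymptotic independence of kpz fixed point} and Remark~\ref{r.weakening independence assumption} both require the initial data to satisfy the almost-sure linear-growth bound \eqref{e.uniform slope condition}; a two-sided Brownian motion with drift (the stationary horizon marginal) satisfies no such almost-sure bound, so neither statement applies off the shelf. Second, a coupling under which $\sup|\h^{(i)}-\tilde\h^{(i)}|$ is small \emph{with high probability} does not by itself yield $\Cov(\h^{(1)},\h^{(2)})\to0$; one needs uniform $L^2$ control of $\h^{(i)}-\tilde\h^{(i)}$ (in particular of what happens on the complementary event), and this is precisely where the argument becomes delicate. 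The paper avoids both issues by working directly with the explicit decomposition $\h_0^\slope=2^{1/2}B_2+2\slope x+M_\slope$ from Lemma~\ref{l.sh representation}: independence of $B_1,B_2,\mathcal L$ kills the main term of the covariance outright, the error process $M_\slope$ has uniform exponential tails by \eqref{e.M_beta uniform bound}, and the maximizer-localization Lemma~\ref{l.localization of maximizer} is invoked to justify restricting the variational problem to a $\slope^2$-sized window before Cauchy--Schwarz is applied. This combination---rather than a direct appeal to Proposition~\ref{p.asymptotic independence of kpz fixed point}---is what makes the covariance bound honest.
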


In the above, the statement \eqref{e.on diagonal covvariance} that the single-species two-point function (appropriately scaled) has a limit governed by KPZ statistics when evaluated in the vicinity of the associated characteristic line (of the color 2 particles), and (under the same scaling) converges to zero away from the characteristic line, is known from prior work (following quickly from results in \cite{landon2023tail}).

Instead of evaluating the space-time covariance near the characteristic line associated to the color 2 particles, one could do the same near the characteristic line associated to the combined color 1 and color 2 particles (which together have density $\frac{1}{2}(1+\slope\varepsilon^{1/3})$). Indeed, if one evaluates the space-time covariance at $\floor{2(x-\slope) \varepsilon^{-2/3}}$ for fixed $x$, then one would obtain the limit (again $\varepsilon\to0$ first and then $\slope\to\infty$) of $\varepsilon^{-2/3}S^{\slope,\varepsilon}_{k,\ell}(2\gamma^{-1}\varepsilon^{-1}, \floor{2(x-\slope)\varepsilon^{-2/3}})$ to be $\frac{1}{32}g''_{\mrm{BR}}(x)$ for $k=\ell=1$ and $0$ for $k=\ell=2$. That this covariance converges to $0$ when $k \ne \ell$ is already included in \eqref{e.space-time covariance vanishes}.

\subsection{Colored stochastic six-vertex model}

\label{s.intro.cS6V}

The colored stochastic six-vertex model (S6V) we consider is defined on the domain $\Z_{\geq 1}\times \llbracket-N,\infty\rrparen$ for some choice of $N\in\N$. The model has two parameters: a quantization (or asymmetry) parameter $q\in[0,1)$ and a rapidity (or spectral) parameter $z\in(0,1)$, which are both fixed.

\subsubsection{Configurations and weights} \label{s.s6v configuration}
A configuration of the model is described as follows. First, an \emph{arrow configuration} is a tuple $(a, i; b, j)$ such that $i, j, a, b \in \Z\cup\{-\infty\}$ and $\{i, a\} = \{j, b\}$; see the left panel of Figure~\ref{f.colored S6V}. Under this notation, $i$, $a$, $j$, and $b$ can be interpreted as the \emph{colors}  of the arrows horizontally entering, vertically entering, horizontally exiting, and vertically exiting the vertex, respectively. Here, an arrow of color $-\infty$ may be viewed as the absence of an arrow along an edge. The condition $\{i, a\} = \{j, b\}$ is called \emph{colored arrow conservation}, i.e., the number of incoming arrows of each color is equal to the number of outgoing arrows of the same color.

A configuration of the model consists of an assignment of arrow configurations, one to each vertex $v$ of the domain $\Z_{\geq 1}\times \llbracket-N,\infty\rrparen$; we will denote the arrow configuration at $v$ by $(a_v, i_v; b_v, j_v)$. We require that the arrow configurations are \emph{consistent}: $i_{(x,y)} = j_{(x-1,y)}$ and $a_{(x,y)} = b_{(x,y-1)}$, i.e., an arrow of color $k$ is horizontally entering at $(x,y)$ if and only if an arrow of color $k$ is horizontally exiting at $(x-1,y)$, and similarly an arrow of color $k$ is vertically entering at $(x,y)$ if and only if an arrow of color $k$ is vertically exiting at $(x,y-1)$. We will refer to the colors of the arrows entering and exiting a vertex $v$ (horizontally and vertically) as the incoming and outgoing arrow configurations at $v$ respectively, and the combined set of incoming and outgoing arrows as the arrow configuration at $v$. 
One consequence of this description is that the colored arrows form up-right paths; see the right panel of Figure~\ref{f.colored S6V}.

The \emph{boundary condition} of the model is specified as follows. Let $\sigma : \llbracket -N, N \rrbracket \to \Z \cup \{ -\infty \}$. Then the colored S6V model with boundary condition $\sigma$ is defined as the model where an arrow of color $\sigma (k)$ enters horizontally at $(1, k)$ (i.e., $i_{(1,k)} = \sigma(k)$) for $k \in \intint{-N, N}$, and an arrow of color $-\infty$ enters vertically at $(\ell, -N)$ (i.e., $a_{(\ell,-N)} = -\infty$) for $\ell \in \mathbb{N}$; see the right panel of Figure 6. Note that arrows enter only at heights in $\intint{-N,N}$, so there are only finitely many arrows in the system.

\begin{remark}\label{r.N restriction for S6V}
The restriction that we impose of no arrows entering above height $N$ is to make the upcoming definition \eqref{e.s6v height function} of the colored height function simpler (because of the just noted finiteness); we will soon restrict our attention to  the height function's behavior in $\Z_{\geq 1}\times \intint{-N,N}$, where the entrance of arrows above height $N$ would have no distributional effect anyway. Allowing arrows to enter on $\intint{-N,N}$ allows us to consider arrows of negative color without recentering, which will make our height function definition simpler, and will later allow us to couple directly with colored ASEP on all of $\Z$.
\end{remark}

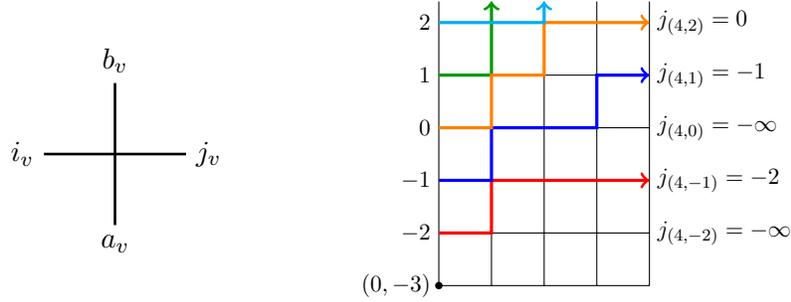
\begin{figure}[t!]
\begin{tikzpicture}[scale=0.7]

\begin{scope}[shift={(-7.5,1.5)}, scale=0.9]

\draw[line width=1pt] (0,0) -- ++(3,0);
\draw[line width=1pt] (1.5,-1.5) -- ++(0,3);

\node[anchor = east] at (0,0) {$i_v$};
\node[anchor = west] at (3,0) {$j_v$};
\node[anchor = north] at (1.5,-1.5) {$a_v$};
\node[anchor = south] at (1.5,1.5) {$b_v$};
\end{scope}

\draw (0,-1) grid (4,4.4);

\draw[red, very thick, ->] (0,0) -- ++(1,0) -- ++(0,1) -- ++(3,0);

\draw[blue, very thick, ->] (0,1) -- ++(1,0) -- ++(0,1) -- ++(2,0) -- ++(0,1) -- ++(1,0);

\draw[green!60!black, very thick, ->] (0,3) -- ++(1,0) -- ++(0,1.4);

\draw[orange, very thick, ->] (0,2) -- ++(1,0) -- ++(0,1) -- ++(1,0) -- ++(0,1) -- ++(2,0);

\draw[cyan, very thick, ->] (0,4) -- ++(2,0)-- ++(0,0.4);

\foreach \x in {-2,...,2}
{
  \node[anchor=east, scale=0.8] at (0,\x+2) {$\x$};
}

\node[circle, fill, inner sep=1pt] at (0,-1) {};
\node[anchor=east, scale=0.8] at (0,-1) {$(0,-3)$};

\node[anchor=west, scale=0.8] at (4,0) {$j_{(4,-2)}=-\infty$};
\node[anchor=west, scale=0.8] at (4,1) {$j_{(4,-1)}=-2$};
\node[anchor=west, scale=0.8] at (4,2) {$j_{(4,0)}=-\infty$};
\node[anchor=west, scale=0.8] at (4,3) {$j_{(4,1)}=-1$};
\node[anchor=west, scale=0.8] at (4,4) {$j_{(4,2)}=0$};

\end{tikzpicture}
\caption{Left: a depiction of the labels for the arrow configuration at a vertex $v$. Right: A sample configuration of the colored stochastic six-vertex model with $N=2$ and the packed boundary condition, i.e., $\sigma(k) = k$ for $k\in\intint{-N,N}$. The numbers on the left boundary next to an arrow is the color of the arrow. Any edges without arrows should be interpreted as an arrow with color $-\infty$, i.e., a color lower than the colors of all the other arrows in the system.}\label{f.colored S6V}
\end{figure}

\begin{figure}[b]
\begin{tikzpicture}
\newcommand{\thedim}{4.2}
\begin{scope}[scale=0.65]

      \draw[step=\thedim] (0,0) grid (5*\thedim, \thedim);

      \draw[->, line width=1.2pt, red] (0.5*\thedim, 0.25*\thedim) -- ++(0, 0.5*\thedim);
      \draw[->, line width=1.2pt, red] (0.25*\thedim, 0.5*\thedim) -- ++(0.5*\thedim, 0);
      \node[scale=0.8, anchor=west] at (0.75*\thedim+0.1, 0.5*\thedim) {$i$};
      \node[scale=0.8, anchor=south] at (0.5*\thedim, 0.75*\thedim+0.1) {$i$};
      \node[scale=0.8, anchor=east] at (0.25*\thedim-0.1, 0.5*\thedim) {$i$};
      \node[scale=0.8, anchor=north] at (0.5*\thedim, 0.25*\thedim-0.1) {$i$};

      \draw[->, line width=1.2pt, blue] (1.5*\thedim, 0.25*\thedim) -- ++(0, 0.5*\thedim);
      \draw[->, line width=1.2pt, red] (1.25*\thedim, 0.5*\thedim) -- ++(0.5*\thedim, 0);
      \node[scale=0.8, anchor=west] at (1.75*\thedim+0.1, 0.5*\thedim) {$i$};
      \node[scale=0.8, anchor=south] at (1.5*\thedim, 0.75*\thedim+0.1) {$j$};
      \node[scale=0.8, anchor=east] at (1.25*\thedim-0.1, 0.5*\thedim) {$i$};
      \node[scale=0.8, anchor=north] at (1.5*\thedim, 0.25*\thedim-0.1) {$j$};

      \draw[->, line width=1.2pt, red] (2.5*\thedim, 0.25*\thedim) -- ++(0, 0.5*\thedim);
      \draw[->, line width=1.2pt, blue] (2.25*\thedim, 0.5*\thedim) -- ++(0.5*\thedim, 0);
      \node[scale=0.8, anchor=west] at (2.75*\thedim+0.1, 0.5*\thedim) {$j$};
      \node[scale=0.8, anchor=south] at (2.5*\thedim, 0.75*\thedim+0.1) {$i$};
      \node[scale=0.8, anchor=east] at (2.25*\thedim-0.1, 0.5*\thedim) {$j$};
      \node[scale=0.8, anchor=north] at (2.5*\thedim, 0.25*\thedim-0.1) {$i$};

      \draw[->, line width=1.2pt, red] (3.25*\thedim, 0.5*\thedim) -- ++(0.25*\thedim, 0) -- ++(0,0.25*\thedim);
      \draw[->, line width=1.2pt, blue] (3.5*\thedim, 0.25*\thedim) -- ++(0, 0.25*\thedim) -- ++(0.25*\thedim, 0);
      \node[scale=0.8, anchor=west] at (3.75*\thedim+0.1, 0.5*\thedim) {$j$};
      \node[scale=0.8, anchor=south] at (3.5*\thedim, 0.75*\thedim+0.1) {$i$};
      \node[scale=0.8, anchor=east] at (3.25*\thedim-0.1, 0.5*\thedim) {$i$};
      \node[scale=0.8, anchor=north] at (3.5*\thedim, 0.25*\thedim-0.1) {$j$};

      \draw[->, line width=1.2pt, blue] (4.25*\thedim, 0.5*\thedim) -- ++(0.25*\thedim, 0) -- ++(0,0.25*\thedim);
      \draw[->, line width=1.2pt, red] (4.5*\thedim, 0.25*\thedim) -- ++(0, 0.25*\thedim) -- ++(0.25*\thedim, 0);
      \node[scale=0.8, anchor=west] at (4.75*\thedim+0.1, 0.5*\thedim) {$i$};
      \node[scale=0.8, anchor=south] at (4.5*\thedim, 0.75*\thedim+0.1) {$j$};
      \node[scale=0.8, anchor=east] at (4.25*\thedim-0.1, 0.5*\thedim) {$j$};
      \node[scale=0.8, anchor=north] at (4.5*\thedim, 0.25*\thedim-0.1) {$i$};

      \draw[xstep=\thedim, ystep=0.35*\thedim] (0,0) grid (5*\thedim, -0.35*\thedim);

      \newcommand{\they}{-0.175*\thedim}

      \node[scale=0.8] at (0.5*\thedim, \they) {$1$};
      \node[scale=0.8] at (1.5*\thedim, \they) {$\displaystyle\frac{q(1-z)}{1-qz}$};
      \node[scale=0.8] at (2.5*\thedim, \they) {$\displaystyle\frac{1-z}{1-qz}$};
      \node[scale=0.8] at (3.5*\thedim, \they) {$\displaystyle\frac{1-q}{1-qz}$};
      \node[scale=0.8] at (4.5*\thedim, \they) {$\displaystyle\frac{z(1-q)}{1-qz}$};

     \end{scope}
\end{tikzpicture}
\caption{The weights for colored S6V written for colors $i$ and $j$ satisfying $i < j$.}
\label{f.R weights}
\end{figure}

The probability measure on configurations is defined by the following sampling procedure. We start at the vertex $(1,-N)$. Here there is a horizontally incoming arrow determined by the boundary condition $\sigma$ and no vertically entering arrow. We determine the outgoing arrow configuration by choosing each of the two possible configurations (whether the horizontally incoming arrow exits vertically or horizontally) with probability equal to the vertex weight read from Figure~\ref{f.R weights}; note that this uses that the vertex weights are \emph{stochastic}, i.e., non-negative and for a given incoming arrow configuration, the sum of the weights of the possible outgoing arrow configurations satisfying arrow conservation is 1. We then proceed iteratively in a Markovian fashion: if the outgoing arrow configuration at all vertices in the domain $\mc D_n=\{(x,y)\in\Z_{\geq 1}\times\llbracket-N,\infty\rrparen: x+y < n\}$ has been determined, the configuration for vertices on the line $x+y=n$ is determined by picking the outgoing arrow configuration at each vertex independently with probability equal to that vertex's weight (again regarding an absence of an arrow along an edge as an arrow of color $-\infty$); note that the incoming arrows configuration at each such vertex is already determined by the requirement of consistency of the overall configuration, and that the order in which the outgoing arrow configurations are sampled does not matter.

The \emph{uncolored stochastic six vertex model} corresponds to the case of colored S6V with two colors: the arrows of lower color are typically not drawn, leaving only arrows of a single color which may then be regarded as uncolored.

\begin{remark}\label{r.s6v color merging}
Like the colored ASEP, the colored S6V model also satisfies color merging: we can project onto a colored S6V model with fewer colors by declaring all arrows of color lying in an interval $I\subseteq \Z$ to be the same color, which is an element of $I$. Equivalently, the Markov semigroup of the process (viewed as a Markov chain on configurations on vertical lines $x=t$ indexed by the time parameter $t$) commutes with any weakly monotone function $\Z\cup\{-\infty\}\to\Z\cup\{-\infty\}$. That this is true follows from the observation that the vertex weights in Figure~\ref{f.R weights} are determined by only the order relation of the colors of the incident arrows and not their precise value. The uncolored S6V model can again be obtained from the colored one via color merging.
\end{remark}

\subsubsection{The S6\kern-0.04em V sheet}
Recall the notation $j_v$ from Section~\ref{s.s6v configuration} for the color of the arrow horizontally exiting the vertex $v$. Fix a boundary condition $\sigma:\intint{-N,N}\to\intint{-N,N}$. We define the \emph{colored height function}
\begin{align}\label{e.s6v height function}
\hssv(x, 0; y, t) := \#\bigl\{k> y: j_{(t,k)} \geq x\bigr\}
\end{align}
for $x\in\intint{-N, N}$, $y\in\llbracket -N, \infty\rrparen$, and $t\in\N$. In words, $\hssv(x, 0; y,t)$ is the number of arrows of color at least $x$ that exit horizontally from some vertex in $\{t\}\times\llbracket y+1,\infty\rrparen$.

Note that we did not include the boundary condition $\sigma$ in the notation \eqref{e.s6v height function}. The boundary configuration we primarily work with is the \emph{packed boundary condition} (see right panel of Figure~\ref{f.colored S6V}), though some of our results will hold for general boundary conditions as well (see Sections~\ref{s.colored line ensemble} and \ref{s.LPP in discrete line ensemble}). The packed boundary condition corresponds to $\sigma(k) = k$, i.e, the arrows entering each vertex on the left boundary of the domain have color decreasing by $1$ from top to bottom. We will restrict to the packed boundary condition in the rest of this article unless explicitly mentioned otherwise.

In terms of starting position under packed boundary condition, $\hssv(x, 0;y, t)$ above is the number of arrows of any color entering horizontally at a vertex $(1,k)$ for some $k\in\intint{x,N}$ that exit horizontally from $\{t\}\times\llbracket y+1,\infty\rrparen$. As suggested by the notation, the horizontal coordinate should be thought of as time.
Note that $\hssv(x,0;y,t)$ is finite since we started with finitely many arrows on the left boundary. Note also that though the distribution of $\hssv$ depends on $N$, we omit it from the notation (though as alluded to earlier in Remark~\ref{r.N restriction for S6V}, the distribution of $h^{\mrm{S6V}}(x,0;y,t)$ jointly over $x, y\in\intint{-N,N}$ and $t\in\N$ will not depend on $N$).

As in the colored ASEP, the collection $(\hssv(x,0;\bm\cdot, t))_{x\in\intint{-N,N}}$ is a coupled collection of copies of the height functions under \emph{step initial condition} from $x$ for uncolored S6V, which means arrows enter at all heights in $\intint{x,N}$ (again recalling from Remark~\ref{r.N restriction for S6V} that the $N$ cutoff is irrelevant for the domain where we will consider the colored height function). We again refer to this coupling as the \emph{colored coupling}.

Now we define a rescaled version of the height function, which we will call the S6V sheet.
Fix $q\in[0,1)$, $z\in (0,1)$ and a velocity $\alpha \in (z,z^{-1})$ (which corresponds to the full rarefaction fan). Define $\mu(\alpha)$ and $\sigma(\alpha)$ by
\begin{equation}\label{e.mu and sigma}
\begin{split}
\mu(\alpha) &= \mu^{\mrm{S6V}}(\alpha) := - \frac{(\sqrt{\alpha} - \sqrt{z})^2}{1-z} \quad\text{and}\\
\sigma(\alpha) &= \sigma^{\mrm{S6V}}(\alpha) :=\frac{\alpha^{-1/6}z^{1/6}(1-\sqrt{z\alpha})^{2/3}(\sqrt{\alpha}-\sqrt{z})^{2/3}}{1-z};
\end{split}
\end{equation}
these are exactly the limit shape and fluctuation scale expressions as first proven in \cite{borodin2016stochastic}.
Next define the spatial scaling factor $\beta(\alpha) = \beta^{\mrm{S6V}}(\alpha)$ by
\begin{equation}\label{e.S6V spatial scaling}
\beta(\alpha) = \beta^{\mrm{S6V}}(\alpha) := \frac{2\sigma(\alpha)^2}{|\mu'(\alpha)|(1-|\mu'(\alpha)|)}.
\end{equation}

\begin{definition}[S6V sheet]\label{d.s6v sheet}
For $\varepsilon>0$, we define the \emph{S6\kern-0.04em V sheet} $\S^{\mrm{S6V}, \varepsilon} : \R^2 \to \R$, in the case that $|x|,|y|\leq \varepsilon^{-1/6}$ and the arguments of $\hssv$ below are integers, by
\begin{equation}\label{e.rescaled s6v definition}
\begin{split}
\MoveEqLeft[13]
\S^{\mrm{S6V},\varepsilon}(x;y) := \sigma(\alpha)^{-1}\varepsilon^{1/3}\Bigl(\hssv\bigl(\beta(\alpha) x \varepsilon^{-2/3}, 0; \alpha \varepsilon^{-1}+\beta(\alpha) y\varepsilon^{-2/3}, \floor{\varepsilon^{-1}}\bigr)\\
& + \beta(\alpha) x \varepsilon^{-2/3} -N -\mu(\alpha)\varepsilon^{-1} - \mu'(\alpha)\beta(\alpha)(y-x)\varepsilon^{-2/3}\Bigr);
\end{split}
\end{equation}
for $|x|,|y|\in [-\varepsilon^{-1/6},\varepsilon^{-1/6}]$ where the arguments of $\hssv$ are not integers we define $\S^{\mrm{S6V},\varepsilon}(x;y)$ by linear interpolation so that $\S^{\mrm{S6V},\varepsilon}$ is continuous on $\smash{[-\varepsilon^{-1/6}, \varepsilon^{-1/6}]^2}$, and we define it on $\smash{\R^2}\setminus(\smash{[-\varepsilon^{-1/6}, \varepsilon^{-1/6}]^2})$ in an arbitrary way such that the resulting function is again continuous.
\end{definition}

The choice of $[-\varepsilon^{-1/6}, \varepsilon^{-1/6}]$ is merely to ensure that $\hssv$ is well-defined, i.e., its arguments fall within its domain (for all $\varepsilon>0$); this is true under the assumption we will later adopt that $N$ is at least of order $\varepsilon^{-1}$. The value of $\S^{\mrm{S6V},\varepsilon}$ outside $[-\varepsilon^{-1/6},\varepsilon^{-1/6}]^2$ will not be relevant for our results which will only concern uniform convergence on compact sets as $\varepsilon\to 0$. Note that again we omit $N$ from the notation.
The choice of scalings (i.e., $\beta (\alpha)$ and $\sigma(\alpha)$) can be understood by the same heuristics as in Section~\ref{s.asep scalings}. We also observe that this definition differs qualitatively slightly from that of the ASEP sheet in Definition~\ref{d.asep sheet} in that $\S^{\mrm{S6V},\varepsilon}$ involves $h^{\mrm{S6V}}$ without a negative sign, unlike in ASEP, and there is an extra additive term of $\beta(\alpha)x\varepsilon^{-2/3}$ here. The first is because the proof of Theorem~\ref{t.asep airy sheet} will be via a limit of $h^{\mrm{S6V}}$ (as mentioned before), based on a matching between $h^{\mrm{S6V}}(x,0;y,t)$ and $-h^{\mrm{ASEP}}(-x,0; -y,t)$ (omitting some terms; see \eqref{e.approx asep height function definition} for more precision), and relying on a symmetry of the Airy sheet: $\S(x;y) \stackrel{\smash{d}}{=} \S(-x;-y)$ as processes. The introduction of the $\beta(\alpha)x\varepsilon^{-2/3}$ term is to ensure $\S^{\mrm{S6V}, \varepsilon}(\bm\cdot; \bm\cdot) \stackrel{\smash{d}}{=} \S^{\mrm{S6V}, \varepsilon}(\bm\cdot+r;\bm\cdot+r)$, since $h^{\mrm{S6V}}(\bm\cdot, 0; \bm\cdot, t)$ does not itself have this property; the term is absent in $\S^{\mrm{ASEP}, \varepsilon}$ since $h^{\mrm{ASEP}}(\bm\cdot, 0; \bm\cdot, t)$ does have this property.

Our main result for the colored stochastic six-vertex model is the following, where recall we work with the packed boundary condition. It is proved in Section~\ref{s.convergence to Airy sheet}.

\begin{theorem}[Airy sheet convergence for S6V]\label{t.s6v airy sheet}
Fix any asymmetry $q \in [0,1)$, spectral parameter $z\in (0,1)$, and velocity $\alpha\in(z,z^{-1})$ in the rarefaction fan. Assume $N\geq 2\alpha \varepsilon^{-1}$. Then, as $\varepsilon\to 0$, $\smash{\S^{\mrm{S6V},\varepsilon} \stackrel{d}{\to} \S}$ weakly in $\mc C(\R^2,\R)$ with the topology of uniform convergence on compact sets.
\end{theorem}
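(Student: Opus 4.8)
The plan is to deduce Theorem~\ref{t.s6v airy sheet} from the colored Hall--Littlewood line ensemble embedding together with the two structural inputs emphasized in Section~\ref{Modelq}. By Proposition~\ref{p.colored line ensembles}, under the packed boundary condition the colored S6V height function $\hssv(x,0;\bm\cdot,t)$ is realized as the first (top) curves of the colored Hall--Littlewood line ensemble, a color-indexed family $\bm L = (\bm L^{(1)},\ldots,\bm L^{(n)})$ carrying the explicit Hall--Littlewood Gibbs property. Since weak convergence in $\mc C(\R^2,\R)$ under uniform-on-compacts is equivalent to joint weak convergence of $\big(\S^{\mrm{S6V},\varepsilon}(x_1;\bm\cdot),\ldots,\S^{\mrm{S6V},\varepsilon}(x_m;\bm\cdot)\big)$ for every finite tuple $x_1<\cdots<x_m$ together with tightness of the family, I would first fix such a tuple. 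The $x_i$ correspond to colors of $\hssv$, so by color merging (Remark~\ref{r.s6v color merging}) this finite marginal is a functional of a colored S6V model with only $n=m+1$ colors, hence of the corresponding finite colored Hall--Littlewood line ensemble, and the cutoff $N\ge 2\alpha\varepsilon^{-1}$ makes the relevant part of the height function independent of $N$ (Remark~\ref{r.N restriction for S6V}).

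Within this finite family, the line ensemble $\bm L^{(1)}$ indexed by the minimal color marginally has the law of an \emph{uncolored} Hall--Littlewood line ensemble (Proposition~\ref{p.L has HL}), so I would apply Theorem~\ref{t.line ensemble convergence to parabolic Airy} to conclude that, under the KPZ rescaling built from $\mu(\alpha),\sigma(\alpha)$ of \eqref{e.mu and sigma} and the spatial factor $\beta(\alpha)$ of \eqref{e.S6V spatial scaling}, its top edge converges to the parabolic Airy line ensemble $\bm\cP$ (with the standard diffusion/curvature normalization of Definition~\ref{d.parabolic Airy line ensemble}, for which the scalings of Section~\ref{s.asep scalings} are tuned). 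Next, Theorem~\ref{t.approxmate LPP problem representation general} asserts that with probability tending to $1$ each remaining ensemble $\bm L^{(j)}$ is, up to a vanishing error on the KPZ scale, determined by a last passage percolation problem across $\bm L^{(1)}$ whose starting height is governed by the color label $x_j$. Combining these two facts with continuity of the LPP map $\bm f \mapsto \bm f[(\bm\cdot,\bm\cdot)\to(\bm\cdot,\bm\cdot)]$ (Definition~\ref{d.lpp}) on the relevant compact sets, the finite marginal $\big(\S^{\mrm{S6V},\varepsilon}(x_1;\bm\cdot),\ldots,\S^{\mrm{S6V},\varepsilon}(x_m;\bm\cdot)\big)$ converges jointly in law to the corresponding collection of LPP increments across $\bm\cP$.

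It then remains to identify this limit with the Airy sheet. By construction the limiting increments are exactly those appearing in the coupling \eqref{e.airy sheet increment} of Definition~\ref{d.airy sheet}; the additive term $\beta(\alpha)x\varepsilon^{-2/3}$ inserted in \eqref{e.rescaled s6v definition} makes $\S^{\mrm{S6V},\varepsilon}$ stationary under diagonal shifts, so the limit inherits property (i), while the one-point asymptotics of $\hssv$ from \cite{borodin2016stochastic} pin down the overall additive constant, i.e.\ identify the reference curve $\S(0;\bm\cdot)=\cP_1(\bm\cdot)$. Uniqueness of the Airy sheet then forces the limit to be $\S$. To upgrade from these finite-dimensional-in-$x$ limits to weak convergence in $\mc C(\R^2,\R)$ I would prove tightness of $\{\S^{\mrm{S6V},\varepsilon}\}$ on compact sets, using monotonicity of LPP values in their endpoints, the one-point fluctuation bounds of \cite{borodin2016stochastic}, and equicontinuity estimates supplied by the Hall--Littlewood Gibbs property, yielding a H\"older-type modulus of continuity in both spatial variables.

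The main obstacle is Theorem~\ref{t.line ensemble convergence to parabolic Airy}: establishing tightness of the uncolored Hall--Littlewood line ensemble around its top edge \emph{without} the FKG monotonicity that underlies all previous arguments of this kind, and then showing that subsequential limits are Brownian Gibbsian, so that the strong characterization of \cite{aggarwal2023strong} (or, in the ASEP case, \cite{dimitrov2021characterization,quastel2022convergence}) pins the limit down as $\bm\cP$. As outlined in Section~\ref{s.intro.line ensemble tightness}, this rests on an inductive \emph{a priori} separation estimate for the lower curves, proved using only the Gibbs property and the weak one-curve monotonicity refined from \cite{corwin2018transversal}, and exploiting that the reweighting factors $1-q^{d}$ are nearly $1$ once consecutive curves are separated by $\gg \log t$. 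A secondary technical point is controlling the accumulated error in the approximate LPP representation of Theorem~\ref{t.approxmate LPP problem representation general} uniformly over the compact set of space parameters, so that the per-color $o(1)$ errors do not compound when finitely many colors are considered jointly.
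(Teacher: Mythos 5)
Your outline matches the paper's broad strategy — uncolored Hall--Littlewood edge convergence (Theorem~\ref{t.line ensemble convergence to parabolic Airy}) plus the approximate LPP representation (Theorem~\ref{t.approxmate LPP problem representation general}), fed into a tightness argument via the Gibbs property. But the central step where you "combine these two facts with continuity of the LPP map" hides the genuine difficulty and, as stated, does not go through. In the prelimiting approximate LPP formula \eqref{e.scaled approximate LPP representation}, the environment's top $k$ curves $\bm\cL^{(1),N}$ converge to $\bm\cP$, but the starting term $\cL^N_{N^{1/6}}(x;\bm\cdot)$ — which is the deep curve $L^{(j)}_{N^{1/6}}$ of the \emph{$j$-colored} (not uncolored) ensemble, rescaled — is not shown to converge to anything, and indeed the paper never establishes any scaling limit for it. Moreover the number of lines in the LPP problem is taken to be $N^{1/6}\to\infty$; the LPP functional of Definition~\ref{d.lpp} is not continuous (and does not even make sense pointwise) as the number of lines diverges. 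So the argument cannot proceed by invoking joint convergence of inputs and continuity of the map.

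What the paper does instead is subtler: it identifies only the \emph{increments} $\S^N(x;y_1)-\S^N(x;y_2)$, noting that the uncontrolled starting term $\cL^N_{N^{1/6}}(x;\bm\cdot)$ and the LPP down to line $k$ cancel exactly in the difference at the maximizer $Z^N_k(x,y_i)$ (see \eqref{e.S difference setup}--\eqref{e.S increment lower bound}). One then needs two additional inputs that are absent from your proposal: (a) tightness and asymptotics of the intermediate-line maximizer $Z^N_k(x,y)$ (Proposition~\ref{p.maximizer location}, ultimately $Z^N_k \approx -\sqrt{k/(2x)}$), which requires the monotonicity Lemma~\ref{l.Z_k monotone} and the $G_k$ asymptotics of Lemma~\ref{l.G_k asymptotic}; and (b) the crossing lemma (Lemma~\ref{l.crossing lemma}) to replace $Z^N_k(x,y_2)$ by a deterministic point and then send $k\to\infty$, after which continuity of $\S$ in its first argument is used to remove the final $\varepsilon$. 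Only with these does one recover the defining coupling \eqref{e.airy sheet increment}. Similarly, the tightness of $\S^N$ in two variables (Proposition~\ref{p.S_N tightness}) is reduced to one-variable modulus-of-continuity using the quadrangle inequality (Lemma~\ref{l.quadrangle}) and the shift-invariance symmetry $\S^N(y;x)\stackrel{d}{=}\S^N(-x;-y)$ (Lemma~\ref{l.S_N symmetry}, which relies on the shift-invariance results of \cite{galashin2021symmetries}), not by "monotonicity of LPP values in endpoints" per se; the quadrangle inequality is a separate (deterministic) property of the colored height function that must be established. Finally, the finiteness of $\oldN$ needed for Theorem~\ref{t.approxmate LPP problem representation general}'s $\log\oldN$ threshold is handled by a finite-speed-of-discrepancy coupling (\cite[Lemma B.3]{aggarwalborodin}) to reduce to $\oldN=\varepsilon^{-2}$, which is stronger than what Remark~\ref{r.N restriction for S6V} gives you (it only asserts distributional invariance of the height function, not a coupling).
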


The lower bound on $N$ in Theorem~\ref{t.s6v airy sheet} is merely to ensure that $\S^{\mrm{S6V}, \varepsilon}$ is well-defined.

\subsubsection{The basic coupling for S6V}\label{s.s6v basic coupling}

We next give an analog of ASEP's basic coupling for S6V, which again corresponds to using the same randomness for the evolution of different initial conditions. This will allow us to define the S6V landscape which we show converges to the directed landscape.
We work in the model of uncolored S6V, or, equivalently, colored S6V with two colors, which we take to be $1$ and $-\infty$, where as usual $-\infty$ represents the absence of an arrow (as discussed in Section~\ref{s.s6v configuration}).

Consider finitely many initial conditions $\eta_0^{(1)}, \ldots, \eta_0^{(K)}\in \{-\infty,1\}^{\llbracket -N, \infty\rrparen}$ where $\eta_0^{(j)}(x)$ represents the presence or absence of a horizontal arrow at location $(1,x)$, i.e., $\smash{i_{(1,x)} = \eta_0^{(j)}(x)}$. As a notational convention, we will use $\eta$ for uncolored initial conditions (as opposed to $\sigma$ for colored initial conditions as in Section~\ref{s.s6v configuration}).

Let $b^{\shortuparrow} = q(1-z)/(1-qz)$ and $b^{\shortrightarrow} = (1-z)/(1-qz)$ which, from Figure~\ref{f.R weights}, are the probabilities of the arrow of higher color going straight through a vertex vertically or horizontally, respectively. We start with a collection of independent $0$-$1$ Bernoulli random variables $\{\smash{X^{\shortuparrow}_{v}, X^{\shortrightarrow}_{v}\}_{v\in\Z_{\geq 1}\times\llbracket -N, \infty\rrparen}}$, where $\P(\smash{X^{\shortuparrow}_{v}} = 1) = b^{\shortuparrow}$ and $\P(\smash{X^{\shortrightarrow}_{v}} = 1) = b^{\shortrightarrow}$ for each $\smash{v\in\Z_{\geq 1}\times\intray{-N}}$.

Given this randomness, the configuration is specified deterministically from the initial condition as follows. For any vertex $(x,y)\in\Z_{\geq 1}\times\llbracket -N, \infty\rrparen$ such that the incoming arrow configuration has been assigned to the vertex at $(x,y)$ (either by the boundary conditions or by the assignment of outgoing arrow configurations to the vertices at $(x-1, y)$ and $(x,y-1)$, along with consistency), let $I_{\shortrightarrow}, I_{\shortuparrow}\in\{-\infty,1\}$ respectively be $i_{(x,y)}$ and $a_{(x,y)}$, the colors of the horizontally and vertically incoming arrows at $(x,y)$ respectively. The colors $O_{\shortrightarrow}, O_{\shortuparrow}\in\{-\infty,1\}$ of the horizontally and vertically outgoing arrows from $(x,y)$ (i.e., $j_{(x,y)}$ and $b_{(x,y)}$), respectively  are defined by:
\begin{enumerate}
  \item If $I_{\shortrightarrow} = I_{\shortuparrow}$, then $O_{\shortrightarrow} = O_{\shortuparrow} = I_{\shortrightarrow}= I_{\shortuparrow}$.

  \item If $(I_{\shortrightarrow}, I_{\shortuparrow})= (-\infty,1)$, then $(O_{\shortrightarrow}, O_{\shortuparrow}) = (-\infty,1)$ if $X^{\shortuparrow}_{(x,y)} = 1$ and $(O_{\shortrightarrow}, O_{\shortuparrow}) = (1,-\infty)$ if $X^{\shortuparrow}_{(x,y)} = 0$.

  \item If $(I_{\shortrightarrow}, I_{\shortuparrow})= (1,-\infty)$, then $(O_{\shortrightarrow}, O_{\shortuparrow}) = (1,-\infty)$ if $X^{\shortrightarrow}_{(x,y)} = 1$ and $(O_{\shortrightarrow}, O_{\shortuparrow}) = (-\infty,1)$ if $X^{\shortrightarrow}_{(x,y)} = 0$.
\end{enumerate}

Note that for any times $s_j\in\N$, we may also specify (horizontally entering) boundary data $\eta_0^{(j)}$ at vertices on a vertical line $(s_j, i)$, $i\in\llbracket -N, \infty\rrparen$ and obtain the evolution of the arrows to the right using this definition, i.e., we may couple together the evolutions starting from different initial conditions at different times (by using the same randomness $(X_v^{\shortuparrow}, X_v^{\shortrightarrow})_v$).

We observe as in the case of the ASEP that the basic coupling specializes to the colored coupling in the case that the set of arrows in $\smash{\eta_0^{(j+1)}}$ is a subset of those in $\smash{\eta_0^{(j)}}$ for each $j$, i.e., $\smash{\eta_0^{(j+1)}(x) = 1}$ implies $\smash{\eta_0^{(j)}(x) = 1}$ for any $x\in\llbracket -N, \infty\rrparen$.

Next we define the S6V height function when started from a general initial height function.  Let  $h_0:\llbracket -N-1, \infty\rrparen\to \Z$ be a Bernoulli path such that $h_0(x) = 0$ for all large enough $x$. As in ASEP (recall \eqref{e.height to config}), we define an associated arrow configuration $\eta_{h_0}\in\{-\infty,1\}^{\llbracket -N, \infty\rrparen}$ by
\begin{align}\label{e.s6v particle from h}
\eta_{h_0}(x) := \begin{cases}
1 & h_0(x-1) - h_0(x) = 1\\
-\infty & h_0(x-1) - h_0(x) = 0.
\end{cases}
\end{align}
The assumption of $h_0$ being asymptotically zero implies that $\eta_{h_0}$ consists of only finitely many arrows (note that we did not assume the analogous condition of right-finiteness for ASEP from a general height function, but it will be convenient to impose this condition for S6V).

Now, for such a Bernoulli path $h_0$ and $s\in\N$, consider the S6V model with initial condition $\eta_{h_0}$ started at time $s\in\N\cup\{0\}$ (i.e., $\eta_{h_0}$ is the configuration on $\{s\}\times\llbracket -N, \infty\rrparen$), and let $j_{(x,y)}$ be the arrow exiting horizontally from $(x,y)$. Define the height function $(y,t)\mapsto \hssv(h_0, s; y,t)$ for $t>s$ (with $s,t\in\N\cup\{0\}$) and $y\in\llbracket -N, \infty\rrparen$ by
\begin{equation}\label{e.s6v height function gen initial condition}
\hssv(h_0,s; y,t) := \#\bigl\{k> y: j_{(t,k)} = 1\bigr\}.
\end{equation}

For multiple initial conditions $(h_0^{(k)})_{k=1}^K$ and times $s_k>0$, the height functions $(y,t)\mapsto\smash{\hssv(h_0^{(k)}, s_j; y,t)}$ are coupled together via the basic coupling for S6V.

\subsubsection{The S6V landscape} Using this coupling we may define the S6V landscape. As in the ASEP case, we start with a precursor: for $x,y\in\intint{-N,N}$ with $x<y$ and $0\leq s<t$ in $\N$, we overload the notation and define
\begin{align*}
\hssv(x,s; y, t) := \hssv(h_{0,x}, s; y,t)
\end{align*}
coupled via the basic coupling, where, for $z\in\llbracket -N-1,\infty\rrparen$,
$$h_{0,x}(z) = (x-z)\one_{z\leq x};$$
note that this function is zero for all large $z$. It follows that $\hssv(x,s; y, t)$ generalizes $\hssv(x,0; y, t)$ as a process in $x$, $y$, and $t$.

\begin{definition}[S6V landscape]\label{d.s6v landscape}
Recall $\mu(\alpha)$, $\sigma(\alpha)$, and $\beta(\alpha)$ from \eqref{e.mu and sigma} and \eqref{e.S6V spatial scaling}. We define the \emph{S6V landscape} for $\varepsilon>0$ by
\begin{align*}
\MoveEqLeft[15]
\mc L^{\mrm{S6V}, \varepsilon}(x,s;y,t) := \sigma(\alpha)^{-1}\varepsilon^{1/3}\Bigl(\hssv\bigl(\beta(\alpha)x\varepsilon^{-2/3}, \floor{s\varepsilon^{-1}}; \alpha (t-s)\varepsilon^{-1} + \beta(\alpha)y\varepsilon^{-2/3}, \floor{t\varepsilon^{-1}}\bigr)\\
& + \beta(\alpha)x\varepsilon^{-2/3} -N - \mu(\alpha)(t-s)\varepsilon^{-1} - \mu'(\alpha)\beta(\alpha)(y-x)\varepsilon^{-2/3}\Bigr);
\end{align*}
again this definition is for $|x|, |y| \leq \varepsilon^{-1/6}$ such that the arguments of $h^{\mrm{S6V}}$ are integers, under the assumption that $N$ is at least of order $\varepsilon^{-1}$. For all other $x,y$, the function is defined by linear interpolation so that it is continuous in $x$ and $y$.
\end{definition}

We now state the convergence result for $\mc L^{\mrm{S6V},\varepsilon}$, which is proved in Appendix~\ref{s.general initial condition} by using an approximate monotonicity estimate for the S6V model under basic coupling (Proposition~\ref{p.s6v approximate monotonicity}) together with an idea communicated to us by Shalin Parekh. Recall that for a set $\mc T$, $\mc T^2_< = \{(s,t)\in\mc T^2: s<t\}$.

\begin{corollary}[Directed landscape convergence for S6V]\label{c.s6v landscape}
Fix $q\in[0,1)$, $z\in(0,1)$, $\alpha\in (z,z^{-1})$, and a countable set $\mc T\subseteq [0,\infty)$. Assume $N\geq 2\alpha \varepsilon^{-1}$. Then, the following limits hold as $\varepsilon\to 0$.

\begin{enumerate}
  \item $\smash{(\mc L^{\mrm{S6V},\varepsilon}(\bm\cdot, s; \bm\cdot, t))_{(s,t)\in\mc T^2_<}\xrightarrow{d} (\mc L(\bm\cdot, s; \bm\cdot, t))_{(s,t)\in\mc T^2_<}}$ in $\smash{\mc C(\R^2,\R)^{\mc T^2_<}}$ under the topology of uniform convergence on compact sets.

  \item As a special case, for $k\in\N$ and $x_k< \ldots <x_1$,
  $$\smash{(\mc L^{\mrm{S6V},\varepsilon}(x_i, 0; \bm\cdot, t))_{i\in\intint{1,k}, t\in\mc T}\xrightarrow{d} (\mc L(x_i, 0; \bm\cdot, t))_{i\in\intint{1,k}, t\in\mc T}}$$
  in $\smash{\mc C(\R,\R)^{\intint{1,k}\times\mc T}}$ under the topology of uniform convergence on compact sets.

\end{enumerate}

\end{corollary}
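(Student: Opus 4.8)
The plan is to deduce Corollary~\ref{c.s6v landscape} from the Airy sheet convergence of Theorem~\ref{t.s6v airy sheet}, following an idea of Shalin Parekh, by verifying that the prelimit S6V landscape $\mc L^{\mrm{S6V},\varepsilon}$ satisfies, in a limiting sense, the three properties characterizing the directed landscape in Definition~\ref{d.directed landscape}. Since $\mc T$ is countable and weak convergence in $\mc C(\R^2,\R)^{\mc T^2_<}$ with the product topology is equivalent to joint convergence of all finite coordinate projections (each coordinate being tight, since it converges), it suffices to prove joint convergence of $\big(\mc L^{\mrm{S6V},\varepsilon}(\bm\cdot, s_i;\bm\cdot, s_j)\big)_{1\le i<j\le m}$ for an arbitrary finite set $s_1<\dots<s_m$ in $\mc T$. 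Part~(2) is then the special case $s_1=0$ with the first spatial coordinate evaluated at $x_k<\dots<x_1$, and, via color merging (Remark~\ref{r.s6v color merging}), may additionally be read as a scaling limit for $k$-colored S6V under step boundary data.

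\emph{Airy sheet marginals and independent increments.} The colored S6V vertex weights and the bottom boundary data are invariant under horizontal (time) translation, so $\mc L^{\mrm{S6V},\varepsilon}(\bm\cdot, s;\bm\cdot, t)$ --- which, jointly in its first spatial argument, is a KPZ rescaling of the colored height function started from packed boundary data at time $\floor{s\varepsilon^{-1}}$ --- is equal in law to the same quantity started at time $0$. Matching the scalings in Definitions~\ref{d.s6v sheet} and~\ref{d.s6v landscape} yields the identity $\mc L^{\mrm{S6V},\varepsilon}(\bm\cdot, s;\bm\cdot, t)\stackrel{d}{=}(t-s)^{1/3}\,\S^{\mrm{S6V},\varepsilon'}\!\big((t-s)^{-2/3}\bm\cdot\,;(t-s)^{-2/3}\bm\cdot\big)$ for $\varepsilon'$ of order $\varepsilon/(t-s)$, up to an $O(\varepsilon^{1/3})$ correction coming from the floor functions in the time arguments (absorbed using a priori modulus-of-continuity bounds for $\hssv$). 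By Theorem~\ref{t.s6v airy sheet} the right-hand side converges to $(t-s)^{1/3}\,\S\big((t-s)^{-2/3}\bm\cdot\,;(t-s)^{-2/3}\bm\cdot\big)$, which is $\mc L(\bm\cdot, s;\bm\cdot, t)$ in law. Moreover, from the construction of the basic coupling for S6V (Section~\ref{s.s6v basic coupling}), $\mc L^{\mrm{S6V},\varepsilon}(\bm\cdot, s_i;\bm\cdot, s_{i+1})$ depends on the Bernoulli variables $\{X^{\shortuparrow}_v,X^{\shortrightarrow}_v\}$ only through vertices with time coordinate in $[\floor{s_i\varepsilon^{-1}},\floor{s_{i+1}\varepsilon^{-1}})$, which for distinct $i$ are disjoint sets; hence the increments over the consecutive intervals $(s_i,s_{i+1})$ are already independent at the prelimit. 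Combined with the marginal convergence, $\big(\mc L^{\mrm{S6V},\varepsilon}(\bm\cdot, s_i;\bm\cdot, s_{i+1})\big)_{i=1}^{m-1}$ converges jointly to independent rescaled Airy sheets, i.e.\ to $\big(\mc L(\bm\cdot, s_i;\bm\cdot, s_{i+1})\big)_{i=1}^{m-1}$ by properties (i)--(ii) of Definition~\ref{d.directed landscape}.

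\emph{Metric composition and the main obstacle.} The remaining, non-consecutive marginals are pinned down by the metric composition law, and establishing its prelimit analogue is where the real work lies. By the Markov property of S6V, $\hssv(x,\floor{s_i\varepsilon^{-1}};y,\floor{s_j\varepsilon^{-1}})$ equals the height at $(y,\floor{s_j\varepsilon^{-1}})$ of S6V run from time $\floor{s_{j-1}\varepsilon^{-1}}$ out of the random profile $P:=\hssv(x,\floor{s_i\varepsilon^{-1}};\bm\cdot,\floor{s_{j-1}\varepsilon^{-1}})$. Using the approximate monotonicity estimate for S6V under the basic coupling (Proposition~\ref{p.s6v approximate monotonicity}), one shows that the height function out of such a profile is, with high probability and uniformly on the KPZ scale, within $o(\varepsilon^{-1/3})$ of the variational expression $\max_z\big(P(z)+\hssv(z,\floor{s_{j-1}\varepsilon^{-1}};y,\floor{s_j\varepsilon^{-1}})+\mrm{const}\big)$ --- an identity exact for last passage percolation but only approximate for S6V, which is exactly why the approximate monotonicity input is required. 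Since the maximizer $z$ is localized to a compact window in rescaled coordinates (by the one-point asymptotics of \cite{borodin2016stochastic} together with parabolic-curvature bounds for the Airy sheet), rescaling and using continuity of $\max$ under uniform convergence on compacts gives $\mc L^{\mrm{S6V},\varepsilon}(x,s_i;y,s_j)-\max_z\big(\mc L^{\mrm{S6V},\varepsilon}(x,s_i;z,s_{j-1})+\mc L^{\mrm{S6V},\varepsilon}(z,s_{j-1};y,s_j)\big)\to0$ in probability, uniformly for $(x,y)$ in compacts. Iterating, each $\mc L^{\mrm{S6V},\varepsilon}(\bm\cdot, s_i;\bm\cdot, s_j)$ is an asymptotically negligible perturbation of the iterated max-composition of the consecutive increments; passing to the limit, $\big(\mc L^{\mrm{S6V},\varepsilon}(\bm\cdot, s_i;\bm\cdot, s_j)\big)_{i<j}$ converges to the iterated max-compositions of independent rescaled Airy sheets, which by property (iii) of Definition~\ref{d.directed landscape} is precisely the restriction of the directed landscape to the times $s_1,\dots,s_m$. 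This proves part~(1); the main difficulty throughout is the approximate composition step, together with the care needed to make its error uniform over compacts and valid jointly with the random, model-generated intermediate profiles $P$.
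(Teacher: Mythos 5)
Your overall skeleton --- reduce to finite subsets of $\mc T$, establish Airy sheet marginals via the scaling relation and Theorem~\ref{t.s6v airy sheet}, establish independence of consecutive increments from the disjointness of the Bernoulli randomness, and then close via the metric composition law --- matches the paper's strategy, and your treatment of the first two steps is essentially correct (up to the omitted reduction from arbitrary $N$ to $N$ of polynomial size in $\varepsilon^{-1}$ via the finite speed of discrepancy Lemma~\ref{l.finite speed of prop for s6v basic coupling}, which is needed so that the $(\log N)^2$ error in Proposition~\ref{p.s6v approximate monotonicity} vanishes after rescaling).

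The gap is in the metric composition step. You assert that Proposition~\ref{p.s6v approximate monotonicity} yields a two-sided approximation: that $\hssv(h_0,s;y,t)$ is, up to $o(\varepsilon^{-1/3})$, \emph{equal} to the variational expression $\max_z\bigl(h_0(z)+\hssv(z,s;y,t)+\text{const}\bigr)$. But approximate height monotonicity gives only one direction. Concretely, one first observes the deterministic same-time inequality \eqref{e.common time s6v inequality}; viewing $\hssv(z,s;\bm\cdot,s)$ and $\hssv(x,r;\bm\cdot,s)$ as ordered initial profiles and running the basic coupling, Proposition~\ref{p.s6v approximate monotonicity} preserves the ordering up to an $M\ge(\log N)^2$ error, which after rescaling gives (for any subsequential limit $\tilde{\mc L}$) only
\begin{align*}
\tilde{\mc L}(x,r;y,t) \;\geq\; \sup_{z\in\R}\Bigl(\tilde{\mc L}(x,r;z,s)+\tilde{\mc L}(z,s;y,t)\Bigr).
\end{align*}
There is no analogous argument from the dynamics giving the reverse inequality; indeed, as the introduction stresses, no exact or approximate LPP/polymer duality for S6V ($q>0$) is known, so one cannot expect a direct proof that the height function \emph{equals} the variational expression at the prelimit. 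The paper closes the gap indirectly: both sides of the displayed inequality are marginally distributed as the corresponding expressions with $\mc L$ in place of $\tilde{\mc L}$ (by the marginal and independence-of-consecutive-increments facts already proved), $\mc L$ satisfies the composition law with equality, so the expectations of both sides agree pointwise, and then Lemma~\ref{l.X=Y} upgrades the a.s.\ inequality between integrable random variables with equal means to an a.s.\ equality (first for rational $(x,y)$, then by continuity for all $(x,y)$). Your proof proposal is missing exactly this distributional-matching step; without it the argument does not close.
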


\begin{remark}[Scaling limit of S6V with finitely many colors]\label{r.s6v finitely many colors}
Analogous to the ASEP case (Remark \ref{r.asep finitely many colors}), Corollary~\ref{c.s6v landscape} (2) can be interpreted as a scaling limit result for the S6V with finitely many $k \ge 1$ colors via color merging. More precisely, fix real numbers $x_k <  \ldots < x_1$. Consider colored S6V with initial condition of arrows of color $k$ at locations $\intint{\floor{\beta(\alpha)x_1\varepsilon^{-2/3}}+1, N}$; arrows of color $0$ at locations $\intint{-N,\floor{\beta(\alpha)x_k\varepsilon^{-2/3}}}$; and arrows of color $k-i$ at all locations in $\intint{\floor{\beta(\alpha)x_{i+1}\varepsilon^{-2/3}}+1, \floor{\beta(\alpha)x_i\varepsilon^{-2/3}}}$ for $i=1, \ldots, k-1$.
Then, by color merging, $(\mc L^{\mrm{S6V},\varepsilon}(x_1; \bm\cdot),  \ldots, \mc L^{\mrm{S6V},\varepsilon}(x_k; \bm\cdot))$ is exactly a centered and rescaled version of the height functions associated to the colored S6V with $k$ different colors and, by Corollary~\ref{c.s6v landscape} (2), they converge to the directed landscape.
\end{remark}

\begin{remark}\label{r.s6v landscape}
As mentioned in Section~\ref{s.intro.sample result}, we do not pursue a S6V version of Corollary~\ref{c.asep general initial condition} (3), i.e., joint convergence of multiple initial conditions to the KPZ fixed point (though we expect such a result still holds). This is because there is not currently a S6V analog of the work of \cite{quastel2022convergence}, i.e., convergence from a single general initial condition to the KPZ fixed point.

However, the stationary measures for the colored S6V coincide with those of the colored ASEP (as is a consequence of the convergence of the former to the latter; see \cite[Proposition B.2]{aggarwalborodin}); so, the stationary horizon convergence of the colored ASEP (Corollary~\ref{c.SH convergence}) implies the same for the colored S6V. 
\end{remark}


\section{Colored $q$-Boson model, line ensembles, and Gibbs properties}\label{s.key ideas}

As we saw in Definition~\ref{d.airy sheet}, the Airy sheet is given in terms of a last passage percolation problem in the parabolic Airy line ensemble $\bm\cP$. As outlined in Section~\ref{Asymptoticq0}, the starting point of our proof is a prelimiting analog of this relation, in terms of a certain colored line ensemble, called the \emph{colored Hall-Littlewood line ensemble}, that is associated to the colored S6V model and was recently introduced in \cite{aggarwalborodin} (we reproduce the argument for the relation between the colored line ensemble and the colored S6V model in Appendix~\ref{s.yang-baxter}). This line ensemble is defined in terms of an auxiliary vertex model which we refer to as the \emph{colored $q$-Boson model}, and we introduce it in Section~\ref{s.intro.vertex model}. We will then introduce the colored Hall-Littlewood line ensemble and its Gibbs property in Section~\ref{s.colored line ensemble}. Section~\ref{s.line ensemble convergence} states the convergence of uncolored line ensembles to $\bm\cP$. Section~\ref{s.LPP in discrete line ensemble} states the precise prelimiting LPP representation of higher color height functions.

\subsection{Colored $q$-Boson model}\label{s.intro.vertex model}

The \emph{colored and uncolored $q$-Boson models} we are introducing are vertex models on the domain $\Z_{\leq 0}\times \intint{1, N+M}$ for $N,M\in\N$; note that this is different from the domain for the colored stochastic six-vertex model, and we will make the connection between the two in Proposition~\ref{p.colored line ensembles}. As in the stochastic six vertex model introduced in Section~\ref{s.intro.cS6V}, there are two parameters: a \emph{quantization parameter} $q\in[0,1)$ and a \emph{spectral parameter} $z\in(0,1)$, both of which are held fixed. We will focus on the colored model for the time being. Again as in the colored S6V model, informally (see the next paragraph for a more precise description), a configuration of the model consists of colored arrows, with the color of an arrow being an integer in $\intint{1,N}$. These arrows enter the domain at the left boundary and move through the domain forming up-right paths, satisfying colored arrow conservation at each vertex; we will specify the colors of these arrows via a boundary condition $\sigma$ shortly (Section~\ref{s.higher spin boundary condition}). There is also an exiting boundary condition: all arrows in the system exit vertically from $(0,N+M)$.

An important difference compared to the colored S6V model is that here an arbitrary number of arrows may occupy the same vertical edge (though horizontal edges are still restricted to at most 1 arrow), as well as the fact that the weights (as will be introduced ahead in Figure~\ref{f.L weights}) are not stochastic, i.e., do not sum to $1$ when summing over all possible outgoing vertex configurations for fixed incoming vertex configuration; thus the model's probability distribution is defined via a product of vertex weights with an additional non-trivial normalization factor. Finally, note that in the colored $q$-Boson model we allow arrows to have only positive colors.

\begin{figure}[b]
\begin{tikzpicture}[scale=0.7]
\newcommand{\Nval}{3}
\newcommand{\Mval}{4}
\newcommand{\thelength}{6.5}

  \foreach \y in {1,..., \Mval}
    \draw (\Mval-1, -\Mval + \y) -- ++(\thelength+1,0);

  \foreach \y in {1,..., \Nval}
   \draw (\Mval-1, \Nval - \y + 1) -- ++(\thelength+1,0);

  \foreach \x in {1,..., 3, 4.5, 5.5, ...,\thelength}
    \draw[thick] (\Mval-1 + \x, -\Mval + 0.5) -- ++ ($(0,\Mval+\Nval)$);

  \node[scale=1.8] at (\Mval-1+3.85, 0.5) {$\cdots$};

  \foreach \x in {1,..., \Mval}
    \draw[->]  (\Mval-1.5, \x-1) -- node[scale=0.7, anchor = east, left=2pt]{$z$} ++(0.35, 0);

  \foreach \x in {1,..., \Nval}
    \draw[->]  (\Mval-1.5, \x-1 -\Nval) -- node[scale=0.7, anchor = east, left=2pt]{$1$} ++(0.35, 0);

  \foreach \x/\thecolor in {1/blue, 2/orange, 3/red}
  {
    \draw[\thecolor, very thick] (\Mval-1, \x-\Nval-1) -- ++(3,0);
    \draw[\thecolor, very thick, dashed] (\Mval+2.25, -\Mval+\x) -- ++(1,0);
  }

  \draw[red, very thick, ->] (\Mval + 3.5, -\Mval + 3) -- ++(1,0) -- ++(0,2) -- ++(1-0.085,0) -- ++(0,2.5);
  \draw[orange, very thick, ->] (\Mval + 3.5, -\Mval + 2) -- ++(1,0) -- ++(0,1) -- ++(1,0) -- ++(0,4.5);
  \draw[blue, very thick, ->] (\Mval + 3.5, -\Mval + 1) -- ++(2+0.085,0) -- ++(0,6.5);

  \foreach \x in {1,..., \Nval}
    \node[scale=0.8, anchor=south] at  (\Mval - 0.6, \x-\Nval-1) {$\sigma(\x)$};

  \draw [decorate,decoration={brace, mirror}, semithick]
  (\Mval-2.2, \Nval) --node [pos=0.5, anchor = east, scale=0.8, left=2pt] {$M$}  ++(0, -\Mval+1) ;

  \draw [decorate,decoration={brace, mirror}, semithick]
  (\Mval-2.2, -1) --node [pos=0.5, anchor = east, scale=0.8, left=2pt] {$N$}  ++(0, -\Nval+1) ;

  \node[circle, fill, inner sep = 1pt] (dot) at (\Mval-1+\thelength-1, \Mval-3) {};
  \node[scale=0.6] (A) at (\Mval + \thelength+0.8, \Mval-2.5) {$(-1,N+2)$};
  \draw[->, dashed, semithick] (A) to[out=165, in=45] (dot);
\end{tikzpicture}
\caption{The colored $q$-Boson model. The $z\to$ and $1\to$ at the left indicate the spectral parameters of the $\bigL$ weights associated to vertices in the corresponding rows, the expressions above the entry point of the arrows indicate the corresponding arrow's color, and the $\bm\cdots$ indicate that there are infinitely many columns, though, by definition of the boundary conditions, the arrows travel purely horizontally except until finitely many columns from the right.} 
\label{f.vertex model}
\end{figure}
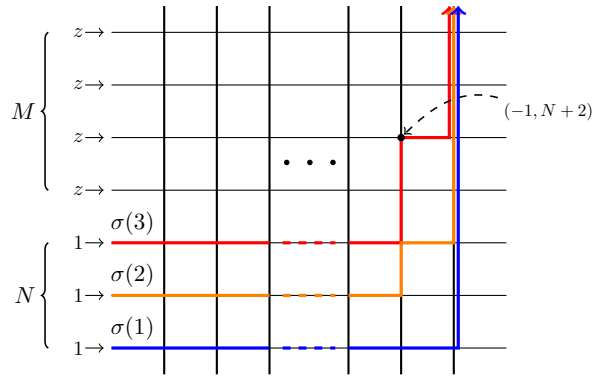

\subsubsection{Arrow configurations} \label{s.higher spin configuration} More formally, for a vertex $v\in\Z_{\leq0}\times\intint{1,N+M}$, we denote the arrow configuration at $v$ by a tuple $(\bm A_v, i_v; \bm B_v, j_v)$, where $i_v,j_v \in \intint{0,N}$ and $\bm A_v,\bm B_v\in \Z_{\geq 0}^N$ are vectors $(A_{v,1}, \ldots, A_{v,N})$ and $(B_{v,1}, \ldots, B_{v,N})$, such that $A_{v,k} + \one_{i_v=k} = B_{v,k} + \one_{j_v=k}$ for each $k\in\intint{1,N}$.  Here $i_v$ and $j_v$ represent the colors of the horizontally incoming and outgoing arrows at $v$, respectively, while $A_{v,k}, B_{v,k}$ represent the number of arrows of color $k$ vertically entering and exiting at $v$, respectively, for $k\in\intint{1,N}$. For this model, color 0 will represent no arrow (unlike the colored S6V model in Section~\ref{s.s6v configuration}, where $-\infty$ did). The condition $A_{v,k} + \one_{i_v=k} = B_{v,k} + \one_{j_v=k}$ is that of \emph{colored arrow conservation}. A configuration of the model is an assignment of arrow configurations to all vertices of $\Z_{\leq 0}\times \intint{1,N+M}$ that is \emph{consistent}, i.e., the arrows entering a vertex horizontally or vertically are respectively the same as the arrows horizontally exiting the vertex immediately to the left or vertically exiting the vertex immediately below.

\subsubsection{Boundary conditions} \label{s.higher spin boundary condition}

Let $\sigma: \intint{1,N}\to\intint{1,N}$ be a function. The boundary condition $\sigma$ of arrows is specified as follows: informally, at $(-\infty, i)$ for $i\in\intint{1,  N}$ an arrow of color $\sigma(i)$ enters and moves purely horizontally for all but finitely many edges, and no arrow enters at row $i$ for  $i\in\intint{N+1, N+M}$; further, all arrows which originally entered the system vertically exit at $(0,N+M)$. More formally, the arrow configuration at $v = (-K,i)$ is given by $(0^N, \sigma(i); 0^N, \sigma(i))$ for $i\in\intint{1,N}$ and by $(0^N, 0; 0^N, 0)$ for $i\in\intint{N+1,N+M}$, for all large enough $K$, and no arrows vertically enter the system through the bottom boundary $\Z_{\leq 0}\times\{1\}$. Further, the configuration of arrows vertically exiting $(0,N+M)$ is $(\#\sigma^{-1}(\{j\}))_{j=1}^N$, and no arrows horizontally or vertically exit the system elsewhere.
We emphasize that we only consider boundary conditions $\sigma$ taking values at least $1$; in other words, we require that an arrow enters the $i$\th row for each~$i \in \intint{1, N}$.%

\begin{figure}[h]
\begin{tikzpicture}
\newcommand{\thedim}{4.2}
\begin{scope}[scale=0.6]

      \draw[step=\thedim] (0,0) grid (6*\thedim, \thedim);

      \draw[->, very thick, blue] (0.5*\thedim, 0.2*\thedim) -- ++(0, 0.6*\thedim);
      \draw[->, very thick, green!80!black] (0.5*\thedim+0.15, 0.2*\thedim) -- ++(0, 0.6*\thedim);

      \draw[->, very thick, blue] (1.5*\thedim-0.075, 0.2*\thedim) -- ++(0, 0.6*\thedim);
      \draw[->, very thick, green!80!black] (1.5*\thedim+0.075, 0.2*\thedim) -- ++(0, 0.3*\thedim) -- ++(0.3*\thedim, 0);
      \node[scale=0.6, anchor=west] at (1.8*\thedim+0.1, 0.5*\thedim) {$i$};

      \draw[->, very thick, blue] (2.5*\thedim +0.075, 0.2*\thedim) -- ++(0, 0.6*\thedim);
      \draw[->, very thick, green!80!black] (2.2*\thedim, 0.5*\thedim) -- ++(0.3*\thedim +0.2, 0) -- ++(0, 0.3*\thedim);
      \node[scale=0.6, anchor=east] at (2.2*\thedim, 0.5*\thedim+0.1) {$i$};

      \draw[->, very thick, blue] (3.5*\thedim, 0.2*\thedim) -- ++(0, 0.6*\thedim);
      \draw[->, very thick, green!80!black] (3.2*\thedim, 0.5*\thedim) -- ++(0.3*\thedim -0.15, 0) -- ++(0, 0.3*\thedim);
      \draw[->, very thick, orange!90!black] (3.5*\thedim+0.15, 0.2*\thedim) -- ++(0, 0.3*\thedim) -- ++(0.3*\thedim, 0);
      \node[scale=0.6, anchor=east] at (3.2*\thedim, 0.5*\thedim+0.1) {$i$};
      \node[scale=0.6, anchor=west] at (3.8*\thedim+0.075, 0.5*\thedim) {$j$};

      \draw[->, very thick, blue] (4.5*\thedim, 0.2*\thedim) -- ++(0, 0.6*\thedim);
      \draw[->, very thick, orange!90!black] (4.2*\thedim, 0.5*\thedim+0.1) -- ++(0.3*\thedim +0.15, 0) -- ++(0, 0.3*\thedim-0.1);
      \draw[->, very thick, green!80!black] (4.5*\thedim-0.15, 0.2*\thedim) -- ++(0, 0.3*\thedim) -- ++(0.3*\thedim, 0);
      \node[scale=0.6, anchor=east] at (4.2*\thedim, 0.5*\thedim+0.1) {$j$};
      \node[scale=0.6, anchor=west] at (4.75*\thedim, 0.5*\thedim) {$i$};

      \draw[->, very thick, blue] (5.5*\thedim, 0.2*\thedim) -- ++(0, 0.6*\thedim);
      \draw[->, very thick, green!80!black] (5.5*\thedim+0.15, 0.2*\thedim) -- ++(0, 0.6*\thedim);
      \draw[->, very thick, orange!90!black] (5.2*\thedim, 0.5*\thedim) -- ++(0.6*\thedim, 0);
      \node[scale=0.6, anchor=west] at (5.8*\thedim, 0.5*\thedim) {$i$};

      \draw[xstep=\thedim, ystep=0.25*\thedim] (0,0) grid (6*\thedim, -0.5*\thedim);

      \newcommand{\they}{-0.125*\thedim}

      \node[scale=0.775] at (0.5*\thedim, \they) {$(\bm A, 0; \bm A, 0)$};
      \node[scale=0.775] at (1.5*\thedim, \they) {$(\bm A, 0; \bm A_i^-, i)$};
      \node[scale=0.775] at (2.5*\thedim, \they) {$(\bm A, i; \bm A_i^+,0)$};
      \node[scale=0.775] at (3.5*\thedim, \they) {$(\bm A, i; \bm A_{ij}^{+-}, j)$};
      \node[scale=0.775] at (4.5*\thedim, \they) {$(\bm A, j; \bm A_{ji}^{+-}, i)$};
      \node[scale=0.775] at (5.5*\thedim, \they) {$(\bm A, i; \bm A, i)$};

      \node[scale=0.775] at (0.5*\thedim, 3*\they) {$1$};
      \node[scale=0.775] at (1.5*\thedim, 3*\they) {$u(1-q^{A_i})q^{A_{[i+1,N]}}$};
      \node[scale=0.775] at (2.5*\thedim, 3*\they) {$1$};
      \node[scale=0.775] at (3.5*\thedim, 3*\they) {$u(1-q^{A_j})q^{A_{[j+1,N]}}$};
      \node[scale=0.775] at (4.5*\thedim, 3*\they) {$0$};
      \node[scale=0.775] at (5.5*\thedim, 3*\they) {$uq^{A_{[i+1,N]}}$};

     \end{scope}
\end{tikzpicture}
\caption{The $\bigL_u$ weights for $1\leq i < j \leq N$. Here for $\bm A\in\Z_{\geq 0}^N$ and $i\in\intint{1,N}$, $A_{[i,N]} = \smash{\sum_{j=i}^N A_j}$. The notation $\smash{\bm A_\ell^+}$ and $\smash{\bm A_\ell^-}$ respectively means the vector obtained by increasing or decreasing the $\ell$\smash{\th} component of $\bm A$ by $1$; similarly $\bm A_{k\ell}^{+-}$ is the vector obtained by increasing the $k$\th component of $\bm A$ by $1$ and decreasing the $\ell$\th component by 1. }
\label{f.L weights}
\end{figure}

\subsubsection{The colored $q$-Boson weights and probability measure}
The weight of a configuration is given by the product of the vertex weights at each vertex of the domain. The vertex weights are specified as follows. First we define the $\bigL_u$ weights in Figure~\ref{f.L weights}, which will also be called the \emph{colored $q$-Boson weights} (and are the same as the ones in \cite[eq. (1.2.2)]{borodin2018coloured} or \cite[Figure 5]{aggarwalborodin}, with $s=0$). The vertex weights in our model are of two types, coming from the weights in Figure~\ref{f.L weights} with two different values of $u$: those on the bottom $N$ rows, i.e, on $\Z_{\leq 0}\times\intint{1,N}$, are given by the $\bigL_u$ weights with $u=1$, while those on the top $M$ rows, i.e., on $\Z_{\leq 0}\times\intint{N+1,N+M}$, are given by the same with $u=z$ as fixed above; see Figure~\ref{f.vertex model}.

\begin{definition}\label{d.colored q-Boson}
Fix a boundary condition $\sigma:\intint{1,N}\to\intint{1,N}$. This specifies a collection of consistent configurations that satisfy this boundary condition and which have positive weight. We may define a probability measure supported on this collection, which we call the \emph{colored $q$-Boson measure} with boundary condition $\sigma$. The measure is defined by specifying that
\begin{align}\label{e.q-boson measure}
\parbox{5in}{\centering the probability of a given configuration is proportional to its weight;\\the normalization constant is finite and equals $\bigl(\frac{1-qz}{1-z}\bigr)^{NM}$.}
\end{align}
The \emph{uncolored $q$-Boson model} is the case where only arrows of color $1$ are present, i.e., $\sigma(k) = 1$ for all $k\in\intint{1,N}$ (and thus there is no boundary  condition to be specified, i.e., there is only one choice for $\sigma$).
\end{definition}
The normalization constant in \eqref{e.q-boson measure} is the sum of weights of all configurations and its expression is derived in \cite[Lemma~2.9]{aggarwalborodin} via the Yang-Baxter equation relating the colored $q$-Boson and colored S6V weights; equivalently, this can be seen as a Cauchy identity for certain functions associated to the vertex model. We reproduce the former argument in Appendix~\ref{s.yang-baxter}.

\subsubsection{Color merging}\label{s.q-boson color merging}

The colored $q$-Boson model satisfies a form of color merging, like the colored S6V model (Remark~\ref{r.s6v color merging}). Take the colored $q$-Boson measure with boundary condition $\sigma$ and identify any interval $I\subseteq \intint{1,N}$ of colors with an element of $I$, and if desired relabel all the colors in a weakly increasing way (preserving the original orderings). The resulting projection is the colored $q$-Boson measure with boundary condition $\sigma'$ obtained by performing the same color identification and relabeling in $\sigma$. A more precise statement is as follows.

\begin{lemma}\label{l.q-Boson color merging}
Let $N, M\in\N$ and $\sigma:\intint{1,N}\to\intint{1,N}$. Let $\tau:\intint{0,N}\to\intint{0,N}$ be non-decreasing and such that $\tau^{-1}(\{0\}) = \{0\}$. Then $(j_v)_{v\in\Z_{\leq 0}\times\intint{1,N+M}}$ under the colored $q$-Boson measure with boundary condition $\tau\circ\sigma$ has the same distribution as $(\tau(j_v))_{v\in\Z_{\leq 0}\times\intint{1,N+M}}$ under the colored $q$-Boson measure with boundary condition $\sigma$.

In particular, defining $\tau$ by $\tau(0) = 0$ and $\tau(j) = 1$ for $j\in\intint{1,N}$, we see that the colored $q$-Boson model with all colors ignored (i.e., only total arrow counts are observed) yields the uncolored $q$-Boson model.
\end{lemma}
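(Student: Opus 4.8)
The plan is to prove Lemma~\ref{l.q-Boson color merging} by establishing the statement vertex by vertex, using the fact that the $\bigL_u$ weights in Figure~\ref{f.L weights} depend on the colors of the incident arrows only through their order relation (and that colors strictly above a given one are ``bundled'' in the exponents $q^{A_{[i+1,N]}}$ via sums of arrow counts). First I would set up the bijective/weight-preserving correspondence: given a non-decreasing $\tau:\intint{0,N}\to\intint{0,N}$ with $\tau^{-1}(\{0\})=\{0\}$, and a configuration $\mathcal{C}$ of the colored $q$-Boson model with boundary condition $\sigma$, define its pushforward $\tau_*\mathcal{C}$ by replacing every arrow of color $k$ with an arrow of color $\tau(k)$ (equivalently, replacing $i_v,j_v$ by $\tau(i_v),\tau(j_v)$ and replacing $\bm A_v=(A_{v,1},\dots,A_{v,N})$ by the vector whose $\ell$\th entry is $\sum_{k:\tau(k)=\ell}A_{v,k}$). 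One checks that $\tau_*\mathcal{C}$ is a consistent configuration with boundary condition $\tau\circ\sigma$, since colored arrow conservation at each vertex is preserved under grouping the conservation identities $A_{v,k}+\one_{i_v=k}=B_{v,k}+\one_{j_v=k}$ over color classes (using that $\tau$ is a function, so the events $\{i_v=k\}$ for $k$ in a fixed $\tau$-fiber are disjoint).

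Next I would verify the key \emph{weight identity}: for every vertex $v$, the $\bigL_u$ weight of the arrow configuration at $v$ in $\mathcal{C}$ equals the $\bigL_u$ weight of the arrow configuration at $v$ in $\tau_*\mathcal{C}$, \emph{and}, crucially, summing over the preimages under $\tau_*$ is compatible with the Markovian sampling. The cleanest way to phrase this is: for a fixed incoming configuration at $v$ (in the merged model) and a fixed outgoing configuration, the merged $\bigL_u$ weight equals the sum of $\bigL_u$ weights over all unmerged outgoing configurations that map to it, given a fixed unmerged incoming configuration. This is a finite case check over the six vertex types in Figure~\ref{f.L weights}. The point is that the exponent $A_{[i+1,N]}=\sum_{j>i}A_j$ is exactly the total number of vertically-entering arrows of color strictly greater than~$i$, a quantity that only depends on the order relation between $i$ and the other arrows' colors and hence is invariant under $\tau$ (and aggregates correctly under merging), while the factor $1-q^{A_i}$ (for the vertex types where the horizontal arrow exits, or a vertical arrow of color $i$ turns horizontal) also only involves the count $A_i$ in the class of the turning color. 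Since the colored S6V-type weight structure means that when two distinct colors $k<k'$ are merged to a common color, the two ``bend'' configurations that were previously distinguishable (one weight $u(1-q^{A_k})q^{A_{[k+1,N]}}$ and one weight $u(1-q^{A_{k'}})q^{A_{[k'+1,N]}}$) combine, and one verifies algebraically (using $A_{[k+1,N]}\ge A_{k'}+A_{[k'+1,N]}$ and the telescoping $(1-q^{A_k})q^{A_{[k+1,N]}}+(1-q^{A_{k'}})q^{A_{[k'+1,N]}}$ collapses to $(1-q^{A_k+A_{k'}+\dots})q^{A_{[>\text{class}]}}$ when the colors are consecutive in the relevant sense) that the sum is exactly the merged weight. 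I would organize this so that the only nontrivial algebra is this one telescoping identity, and reduce the general $\tau$ to iterated merges of two consecutive colors plus an order-preserving relabeling (relabeling is trivial as weights depend only on order).

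Having the vertex-level identity, the lemma follows: the probability (under boundary condition $\sigma$) that $(\tau(j_v))_v$ equals a given merged configuration of exits is the sum of probabilities over all unmerged configurations $\mathcal{C}$ with $\tau_*\mathcal{C}$ equal to that configuration; each summand is (weight of $\mathcal{C}$) divided by the normalization $\bigl(\tfrac{1-qz}{1-z}\bigr)^{NM}$; grouping the sum over $\mathcal{C}$ by their outgoing configurations at each vertex and applying the vertex-level summation identity row by row (respecting the up-right Markovian order of sampling, exactly as in the definition of the measure in \eqref{e.q-boson measure}) shows the total equals the weight of the merged configuration in the $\tau\circ\sigma$ model, divided by the \emph{same} normalization $\bigl(\tfrac{1-qz}{1-z}\bigr)^{NM}$ — which is correct since by \eqref{e.q-boson measure} that is precisely the normalization constant for the colored $q$-Boson measure with boundary condition $\tau\circ\sigma$ (the normalization depends only on $N,M$, not on $\sigma$). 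The final assertion, that merging all positive colors to $1$ yields the uncolored $q$-Boson model, is the special case $\tau(0)=0$, $\tau(j)=1$ for $j\ge 1$, noting that the uncolored $q$-Boson model is by Definition~\ref{d.colored q-Boson} exactly the colored one with $\sigma\equiv 1$.

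The main obstacle I expect is the bookkeeping in the vertex-level summation identity: one must be careful that when passing to the merged model, the incoming configuration in the merged model does \emph{not} uniquely determine the incoming configuration in the original model, so the ``sum over preimages'' has to be handled correctly together with the Markovian sampling order — i.e., one should prove the statement as an equality of the full joint law of $(j_v)_v$ by induction along anti-diagonals (or along the up-right sampling order), at each step using that conditionally on the merged incoming data the merged outgoing data has the law obtained by pushing forward the original conditional law, which is exactly the content of the vertex weight summation identity. Getting this conditioning structure right (as opposed to a naive configuration-by-configuration weight match, which is true but needs the normalization-independence observation to conclude) is the only genuinely delicate point; the underlying combinatorics is the same order-only dependence of weights already invoked for color merging of the colored S6V model in Remark~\ref{r.s6v color merging}, and in fact an alternative route is to deduce this lemma directly from S6V color merging via the Yang--Baxter correspondence of Appendix~\ref{s.yang-baxter}, which I would mention as a remark but not rely on, to keep this section self-contained.
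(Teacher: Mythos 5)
Your proposal follows essentially the same route as the paper's proof: the engine is the vertex-level summation identity (the paper's equation \eqref{e.color merging weights}, your ``key weight identity''), iterated over vertices to obtain the configuration-level weight match, combined with the observation that the normalization constant does not depend on $\sigma$. Your plan to reduce the vertex identity to iterated merges of two consecutive colors plus a telescoping calculation is a correct elaboration of what the paper states as ``a straightforward calculation'' with literature references, and your telescoping identity is right: with $k'=k+1$ one indeed has $(1-q^{A_k})q^{A_{k'}+A_{[k'+1,N]}} + (1-q^{A_{k'}})q^{A_{[k'+1,N]}} = (1-q^{A_k+A_{k'}})q^{A_{[k'+1,N]}}$. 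The one small difference in ingredients is that you invoke the explicit normalization $\bigl(\tfrac{1-qz}{1-z}\bigr)^{NM}$ from \eqref{e.q-boson measure} to conclude the partition functions agree, while the paper derives this equality directly from the weight identity (applying it with $\tau$ merging all positive colors to $1$); both are legitimate and non-circular, but the paper's version is slightly more self-contained.

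The one conceptual wrinkle in your write-up is the framing of the ``main obstacle'' in terms of a Markovian sampling order and conditional laws. The $q$-Boson measure in \eqref{e.q-boson measure} is a Gibbs measure with non-stochastic vertex weights; unlike the colored S6V model (Section~\ref{s.s6v configuration}), it is \emph{not} sampled by an up-right Markov procedure, so there is no ``conditionally on the merged incoming data the merged outgoing data has the law obtained by pushing forward the original conditional law'' to appeal to. What actually makes the iteration go through is purely algebraic: after summing the unmerged outgoing data at a vertex (with unmerged incoming and merged outgoing fixed), the resulting factor depends only on the \emph{merged} incoming data, which detaches it from the subsequent sum and frees the unmerged outgoing data at the upstream vertices. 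This does require processing vertices in an order compatible with the arrow flow (roughly, sink to source), but it is bookkeeping, not conditioning. You do in the end identify the correct and simpler route — ``a naive configuration-by-configuration weight match \ldots plus the normalization-independence observation'' — which is exactly what the paper does, so there is no gap, only a mislabeled intuition.
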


That this color merging property holds is not as immediate as in the case of colored S6V. Indeed, in the latter it was a consequence of the vertex weights being determined by the relative order of the colors of arrows and not their precise values; this implicitly used that the normalization constant or partition function for the measure is $1$ (or, rather, independent of the boundary condition), since the weights are stochastic, i.e, for a fixed incoming arrow configuration, summing the weights over all valid outgoing arrow configurations yields $1$. In contrast, the $\bigL_u$ weights are not determined purely by the relative order and are also not stochastic.

\begin{proof}[Proof of Lemma~\ref{l.q-Boson color merging}]
The verification of color merging requires the following fact. First, for $\tau:\intint{0,N}\to\intint{0,N}$ non-decreasing with $\tau^{-1}(\{0\}) = \{0\}$, define $\rho_{\tau}:\Z_{\geq 0}^N \to \Z_{\geq 0}^N$ by the following: for $\bm A\in\Z_{\geq 0}^N$ and $k\in\intint{1,N}$, $(\rho_{\tau}(\bm A))_k = \sum_{i\in\tau^{-1}(\{k\})} A_i$. Then, it is a straightforward calculation that for $\bm A_v\in\Z_{\geq 0}^N$, $i_v\in\intint{0,N}$, $\widetilde{\bm B}_v \in \rho_\tau(\Z_{\geq 0}^N)$,  and $\lambda\in \tau(\intint{0,N})$,
\begin{align}\label{e.color merging weights}
\sum_{\substack{\bm B_v, j_v : \tau(j_v)=\lambda,\\ \rho_\tau(\bm B_v) = \widetilde{\bm B}_v}} \bigL_u(\bm A_v, i_v; \bm B_v, j_v) = \bigL_u\bigl(\rho_\tau(\bm A_v), \tau(i_v); \widetilde{\bm B}_v, \lambda\bigr).
\end{align}
In words, if we fix the incoming arrow configurations and sum the vertex weight over all outgoing arrow configuration under the constraint that the outgoing arrow colors, on merging according to $\tau$, are fixed, we obtain the vertex weight with the colors merged according to $\tau$. That the $\bigL_u$ weights have this property has been observed a number of times in the literature; see, for example, \cite[eq. (4.9)]{aggarwalborodin}, or \cite[Proposition 2.4.2]{borodin2018coloured} or \cite{foda2013colour} in the case of merging all colors with color 1.

Fix a configuration $\bm\omega$ of the colored $q$-Boson model with boundary condition $\tau\circ\sigma$. Repeatedly applying \eqref{e.color merging weights} implies that the weight of $\bm\omega$ is the same as the sum of the weights of all configurations of the colored $q$-Boson model with boundary condition $\sigma$ such that one obtains $\bm\omega$ on color merging according to $\tau$. Next, repeatedly applying the special case of \eqref{e.color merging weights},  with $\tau(k) = 1$ for all $k\in\intint{1,N}$ and $\tau(0) = 0$, yields that the partition functions for the models with boundary conditions $\sigma$ and $\tau\circ\sigma$, respectively, are the same. These two inferences complete the proof by recalling the definition \eqref{e.q-boson measure} of the colored $q$-Boson measure.
\end{proof}

\begin{remark}\label{r.no color merging with zero}
Unlike the colored S6V model, in the colored $q$-Boson model one cannot merge colors with color zero (no arrows). This is a consequence of the $\bigL_u$ weights not being stochastic. It can be seen by the fact that the analog of \eqref{e.color merging weights} for merging with $0$ (i.e., $\tau^{-1}({0})$ contains a non-zero integer) does not hold.
\end{remark}

\subsubsection{Gibbs property for $q$-Boson models}\label{s.intro.vertex model gibbs property}
Next we discuss two Gibbs properties, one each for the colored and uncolored $q$-Boson models, that are  consequences of the above description, and which will be used in later arguments. By Gibbs property, we mean a spatial Markov property: informally, for a domain $\Lambda$, the conditional distribution of the configuration strictly inside $\Lambda$, given the configuration on the boundary of $\Lambda$ as well as outside $\Lambda$, is explicit and depends only on the configuration on the boundary of $\Lambda$.

We start with the uncolored Gibbs property. We may write the arrow configuration at a vertex $v$ as $(A_v, i_v; B_v, j_v)$ where $i_v, j_v\in\{0,1\}$ (presence of arrow) and $A_v, B_v\in\Z_{\geq 0}$ (number of arrows).  Because of the form of the weight of a configuration as a product of vertex weights, that this model enjoys a Gibbs property is immediate; the dependency of the conditional distribution on the configuration on the boundary of the domain is imposed via the requirement of consistency and arrow conservation. To state the Gibbs property more precisely, fix a rectangle $\Lambda:=\intint{-k,-\ell}\times\intint{a,b}\subseteq \Z_{\leq 0}\times \intint{1,N+M}$, let $\Lambda^c = \Z_{\leq 0}\times\intint{1,N+M}\setminus \Lambda$, and let $\smash{\F^{\mrm{uB}}_{\Lambda^c}}$ (``uB'' short for uncolored $q$-Boson) be the $\sigma$-algebra (where $\sigma(\bm\cdot)$ for $\bm\cdot$ some collection of random variables means the $\sigma$-algebra generated by that collection)
\begin{align}\label{e.F^ub}
\F^{\mrm{uB}}_{\Lambda^c} := \sigma\left(\left\{j_v: v\in \Lambda^c\right\}\right);
\end{align}
note that, by consistency, for every $v$ in the left boundary of $\Lambda$, it holds that $i_v$ is $\F^{\mrm{uB}}_{\Lambda^c}$-measurable, and by consistency along with arrow conservation, for every $v$ in the bottom boundary of $\Lambda$, it holds that $A_v$ is $\F^{\mrm{uB}}_{\Lambda^c}$-measurable. This implies that, for $v$ the top left corner of $\Lambda$, $B_v$ is $\F^{\mrm{uB}}_{\Lambda^c}$-measurable and, iteratively, that the same holds for every $v$ in the top boundary of $\Lambda$. Continuing reasoning in this way, we see that $B_v$ is $\F^{\mrm{uB}}_{\Lambda^c}$-measurable for every $v$ in $\Z_{\leq 0}\times \intint{1,N+M}\setminus(\intint{-k,-\ell+1}\times\intint{a,b-1})$. See the left panel of Figure~\ref{f.vertex gibbs}.

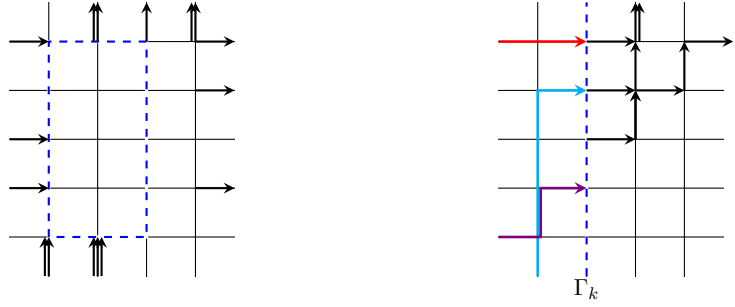
\begin{figure}
\begin{tikzpicture}[scale=1.3]

\begin{scope}[shift={(5,0)}]
\begin{scope}
\clip (-0.4, -0.4) rectangle (1.9, 2.4);
\draw (-1,-1) grid[step=0.5] (3,3);
\end{scope}

\draw[white, very thick] (0.5, -0.4) -- ++(0,2.8);
\draw[dashed, blue, thick] (0.5,-0.4) -- ++(0,2.8);
\node[anchor=north, scale=0.8] at (0.5,-0.35) {$\Gamma_k$};

\draw[line width=1pt, -stealth, cyan] (0, -0.4) -- ++(0, 1.9) -- ++(0.5,0);
\draw[line width=1pt, -stealth, violet] (-0.4, 0) -- ++(0.43, 0) -- ++(0,0.5) -- ++(0.47,0);
\draw[line width=1pt, -stealth, red] (-0.4, 2) -- ++(0.9,0);

\foreach \x/\y in {0.5/2, 0.5/1.5, 0.5/1, 1/1.5, 1.5/2}
  \draw[thick, -stealth] (\x, \y) -- ++(0.5,0);

\foreach \x/\y in {1/1, 1/1.5, 1/1, 1.5/1.5}
  \draw[thick, -stealth] (\x, \y) -- ++(0, 0.5);

\foreach \x/\y in {1/2, 1.04/2}
  \draw[thick, -stealth] (\x, \y) -- ++(0, 0.4);

\end{scope}

\begin{scope}
\clip (-0.4, -0.4) rectangle (1.9, 2.4);
\draw (-1,-1) grid[step=0.5] (3,3);
\end{scope}

\draw[white, very thick] (0,0) rectangle (1,2);
\draw[dashed, blue, thick] (0,0) rectangle (1,2);

\foreach \x [count=\c from 6] in {1.46, 1.5, 1, 0.5, 0.46}
  \draw[thick, -stealth] (\x, 2) -- ++(0,0.4);

\foreach \x [count=\c] in {-0.04, 0, 0.46, 0.5, 0.54}
  \draw[thick, -stealth] (\x, -0.4) -- ++(0,0.4);

\useasboundingbox (current bounding box);

\foreach \y [count =\c from 3] in {0.5, 1.5, 2}
  \draw[thick, -stealth] (1.5, \y) --  ++(0.4,0);

\foreach \y [count =\c from 9] in {2, 1, 0.5}
  \draw[thick, -stealth] (-0.4, \y) -- ++(0.4,0);

\end{tikzpicture}
\caption{Left: The setup for the Gibbs property in the uncolored $q$-Boson model. Outgoing arrow counts outside the region $\Lambda$ outlined in blue are conditioned upon, and arrows have been displayed in this depiction if the arrow count is non-zero and has been conditioned on. Arrow counts entering the blue region are conditioned on due to consistency, but those exiting it are not. Right: The setup for the Gibbs property in the colored $q$-Boson model. Horizontally outgoing colored arrows counts are conditioned on strictly to the left of the blue dashed line (which extends to the ends of the domain in both directions), while only total outgoing arrow counts (depicted in black) are conditioned on to the right (including on~$\Gamma_k$).}\label{f.vertex gibbs}
\end{figure}

For the statement of the Gibbs property, we adopt the notation $\bigL_{z,v}$ for the vertex weights in the model: $\bigL_{z,v} = \bigL_{1}$ for $v \in \Z_{\leq 0}\times \intint{1,N}$ and $\bigL_{z,v} = \bigL_{z}$ for $v \in \Z_{\leq 0}\times \intint{N+1, N+M}$.

\begin{lemma}[Uncolored $q$-Boson Gibbs property]\label{l.uncolored vertex gibbs}
Let $\Lambda=\intint{-k,-\ell}\times\intint{a,b}$ and $\smash{\F^{\mrm{uB}}_{\Lambda^c}}$ be as in \eqref{e.F^ub}. Under the uncolored $q$-Boson measure, the conditional distribution of $((A_v, i_v; B_v, j_v):v\in\Lambda)$ given $\smash{\F^{\mrm{uB}}_{\Lambda^c}}$ is supported on tuples $((\smash{A'_v, i'_v; B'_v, j'_v}): v\in\Lambda)$ such that arrow conservation and consistency are satisfied inside $\Lambda$ and with respect to the boundary conditions; call the (random) set of such tuples $\mrm{Supp}^{\mrm{uB}}$. Further, the conditional probability that $((A_v, i_v; B_v, j_v): v\in\Lambda) = ((A'_v, i'_v; B'_v, j'_v): v\in\Lambda)$, for any fixed (i.e., deterministic) selection of the righthand side, is proportional to $\prod_{v\in \Lambda} {\bigL}_{z,v}(A'_v, i'_v; B'_v, j'_v)\one_{(A'_v, i'_v; B'_v, j'_v)_{v\in\Lambda}\in \mrm{Supp}^{\mrm{uB}}}$.
\end{lemma}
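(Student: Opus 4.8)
The plan is to read the Gibbs property straight off the fact that the colored $q$-Boson measure is, up to a single global normalization, a product of local vertex weights; the only genuine work is the bookkeeping of which quantities on $\partial\Lambda$ are pinned down by $\F^{\mrm{uB}}_{\Lambda^c}$, which is exactly what the discussion preceding the statement records.

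First I would condition not on $\F^{\mrm{uB}}_{\Lambda^c}$ but on the finer $\sigma$-algebra $\sigma\big((A_v,i_v,B_v,j_v):v\in\Lambda^c\big)$ generated by the entire exterior configuration, and fix a deterministic exterior configuration $\bar\omega$ lying in a positive-probability event. Since the weight of any global configuration $\omega$ factors as $W(\omega)=\big(\prod_{v\in\Lambda}\bigL_{z,v}(\omega|_v)\big)\cdot\big(\prod_{v\in\Lambda^c}\bigL_{z,v}(\omega|_v)\big)$ and the second factor is a fixed number once $\bar\omega$ is fixed, the conditional law of $(A_v,i_v;B_v,j_v)_{v\in\Lambda}$ given $\{\text{exterior}=\bar\omega\}$ is supported on those interior tuples that satisfy arrow conservation and consistency internally and along $\partial\Lambda$ against $\bar\omega$ — call this (deterministic) set $\mrm{Supp}(\bar\omega)$ — and assigns any such tuple $\eta$ probability proportional to $\prod_{v\in\Lambda}\bigL_{z,v}(\eta_v)$, the exterior factor and the partition function cancelling between numerator and normalizer. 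This is already the claimed product form, relative to the finer conditioning.

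Next I would show that this conditional law depends on the exterior only through $\F^{\mrm{uB}}_{\Lambda^c}$, so that by the tower property conditioning on $\F^{\mrm{uB}}_{\Lambda^c}$ returns the same expression and $\mrm{Supp}(\bar\omega)$ coincides with the random set $\mrm{Supp}^{\mrm{uB}}$ of the statement. The only datum entering the conditional law beyond $\prod_{v\in\Lambda}\bigL_{z,v}$ is $\mrm{Supp}(\bar\omega)$, i.e.\ the boundary data of $\Lambda$: the incoming horizontal colors $i_v$ on the left edge of $\Lambda$, the incoming vertical counts $A_v$ on the bottom edge, and the values that the outgoing data on the right and top edges are constrained to match. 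Each of these is $\F^{\mrm{uB}}_{\Lambda^c}$-measurable by the consistency/arrow-conservation propagation described before the lemma: vertical counts propagate upward from the empty bottom boundary $\Z_{\leq 0}\times\{1\}$ and downward from the top row (where the exit boundary condition forces $B_v=0$ at every top-row vertex other than $(0,N+M)$), while horizontal colors propagate rightward from the far-left boundary condition $\sigma$; together these express all of the boundary data of $\Lambda$ as functions of $\{j_w:w\in\Lambda^c\}$. Combining the two steps gives the lemma, and in particular the claimed support.

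The main (and essentially only) point requiring care is this last measurability bookkeeping — tracking precisely which vertical counts and horizontal colors on $\partial\Lambda$ are forced by $\{j_w:w\in\Lambda^c\}$ through arrow conservation, consistency, and the specific entrance and exit boundary conditions of the $q$-Boson domain; everything else is the formal factorization above. The same scheme will then transfer with minimal change to the colored analogue (where the conditioning $\sigma$-algebra records horizontal colors only to the left of $\Lambda$ and merged total arrow counts to its right, reflecting that the $\bigL_u$ weights are not stochastic), the extra care there being to check that color merging still allows the boundary data to be recovered from the conditioned quantities.
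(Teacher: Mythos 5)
Your overall skeleton (condition on the full exterior, factor the global weight, then push down to $\F^{\mrm{uB}}_{\Lambda^c}$ via the tower property) is a sensible elaboration of the paper's one-line proof, and much of it is fine: the first paragraph's factorization and the claims about the left, bottom, and top boundaries are correct, and you correctly located the crux in the measurability bookkeeping. However, there is a genuine gap in the second paragraph, precisely at the right boundary of $\Lambda$.

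The gap: you assert that the right-edge matching values are $\F^{\mrm{uB}}_{\Lambda^c}$-measurable because ``horizontal colors propagate rightward from the far-left boundary condition $\sigma$.'' This is false. Rightward propagation of $i_v = j_{(x-1,y)}$ from $\sigma$ only determines the horizontal entries up to and including the left boundary of $\Lambda$; beyond that it hits interior $j$'s. In particular, for $y\in\intint{a,b}$ the quantity the right edge would have to match is
$i_{(-\ell+1,y)} = j_{(-\ell,y)}$,
which is an \emph{interior} variable, not an $\F^{\mrm{uB}}_{\Lambda^c}$-measurable one. Consequently the finer conditioning you introduce (on the entire exterior tuple $(A_w,i_w;B_w,j_w)_{w\in\Lambda^c}$) is not a ``purely exterior'' refinement of $\F^{\mrm{uB}}_{\Lambda^c}$ — through the horizontal edges crossing the right boundary, it contains information about the interior $j$'s — and so the conditional law given that finer $\sigma$-algebra is not $\F^{\mrm{uB}}_{\Lambda^c}$-measurable. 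The tower-property step therefore does not deliver the stated conclusion as written. Relatedly, the weights $\bigL_{z,v}$ at the exterior vertices $v\in\{-\ell+1\}\times\intint{a,b}$ depend on $A_v$ and $i_v$, which are exactly the quantities infected by the interior, so the exterior factor $\prod_{v\in\Lambda^c}\bigL_{z,v}$ does not simply cancel into the normalizer after conditioning on $\F^{\mrm{uB}}_{\Lambda^c}$ alone. (A concrete check with $N=M=2$ and $\Lambda=\{-2\}\times\intint{1,2}$ shows the product of weights over that adjacent column really does vary as the interior $j$'s vary with the $\F^{\mrm{uB}}_{\Lambda^c}$-data held fixed.) This is no side issue: it is exactly the mechanism producing the ``upper boundary'' interaction factor $W_{\mathrm{up}}$ in the Hall--Littlewood Gibbs weight (Definition~\ref{d.weight factor}/\ref{d.HL Gibbs}) that Proposition~\ref{p.L has HL} is to be read off from this lemma; your proof, as written, silently drops it. A correct argument must carefully account for the column $\{-\ell+1\}\times\intint{a,b}$ (either by enlarging $\Lambda$, or by explicitly tracking those factors), rather than asserting its data is determined by $\F^{\mrm{uB}}_{\Lambda^c}$.
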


\begin{proof}
This follows from the form of the uncolored $q$-Boson measure as a product of vertex weights.
\end{proof}

Next we turn to a Gibbs property for the colored $q$-Boson model. Informally, it explains how to ``reveal'' colored arrow counts, conditional on total arrow counts.
Fix $k\in\N$. Let $\Lambda_k = \llparen {-}\infty, -k\rrbracket\times\intint{1,N+M}$ and $\Gamma_k = \{-k\}\times\intint{1,N+M}$, and let $\smash{\F^{\mrm{cB}}_k}$ (``cB'' short for colored $q$-Boson) be the $\sigma$-algebra
\begin{equation}\label{e.F^cB}
\smash{\F^{\mrm{cB}}_k} := \sigma\left(\bigl\{\one_{i_{v}\geq 1}: v\in \Z_{\leq 0}\times\intint{1,N+M}\bigr\}\cup\bigl\{i_v: v\in\Lambda_k\bigr\}\right).
\end{equation}
For $\bm A\in\Z_{\geq 0}^N$, let $|\bm A| := \sum_{i=1}^N A_i$. Note that by colored arrow conservation and consistency, the collection of random variables $\{(|\bm A_v|; |\bm B_v|, \one_{j_v\geq 1}) : v\in\Z_{\leq 0}\times\intint{1,N+M}\}$ and $\{(\bm A_v; \bm B_v, j_v) : v\in\Lambda_{k+1}\}$ is $\F^{\mrm{cB}}_k$-measurable. Thus, in words, $\smash{\F^{\mrm{cB}}_k}$ contains the information of the incoming colored arrow counts at $\Gamma_k$, incoming and outgoing total arrow counts of every vertex in the domain, and incoming and outgoing colored arrow counts of every vertex in $\Lambda_{k+1}$. 
See the right panel of Figure~\ref{f.vertex gibbs}.

The following is the Gibbs property when conditioning on $\F^{\mrm{cB}}_k$; because of the heterogeneous nature of the random variables being conditioned (a mixture of colored and total arrow counts), its proof is slightly more complicated than that of Lemma~\ref{l.uncolored vertex gibbs}. Recall the definition of $\bigL_{z,v}$ from before Lemma~\ref{l.uncolored vertex gibbs}.

\begin{lemma}[Colored $q$-Boson Gibbs property]\label{l.colored vertex gibbs}
Fix $k\in\N$ and $\sigma:\intint{1,N}\to\intint{1,N}$, and let $\Gamma_k$ and $\smash{\F^{\mrm{cB}}_k}$ be as in \eqref{e.F^cB}. Under the colored $q$-Boson measure with boundary condition $\sigma$, the conditional distribution of $((\bm A_v; \bm B_v, j_v):v\in\Gamma_k)$ given $\smash{\F^{\mrm{cB}}_k}$ is supported on tuples $((\smash{\bm A'_v; \bm B'_v, j'_v}): v\in\Gamma_k)$ such that (i) $\one_{j'_v \geq 1}=\one_{j_v\geq 1}$ for all $v\in\Gamma_k$ and (ii) colored arrow conservation and consistency are satisfied. Call the (random) set of all such tuples $\mrm{Supp}^{\mrm{cB}}$. Further, the conditional probability that $((\bm A_v; \bm B_v, j_v): v\in\Gamma_k) = ((\bm A'_v; \bm B'_v, j'_v): v\in\Gamma_k)$, for any fixed (i.e., deterministic) selection of the righthand side, is proportional to $\prod_{v\in \Gamma_k}\bigL_{z,v}(\bm A'_v, i_v; \bm B'_v, j'_v)\one_{(\bm A'_v; \bm B'_v, j'_v)_{v\in\Gamma_k} \in \mrm{Supp}^{\mrm{cB}}}$.
\end{lemma}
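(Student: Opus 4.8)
The plan is to deduce the colored Gibbs property from the fact that the colored $q$-Boson measure is a product of vertex weights, exactly as in Lemma~\ref{l.uncolored vertex gibbs}, but to handle carefully the fact that the conditioning $\sigma$-algebra $\F^{\mrm{cB}}_k$ is a mixture: it records \emph{colored} arrow data only to the left of (and including) $\Gamma_k$, but only \emph{total} arrow counts to the right of $\Gamma_k$. The key observation to make first is the measurability claims already stated in the excerpt: by colored arrow conservation and consistency, conditioning on $\F^{\mrm{cB}}_k$ determines the full colored configuration on $\Lambda_{k+1} = \llparen{-}\infty,-k-1\rrbracket\times\intint{1,N+M}$ (the columns strictly left of $\Gamma_k$), hence in particular determines the colored incoming arrows $(\bm A_v, i_v)$ for every $v\in\Gamma_k$ (these come from the vertices immediately to the left and below, all in $\Lambda_{k+1}$ or on $\Gamma_k$'s bottom boundary — and the bottom boundary has no vertical arrows). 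So the only randomness left on $\Gamma_k$, after conditioning, is the choice of outgoing configuration $(\bm B_v, j_v)$ at each vertex of $\Gamma_k$, subject to arrow conservation there, and subject to matching the already-revealed data: namely the indicator $\one_{j_v\ge 1}$ (part (i)) and the total outgoing count $|\bm B_v|$ at each vertex of $\Gamma_k$ and all columns to the right.

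The next step is to run the standard disintegration argument. Write the probability of a full configuration $\bm\omega$ as $Z^{-1}\prod_{v} \bigL_{z,v}(\text{arrow config at }v)$, with $Z = \bigl(\tfrac{1-qz}{1-z}\bigr)^{NM}$ by \eqref{e.q-boson measure}. Fix a realization of the conditioning data. A full configuration $\bm\omega$ is compatible with this data precisely when its colored configuration on $\Gamma_k\cup\Lambda_{k+1}$ lies in $\mrm{Supp}^{\mrm{cB}}$ (on $\Gamma_k$) and agrees with the revealed colored configuration on $\Lambda_{k+1}$, \emph{and} its configuration on the columns strictly to the right of $\Gamma_k$ has the prescribed total arrow counts and outgoing-arrow indicators. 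The point is that the vertex weights $\bigL_{z,v}$ on all vertices in $\Lambda_{k+1}$ are completely determined by the conditioning data, so they contribute a common constant factor that cancels in the conditional probability. Similarly, the vertex weights to the right of $\Gamma_k$: here is where I must be a little careful, because $\bigL_u$ genuinely depends on the colored vertical counts $\bm A_v$ (through the factors $q^{A_{[i+1,N]}}$), not just on totals. However, conditioning on $(\bm A_v; \bm B_v, j_v)$ for $v\in\Gamma_k$ — i.e.\ summing over these and then conditioning — is exactly what we are doing: once we fix the outgoing colored configuration on $\Gamma_k$, the colored configuration everywhere to the right of $\Gamma_k$ is itself a deterministic function of it (by consistency, going column by column to the right, since the vertex weights to the right of $\Gamma_k$... no — the model to the right is still random). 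Let me reconsider: the cleanest route is to observe that, having conditioned on $\F^{\mrm{cB}}_k$, the conditional law of the \emph{entire} colored configuration on $\Z_{\le 0}\times\intint{1,N+M}$ is a product-of-weights measure restricted to the (random) compatibility set; summing out everything to the right of $\Gamma_k$ subject to the fixed totals and indicators, the sum of the product of those weights over compatible right-configurations is a function only of the colored data along $\Gamma_k$'s \emph{right} boundary (the outgoing arrows $j_v$, $\bm B_v$ for $v\in\Gamma_k$), but — crucially — it must in fact depend only on the \emph{totals} $|\bm B_v|$ and indicators $\one_{j_v\ge1}$, not on the full $\bm B_v$. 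This last is exactly the color-merging identity \eqref{e.color merging weights} applied iteratively to the right half-plane: the sum of products of $\bigL_u$ weights over all colored completions of a region, with fixed total incoming counts, equals the corresponding \emph{uncolored} $q$-Boson partition function of that region, which is a constant depending only on the totals. Hence that summed-out factor is a constant (given the conditioning data), and cancels.

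After these cancellations, the conditional probability of $\bigl((\bm A_v;\bm B_v,j_v):v\in\Gamma_k\bigr)=\bigl((\bm A'_v;\bm B'_v,j'_v):v\in\Gamma_k\bigr)$ is proportional to $\prod_{v\in\Gamma_k}\bigL_{z,v}(\bm A'_v,i_v;\bm B'_v,j'_v)$, with the indicator $\one_{(\bm A'_v;\bm B'_v,j'_v)_{v\in\Gamma_k}\in\mrm{Supp}^{\mrm{cB}}}$ enforcing colored arrow conservation on $\Gamma_k$, consistency with the already-revealed incoming data $(\bm A_v,i_v)$, and the constraints $\one_{j'_v\ge1}=\one_{j_v\ge1}$ and prescribed totals $|\bm B'_v|$ — all of which are built into the definition of $\mrm{Supp}^{\mrm{cB}}$ together with the conditioning. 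That is precisely the assertion of the lemma.

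The main obstacle is the middle step: making rigorous that summing the colored vertex weights over all configurations to the right of $\Gamma_k$, with fixed total arrow counts and outgoing-arrow-presence pattern, produces a normalization factor that does \emph{not} depend on the fine colored structure of the outgoing arrows along $\Gamma_k$ — only on their totals. This requires invoking the color-merging identity \eqref{e.color merging weights} in the form that an iterated sum of $\bigL_u$-weights over a strip equals an uncolored $q$-Boson partition function (the $s=0$ specialization of the Cauchy-type identity behind \eqref{e.q-boson measure}), and keeping track of exactly which marginal data is being summed over versus held fixed. Everything else is the routine product-measure disintegration, combined with the elementary measurability bookkeeping (colored configuration on $\Lambda_{k+1}$ is $\F^{\mrm{cB}}_k$-measurable by conservation and left-to-right, bottom-to-top propagation) that the excerpt has already spelled out.
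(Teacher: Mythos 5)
Your proposal is correct and follows essentially the same route as the paper's proof: disintegrate the product-form measure, observe that the weights on $\Lambda_{k+1}$ are $\F^{\mathrm{cB}}_k$-measurable and cancel, and then show via iterated application of the color-merging identity \eqref{e.color merging weights} that the sum of colored vertex weights over all compatible completions on the columns to the right of $\Gamma_k$ (with total arrow counts and presence indicators held fixed) reduces to a product of uncolored $\bigL_{z,v}$ weights, hence is $\F^{\mathrm{cB}}_k$-measurable and can be absorbed into the normalization. One minor inaccuracy: the parenthetical attributing this reduction to ``the $s=0$ specialization of the Cauchy-type identity behind \eqref{e.q-boson measure}'' is misleading — what is used is exactly \eqref{e.color merging weights} iterated vertex by vertex, and the resulting quantity is a single product of uncolored weights (not a genuine multi-term partition function) since the totals $|\bm A_v|$, $|\bm B_v|$, $\one_{i_v\ge1}$, $\one_{j_v\ge1}$ are all fixed by the conditioning on $\Lambda_k^c$.
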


\begin{proof}
Let $\Lambda_{k}^c := \Z_{\leq 0}\times\intint{1,N+M}\setminus\Lambda_{k} = \intint{-k+1,0}\times\intint{1,N+M}$. Let $\bm\omega = ((\bm A'_v; \bm B'_v, j'_v): v\in\Gamma_k)$ and let $\mrm{Val}_k(\bm\omega)$ (for ``valid'') be the collection of configurations $(\bm A'_v, i'_v; \bm B'_v, j'_v)_{v\in \Lambda_k^c}$ which satisfy colored arrow conservation and consistency as well as the following:
\begin{itemize}
  \item $i'_{(-k+1, x)} = j'_{(-k,x)}$ for $x\in\intint{1,N+M}$ (consistency of incoming arrows to $\Gamma_{k-1}$ with outgoing arrows from $\Gamma_k$ as given by $\bm \omega$),

  \item $\bm B'_{(0,N+M)} = (\#\sigma^{-1}(\{j\}))_{j=1}^N$ (all arrows exit vertically from the rightmost column), and

  \item $\one_{i'_v\geq 1} = \one_{i_v\geq 1}$, $\one_{j'_v\geq 1} = \one_{j_v\geq 1}$, $|\bm A'_v| = |\bm A_v|$, and $|\bm B'_v| = |\bm B_v|$ for all $v\in \Lambda_k^c$ (respecting the total arrow counts conditioned on).
\end{itemize}
It is immediate from the definition of the colored $q$-Boson measure that the $\F^{\mrm{cB}}_k$-conditional probability of the event $\{((\bm A_v; \bm B_v, j_v): v\in\Gamma_k) = \bm \omega\}$ is proportional to
\begin{align}\label{e.a priori proportional}
\prod_{v\in \Gamma_k}\bigL_{z,v}(\bm A'_v, i_v; \bm B'_v, j'_v)\cdot \left(\sum_{(\bm A'_v, i'_v; \bm B'_v, j'_v)_{v\in\Lambda_k^c}\in\mrm{Val}_{k}(\bm\omega)}\prod_{w\in \Lambda_{k}^c}\bigL_{z,w}(\bm A'_w, i'_w; \bm B'_w, j'_w)\right).
\end{align}
To absorb the factor in parentheses into the normalization constant, we must show that it is $\smash{\F^{\mrm{cB}}_k}$-measurable. The lemma will therefore be implied by the following claim: that factor's value depends only on $((|\bm A_v|, \one_{i_v \geq 1}; |\bm B_v|, \one_{j_v\geq 1}): v\in\Lambda_{k}^c)$, which has been conditioned upon (it is determined by $(\one_{i_v\geq 1}: v\in \Lambda_k^c)$ by consistency and arrow conservation). 
This claim in turn follows by applying \eqref{e.color merging weights} (with $\tau(k) = 1$ for all $k\in\intint{1,N}$ and $\tau(0) = 0$) iteratively for each factor in the second product in \eqref{e.a priori proportional} (i.e., for each vertex in $\Lambda_k^c$); notice that the sum in \eqref{e.a priori proportional} is over $\mrm{Val}_k(\bm \omega)$, which fixes the total arrow counts at each vertex according to $((|\bm A_v|, \one_{i_v \geq 1}; |\bm B_v|, \one_{j_v\geq 1}): v\in\Lambda_{k}^c)$. This yields that the parenthetical quantity in \eqref{e.a priori proportional} equals the partition function for the uncolored $q$-Boson measure on configurations $(\tilde A_v, \tilde i_v; \tilde B_v, \tilde j_v)_{v\in \Lambda_k^c}$ (where $A_v, B_v\in\Z_{\geq 0}$ and $i_v, j_v\in\{0,1\}$ represent total arrow counts, as in the beginning of Section~\ref{s.intro.vertex model gibbs property}) constrained to satisfy the uncolored analogs of the three bullet points above, namely, $\tilde i_{(-k+1,x)} = \one_{j_{(-k,x)}\geq 1}$ for $x\in\intint{1,N+M}$; $\tilde B_{(0,N+M)} = N$; and $\tilde i_v = \one_{i_v\geq 1}, \tilde j_v = \one_{j_v\geq 1}$, $\tilde A_v = |\bm A_v|$, and $\tilde B_v = |\bm B_v|$ for all $v\in\Lambda_k^c$ (this actually includes the first point since $\one_{j_{(-k,x)}\geq 1} = \one_{i_{(-k+1,x)}\geq 1}$). This verifies the claim and completes the proof.
\end{proof}

\subsection{Colored Hall-Littlewood line ensemble} \label{s.colored line ensemble}
Next we define a rewriting of a configuration sampled from the colored $q$-Boson measure in a more probabilistically appealing form, namely, the colored line ensemble. First we specify what we mean by a discrete line ensemble.

\begin{definition}[Discrete line ensemble]\label{d.discrete line ensemble}
Fix a (possibly infinite) interval $\Lambda\subseteq \Z$.  A \emph{$\Lambda$-indexed discrete line ensemble} $\bm L = (L_1, L_2, \ldots)$ is a random variable defined on a probability space $(\Omega, \F, \P)$ taking values in the space of functions $\N\times\Lambda\to\Z$ (here the first argument is written as a subscript, i.e., $L_i(\bm\cdot) := \bm L(i,\bm\cdot)$), endowed with the discrete topology, such that: (i) $L_i(y+1) - L_i(y) \in\{0,-1\}$ for any $i \in \N$ and $y$ such that $y,y+1\in\Lambda$, and (ii) $L_{i}(y) \geq L_{i+1}(y)$ for any $y\in\Lambda$ and~$i\in\N$.
\end{definition}

Note that we take discrete line ensembles, and all the line ensembles we work with in this paper, to have an infinite number of curves. This differs slightly from some previous studies focused on tightness of line ensembles (e.g., \cite{corwin2014brownian,corwin2016kpz,corwin2018transversal}).

We will refer to the set of vertices $\{-k\}\times\intint{1,N+M}$ in the domain $\Z_{\leq 0}\times\intint{1,N+M}$ of the colored $q$-Boson model as the $k$\th column. The colored line ensemble will be given in terms of colored arrow counts in columns at a finite distance from the rightmost (zeroth) column. Recall that, in the notation from Section~\ref{s.higher spin configuration}, for $y\in \intint{1, N+M}$, and $k\in\N$, $j_{(-k,y)}$ is the color of the arrow exiting horizontally from vertex $(-k, y)$; and $j_{(-k,y)} = 0$ if there is no such arrow.

\begin{definition}[Colored and uncolored Hall-Littlewood line ensembles]\label{d.colored line ensemble}
Let $\sigma:\intint{1,N}\to\intint{1,N}$ and let $\{(\bm A_{v}, i_v; \bm B_v, j_v):v\in\Z_{\leq 0}\times\intint{1,N+M}\}$ be a configuration sampled from the colored $q$-Boson measure with boundary condition $\sigma$ (from Section~\ref{s.higher spin boundary condition}). For each $k\in\intint{1,N}$, we define the discrete line ensemble $\bm L^{\mrm{cHL}, \smash{(k)}} = (L^{\mrm{cHL}, \smash{(k)}}_1, L^{\mrm{cHL}, \smash{(k)}}_2, \ldots) : \N\times\intint{0,N+M}\to\Z$ by letting $\smash{L^{\mrm{cHL}, (k)}_i(y)}$ be the number of arrows of color at least $k$ which exit horizontally in the $i$\textsuperscript{th} column strictly above row $y$, i.e,
\begin{equation}\label{e.colored line ensemble definition}
L^{\mrm{cHL}, (k)}_i(y) := \#\bigl\{y' > y : j_{(-i,y')} \geq k\bigr\}.
\end{equation}
The object $\bm L^{\mrm{cHL}} = (\bm L^{\mrm{cHL}, (1)}, \bm L^{\mrm{cHL}, (2)}, \ldots ,\bm L^{\mrm{cHL}, (N)})$ is called the \emph{colored Hall-Littlewood line ensemble} with boundary condition $\sigma$ (see Figure~\ref{0lmu}). The line ensemble $\bm L^{\mrm{cHL}, (1)}$ is called the \emph{uncolored Hall-Littlewood line ensemble}.
\end{definition}

\begin{remark}
The terminology comes from the fact that, if one interprets the colored Hall-Littlewood line ensemble as random sequences of compositions (see \cite[Definition 4.1]{aggarwalborodin}), the resulting process is the same  as the colored Hall-Littlewood process introduced in \cite[Section 1.6]{borodin2018coloured} (though we do not need or prove this).

We call $\bm L^{\mrm{cHL}, (1)}$ the uncolored Hall-Littlewood line ensemble as it is determined purely by total arrow counts in the colored $q$-Boson model. As in the uncolored $q$-Boson model, there is no boundary condition to be specified for the uncolored Hall-Littlewood line ensemble.
Analogous to the colored case, the uncolored Hall-Littlewood line ensemble is associated to the (uncolored) Hall-Littlewood process (see e.g., \cite{borodin2014macdonald,LRPP}), a collection of measures on sequences of partitions that is specified by a choice of specializations and a down-right path. The specializations (in some cases) can be thought of as our choices of spectral parameters, and the down-right path as the choice of boundary conditions of the uncolored $q$-Boson model (allowing arrows to horizontally enter only at rows with spectral parameter $1$, but which may be different than the bottom $N$ rows as we have fixed here; see also \cite[Remark~3.11]{aggarwalborodin}). The choice of boundary condition and spectral parameters we have fixed (and the only one considered in this article) is equivalent to the ``ascending Hall-Littlewood process'' with a homogeneous choice of specializations.
\end{remark}

Note that we do not include $\sigma$ in the notation for the colored Hall-Littlewood line ensembles, but we will specify $\sigma$ in the relevant statements. Next we record some simple consequences of Definition~\ref{d.colored line ensemble}.

\begin{lemma}\label{l.colored line ensemble basic properties}
Fix $\sigma:\intint{1,N}\to\intint{1,N}$, and for each $k\in\intint{1,N}$, let $\bm L^{\mrm{cHL}, (k)} = (L^{\mrm{cHL}, (k)}_1, L^{\mrm{cHL}, (k)}_2$, $\ldots)$ be defined as in \eqref{e.colored line ensemble definition} with boundary condition $\sigma$. It holds deterministically that, for any $i\in\N$ and $y\in\intint{0,N+M-1}$,
\begin{equation}\label{e.L^(j) properties}
\parbox{14.7cm}{
\begin{enumerate}[leftmargin=1cm, label=(\roman*)]
  \item $L^{\mrm{cHL}, (k)}_i(y) \geq L^{\mrm{cHL}, (k+1)}_i(y)$,

  \item $L^{\mrm{cHL}, (k)}_i(y) \geq L^{\mrm{cHL}, (k)}_{i+1}(y)$, and

  \item $L^{\mrm{cHL}, (k)}_i(y+1) - L^{\mrm{cHL}, (k)}_i(y) = -\one_{j_{(-i,y+1)}\geq k} \in \{0,-1\}$.
\end{enumerate}}
\end{equation}
\end{lemma}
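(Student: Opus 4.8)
The statement in \eqref{e.L^(j) properties} is a purely deterministic consequence of the definition \eqref{e.colored line ensemble definition}, so the plan is simply to unwind the definition of $L^{\mrm{cHL}, (k)}_i(y)$ as a cardinality of a set of rows, and observe that all three claims reduce to elementary set-theoretic facts about the sets $S^{(k)}_i(y) := \{y' > y : j_{(-i,y')} \geq k\}$ whose sizes are being counted. No use of the Gibbs property, arrow conservation, or the structure of the $\bigL_u$ weights is needed beyond the consistency bookkeeping already built into the configuration.

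For part (i): fixing $i$ and $y$, the condition $j_{(-i,y')} \geq k+1$ implies $j_{(-i,y')} \geq k$, so $S^{(k+1)}_i(y) \subseteq S^{(k)}_i(y)$ and hence $L^{\mrm{cHL}, (k)}_i(y) = \# S^{(k)}_i(y) \geq \# S^{(k+1)}_i(y) = L^{\mrm{cHL}, (k+1)}_i(y)$. For part (ii): the relevant inclusion is $\{y' > y : j_{(-(i+1),y')} \geq k\} \subseteq \{y' > y : j_{(-i,y')} \geq k\}$ — in words, any arrow of color at least $k$ exiting horizontally from column $i+1$ strictly above row $y$ must, by following it up-right and leftward along the configuration (i.e., tracing the color-$j_{(-(i+1),y')}$ path back through the vertices at $(-i, \cdot)$, using colored arrow conservation and consistency), correspond to an arrow of color at least $k$ that exited column $i$ at some row $y'' \geq y'$, hence strictly above $y$. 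This monotonicity of colored path positions moving rightward through the lattice is the one fact here that deserves a sentence of justification rather than being pure symbol-pushing; it follows because at each vertex the horizontal and vertical outgoing arrows never decrease in height relative to their incoming counterparts (an arrow entering horizontally either continues horizontally at the same height or turns upward), so columns further to the right carry the color-$\geq k$ arrows at weakly greater heights. For part (iii): the set $S^{(k)}_i(y)$ differs from $S^{(k)}_i(y+1)$ exactly according to whether $y+1$ itself belongs to $S^{(k)}_i(y)$, i.e. whether $j_{(-i,y+1)} \geq k$; thus $L^{\mrm{cHL}, (k)}_i(y) - L^{\mrm{cHL}, (k)}_i(y+1) = \one_{j_{(-i,y+1)}\geq k}$, which rearranges to the displayed identity, and the membership in $\{0,-1\}$ is immediate since the indicator is $0$ or $1$.

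The only mild subtlety — and the step I'd expect to require the most care — is part (ii), because unlike (i) and (iii) it is not a statement about a single column but relates two adjacent columns, so one genuinely must invoke how colored paths propagate through the lattice. I would phrase this via the up-right path structure noted just after the definition of the model in Section~\ref{s.higher spin configuration}: each color forms an up-right lattice path, so if a color-$c$ arrow (with $c \geq k$) exits column $i+1$ above row $y$, the same path occupied a horizontal edge exiting column $i$ at some row that is weakly smaller in the column index but whose height is weakly smaller too (paths only go up as the column index increases), hence that exit height is $> y - $ wait, more carefully: the path exits column $i$ at height $y'' \leq y'$, but we need $y'' > y$; this is where one uses that the path is up-right, so its height is nondecreasing in the column index, giving $y'' \le y'$ is the wrong direction. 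The correct direction: reading the path from left (column $i$) to right (column $i+1$), heights only increase, so if at column $i+1$ the path is at height $y' > y$, then at column $i$ it was at height $y'' \le y' $, which need not be $>y$. Hmm — so in fact part (ii) as an inclusion of sets is \emph{false} in that naive form, and the correct argument must instead compare the \emph{counts}: the number of color-$\geq k$ arrows exiting above row $y$ in column $i$ equals the number of such arrows whose path passes through column $i$ above row $y$, and since every such path also passes through column $i+1$ at a weakly greater height, the count in column $i$ is at least the count in column $i+1$ — but one must be careful that no color-$\geq k$ path "enters" between columns, which is guaranteed since all boundary arrows enter at the far left. So the clean argument for (ii) is a counting/injection argument (map each color-$\geq k$ arrow exiting column $i+1$ above $y$ to the same path's exit from column $i$, which is above $y$ and distinct for distinct paths), and I'd present it that way; the injectivity and the "above $y$" property both follow from the up-right path structure plus absence of horizontal arrow entry except at the left boundary.

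Having established (i)--(iii) from these elementary observations, the proof is complete. Since the whole statement holds deterministically for every configuration in the support, there is nothing probabilistic to check. The write-up should be short — essentially three one-line verifications plus the one paragraph carefully handling the path-propagation input for (ii).

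\begin{proof}[Proof of Lemma~\ref{l.colored line ensemble basic properties}]
All three claims are deterministic consequences of the definition \eqref{e.colored line ensemble definition}; we fix an arbitrary configuration $\{(\bm A_v, i_v; \bm B_v, j_v)\}$ satisfying colored arrow conservation, consistency, and the boundary conditions of Section~\ref{s.higher spin boundary condition}. For $k\in\intint{1,N}$, $i\in\N$, and $y\in\intint{0,N+M}$ write
\begin{align*}
S^{(k)}_i(y) := \bigl\{\, y' > y : j_{(-i,y')} \geq k \,\bigr\}, \qquad\text{so that}\qquad L^{\mrm{cHL}, (k)}_i(y) = \# S^{(k)}_i(y).
\end{align*}

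\emph{(i).} If $j_{(-i,y')} \geq k+1$ then $j_{(-i,y')} \geq k$, so $S^{(k+1)}_i(y) \subseteq S^{(k)}_i(y)$ and therefore $L^{\mrm{cHL}, (k)}_i(y) = \# S^{(k)}_i(y) \geq \# S^{(k+1)}_i(y) = L^{\mrm{cHL}, (k+1)}_i(y)$.

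\emph{(iii).} The sets $S^{(k)}_i(y)$ and $S^{(k)}_i(y+1)$ differ only possibly by the element $y+1$, which lies in $S^{(k)}_i(y)$ if and only if $j_{(-i,y+1)} \geq k$; hence $L^{\mrm{cHL}, (k)}_i(y) - L^{\mrm{cHL}, (k)}_i(y+1) = \one_{j_{(-i,y+1)}\geq k}$, which rearranges to the identity in (iii). Since $\one_{j_{(-i,y+1)}\geq k}\in\{0,1\}$, the displayed difference lies in $\{0,-1\}$.

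\emph{(ii).} Each color forms an up-right lattice path, and by the boundary conditions of Section~\ref{s.higher spin boundary condition} every arrow of every color enters the system only at the far left boundary (no arrow enters vertically through the bottom boundary, and arrows enter horizontally only at $(-\infty, \cdot)$). Consequently each arrow contributing to $L^{\mrm{cHL}, (k)}_{i+1}(y)$, say an arrow of color $c\geq k$ exiting horizontally from $(-(i+1), y')$ with $y'>y$, is part of a single up-right path which, traced one step to the left, also exits horizontally from $(-i, y'')$ for some $y''$ with $y'' \leq y'$; but an up-right path has weakly increasing height as the column index increases, so in fact, reading the path from column $i$ to column $i+1$, its height does not decrease, forcing $y'' \geq y' > y$ whenever the path indeed passes through column $i$ at the same or a later step. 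More carefully: distinct color-$\geq k$ arrows exiting column $i+1$ above row $y$ belong to distinct up-right paths, and each such path, being entered only at the left boundary, must pass through column $i$ as well, exiting it horizontally at some height that is weakly below its height in column $i+1$ when read rightward — i.e., weakly above when read leftward — and hence at a height $>y$. This assignment path $\mapsto$ (its horizontal exit from column $i$) is an injection from the set of color-$\geq k$ arrows exiting column $i+1$ above $y$ into the set of color-$\geq k$ arrows exiting column $i$ above $y$. Therefore $L^{\mrm{cHL}, (k)}_{i+1}(y) \leq L^{\mrm{cHL}, (k)}_{i}(y)$.
\end{proof}
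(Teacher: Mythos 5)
Parts (i) and (iii) are fine and match the paper's one-line verification. Part (ii), however, contains a genuine error: you have the geometry of the columns reversed. Since the $k$\th column is by definition $\{-k\}\times\intint{1,N+M}$, column $i+1$ (at $x=-(i+1)$) lies to the \emph{left} of column $i$ (at $x=-i$), and increasing column index means moving left, not right as you repeatedly assume (``traced one step to the left, also exits horizontally from $(-i,y'')$'', ``an up-right path has weakly increasing height as the column index increases''). With your orientation, the up-right property gives $y''\le y'$, which does \emph{not} imply $y''>y$, and your proof in fact asserts both $y''\le y'$ and $y''\ge y'$ in the space of two sentences; moreover, following your stated geometry honestly would produce an injection in the wrong direction, yielding $L^{(k)}_i\le L^{(k)}_{i+1}$, the opposite of what is claimed. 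The fix is short: the arrow exiting column $i+1$ at row $y'>y$ must, moving up and to the right, subsequently exit column $i$ at some row $y''\ge y'>y$, and this map is the injection from $\{$color-$\ge k$ exits from column $i+1$ above $y\}$ into $\{$color-$\ge k$ exits from column $i$ above $y\}$. This is precisely the one sentence the paper uses for (ii); your underlying idea is the same, but the orientation must be corrected for the argument to be sound.
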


\begin{proof}
The first and third points are immediate from the definition. The second relies on the fact that arrows can only move up and to the right, so that any arrow exiting the $(i+1)$\textsuperscript{st} column at a row higher than $y$ must exit the $i$\textsuperscript{th} column at a row higher than $y$.
\end{proof}

We next record a statement of color merging for the colored Hall-Littlewood line ensemble.

\begin{lemma}[Color merging for $\bm L^{\mrm{cHL}}$]\label{l.color merging cHL}
Fix $\sigma:\intint{1,N}\to\intint{1,N}$ and $k\in\intint{1,N}$, and let $\tau:\intint{1,N}\to\intint{1,k}$ be surjective and non-decreasing. Let $\bm L^{\mrm{cHL}, \sigma}$ and $\bm L^{\mrm{cHL}, \tau\circ\sigma}$ be the colored Hall-Littlewood line ensembles with boundary conditions $\sigma$ and $\tau\circ\sigma$ as in Definition~\ref{d.colored line ensemble}, respectively. For $j\in\intint{1,k}$, let $t_j = \min \tau^{-1}(\{j\})$. Then
\begin{align*}
(\bm L^{\mrm{cHL}, \sigma, (t_j)})_{j\in\intint{1,k}} \stackrel{d}{=} (\bm L^{\mrm{cHL}, \tau\circ\sigma, (j)})_{j\in\intint{1,k}}.
\end{align*}
In particular, defining $\tau$ by $\tau(j) = 1$ for all $j\in\intint{1,N}$, it follows that $\bm L^{\mrm{cHL}, \sigma, (1)}$  under any boundary condition $\sigma$ has the same distribution as $\bm L^{\mrm{cHL}, \sigma', (1)}$ with $\sigma'(j) = 1$ for all $j\in\intint{1,N}$, i.e., the colored line ensemble associated with the uncolored $q$-Boson model.
\end{lemma}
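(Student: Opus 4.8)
The plan is to deduce the color merging statement for $\bm L^{\mrm{cHL}}$ directly from the color merging statement for the colored $q$-Boson measure (Lemma~\ref{l.q-Boson color merging}), since each $\bm L^{\mrm{cHL}, (k)}$ is, by Definition~\ref{d.colored line ensemble}, a deterministic functional of the horizontal exit colors $(j_v)_{v}$ of the underlying $q$-Boson configuration. First I would fix $\sigma$, $k$, and a surjective non-decreasing $\tau:\intint{1,N}\to\intint{1,k}$, and set $t_j = \min\tau^{-1}(\{j\})$ for $j\in\intint{1,k}$. The key observation is that, for any integer $m\in\intint{1,N}$ and any color $c\in\intint{0,N}$, the condition $c \geq m$ is equivalent to $\tau(c)\geq \tau(m)$ \emph{whenever} $m = t_{\tau(m)}$ is the minimal element of its $\tau$-fiber; more precisely, $c\geq t_j \iff \tau(c)\geq j$ for every $j\in\intint{1,k}$ and every $c$ in the range $\intint{0,N}$ (here using that $\tau$ is non-decreasing and surjective onto $\intint{1,k}$, and that $\tau(0)$ — which we set to $0$ to match the convention of Lemma~\ref{l.q-Boson color merging} — is $0 < 1 \le j$, consistent with $c = 0 < t_j$ since $t_j\ge 1$). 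Hence
\begin{align*}
L^{\mrm{cHL}, \sigma, (t_j)}_i(y) = \#\{y' > y: j_{(-i,y')}\geq t_j\} = \#\{y' > y: \tau(j_{(-i,y')})\geq j\},
\end{align*}
which is exactly $L^{\mrm{cHL}, (j)}_i(y)$ computed from the color-merged configuration $(\tau(j_v))_v$.

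Next I would invoke Lemma~\ref{l.q-Boson color merging}, applied with this $\tau$ (extended by $\tau(0)=0$, which satisfies $\tau^{-1}(\{0\}) = \{0\}$ since $\tau$ maps $\intint{1,N}$ into $\intint{1,k}$ and is non-decreasing and surjective there), to conclude that $(j_v)_{v}$ under the colored $q$-Boson measure with boundary condition $\tau\circ\sigma$ has the same law as $(\tau(j_v))_{v}$ under the colored $q$-Boson measure with boundary condition $\sigma$. Applying the deterministic functional $(\,\cdot\,)\mapsto \big(\#\{y'>y: (\cdot)_{(-i,y')}\geq j\}\big)_{i,y,\,j\in\intint{1,k}}$ to both sides of this distributional identity, the left side becomes $(\bm L^{\mrm{cHL}, \tau\circ\sigma, (j)})_{j\in\intint{1,k}}$ by Definition~\ref{d.colored line ensemble}, and the right side becomes $(\bm L^{\mrm{cHL}, \sigma, (t_j)})_{j\in\intint{1,k}}$ by the displayed computation above. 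This gives the claimed equality in distribution. The final ``in particular'' assertion is the special case $k=1$, $\tau\equiv 1$, for which $t_1 = \min\tau^{-1}(\{1\}) = 1$, so $\bm L^{\mrm{cHL},\sigma,(1)} \stackrel{d}{=} \bm L^{\mrm{cHL},\sigma',(1)}$ where $\sigma' = \tau\circ\sigma \equiv 1$; by the definition of the uncolored $q$-Boson model this is precisely the line ensemble associated with the uncolored $q$-Boson measure, and the right side does not depend on the original $\sigma$.

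The only genuinely delicate point — and the step I'd take most care over — is the equivalence $c\geq t_j \iff \tau(c)\geq j$ for $c$ ranging over \emph{all} possible exit colors, including $c=0$ (no arrow). The forward direction uses monotonicity of $\tau$: if $c\geq t_j$ then $\tau(c)\geq\tau(t_j) = j$. The reverse direction uses that $t_j$ is the \emph{minimum} of the fiber $\tau^{-1}(\{j\})$ together with monotonicity: if $\tau(c)\geq j$ then, were $c < t_j$, monotonicity would force $\tau(c)\leq \tau(t_j - 1)$, and since $t_j$ is the least color mapping to $j$ and $\tau$ is non-decreasing onto $\intint{1,k}$, we have $\tau(t_j-1)\le j-1 < j$, a contradiction (the edge case $t_j = 1$ is vacuous as then no $c<t_j$ with $c\ge 1$ exists, and $c=0$ gives $\tau(0)=0<j$, consistent with $0 < t_j$). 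Everything else is bookkeeping: the passage from an almost-sure identity of configurations under a single measure to a distributional identity of the induced line ensembles is immediate from the measurability of the functionals involved, and no Gibbs property or further structure of the $q$-Boson weights is needed beyond what Lemma~\ref{l.q-Boson color merging} already provides.
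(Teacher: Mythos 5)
Your proof is correct and takes essentially the same approach as the paper, which simply states the result as an immediate consequence of combining Definition~\ref{d.colored line ensemble} with Lemma~\ref{l.q-Boson color merging}. You have correctly filled in the key bookkeeping step (the equivalence $c\geq t_j \iff \tau(c)\geq j$ and the extension $\tau(0)=0$), which the paper leaves implicit.
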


\begin{proof}
This is an immediate consequence of combining Definition~\ref{d.colored line ensemble} with the color merging statement Lemma~\ref{l.q-Boson color merging}.
\end{proof}

\subsubsection{Restricting the domain}\label{s.line ensemble to colored S6V} We need to connect the top curve of $\smash{\bm L^{\mrm{cHL}, (j)}}$ for different $j$ with the height function of the colored S6V model (we focus on S6V since we will ultimately deal with ASEP via a limit to it from the former; see Section~\ref{s.properties of S6V and ASEP}). To do so, we must first modify the setup of the colored S6V model slightly. Recall from \eqref{e.s6v height function} that the prelimiting analog $\S^{\mrm{S6V}, \varepsilon}$ of $\S$ concerns counts of the number of arrows above a varying vertex on the vertical line $x=\floor{\varepsilon^{-1}}$, and that arrows horizontally enter the system from the left side at coordinates $(1,i)$ for $i\in\intint{-N,N}$. In the packed boundary condition, the arrow entering horizontally at $(1,i)$ has color $i$, and thus all arrow colors entering at or below the $x$-axis are not positive. In contrast, note that the colored Hall-Littlewood line ensemble we defined are associated with only positive colors. (We needed to allow arrows of negative color in the colored S6V model, as  $\S^{\mrm{S6V},\varepsilon}$ needs to allow negative values in the first argument.)

We first record a consequence of color merging (of the colored S6V model) that says that, when considering the height function for colors $1$ or higher, we may restrict the domain to $\Z_{\geq 1}\times\intint{1,N}$ without changing the distribution of the colored height function. This will address the discrepancy pointed out in the previous paragraph. 

\begin{lemma}
Fix $N\in\N$ and let $\sigma:\intint{-N,N}\to\llparen {-}\infty, N\rrbracket\cup\{-\infty\}$, $\sigma_{\geq 1}:\intint{1,N}\to\intint{1,N}$ be such that $\sigma(k) = \sigma_{\geq 1}(k) \geq 1$ for $k \in \intint{1,N}$ and $\sigma(k)\leq 0$ for $k\in\intint{-N,0}$. Consider the colored S6V model on the domains $\Z_{\geq 1}\times\llbracket -N, \infty\rrparen$ and $\Z_{\geq 1}\times \llbracket1,\infty\rrparen$ with boundary conditions $\sigma$ and $\sigma_{\geq 1}$, respectively (i.e., an arrow of color $\sigma(k)$ or $\sigma_{\geq 1}(k)$ enters horizontally at $(1,k)$ for $k\in\intint{-N,N}$ and $\intint{1,N}$, respectively), and let $h^{\mrm{S6V}}$ and $h^{\mrm{S6V}}_{\geq 1}$ be their respective colored height functions as defined in \eqref{e.s6v height function}. Then $(h^{\mrm{S6V}}(x,0; y,t))_{x,y, t\in\N}\stackrel{\smash d}{=} (h^{\mrm{S6V}}_{\geq 1}(x,0; y,t))_{x,y, t\in\N}$.
\end{lemma}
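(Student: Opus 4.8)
The plan is to obtain this as an immediate consequence of color merging for the colored S6V model (Remark~\ref{r.s6v color merging}): the height functions in question, $h^{\mrm{S6V}}(x,0;y,t)$ with $x \in \N$, only count arrows of color at least $x \geq 1$, so they are insensitive to identifying all nonpositive colors (and the color $-\infty$) with one another. Concretely, I would introduce the weakly monotone map $f : \Z\cup\{-\infty\}\to\Z\cup\{-\infty\}$ defined by $f(j) = -\infty$ for $j \leq 0$ and $f(j) = j$ for $j \geq 1$. By Remark~\ref{r.s6v color merging} (in its ``equivalent'' formulation, that the Markov semigroup commutes with weakly monotone color maps), applying $f$ to every arrow color of the colored S6V model with boundary condition $\sigma$ yields a configuration with the law of the colored S6V model with boundary condition $f\circ\sigma$; and because $f$ fixes every color $\geq 1$, one has $\#\{k>y: j_{(t,k)} \geq x\} = \#\{k>y: f(j_{(t,k)}) \geq x\}$ for every $x \in \N$, so the joint law of $(h^{\mrm{S6V}}(x,0;y,t))_{x,y,t\in\N}$ is unchanged upon replacing $\sigma$ by $f\circ\sigma$.

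Next I would analyze the model with boundary condition $f\circ\sigma$: by hypothesis $f\circ\sigma(k) = -\infty$ for $k\in\intint{-N,0}$ and $f\circ\sigma(k) = \sigma_{\geq 1}(k) \geq 1$ for $k\in\intint{1,N}$, while only $-\infty$ arrows enter along the bottom boundary. The key structural observation is that no arrow ever occupies a vertex in $\Z_{\geq 1}\times\intint{-N,0}$: running the Markovian antidiagonal sampling procedure and inducting, using colored arrow conservation together with the fact (Figure~\ref{f.R weights}) that a vertex with empty incoming configuration deterministically has empty outgoing configuration (weight $1$), shows every such vertex has the fully empty arrow configuration almost surely. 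It follows that the restriction of the sampling procedure to $\Z_{\geq 1}\times\llbracket 1,\infty\rrparen$ reproduces exactly the colored S6V model on $\Z_{\geq 1}\times\llbracket 1,\infty\rrparen$ with boundary condition $\sigma_{\geq 1}$: the horizontal arrows entering at $(1,k)$ for $k\in\intint{1,N}$ carry colors $\sigma_{\geq 1}(k)$, no horizontal arrows enter above height $N$, and the arrows entering vertically at $(\ell,1)$ have color $-\infty$, having been propagated unchanged up from $(\ell,-N)$ through the empty lower vertices. Hence $(j_{(t,k)})_{k\geq 1,\,t\in\N}$ has the same joint law under $f\circ\sigma$ as under $\sigma_{\geq 1}$, and since for $x\in\N$ the terms with $k\leq 0$ never contribute to \eqref{e.s6v height function} (there $j_{(t,k)} = -\infty < x$), the identity $(h^{\mrm{S6V}}(x,0;y,t))_{x,y,t\in\N} \stackrel{d}{=} (h^{\mrm{S6V}}_{\geq 1}(x,0;y,t))_{x,y,t\in\N}$ follows.

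The only step needing genuine care is this ``decoupling'': verifying that under $f\circ\sigma$ the heights $\leq 0$ are deterministically arrow-free, so that the heights $\geq 1$ evolve autonomously and the resulting law coincides with that of the model on $\Z_{\geq 1}\times\llbracket 1,\infty\rrparen$ with boundary data $\sigma_{\geq 1}$ (including the correct bottom boundary $a_{(\ell,1)} = -\infty$). This is a short induction along antidiagonals and I do not anticipate any real difficulty, but it is the one place where the argument is more than a direct invocation of color merging.
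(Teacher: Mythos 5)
Your proof is correct and takes essentially the same route as the paper: merge all nonpositive colors (and $-\infty$) to $-\infty$ via color merging (Remark~\ref{r.s6v color merging}), then observe that the resulting model on $\Z_{\geq 1}\times\llbracket -N,\infty\rrparen$ is arrow-free on heights $\leq 0$ and hence coincides on heights $\geq 1$ with the model with boundary condition $\sigma_{\geq 1}$. The paper's proof is one sentence and leaves the ``decoupling'' step implicit; you spell it out (correctly), which is the only difference.
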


\begin{proof}
This is an immediate consequence of color merging for colored S6V (Remark~\ref{r.s6v color merging}): in the colored S6V model on $\Z_{\geq 1}\times\llbracket -N, \infty\rrparen$ with boundary condition $\sigma$ in the statement, we merge the colors $\llparen {-}\infty,0\rrbracket\cup\{-\infty\}$ with $-\infty$ and thus obtain the colored S6V model on $\Z_{\geq 1}\times\llbracket 1,\infty\rrparen$ with boundary condition $\sigma_{\geq 1}$.
\end{proof}

As noted, the convergence of the S6V and ASEP sheets to $\S$ is a statement about the colored height functions at a fixed time, so to prove Theorems~\ref{t.asep airy sheet} and \ref{t.s6v airy sheet} we can restrict our attention to the colored height function at time $M$ for fixed $M$, i.e., $h^{\mrm{S6V}}(\bm\cdot, 0; \bm\cdot, M)$.
Note that we switched the notation for the time argument from $t$ to $M$; this is so as to match the notation of the colored $q$-Boson model as well as \cite{aggarwalborodin}.  

\subsubsection{Relation to colored S6V} With these preliminaries, we may state the crucial link between the colored Hall-Littlewood line ensemble and the height functions in colored S6V. Its proof, originally due to \cite{aggarwalborodin}, relies on the Yang-Baxter equation of colored S6V and the colored $q$-Boson model. Because our arguments fundamentally rely on this result, we give a self-contained proof in Appendix~\ref{s.yang-baxter}.

\begin{proposition}[{\cite[Theorem 4.7]{aggarwalborodin}}]\label{p.colored line ensembles}
Fix $N,M\in\N$, $\sigma:\intint{1,N}\to\intint{1,N}$, $q\in[0,1)$, and $z\in(0,1)$. Let $h^{\mrm{S6V}}$ be the height function associated with the colored S6V on $\Z_{\geq  1} \times \llbracket 1, \infty\rrparen$ as in \eqref{e.s6v height function} and $\bm L^{\mrm{cHL}, (j)}$ be as in Definition~\ref{d.colored line ensemble} (with the associated colored $q$-Boson model on domain $\Z_{\leq 0}\times\intint{1,N+M}$), both with boundary condition $\sigma$. We have the equality in distribution of $(\hssv(k, 0; y,M))_{y\in\intint{1,N-1},k\in\intint{1,N}}$ and $(L^{\mrm{cHL}, \smash{(k)}}_1(y))_{y\in\intint{1,N-1}, k\in\intint{1,N}}$.
\end{proposition}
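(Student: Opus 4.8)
The plan is to prove the distributional identity by exhibiting a measure-preserving bijection at the level of configurations, using the Yang-Baxter equation to transport the colored S6V measure to the colored $q$-Boson measure. First I would set up the two relevant objects: on one side, the colored S6V model on $\Z_{\geq 1}\times\intray{1}$ with boundary condition $\sigma$, whose height function $\hssv(k,0;y,M)$ counts arrows of color $\geq k$ exiting horizontally above row $y$ at horizontal coordinate $M$; on the other side, the colored $q$-Boson model on $\Z_{\leq 0}\times\intint{1,N+M}$ with boundary condition $\sigma$, with $\smash{L^{\mrm{cHL},(k)}_1(y)}$ counting arrows of color $\geq k$ exiting horizontally in the first column above row $y$. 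The key observation is that by \eqref{e.colored line ensemble definition}, $\smash{L^{\mrm{cHL},(k)}_1(y)} = \#\{y' > y : j_{(-1,y')}\geq k\}$, so it suffices to match the law of the colored arrow configuration of the column $\{-1\}\times\intint{1,N+M}$ (equivalently, the horizontal exit colors $(j_{(-1,y)})_{y\in\intint{1,N+M}}$) with the law of the colored arrow configuration of the vertical line $\{M\}\times\intint{1,N}$ of the S6V model (the horizontal exit colors at time $M$).

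The main engine is the Yang-Baxter equation relating the $\bigL_u$ weights (colored $q$-Boson) and the colored S6V $\bigR$ weights of Figure~\ref{f.R weights}: as recalled in the excerpt before Definition~\ref{d.colored q-Boson}, the colored $q$-Boson weights are exactly the weights of \cite[eq.~(1.2.2)]{borodin2018coloured} with $s=0$, which intertwine with the S6V weights. The strategy, following \cite{aggarwalborodin} and spelled out in Appendix~\ref{s.yang-baxter}, is to start from the colored $q$-Boson partition sum on $\Z_{\leq 0}\times\intint{1,N+M}$ with boundary data $\sigma$, and repeatedly apply the Yang-Baxter relation to ``swap'' the row rapidities past the column rapidities --- concretely, moving each of the $M$ ``spectral parameter $z$'' rows (rows $N+1,\dots,N+M$) across the $N$ ``spectral parameter $1$'' rows. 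Each such move replaces an $\bigL_z$–$\bigL_1$ pair of vertices by an S6V vertex plus a rearrangement, and after all swaps the weighted sum is reorganized as an S6V partition sum on a domain of width $M$ with $N$ rows, with the auxiliary rows absorbed into the boundary. Tracking the colored arrow occupation of the relevant boundary edges through this sequence of local moves shows that the marginal law of $(j_{(-1,y)})_{y}$ in the colored $q$-Boson model equals the marginal law of the horizontal exit colors at horizontal coordinate $M$ in the colored S6V model; crucially the normalization constant $\smash{\bigl(\frac{1-qz}{1-z}\bigr)^{NM}}$ in \eqref{e.q-boson measure} is precisely what makes the S6V side (with its stochastic weights) come out normalized to a probability measure, which is why this identity of marginals holds and not merely an identity of unnormalized weights. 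Translating through \eqref{e.s6v height function} and \eqref{e.colored line ensemble definition} then gives the equality in distribution of $(\hssv(k,0;y,M))_{y\in\intint{1,N-1},k\in\intint{1,N}}$ and $(\smash{L^{\mrm{cHL},(k)}_1(y)})_{y\in\intint{1,N-1},k\in\intint{1,N}}$, the restriction to $y\leq N-1$ being needed because the S6V count only sees rows in $\intint{1,N}$.

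The main obstacle is the bookkeeping in the Yang-Baxter step: one must carefully verify that the local intertwining relation holds for the full colored (not just uncolored) weights with the $s=0$ specialization, that the boundary data $\sigma$ is carried correctly through each swap (in particular that no arrows leak out of the finite window and that the ``all arrows exit vertically at $(0,N+M)$'' condition corresponds on the S6V side to the prescribed entrance pattern), and that summing/telescoping the $NM$ local moves produces exactly the claimed normalization. This is exactly the content deferred to Appendix~\ref{s.yang-baxter}; here I would simply cite \cite[Theorem~4.7]{aggarwalborodin} together with that appendix, and note that the passage from the column/line arrow-occupation statistics to the height-function statement is immediate from the definitions \eqref{e.s6v height function} and \eqref{e.colored line ensemble definition}. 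One should also double-check the index conventions (the first $q$-Boson column $\{-1\}$ versus S6V time $M$, and the row ranges $\intint{1,N}$) match those of \cite{aggarwalborodin}, which is routine.
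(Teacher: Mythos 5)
Your proposal takes essentially the same approach as the paper: reduce the height-function identity to matching the law of the arrow exit colors along the relevant boundary, and establish the matching by the Yang-Baxter argument of Appendix~\ref{s.yang-baxter} (reproducing \cite[Theorem 4.7]{aggarwalborodin}), with the factor $\bigl(\frac{1-qz}{1-z}\bigr)^{NM}$ from \eqref{e.q-boson measure} cancelling the weight of the trivially-traversed auxiliary rectangle of $\bigR_z$ vertices. Two small descriptive imprecisions, neither of which affects the outcome: in the Yang-Baxter step a pre-existing $\bigR$ vertex is carried across each $\bigL_z$--$\bigL_1$ pair rather than being produced from it, and the paper actually matches the full $q$-Boson column $\{-1\}\times\intint{1,N+M}$ with a \emph{bent} S6V boundary of equal cardinality (the object $\tilde h^{\mathrm{S6V}}$ of Section~\ref{s.proof of matching}) rather than a single vertical slice, after which restricting to $y\in\intint{1,N-1}$ (where, as you note, only the right boundary matters, by conservation of the total arrow count) yields the proposition.
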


Next we move to describing a Gibbs property of the colored Hall-Littlewood line ensemble.

\subsubsection{Colored Hall-Littlewood Gibbs property} From the definition of the colored Hall-Littlewood line ensemble in terms of the colored $q$-Boson model, and the Gibbs property of the latter recorded in Lemma~\ref{l.colored vertex gibbs}, we obtain a Gibbs property for the former. We restrict ourselves to stating the case where the system contains only two colors, i.e., $\sigma:\intint{1,N}\to\intint{1,2}$, as this will turn out to suffice for our purposes (by performing a color merging later, see Section~\ref{s.approximate LPP}), though more general Gibbs properties involving more colors also hold. In this case, $\bm L^{\mrm{cHL}, (1)}$ and $\bm L^{\mrm{cHL}, (2)}$ determine the colored arrow configurations at all vertices in the system via Definition~\ref{d.colored line ensemble}. Let $\smash{\F^{\mrm{cHL}}_k}$ (``cHL'' short for colored Hall-Littlewood) be the $\sigma$-algebra
\begin{align}\label{e.F^cHL}
\smash{\F^{\mrm{cHL}}_k} = \sigma\left(\bigl\{\smash{L^{\mrm{cHL},(1)}_i}(x), \smash{L^{\mrm{cHL},(2)}_j}(x) : i\in\N, j\in\llbracket k+1, \infty\rrparen, x\in\intint{0,N+M}\bigr\}\right);
\end{align}
in words, we condition on all curves of the line ensemble $\bm L^{\mrm{cHL},(1)}$, and the curves indexed $k+1$ or larger of $\bm L^{\mrm{cHL},(2)}$.
We now state the Gibbs property of the colored Hall-Littlewood line ensemble that we will need. Recall  the notation $\bigL_{z,v}$ from before Lemma~\ref{l.uncolored vertex gibbs}.

\begin{lemma}[Colored Hall-Littlewood Gibbs property]\label{l.colored line ensemble gibbs}
\linespread{1.1}\selectfont{Under the colored Hall-Littlewood line ensemble measure (with boundary condition $\sigma:\intint{1,N}\to\intint{1,2}$), the conditional distribution of $\smash{L^{\mrm{cHL},(2)}_k}(\bm\cdot)$ given $\smash{\F^{\mrm{cHL}}_k}$ is supported on the collection of Bernoulli paths $L':\intint{0,N+M}\to\Z$ such that (i) $\smash{L^{\mrm{cHL},(1)}_k}(\bm\cdot) - L'(\bm\cdot)$ is a Bernoulli path and (ii) $L'(\bm\cdot) \geq L^{\mrm{cHL},(2)}_{k+1}(\bm\cdot)$ with $L'(N+M) = 0$ and $L'(0) = L^{\smash{\mrm{cHL},(2)}}_{k+1}(0)$; call the (random) set of such Bernoulli paths $\mrm{Supp}^{\mrm{cHL}}$. 

For such a Bernoulli path $L'$, let $(\bm A'_v, i_v; \bm B'_v, j_v')_{v\in\Gamma_k}$ be the associated tuple determined by $L'(\bm\cdot)$, $L^{\mrm{cHL},\smash{(1)}}_{k}(\bm\cdot)$, $L^{\mrm{cHL},\smash{(1)}}_{k+1}(\bm\cdot)$, and $L^{\mrm{cHL},\smash{(2)}}_{k+1}(\bm \cdot)$ on the event that $L^{\mrm{cHL}, \smash{(2)}}_k(\bm\cdot) = L'(\bm\cdot)$ (we write $i_v$ and not $i_v'$ as it has no dependence on $L'(\bm\cdot)$). The condition $L'(\bm\cdot)\in\mrm{Supp}^{\mrm{cHL}}$ is equivalent to $(\bm A'_v; \bm B'_v, j'_v)_{v\in\Gamma_k}\in \mrm{Supp}^{\mrm{cB}}$ (the latter as in Lemma~\ref{l.colored vertex gibbs}). Further, the conditional probability that $L^{\mrm{cHL},(2)}_k(\bm\cdot) = L'(\bm\cdot)$ , for any fixed (i.e., deterministic) selection of the righthand side, is proportional to $\prod_{v\in \Gamma_k} \bigL_{z,v}(\bm A'_v, i_v; \bm B'_v, j'_v)\one_{(\bm A'_v; \bm B'_v, j'_v)_{v\in\Gamma_k}\in \mrm{Supp}^{\mrm{cB}}}$.}
\end{lemma}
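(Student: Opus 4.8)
The plan is to derive this Gibbs property directly from the colored $q$-Boson Gibbs property (Lemma~\ref{l.colored vertex gibbs}) by carefully translating the statement about colored arrow counts on the column $\Gamma_k$ into a statement about the Bernoulli path $L^{\mrm{cHL},(2)}_k(\bm\cdot)$. First I would recall that, by Definition~\ref{d.colored line ensemble}, in the two-color case the entire configuration of colored arrows is a deterministic function of the pair $(\bm L^{\mrm{cHL},(1)}, \bm L^{\mrm{cHL},(2)})$: specifically $\one_{j_{(-i,y)}\geq 1}$ is read off from $\bm L^{\mrm{cHL},(1)}$ (equivalently from total arrow counts) and $\one_{j_{(-i,y)}\geq 2}$ from $\bm L^{\mrm{cHL},(2)}$. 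Hence conditioning on $\F^{\mrm{cHL}}_k$ from \eqref{e.F^cHL} is the same as conditioning on $\F^{\mrm{cB}}_k$ from \eqref{e.F^cB}: knowing all curves of $\bm L^{\mrm{cHL},(1)}$ determines $\one_{j_v\geq 1}$ for all $v$, hence all total arrow counts $|\bm A_v|$, $|\bm B_v|$ via \eqref{e.L^(j) properties}(iii) and consistency; and knowing the curves of $\bm L^{\mrm{cHL},(2)}$ indexed $k{+}1$ and above determines $j_v = (j_v$ merged to $\{0,1,2\})$ for all $v\in\Lambda_k$ (columns at distance $>k$), since color $\geq 2$ arrows in those columns are recorded by $L^{\mrm{cHL},(2)}_i$ with $i\geq k+1$. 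So $\F^{\mrm{cHL}}_k = \F^{\mrm{cB}}_k$, and Lemma~\ref{l.colored vertex gibbs} applies verbatim to describe the conditional law of $((\bm A_v; \bm B_v, j_v):v\in\Gamma_k)$.

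Next I would unwind the correspondence between $(\bm A_v; \bm B_v, j_v)_{v\in\Gamma_k}$ and the single Bernoulli path $L^{\mrm{cHL},(2)}_k(\bm\cdot)$. The point is that, conditionally on $\F^{\mrm{cHL}}_k$, the only undetermined object in column $k$ is the curve $L^{\mrm{cHL},(2)}_k$: indeed $L^{\mrm{cHL},(1)}_k$, $L^{\mrm{cHL},(1)}_{k+1}$, $L^{\mrm{cHL},(2)}_{k+1}$ are all $\F^{\mrm{cHL}}_k$-measurable (the first two because $\bm L^{\mrm{cHL},(1)}$ is fully conditioned on, the third because $k{+}1 > k$), and together with $L^{\mrm{cHL},(2)}_k(\bm\cdot)$ these four curves determine, via \eqref{e.colored line ensemble definition} and arrow conservation, the full arrow configuration $(\bm A_v, i_v; \bm B_v, j_v)$ at every $v\in\Gamma_k$: the color-$2$ counts come from differences of $L^{\mrm{cHL},(2)}$ curves, the total counts from differences of $L^{\mrm{cHL},(1)}$ curves, and $i_v = j_{(-k-1,\cdot)}$ is fixed by consistency with column $k{+}1$ (explaining why we write $i_v$, not $i_v'$). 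Conversely a path $L':\intint{0,N+M}\to\Z$ gives a valid such configuration iff it is a Bernoulli path with $L'(N+M)=0$, $L'(0) = L^{\mrm{cHL},(2)}_{k+1}(0)$, $L'\geq L^{\mrm{cHL},(2)}_{k+1}$ (the ordering constraint between curves $k$ and $k{+}1$ of color $2$), and $L^{\mrm{cHL},(1)}_k - L'$ is a Bernoulli path (encoding that at each row the number of color-$\{0,1\}$ arrows exiting column $k$ is non-negative, i.e.\ the color-$2$ arrows among all arrows of color $\geq 1$ form a sub-collection). This is exactly the definition of $\mrm{Supp}^{\mrm{cHL}}$, and I would check that under this bijection $L'\in\mrm{Supp}^{\mrm{cHL}}$ corresponds precisely to $(\bm A'_v; \bm B'_v, j'_v)_{v\in\Gamma_k}\in\mrm{Supp}^{\mrm{cB}}$ (matching conditions (i) $\one_{j'_v\geq 1}=\one_{j_v\geq 1}$, which here holds automatically since $\bm L^{\mrm{cHL},(1)}$ is fixed, and (ii) colored arrow conservation/consistency). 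Finally, since the conditional probability of each configuration on $\Gamma_k$ is proportional to $\prod_{v\in\Gamma_k}\bigL_{z,v}(\bm A'_v, i_v; \bm B'_v, j'_v)$ times the $\mrm{Supp}^{\mrm{cB}}$ indicator by Lemma~\ref{l.colored vertex gibbs}, and the map $L'\mapsto (\bm A'_v; \bm B'_v, j'_v)_{v\in\Gamma_k}$ is a bijection onto the relevant support, the same formula transfers to give the conditional law of $L^{\mrm{cHL},(2)}_k(\bm\cdot)$, completing the proof.

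The main obstacle I anticipate is purely bookkeeping: verifying carefully that $\F^{\mrm{cHL}}_k$ and $\F^{\mrm{cB}}_k$ generate the same $\sigma$-algebra, i.e.\ that nothing beyond total arrow counts and the colored counts in columns $>k$ leaks into $\F^{\mrm{cHL}}_k$, and symmetrically that $\F^{\mrm{cB}}_k$ contains no more than what $\bm L^{\mrm{cHL},(1)}$ together with curves $k{+}1,k{+}2,\dots$ of $\bm L^{\mrm{cHL},(2)}$ encode. This requires tracking which curve indices record arrows in which columns under \eqref{e.colored line ensemble definition}, and invoking consistency and arrow conservation (as in the proof of Lemma~\ref{l.colored vertex gibbs}) to propagate measurability of boundary counts; once that identification is pinned down, the rest is the routine bijection argument sketched above. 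I would also take care, when stating the correspondence, that the boundary values $L'(0)$ and $L'(N+M)$ are genuinely forced (the former by the fact that all arrows exit from the rightmost column and by the value of $L^{\mrm{cHL},(2)}_{k+1}$ at $0$, the latter because no arrows enter above row $N$), so that the support description is exactly as claimed and the counting of degrees of freedom in column $k$ is correct.
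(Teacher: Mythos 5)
Your argument is correct and follows the same route as the paper's (very terse) proof: both translate the colored $q$-Boson Gibbs property (Lemma~\ref{l.colored vertex gibbs}) into line-ensemble language via Definition~\ref{d.colored line ensemble}, identifying the $\sigma$-algebras $\F^{\mrm{cHL}}_k$ and $\F^{\mrm{cB}}_k$, setting up the bijection between candidate paths $L'$ and tuples on $\Gamma_k$, and matching conditions (i) and (ii). You fill in the measurability bookkeeping that the paper leaves implicit, which is a reasonable elaboration rather than a different approach.
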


\begin{proof}
The lemma follows from Lemma~\ref{l.colored vertex gibbs} and Definition~\ref{d.colored line ensemble} relating the color Hall-Littlewood line ensemble and the arrow configurations of the colored $q$-Boson model; note that (i) and (ii) in the hypotheses of Lemma~\ref{l.colored line ensemble gibbs} correspond to (i) and (ii) in Lemma~\ref{l.colored vertex gibbs} (for (i), using that $L^{\smash{\mrm{cHL},(1)}}_k(\bm\cdot) - \smash{L^{\mrm{cHL},(2)}_k}(\bm\cdot)$ decreases by $1$ from $x$ to $x+1$ exactly at those $x$ such that an arrow of color $1$ exits horizontally from $(-k,x)$, by Lemma~\ref{l.colored line ensemble basic properties} (iii)).
\end{proof}

There is also a Gibbs property for the uncolored Hall-Littlewood line ensemble, i.e., $\bm L^{\mrm{cHL},(1)}$, obtained by translating Lemma~\ref{l.uncolored vertex gibbs}. We will discuss this in more detail in Section~\ref{s.line ensemble convergence} next.

\subsection{Convergence of line ensembles} \label{s.line ensemble convergence}
In this section we discuss convergence results for a class of discrete line ensembles possessing a certain Gibbs property (that of the uncolored Hall-Littlewood line ensemble).
The statement we give for the limit of line ensembles is under a  general framework which we introduce now. Because of the general setup, these line ensembles are not necessarily related to the $q$-Boson vertex model; having said that, as mentioned, the example the framework will be applied to in this paper is the uncolored Hall-Littlewood line ensemble $\bm L^{\mrm{cHL}, (1)}$.

\subsubsection{Hall-Littlewood Gibbs property} The line ensembles we consider will possess the \emph{Hall-Littlewood Gibbs property}, which describes the interactions that curves in the ensemble have with each other. 
We start by precisely specifying the data we condition on in the Gibbs property.

\begin{definition}\label{d.F_ext}
Let $\bm L:\N\times\Lambda\to\Z$ be a discrete line ensemble (recall Definition~\ref{d.discrete line ensemble}). Let $\llbracket j,k\rrbracket \subset\N$ and $\llbracket a,b \rrbracket\subset \Lambda$ be intervals. We define  $\Fext(\llbracket j,k\rrbracket, \llbracket a,b\rrbracket, \bm L)$ to be the $\sigma$-algebra generated by the collection of random variables $\{L_i(x): (i,x)\in \N\times\Lambda\setminus (\intint{j,k}\times\intint{a+1,b-1})\}$.

When the line ensemble under discussion is clear, we will drop $\bm L$ from the notation. Similarly when $j=1$, we will replace $\intint{1,k}$ by simply $k$. Thus $\Fext(k, \intint{a,b})$ would denote $\Fext(\intint{1,k}, \intint{a,b}, \bm L)$.

\end{definition}

Next, recall the definition of a Bernoulli path from Definition~\ref{d.bernoulli path}.  For $a,b,x,y\in\Z$ with $a< b$, $y \leq x \leq y + b-a $, we define a \emph{Bernoulli random walk bridge} $B:\intint{a,b}\to\Z$ from $(a,x)$ to $(b,y)$ to be a uniformly selected random path from the set of all Bernoulli paths $\gamma:\intint{a,b}\to\Z$ with $\gamma(a) = x$ and $\gamma(b)=y$ (assuming this set is non-empty).

\begin{definition}[Weight factor]\label{d.weight factor}
Let $\llbracket a,b\rrbracket\subseteq \Z$ and fix $q\in[0,1)$. Let $\bm\gamma = (\gamma_1, \ldots, \gamma_k)$, $f$, and $g$ be a collection of Bernoulli paths ($f$ and $g$ should be thought of as upper and lower boundary curves respectively), all defined on $\llbracket a,b\rrbracket$, with $\gamma_i(x) \geq \gamma_{i+1}(x)$ for $i\in\intint{1,k-1}$, $\gamma_1(x)\leq f(x)$, and $\gamma_{k}(x)\geq g(x)$ for all $x\in\llbracket a, b\rrbracket$. We define the weight factor $W(\bm\gamma, f, g, \llbracket a,b\rrbracket)$ by
\begin{align}\label{e.weight function}
W(\bm\gamma, f, g, \llbracket a,b\rrbracket) := \prod_{i=0}^k \prod_{x=a+1}^b \left(1- q^{\Delta_i(x-1)}\one_{\Delta_i(x) = \Delta_i(x-1) -1}\right),
\end{align}
where $\Delta_i(x) = \gamma_i(x) - \gamma_{i+1}(x)$ and $\gamma_0 = f$ and $\gamma_{k+1} = g$ by convention. For curves which do not satisfy the ordering conditions mentioned above, we define $W(\bm\gamma, f, g, \intint{a,b}) = 0$; observe that \eqref{e.weight function} equals 0 if at any location the separation between consecutive curves drops from $0$ to $-1$. See also Figure~\ref{f.HL Gibbs}.
When the interval under consideration is obvious from the context we will drop $\intint{a,b}$ from the notation. Moreover, if $f = \infty$, we drop it from the notation; if in addition $g = -\infty$, then we drop it from the notation as well.

Sometimes, to emphasize whether we are focusing on interaction with the upper curve $f$, between the curves of $\bm \gamma$, or with the lower curve $g$, we will write 
\begin{align*}
W(\bm\gamma, f, g) = W_{\mrm{up}}(\gamma_1, f)\cdot W_{\mrm{int}}(\bm \gamma)\cdot W_{\mrm{low}}(\gamma_{k}, g),
\end{align*}
where, in \eqref{e.weight function}, $W_{\mrm{up}}(\gamma_1, f)$ is the factor corresponding to $i=0$, $W_{\mrm{int}}(\bm \gamma)$ is the product corresponding to $i=1, \ldots, k-1$, and $W_{\mrm{low}}(\gamma_k, g)$ to the factor with $i=k$.
\end{definition}

The following is the Gibbs property that the line ensembles in the general framework we are setting up will possess; it originally appeared and was studied in \cite{corwin2018transversal}.

\begin{definition}(Hall-Littlewood Gibbs property)\label{d.HL Gibbs}
We say a discrete line ensemble $\bm L : \N\times\Lambda\to \Z$ satisfies the (uncolored) \emph{Hall-Littlewood Gibbs property} (with parameter $q\in[0,1)$) if the following holds for any $\intint{j, k} \subset \N$ and $\intint{a, b}\subset \Lambda$. Conditionally on $\F=\Fext(\intint{j,k}, \intint{a,b}, \bm L)$, the distribution of $\bm L|_{\intint{j,k}\times \intint{a,b}}$ is given by a collection of $k-j+1$ Bernoulli random walk bridges $\bm B=(B_j, \ldots, B_k)$, where $B_i(a) = L_i(a)$ and $B_i(b) = L_i(b)$ for each $i\in\intint{j, k}$, reweighted by the Radon-Nikodym derivative $W(\bm B, L_{j-1}, L_{k+1})/\EF[W(\bm B, L_{j-1}, L_{k+1})]$, where $L_0\equiv \infty$ by convention.

Equivalently, for any bounded function $F:\Z^{\intint{j,k}\times\intint{a,b}} \to \R$,
\begin{align*}
\EF\left[F(L_j|_{\intint{a,b}}, \ldots, L_k|_{\intint{a,b}})\right] = \frac{\EF\left[F(B_j, \ldots, B_k)W(\bm B, L_{j-1}, L_{k+1})\right]}{\EF\left[W(\bm B, L_{j-1}, L_{k+1})\right]}
\end{align*}
(here $\EF[\bm\cdot]$ is shorthand for $\E[\bm\cdot\mid \F]$, and we will also use analogous notation $\PF(\bm\cdot)$ for $\P(\bm\cdot\mid \F)$).
We will sometimes write $Z = \EF[W(\bm B, L_{j-1}, L_{k+1})]$, but this will always be made explicit.
\end{definition}

As above, we will usually refer to this property as the Hall-Littlewood Gibbs property (dropping the ``uncolored'' adjective); this will not cause confusion as we will always explicitly say ``colored'' when referring to its colored analog.

This Gibbs property is satisfied by $\smash{\bm L^{\mrm{cHL}, (1)}}$, but not in general by $\bm L^{\mrm{cHL}, (j)}$ for $j\geq 2$.

\begin{proposition}\label{p.L has HL}
$\bm L^{\mrm{cHL}, (1)}|_{\N\times\intint{0,N-1}} = (L^{\mrm{cHL}, (1)}_1|_{\intint{0,N-1}}, L^{\mrm{cHL}, (1)}_2|_{\intint{0,N-1}}, \ldots)$ has the Hall-Littlewood Gibbs property for any boundary condition $\sigma:\intint{1,N}\to\intint{1,N}$.
\end{proposition}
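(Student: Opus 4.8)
The strategy is to descend from the colored Hall-Littlewood Gibbs property (Lemma~\ref{l.colored line ensemble gibbs}) to the uncolored one, or equivalently, to start directly from the uncolored $q$-Boson Gibbs property (Lemma~\ref{l.uncolored vertex gibbs}) and translate it into the language of Bernoulli random walk bridges and weight factors. By color merging for the colored Hall-Littlewood line ensemble (Lemma~\ref{l.color merging cHL}, taking $\tau \equiv 1$), the law of $\bm L^{\mrm{cHL}, (1)}$ under an arbitrary boundary condition $\sigma$ coincides with its law under the all-ones boundary condition, i.e.\ it is the colored line ensemble attached to the \emph{uncolored} $q$-Boson model. So it suffices to prove the Hall-Littlewood Gibbs property for the line ensemble built from the uncolored $q$-Boson measure via $L_i^{\mrm{cHL},(1)}(y) = \#\{y' > y : j_{(-i,y')} \ge 1\}$. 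I would fix an interval block $\intint{j,k}\times\intint{a,b}$ on the line-ensemble side and identify the corresponding rectangular block of vertices $\Lambda = \intint{-k, -j}\times\intint{?,?}$ in the $q$-Boson domain whose outgoing arrow counts are in bijective correspondence with the curve increments $L_i^{\mrm{cHL},(1)}(y)$ for $(i,y)$ in the interior of the block; one must be careful translating the column index $i$ and the height index $y$ into vertex coordinates (column $-i$, and the fact that $L_i^{\mrm{cHL},(1)}$ decreases by $1$ exactly where a horizontal arrow exits $(-i, y)$, per Lemma~\ref{l.colored line ensemble basic properties}(iii)).

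The core computation is then to show that the reweighted Bernoulli-bridge description in Definition~\ref{d.HL Gibbs} matches the ``product of $\bigL_u$ vertex weights'' conditional law from Lemma~\ref{l.uncolored vertex gibbs}. First I would check that, \emph{ignoring the weights}, the uniform measure on tuples of Bernoulli paths with prescribed endpoints (and the ordering/non-crossing constraints encoded by $\mrm{Supp}^{\mrm{uB}}$) is exactly the measure where every vertex configuration is given weight $1$ — this is because a Bernoulli path is, combinatorially, a choice at each column of whether a horizontal arrow exits ($j_v = 1$) or not ($j_v = 0$), and the constraints $L_i(y) \ge L_{i+1}(y)$, $L_i$ decreasing are exactly consistency plus arrow conservation plus the nonnegativity of arrow counts. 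Second, and this is the crux, I would show that the genuine $\bigL_u$ vertex weights contribute precisely the weight factor $W(\bm B, L_{j-1}, L_{k+1})$ of Definition~\ref{d.weight factor}. Reading off Figure~\ref{f.L weights} in the uncolored ($N=1$, so $A_{[i+1,N]} = 0$) case, the nonzero weights are $1$ (arrow passes straight, or no interaction) and $u(1-q^{A})$ where $A = A_{v,1}$ is the number of arrows vertically entering $v$; the latter occurs exactly when a vertically entering arrow turns to exit horizontally, i.e.\ when the vertical arrow count drops by one at that vertex. Since the vertical arrow count entering $(-i, y)$ equals $L_{i-1}^{\mrm{cHL},(1)}(y-1) - L_i^{\mrm{cHL},(1)}(y-1)$ (arrows exiting column $i-1$ above row $y$ but not column $i$ above row $y$ — this needs the up-right structure, Lemma~\ref{l.colored line ensemble basic properties}(ii)), the exponent $A$ becomes exactly the gap $\Delta_{i-1}(y-1)$ between consecutive curves, and the factor $1 - q^A$ appears exactly when $\Delta_{i-1}(y) = \Delta_{i-1}(y-1) - 1$. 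That reproduces each factor of \eqref{e.weight function}, including the $i=0$ (boundary) and $i=k$ (lower boundary) factors by taking $L_{j-1}$ and $L_{k+1}$ as the conditioned neighboring curves. The powers $u$ (equal to $1$ or $z$ depending on the row) multiply into an overall constant depending only on $\F$-measurable data and on the fixed endpoints, so they cancel between numerator and denominator of the Radon–Nikodym derivative; I would state this carefully since the region may straddle the boundary between the $u=1$ rows and the $u=z$ rows, but the count of each type of turning vertex is determined by the endpoint data, not by the interior path.

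\textbf{Main obstacle.} The delicate point is the bookkeeping at the boundary of the block: the Gibbs property of Definition~\ref{d.HL Gibbs} conditions on $\Fext(\intint{j,k},\intint{a,b})$, which fixes $L_i$ at $x = a$ and $x = b$ for $i\in\intint{j,k}$ and fixes $L_{j-1}$, $L_{k+1}$ on all of $\intint{a,b}$, whereas the $q$-Boson Gibbs property (Lemma~\ref{l.uncolored vertex gibbs}) conditions on \emph{all} outgoing arrow counts outside a vertex rectangle $\Lambda$. I would need to verify that the $\sigma$-algebra $\Fext(\intint{j,k},\intint{a,b})$, when transported to arrow-count language, gives exactly the data $\F^{\mrm{uB}}_{\Lambda^c}$ up to $\F^{\mrm{uB}}_{\Lambda^c}$-measurable deterministic functions — in particular that knowing $L_i(a), L_i(b)$ for $i$ in the block plus the neighboring curves fully determines the incoming arrows on the left and bottom edges of $\Lambda$ via consistency and arrow conservation, as spelled out in the paragraph preceding Lemma~\ref{l.uncolored vertex gibbs}. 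Once that identification is pinned down, the proof is the computation above; the only other care needed is to handle the infinitely-many-curves convention (Definition~\ref{d.discrete line ensemble}) — but since the Gibbs property is a statement about each finite block $\intint{j,k}\times\intint{a,b}$ separately, and every such block sits in the finite $q$-Boson domain $\Z_{\le 0}\times\intint{1,N+M}$ once we intersect with $\intint{0,N-1}$ in the spatial variable, no issue arises.
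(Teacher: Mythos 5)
Your overall plan matches the paper's: color merging (Lemma~\ref{l.color merging cHL}) reduces to the uncolored $q$-Boson model, and then Lemma~\ref{l.uncolored vertex gibbs} is translated to line-ensemble language. The paper's proof is exactly this one-liner, so in spirit you are doing the same thing, just spelling out the translation. However, the translation as you have written it contains a concrete off-by-one error that would cause the final bookkeeping to fail.

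You claim the vertical arrow count entering vertex $(-i,y)$ is $L^{\mrm{cHL},(1)}_{i-1}(y-1) - L^{\mrm{cHL},(1)}_i(y-1) = \Delta_{i-1}(y-1)$; in fact it equals $L^{\mrm{cHL},(1)}_i(y-1) - L^{\mrm{cHL},(1)}_{i+1}(y-1) = \Delta_i(y-1)$. An arrow sits on the vertical edge below $(-i,y)$ precisely when it has already crossed from column $-(i+1)$ into column $-i$ (i.e.\ is \emph{not} counted by $L_{i+1}(y-1)$) but has not yet crossed from column $-i$ into column $-(i-1)$ at or below row $y-1$ (i.e.\ \emph{is} counted by $L_i(y-1)$), so the count is $L_i - L_{i+1}$, not $L_{i-1} - L_i$. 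Your expression is the vertical count at $(-(i-1),y)$, one column to the right. This matters: with the correct identification, the factor $1-q^{\Delta_m}$ of the weight \eqref{e.weight function} is produced by vertices in column $-m$, so the factors $\Delta_{j-1},\ldots,\Delta_k$ appearing in $W(\bm B, L_{j-1}, L_{k+1})$ come from columns $-(j-1),\ldots,-k$; in particular the boundary factor $i=0$ arises at column $-(j-1)$, whose vertex weights depend on the free data through the horizontally entering arrows $j_{(-j,\cdot)}$ and the vertical counts $\Delta_{j-1} = L_{j-1} - L_j$ (with $L_j$ free). Your version attaches $\Delta_k$ to column $-(k+1)$, but the vertex weights there are a deterministic function of the conditioned curves $L_{k+1}, L_{k+2}$ alone and should therefore sit in the normalizing constant, not in the Radon--Nikodym derivative. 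This mislabeling would drive the $\sigma$-algebra/rectangle identification you flag as the delicate point in the wrong direction. A minor additional slip: the uncolored pass-through configuration $(A,1;A,1)$ has weight $u$, not $1$, so there are three kinds of nonzero weight, not two; this is harmless because (as you note) the $u$-powers are endpoint-determined, and in fact they are identically $1$ here since restricting the proposition to $\intint{0,N-1}$ keeps every relevant vertex in the rows with spectral parameter $u=1$, making your worry about straddling the $u=1/u=z$ boundary moot.
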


\begin{proof}
This is a direct consequence of Lemma~\ref{l.uncolored vertex gibbs} after translating to the line ensemble notation from Definition~\ref{d.colored line ensemble}, using also from Lemma~\ref{l.color merging cHL} that $\bm L^{\mrm{cHL}, (1)}$ is in distribution the line ensemble associated to the uncolored $q$-Boson model.
\end{proof}

A derivation of Proposition~\ref{p.L has HL} is also contained in \cite[Proposition~5.1]{aggarwalborodin}, and an alternate derivation is present in \cite[Proposition~3.9]{corwin2018transversal} after matching $\bm L^{\mrm{cHL},(1)}$ with the (uncolored) Hall-Littlewood process.

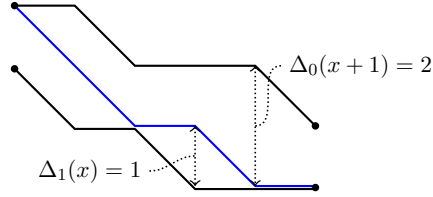
\begin{figure}
\begin{tikzpicture}[scale=0.8]
\draw[thick] (0,0) -- ++(1,0) -- ++(1,-1) -- ++(1,0) -- ++(1,0) -- ++(1,-1);
\draw[thick, blue] (0,0) -- ++(1,-1) -- ++(1,-1) -- ++(1,0) -- ++(1,-1) -- ++(1,0);
\draw[thick] (0,-1.05) -- ++(1,-1) -- ++(1,0) -- ++(1,-1) -- ++(2,0);

\node[circle, fill, inner sep = 1pt] at (0,0) {};
\node[circle, fill, inner sep = 1pt] at (0,-1.05) {};
\node[circle, fill, inner sep = 1pt] at (5,-2) {};
\node[circle, fill, inner sep = 1pt] at (5,-3.025) {};

\draw[semithick, <->, densely dotted] (3,-2) -- ++(0,-1);
\draw[semithick, <->, densely dotted] (4,-1) -- ++(0,-2);

\useasboundingbox (current bounding box);

\node[scale=0.8] (A0) at (5.75, -1) {$\Delta_0(x+1) = 2$};
\node[scale=0.8] (A1) at (1.25, -2.75) {$\Delta_1(x) = 1$};

\draw[densely dotted, semithick] (A1) to[out=0, in=180] (3,-2.5);
\draw[densely dotted, semithick] (A0) to[out=180, in=0] (4,-2);
\end{tikzpicture}
\caption{An example of the Hall-Littlewood weight factor with a single curve $\gamma$. The top and bottom curves (in black) are $f$ and $g$, respectively, and are fixed, and the blue curve is $\gamma$.
  In calculating the weight factor $W$, the separation of $\gamma$ from $f$ and $g$ as captured by $\Delta_i(\bm\cdot)$ is used. In this example, $W(\gamma, f, g) = (1-q^2)\cdot(1-q)^2$: the separation $\Delta_0(\bm\cdot)$ between $f$ and $\gamma$ reduces only at the penultimate step, from $2$ to $1$, and the separation $\Delta_1(\bm\cdot)$ between $\gamma$ and $g$ reduces twice, each time from $1$ to $0$.}\label{f.HL Gibbs}
\end{figure}

\subsubsection{Heuristic behavior of the Gibbs property} For intuition, the Gibbs property should be thought of as a resampling property, as in Figure~\ref{f.para airy and bg} ahead: it says that the following resampling procedure leaves the distribution of the line ensemble unchanged. First, with $\intint{j,k}\subseteq \N$ and $\intint{a,b}\subseteq \Lambda$ fixed, we erase $\bm L$ on $\intint{j,k}\times\intint{a+1,b-1}$ while retaining the information of its values on $\N\times\Lambda\setminus(\intint{j,k}\times\intint{a+1,b-1})$. Then we attempt to replace $L_j, \ldots, L_k$ on $\intint{a,b}$ with a tuple $\bm B$ of $k-j+1$ independent Bernoulli random walk bridges with endpoints of $B_i$ equaling $L_i(a)$ and $L_i(b)$ at $a$ and $b$, respectively: the attempt succeeds with probability $W(\bm B, L_{j-1}, L_{k+1})$ as given in Definition~\ref{d.weight factor}, independent of everything else; note that if any individual curve of $\bm B$ crosses $L_{j-1}$, $L_{k+1}$, or any other curve of $\bm B$, then $W(\bm B, L_{j-1}, L_{k+1}) = 0$ and the attempt certainly fails. On the event of $\bm B$ successfully replacing $L_j, \ldots, L_k$ on $\intint{a,b}$, the former's law is exactly that of $k$ independent Bernoulli random walk bridges under the reweighting by the Radon-Nikodym derivative $W(\bm B, L_{j-1}, L_{k+1})/\EF[W(\bm B, L_{j-1}, L_{k+1})]$. The basic idea underlying many of our arguments is that behavior which is unlikely for such reweighted Bernoulli random walk bridges must also be unlikely for the original line ensemble.

The above is really a perspective on Gibbs properties of line ensembles in general, and now we discuss the particular features of the Hall-Littlewood Gibbs property that constitute a major challenge and necessitate the development of new techniques, compared to previous studies of line ensembles with different Gibbs properties. In our setup, we will consider sequences of line ensembles indexed by a parameter $N$ (which should not be confused with the $N$ parameter of the colored S6V model or the colored $q$-Boson model in Section~\ref{s.intro.vertex model}). Our goal in Section~\ref{s.line ensemble convergence} is to give a result of convergence of these line ensembles, rescaled by the KPZ exponents, to the parabolic Airy line ensemble. This means that we scale the spatial argument of the line ensemble by $N^{2/3}$ and the value of the lines by $N^{1/3}$ (both up to constants) in order to obtain a tight, or unit-order, sequence.

At short range, the resampling property penalizes curves coming closer together, as encoded by the indicator function in \eqref{e.weight function}. This is a somewhat complicated interaction as it is not always active, and, more importantly, does not satisfy a certain convenient stochastic monotonicity property. Before discussing these aspects, let us first consider how the interaction behaves on the larger scale on which our limits will be taken. This means evaluating the weight function \eqref{e.weight function} when $\intint{a,b}$ is an interval of size of order $N^{2/3}$ and $\Delta_i(x)$ is of order $N^{1/3}$: then $W$ is $\smash{(1-q^{\Theta(N^{1/3})})^{\Theta(kN^{2/3})}}$, i.e., essentially 1 when $q\in[0,1)$ is fixed. This assumes the separation between consecutive curves is of order $N^{1/3}$ throughout the interval; if instead at some location consecutive curves cross, the weight factor becomes zero as before. Thus at large $N$ the weight factor is expected to behave like the indicator function of the curves not intersecting, mimicking the Brownian Gibbs property (see Definition~\ref{d.bg}) for the parabolic Airy line ensemble (the putative limit of our discrete one).

\subsubsection{Lack of stochastic monotonicity}\label{s.lack of monotonicity discussion}
 Actually establishing this separation (that the above heuristic relies on) is non-trivial and will constitute the bulk of the proof of our convergence results. Some challenges are exactly the earlier mentioned complicated nature of the interaction at short range, from which we must argue we can escape to a larger scale, and that the Hall-Littlewood Gibbs property does not possess a stochastic monotonicity property present in other Gibbs properties previously studied. Stochastic monotonicity means that if one considers the law of the curves of the line ensemble inside $\intint{j,k}\times\intint{a,b}$ conditional on the boundary data, and then considers the same law with the boundary points increased and/or the upper and lower curves increased, the second law is stochastically larger than the first (i.e., there is a coupling of the two laws under which each curve in the second collection, with higher boundary data, is pointwise larger than the corresponding curve in the first collection). The Hall-Littlewood Gibbs property fails to satisfy this because $W$ from \eqref{e.weight function} is a monotone function of the discrete derivative of the distance between consecutive curves and not of the distance itself. See Figure~\ref{f.stoch mono failure} for an example demonstrating the lack of stochastic monotonicity. The violations of stochastic monotonicity can be fairly severe and it is also possible to find counterexamples to versions of stochastic monotonicity with more conditions imposed, e.g., comparing only boundary conditions which share the same endpoints.

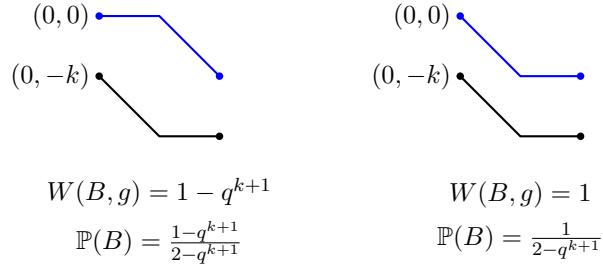
\begin{figure}[h!]
\begin{tikzpicture}[scale=0.8]
\draw[thick, blue] (0,1) -- ++(1,0) -- ++(1,-1);
\draw[thick] (0,0) -- ++(1,-1) -- ++(1,0);

\node[anchor=east, scale=0.9] at (0,1) {$(0,0)$};
\node[circle, fill, inner sep=1pt, blue] at (0,1) {};
\node[circle, fill, inner sep=1pt, blue] at (2,0) {};

\node[anchor=east, scale=0.9] at (0,0) {$(0,-k)$};
\node[circle, fill, inner sep=1pt] at (0,0) {};
\node[circle, fill, inner sep=1pt] at (2,-1) {};

\node[scale=0.9, align=center] at (1,-2.4) {$W(B,g) = 1-q^{k+1}$\\[5pt]$\P(B) = \frac{1-q^{k+1}}{2-q^{k+1}}$};

\begin{scope}[shift={(6,0)}]
\draw[thick, blue] (0,1) -- ++(1,-1) -- ++(1,0);
\draw[thick] (0,0) -- ++(1,-1) -- ++(1,0);

\node[anchor=east, scale=0.9] at (0,1) {$(0,0)$};
\node[circle, fill, inner sep=1pt, blue] at (0,1) {};
\node[circle, fill, inner sep=1pt, blue] at (2,0) {};

\node[anchor=east, scale=0.9] at (0,0) {$(0,-k)$};
\node[circle, fill, inner sep=1pt] at (0,0) {};
\node[circle, fill, inner sep=1pt] at (2,-1) {};

\node[scale=0.9, align=center] at (1,-2.4) {$W(B,g) = 1$\\[5pt]$\P(B) = \frac{1}{2-q^{k+1}}$};
\end{scope}
\end{tikzpicture}
\caption{An example of the failure of stochastic monotonicity for the Hall-Littlewood Gibbs property. Here $g$ is the lower boundary curve drawn in black (indexed by its starting location $(0,-k)$) and $B$ is the random curve drawn in blue (whose endpoints are fixed). We see that the probability of the higher possibility for $B$ equals $\smash{\frac{1-q^{k+1}}{2-q^{k+1}}}$, which is increasing in $k$, while stochastic monotonicity requires it to be non-increasing.}\label{f.stoch mono failure}
\end{figure}

The approach to line ensemble tightness we develop relies instead on a form of weak monotonicity for a single curve (a minor refinement of a notion introduced in \cite{corwin2018transversal}); see Section~\ref{s.monotonicity}. By this we mean a comparison of partition functions (up to a, possibly large, constant) of certain pairs of laws corresponding to a single curve under different boundary conditions, which translates to a comparison (up to the same constant) of one-point probabilities for a single curve. In particular, it does not hold for collections of more than one curve, and does not imply the existence of a coupling of the two laws such that all points on the pair of curves are ordered. Using this weak monotonicity, we iteratively establish uniform separation of order $N^{1/3}$  between curves further and further down in the line ensemble, simultaneously establishing control on how far down such curves can be. We will discuss and develop this more later (see Section~\ref{s.lower tail}); here we next introduce the general framework under which our arguments~operate.

\subsubsection{Assumptions on line ensembles and their tightness} \label{s.line ensemble hypotheses}

In this section we formulate some assumptions on general line ensembles under which we will prove tightness.
Before stating them, we specify the space we will work on and some conventions about the objects.

Our sequence of discrete line ensembles will be denoted by $\bm{L}^N = (L_1^N, L_2^N, \ldots )$. Here $N$ is an indexing parameter which has no relation to the parameter in the colored S6V or colored $q$-Boson models.
Though discrete valued, we will regard $\bm L^N$ as an element of $\mc C(\N\times\R, \R)$, by linearly interpolating $L^N_i(x)$ for $x\in [j,j+1]$ for each $i\in\N$ and $j\in\Z$; here $\N\times\R$ is given the product topology of the discrete topology on $\N$ and the usual Euclidean topology on $\R$. We endow the space $\mc C(\N\times\R, \R)$ with the topology of uniform convergence on compact sets, i.e., $\smash{\bm f^{(n)}}, \bm f \in \mc C(\N\times\R,\R)$ satisfy $\smash{\bm f^{(n)}}\to \bm f$ if for any compact set $K\subseteq \R$ and $i\in\N$ fixed, $\smash{f^{(n)}_i}(x)\to f_i(x)$ uniformly over~$x\in K$.

Further, we allow a sequence $g_N$ going to $\infty$ with $N\to\infty$ such that the values of $L^N_i(x)$ are not specified for $|x|\geq g_NN^{2/3}$, as these values do not affect the existence or value of the limits we consider (which are determined by uniform convergence on compact sets on the $N^{2/3}$ spatial scale); in particular, $L^N_i(x)$ may be defined for such values in any way such that $\bm L^N$ is an element of $\mc C(\N\times\R, \R)$, without regard for satisfying the assumptions below.

Now we may turn to the precise assumptions we assume on our line ensembles $\bm L^N=(L^N_1, L^N_2, \ldots)$.

\smallskip

\begin{assumption}[Hall-Littlewood Gibbs property]\label{as.HL Gibbs}
$\bm L^N$ has the Hall-Littlewood Gibbs property with parameter $q\in[0,1)$ fixed (Definition~\ref{d.HL Gibbs}).
\end{assumption}

\smallskip

\begin{assumption}[One-point tightness of top curve around parabola]\label{as.one-point tightness}
\label{as.one-point tightness} There exist $p\in(-1,0)$ and $\lambda>0$ such that, for any $\delta>0$ there exists $M = M(\delta)$ such that
  \begin{align*}
  \sup_{x\in\R}\, \limsup_{N\to\infty}\, \P\left(|L^N_1(xN^{2/3}) - pxN^{2/3} + \lambda x^2 N^{1/3}| > MN^{1/3}\right) \leq \delta.
  \end{align*}
  Equivalently, for any $\delta>0$ there exists $M$ such that for all $x\in\R$ there exists $N_0$ such that, for $N> N_0$,
  \begin{align}\label{e.one point tightness assumption}
  \P\left(|L^N_1(xN^{2/3}) - pxN^{2/3} + \lambda x^2 N^{1/3}| > MN^{1/3}\right) \leq \delta.
  \end{align}
\end{assumption}

By our convention on the definition of $L^N_i(x)$ for $|x|\geq g_NN^{2/3}$, Assumption \ref{as.HL Gibbs} is only needed for $\bm L^N|_{\N\times[-g_NN^{2/3},g_NN^{2/3}]}$. We will not explicitly mention this point or the general point about the definition of $L^N_i(x)$ for $|x|\geq g_NN^{2/3}$ further.

We note that Assumption~\ref{as.one-point tightness} is satisfied if it is known that the union over $x\in\R$ of the set of subsequential weak limits of $\{N^{-1/3}(L^N_1(xN^{2/3}) - pxN^{2/3} + \lambda x^2 N^{1/3})\}_{N=1}^\infty$ is a tight collection (for instance, as will be the case for our models of interest, if it is a single distribution, e.g., a constant multiple of the GUE Tracy-Widom distribution).

We will need to rescale our line ensembles to state our results. For $p\in(-1,0)$, $\lambda>0$, and $N\in\N$, we introduce a scaling operator $T_{p, \lambda, N}:\mc C(\N\times\R, \R) \to \mc C(\N\times\R, \R)$ defined by the following for $\bm f\in\mc C(\N\times\R, \R)$, $i\in\N$, and $x\in\R$:
\begin{align}\label{e.scaling oeprator}
\bigl(T_{p,\lambda, N}(\bm f)\bigr)_i(x) := \sigma^{-1}N^{-1/3}\left(f_i(\beta xN^{2/3}) - p \beta xN^{2/3}\right)
\end{align}
where $\beta$ and $\sigma$ are such that
\begin{align}\label{e.scaling coefficients relation}
\sigma^{-1}\beta^2\lambda = 1 \quad\text{and}\quad \sigma^{-2} \beta p(1-p) = 2.
\end{align}
For $\bm L^{N}$ satisfying Assumption~\ref{as.one-point tightness}, and with $p$ and $\lambda$ as there, define $\bm\cL^N : \N\times\R\to\R$~by
\begin{align}\label{e.cL definition}
\bm\cL^{N} := T_{p,\lambda, N}(\bm L^{N}).
\end{align}
The equations \eqref{e.scaling coefficients relation} are the analogs of the conditions \eqref{e.asep scaling relations}. The first condition will ensure that $\smash{\cL^{N}_1(x)}$ decays like $-x^2$; the second will ensure that the large $N$ limit of $\bm \cL^{N}$ has the Brownian Gibbs property (Definition~\ref{d.bg}) with the correct diffusion coefficient of 2. 

Now we may state the tightness result we prove, which is a key result needed to establish our main results, Theorems~\ref{t.asep airy sheet} and \ref{t.s6v airy sheet}; the proof will be given in Section~\ref{s.tightness ingredients}.

\begin{theorem}[Tightness of line ensembles]\label{t.tightness}
Let $\bm L^N : \N\times \R \to\R$ be a line ensemble satisfying Assumptions \ref{as.HL Gibbs} and \ref{as.one-point tightness}. Then $\{\bm\cL^N\}_{N=1}^\infty$ is tight in the space $\mc C(\N\times\R, \R)$, under the topology of uniform convergence on compact sets.
\end{theorem}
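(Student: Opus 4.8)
The plan is to establish tightness by proving two things, as is standard in the Gibbsian line ensemble literature but with a crucial new twist: (1) a \emph{uniform separation} estimate, showing that with high probability consecutive curves $\cL^N_i$ and $\cL^N_{i+1}$ stay a macroscopic distance $\delta$ apart on compact intervals; and (2) given (1), reduce the Hall-Littlewood Gibbs property to an approximate non-intersecting Brownian Gibbs property, from which one-point tightness (Assumption~\ref{as.one-point tightness}) propagates to tightness of all curves by the now-classical arguments of \cite{corwin2014brownian,corwin2016kpz}. The key observation that makes (2) work is the one emphasized in the introduction: since $q\in[0,1)$ is fixed, the weight factor $W$ of \eqref{e.weight function} is a product of factors $1-q^{\Delta_i(x-1)}$ over $\Theta(N^{2/3})$ positions, and once the separations $\Delta_i$ are $\gg \log N$ (which after rescaling is $o(N^{1/3})$, so still compatible with (1)), each such factor is $1 - o(N^{-100})$ and the whole product is within $o(1)$ of $\one\{\text{no crossings}\}$. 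So the reweighting is a negligible perturbation of the non-intersecting Bernoulli bridge measure, and one can run the standard tightness machinery on the latter while paying only a $(1+o(1))$ multiplicative price on probabilities.

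The heart of the matter is therefore proving the separation estimate (1), and I expect this to be the main obstacle. I would prove it by induction on the curve index $k$, in three sub-steps as sketched in Section~\ref{s.lower tail}. \emph{Base case}: $k=1$ is Assumption~\ref{as.one-point tightness}, which pins $\cL^N_1$ near the parabola $-\lambda' x^2$ at every fixed point, and combined with the Gibbs property (monotonicity of the bridge law in its endpoints, for a single curve) gives a modulus-of-continuity / uniform-boundedness statement for $\cL^N_1$ on compacts. \emph{Inductive step, lower bound on $\cL^N_k$}: assuming $\cL^N_1, \ldots, \cL^N_{k-1}$ are uniformly separated (hence uniformly bounded below) on a slightly larger interval, use the one-curve \emph{weak monotonicity} of Section~\ref{s.monotonicity} (the refinement of \cite{corwin2018transversal}) to compare, up to a constant $C=C(q)$, the law of $\cL^N_k$ conditioned on its actual neighbors with the law conditioned on lowering the upper boundary curve $\cL^N_{k-1}$ to a deterministic low level and removing the lower curve; the latter is an unconstrained-below single Bernoulli bridge whose lower tail is Gaussian, giving $\P(\cL^N_k(0) < -M) \le C e^{-cM^2/\ldots}$, hence a uniform lower bound on $\cL^N_k$. \emph{Separation at one point, then everywhere}: knowing $\cL^N_k$ is not too low and $\cL^N_{k-1}$ is bounded above (from (1) at level $k-1$), and using that the top curve lies near a parabola while lower curves cannot be too close to it at the \emph{typical} scale (an acceptable-gap/interlacing argument), one shows that at some fixed spatial coordinate the gap $\cL^N_{k-1} - \cL^N_k$ is $\ge 2\delta$ with high probability; then a Gibbs-property resampling on the pair $(\cL^N_{k-1},\cL^N_k)$ — again exploiting that the $W$-reweighting is $\approx\one\{\text{no crossing}\}$ on this scale — spreads the separation to the whole compact interval, using non-intersecting Brownian bridge estimates. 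Control of the partition function $Z = \EF[W(\bm B, L_{j-1}, L_{k+1})]$ being bounded below (Section~\ref{s.partition function and non-intersection}) is needed throughout to turn $W$-reweighted statements into genuine probability bounds, and this itself follows once separation is in hand.

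Concretely, the proof of Theorem~\ref{t.tightness} would read: fix $k\in\N$ and a compact interval $[-T,T]$; by the separation estimate (1) (proven by the induction above) there is an event $\mathsf{Sep}$ of probability $\ge 1-\delta$ on which $\cL^N_i(x) - \cL^N_{i+1}(x) \ge \delta'$ for all $i \le k$ and $x \in [-T-1, T+1]$, and on which all these curves are bounded in absolute value by $M$ there. Condition on $\F = \Fext(k, \intint{-T-1,T+1}, \bm L^N)$; on $\mathsf{Sep}$, the Radon–Nikodym factor $W(\bm B, L_{k+1})/Z$ is, uniformly, within $(1\pm o(1))$ of $\one\{\bm B \text{ does not cross each other or } L_{k+1}\}/Z'$ where $Z'$ is the non-crossing probability for Bernoulli bridges with the given (bounded, $\delta'$-separated) endpoints — this comparison is where the fixed-$q$ and $\gg\log N$ separation is used, and $Z'$ is bounded below by a KMT coupling to non-intersecting Brownian bridges. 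Hence on $\mathsf{Sep}$ the conditional law of $(\cL^N_1,\ldots,\cL^N_k)|_{[-T,T]}$ is absolutely continuous w.r.t. the non-intersecting Brownian bridge ensemble with density bounded above and below, so the standard equicontinuity estimate for non-intersecting Brownian bridges with bounded, separated endpoints (as in \cite{corwin2014brownian}) yields that $\{\cL^N|_{\intint{1,k}\times[-T,T]}\}_N$ is tight in $\mc C$; since $k$ and $T$ are arbitrary and the relevant modulus bounds are uniform, a diagonal argument gives tightness in $\mc C(\N\times\R,\R)$. The only genuinely new work is the separation estimate, and within it the one-curve weak-monotonicity comparison replacing the (false) full stochastic monotonicity; everything downstream is a careful but routine adaptation of existing Gibbsian line ensemble technology, which is why I expect the bulk of the effort — and essentially all the risk — to sit in Sections~\ref{s.lower tail}–\ref{s.uniform separation}.
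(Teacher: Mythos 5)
Your proposal matches the paper's proof architecture: tightness is reduced to one-point control of every curve (via a lower-tail bound on $\cL^N_k$ obtained from an induction that interweaves lower-tail control with a uniform-separation estimate, with one-curve weak monotonicity substituting for the unavailable full stochastic monotonicity) and a modulus-of-continuity estimate through a partition-function lower bound. This is precisely the structure of Sections~\ref{s.tightness preliminaries}--\ref{s.partition function and non-intersection}, and you correctly identify the interweaved-induction separation estimate, and the role of fixed $q$ making $W$ negligible once gaps exceed $\log N$, as the central technical novelties.
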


The following is a third assumption which will be needed to ensure that there is a unique limiting line ensemble, and that it is the parabolic Airy line ensemble $\bm\cP$ from Definition~\ref{d.parabolic Airy line ensemble}.

\smallskip

\begin{assumption}[GUE Tracy-Widom convergence]
There exists $x\in\R$ such that $\cL^{N}_1(x) + x^2\xrightarrow{\smash{d}} \mrm{GUE}\text{-}\mathrm{TW}$ (the GUE Tracy-Widom distribution, as defined at the end of Definition~\ref{d.parabolic Airy line ensemble}) as $N\to\infty$.
  \label{as.GUE TW}
\end{assumption}

The following gives the convergence to $\bm\cP$ of the rescaled line ensemble $\bm{\cL}^N$. It will be proved in Section~\ref{s.tightness preliminaries} as a quick consequence of the tightness from Theorem~\ref{t.tightness}, together with the recent characterization result for Brownian Gibbsian line ensembles from \cite{aggarwal2023strong}.

\begin{theorem}\label{t.line ensemble convergence to parabolic Airy}
Suppose $\bm L^N$ satisfies Assumptions~\ref{as.HL Gibbs} and \ref{as.one-point tightness}. Then any subsequential limit of $\{\bm\cL^N\}_{N=1}^\infty$ is $\bm\cP +\mf c$, where $\mf c\in\R$ is a random variable independent of $\bm\cP$ (but which may depend on the subsequence taken),
 under the topology of uniform convergence on compact sets. If additionally Assumption~\ref{as.GUE TW} is satisfied, then $\mf c = 0$, i.e, $\bm\cL^N\xrightarrow{\smash{d}} \bm\cP$.
\end{theorem}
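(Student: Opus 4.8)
\textbf{Proof plan for Theorem~\ref{t.line ensemble convergence to parabolic Airy}.}
The plan is to combine the tightness result of Theorem~\ref{t.tightness} with a characterization of the limit. First I would invoke Theorem~\ref{t.tightness}: under Assumptions~\ref{as.HL Gibbs} and \ref{as.one-point tightness}, the family $\{\bm\cL^N\}_{N=1}^\infty$ is tight in $\mc C(\N\times\R,\R)$ with the topology of uniform convergence on compact sets. Fix a subsequential limit $\bm\cL^\infty$; by Skorokhod representation we may assume the convergence is almost sure along a subsequence, so $\bm\cL^\infty$ is an ($\N$-indexed) collection of continuous non-intersecting curves (non-intersection in the limit must be extracted, e.g.\ using that the weight factor $W$ vanishes on crossings together with the separation estimates underlying the proof of Theorem~\ref{t.tightness}; this should already be recorded there or follow by the same arguments).

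The core step is to show $\bm\cL^\infty$ has the Brownian Gibbs property (Definition~\ref{d.bg}) with diffusion coefficient $2$. This is where the bulk of the work lies, and I expect it to be the main obstacle, but it is precisely what Section~\ref{s.bg in the limit} is devoted to, so for the purposes of this theorem I would cite that development. The mechanism is standard in outline: the prelimiting line ensemble $\bm L^N$ has the Hall-Littlewood Gibbs property, whose resampling law reweights $k$ Bernoulli bridges by $W(\bm B, L_{j-1}, L_{k+1})$; on the KPZ scale, with curves separated by order $N^{1/3}$ over intervals of length order $N^{2/3}$, one has $W = (1-q^{\Theta(N^{1/3})})^{\Theta(kN^{2/3})} \to 1$, so $W$ degenerates to the indicator of non-crossing, while Komlós--Major--Tusnády couplings turn the rescaled Bernoulli bridges into Brownian bridges with the diffusion coefficient fixed by \eqref{e.scaling coefficients relation}. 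The uniform separation estimates (Sections~\ref{s.lower tail}--\ref{s.uniform separation}) and the partition-function control (Section~\ref{s.partition function and non-intersection}) are exactly what is needed to make ``$W\to$ non-crossing indicator'' rigorous and to pass the conditional laws to the limit; the conclusion is that $\bm\cL^\infty$ satisfies the Brownian Gibbs property with diffusion coefficient $2$.

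Given that, the characterization result for Brownian Gibbsian line ensembles from \cite{aggarwal2023strong} (quoted as Proposition~\ref{p.strong characterization}) applies. By Assumption~\ref{as.one-point tightness}, the top curve of $\bm\cL^\infty$ is pinned to the parabola $-\lambda' x^2$ with the normalization arranged so that, after the scaling \eqref{e.scaling oeprator}--\eqref{e.scaling coefficients relation}, the curvature matches that of $\bm\cP$; combined with the Brownian Gibbs property and the one-point boundedness this forces, by Proposition~\ref{p.strong characterization}, that $\bm\cL^\infty \stackrel{d}{=} \bm\cP + \mf c$ for some random constant $\mf c$ independent of $\bm\cP$. (The random shift $\mf c$ is not ruled out by the hypotheses so far, since Assumption~\ref{as.one-point tightness} only controls tightness, not the precise law, of $L^N_1$.) Finally, under the additional Assumption~\ref{as.GUE TW}, $\cL^N_1(x)+x^2 \xrightarrow{d} \mathrm{GUE\text{-}TW}$ for some fixed $x$; since $\cP_1(x)+x^2 = \mc A_1(x)$ also has the GUE Tracy-Widom law and $\mf c$ is independent of $\bm\cP$, matching the distributions of $\cL^\infty_1(x)+x^2 = \mc A_1(x)+\mf c$ and $\mc A_1(x)$ forces $\mf c = 0$ almost surely (e.g.\ by comparing characteristic functions, using that the GUE Tracy-Widom law is not invariant under any nonzero deterministic shift and $\mf c \perp \mc A_1(x)$). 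Hence every subsequential limit equals $\bm\cP$, and by tightness $\bm\cL^N \xrightarrow{d} \bm\cP$. This completes the proof.
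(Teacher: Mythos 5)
Your proposal is correct and follows the same route as the paper: invoke Theorem~\ref{t.tightness} for tightness, Proposition~\ref{p.limits have BG} (the content of Section~\ref{s.bg in the limit}) for the Brownian Gibbs property of subsequential limits, Proposition~\ref{p.strong characterization} to identify the limit as $\bm\cP+\mf c$, and Assumption~\ref{as.GUE TW} to kill the shift. Your justification that $\mf c=0$ is a touch informal (the cleanest route is variance additivity for independent summands, or the observation that $\phi_{\mf c}\equiv 1$ near $0$ forces $\mf c$ degenerate), but the conclusion is sound and the paper itself states that step without elaboration.
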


\subsection{Last passage percolation in the colored Hall-Littlewood line ensemble}\label{s.LPP in discrete line ensemble}

We now turn to stating our results on the relation between $\bm L^{\mrm{cHL}, (1)}$ and $(\bm L^{\mrm{cHL}, (2)}, \bm L^{\mrm{cHL}, (3)}, \ldots)$ (under general boundary condition $\sigma$) as defined in \eqref{e.colored line ensemble definition}.
In particular, we give a precise statement of the approximate representation of $\smash{L^{\mrm{cHL}, (j)}_1}$ as a last passage percolation (LPP) problem in $\bm L^{\mrm{cHL}, (1)}$, which in fact holds exactly when $q=0$.

Recall the definition of $\smash{\bm L^{\mrm{cHL}, (j)}}$ from \eqref{e.colored line ensemble definition} and the notation for LPP problems from \eqref{e.LPP definition}; this notation is for an LPP problem in an environment given by a family of continuous functions, and for this purpose as well as in the next result, we regard $\bm L^{\mrm{cHL}, (1)}$ as a collection of continuous functions on the real interval $[0,N+M]$ by linear interpolation, as in the setup introduced in Section~\ref{s.line ensemble hypotheses}. We prove the next result in Section~\ref{s.approximate LPP}.

\begin{theorem}\label{t.approxmate LPP problem representation general}
Fix  $N,M\in\N$, any boundary condition $\sigma:\intint{1,N}\to\intint{1,N}$ and $q\in[0,1)$. Let $(\bm L^{\mrm{cHL}, (1)}, \ldots, \bm L^{\mrm{cHL}, (N)})$ be the associated colored Hall-Littlewood line ensemble as given by Definition~\ref{d.colored line ensemble}. There exist positive constants $c$ (absolute), and $K=K(q)$ such that for any $j,k\geq 1$ and $m\geq K\log N$,
\begin{align}
\P\left(\sup_{y\in [0,N+M]} \left|L^{\mrm{cHL}, (j)}_1(y) - \sup_{z\leq y} \left(L^{\mrm{cHL},(j)}_{k+1}(z) + \bm L^{\mrm{cHL}, (1)}[(z,k)\to(y,1)]\right)\right| \geq k m \right) \leq kq^{cm^2}. \label{e.approximate LPP problem}
\end{align}
When $q=0$, $L^{\mrm{cHL}, (j)}_1(y) = \sup_{z\leq y} (L^{\mrm{cHL},(j)}_{k+1}(z) + \bm L^{\mrm{cHL}, (1)}[(z,k)\to(y,1)])$ for all $y\in[0,N+M]$, $j\in\intint{1,N}$, and $k\in\N$.
\end{theorem}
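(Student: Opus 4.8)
The plan is to prove the two assertions of Theorem~\ref{t.approxmate LPP problem representation general} by analyzing how the colored $q$-Boson weights constrain the interaction between the line ensemble $\bm L^{\mrm{cHL},(1)}$ (total arrow counts) and the higher-color line ensembles $\bm L^{\mrm{cHL},(k+1)}$. I will first dispense with the exact $q=0$ case, then bootstrap the approximate statement from a quantitative version of the same combinatorics. Throughout I use the Gibbs descriptions in Lemma~\ref{l.colored vertex gibbs} and Lemma~\ref{l.colored line ensemble gibbs}, after a color merging (Lemma~\ref{l.color merging cHL}) that reduces to the two-color situation where the colors are $\{1,2\}$ with the role of ``color $\geq k+1$'' played by color $2$; this is legitimate because \eqref{e.approximate LPP problem} only involves $L^{\mrm{cHL},(j)}_1$, $L^{\mrm{cHL},(j)}_{k+1}$, and $\bm L^{\mrm{cHL},(1)}$, so merging colors $\intint{1,j}$ to one color and $\intint{j,N}$... more carefully, merging so that color~$1$ records arrows of all colors and color~$2$ records arrows of color $\geq j$ preserves exactly the three processes in the statement (with $k$ now the curve index at which we split, handled by iterating the one-curve Gibbs step $k$ times).

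\textbf{The $q=0$ case.} When $q=0$ the colored $q$-Boson weights in Figure~\ref{f.L weights} become $0$-$1$ valued: the factor $(1-q^{A_i})q^{A_{[i+1,N]}}$ is $1$ exactly when $A_i\geq 1$ and $A_{[i+1,N]}=0$, i.e.\ when a color-$i$ arrow is present vertically and no strictly-higher-color arrow is present; $uq^{A_{[i+1,N]}}$ is $1$ (with $u=1$; the $u=z$ rows I will address by an interpolation/limiting argument or by noting the relevant identity is scale-free) exactly when $A_{[i+1,N]}=0$. Translating through Definition~\ref{d.colored line ensemble}, this forces a deterministic rule: the higher-color curve $L^{\mrm{cHL},(j)}_k(\bm\cdot)$ is obtained from $L^{\mrm{cHL},(j)}_{k+1}(\bm\cdot)$ and the column-$k$ data of $\bm L^{\mrm{cHL},(1)}$ by the ``Pitman transform'' that, reading columns from right (column $k{=}1$) to the target, pushes the higher-color path up against the total-count path until it must drop. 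Iterating across columns $1,\dots,k$, an induction on $k$ shows the higher-color top curve equals exactly $\sup_{z\leq y}\bigl(L^{\mrm{cHL},(j)}_{k+1}(z)+\bm L^{\mrm{cHL},(1)}[(z,k)\to(y,1)]\bigr)$: the supremum over the breakpoint $z$ encodes the last column at which the higher-color path detaches from the boundary curve $L^{\mrm{cHL},(j)}_{k+1}$, and the LPP value $\bm L^{\mrm{cHL},(1)}[(z,k)\to(y,1)]$ tracks the accumulated displacement as the higher-color arrows ride along the total-count curves through columns $k$ down to $1$. This is the standard RSK/Pitman identity between non-crossing walks and LPP, here read off the $q=0$ vertex weights; I would verify the base case $k=1$ directly from the $0$-$1$ weights and the one-column geometry, then push the induction using that the $(k{+}1)$-st higher-color curve acts as the lower boundary in the Gibbs step.

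\textbf{The approximate case $q\in(0,1)$.} Now the weights carry genuine powers $q^{A_{[i+1,N]}}$. Fix a configuration of $\bm L^{\mrm{cHL},(1)}$ and condition on it; by Lemma~\ref{l.colored line ensemble gibbs} the higher-color curve $L^{\mrm{cHL},(j)}_k$ is a reweighting, by $\prod_{v\in\Gamma_k}\bigL_{z,v}$, of Bernoulli bridges squeezed between $L^{\mrm{cHL},(1)}_k$ and $L^{\mrm{cHL},(j)}_{k+1}$. The key combinatorial estimate is: if the higher-color curve deviates from the $q=0$ (LPP) value by an amount $m$ at some location, then along the way it must have been ``detached'' from the relevant boundary by a total excess that produces a weight factor at most $q^{c m^2}$ --- the quadratic exponent arising because a deviation of size $m$ at a single column is cheap, but maintaining it, or building it up, over the $\Theta(m)$ columns it takes to create it forces $\Theta(m)$ factors each of order $q^{\Theta(m)}$ (the exponents $A_{[i+1,N]}$ being the gaps between the higher-color curve and the total-count curve, which are themselves of order $m$ when the LPP constraint is violated by $m$). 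I would make this precise by comparing the conditional law to the $q=0$ (Pitman) law: the Radon--Nikodym factor between them is a product of $q^{A_{[i+1,N]}}$-type terms, and on the event $\{\text{deviation}\geq km\}$ this product is $\leq q^{c m^2}$ pointwise, while the $q=0$ law gives zero mass to that event; a union bound over the $k$ curves being revealed and over $y\in[0,N+M]$ (discretized, with the integer-valued, $1$-Lipschitz-in-each-column structure controlling the continuum supremum) yields $kq^{cm^2}$, valid once $m\geq K(q)\log N$ so that $q^{cm^2}$ beats the $O(N)$ union-bound factor.

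\textbf{Main obstacle.} The crux --- and the step I expect to be hardest --- is proving the quadratic-in-$m$ weight bound rigorously: one must show that \emph{any} configuration of the higher-color curve whose top line deviates by $\geq km$ from the LPP value necessarily accrues vertex weights whose product is $\leq q^{cm^2}$, uniformly over the (conditioned, hence arbitrary) shape of $\bm L^{\mrm{cHL},(1)}$ and over boundary data. This requires a careful accounting of the exponents $A_{[i+1,N]}$ along the columns $\Gamma_1,\dots,\Gamma_k$: one needs a combinatorial lemma saying that a max-plus (LPP) value and the true path value differ by $m$ only if the true path spends $\Omega(m)$ column-steps at vertical displacement $\Omega(m)$ above where the greedy Pitman path would be, each such step contributing a factor $q^{\Omega(m)}$. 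Isolating this lemma cleanly --- so that it applies simultaneously to all $k$ higher-color curves revealed one at a time, with the previously-revealed curve serving as the moving lower boundary --- and then feeding it through the union bound with the $K(q)\log N$ threshold, is where the real work lies; the rest is bookkeeping with the Gibbs descriptions already established in Lemmas~\ref{l.colored vertex gibbs} and~\ref{l.colored line ensemble gibbs}.
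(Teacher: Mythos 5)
Your proposal correctly identifies the overall structure: color-merge to two colors, reduce to a per-column Pitman step, prove the exact Pitman identity at $q=0$, and bound the probability of deviation by a quadratic-in-$m$ weight estimate. The paper proceeds identically at this level, via Lemma~\ref{l.q=0 pitman} (exact $q=0$ Pitman identity), the iterated Pitman decomposition \eqref{e.PT^(j) definition}--\eqref{e.PT LPP representation}, the telescoping bound \eqref{e.iterated pitman decomposition}, and Proposition~\ref{p.PT deviation probability} (per-column $q^{cm^2}$ estimate). Your $q=0$ argument and your observation that the $u=z$ spectral factor cancels from the conditional law are both sound.

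The genuine gap is the key quantitative lemma (the paper's Lemma~\ref{l.alternate configuration weight ratio}), which you flag as unproven but then gesture at with a ``Radon--Nikodym comparison to the $q=0$ law'' heuristic that cannot work as stated. The $q=0$ conditional law is a point mass on the Pitman configuration, so the $q>0$ law is not absolutely continuous with respect to it, and the $q=0$ law assigns zero mass to the bad event; comparing densities gives no bound on the $q>0$ mass. The paper's actual argument has a different shape: for each bad column configuration $\bm w$ (Pitman error $m$) it constructs a \emph{different}, better configuration $f(\bm w)$ --- \emph{not} the Pitman one, but one whose Pitman error is still $\approx K_0\log N$ --- obtained by a sequence of $\approx m$ single-pair swaps, each swap reducing the Pitman error by one while improving the weight by a factor $\approx q^{m-i}$, whence the $q^{cm^2}$ ratio. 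Crucially, one must also control the multiplicity of the map: $\#f^{-1}(\{\tilde{\bm w}\})\leq N^{2m}$, because each of the $\approx m$ swaps could have been applied at up to $\binom{N}{2}$ pairs of rows. That multiplicity factor is exactly why the $K(q)\log N$ threshold appears --- you need $q^{cm^2/2}\leq C^{-1}N^{-2m}$ to absorb it into the exponent --- not merely to beat an $O(N)$ union bound over $y$ as your proposal suggests. Without the map-and-multiplicity device (or an alternative that achieves the same summation over bad configurations), the ``pointwise weight is $\leq q^{cm^2}$'' observation by itself cannot be converted into the probability bound, because the denominator (total weight) has no a priori lower bound.

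A secondary confusion worth flagging: your heuristic speaks of ``building it up over $\Theta(m)$ columns.'' The quadratic exponent arises entirely \emph{within a single column}: the $\Theta(m)$ factors are accrued at $\Theta(m)$ distinct \emph{rows} of one column, each at a location where the vertical higher-color queue length is $\Theta(m)$. The cross-column accounting is separate and linear: the telescoping in \eqref{e.iterated pitman decomposition} adds the $k$ per-column deviations to give $km$ and union-bounds the probabilities to give $kq^{cm^2}$. Keeping the row coordinate (within one column of the $q$-Boson model) distinct from the column coordinate (indexing curves of the line ensemble) is essential for completing the argument.
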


Recall the definition of the Airy sheet from Definition~\ref{d.airy sheet}, the definition of $\S^{\mrm{S6V},\varepsilon}$ from \eqref{e.rescaled s6v definition}, and the relation of $\smash{L^{\mrm{cHL}, (j)}_1}$ to the S6V height function $h^{\mrm{S6V}}$ from Proposition~\ref{p.colored line ensembles}. At this point in our discussion, it should not be hard to believe that, given the representation of $\smash{L^{\mrm{cHL}, (j)}_1}$ in Theorem~\ref{t.approxmate LPP problem representation general} and the convergence of $\bm L^{\mrm{cHL}, (1)}$ to $\bm\cP$ from Theorem~\ref{t.line ensemble convergence to parabolic Airy}, the convergence of $\S^{\mrm{S6V}, \varepsilon}$ to $\S$ will follow (and similarly the convergence of $\S^{\mrm{ASEP}, \varepsilon}$ which, as mentioned in the beginning of Section~\ref{s.key ideas}, will be proved by a limit to ASEP from S6V). Indeed, given the results we have stated in this section, the argument to establish this is at this point well-known, following presentations in \cite{dauvergne2018directed} and \cite{dauvergne2021scaling}; as a result we give the proof of Theorems~\ref{t.s6v airy sheet} and \ref{t.asep airy sheet} in Section~\ref{s.convergence to Airy sheet} after relegating some of the standard details to Appendix~\ref{s.G_k asymptotics}.

\section{Approximate LPP representation}\label{s.approximate LPP}

\newcommand{\numrows}{U}

In this section we prove Theorem~\ref{t.approxmate LPP problem representation general}. 
Recall that we allow a general boundary condition encoded by a function $\sigma:\intint{1,N}\to \intint{1,N}$, where $\sigma(k)$ is the color of the arrow horizontally entering the domain $\Z_{\leq 0}\times\intint{1,N+M}$ of the colored $q$-Boson model at $(1,k)$.

\subsection{Pitman transform and the $q=0$ case} For two functions $f,g:\Z_{\geq 0}\to \R$, we define the \emph{Pitman transform} $\PT(f,g):\Z_{\geq 0}\to\R$ by
\begin{align}\label{e.pitman transform definition}
\PT(f,g)(y) := f(y) + \max_{0\leq y'\leq y}\left( g(y') - f(y')\right).
\end{align}
The Pitman transform should be thought of as an LPP problem across two curves, with the bottom one equal to $g$ and the top equal to $f$ (this interpretation assumes $g(0) = 0$, as will be the case in our context). Indeed, we will ultimately access the LPP value across $k$ lines by iterated applications of $\PT$.

We start by stating and proving a statement in the case of $q=0$, in order to make the source of the Pitman transform in the colored Hall-Littlewood line ensemble more transparent; we will subsequently turn to obtaining a similar statement in the case of general $q\in[0,1)$ in Section~\ref{s.general q}.

In this section we will mainly restrict to the case where $\sigma:\intint{1,N}\to\intint{1,2}$, i.e., arrows in the colored $q$-Boson model are only of color $1$ or $2$, as this will suffice for the proof of Theorem~\ref{t.approxmate LPP problem representation general} (see the end of this section) by color merging. We drop the superscript of ``cHL'' when writing the colored Hall-Littlewood line ensemble; for instance we will write $\smash{L^{(j)}_i}$ for $\smash{L^{\mrm{cHL}, (j)}_i}$. We will also refer to arrows of color 2 as $2$-arrows, and analogously $1$-arrows for arrows of color $1$.
In particular, recalling the relation of the colored Hall-Littlewood line ensemble and the colored $q$-Boson model from Definition~\ref{d.colored line ensemble}, $\smash{L^{(2)}_k}$ corresponds to counts of 2-arrows in the $k$\th column (i.e., $\{-k\}\times\intint{1,N+M}$) of the colored $q$-Boson model, while $\smash{L^{(1)}_k}$ corresponds to combined arrow counts of $1$- and $2$-arrows in the same column.

\begin{lemma}\label{l.q=0 pitman}
Let $q=0$. For any boundary condition $\sigma:\intint{1,N}\to\intint{1,2}$ and any $k\in\N$,
$$L^{(2)}_k = \PT\left(L^{(1)}_k, L^{(2)}_{k+1}\right).$$
\end{lemma}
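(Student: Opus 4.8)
The plan is to verify the claimed identity $L^{(2)}_k = \PT(L^{(1)}_k, L^{(2)}_{k+1})$ by unpacking what the $q=0$ colored $q$-Boson weights force on the relationship between columns $k$ and $k+1$. Recall from Definition~\ref{d.colored line ensemble} that $L^{(1)}_k(y)$ counts all arrows (colors $1$ or $2$) exiting horizontally in column $k$ strictly above $y$, while $L^{(2)}_k(y)$ counts just the $2$-arrows; the discrete derivatives record, at each row, whether an arrow (resp. a $2$-arrow) exits column $k$ there. First I would observe that when $q=0$ all the $\bigL_u$ weights in Figure~\ref{f.L weights} are either $0$ or $1$ (the factors $1-q^{A_i}$ become $1$ unless $A_i=0$, and $q^{A_{[i+1,N]}}$ becomes $\one_{A_{[i+1,N]}=0}$), so the configuration in column $k$ is a \emph{deterministic} function of the arrows entering column $k$ from the left (i.e., exiting column $k+1$) together with the arrows entering from below. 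The key structural consequence: in the $q=0$ colored $q$-Boson model, at any vertex a horizontally incoming arrow of color $c$ exits vertically iff there is no arrow of color $\geq c$ already stacked vertically entering; equivalently, each column acts as a ``priority sorting'' device.

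The core of the argument is to track the $2$-arrows and the $1$-arrows in column $k$ separately, from the bottom row up, and to show the running count matches the Pitman transform. I would set $f = L^{(1)}_k$ (the top curve: total arrows in column $k$) and $g = L^{(2)}_{k+1}$ (the lower curve: $2$-arrows coming in from column $k+1$, which by Lemma~\ref{l.colored line ensemble basic properties} and the boundary conditions satisfies $g(0)$ appropriately). Reading from the top of the domain downward (or bottom up, whichever makes the recursion cleanest), at each row the number of $2$-arrows that have ``exited so far'' in column $k$ either stays equal to the number that have exited in column $k+1$ so far (when a $2$-arrow passes straight through horizontally) or increases by one — but crucially, a $2$-arrow entering column $k$ horizontally is diverted to exit vertically precisely when there is already a vertical $2$-arrow passing it, and otherwise it exits horizontally. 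Meanwhile $1$-arrows can be ``pushed up'' by $2$-arrows. Unwinding the combinatorics of who-exits-where, the count $L^{(2)}_k(y)$ of $2$-arrows exiting above row $y$ equals the total arrow count $L^{(1)}_k(y)$ above $y$ minus the number of $1$-arrows that manage to exit horizontally above $y$; and the latter equals $\min_{0 \le y' \le y}(L^{(1)}_k(y') - L^{(2)}_{k+1}(y'))$-type quantity after the transposition structure is accounted for. Matching this against $\PT(f,g)(y) = f(y) + \max_{0 \le y' \le y}(g(y') - f(y'))$ is then bookkeeping.

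More precisely, I expect the cleanest route is induction on $y$: show both sides satisfy the same recursion in $y$ with the same initial condition at $y=0$. At $y=0$: $L^{(2)}_k(0)$ is the total number of $2$-arrows in the whole domain (all exit above row $0$), which equals $L^{(2)}_{k+1}(0)$ (number of $2$-arrows is a conserved count across columns), and since $\max_{0\le y'\le 0}(g-f) = g(0) - f(0)$, we get $\PT(f,g)(0) = f(0) + g(0) - f(0) = g(0) = L^{(2)}_{k+1}(0) = L^{(2)}_k(0)$. For the inductive step from $y$ to $y+1$, I would case on the arrow configuration at vertex $(-k, y+1)$: using that the $q=0$ weights force the outgoing configuration, one checks that $L^{(2)}_k(y+1) - L^{(2)}_k(y) \in \{0,-1\}$ matches the change in $\PT(f,g)$, distinguishing whether the horizontally outgoing arrow from $(-k,y+1)$ is a $2$-arrow, a $1$-arrow, or absent, and whether the running max in the Pitman transform is attained at $y'\le y$ or newly at $y'=y+1$. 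The main obstacle, I anticipate, is precisely this case analysis: making rigorous the claim that ``a $2$-arrow exits column $k$ horizontally above row $y$ exactly when the Pitman max is not renewed below,'' which requires carefully understanding how $1$-arrows get displaced upward by $2$-arrows in the deterministic $q=0$ dynamics — essentially a discrete queueing/RSK-type identity. Once that bijective picture is pinned down, matching it to \eqref{e.pitman transform definition} is routine.
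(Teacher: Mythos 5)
Your plan—induction on $y$ matching discrete derivatives of both sides—is a genuinely different route from the paper's, and it can in principle be made to work, but as written it stops exactly at the step you yourself flag as ``the main obstacle.'' You correctly establish the base case $y=0$ via arrow conservation, but you do not carry out the inductive step; you only anticipate that a case analysis should exist. The inductive step \emph{does} work: writing the Pitman recursion as $\PT(f,g)(y+1) = \max\bigl(\PT(f,g)(y) + (f(y+1)-f(y)),\, g(y+1)\bigr)$, and using the inductive hypothesis to identify the vertical $2$-arrow stack at row $y+1$ in column $k$ as $L^{(2)}_k(y) - L^{(2)}_{k+1}(y) = \PT(f,g)(y) - g(y)$, one splits the case $f(y+1)=f(y)-1$ into three subcases according to whether $\PT(f,g)(y) > g(y)$, or $\PT(f,g)(y)=g(y)$ with $g(y+1)=g(y)-1$, or $\PT(f,g)(y)=g(y)$ with $g(y+1)=g(y)$; in each one the $q=0$ priority rule determines whether the exiting arrow is a $2$-arrow and this matches whether $\PT$ decreases. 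But this subcase analysis is precisely the content of the lemma, so leaving it unresolved means the proposal does not constitute a proof.

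It is also worth noting that the paper's route is structured differently and buys something you do not get from the induction. The paper first proves, \emph{for all} $q\in[0,1)$, the deterministic one-sided inequality $L^{(2)}_k \geq \PT(L^{(1)}_k, L^{(2)}_{k+1})$ (Lemma~\ref{l.general q pitman lower bound}), using only that $L^{(2)}_k \geq L^{(2)}_{k+1}$ and that $O_k(y) := L^{(1)}_k(y)-L^{(2)}_k(y)$ is non-increasing. Then for $q=0$ it proves the reverse inequality not by induction on $y$ but by exhibiting a single witness row: $y_0$ is taken to be the largest row $\le y$ at which a $1$-arrow exits column $k$ horizontally (or $y_0=0$ if there is none). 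The $q=0$ weights force that at such a vertex no $2$-arrow is stacked or enters, which yields $L^{(2)}_k(y_0) = L^{(2)}_{k+1}(y_0)$, and $O_k(y_0)=O_k(y)$ by maximality; this closes the minimization at $y'=y_0$ in one stroke, with no row-by-row bookkeeping. The split into lower bound for general $q$ plus equality witness for $q=0$ is not merely cosmetic: the general-$q$ lower bound is reused later (in the proof of Proposition~\ref{p.PT deviation probability} and Theorem~\ref{t.approxmate LPP problem representation general}), whereas your induction, once completed, would only establish the $q=0$ identity and not the general-$q$ inequality.
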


It turns out that a lower bound actually holds deterministically for all $q\in[0,1)$. We state and prove it first before giving the proof of Lemma~\ref{l.q=0 pitman} that equality holds when $q=0$.

\begin{lemma}\label{l.general q pitman lower bound}
For any $q\in[0,1)$, boundary condition $\sigma:\intint{1,N}\to\intint{1,2}$, and $k\in \N$,
$$L^{(2)}_k \geq \PT\left(L^{(1)}_k, L^{(2)}_{k+1}\right).$$
\end{lemma}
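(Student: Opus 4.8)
The plan is to prove the inequality separately for every colored $q$-Boson configuration that is consistent, obeys colored arrow conservation, and is compatible with the boundary condition $\sigma$; since the argument will use only arrow conservation and the up-right nature of the arrow paths — never the vertex weights — it will hold for all such configurations, hence almost surely for every $q\in[0,1)$ (this is why the bound is ``deterministic'' in the sense claimed above). Throughout I write $L^{(j)}_i$ for $L^{\mrm{cHL},(j)}_i$ and restrict to $\sigma:\intint{1,N}\to\intint{1,2}$, so that the only arrow colors present are $1$ and $2$; in particular $L^{(1)}_k(y)$ counts \emph{all} arrows exiting column $k$ strictly above row $y$, while $L^{(2)}_k(y)$ counts only the $2$-arrows.

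The two inputs, both immediate from Lemma~\ref{l.colored line ensemble basic properties}, are: (a) the difference $L^{(1)}_k(y)-L^{(2)}_k(y)$ is exactly the number of color-$1$ arrows exiting column $k$ strictly above row $y$, and by part (iii) of that lemma this difference decreases by $0$ or $1$ as $y$ increases by $1$ (since $-\one_{j_{(-k,y+1)}\geq 1}+\one_{j_{(-k,y+1)}\geq 2}=-\one_{j_{(-k,y+1)}=1}$) — in particular it is non-increasing in $y$; and (b) part (ii) of that lemma gives $L^{(2)}_{k+1}(y')\le L^{(2)}_k(y')$ for every $y'$, since any $2$-arrow exiting column $k+1$ above $y'$ must also exit column $k$ above $y'$.

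With these, fix $y\in\intint{0,N+M}$ and let $y'$ be any integer with $0\le y'\le y$. From (a), $L^{(1)}_k(y')-L^{(1)}_k(y)\ge L^{(2)}_k(y')-L^{(2)}_k(y)$, whence
\[
L^{(1)}_k(y)+L^{(2)}_{k+1}(y')-L^{(1)}_k(y')\ \le\ L^{(2)}_{k+1}(y')-L^{(2)}_k(y')+L^{(2)}_k(y)\ \le\ L^{(2)}_k(y),
\]
the last step being (b). Taking the maximum over $y'\in\intint{0,y}$ and recalling the definition~\eqref{e.pitman transform definition} of $\PT$ yields $\PT\!\left(L^{(1)}_k,L^{(2)}_{k+1}\right)(y)\le L^{(2)}_k(y)$, which is the assertion.

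I do not expect a genuine obstacle for this lemma; the only care needed is the bookkeeping with colors (that under $\sigma:\intint{1,N}\to\intint{1,2}$ the curve $L^{(1)}_k$ counts all arrows and $L^{(1)}_k-L^{(2)}_k$ counts the color-$1$ arrows) and the observation that no property of the $q$-Boson measure is invoked, only the combinatorics of a fixed configuration. The substantive work — showing that equality holds when $q=0$ (Lemma~\ref{l.q=0 pitman}), and then, for $q>0$, upgrading this one-sided bound to the approximate two-sided LPP representation of Theorem~\ref{t.approxmate LPP problem representation general} with its quantitative error term — is what remains after this.
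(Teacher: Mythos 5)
Your proof is correct and follows essentially the same argument as the paper's: both reduce to the two facts that $L^{(1)}_k - L^{(2)}_k$ (the count of color-$1$ arrows, called $O_k$ in the paper) is non-increasing in $y$ by Lemma~\ref{l.colored line ensemble basic properties}(iii), and that $L^{(2)}_k \geq L^{(2)}_{k+1}$ pointwise by part~(ii) of the same lemma. The paper simply packages the same chain of inequalities by first rearranging the desired bound into the equivalent form $\min_{y'\le y}\bigl(O_k(y') + L^{(2)}_k(y') - L^{(2)}_{k+1}(y')\bigr) \ge O_k(y)$, so the difference is purely presentational.
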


\begin{proof}
Fix $y$. Recalling the definition \eqref{e.pitman transform definition} of the Pitman transform, we must show that
\begin{align*}
L^{(2)}_k(y) \geq L^{(1)}_k(y) + \max_{y'\leq y}\left(L^{(2)}_{k+1}(y') - L^{(1)}_k(y')\right).
\end{align*}
Let us write $O_{k}(y) = L^{(1)}_k(y) - L^{(2)}_k(y)$, so that $O_{k}(y)$ is the number of $1$-arrows exiting the $k$\textsuperscript{th} column in the colored $q$-Boson model at row strictly higher than $y$. Then, from the previous display, what we have to show is equivalent to
\begin{align}\label{e.PT equivalent to prove}
\min_{y'\leq y}\left(O_{k}(y') + L^{(2)}_k(y') - L^{(2)}_{k+1}(y')\right) \geq O_{k}(y).
\end{align}
Now, recall from \eqref{e.L^(j) properties} that $L^{(2)}_k(y') \geq L^{(2)}_{k+1}(y')$ for all $y'$ and observe that \smash{$O_{k}(y)$} is non-increasing in $y$ by its definition as the count of $1$-arrows (analogous to the fact that $\smash{L^{(j)}_k(y+1) \leq L^{(j)}_k(y)}$ noted in the third part of \eqref{e.L^(j) properties}). This yields that the lefthand side of the previous display is at least the righthand side.
\end{proof}

\begin{proof}[Proof of Lemma~\ref{l.q=0 pitman}]
We continue with the notation $O_k(y) = L^{(1)}_k(y) - L^{(2)}_k(y)$ from the previous proof for the count of 1-arrows exiting the $k$\th column at row strictly higher than $y$. By Lemma~\ref{l.general q pitman lower bound}, we must show \eqref{e.PT equivalent to prove} holds with equality, i.e., for some $y'\leq y$,
\begin{equation}\label{e.to show}
O_{k}(y') + L^{(2)}_k(y') - L^{(2)}_{k+1}(y') = O_{k}(y).
\end{equation}
First suppose that $O_k(y') = O_k(y)$ for all $0\leq y'\leq y$, which is the case that no $1$-arrows exit horizontally from any such vertex $(-k,y')$. We observe that $\smash{L^{(2)}_k(0) = L^{(2)}_{k+1}(0)}$, since both quantities are simply the total number of $2$-arrows in the system, and we have established \eqref{e.to show} at $y'=0$.

Next we assume that there exists at least one vertex $(-k,y')$ with $y'\leq y$ from which a 1-arrow exits horizontally. Let $y_0$ be the largest such $y'$; equivalently, $y_0$ is the largest $y'\leq y$ such that $O_k(y') = O_k(y'-1) - 1$. Observe that then $O_k(y_0) = O_{k}(y)$ since $O_k$ is non-increasing for each $k$.

We claim that then $L^{(2)}_k(y_0) = L^{(2)}_{k+1}(y_0)$, which would establish \eqref{e.to show}. Indeed, we know from the weights in the $q=0$ case from Figure~\ref{f.L weights} that since a $1$-arrow exits horizontally from $(-k,y_0)$, there is no $2$-arrow entering that vertex from the left or bottom. The arrows vertically entering $(-k,y_0)$ are precisely the arrows that have exited $(-(k+1), y')$ horizontally for some $y'< y_0$ and which have not exited the $k$\textsuperscript{th} column strictly below $y_0$. Thus all 2-arrows in the $(k+1)$\textsuperscript{st} column that have horizontally exited that column strictly below row $y_0$ have also exited the $k$\textsuperscript{th} column strictly below row $y_0$, and, as we already noted, a $2$-arrow does not exit horizontally from $(-k,y_0)$ or $(-(k+1),y_0)$. In other words, \smash{$L^{(2)}_k(0)-L^{(2)}_k(y_0) = L^{(2)}_{k+1}(0)-L^{(2)}_{k+1}(y_0)$}, which implies our claim, since $L^{(2)}_k(0) = L^{(2)}_{k+1}(0)$.
\end{proof}

As we saw in Lemma~\ref{l.general q pitman lower bound}, $L^{(2)}_k(y) \geq \PT(L^{(1)}_k, L^{(2)}_{k+1})(y)$ deterministically. The next statement gives a bound on how much the deviation can be in the case of general $q\in[0,1)$ and is the basic ingredient of the proof of Theorem~\ref{t.approxmate LPP problem representation general}.

\begin{proposition}\label{p.PT deviation probability}
There exist positive constants $c$ and $K = K(q)$ such that, for any $N\in\N$, boundary condition $\sigma:\intint{1,N}\to\intint{1,2}$, $k\in\N$, $q\in[0,1)$, and $m\geq K\log N$,
\begin{align*}
\P\left(\max_{y\in\intint{0,N+M}} \left(L^{(2)}_k(y) - \PT(L^{(1)}_k, L^{(2)}_{k+1})(y)\right) \geq m\right) \leq q^{cm^2}.
\end{align*}
\end{proposition}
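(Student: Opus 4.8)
The plan is to exploit the colored Hall-Littlewood Gibbs property (Lemma~\ref{l.colored line ensemble gibbs}) to bound, at a fixed column $k$, the probability that the curve $L^{(2)}_k$ exceeds its Pitman-transform lower bound $\PT(L^{(1)}_k, L^{(2)}_{k+1})$ by a large amount $m$. The key observation is the combinatorial cost, already flagged in the proof outline (Section~\ref{s.approximate LPP}): if $L^{(2)}_k$ deviates upward from $\PT(L^{(1)}_k, L^{(2)}_{k+1})$ by $m$ at some row $y$, then in the colored $q$-Boson model there must be many vertices in the $k$\th column whose vertex weights carry large powers $q^{A_{[i+1,N]}}$. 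Concretely, conditioned on $\F^{\mrm{cHL}}_k$, the curve $L^{(2)}_k(\bm\cdot)$ is a Bernoulli bridge between the fixed endpoints $L^{(2)}_{k+1}(0)$ and $0$, forced to stay between $L^{(2)}_{k+1}$ and $L^{(1)}_k$, reweighted by $\prod_{v\in\Gamma_k}\bigL_{z,v}(\bm A'_v, i_v; \bm B'_v, j'_v)$. I would first translate the event $\{L^{(2)}_k(y) - \PT(\cdots)(y) \ge m\}$ into a statement about the trajectory of $O_k := L^{(1)}_k - L^{(2)}_k$ (the $1$-arrow count), exactly as in the proofs of Lemmas~\ref{l.general q pitman lower bound} and \ref{l.q=0 pitman}: the deviation being $\ge m$ means that on the interval $[y_0, y]$ (with $y_0$ the last row at or below $y$ where a $1$-arrow exits the $k$\th column) one has $L^{(2)}_k(y_0) - L^{(2)}_{k+1}(y_0) \ge m$, i.e.\ there are at least $m$ vertically-stacked $2$-arrows entering column $k$ at row $y_0$ that are "trapped" there and do not descend into column $k+1$ below $y_0$.

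Next I would quantify the weight penalty. On the event above, consider the rows just below $y_0$ where the relevant $2$-arrows sit in column $k$; each such $2$-arrow that passes straight up through a vertex contributes a factor $uq^{A_{[i+1,N]}}$ or $u(1-q^{A_j})q^{A_{[j+1,N]}}$, and with $m$ stacked arrows of color $2$ the factor $A_{[3,N]}$ (color $2$ is the top color in the merged two-color setting, so the exponent involves counts of strictly higher colors — here one instead tracks that the weight for a $2$-arrow passing a vertex where lower-color arrows are also present picks up a $q$-power proportional to the number of trapped arrows). Summing over the positions where such a configuration could occur, and over the at most $N+M$ choices of $y$ and $y_0$, the probability is bounded by a sum of at most $\mathrm{poly}(N+M)$ terms each of size at most $q^{c m^2}$ for an absolute constant $c>0$ — the $m^2$ arising because one needs roughly $m$ trapped arrows each incurring a penalty across roughly $m$ rows, a quadratic total cost. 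Choosing $m \ge K(q)\log N$ with $K$ large enough absorbs the polynomial prefactor into, say, $q^{(c/2)m^2}$ and yields the claimed bound $q^{cm^2}$ (after relabeling $c$). The comparison against the reweighted Bernoulli bridge is legitimate because the weight factors only ever shrink probabilities relative to a bridge with the same boundary data, so one does not even need a two-sided control on the partition function for this one-sided estimate — but one should be slightly careful that the normalizing constant $Z$ in the Gibbs property is not itself exponentially small; this is handled by noting that $Z \ge \P(\text{Bernoulli bridge stays in the allowed corridor and no adjacent curves touch}) \cdot (\text{a bounded weight}) $, and the corridor has width controlled by $L^{(1)}_k - L^{(2)}_{k+1}$, which on a good event (to be quantified, or simply conditioned on) is of order at least a constant.

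The main obstacle I anticipate is precisely making the weight-penalty bookkeeping rigorous and uniform in $k$ and in the boundary condition $\sigma$: one must show that an upward deviation of size $m$ genuinely forces $q^{\Omega(m^2)}$ worth of vertex weights, not merely $q^{\Omega(m)}$, and this requires carefully identifying the geometry of the trapped $2$-arrows in column $k$ relative to column $k+1$ and counting how many vertex weights in $\Gamma_k$ acquire nontrivial $q$-powers. The subtlety is that a single "bad" event could in principle be realized by configurations that are individually cheap but numerous; the resolution is that the reweighting is a product over \emph{all} vertices of $\Gamma_k$ and the corridor constraint forces the $1$-arrows to exit in a prescribed pattern, so the number of admissible Bernoulli trajectories realizing the deviation is itself only $\mathrm{poly}(N+M)^{O(1)}$ (or even exponential in $m$ but with a base beatable by $q^{-cm^2}$), controlled by a union bound over the locations $(y, y_0)$ and the single parameter $m$. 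A secondary technical point is passing from the $\F^{\mrm{cHL}}_k$-conditional estimate (which holds pointwise for every realization of the conditioning) to the unconditional bound, which is immediate by taking expectations. I would also double-check the edge cases $k=1$ (top column, where $L^{(1)}_1, L^{(2)}_1$ encode the S6V height functions directly) and the case where no $1$-arrow exits column $k$ below $y$, which was already the trivial case in the proof of Lemma~\ref{l.q=0 pitman} and forces the deviation to be zero.
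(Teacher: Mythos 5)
You have the right coarse outline — condition on $\F^{\mrm{cHL}}_k$ via Lemma~\ref{l.colored line ensemble gibbs}, observe that a deviation of size $m$ forces at least $m$ stacked $2$-arrows vertically entering column $k$ at the relevant row, exploit the $q$-power factors in the $\bigL$ weights, and use the hypothesis $m\ge K\log N$ to absorb the combinatorial prefactor — but the core mechanism producing the $m^2$ exponent is missing, and two of your technical remarks point in the wrong direction.

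The decisive idea in the paper's argument is Lemma~\ref{l.alternate configuration weight ratio}: for each ``bad'' outgoing color assignment $\bm w$ with Pitman error $m$, one constructs a chain $\bm w = \bm w^{(0)}, \bm w^{(1)}, \ldots, \bm w^{(R)}$ with $R = m - K_0\log N$, where each step swaps a $1$-arrow exit at a row with at least $m'$ stacked $2$-arrows for a nearby $2$-arrow exit, reducing the Pitman error by one while multiplying the weight by at least roughly $q^{-(m'-K_0\log N)}$. Summing $\sum_{m'} (m' - K_0\log N) \asymp m^2$ gives the quadratic exponent by a telescoping product, not by a heuristic count of ``$m$ arrows over $m$ rows.'' Your proposal stops at ``must show a deviation of $m$ forces $q^{\Omega(m^2)}$'' without supplying this construction; that is exactly the hard step and it is not a bookkeeping exercise.

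Two corrections. First, the combinatorial multiplicity $\#f^{-1}(\{\bm w\})$ is \emph{not} polynomial in $N+M$: the paper shows it is at most $N^{2m}$, which is exponential in $m$, and this is precisely why the threshold $m\ge K\log N$ is needed (so that $N^{2m}\le q^{-cm^2/2}$). You hedge towards this possibility but should not expect a polynomial count. Second, your concern about the partition function $Z$ being exponentially small is a red herring for this proof: the conditional probability in \eqref{e.pitman transform conditional prob} is bounded as a ratio of two sums of non-negative weights, and one bounds the numerator term-by-term against terms of the denominator via the injection-style map $f$, so no lower bound on $Z$ is ever invoked. (That lower-bound machinery is needed elsewhere, in Proposition~\ref{p.partition function strong lower bound}, but not here.) You also attribute the $q$-penalty to $2$-arrows passing through a vertex in the presence of lower colors; the exponent $A_{[i+1,N]}$ with $i=2$ vanishes in the two-color model. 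The penalty instead attaches to $1$-arrows ($i=1$) exiting or passing a vertex with $A_2$ stacked $2$-arrows, giving $q^{A_{[2,N]}}=q^{A_2}$, which is what the construction exploits when forcing a $1$-arrow to exit where many $2$-arrows are queued.
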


\noindent Before turning to the proof of this statement in Section~\ref{s.general q}, we use it to establish Theorem~\ref{t.approxmate LPP problem representation general}.

\begin{proof}[Proof of Theorem~\ref{t.approxmate LPP problem representation general}]
Recall that we have dropped the superscript $\mrm{cHL}$ from the notation of the colored line ensembles. Next, it suffices to specialize to the case of $j=2$. Indeed, the original statement will follow from this special case by Lemma~\ref{l.color merging cHL} (color merging), i.e., the fact that the joint distribution of $(\bm L^{(1)}, \bm L^{(j)})$ associated to boundary condition $\sigma$ is the same as that of $(\bm L^{(1)}, \bm L^{(2)})$ associated to boundary condition $\tau\circ\sigma$, where $\tau(i) = 1$ if $i \in\intint{1,j-1}$ and $\tau(i) = 2$ if $i\in\intint{j,N}$. 
In other words, we merge the colors in $\intint{1,j-1}$ to be $1$ and those in $\intint{j,N}$ to be $2$.

Inductively define $\PT^{(1)} = \PT$ and for $k\geq 2$
\begin{align}\label{e.PT^(j) definition}
\PT^{(k)}\left((L^{(1)}_i)_{i=1}^k, L^{(2)}_{k+1}\right) := \PT^{(k-1)}\left((L^{(1)}_i)_{i=1}^{k-1}, \PT\left(L^{(1)}_k, L^{(2)}_{k+1}\right)\right);
\end{align}
It is easy to show that
\begin{align}\label{e.PT LPP representation}
\PT^{(k)}\left((L^{(1)}_i)_{i=1}^k, L^{(2)}_{k+1}\right) = \max_{z\leq y} \left(L^{(2)}_{k+1}(z) + \bm L^{(1)}[(z,k)\to(y,1)]\right).
\end{align}
With this we see that
\begin{align}
\MoveEqLeft[0]
\P\left(\max_{y\in\intint{0,N+M}}\left| L^{(2)}_1(y) - \max_{z\leq y} \left(L^{(2)}_{k+1}(z) + \bm L^{(1)}[(z,k)\to(y,1)]\right) \right| \geq km\right)\nonumber\\
&\leq \P\left(\max_{y\in\intint{0,N+M}}\left|L^{(2)}_1(y) - \PT(L^{(1)}_1, L^{(2)}_2)(y)\right| \geq m \right)\label{e.iterated pitman decomposition}\\
&\quad + \sum_{j=1}^{k-1} \P\left(\max_{y\in\intint{0,N+M}}\left|\PT^{(j)}\left((L^{(1)}_i)_{i=1}^j, L^{(2)}_{j+1}\right)(y) - \PT^{(j+1)}\left((L^{(1)}_i)_{i=1}^{j+1}, L^{(2)}_{j+2}\right)(y)\right| \geq m \right). \nonumber
\end{align}
In the $q=0$ case, Lemma~\ref{l.q=0 pitman} says that the previous display is $0$ for any $m>0$. For the case of $q\in(0,1)$, we observe that if $h$ is a function with $\max |h| \leq \varepsilon$ for some $\varepsilon>0$, then it follows from the representation \eqref{e.PT LPP representation} of $\PT^{(k)}$ and the triangle inequality that, for any $f_1, \ldots, f_k$ and $g$,
\begin{align}\label{e.pt perturbation inequality}
|\PT^{(k)}((f_i)_{i=1}^k, g) - \PT^{(k)}((f)_{i=1}^k, g+h)| \leq 2\varepsilon.
\end{align}
From Lemma~\ref{l.general q pitman lower bound} and Proposition~\ref{p.PT deviation probability} it holds that, for any $j\geq 0$ and $m\geq K\log N$,
\begin{align*}
\P\left(\max_{y\in\intint{0,N+M}}\left| L^{(2)}_{j+1}(y) - \PT(L^{(1)}_{j+1}, L^{(2)}_{j+2})(y)\right| \geq \tfrac{1}{2}m\right) \leq q^{cm^2}.
\end{align*}
Combining this with \eqref{e.pt perturbation inequality} and the definition \eqref{e.PT^(j) definition} of $\PT^{(j)}$ yields that the $j$\textsuperscript{th} summand in \eqref{e.iterated pitman decomposition} is upper bounded by $q^{cm^2}$ which, on tracing back, completes the proof.
\end{proof}

The rest of the section is devoted to proving Proposition~\ref{p.PT deviation probability}.

\subsection{The case of general $q$}\label{s.general q}
As we saw in the proof of Lemma~\ref{l.q=0 pitman}, the Pitman transform representation was essentially a consequence of the fact that, when $q=0$, the colored $q$-Boson vertex weights force $2$-arrows to exit horizontally before $1$-arrows. In $q>0$, $1$-arrows can exit before $2$-arrows, but will be penalized in weight by a factor of $q^m$, where $m$ is the number of $2$-arrows vertically entering at the vertex where the $1$-arrow is exiting (see Figure~\ref{f.L weights}). At a high level, the heuristic behind the proof of Proposition~\ref{p.PT deviation probability} is as follows: on the event that the deviation of the height function in the $q>0$ case from the Pitman transform expression (as it would be in the $q=0$ case) is $m$, we can find order $m$ many different locations where a 1-arrow horizontally exits before a $2$-arrow, and where there are order $m$ many $2$-arrows ``in queue'', i.e., vertically entering; this will lead to the $\smash{q^{cm^2}}$ probability bound for this event.

The precise proof of Proposition~\ref{p.PT deviation probability} requires introducing some notation.
Let $\numrows=N+M$ (the number of rows in the colored $q$-Boson model). Fix $\bm v = (v_1, \ldots, v_\numrows)\in\{0,1,2\}^\numrows$ and $\bm x=(x_1, \ldots, x_\numrows)\in\{0,1\}^\numrows$. Here $v_y$ represents the color of the arrow horizontally entering at the $y$\smash{\th} row and $x_y$ represents the number (of non-zero color) arrows horizontally exiting at the $y$\smash{\th} row, both counted from the bottom (see Figure~\ref{f.v z w}).

For a $\bm v \in\{0,1,2\}^\numrows$, $j\in\{1,2\}$ and $y\in\intint{1,\numrows}$, let the height function (which we call $\mrm{Ht}$ for short) of the $j$\th color be defined by
$$\mrm{Ht}_j(\bm v,y) := \#\Bigl\{y'> y: v_{y'}= j\Bigr\} = \sum_{y'=y+1}^\numrows \one_{v_{y'} = j},$$
i.e., the number of arrows of color $j$ strictly above row $y$ in $\bm v$.

We say that  $\bm v$ and $\bm x$ are \emph{compatible} if they can arise as the incoming color counts and outgoing total arrow counts at a column, respectively, while satisfying arrow conservation; more precisely, if and only if $\sum_{j=1}^2(\mrm{Ht}_j(\bm v,0) - \mrm{Ht}_j(\bm v,y)) \geq \sum_{y'=1}^y x_{y'}$ for each $y$ and equality holds at $y=\numrows$.

For given $\bm v\in\{0,1,2\}^\numrows$ and $\bm x\in\{0,1\}^\numrows$ which are compatible, define $\mrm{Val}(\bm v, \bm x)$ (short for ``valid'')
$$\mrm{Val}(\bm v,\bm x) := \left\{\bm w\in\{0,1,2\}^\numrows : \parbox{8.4cm}{\centering$w_y = 0 \iff x_y=0$,  $\mrm{Ht}_j(\bm w,0) = \mrm{Ht}_j(\bm v,0),$\\[3pt]
$\mrm{Ht}_j(\bm w,y) \geq \mrm{Ht}_j(\bm v,y) \text{ for } j=1,2,\ y \in \intint{0, \numrows-1}$}\right\},$$
to be the set of valid assignments of outgoing colors at a column with incoming color counts given by $\bm v$ and outgoing arrow counts given by $\bm x$  (see Figure~\ref{f.v z w}); here, valid informally means that colored arrow conservation holds.

\begin{figure}
\begin{tikzpicture}
\begin{scope}
\draw (0,0) -- ++(0,3.75);

\foreach \y/\i in {0.75/1, 1.5/2, 2.25/3, 3/4}
{
 \draw (-0.75, \y) -- ++(1.5,0);
 \node[anchor=east] at (-0.75, \y) {$v_\i$};
 \node[anchor=west] at (0.75, \y) {$x_\i$};
}
\end{scope}

\begin{scope}[shift={(5.25,0)}]
\draw (0,0) -- ++(0,3.75);

\foreach \y/\i/\z/\thecolor in {0.75/1/0/red, 1.5/0/0/black, 2.25/2/1/blue, 3/0/1/black}
{
 \draw (-0.75, \y) -- ++(1.5,0);
  \node[anchor=east, \thecolor] at (-0.75, \y) {$\i$};
  \node[anchor=west] at (0.75, \y) {$\z$};
}

  \node at (-1.65, 1.85) {$\bm v$};
  \node at (1.65, 1.85) {$\bm x$};

  \draw[line width=1pt] (-0.75, 0.75) -- ++(0.75,0) -- ++(0,2.25) -- ++(0.75,0);
  \draw[line width=1pt] (-0.75, 2.25) -- ++(1.5,0);

\end{scope}

\begin{scope}[shift={(10.5,0)}]
\draw (0,0) -- ++(0,3.75);

\foreach \y/\i/\z/\theLcolor/\theRcolor in {0.75/1/0/red/black, 1.5/0/0/black/black, 2.25/2/2/blue/blue, 3/0/1/black/red}
{
 \draw (-0.75, \y) -- ++(1.5,0);
  \node[anchor=east, \theLcolor] at (-0.75, \y) {$\i$};
  \node[anchor=west, \theRcolor] at (0.75, \y) {$\z$};
}

\draw[red, line width=1pt] (-0.75, 0.75) -- ++(0.75,0) -- ++(0,2.25) -- ++(0.75,0);
\draw[blue, line width=1pt] (-0.75, 2.25) -- ++(1.5,0);

  \node at (-1.65, 1.85) {$\bm v$};
  \node at (1.65, 1.85) {$\bm w$};
\end{scope}
\end{tikzpicture}
\caption{A depiction of $\bm v\in\{0,1,2\}^\numrows$, $\bm x\in\{0,1\}^\numrows$ (left panel), an example where they are compatible (middle panel), and an example of a $\bm w\in\mrm{Val}(\bm v,\bm x)$ (right panel). Here the $1$ and $0$ in black on the right side of the middle panel respectively indicate the presence or absence of an arrow of any color, while the colored numbers on the left side of the middle panel and both sides of the right panel indicate the color of the arrow.}\label{f.v z w}
\end{figure}
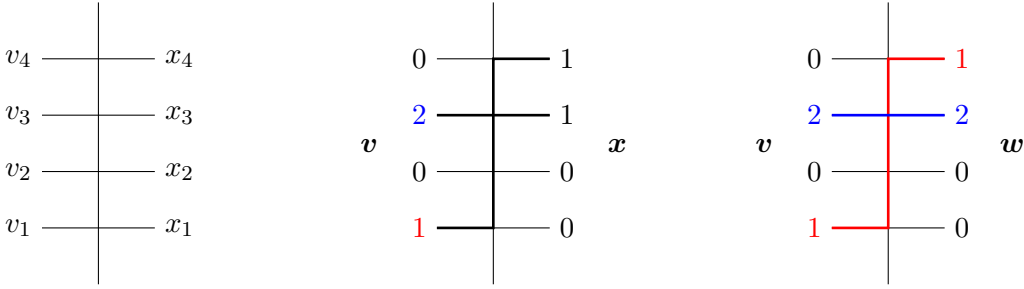

For $\bm v\in\{0,1,2\}^\numrows$ and $\bm w\in\mrm{Val}(\bm v,\bm x)$, define $\mrm{VertCt}_j$ (short for ``vertical count'') by
\begin{equation}\label{e.vertct definition}
\begin{split}
\mrm{VertCt}_j(\bm v,\bm w,y) &:= \bigl(\mrm{Ht}_j(\bm v,0) - \mrm{Ht}_j(\bm v,y)\bigr) - \bigl(\mrm{Ht}_j(\bm w,0) - \mrm{Ht}_j(\bm w,y)\bigr)\\
&= \mrm{Ht}_j(\bm w,y) - \mrm{Ht}_j(\bm v,y)
= \sum_{y'=1}^y \one_{v_{y'}=j} - \one_{w_{y'} = j}
\end{split}
\end{equation}
be the number of arrows of color $j$ vertically exiting the vertex at height $y$; the second equality is since $\mrm{Ht}_j(\bm v,0) = \mrm{Ht}_j(\bm w,0)$ is the total number of $j$-arrows in the system (equal since $\bm w\in\mathrm{Val}(\bm v,\bm x)$). Note that $\mrm{VertCt}_j(\bm v,\bm w,y)$ is a function only of $v_1, \ldots, v_y$ and $w_1, \ldots, w_y$.

We next define a specific assignment $\bm w^*\in \mrm{Val}(\bm v,\bm x)$ for any given compatible pair $(\bm v,\bm x)$, which is the deterministic (and unique) assignment of colors to the outgoing arrows that would occur in the $q=0$ colored $q$-Boson model. It is defined as follows: we specify $w^*_y$ iteratively starting with $y=1$ and continuing in increasing order by:
\begin{itemize}

  \item $w^*_y = 2$ if $\one_{v_y = 2} + \mrm{VertCt}_2(\bm v,\bm w^*,y-1) \geq 1$ (i.e., if there is at least one $2$-arrow that can exit the vertex at height $y$) and $x_y=1$; note that the condition is well-defined since $\mrm{VertCt}_2(\bm v,\bm w^*,y-1)$ does not depend on $w^*_{y'}$ for any $y'\geq y$;

  \item $w^*_y = 1$ if $\one_{v_y = 2} + \mrm{VertCt}_2(\bm v, \bm w^*,y-1) = 0$ and $\one_{v_y = 1} + \mrm{VertCt}_1(\bm v, \bm w^*,y-1) \geq 1$ (i.e., if there are no 2-arrows and at least one $1$-arrow that can exit the vertex at height $y$) and $x_y=1$;

  \item $w^*_y = 0$ if $x_y=0$.
\end{itemize}
The fact that $(\bm v,\bm x)$ are compatible ensures that exactly one of the above three conditions holds for each $y$.

Fix $\bm v\in\{0,1,2\}^\numrows$ and $\bm x\in\{0,1\}^\numrows$ which are compatible, let $\bm w\in\mrm{Val}(\bm v,\bm x)$, and let $\bm w^*$ be as defined above. We define the Pitman error $\mrm{PE}(\bm w)$ of $\bm w$ by
\begin{align*}
\mrm{PE}(\bm w) := \max_{y\leq \numrows} \bigl(\mrm{Ht}_2(\bm w, y) - \mrm{Ht}_2(\bm w^*, y)\bigr),
\end{align*}
i.e., it is the maximal height difference of $2$-arrows between $\bm w$ and $\bm w^*$ over all rows. Observe that $\mrm{Ht}_2(\bm w, y) \geq \mrm{Ht}_2(\bm w^*,y)$ for each $y\leq \numrows$. Indeed, as we said, it is easy to show that $\bm w^*$ is the configuration that would arise in the $q=0$ setting, so by Lemma~\ref{l.q=0 pitman}, $\mrm{Ht}_2(\bm w^*,\bm\cdot)$ may be regarded as the Pitman transform of the incoming color counts $\bm v$ and the outgoing arrow counts $\bm x$; then Lemma~\ref{l.general q pitman lower bound} implies that $\mrm{Ht}_2(\bm w, y) \geq \mrm{Ht}_2(\bm w^*, y)$. Alternatively, since $\bm w^*$ always releases $2$-arrows as soon as it is possible, $\mrm{Ht}_2(\bm w^*, y)$ is minimal over all $\bm w\in\mrm{Val}(\bm v,\bm x)$ for any $y$, which again verifies that $\mrm{Ht}_2(\bm w, y) \geq \mrm{Ht}_2(\bm w^*, y)$.

We need some notation for the weight of configurations. For a compatible pair $(\bm v,\bm x)$ and $\bm w\in\mrm{Val}(\bm v,\bm x)$, let $\bm{\mrm{VertCt}}(\bm v,\bm w,y) = (\mrm{VertCt}_1(\bm v,\bm w,y),\mrm{VertCt}_2(\bm v,\bm w,y))$. Further let $\bigL_{z,y} = \bigL_1$ if $y\in\intint{1,N}$ and $\bigL_{z,y} = \bigL_z$ (recall that $z\in(0,1)$ is the fixed spectral parameter) if $y\in\intint{N+1,N+M}$, where $\bigL_u$ is read from Figure~\ref{f.L weights}. Now define
\begin{equation}\label{e.wgt_y definition}
\mrm{Wgt}_y(\bm v,\bm w) := \bigL_{z,y}\bigl(\bm{\mrm{VertCt}}(\bm v,\bm w,y-1), v_y; \bm{\mrm{VertCt}}(\bm v,\bm w,y), w_y\bigr)
\end{equation}
to be the vertex weight at $y$, with incoming and outgoing horizontal arrows of $v_y$ and $w_y$, respectively, and incoming and outgoing vertical arrows of $\bm{\mrm{VertCt}}(\bm v,\bm w,y-1)$ and $\bm{\mrm{VertCt}}(\bm v,\bm w,y)$, respectively. Next, define the weight of the configuration determined by $\bm v$ and $\bm w$ by
 \begin{align}\label{e.wgt definition}
 \mrm{Wgt}(\bm v,\bm w) := \prod_{y=1}^\numrows \mrm{Wgt}_y(\bm v,\bm w).
 \end{align}

The next lemma is a deterministic statement about the weights of configurations $w$ with high values of $\mrm{PE}(\bm w)$.

\begin{lemma}\label{l.alternate configuration weight ratio}
Fix $N, M \in\N$. There exist positive absolute constants $c$ and $C$ and $K=K(q)$, such that the following holds for any spectral parameter $z\in(0,1)$ (which may depend on $N,M$). Fix a compatible pair $(\bm v,\bm x)\in \{0,1,2\}^\numrows\times\{0,1\}^\numrows$. There exists a mapping $f:\mrm{Val}(\bm v,\bm x)\to \mrm{Val}(\bm v,\bm x)$ such that, if $\bm w\in\mrm{Val}(\bm v,\bm x)$ with $m:=\mrm{PE}(\bm w)$ and $\mrm{Wgt}(\bm v, \bm w) > 0$, then $\mrm{Wgt}(\bm v,\bm w)/\mrm{Wgt}(\bm v,f(\bm w)) \leq Cq^{cm^2}$ whenever $m> K\log N$. Further, for any $\bm w\in\mrm{Val}(\bm v, \bm x)$ with $m:=\mrm{PE}(\bm w)$ and $m\geq K\log N$, $\#f^{-1}(\{\bm w\}) \leq N^{2m}$.
\end{lemma}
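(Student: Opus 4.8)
The plan is to construct $f$ by a re-routing surgery. For $\bm w\in\mathrm{Val}(\bm v,\bm x)$ with positive weight, put $\delta(y):=\mathrm{Ht}_2(\bm w,y)-\mathrm{Ht}_2(\bm w^*,y)\ge0$, so $m=\mathrm{PE}(\bm w)=\max_y\delta(y)$; the increment $\delta(y)-\delta(y-1)=\one_{w^*_y=2}-\one_{w_y=2}$ equals $+1$ exactly at the \emph{deferral rows} ($w^*_y=2$, $w_y\neq2$, which forces $w_y=1$ and $v_y\in\{0,1\}$, since $(v,w)=(2,1)$ has weight $0$) and $-1$ exactly at the \emph{catch-up rows} ($w_y=2$, $w^*_y\neq2$, which forces $v_y\in\{0,1\}$ and $\mathrm{VertCt}_2(\bm v,\bm w^*,y-1)=0$). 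The key point is that at a deferral row $y$ the vertex weight carries the factor $q^{\mathrm{VertCt}_2(\bm v,\bm w,y-1)}$, and $\mathrm{VertCt}_2(\bm v,\bm w,y-1)=\delta(y-1)+\mathrm{VertCt}_2(\bm v,\bm w^*,y-1)\ge\delta(y-1)+1$ (using that $\bm w^*$ also releases a $2$ there). Also, reading Figure~\ref{f.L weights}, the prefactor $u\in\{1,z\}$ appears at row $y$ precisely when $x_y=1$, so the power of $z$ in $\mathrm{Wgt}(\bm v,\bm w)$ is $\#\{y\in\intint{N+1,N+M}:x_y=1\}$, independent of $\bm w$; hence $z$ cancels in every ratio of weights and all constants below are independent of $z$.

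To locate the corrections, match the $\pm1$ steps of $\delta$ by the usual non-crossing rule, fix a row $y^*$ with $\delta(y^*)=m$ (say the first such), and let $z_1<\dots<z_m\le y^*$ be the deferral rows whose up-steps are still unmatched at $y^*$; then $\delta(z_i-1)=i-1$, so $\mathrm{VertCt}_2(\bm v,\bm w,z_i-1)\ge i$. Pick exit rows $r_1<\dots<r_m>y^*$ at which queued $2$-arrows leave (i.e.\ $w_y=2$, $v_y\in\{0,1\}$); for definiteness take the first $m$ such rows above $y^*$, which exist because $\mathrm{VertCt}_2(\bm v,\bm w,y^*)=m+\mathrm{VertCt}_2(\bm v,\bm w^*,y^*)\ge m$ and those queued arrows can only leave at such rows. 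Now define $f(\bm w)$ by swapping, for each $i$, the color of the arrow exiting at $z_i$ (a $1$) with that of the arrow exiting at $r_i$ (a $2$); set $f(\bm w)=\bm w$ if $m<K\log N$ (where the asserted ratio bound is vacuous). Writing $b(y):=\#\{i:z_i\le y<r_i\}$, the surgery shifts vertical counts by $\mathrm{VertCt}_2(\bm v,f(\bm w),y)=\mathrm{VertCt}_2(\bm v,\bm w,y)-b(y)$ and $\mathrm{VertCt}_1(\bm v,f(\bm w),y)=\mathrm{VertCt}_1(\bm v,\bm w,y)+b(y)$; using the excursion structure of $\delta$ one checks these stay $\ge0$, so $f(\bm w)\in\mathrm{Val}(\bm v,\bm x)$, and since $v_{z_i},v_{r_i}\in\{0,1\}$ the swaps never produce the zero-weight pattern $(v,w)=(2,1)$, so $\mathrm{Wgt}(\bm v,f(\bm w))>0$.

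The weight comparison is then carried out vertex by vertex. At $z_i$ the ratio $\mathrm{Wgt}_{z_i}(\bm v,\bm w)/\mathrm{Wgt}_{z_i}(\bm v,f(\bm w))$ is $\le q^{\mathrm{VertCt}_2(\bm v,\bm w,z_i-1)}/(1-q)\le q^i/(1-q)$ (a $q$-penalty against an order-one factor); at $r_i$ it is $\le(1-q)^{-1}$; at a row $y\notin\{z_i,r_i\}$ the horizontal color is unchanged, and the shift $b(y)$ only \emph{lowers} the $q$-exponents of the $\bm w$-weight at rows where a $1$-arrow, nothing, or a horizontally-incoming $2$ exits, so the ratio is $\le1$ there, while the rows where a queued $2$ exits --- confined by our choice of the $r_i$ to $[z_1,y^*)$ --- are handled by pairing each against the accompanying $1$-releases within the same excursion of $\delta$, bounding their joint contribution by a constant. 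Multiplying, $\mathrm{Wgt}(\bm v,\bm w)/\mathrm{Wgt}(\bm v,f(\bm w))\le Cq^{\sum_{i=1}^m i}\le Cq^{cm^2}$ with $c,C$ absolute. Finally $f$ alters the colors of at most $2m$ arrows, each swap being recorded by the exit rows of the two arrows it involves; since a $q$-Boson model with boundary condition $\sigma:\intint{1,N}\to\intint{1,N}$ has exactly $N$ arrows, each exit row has $\le N$ possibilities, whence $\#f^{-1}(\{\bm w\})\le N^{2m}$. The hypothesis $m\ge K\log N$ is used to absorb $C$ (and any polynomial-in-$N$ slack arising in the middle-row estimate).

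The main obstacle I expect is precisely that middle-row part of the weight comparison: arranging, via the choice of where each deferred $2$-arrow is sent, that the weight does not drop by more than a controlled amount away from the $\Theta(m)$ corrected rows. This requires combining the non-crossing excursion structure of $\delta$, the exact powers of $q$ in the $q$-Boson weights, and the flexibility in the re-routing; by contrast, the validity of $f(\bm w)$, the penalty at the corrected rows, and the preimage count are comparatively routine.
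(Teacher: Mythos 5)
Your surgery shares the paper's underlying combinatorics—swapping the colors of exiting arrows, penalizing $\bm w$ by $q^{\mathrm{VertCt}_2(\bm v,\bm w,y-1)}$ at each deferral row, and bounding the preimage count by the number of swap locations—and your observation that the prefactors $\tilde z(u)$ cancel in every weight ratio is correct. But the paper performs $R=m-K_0\log N$ swaps \emph{one at a time}, with $K_0=2(\log q^{-1})^{-1}$, choosing the upper endpoint $y'$ of each swap adaptively as the first exit row above $y$ at which $\mathrm{VertCt}_2(\bm v,\bm w,\cdot)$ has fallen to $\lceil K_0\log N\rceil$. You instead do all $m$ swaps at once, sending the unmatched deferral rows $z_1<\cdots<z_m\le y^*$ to the first $m$ queued-$2$ exit rows $r_1<\cdots<r_m>y^*$. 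This difference is not cosmetic: it breaks exactly the middle-row estimate that you flag as the main obstacle.

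Here is the gap concretely. At a middle row $u\notin\{z_i,r_i\}$ with $w_u=2$, $v_u\in\{0,1\}$, the weight ratio is
\begin{align*}
\frac{\mathrm{Wgt}_u(\bm v,\bm w)}{\mathrm{Wgt}_u(\bm v,f(\bm w))}=\frac{1-q^{A_2(u)}}{1-q^{A_2(u)-b(u-1)}},\qquad A_2(u)=\mathrm{VertCt}_2(\bm v,\bm w,u-1),
\end{align*}
which is close to $1$ only if $A_2(u)-b(u-1)$ is large. With your $r_i>y^*$, on $(z_j,z_{j+1})$ one has $b\equiv j$ while at a down-step of $\delta$ one can only guarantee $A_2(u)=\delta(u-1)\ge j+1$, so the ratio can be as large as $(1-q^{j+1})/(1-q)\approx(1-q)^{-1}$. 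The pairing you propose—matching each such down-step with an up-step in the same $\delta$-excursion—produces a per-pair factor of roughly $q^j\cdot(1-q)^{-1}$ (up-step ratio $q^j$ times a down-step ratio up to $(1-q)^{-1}$). This exceeds $1$ whenever $q^j>1-q$, i.e.\ for all $j<\log(1-q)/\log q$, which is a non-trivial range of $j$ when $q$ is close to $1$. Since there can be $\Theta(N)$ such pairs in the low-$j$ region (the number of $2$-exits is not bounded by $m$), the middle-row product can blow up exponentially in $N$, not $m$. There is also no pairing at all for rows where both $\bm w$ and $\bm w^*$ emit a $2$ (so $\delta$ does not change) but $\mathrm{VertCt}_2(\bm v,\bm w^*,u-1)$ is small—those rows contribute ratios up to $(1-q)^{-1}$ with nothing to offset them. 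The paper avoids both problems because on each swap interval $[y,y']$ the queue $\mathrm{VertCt}_2(\bm v,\bm w,\cdot)$ stays $\ge K_0\log N$ while the shift per swap is only $1$, so every middle-row ratio is $\le 1+q^{K_0\log N-1}=1+O(N^{-2})$ and the product over $\le N$ exit rows is harmless. To rescue a one-shot construction you would need to choose the $r_i$ adaptively so that $b(u)\le\mathrm{VertCt}_2(\bm v,\bm w,u)-K_0\log N$ holds throughout, which amounts to reproducing the paper's iteration. The parts of your argument covering the $q^{\Theta(m^2)}$ penalty at the corrected rows and the preimage count $\le N^{2m}$ are sound.
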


This statement should be interpreted as giving a bound on the conditional probability of a given configuration exhibiting $\max_{y} \smash{L^{(2)}_k(y) - \PT(L^{(1)}_k, L^{(2)}_{k+1})(y)} \geq m$ given $\smash{L^{(1)}_k}$ and $\smash{L^{(2)}_{k+1}}$ (which will determine $\bm v$ and $\bm x$). Then Proposition~\ref{p.PT deviation probability} will follow by averaging over the conditioned data; we give this proof now before proving Lemma~\ref{l.alternate configuration weight ratio} in Section~\ref{s.alternate config}.

\begin{proof}[Proof of Proposition~\ref{p.PT deviation probability}]
Recall from \eqref{e.F^cHL} that $\F^{\mrm{cHL}}_k$ is the $\sigma$-algebra generated by $\bm L^{(1)}$, $L^{(2)}_{k+1}, L^{(2)}_{k+2}$, $\ldots$. We condition on $\F^{\mrm{cHL}}_k$ and use the tower property of conditional expectations to write
\begin{align*}
\MoveEqLeft[6]
\P\left(\max_{y\in\intint{0,N+M}} L^{(2)}_k(y) - \PT(L^{(1)}_k, L^{(2)}_{k+1})(y) \geq m\right)\\
&= \E\left[\P\left(\max_{y\in\intint{0,N+M}} L^{(2)}_k(y) - \PT(L^{(1)}_k, L^{(2)}_{k+1})(y) \geq m \midd \F^{\mrm{cHL}}_k\right)\right].
\end{align*}
Let $\bm v \in \{0,1,2\}^{\numrows}$ and $\bm x\in\{0,1\}^\numrows$ be given by
\begin{align*}
v_y = \left(L^{(2)}_{k+1}(y-1) - L^{(2)}_{k+1}(y)\right) + \left(L^{(1)}_{k+1}(y-1) - L^{(1)}_{k+1}(y)\right) \quad \text{and}\quad x_y = L^{(1)}_k(y-1) - L^{(1)}_k(y);
\end{align*}
in words, they are the horizontally incoming colors and the horizontally outgoing arrow counts at column $k$. They are compatible. Let $A(\bm v,\bm x, m')$ be the set of $\bm w\in\mrm{Val}(\bm v,\bm x)$ such that $\mrm{PE}(\bm w) = m'$ and $\mrm{Wgt}(\bm v, \bm w) > 0$. Now, it is immediate from the Gibbs property of the colored Hall-Littlewood line ensemble, Lemma~\ref{l.colored line ensemble gibbs} (and can also be seen directly from the Gibbs property of the colored $q$-Boson model, Lemma~\ref{l.colored vertex gibbs}), that
\begin{equation}\label{e.pitman transform conditional prob}
\begin{split}
\MoveEqLeft[18]
\P\left(\max_{y\in\intint{0,N+M}} L^{(2)}_k(y) - \PT(L^{(1)}_k, L^{(2)}_{k+1})(y) \geq m \midd \F^{\mrm{cHL}}_k\right)\\
&= \frac{\sum_{m'=m}^\infty\sum_{\bm w\in A(\bm v,\bm x,m')} \mrm{Wgt}(\bm v,\bm w)}{\sum_{w\in\mrm{Val}(\bm v,\bm x)} \mrm{Wgt}(\bm v,\bm w)}.
\end{split}
\end{equation}
For each $\bm w\in A(\bm v,\bm x,m')$, let $f(\bm{w})\in\mrm{Val}(\bm v,\bm x)$ be the configuration obtained from Lemma~\ref{l.alternate configuration weight ratio}, and recall $\#f^{-1}(\bm w)\leq N^{2m'}$. Since $m'\geq m\geq K\log N$, we know that $\mrm{Wgt}(\bm v,\bm w)/\mrm{Wgt}(\bm v,f(\bm{w}))$ is upper bounded by $Cq^{c(m')^2}$. 
So we see that, since $\#f^{-1}(\{\bm w\})\leq N^{2m'}$ for $\bm w\in A(\bm v, \bm x, m')$,
\begin{align*}
\sum_{\bm w\in A(\bm v,\bm x, m')}\! \mrm{Wgt}(\bm v,\bm w) \leq Cq^{c(m')^2}\!\sum_{\bm w\in A(\bm v,\bm x, m')}\! \mrm{Wgt}(\bm v, f(\bm w)) \leq CN^{2m'}q^{c(m')^2}\!\sum_{\bm w\in \mrm{Val}(\bm v, \bm x)}\! \mrm{Wgt}(\bm v,\bm w).
\end{align*}
%
Substituting this into \eqref{e.pitman transform conditional prob}, and using that $q^{\frac{1}{2}c(m')^2}\leq C^{-1}N^{-2m'}$ by raising $K$ if necessary (and recalling $m'\geq m\geq K\log N$), yields that the righthand side of that equation is upper bounded by $q^{c'm^2}$ for some absolute constant $c>0$.  This completes the proof.
\end{proof}

\subsection{Construction of the alternate configuration}\label{s.alternate config}

\begin{proof}[Proof of Lemma~\ref{l.alternate configuration weight ratio}]
First, to ensure that the $f$ we need to produce is defined for all inputs, for any $\bm w\in \mrm{Val}(\bm v, \bm x)$ such that $\mrm{PE}(\bm w)< K\log N$ (for a $K=K(q)$ to be set later) or $\mrm{Wgt}(\bm v, \bm w) = 0$, we define $f(\bm w) = \bm w$. So in the remainder of the proof, we may assume that $\bm w$ is such that $\mrm{PE}(\bm w) > K\log N$ and $\mrm{Wgt}(\bm v, \bm w) > 0$.
It will be useful to keep track of a running Pitman error by row $y$, for which we define
\begin{align*}
\mrm{PE}(\bm w, y) := \max_{y'\leq y} \left(\mrm{Ht}_2(\bm w, y') - \mrm{Ht}_2(\bm w^*, y')\right).
\end{align*}
Let $m' \in \intint{1, m}$. We fix $y = y(m')$ to be the smallest such that $\mrm{PE}(\bm w,y) = m'$; since $\mrm{PE}(\bm w,0) = 0$, $\bm w$ is such that $\mrm{PE}(\bm w,\numrows) = \mrm{PE}(\bm w) = m$, and $\mrm{PE}(\bm w,y')$ increases by at most $1$ when $y'$ increases by $1$, such a $y$ exists.

We claim that $w_y = 1$ and $w_y^*=2$. We first observe that since $\mrm{PE}(\bm w,y) > \mrm{PE}(\bm w,y-1)$ and $\mrm{Ht}_2(\bm w, \bm\cdot)$, $\mrm{Ht}_2(\bm w^*, \bm\cdot)$ are non-increasing and have increments in $\{-1,0\}$, it must hold that $\mrm{Ht}_2(\bm w,y) = \mrm{Ht}_2(\bm w,y-1)$ and $\mrm{Ht}_2(\bm w^*,y) = \mrm{Ht}_2(\bm w^*,y-1)-1$. This means that $w^*_y=2$, which implies $w_y=1$ since $w_y\in\{1,2\}$ (both $\bm w$ and $\bm w^*$ have the same outgoing arrow counts) and $\mrm{Ht}_2(\bm w,y) = \mrm{Ht}_2(\bm w,y-1)$.

Next we claim that $\mrm{VertCt}_2(\bm v, \bm w, y-1)\geq m'$. To see this we first write, using the definition \eqref{e.vertct definition} of $\mrm{VertCt}_2(\bm v, \bm w, y-1)$ and noting that, by definition of $y$, $\mrm{Ht}_2(\bm w, y-1)- \mrm{Ht}_2(\bm w^*, y-1) = m'-1$,
\begin{align}
\mrm{VertCt}_2(\bm v, \bm w, y-1)
&= \mrm{Ht}_2(\bm w, y-1) - \mrm{Ht}_2(\bm v, y-1)\nonumber\\
&= \mrm{VertCt}_2(\bm v, \bm w^*, y-1) + \mrm{Ht}_2(\bm w, y-1)- \mrm{Ht}_2(\bm w^*, y-1)\label{e.vert count pe error bound}\\
&= \mrm{VertCt}_2(\bm v, \bm w^*, y-1) + m'-1.\nonumber
\end{align}
Now it suffices to show that $\mrm{VertCt}_2(\bm v, \bm w^*, y-1) \geq 1$. We know that $w_y^* = 2$, so $v_y=2$ or $\mrm{VertCt}_2(\bm v, \bm w^*, y-1) \geq 1$ (or both). But $w_y=1$ and the weight of the vertex with a horizontally incoming $2$-arrow and an outgoing $1$-arrow is zero, by Figure~\ref{f.L weights}. So we have that $v_y\neq 2$ (since we have assumed $\mrm{Wgt}(\bm v, \bm w) > 0$), which implies that $\mrm{VertCt}_2(\bm v, \bm w^*, y-1) \geq 1$. Returning to \eqref{e.vert count pe error bound}, we have shown that $\mrm{VertCt}_2(\bm v, \bm w, y-1) \geq m'$.
Thus we have now established that with $y$ defined as above, it holds that
\begin{align}\label{e.bdy conditions established for alt config}
w_y = 1,\, w^*_y=2,\, \mrm{VertCt}_2(\bm v, \bm w, y-1) \geq m', \text{ and } \mrm{VertCt}_2(\bm v, \bm w^*, y-1) \geq 1.
\end{align}

With these preliminaries in place, we turn to exhibiting a particular configuration $f(\bm w)$ such that $\mrm{Wgt}(\bm w)/\mrm{Wgt}(f(\bm w)) \leq Cq^{cm^2}$.
We let $K_0 := 2(\log q^{-1})^{-1}$ and $K:=2K_0$. For $m\geq K\log N$, we will define a sequence of configurations $\bm w^{(0)}, \bm w^{(1)}, \ldots, \bm w^{(R)} \in \mrm{Val}(\bm v,\bm x)$ where $\bm w^{(0)} := \bm w$ and $R := m - K_0\log N$ such that $\bm w^{(i)}$ has $\mrm{PE}(\bm w^{(i)}) \geq m-i$ and
\begin{align}\label{e.configuration comparison}
\frac{\mrm{Wgt}(\bm v,\bm w^{(i-1)})}{\mrm{Wgt}(\bm v,\bm w^{(i)})} \leq q^{m-i-K_0\log N-1}(1+q^{K_0\log N-1})^N.
\end{align}
We define $f(\bm w)$ to be $\bm w^{(R)}$. Clearly this will suffice to establish $\mrm{Wgt}(\bm w)/\mrm{Wgt}(f(\bm w)) \leq Cq^{cm^2}$ as then by multiplying together the inequalities we would obtain that, for absolute constants $c$ and $C$,
\begin{align}
\frac{\mrm{Wgt}(\bm v,\bm w)}{\mrm{Wgt}(\bm v,\bm w^{(R)})} \leq q^{mR-\sum_{i=1}^R i - RK_0\log N - R}(1+q^{K_0\log N-1})^{RN} \leq Cq^{cm^2},
\end{align}
the final inequality for $m \geq K\log N$. So now we must show \eqref{e.configuration comparison}; we will establish that $\# f^{-1}(\{\tilde{\bm w}\})\leq N^{2m}$ for any $\tilde{\bm w}$ with $\mrm{PE}(\tilde{\bm w}) = m \geq K\log N$ and $\mrm{Wgt}(\bm v, \bm w)>0$ after specifying the sequence of configurations.

Reindexing, to establish \eqref{e.configuration comparison}, we must show that if we have a configuration $\bm w$ with $\mrm{PE}(\bm w) \geq m' \geq K\log N$, then we can produce a configuration $\bm w'$ with $\mrm{PE}(\bm w') \geq m'-1$ and
\begin{equation}\label{e.comparison to show}
\frac{\mrm{Wgt}(\bm v,\bm w)}{\mrm{Wgt}(\bm v, \bm w')} \leq q^{m'-K_0\log N-1}(1+q^{K_0\log N-1})^N.
\end{equation}

Now, by \eqref{e.bdy conditions established for alt config}, since $\mrm{PE}(\bm w) = m$, for any $m'\in\intint{K\log N,m}$ there exists a first vertex (whose row height we labeled $y$), where a $1$-arrow exits horizontally and there are at least $m'$ 2-arrows entering vertically, i.e., $w_y = 1$ and $\mrm{VertCt}_2(\bm v,\bm w,y-1)\geq m'$. Since we have assumed $m'\geq K\log N$ and all $2$-arrows eventually exit, there exists a value $y' > y$ which is the smallest one greater than $y$ such that $v_{y'}\neq 2$, $w_{y'} = 2$, and $\mrm{VertCt}_2(\bm w, y'-1) = \ceil{\frac{1}{2}K\log N} = \ceil{K_0\log N}$. In particular, for all $u\in\intint{y, y'-1}$,
\begin{equation}\label{e.vertct u bound}
\mrm{VertCt}_2(\bm v,\bm w, u) \geq \tfrac{1}{2}K\log N = K_0\log N.
\end{equation}

We define $\bm w'$ by swapping the exiting arrows at $y$ and $y'$, i.e., by $w'_u = w_u$ for all $u\not\in\{y, y'\}$, $w'_{y'} = w_y=1$, and $w'_y = w_{y'}=2$. Observe that since there are at most $N$ possible locations for such a swap, and $f(\bm w)$ is defined by a sequence of $R = m- K_0\log N < m$ such swaps, it follows that $\#f^{-1}(\{\tilde{\bm w}\}) \leq \binom{N}{2}^m < N^{2m}$ for any $\tilde{\bm w}\in\mrm{Val}(\bm v, \bm x)$ with $\mrm{PE(}\tilde{\bm w}) = m$. Next observe that, for $u\in \intint{y,y'-1}$,
\begin{equation}\label{e.arrow quiver relations}
\begin{split}
\mrm{VertCt}_1(\bm v, \bm w', u) &= \mrm{VertCt}_1(\bm v,\bm w, u) + 1 \ \text{  and  }\\
\mrm{VertCt}_2(\bm v, \bm w', u) &= \mrm{VertCt}_2(\bm v,\bm w, u) - 1.
\end{split}
\end{equation}

It is immediate that $\mrm{PE}(\bm w') \geq m'-1$. To upper bound $\mrm{Wgt}(\bm v,\bm w)/\mrm{Wgt}(\bm v,\bm w')$, we note that the vertex configurations at $u$ are identical in $\bm w$ and $\bm w'$ for $u\not\in \intint{y,y'}$, so the ratio is the ratio of the product of the vertex weights for vertices at $u$ with $u\in \intint{y, y'}$, i.e., (recall the definition of $\mrm{Wgt}_u$ and $\mrm{Wgt}$ from \eqref{e.wgt_y definition} and \eqref{e.wgt definition} respectively)
\begin{align*}
\frac{\mrm{Wgt}(\bm v,\bm w)}{\mrm{Wgt}(\bm v,\bm w')} = \prod_{u=y}^{y'} \frac{\mrm{Wgt}_u(\bm v,\bm w)}{\mrm{Wgt}_u(\bm v,\bm w')}.
\end{align*}
To estimate this, we first note that $w_u = w'_u$ for $u\in \intint{y+1, y'-1}$ and recall that \eqref{e.arrow quiver relations} holds for $u\in\intint{y,y'-1}$.

Recall the spectral parameter $z\in(0,1)$. For a vertex height $u\in\intint{1,\numrows}$, let
\begin{align}\label{e.tilde z definition}
\tilde z(u) := z \text{ if } u\in \intint{N+1, N+M} \text{ and } \tilde z(u) := 1 \text{ if } u\in\intint{1,N},
\end{align}
and, for $j\in\{1,2\}$,
\begin{align}\label{e.ell_j definition}
A_j(u) := \mrm{VertCt}_j(\bm v, \bm w, u-1).
\end{align}

In the following, we will be reading vertex weights from Figure~\ref{f.L weights}. Now, since $w_y = 1$, $\mrm{Wgt}_{y}(\bm v, \bm w) = \tilde z(y)q^{A_2(y)}$ or $\tilde z(y)(1-q^{A_1(y)})q^{A_2(y)}$, depending on whether $v_y = 1$ or $0$ respectively. Since $A_2(y) \geq m$ as shown in \eqref{e.bdy conditions established for alt config}, both possibilities for the weight are upper bounded by $\tilde z(y)q^m$. Similarly since $w_y' = 2$ and $\mrm{VertCt}_2(\bm v, \bm w', y-1) = \mrm{VertCt}_2(\bm v, \bm w, y-1) = A_2(y)$, $\mrm{Wgt}_{y}(\bm v, \bm w') = \smash{\tilde z(y)(1-q^{A_2(y)})} \geq \tilde z(y)(1-q^m)$. Thus
\begin{align*}
\frac{\mrm{Wgt}_y(\bm v, \bm w)}{\mrm{Wgt}_y(\bm v, \bm w')} \leq \frac{\tilde z(y) q^m}{\tilde z(y)(1-q^m)} = \frac{q^m}{1-q^m}.
\end{align*}
Next we turn to the $y'$ vertex. Here, $w_{y'} = 2$ and $A_2(y') = \ceil{K_0\log N}$ (see before \eqref{e.vertct u bound}), so $\mrm{Wgt}_{y'}(\bm v, \bm w) = \tilde z(y')(1-q^{\ceil{K_0\log N}})$ independently of $v_{y'}\in\{0,1\}$, while $\mrm{Wgt}_{y'}(\bm v, \bm w') = \tilde z(y')q^{\ceil{K_0\log N}-1}$ or $\tilde z(y')(1-q^{\mrm{VertCt}_1(\bm v,\bm w', y'-1)})q^{\ceil{K_0\log N} -1}$ (depending on $v_{y'} = 1$ or $0$ respectively, since $v_{y'}\neq 2$), which is lower bounded by $\tilde z(y')(1-q)q^{\ceil{K_0\log N} -1}$, since, as noted in \eqref{e.arrow quiver relations}, $\mrm{VertCt}_2(\bm v,\bm w',y'-1) = \mrm{VertCt}_2(\bm v,\bm w,y'-1)-1 = \ceil{K_0\log N} -1$. So
\begin{align*}
\frac{\mrm{Wgt}_{y'}(\bm v,\bm w)}{\mrm{Wgt}_{y'}(\bm v,\bm w')} \leq \frac{\tilde z(y')(1-q^{\ceil{K_0\log N}})}{\tilde z(y') (1-q)q^{\ceil{K_0\log N}-1}} = \frac{1-q^{\ceil{K_0\log N}}}{(1-q)q^{\ceil{K_0\log N}-1}}.
\end{align*}
Finally we turn to looking at the weight of vertex $u$ for $u\in \intint{y+1, y'-1}$. Recall that $w_u = w'_u$ for this range of $u$. If $w_u = 0$, the vertex weights are $1$ for both $\bm w$ and $\bm w'$. If $w_u = 1$, then, using \eqref{e.arrow quiver relations}, and with $\tilde z(u)$ as in \eqref{e.tilde z definition} and $A_j(u)$ as in \eqref{e.ell_j definition},
$$\frac{\mrm{Wgt}_{u}(\bm v, \bm w)}{\mrm{Wgt}_u(\bm v, \bm w')} =
\begin{cases}
\displaystyle\frac{\tilde z(u)(1-q^{A_1(u)}) q^{A_2(u)}}{\tilde z(u)(1-q^{A_1(u)+1}) q^{A_2(u)-1}} & v_u = 0\\[12pt]
\displaystyle\frac{\tilde z(u)q^{A_2(u)}}{\tilde z(u) q^{A_2(u)-1}} & v_u = 1,
\end{cases}$$
which is at most $1$. Finally, we consider the case $w_u = 2$. Recall that $A_2(u) \geq \ceil{K_0\log N}$ by \eqref{e.vertct u bound}. If $v_u = 2$, $\mrm{Wgt}_{u}(\bm v, \bm w)/\mrm{Wgt}_u(\bm v, \bm w') = 1$, while if $v_u = 0$ or $1$,
\begin{align*}
\frac{\mrm{Wgt}_{u}(\bm v, \bm w)}{\mrm{Wgt}_u(\bm v, \bm w')} = \frac{\tilde z(u)(1-q^{A_2(u)})}{\tilde z(u)(1-q^{A_2(u)-1})} = 1+q^{A_2(u)-1}\cdot\frac{1-q}{1-q^{A_2(u)-1}} \leq 1+q^{\ceil{K_0\log N}-1}.
\end{align*}
Combining the above inequalities, and using that the number of locations $u$ where $w_u\neq 0$ is $N$ (and that if $w_u=0$ the ratio is 1), we obtain that
\begin{align*}
\frac{\mrm{Wgt}(\bm v,\bm w)}{\mrm{Wgt}(\bm v,\bm w')} = \prod_{u=y}^{y'} \frac{\mrm{Wgt}_u(\bm v,\bm w)}{\mrm{Wgt}_u(\bm v,\bm w')}
&\leq \frac{q^m}{1-q^m}\cdot \frac{1-q^{\ceil{K_0\log N}}}{q^{\ceil{K_0\log N} -1}} \left(1+q^{K_0\log N-1}\right)^N\\
&\leq q^{m-\ceil{K_0\log N}} \left(1+q^{\ceil{K_0\log N}-1}\right)^N.
\end{align*}
This implies \eqref{e.comparison to show}, completing the proof.
\end{proof}

\section{Properties of ASEP and S6V} \label{s.properties of S6V and ASEP}

In this section we collect some properties of the colored S6V model and ASEP that will be useful in future sections, in particular, a form of stationarity and one-point tightness.

\subsection{Colored stochastic six-vertex model}
We work with packed boundary condition for the colored S6V model; recall the definition from Section~\ref{s.intro.cS6V}. In particular, we have arrows entering horizontally at every site along $\{1\}\times\intint{-N,N}$, with the color of the arrow entering $(1,k)$ being $k$. Also recall that the color of the arrow exiting horizontally from $(t,k)$ is $j_{(t,k)}$ and from \eqref{e.s6v height function} that
\begin{align*}
\hssv(x, 0; y, t) = \#\left\{k> y: j_{(t,k)}\geq x\right\}
\end{align*}
for $x\in\intint{-N,N}$ and $y\in\llbracket-N,\infty\rrparen$. 
It is easy to check from this description that, for any $k\in\intint{-N,N}$ fixed,
\begin{align}\label{e.two parameter stationarity}
\hssv(x, 0;y, t)  \stackrel{d}{=} \hssv(x+k,0;y+k,t) +k
\end{align}
as processes in $(x,y,t)$ on the domain $\intint{\max(-N-k,-N), \min(N-k, N)}\times \llbracket\max(-N-k,-N), \infty\rrparen\times\N$; this uses that, when $y\geq x$, $\hssv(x,0;y,t) - (N-y)$ is the number of arrows starting at a site $(1,i)$ for some $i\in\intint{x,y}$ which horizontally exit a vertex $(t,i)$ for some $i\in\llbracket y+1,\infty\rrparen$, which is a quantity whose distribution is invariant on shifting $x$ and $y$ by the same constant.

The following records a one-point convergence result on the limiting one-point fluctuations of $\hssv$ in its rescaled form $\S^{\mrm{S6V},\varepsilon}$ as defined in \eqref{e.rescaled s6v definition} (which recall involves a parameter $\alpha$ controlling the location in the rarefaction fan where the height function is evaluated). It was originally essentially shown in \cite{borodin2016stochastic}, but proved in the slightly improved form we require in  \cite{corwin2018transversal} .

\begin{proposition}[One-point distributional limit of S6V]\label{p.s6v one-point}
Let $q\in$ $[0,1)$, $z\in(0,1)$, and $\alpha\in(z, z^{-1})$. Fix $x,y\in\R$. Then, as $\varepsilon\to0$, $\smash{\S^{\mrm{S6V},\varepsilon}(x;y) + (x-y)^2 \stackrel{d}{\to} \mrm{GUE}\text{-}\mrm{TW}}$, the GUE Tracy-Widom distribution (see end of Definition~\ref{d.parabolic Airy line ensemble}).

\end{proposition}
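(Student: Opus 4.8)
\textbf{Proof proposal for Proposition~\ref{p.s6v one-point}.}

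The plan is to reduce this statement to the one-point fluctuation result for the \emph{uncolored} stochastic six-vertex model, which is what was actually established in \cite{borodin2016stochastic} and refined in \cite{corwin2018transversal}. The first step is to observe that $\S^{\mrm{S6V},\varepsilon}(x;y)$ depends on $\hssv$ only through the single count $\hssv\bigl(\beta(\alpha)x\varepsilon^{-2/3},0;\alpha\varepsilon^{-1}+\beta(\alpha)y\varepsilon^{-2/3},\floor{\varepsilon^{-1}}\bigr)$, together with deterministic recentering terms. By the two-parameter stationarity \eqref{e.two parameter stationarity} (applied with $k = -\floor{\beta(\alpha)x\varepsilon^{-2/3}}$, which is legitimate since we are free to take $N$ as large as we like), this count has the same law as $\hssv\bigl(0,0;\alpha\varepsilon^{-1}+\beta(\alpha)(y-x)\varepsilon^{-2/3},\floor{\varepsilon^{-1}}\bigr)$ up to an explicit additive shift. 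Since the right-hand side of the target convergence also only depends on $x,y$ through $(x-y)^2$, it suffices to treat the case $x=0$, i.e.\ to analyze $\hssv\bigl(0,0;\alpha\varepsilon^{-1}+\beta(\alpha)w\varepsilon^{-2/3},\floor{\varepsilon^{-1}}\bigr)$ for $w = y - x$ fixed. By color merging (Remark~\ref{r.s6v color merging}), merging all colors $\ge 1$ together (and all colors $\le 0$ into $-\infty$) turns $\hssv(0,0;\cdot,\cdot)$ into the height function of the uncolored S6V model with step boundary data, which is exactly the object studied in the cited works.

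The second step is to invoke the known one-point asymptotics: with $\mu^{\mrm{S6V}}$ and $\sigma^{\mrm{S6V}}$ as in \eqref{e.mu and sigma} (which are precisely the limit shape and fluctuation scale identified in \cite[Theorem~1.1]{borodin2016stochastic}), one has
\begin{align*}
\frac{\hssv\bigl(0,0;\alpha\varepsilon^{-1},\floor{\varepsilon^{-1}}\bigr) - \mu(\alpha)\varepsilon^{-1} + N}{\sigma(\alpha)\varepsilon^{1/3}} \xrightarrow{d} -\,\mrm{GUE}\text{-}\mrm{TW}
\end{align*}
as $\varepsilon\to 0$, where the minus sign on the right reflects that $\hssv$ counts arrows \emph{above} a level (so larger height corresponds to smaller fluctuation in the Tracy--Widom variable, consistent with the sign conventions in \eqref{e.rescaled s6v definition}); the refinement in \cite{corwin2018transversal} is precisely that this holds with the stated $N$-shift and uniformly enough in the location parameter to accommodate the $\beta(\alpha)w\varepsilon^{-2/3}$ perturbation. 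The third step is a Taylor expansion of the limit shape: shifting the spatial coordinate by $\beta(\alpha)w\varepsilon^{-2/3}$ changes $\mu$ evaluated at the corresponding velocity by
\begin{align*}
\mu'(\alpha)\,\beta(\alpha)\,w\,\varepsilon^{-2/3}\cdot\varepsilon + \tfrac{1}{2}\mu''(\alpha)\beta(\alpha)^2 w^2\varepsilon^{-4/3}\cdot\varepsilon^2 + o(\varepsilon^{1/3}),
\end{align*}
and after multiplying through by $\sigma(\alpha)^{-1}\varepsilon^{1/3}$ the linear term cancels against the $\mu'(\alpha)\beta(\alpha)(y-x)\varepsilon^{-2/3}$ term in \eqref{e.rescaled s6v definition} (by the definition of $\beta$, using that the local slope relates to $\mu'$), while the quadratic term produces exactly $\tfrac14\beta(\alpha)^2\mu''(\alpha)\sigma(\alpha)^{-1}(y-x)^2 = (x-y)^2$ by the second relation in the S6V analog of \eqref{e.asep scaling relations}. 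Collecting these contributions and accounting for the $+\beta(\alpha)x\varepsilon^{-2/3} - N$ terms in \eqref{e.rescaled s6v definition} (which are absorbed by the stationarity shift and the $N$ in the asymptotics), one obtains $\S^{\mrm{S6V},\varepsilon}(x;y) + (x-y)^2 \xrightarrow{d} \mrm{GUE}\text{-}\mrm{TW}$.

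The main obstacle is not conceptual but bookkeeping: one must verify that the cited one-point theorem is available in a form that permits the spatial argument to be perturbed by an amount of order $\varepsilon^{-2/3}$ (rather than being evaluated at a single fixed macroscopic point) without changing the limiting distribution, and that all the deterministic recentering constants in \eqref{e.rescaled s6v definition}---the $\mu(\alpha)\varepsilon^{-1}$, the $\mu'(\alpha)\beta(\alpha)(y-x)\varepsilon^{-2/3}$, the $\beta(\alpha)x\varepsilon^{-2/3}$, and the $-N$---match up exactly with the recentering in the uncolored asymptotics after the stationarity reduction and the Taylor expansion. Checking these scaling constants against the relations \eqref{e.S6V spatial scaling} and the S6V version of \eqref{e.asep scaling relations} is the only genuinely delicate point, and it is precisely the content of the scaling-constant discussion in Section~\ref{s.asep scalings}, adapted to the S6V expressions in \eqref{e.mu and sigma}; the cited refinement in \cite{corwin2018transversal} is designed to supply exactly the uniformity needed.
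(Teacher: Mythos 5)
Your overall strategy (reduce to $x=0$ via the two-parameter stationarity \eqref{e.two parameter stationarity}, merge colors to recover the uncolored model, then cite one-point results) matches the paper's proof. The paper too uses stationarity to show $\S^{\mrm{S6V},\varepsilon}(x;y)\stackrel{d}{=}\S^{\mrm{S6V},\varepsilon}(0;y-x)$ and then invokes \cite[Theorems~2.3 and~2.5]{corwin2018transversal}. You give more explicit detail on the Taylor expansion and how the scaling constants in \eqref{e.rescaled s6v definition} cancel---the paper just says the theorems apply ``after translating the notation''---but that's a difference in presentation, not substance.

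There is, however, a genuine gap: you only handle $q\in(0,1)$. The results you cite, \cite{borodin2016stochastic} and \cite{corwin2018transversal}, are proved for $q\in(0,1)$ and do not cover $q=0$. The proposition is stated for $q\in[0,1)$, and the paper gives a separate argument for $q=0$: via Proposition~\ref{p.colored line ensembles}, the quantity $\S^{\mrm{S6V},\varepsilon}(0;\cdot)$ is a rescaled top curve of the uncolored Hall-Littlewood line ensemble, which at $q=0$ is a Schur process (non-intersecting Bernoulli paths), and the required one-point limit is then deduced from the known full Airy-line-ensemble limit of the Schur process (e.g.\ \cite{dimitrov2021tightness} or \cite{dauvergne2023uniform}). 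Without this separate branch your proof does not establish the proposition as stated.

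Two smaller points: your displayed intermediate claim has a sign error in the $N$-shift (it should be $-N$, not $+N$, cf.\ \eqref{e.rescaled s6v definition}) and a $-\mrm{GUE}\text{-}\mrm{TW}$ on the right where the overall conventions give $+\mrm{GUE}\text{-}\mrm{TW}$; these are bookkeeping slips that do not affect the structure of the argument, but they would need fixing. Also note that you do not need to ``extend'' the cited theorems to accommodate the $O(\varepsilon^{-2/3})$ spatial perturbation: the theorems of \cite{corwin2018transversal} are already stated in a form that permits this, so the worry you flag as the ``main obstacle'' is already dispatched by the citation (for $q>0$).
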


\begin{proof}
By \eqref{e.two parameter stationarity} and the expression defining $\S^{\mrm{S6V},\varepsilon}$ in Definition~\ref{d.s6v sheet}, it follows that $\S^{\mrm{S6V}, \varepsilon}(x;y)\stackrel{\smash{d}}{=} \S^{\mrm{S6V}, \varepsilon}(0; y-x)$, so it suffices to prove the case $x=0$. When $q\in(0,1)$, this case is an immediate consequence of combining \cite[Theorems 2.3 and 2.5]{corwin2018transversal} and translating the notation. When $q=0$, we first note that by Proposition~\ref{p.colored line ensembles}, $\S^{\mrm{S6V},\varepsilon}(0;\bm\cdot)$ is a rescaled version of the top line of the uncolored Hall-Littlewood line ensemble $\smash{\bm L^{\mrm{cHL},(1)}(\bm\cdot)}$ from Section~\ref{s.colored line ensemble}, which consists of non-intersecting Bernoulli paths. The scaling limit of the latter is proven in \cite[Corollary 1.3]{dimitrov2021tightness} as well as in \cite[Theorem 1.5]{dauvergne2023uniform} to be $\cP$, from which the one-point statement we claim follows immediately.
\end{proof}

\subsection{Colored ASEP and relation to colored S6V}\label{sec.colorconnection}
Next we recall the model of colored ASEP from Section~\ref{s.intro.colored asep}.
Because the approximate LPP representation of Theorem~\ref{t.approxmate LPP problem representation general} is in terms of the colored Hall-Littlewood line ensemble associated to the colored $q$-Boson model which has a direct distributional equivalence only with colored S6V, we need to relate colored ASEP to colored S6V.

Consider the colored S6V model under packed boundary condition with
\begin{equation}\label{e.ASEP N value}
N = \floor{\delta^{-1}t}
\end{equation}
for $\delta, t>0$  with $q\in[0,1)$ and $z=\frac{1-\delta}{1-\delta q}$ (from Figure~\ref{f.R weights}, this means that the probability of the higher color arrow moving straight through a vertex vertically is $q\delta$ and horizontally is $\delta$). We will evaluate the colored height function on the vertical line $\{M\}\times \llbracket -N,\infty\rrparen$ with $M=\floor{\delta^{-1}t}$. Consider colored ASEP with the usual jump rates of $1$ to the right and $q$ to the left. It is known from \cite{borodin2016stochastic,aggarwal2017convergence} that by evaluating the height function of the S6V model near $(M,N) = (\floor{\delta^{-1}t}, \floor{\delta^{-1}t})$ one obtains the ASEP height function in the distributional limit as $\delta\to 0$.

Unfortunately, this description poses issues to use directly, namely, in the $\delta\to 0$ limit the domain becomes infinite even for fixed $t$. This means that estimates we wish to apply in the ASEP setting like Theorem~\ref{t.approxmate LPP problem representation general} (probability bound on the deviation $m$ from the approximate LPP representation), which only apply for $m \geq K(q)\log N$, would not hold.

\subsubsection{Effective coupling} We instead make use of an effective coupling proven in \cite{aggarwalborodin} of colored S6V with $N = \floor{\delta^{-1}t}$ as in \eqref{e.ASEP N value} and colored ASEP at time $t$ which allows to take $\delta\to0$ at a particular rate with $t\to\infty$. We will take $\delta= t^{-\theta}$ for some $\theta>0$ large enough. We define
\begin{align}\label{e.approx asep height function definition}
\hasep(x, 0; y, t) := \floor{t^{1+\theta}} + x + 1 - \hssv(-x, 0; \floor{t^{1+\theta}}-y-1, \floor{t^{1+\theta}});
\end{align}
the first argument of $\hssv$ is $-x$ because $h^{\mrm{ASEP}}(x,0;\bm\cdot, t)$ involves counts of particles of color greater than or equal to $-x$ and not $x$ (recall \eqref{e.intro ASEP height function}). As we will see, the presence of negative signs in going between $h^{\mrm{ASEP},\theta}$ and $h^{\mrm{S6V}}$ arises from the fact that the ASEP particle configuration can be read off (under a coupling that we will introduce shortly) of a horizontal line in the S6V model, but the S6V height function considers arrow counts on a vertical line.

We note that \eqref{e.two parameter stationarity} implies that
\begin{equation}\label{e.two parameter stationarity for ASEP}
\hasep(\bm\cdot, 0; \bm\cdot, t) \stackrel{\smash{d}}{=} \hasep(\bm\cdot + k, 0; \bm\cdot +k, t)
\end{equation}
for any $k\in\Z$ fixed, matching the relation satisfied by $h^{\mrm{ASEP}}$ (it is easily seen from the latter's definition \eqref{e.intro ASEP height function} that \eqref{e.two parameter stationarity for ASEP} holds with $h^{\mrm{ASEP}}$ in place of $\hasep$).

The properties of the coupling between $\hasep$ and $h^{\mrm{ASEP}}$ that we need is as follows.

\begin{lemma}[Effective coupling of ASEP with S6V]\label{l.asep s6v comparison}
There exists $C>0$ such that, for any $\theta>0$ and $t\geq 1$, there is a coupling such that $\hasep(x,0;y,t) = h^{\mrm{ASEP}}(x, 0;y, t)$ for all $|x|,|y|\leq 2t$ with probability at least $1-Ct^{3-\theta/8}$.
\end{lemma}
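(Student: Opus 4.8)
\textbf{Proof proposal for Lemma~\ref{l.asep s6v comparison}.}

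The plan is to construct a coupling that runs both the S6V model (with the specified parameters $N = M = \floor{t^{-\theta}\cdot t} = \floor{t^{1+\theta}}$ and $z = (1-\delta)/(1-\delta q)$, $\delta = t^{-\theta}$) and the colored ASEP on a common probability space, in such a way that the ASEP particle configuration at time $t$ can literally be read off from a diagonal slice of the S6V arrow configuration near $(M, N) = (\floor{t^{1+\theta}}, \floor{t^{1+\theta}})$, except on an event of probability $O(t^{3-\theta/8})$. The key mechanism is that when $\delta \to 0$, each column of the S6V model (traversed from bottom to top) behaves, on the relevant finite window, like a single step of the ASEP dynamics: with probability $\delta$ (resp.\ $q\delta$) a higher-color arrow attempts to pass through horizontally (resp.\ vertically), mimicking a rightward jump attempt at rate $1$ (resp.\ a leftward jump attempt at rate $q$) in ASEP. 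So the diagonal line $\{x + y = \text{const}\}$ in S6V, after $\approx \floor{t^{1+\theta}}$ columns, records $\approx \delta^{-1}t = t^{1+\theta}\cdot t^{-\theta} \cdot \ldots$ — more precisely, after $\floor{t^{1+\theta}}$ columns one has performed the equivalent of $\mathrm{Pois}$-many ASEP updates whose total rate concentrates around $t$. This is the statement proven (in the form of the distributional limit) in \cite{borodin2016stochastic, aggarwal2017convergence}; what we need here is the quantitative version with the explicit error rate $t^{3-\theta/8}$, which is exactly the content of the effective coupling established in \cite{aggarwalborodin} — I would cite the relevant proposition there and spell out how its hypotheses specialize to our choices $N = M = \floor{t^{1+\theta}}$, $\delta = t^{-\theta}$, and the window $|x|, |y| \le 2t$.

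Concretely, the steps would be: (1) Recall from \cite{aggarwalborodin} the precise statement of the effective coupling between colored S6V (with parameters as above) and colored ASEP, including the explicit probabilistic error bound; this gives a coupling under which, with probability $\ge 1 - C t^{3-\theta/8}$, the ASEP configuration $\eta_t$ agrees with the slice of S6V arrows read off along the appropriate diagonal. (2) Translate the definition \eqref{e.approx asep height function definition} of $\hasep$ into this coupling: unwind the arithmetic $\floor{t^{1+\theta}} + x + 1 - \hssv(-x, 0; \floor{t^{1+\theta}} - y - 1, \floor{t^{1+\theta}})$ and verify that, on the good event, it equals $\#\{z > y : \eta_t(z) \ge -x\} = h^{\mrm{ASEP}}(x,0;y,t)$, using the relation \eqref{e.intro ASEP height function}. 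The negative signs and the reflection $z \mapsto \floor{t^{1+\theta}} - z - 1$ are exactly the change of coordinates between ``arrows above a vertical line'' (S6V) and ``particles to the right of a site'' (ASEP), which is why $\hssv$ appears with first argument $-x$ and second argument reflected. (3) Check that the window $|x|, |y| \le 2t$ is contained in the region where the effective coupling is valid: since $N = \floor{t^{1+\theta}} \gg 2t$, the diagonal slice over which ASEP is read off comfortably contains all sites $z$ with $|z| \le 2t$, and the relevant S6V arrows all have colors in the required range; here one must also use the stationarity relation \eqref{e.two parameter stationarity for ASEP} (and its S6V source \eqref{e.two parameter stationarity}) to recenter if the statement in \cite{aggarwalborodin} is phrased only near the origin.

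I expect the main obstacle to be purely bookkeeping rather than conceptual: carefully matching the conventions of \cite{aggarwalborodin} (which may index time, space, or colors differently, and may state the coupling with a slightly different cutoff or a different power of $t$ in the error) to the precise statement we want, and in particular confirming that the error exponent we quote, $3 - \theta/8$, is what their bound gives after specializing $\delta = t^{-\theta}$ and restricting to $|x|,|y| \le 2t$ (the ``$3$'' presumably coming from a union bound over an $O(t^2)$ or $O(t^3)$ window of $(x,y)$ pairs and the ``$\theta/8$'' from the per-vertex discrepancy probability, which is polynomially small in $\delta = t^{-\theta}$). Once the citation is pinned down, the remaining verification — that $\hasep = h^{\mrm{ASEP}}$ on the good event via the coordinate change — is a short deterministic computation. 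So the proof will essentially read: invoke the effective coupling of \cite{aggarwalborodin} with the stated parameters; on its good event, unwind \eqref{e.approx asep height function definition} using \eqref{e.intro ASEP height function} and \eqref{e.s6v height function} to get the claimed identity; note $N \gg 2t$ ensures the window is covered; done.
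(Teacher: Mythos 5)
Your strategy is the right one, and it matches the paper's: invoke the effective coupling of \cite[Proposition~B.2]{aggarwalborodin} with the specified parameters, then unwind the definition \eqref{e.approx asep height function definition} on the good event. But there is a genuine gap in how you propose to close the argument, which you describe as a ``short deterministic computation.'' It is not deterministic.

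The coupling of \cite[Proposition~B.2]{aggarwalborodin} identifies the ASEP particle configuration $\eta_t$ with the colored arrows crossing the \emph{horizontal} line at height $\floor{\delta^{-1}t}$ near the point $(\floor{\delta^{-1}t},\floor{\delta^{-1}t})$. However, the height function $\hssv(-x,0;\cdot,\floor{\delta^{-1}t})$ in \eqref{e.approx asep height function definition} counts arrows exiting horizontally from the \emph{vertical} line $\{x=\floor{\delta^{-1}t}\}$. The diagonal reflection $z\mapsto\floor{t^{1+\theta}}-z-1$ you describe is the correct change of coordinates along the anti-diagonal, but it does not relate the arrow data on the horizontal slice to that on the vertical slice; those live on different edges. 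The paper bridges this with a separate probabilistic estimate (the event $\msf E_2$ in its proof): with probability at least $1-32\delta t^2$, every arrow in the relevant $O(t)\times O(t)$ squares near $(\floor{\delta^{-1}t},\floor{\delta^{-1}t})$ passes straight through diagonally (i.e., no vertex in those squares has a ``straight-through'' configuration, which per vertex occurs with probability $\le\delta$), so that the color exiting horizontally from $(\floor{\delta^{-1}t},\floor{\delta^{-1}t}-y)$ coincides with the color exiting vertically from $(\floor{\delta^{-1}t}+y,\floor{\delta^{-1}t})$. Without this union bound over $O(t^2)$ vertices, the identity you want to derive simply is not deterministic on the coupling's good event, so your plan as stated does not close.

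Two smaller but still non-trivial omissions: (i) before invoking \cite[Proposition~B.2]{aggarwalborodin} you must color-merge both systems to finitely many colors (all colors in $[-\floor{2t},\floor{2t}]$ plus two extremal blocks), because that proposition is stated for systems with finitely many colors; this step is harmless but needs to be said. (ii) To replace the truncated counts by the actual height functions $h^{\mrm{ASEP}}$ and the ``all arrows have exited'' normalization, the paper uses finite speed of propagation estimates for ASEP and S6V (its Lemmas~\ref{l.asep finite speed of propagation} and \ref{l.s6v finite pseed of propagation}, each contributing an $O(e^{-ct})$ error). These are what justify replacing $h^{\mrm{ASEP}}$ with the truncated $\tilde h^{\mrm{ASEP}}$ and identifying $\hssv(-x,0;\floor{\delta^{-1}t}-\floor{4t}-1,\floor{\delta^{-1}t})$ with the total arrow count. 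Your write-up does not mention either, and they are needed to make the arithmetic in step (2) of your plan actually come out. Also, on the error exponent: the $Ct^{3-\theta/8}=C\delta^{1/8}t^3$ dominating term is exactly the bound from \cite[Proposition~B.2]{aggarwalborodin} for the event $\msf E_1$, not a union bound of your own over $(x,y)$ pairs; the $32\delta t^2$ and $e^{-ct}$ contributions from $\msf E_2,\msf E_3,\msf E_4$ are all dominated by it.
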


In the remainder of the paper we will assume $\theta$ is a large fixed constant such as $40$ so that, by the Borel-Cantelli lemma, Lemma~\ref{l.asep s6v comparison} implies that with probability one $\hasep(x,0;y,t) = h^{\mrm{ASEP}}(x,0;y,t)$ for all $|x|,|y|\leq 2t$ and for all $t$ large enough. To prove Lemma~\ref{l.asep s6v comparison}, we will need the following statements for colored ASEP and colored S6V, which are essentially implications of the finite speed of propagation in the models. As their proofs are straightforward, we defer them to Appendix~\ref{s.random gibbs}. Define
\begin{align}\label{e.h^asep cutoff}
\tilde h^{\mrm{ASEP}}(x,0;y,t) := \#\{z\in\intint{y+1, \floor{4t}}: \eta_t(z) \geq -x\},
\end{align}
where recall $(\eta_t(k))_{k\in\Z}$ records the state of the colored ASEP at time $t$. In words, $\tilde h^{\mrm{ASEP}}$ is a variant of the colored ASEP height function \eqref{e.intro ASEP height function} where particles beyond location $\floor{4t}$ are not counted.

\begin{lemma}\label{l.asep finite speed of propagation}
There exist $C, c>0$ such that, for all $t>0$, with probability at least $1-C\exp(-ct)$,
\begin{align}
\left(h^{\mrm{ASEP}}(x,0;y,t)\right)_{|x|,|y|\leq 2t} = \bigl(\tilde h^{\mrm{ASEP}}(x,0;y,t)\bigr)_{|x|,|y|\leq 2t},
\end{align}
\end{lemma}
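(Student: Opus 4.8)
\textbf{Proof plan for Lemma~\ref{l.asep finite speed of propagation}.}

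The statement asserts that, up to a probability $C\exp(-ct)$, the colored ASEP height function $h^{\mrm{ASEP}}(x,0;y,t)$ agrees with its truncated version $\tilde h^{\mrm{ASEP}}(x,0;y,t)$ for all $|x|,|y|\le 2t$. By the definitions \eqref{e.intro ASEP height function} and \eqref{e.h^asep cutoff}, the two quantities differ only by the count $\#\{z > \floor{4t}:\eta_t(z)\ge -x\}$ (noting $y\le 2t<\floor{4t}$, so the truncation at $\floor{4t}$ genuinely lops off a tail and the lower endpoint $y+1$ is common). Hence the plan is to show that, with overwhelming probability, \emph{no} particle of color at least $-2t$ is located strictly to the right of $\floor{4t}$ at time $t$; since $-x$ ranges over $\intint{-2t,2t}$, requiring this for color $\geq -2t$ covers all the height functions simultaneously. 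Under the packed initial condition, a particle of color $\ge -2t$ starts at a site $\le 2t$, so what must be ruled out is that some particle initially at or to the left of $2t$ has travelled past $\floor{4t}$, i.e.\ has made net displacement more than $2t$ in time $t$.

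The core input is a finite-speed-of-propagation estimate for the graphical construction of ASEP (Section~\ref{s.asep dynamics}): right-jumps across a fixed bond $\{k,k+1\}$ occur at the jump times of a rate-$1$ Poisson clock $\xi_k^{\mrm R}$. First I would formalize the elementary monotonicity fact that, under the basic coupling, the position at time $t$ of the right-most particle originating at or left of $2t$ is dominated by $2t + Y_t$, where $Y_t$ counts something like a superposition of the right-clock rings that could conceivably push a ``front'' rightward; more precisely, I would define for each site $k\ge 2t$ the indicator that $\xi_k^{\mrm R}$ rings in $[0,t]$, and argue inductively (in $k$, moving rightward from $2t$) that if no particle from the left of $2t$ has reached site $k$, and $\xi_k^{\mrm R}$ does not ring, then no such particle reaches $k+1$ either --- because the only mechanism to advance the front past bond $\{k,k+1\}$ is a ring of $\xi_k^{\mrm R}$ (a left-clock ring can only move the front leftward, and no particle to the right of the front has higher priority forcing it past). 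Therefore $\{\text{some particle from }\le 2t\text{ is at }>\floor{4t}\text{ at time }t\}$ is contained in the event that the clocks $\xi_{2t}^{\mrm R},\dots,\xi_{\floor{4t}-1}^{\mrm R}$ \emph{all} ring in $[0,t]$; equivalently in the event that at least $2t$ specified independent rate-$1$ clocks each fire at least once by time $t$. Actually the cleaner bound is a standard large-deviation one: the number of right-rings on those $\approx 2t$ bonds in time $t$ is $\mathrm{Pois}(\approx 2t^2)$-dominated-by... no --- better, I would instead bound the displacement of a single tagged particle. Each particle makes at most the number of rings of the clocks it encounters; a tagged particle's rightward displacement in time $t$ is stochastically dominated by a rate-$1$ Poisson random variable $\mathrm{Pois}(t)$ (it can only jump right when the clock at its current site fires, and after each jump it is at a new site with an independent clock --- a Poisson thinning argument makes this domination rigorous). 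So $\P(\text{tagged particle displaces}>2t)\le\P(\mathrm{Pois}(t)>2t)\le e^{-ct}$ by a Chernoff bound. There are infinitely many particles, but only those starting in $\intint{2t,\floor{4t}}$ --- i.e.\ $O(t)$ of them --- could possibly reach past $\floor{4t}$ while being relevant (a particle starting at $\le 2t$ needs displacement $>2t$; particles starting at $2t<k\le\floor{4t}$ need displacement $>\floor{4t}-k$; particles starting at $>\floor{4t}$ are irrelevant since... wait, they are already to the right of $\floor{4t}$, but they have color $\le -\floor{4t}<-2t$, so they do not contribute to $h^{\mrm{ASEP}}(x,0;y,t)$ for $|x|\le 2t$). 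A union bound over these $O(t)$ relevant particles, each contributing $\le e^{-ct}$ (or a slightly decaying-in-$k$ bound which only helps), gives total probability $\le C t e^{-ct}\le C'e^{-c't}$, absorbing the polynomial factor into the exponential at the cost of shrinking $c$.

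The main obstacle is making the single-particle Poisson domination fully rigorous in the \emph{colored} setting: a tagged particle can be swapped rightward not only by its own site's right-clock ringing but implicitly through the reshuffling caused by higher-priority neighbors, so one must check that every rightward move of a fixed labeled particle is nonetheless triggered by the ring of the right-clock \emph{at the bond it crosses}, which is true by construction of Harris' graphical representation (a swap across bond $\{k,k+1\}$ at a right-clock ring moves whichever particle is at $k$ to $k+1$, so the particle being tagged is at $k$ at that instant and the clock is $\xi_k^{\mrm R}$); combined with the fact that after the move it sits at a fresh site whose future clock rings are independent of the past, one gets the desired domination by a rate-$1$ Poisson process. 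This is routine but needs the Markovian/independence bookkeeping spelled out; I would present it as: condition on the successive sites visited by the tagged particle and on all left-clocks, and observe the right-clock rings at those sites along $[0,t]$ form (by the strong Markov property of the PPP and the fact that distinct sites have distinct clocks) a dominated Poisson process of total rate $\le 1$ over a time window $\le t$. Everything else --- the union bound, the Chernoff estimate for $\mathrm{Pois}(t)$, and the reduction of $h^{\mrm{ASEP}}$ to $\tilde h^{\mrm{ASEP}}$ via the observation that $y\le 2t<\floor{4t}$ --- is elementary.
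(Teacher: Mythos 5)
Your final plan---tag each candidate particle, dominate its rightward moves by a Poisson$(t)$ process, and union-bound---is a genuinely different route from the paper's, which is more economical: the paper color-merges all colors $\geq -\floor{2t}$ into a single particle species and everything else into holes (Remark~\ref{r.asep color merging}), obtaining an uncolored ASEP with step initial condition at $\floor{2t}$, for which the only particle that can possibly overshoot $\floor{4t}$ is the single rightmost one; ignoring left-jump attempts, that particle must wait an independent $\Exp(1)$ time at each site before crossing the next bond, so the event reduces to $\{\sum_{i=1}^{\floor{2t}-1} X_i < t\}$ with $X_i$ i.i.d.\ $\Exp(1)$, and a single Cram\'er bound suffices---no union bound. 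You actually started down this road (``argue inductively in $k$'') but abandoned it because you cast it as ``all $\approx 2t$ right clocks fire at least once in $[0,t]$,'' which indeed fails: that event has probability $(1-e^{-t})^{2t}\to 1$. The repair you were looking for is precisely the sequential-waiting-time formulation: bond $k+1$ cannot be crossed before bond $k$, so the relevant quantity is the \emph{sum} of successive $\Exp(1)$ waits, not the simultaneous firing indicator.

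Your chosen version works too, but needs two corrections. (i) The ``relevant particles'' are not those in $\intint{2t,\floor{4t}}$: under the packed initial condition the particle of color $i$ sits at $-i$, so colors $\geq -\floor{2t}$ are exactly the particles initially at sites $k\leq\floor{2t}$, an \emph{infinite} set, while sites in $(\floor{2t},\floor{4t}]$ hold colors $<-\floor{2t}$ and never contribute to $h^{\mrm{ASEP}}(x,0;\bm\cdot,t)$ for $|x|\leq 2t$. The union bound still closes, because the particle at site $k\leq\floor{2t}$ needs $\geq\floor{4t}-k\geq 2t$ rightward moves and $\sum_{k\leq\floor{2t}}\P(\Pois(t)>\floor{4t}-k)$ is a geometric tail $\leq Ce^{-ct}$, but not for the reason you state. (ii) A particle at $k$ can move to $k+1$ not only when $\xi_k^{\mrm R}$ rings but also when $\xi_{k+1}^{\mrm L}$ rings and the neighbor at $k+1$ is of higher color; your description omits the second mechanism. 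The Poisson$(t)$ domination is nonetheless correct, since the two mechanisms are mutually exclusive (at any instant either the tagged particle or its right neighbor has higher color, not both) and the left-clock rate is $q<1$, so the predictable intensity of the rightward-jump counting process is $\leq 1$. However, the justification should come from a predictable-intensity/thinning coupling; ``conditioning on the successive sites visited by the tagged particle'' is circular, since those sites are themselves determined by the clocks being conditioned upon.
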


\begin{lemma}\label{l.s6v finite pseed of propagation}
Let $\delta$, $t>0$, $N=\floor{\delta^{-1}t}$, and $z=\frac{1-\delta}{1-\delta q}$. There exist absolute constants $C,c>0$ such that, with probability at least $1-C\exp(-ct)$, it holds that, for all $|x|\leq 2t$ and $z\geq \floor{4t}$,
\begin{align*}
\hssv(x, 0;\floor{\delta^{-1}t} - z, \floor{\delta^{-1}t}) = \floor{\delta^{-1}t}-x+1,
\end{align*}
where note the righthand side is the total number of arrows of color at least $x$ in the system.
\end{lemma}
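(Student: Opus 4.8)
\textbf{Proof plan for Lemma~\ref{l.s6v finite pseed of propagation}.}

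The plan is to exploit the finite speed of propagation in the colored S6V model under packed boundary condition. Recall that arrows enter horizontally at $(1,k)$ for $k\in\intint{-N,N}$, that $N=\floor{\delta^{-1}t}$, and that $\hssv(x,0;\floor{\delta^{-1}t}-z,\floor{\delta^{-1}t})$ counts arrows of color at least $x$ exiting horizontally above height $\floor{\delta^{-1}t}-z$ along the vertical line at horizontal coordinate $\floor{\delta^{-1}t}$. Since the arrows of color at least $x$ are exactly those entering at heights in $\intint{x,N}$ and there are $N-x+1$ of them, the statement to prove is that, for $|x|\le 2t$ and $z\ge\floor{4t}$, \emph{all} of these arrows exit horizontally at a height strictly greater than $\floor{\delta^{-1}t}-z$. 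Equivalently, no arrow of color at least $x$ is displaced downward (vertically) by more than roughly $4t$ over the $\floor{\delta^{-1}t}$ columns it traverses. Because each arrow moves up and to the right, the total downward displacement is zero; the relevant quantity is instead how far an arrow of color at least $-2t$ can be pushed \emph{up} (or how far arrows of color below $x$ can be pushed up past it), or more directly: the height at which the arrow entering at height $k\ge x$ exits column $\floor{\delta^{-1}t}$ is at least $k$ minus the number of times it is ``jumped over'' by higher-color arrows, and each such event requires a vertical step of that higher arrow. The cleanest route is to bound the number of vertical steps taken by arrows in the bottom rows.

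First I would set up the key probabilistic input: at each vertex, the probability that an incoming arrow takes a vertical step (rather than continuing horizontally) is at most $\max(b^{\shortuparrow}, b^{\shortrightarrow}\text{-complement})$; reading off Figure~\ref{f.R weights} with $z=\frac{1-\delta}{1-\delta q}$, one computes $1-z=\frac{\delta(1-q)}{1-\delta q}$, so the probability that the relevant arrow turns upward or that a lower arrow is displaced is $O(\delta)$ per vertex uniformly. Then, for a fixed arrow traversing $\floor{\delta^{-1}t}$ columns, the number of vertical steps it makes is stochastically dominated by a $\Bin(\floor{\delta^{-1}t}, C\delta)$ random variable, which has mean $O(t)$; by a standard Chernoff/Bernstein bound, it exceeds $\floor{4t}$ (for a suitable absolute choice ensuring the mean is, say, at most $t$, which holds for $\delta$ small relative to the constant, or more carefully with a slightly larger threshold) with probability at most $C\exp(-ct)$. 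Actually, to get the precise exit-height statement I would track, for each entering height $k\in\intint{x,N}$, the cumulative vertical displacement and argue that the exit height of that arrow is at least $k$ minus the number of upward steps made by arrows that ever occupy a row below it in some column — but a simpler and sufficient argument is a union bound over all $2t+1$ arrows of color in $\intint{x,N}\cap\intint{-2t,N}$ whose exit heights we care about (there are at most $4t+1$ such colors since $|x|\le 2t$), each contributing $C\exp(-ct)$; summing gives $Ct\exp(-ct)\le C'\exp(-c't)$, absorbing the polynomial factor into the exponential.

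The main obstacle is making rigorous the combinatorial claim that controlling the \emph{number} of vertical steps of the bottom arrows suffices to control the \emph{exit heights}, i.e., that an arrow entering at height $k$ cannot exit column $\floor{\delta^{-1}t}$ below height $k - (\text{total vertical steps of arrows of color} \ge x \text{ occurring at rows} \le k)$. This follows from colored arrow conservation: the quantity $\hssv(x,0;y,\cdot) - (N-y)$ (for $y\ge x$) equals the number of arrows of color $\ge x$ that have, by column $\floor{\delta^{-1}t}$, moved from their entry row down past row $y$, and such a net downward crossing of row $y$ by a given arrow is impossible unless some \emph{other} arrow of color $\ge x$ has made a vertical step crossing row $y$ upward in an earlier column — but no arrow can cross a fixed row upward more than once without... hmm. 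I would instead argue directly: since all motion is up-right, the $i$-th arrow of color $\ge x$ from the bottom exits column $\floor{\delta^{-1}t}$ at a height that is at least its entry height, $x+i-1$, \emph{unless} swaps with higher-color arrows push it down; but in colored S6V a lower arrow is pushed down only when a strictly higher arrow passes it going up-and-right, which costs the higher arrow a vertical step. Thus the exit height of the lowest arrow of color $\ge x$ is at least $x - V$, where $V$ is the total number of vertical steps taken by all arrows in the system below height, say, $2t$; bounding $V \le \floor{4t}$ with probability $1-C\exp(-ct)$ (again via the $\Bin(\floor{\delta^{-1}t}\cdot O(t), C\delta)$ domination, or a union bound over rows) and noting $x - V \ge -2t - 4t > \floor{\delta^{-1}t} - z$ for $z\ge\floor{4t}$ and $\delta$ small completes the argument. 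I would present this last combinatorial step carefully, as it is where the only real content of the lemma lies; the probabilistic estimates are entirely routine Chernoff bounds.
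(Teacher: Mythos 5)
There is a genuine gap: the geometry of the statement has been read backwards, and this propagates into the probabilistic estimate and the closing inequality. Arrows in the S6V model move only up and to the right, so an arrow's exit height is never below its entry height; there is no ``push down'' mechanism, and the combinatorics you build around one does not exist. What the lemma requires is that every arrow of color $\geq x$ exits \emph{high}, above $\floor{\delta^{-1}t}-z \leq \floor{\delta^{-1}t}-\floor{4t}$, which for small $\delta$ (and $\delta = t^{-40}$ in the application) is of order $\delta^{-1}t$, near the top of $\intint{-N,N}$. Your closing inequality $-6t>\floor{\delta^{-1}t}-z$ would require $z>\floor{\delta^{-1}t}+6t$, far stronger than the assumed $z\geq\floor{4t}$. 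The vertex-weight identification is also inverted: plugging the spectral parameter $(1-\delta)/(1-\delta q)$ into the weights of Figure~\ref{f.R weights} gives $(1-z)/(1-qz)=\delta$ and $z(1-q)/(1-qz)=1-\delta$, so the bottom-most arrow, moving horizontally with nothing below it, \emph{continues horizontally} with probability $\delta$ and \emph{turns up} with probability $1-\delta$; you have conflated $1-z=O(\delta)$ with the probability of a vertical step, which is backwards. Over the $\floor{\delta^{-1}t}$ columns the bottom arrow therefore makes $\approx\floor{\delta^{-1}t}$ vertical steps, not $O(t)$.

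Once the geometry is set straight, the argument is short and does track the bottom arrow as you intended, but for the opposite quantity. Merge colors $\geq -\floor{2t}$ to a single color and all lower colors to $0$, obtaining an uncolored S6V model with step boundary at $-\floor{2t}$; by the ordering of arrow trajectories, the bad event reduces to the single lowest arrow, started at $(1,-\floor{2t})$, exiting horizontally from $\{\floor{\delta^{-1}t}\}\times\intint{-\floor{2t},\floor{\delta^{-1}t}-\floor{4t}}$. Decompose that arrow's trajectory into alternating horizontal and vertical intervals: the $i$-th horizontal interval has length $1+X_i$ with $X_i$ geometric of mean $\delta/(1-\delta)$, and on the bad event there are at most $\floor{\delta^{-1}t}-\floor{2t}$ such intervals while the arrow must traverse $\floor{\delta^{-1}t}$ columns. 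This forces $\sum_i X_i>\floor{2t}$, while $\E\bigl[\sum_i X_i\bigr]<2t$, so a Chernoff bound for sums of geometrics gives probability $\exp(-ct)$ uniformly in $\delta$. Your per-arrow union bound is not needed after this reduction, and in any case your estimate bounds the number of vertical steps from above, whereas the claim needs a lower bound (equivalently, an upper bound on the number of horizontal steps).
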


\begin{proof}[Proof of Lemma~\ref{l.asep s6v comparison}]
Let $\delta>0$. Recall $\hssv(-x, 0; \floor{\delta^{-1}t}-y-1, \floor{\delta^{-1}t})$ is the number of arrows of color greater than or equal to $-x$ passing strictly above $(\floor{\delta^{-1}t}, \floor{\delta^{-1}t}-y-1)$, and let $\tilde h^{\mrm{ASEP}}$ be as in \eqref{e.h^asep cutoff}. Also recall the packed boundary and initial conditions for colored S6V and colored ASEP, respectively. We note that in both colored S6V and colored ASEP, color merging (Remarks~\ref{r.asep color merging} and \ref{r.s6v color merging}) implies that the distribution of $(\tilde h^{\mrm{ASEP}}(x,0;y,t))_{|x|,|y|\leq 2t}$ and $\smash{(h^{\mrm{S6V}}(x,0;y,t))_{|x|,|y|\leq 2t}}$ are unchanged by merging all particles/arrows of color strictly greater than $\floor{2t}$ to have color $\floor{2t}+1$, and merging all particles/arrows of color strictly smaller than $-\floor{2t}$ to have color $-\floor{2t}-1$. We work with these color merged versions of the packed initial conditions (this is required as the result we invoke from \cite{aggarwalborodin} states that only finitely many colors exist in the systems).

We work with the coupling whose existence is asserted in \cite[Proposition~B.2]{aggarwalborodin}.  Then we know that, on an event $\msf E_1$ with probability at least $1-C\delta^{1/8}t^3$ (and we work on this event in the remainder of the proof), for all $|x| \leq 2t$, $|y|\leq 4t$, there is an arrow of color $x$ exiting vertically from $(\floor{\delta^{-1}t}+y, \floor{\delta^{-1}t})$ (i.e., along the horizontal line at height $\floor{\delta^{-1}t}$) if and only if there is a particle of color $x$ at location $y$ in colored ASEP at time $t$. However, the S6V height function we defined in \eqref{e.s6v height function} is in terms of arrow counts on a vertical and not horizontal line.

To handle this, we claim that, with probability at least $1-32\delta t^2$, the color of the arrow exiting horizontally from $(\floor{\delta^{-1}t}, \floor{\delta^{-1}t}- y)$ is the same as that of the arrow exiting vertically from $(\floor{\delta^{-1}t}+y, \floor{\delta^{-1}t})$ for all $y\in\intint{-\floor{4t},\floor{4t}}$; call the event that this occurs for a given $y$ by $\msf E_2(y)$ and define $\msf E_2 = \cap_{y\in\intint{-\floor{4t}, \floor{4t}}} \msf E_2(y)$. Let $\bm v_t = (\floor{\delta^{-1}t}, \floor{\delta^{-1}t})$ and $\ell_t = \floor{4t}$, and define the squares $S_1$ and $S_2$ with corners given by $\{\bm v_t, \bm v_t + (-\ell_t,0), \bm v_t +(0,\ell_t), \bm v_t +(-\ell_t, \ell_t)\}$ and $\{\bm v_t, \bm v_t + (\ell_t,0), \bm v_t +(0,-\ell_t), \bm v_t +(\ell_t, -\ell_t)\}$, respectively. To see that our claim holds, note first that for $\msf E_2(y)$ to not hold for some $y \in \intint{-\floor{4t}, \floor{4t}}$, it must be the case that at least one vertex in the squares $S_1$ and $S_2$ (for the cases of $y\leq 0$ and $y>0$ respectively) allows at least one arrow to pass straight through vertically or horizontally; the probability of this happening at any given vertex is at most $\delta$. A union bound over the $32t^2$ vertices in the two squares proves the claim that $\P(\msf E_2^c)\leq 32\delta t^2$.

Let $\msf E_3$ and $\msf E_4$ be the events that $\tilde h^{\mrm{ASEP}}(x,0;y,t) = h^{\mrm{ASEP}}(x,0;y,t)$ and $\hssv(x, 0;\floor{\delta^{-1}t} -\floor{4t}, \floor{\delta^{-1}t}) = \floor{\delta^{-1}t}-x+1$ hold for all $|x|, |y|\leq 2t$. By Lemmas~\ref{l.asep finite speed of propagation} and \ref{l.s6v finite pseed of propagation}, $\P((\msf E_3\cup\msf E_4)^c) \leq C\exp(-ct)$ for some $C,c>0$.

Set $\delta = t^{-\theta}$. Under the coupling above and on $\msf E:=\msf E_1\cap\msf E_2\cap\msf E_3\cap\msf E_4$, we will show that $\hasep(x,0;y,t) = h^{\mrm{ASEP}}(x,0;y,t)$ for all $|x|,|y|\leq 2t$. First, by the definition of $\tilde h^{\mrm{ASEP}}$, on $\msf E_1\cap\msf E_2$ and for $|x|, |y|\leq 2t$,
\begin{align*}
\tilde h^{\mrm{ASEP}}(x,0;y,t)
&= \#\left\{k\in\intint{y+1, \floor{4t}} : j_{(\floor{\delta^{-1}t}, \floor{\delta^{-1}t}-k)}\geq -x\right\}\\
&= h^{\mrm{S6V}}(-x,0; \floor{\delta^{-1}t}-\floor{4t}-1, \floor{\delta^{-1}t}) - h^{\mrm{S6V}}(-x, 0; \floor{\delta^{-1}t}-y-1, \floor{\delta^{-1}t})\\
&= \floor{\delta^{-1}t} + x + 1- h^{\mrm{S6V}}(-x, 0; \floor{\delta^{-1}t}-y-1, \floor{\delta^{-1}t}) = \hasep(x,0; y,t),
\end{align*}
the penultimate equality on $\msf E_3$. The proof is completed by the fact that $\tilde h^{\mrm{ASEP}}(x,0;y,t) = h^{\mrm{ASEP}}(x,0;y,t)$ on $\msf E_4$ and noting that it holds for all large enough $C$ and all $t\geq 1$ that $\P(\msf E^c) \leq C(\exp(-ct) + \delta t^2 + \delta^{1/8}t^3) \leq C\delta^{1/8}t^3$ (since $\delta=t^{-\theta}$).
\end{proof}

\subsubsection{One-point tightness} The one-point tightness for $\hasep$ that we require follows from Lemma~\ref{l.asep s6v comparison} and the following one-point convergence result for $h^{\mrm{ASEP}}$. Recall the ASEP sheet $\S^{\mrm{ASEP},\varepsilon}$ from \eqref{e.rescaled asep definition} and the parameter $\alpha$ controlling the macroscopic location around which the height function $h^{\mrm{ASEP}}$ is evaluated.

\begin{proposition}\label{p.asep one-point tightness}
Let $q\in[0,1)$ and $\alpha\in(-1,1)$. Fix $x,y\in\R$. Then, as $\varepsilon\to0$, $\S^{\mrm{ASEP},\varepsilon}(x;y) + (x-y)^2\stackrel{d}{\to} \mrm{GUE}\text{-}\mrm{TW}$, the GUE Tracy-Widom distribution.

\end{proposition}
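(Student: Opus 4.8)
\textbf{Proof strategy for Proposition~\ref{p.asep one-point tightness}.}
The plan is to deduce the one-point distributional limit for $h^{\mrm{ASEP}}$ from the corresponding statement for the colored S6V model (Proposition~\ref{p.s6v one-point}), transferring it through the effective coupling of Lemma~\ref{l.asep s6v comparison}. First I would reduce to the case $x=0$: by the shift covariance \eqref{e.two parameter stationarity for ASEP} for $\hasep$ (equivalently, directly from the definition \eqref{e.intro ASEP height function} of $h^{\mrm{ASEP}}$), one has $\S^{\mrm{ASEP},\varepsilon}(x;y) \stackrel{d}{=} \S^{\mrm{ASEP},\varepsilon}(0;y-x)$ — one must check that the scaling coefficients $\mu(\alpha),\sigma(\alpha),\beta(\alpha)$ in \eqref{e.rescaled asep definition} and the additive parabolic correction terms are compatible with this shift, which they are by construction (the $\beta(\alpha)(y-x)$ dependence is only on the difference, and there is no stray $x$ term unlike in the S6V case). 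So it suffices to identify the $\varepsilon\to0$ limit of $\S^{\mrm{ASEP},\varepsilon}(0;y) + y^2$.

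Next I would invoke Lemma~\ref{l.asep s6v comparison} to replace $h^{\mrm{ASEP}}$ by $\hasep$. Setting $t = 2\gamma^{-1}\varepsilon^{-1}$ (the time at which $\S^{\mrm{ASEP},\varepsilon}$ evaluates the height function) and $\theta$ a large fixed constant as in the discussion after the lemma, the coupling gives $\hasep(x,0;y,t) = h^{\mrm{ASEP}}(x,0;y,t)$ for all $|x|,|y|\le 2t$ with probability $1 - O(t^{3-\theta/8}) = 1-o(1)$, and the arguments of $h^{\mrm{ASEP}}$ appearing in $\S^{\mrm{ASEP},\varepsilon}(0;y)$ (namely $0$ and $2\alpha\varepsilon^{-1} + \beta(\alpha)y\varepsilon^{-2/3}$) are indeed of size $O(\varepsilon^{-1}) = O(t)$ for fixed $y$, so they fall in the allowed range. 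Hence $\S^{\mrm{ASEP},\varepsilon}(0;y)$ agrees with the analogous quantity built from $\hasep$ with probability tending to $1$, and it is enough to compute the limit of the latter.

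Then I would unwind the definition \eqref{e.approx asep height function definition} of $\hasep$ in terms of $\hssv$: substituting gives $\hasep(0,0;y,t) = \floor{t^{1+\theta}} + 1 - \hssv(0,0;\floor{t^{1+\theta}}-y-1,\floor{t^{1+\theta}})$, and this should, after collecting the deterministic terms, express $\S^{\mrm{ASEP},\varepsilon}(0;y)$ as $-\S^{\mrm{S6V},\varepsilon'}(0;y') + (\text{deterministic})$ for a suitable relabeling of parameters: the S6V model is run with $\delta = t^{-\theta}$, so $N = M = \floor{\delta^{-1}t} = \floor{t^{1+\theta}}$, spectral parameter $z = (1-\delta)/(1-\delta q) \to 1$, and velocity $\alpha^{\mrm{S6V}} \to 1$ as $\delta\to0$ (this is the degeneration of S6V to ASEP, e.g.\ as in \cite{borodin2016stochastic,aggarwal2017convergence}). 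One must verify that the S6V law-of-large-numbers shift $\mu^{\mrm{S6V}}$ and fluctuation scale $\sigma^{\mrm{S6V}}$ from \eqref{e.mu and sigma}, evaluated in this degenerating regime, reproduce exactly $\mu^{\mrm{ASEP}}(0) = 1/4$ and $\sigma^{\mrm{ASEP}}(0) = 1/2$ from \eqref{e.mu sigma ASEP} up to the reparametrization — this is the one genuinely computational check, and is the natural place for the main obstacle, since it requires carefully matching the two sets of scaling constants through the joint limit $\delta \to 0$, $\varepsilon \to 0$ with $\delta = \varepsilon^{\theta}$ (up to constants). Granting that, Proposition~\ref{p.s6v one-point} gives $\S^{\mrm{S6V},\varepsilon'}(0;y') + (y')^2 \stackrel{d}{\to} \mrm{GUE}\text{-}\mathrm{TW}$; using the symmetry of the GUE Tracy-Widom distribution under the reflection built into \eqref{e.approx asep height function definition} — here one uses that $\mrm{GUE}\text{-}\mathrm{TW}$ appears on both sides and the sign flip $\hssv \leftrightarrow -\hasep$ is compensated by the parabola being even and by $y' \leftrightarrow$ a reflected coordinate — one concludes $\S^{\mrm{ASEP},\varepsilon}(0;y) + y^2 \stackrel{d}{\to} \mrm{GUE}\text{-}\mathrm{TW}$, and the general $x$ case follows from the first-paragraph reduction. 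The main obstacle, as noted, is the bookkeeping of scaling constants across the S6V-to-ASEP degeneration; everything else is a soft coupling-and-limit argument.
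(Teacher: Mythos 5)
Your proposal takes a genuinely different route from the paper, and that route has a gap. The paper's proof is two lines: reduce to $x=0$ by stationarity (your first paragraph, done correctly), then cite the ASEP Tracy–Widom asymptotics directly from \cite[Theorem 3]{tracy2009asymptotics} (also stated as \cite[Theorem 11.3]{borodin2017asep}). There is no transfer from the S6V side for this proposition; indeed the paper goes in the \emph{opposite} direction, using Proposition~\ref{p.asep one-point tightness} together with Lemma~\ref{l.asep s6v comparison} to deduce Corollary~\ref{c.approx asep tightness}, the one-point tightness for the S6V-approximant $h^{\mathrm{ASEP},\theta}$.

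The gap in your plan is that Proposition~\ref{p.s6v one-point} is stated for a \emph{fixed} spectral parameter $z\in(0,1)$ and fixed velocity $\alpha\in(z,z^{-1})$, with only $\varepsilon\to0$. But under the coupling of Lemma~\ref{l.asep s6v comparison} the relevant S6V model has $\delta = t^{-\theta}$, $z = (1-\delta)/(1-\delta q)\to 1$, and the height function is probed at the diagonal $(M,N)=(\lfloor\delta^{-1}t\rfloor,\lfloor\delta^{-1}t\rfloor)$, i.e.\ at the very edge of the rarefaction fan. So you are trying to invoke a fixed-$z$, fixed-velocity result along a degenerating sequence where both $z$ and the velocity approach the boundary of the admissible region, and the interval $(z,z^{-1})$ collapses to $\{1\}$. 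Nothing in the paper (nor in \cite{borodin2016stochastic,corwin2018transversal}, which are cited for Proposition~\ref{p.s6v one-point}) supplies the required uniformity. You flag this as ``bookkeeping of scaling constants,'' but it is a missing theorem, not bookkeeping: one would need a one-point Tracy–Widom limit for S6V that is uniform over $z$ near $1$, which is essentially as hard as proving the ASEP result from scratch.

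A related symptom of the mismatch: the S6V model in the coupling is run for $M = \lfloor t^{1+\theta}\rfloor$ steps, so the natural KPZ scaling parameter for Definition~\ref{d.s6v sheet} would be $\varepsilon' \sim t^{-(1+\theta)}$, giving spatial window $\varepsilon'^{-2/3}\sim t^{2(1+\theta)/3}$ and fluctuation scale $\varepsilon'^{-1/3}\sim t^{(1+\theta)/3}$. But ASEP at time $t$ has spatial window $t^{2/3}$ and fluctuations $t^{1/3}$. These scales differ by powers of $t$; the reconciliation happens only because the S6V velocity sits at the degenerate edge, which is exactly where $\sigma^{\mathrm{S6V}}(\alpha)$ and $\beta^{\mathrm{S6V}}(\alpha)$ from \eqref{e.mu and sigma}–\eqref{e.S6V spatial scaling} themselves degenerate. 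So your identity $\mathcal{S}^{\mathrm{ASEP},\varepsilon}(0;y) = -\mathcal{S}^{\mathrm{S6V},\varepsilon'}(0;y') + (\text{deterministic})$ with constants depending on a fixed $\alpha^{\mathrm{S6V}}$ cannot hold as written. The clean fix is to do what the paper does: cite the ASEP result directly and use the coupling only to push one-point tightness \emph{from} ASEP \emph{to} the S6V-approximant, not the reverse.
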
 

\begin{proof}
Again by stationarity of $h^{\mrm{ASEP}}$ (as noted after \eqref{e.two parameter stationarity for ASEP}) it suffices to prove the case $x=0$. This follows from \cite[Theorem 3]{tracy2009asymptotics}, and is also stated in a form closer to ours in \cite[Theorem 11.3]{borodin2017asep}.
\end{proof}

We can combine this result with the coupling in Lemma~\ref{l.asep s6v comparison} to obtain a one-point tightness statement for $\hasep$. The rescaled version $\S^{\mrm{ASEP},\theta,\varepsilon}$ we work with is defined by \eqref{e.rescaled asep definition} with $\hasep$ in place of $h^{\mrm{ASEP}}$ with $\theta=40$, i.e., for $\alpha\in(-1,1)$ fixed and recalling $\mu(\alpha)$, $\sigma(\alpha)$, and $\beta(\alpha)$ from \eqref{e.mu sigma ASEP} and \eqref{e.nu ASEP},
\begin{equation}\label{e.rescaled approximate asep definition}
\begin{split}
\MoveEqLeft[11]
\S^{\mrm{ASEP},\theta,\varepsilon}(x; y) := \sigma(\alpha)^{-1}\varepsilon^{1/3}\Bigl(\mu(\alpha)2\varepsilon^{-1}  + \mu'(\alpha)\beta(\alpha)(y-x)\varepsilon^{-2/3}\\
&- h^{\mrm{ASEP},\theta}\bigl(\beta(\alpha)x\varepsilon^{-2/3}, 0; 2\alpha \varepsilon^{-1} + \beta(\alpha)y\varepsilon^{-2/3}, 2\gamma^{-1}\varepsilon^{-1}\bigr)\Bigr).
\end{split}
\end{equation}

\begin{corollary}\label{c.approx asep tightness}
Fix $\theta=40$. Let $q\in[0,1)$, $\alpha\in(-1,1)$ and $x,y\in\R$. Then, as $\varepsilon\to 0$,  $\S^{\mrm{ASEP},\theta,\varepsilon}(x;y) +(x-y)^2 \stackrel{d}{\to} \mrm{GUE}\text{-}\mrm{TW}$, the GUE Tracy-Widom distribution.
\end{corollary}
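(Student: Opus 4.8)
The plan is to deduce Corollary~\ref{c.approx asep tightness} directly from Proposition~\ref{p.asep one-point tightness} (the one-point convergence of $\S^{\mrm{ASEP},\varepsilon}(x;y)$ to the GUE Tracy-Widom distribution after parabolic centering) together with the effective coupling of Lemma~\ref{l.asep s6v comparison}, which asserts that under a suitable coupling $\hasep(x,0;y,t) = h^{\mrm{ASEP}}(x,0;y,t)$ for all $|x|,|y|\leq 2t$ with probability at least $1 - Ct^{3-\theta/8}$. Since $\theta$ is fixed at $40$, the error probability is $Ct^{3 - 5} = Ct^{-2} \to 0$ as $t \to \infty$, so the two height functions agree on the relevant range with probability tending to $1$.

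The key steps are as follows. First I would fix $\alpha\in(-1,1)$ and $x,y\in\R$, and set $t = 2\gamma^{-1}\varepsilon^{-1}$, so that $t\to\infty$ as $\varepsilon\to0$. The arguments of $\hasep$ appearing in the definition \eqref{e.rescaled approximate asep definition} of $\S^{\mrm{ASEP},\theta,\varepsilon}(x;y)$ are $\beta(\alpha)x\varepsilon^{-2/3}$ in the first slot and $2\alpha\varepsilon^{-1} + \beta(\alpha)y\varepsilon^{-2/3}$ in the second. The first has absolute value $O(\varepsilon^{-2/3}) = o(\varepsilon^{-1}) = o(t)$, hence is $\leq 2t$ for $\varepsilon$ small. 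The second, however, is of order $\varepsilon^{-1}$, which is \emph{not} bounded by $2t = 4\gamma^{-1}\varepsilon^{-1}$ only if $2\alpha > 4\gamma^{-1}$; so I would need to be slightly careful. Here I invoke the two-parameter stationarity \eqref{e.two parameter stationarity for ASEP} of $\hasep$ (and the identical relation for $h^{\mrm{ASEP}}$): choosing $k = -\floor{2\alpha\varepsilon^{-1}}$ (or the nearest integer making the recentering exact), both height functions can be re-expressed with spatial arguments shifted by $k$, after which the second argument becomes $O(\varepsilon^{-2/3}) = o(t)$ as well, at the cost of an additive deterministic shift of $k$ which is absorbed identically on both sides. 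Thus $\S^{\mrm{ASEP},\theta,\varepsilon}(x;y)$ and $\S^{\mrm{ASEP},\varepsilon}(x;y)$ are built from height functions evaluated at points within the $|\cdot|\leq 2t$ window (after this harmless recentering), so on the coupling event from Lemma~\ref{l.asep s6v comparison} they are \emph{equal}.

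Second, let $\msf E_\varepsilon$ be the coupling event; then $\P(\msf E_\varepsilon^c)\to0$. On $\msf E_\varepsilon$ we have $\S^{\mrm{ASEP},\theta,\varepsilon}(x;y) = \S^{\mrm{ASEP},\varepsilon}(x;y)$, and by Proposition~\ref{p.asep one-point tightness} the latter satisfies $\S^{\mrm{ASEP},\varepsilon}(x;y) + (x-y)^2 \xrightarrow{d} \mrm{GUE}\text{-}\mrm{TW}$. A standard argument — if two sequences of random variables agree with probability tending to $1$ and one converges in distribution, so does the other — then gives $\S^{\mrm{ASEP},\theta,\varepsilon}(x;y) + (x-y)^2 \xrightarrow{d} \mrm{GUE}\text{-}\mrm{TW}$, which is the claim. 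Concretely, for any bounded Lipschitz $F$, $|\E F(\S^{\mrm{ASEP},\theta,\varepsilon} + (x-y)^2) - \E F(\S^{\mrm{ASEP},\varepsilon} + (x-y)^2)| \leq 2\|F\|_\infty \P(\msf E_\varepsilon^c) \to 0$, and the portmanteau theorem concludes.

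I do not expect a serious obstacle here; the only point requiring minor care is the bookkeeping in matching the spatial arguments of $\hasep$ to the $|x|,|y|\leq 2t$ window of Lemma~\ref{l.asep s6v comparison}, which is handled by the stationarity relation \eqref{e.two parameter stationarity for ASEP} as described above (one should check the integer-rounding in the shift $k$ introduces only an $O(1)$ discrepancy in the height function argument, which after the $\varepsilon^{1/3}$ prefactor is negligible in the limit — or simply restrict to a sequence $\varepsilon\to0$ along which $2\alpha\varepsilon^{-1}$ is an integer, as the distributional limit is the same). Everything else is immediate from the already-established Proposition~\ref{p.asep one-point tightness} and Lemma~\ref{l.asep s6v comparison}.
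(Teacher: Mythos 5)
Your approach is exactly the paper's: the paper's proof is the one line ``This follows immediately from Lemma~\ref{l.asep s6v comparison} and Proposition~\ref{p.asep one-point tightness},'' which is precisely the coupling-plus-marginal-convergence argument you give. That part is fine.

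One small correction on the detour about the $|\cdot|\leq 2t$ window: the worry does not arise, and the way you propose to dispel it does not quite work. Since $|\alpha|<1$ and $\gamma^{-1}=(1-q)^{-1}\geq 1$, the second argument satisfies $|2\alpha\varepsilon^{-1}+\beta(\alpha)y\varepsilon^{-2/3}|\leq 2|\alpha|\varepsilon^{-1}+O(\varepsilon^{-2/3}) < 4\gamma^{-1}\varepsilon^{-1}=2t$ for all small $\varepsilon$, so both arguments already lie in the window and no recentering is needed. Moreover, the recentering via \eqref{e.two parameter stationarity for ASEP} would not make ``the second argument $O(\varepsilon^{-2/3})$ as well'': shifting \emph{both} slots by $k=-\floor{2\alpha\varepsilon^{-1}}$ moves the first argument from $O(\varepsilon^{-2/3})$ to $O(\varepsilon^{-1})$, so it only trades which argument carries the order-$\varepsilon^{-1}$ part. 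The trade is harmless (both stay inside $|\cdot|\leq 2t$), but the specific claim is off. Dropping the stationarity step entirely and simply observing $2|\alpha|<4\gamma^{-1}$ gives the cleanest version of the argument, which then reduces to Lemma~\ref{l.asep s6v comparison} and Proposition~\ref{p.asep one-point tightness} as you conclude.
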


\begin{proof}
This follows immediately from Lemma~\ref{l.asep s6v comparison} and Proposition~\ref{p.asep one-point tightness}.
\end{proof}


\section{Preliminaries for line ensemble tightness}\label{s.tightness preliminaries}

In this section we start by introducing some line ensemble notions that will be necessary in the proof of Theorem~\ref{t.tightness} on tightness of line ensembles. Then in Section~\ref{s.bg} we introduce the Brownian Gibbs property and give the proof of Theorem~\ref{t.line ensemble convergence to parabolic Airy} (on the convergence of line ensembles to the parabolic Airy line ensemble), assuming Theorem~\ref{t.tightness} and that all subsequential limits possess the Brownian Gibbs property, which will be proven in later sections. In Section~\ref{s.assumptions for S6V and ASEP} we will relate the colored Hall-Littlewood line ensemble, with appropriate centerings, to colored ASEP (in the approximate form \eqref{e.scaled approximate LPP representation}) and colored S6V such that the former's top curves are the height functions of the respective models, in such a way that the corresponding uncolored line ensembles satisfy the assumptions introduced in Section~\ref{s.line ensemble hypotheses}. Finally in Section~\ref{s.tightness ingredients} we collect some ingredient statements needed to prove Theorem~\ref{t.tightness} and give the latter's proof assuming them; the ingredient statements will be proved in later sections.

\subsection{The Strong Gibbs property}

We will need at one or two places a stronger version of the Gibbs property which allows us to resample on certain random domains known as stopping domains, analogous to the strong Markov property and stopping times.

\begin{definition}[Stopping domain]\label{d.stopping domain}
Fix $k\in\N$ and let $\bm L$ be a discrete line ensemble (Definition~\ref{d.discrete line ensemble}). A pair of random variables $\mf l, \mf r\in\Z$ forms a stopping domain $\intint{\mf l, \mf r}$ with respect to $(L_1, \ldots, L_k)$ if, for all $\ell, r\in\Z$ with $\ell < r$,
\begin{align*}
\{\mf l \leq \ell, \mf r \geq r\} \in \Fext(k, \intint{\ell, r}, \bm L).
\end{align*}
We define the $\sigma$-algebra $\Fext(k, \intint{\mf l, \mf r}, \bm L)$ to be the collection of all events $A$ such that $A\cap\{\mf l \leq \ell, \mf r \geq r\} \in \Fext(k,\intint{\ell, r},  \bm L)$ (as in Definition~\ref{d.F_ext}) for all $\ell < r$.
\end{definition}

\begin{proposition}[Strong Gibbs property]\label{p.strong gibbs}
Let $k\in\N$ and let $\intint{\mf l,\mf r}$ be a stopping domain. Let $F: \{(\ell, r, f_1, \ldots,  f_k) : \ell < r \text{ and } (f_1, \ldots,f_k) \in  \R^{\intint{1,k}\times\intint{\ell, r}}\} \to \R$ be bounded. If $\bm L$ is a discrete line ensemble that has the Hall-Littlewood Gibbs property, it holds that, almost surely,
\begin{align*}
\E\left[F(\mf l, \mf r, L_1, \ldots, L_k) \mid \Fext(k, \intint{\mf l,\mf r}, \bm L)\right] = \frac{\E[F(\mf l, \mf r, B_1, \ldots, B_k) W(\bm B,L_{k+1}) \mid \Fext(k, \intint{\mf l,\mf r}, \bm L)]}{\E[W(\bm B,L_{k+1}) \mid \Fext(k, \intint{\mf l,\mf r}, \bm L)]},
\end{align*}
where $\bm  B = (B_1, \ldots, B_k)$ is a collection of $k$ independent Bernoulli random walk bridges with $B_i(x) = L_i(x)$ for $x\in\{\mf l, \mf r\}$ and $i=1, \ldots, k$.
\end{proposition}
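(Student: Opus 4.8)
\textbf{Proof proposal for Proposition~\ref{p.strong gibbs} (Strong Gibbs property).}

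The plan is to reduce the strong Gibbs property to the ordinary Hall-Littlewood Gibbs property (Definition~\ref{d.HL Gibbs}) by a standard discretization-of-the-stopping-domain argument, in direct analogy with how the strong Markov property is deduced from the Markov property for a discrete-time chain. The key simplification available to us is that $\mf l$ and $\mf r$ are integer-valued, so there are only countably many possible values of the pair $(\mf l, \mf r)$, and we can decompose the probability space accordingly rather than taking limits of partitions.

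First I would fix a bounded test functional $F$ and write, using that $\{(\mf l,\mf r) = (\ell,r)\}$ ranges over a countable partition of the probability space,
\begin{align*}
\E\big[F(\mf l,\mf r, L_1,\dots,L_k)\,G\big] = \sum_{\ell < r} \E\big[F(\ell, r, L_1,\dots,L_k)\,G\,\one_{\mf l = \ell}\one_{\mf r = r}\big],
\end{align*}
where $G$ is an arbitrary bounded $\Fext(k,\intint{\mf l,\mf r},\bm L)$-measurable random variable; to prove the claimed identity it suffices to show that for every such $G$ the left side equals $\E\big[\tfrac{\E[F(\mf l,\mf r, B_1,\dots,B_k) W(\bm B, L_{k+1})\mid \Fext(k,\intint{\mf l,\mf r},\bm L)]}{\E[W(\bm B, L_{k+1})\mid \Fext(k,\intint{\mf l,\mf r},\bm L)]}\,G\big]$. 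The next step is the crucial measurability observation: by Definition~\ref{d.stopping domain}, the event $\{\mf l = \ell, \mf r = r\} = \{\mf l \le \ell, \mf r \ge r\} \setminus (\{\mf l \le \ell-1, \mf r \ge r\}\cup\{\mf l\le \ell, \mf r\ge r+1\})$ lies in $\Fext(k,\intint{\ell-1,r+1},\bm L) \subseteq \Fext(k,\intint{\ell,r},\bm L)$ — more precisely one checks it lies in $\Fext(k,\intint{\ell,r},\bm L)$ itself after noting the slightly larger external $\sigma$-algebra still does not see the interior of $\intint{\ell,r}$ — and similarly $G\,\one_{\mf l=\ell}\one_{\mf r=r}$ is $\Fext(k,\intint{\ell,r},\bm L)$-measurable by the definition of $\Fext(k,\intint{\mf l,\mf r},\bm L)$. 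Therefore on the event $\{\mf l=\ell,\mf r=r\}$ I may apply the ordinary Hall-Littlewood Gibbs property on the deterministic interval $\intint{\ell,r}$ with the deterministic index set $\intint{1,k}$: conditionally on $\Fext(k,\intint{\ell,r},\bm L)$, the law of $(L_1,\dots,L_k)|_{\intint{\ell,r}}$ is that of $(B_1^{(\ell,r)},\dots,B_k^{(\ell,r)})$ reweighted by $W(\bm B^{(\ell,r)}, L_{k+1})/\E[W(\bm B^{(\ell,r)}, L_{k+1})\mid\Fext]$, where $\bm B^{(\ell,r)}$ are independent Bernoulli bridges matching $L_i$ at $\ell$ and $r$. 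This gives
\begin{align*}
\E\big[F(\ell,r, L_1,\dots,L_k)\,G\,\one_{\mf l=\ell,\mf r=r}\big] = \E\Big[\tfrac{\E[F(\ell,r,\bm B^{(\ell,r)})W(\bm B^{(\ell,r)}, L_{k+1})\mid\Fext(k,\intint{\ell,r},\bm L)]}{\E[W(\bm B^{(\ell,r)}, L_{k+1})\mid\Fext(k,\intint{\ell,r},\bm L)]}\,G\,\one_{\mf l=\ell,\mf r=r}\Big].
\end{align*}
Summing over $\ell<r$ and using that on $\{\mf l=\ell,\mf r=r\}$ the random object $\bm B$ (built to match $L_i$ at $\mf l,\mf r$) coincides in law with $\bm B^{(\ell,r)}$ and $\Fext(k,\intint{\mf l,\mf r},\bm L)$ restricted to this event agrees with $\Fext(k,\intint{\ell,r},\bm L)$ restricted to it, I recover exactly the right-hand side of the claimed identity. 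Since $G$ was an arbitrary bounded $\Fext(k,\intint{\mf l,\mf r},\bm L)$-measurable random variable, the conditional-expectation identity follows.

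The main obstacle — really the only point requiring care — is the bookkeeping around the $\sigma$-algebras: verifying that $G\,\one_{\mf l=\ell,\mf r=r}$ is genuinely $\Fext(k,\intint{\ell,r},\bm L)$-measurable (which is immediate from the two definitions once unwound) and that the conditional laws on the event $\{\mf l=\ell,\mf r=r\}$ patch together consistently, in particular that there are no measurability issues in defining $\bm B$ simultaneously across all values of $(\mf l,\mf r)$. A clean way to handle the latter is to construct $\bm B$ on an enlarged probability space from a single family of independent uniform Bernoulli-bridge randomness indexed by all rational (here, integer) endpoint pairs and heights, so that $\bm B = \bm B^{(\mf l,\mf r)}$ by selection; then every step above is a finite or countable manipulation with no limiting procedure. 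I would also remark that since we only invoke this proposition in one or two places with $F$ depending on finitely many coordinates, one could alternatively cite the completely analogous strong Gibbs statements established for Brownian and Bernoulli line ensembles in \cite{corwin2014brownian,corwin2018transversal}, whose proofs transfer verbatim with the weight $W$ in place of the non-intersection indicator; but the self-contained argument above is short enough to include directly.
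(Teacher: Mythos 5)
Your proposal is correct and takes essentially the same approach as the paper: the paper's (unrecorded) proof is exactly the decomposition over the countably many values of $(\mathfrak{l},\mathfrak{r})$ followed by an application of the ordinary Hall--Littlewood Gibbs property on each event $\{\mathfrak{l}=\ell,\mathfrak{r}=r\}$, and you have correctly identified the two measurability facts that make this work, namely that $\{\mathfrak{l}=\ell,\mathfrak{r}=r\}\in\Fext(k,\intint{\ell,r},\bm L)$ and that $G\,\one_{\mathfrak{l}=\ell,\mathfrak{r}=r}$ is $\Fext(k,\intint{\ell,r},\bm L)$-measurable for any $\Fext(k,\intint{\mathfrak{l},\mathfrak{r}},\bm L)$-measurable $G$.
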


Similar statements have been proven for continuous line ensembles, e.g., \cite[Lemma 2.5]{corwin2014brownian}. The proof amounts to decomposing based on the exact value of $\mf l$ and $\mf r$ (which is straightforward here as everything is discrete) and using the Hall-Littlewood Gibbs property on each of the resulting events; we do not record the details here.

\subsection{The Brownian Gibbs property}\label{s.bg}

To introduce the Brownian Gibbs property, we will need notation for the $\sigma$-algebra we condition on for continuous line ensembles. We first precisely define the meaning of a continuous line ensemble.

\begin{definition}[Continuous line ensemble]\label{d.continuous line ensemble}
Fix a (possibly infinite) interval $\Lambda\subseteq \R$.  A \emph{$\Lambda$-indexed continuous line ensemble} $\bm \cL = (\cL_1, \cL_2, \ldots)$ is a random variable defined on a probability space $(\Omega, \F, \P)$, taking values in  $\mc C(\N\times\Lambda, \R)$, such that (again writing the first argument of $\bm \cL$ as a subscript, i.e., $\cL_i(\bm\cdot) := \bm \cL(i,\bm\cdot)$) $\cL_{i}(y) \geq \cL_{i+1}(y)$ for any $y\in\Lambda$ and~$i\in\N$. Here $\N\times\Lambda$ is given the product topology of the discrete topology and the Euclidean topology, and $\mc C(\N\times\Lambda, \R)$ has the topology of uniform convergence on compact sets.
\end{definition}

 Recall again the $\sigma$-algebra $\Fext$ in the case of discrete line ensembles from Definition~\ref{d.F_ext}.
In the case of a continuous line ensemble $\bm \cL:\N\times\R\to\R$, we will write, for $a,b\in\R$ and $j,k\in\N$ with $a<b$ and $j<k$, $\smash{\Fext(\llbracket j,k\rrbracket, [a,b], \bm \cL)}$ to be the $\sigma$-algebra generated by the collection of random variables
$$\bigl\{\cL_i(x): (i,x)\in \N\times \R\setminus(\intint{j,k} \times [a,b])\bigr\}.$$
(Compared to Definition~\ref{d.F_ext}, we have replaced $\intint{a+1,b-1}$ with $[a,b]$ in the second bullet point, so that $\cL_i(a)$ and $\cL_i(b)$ are still conditioned on, for each $i\in\intint{j,k}$). We will again often adopt the shorthand $\F_{\mrm{ext}}(k, [a,b], \bm L) = \F_{\mrm{ext}}(\intint{1,k}, [a,b], \bm L)$.

\begin{definition}[Brownian Gibbs property]\label{d.bg}
A line ensemble $\bm \cL = (\cL_1,\cL_2, \ldots):\N\times\R\to\R$ is said to satisfy the Brownian Gibbs property if the following holds for any $\intint{j,k}\subseteq \N$ and $[a,b]\subseteq \R$. Conditional on $\Fext(\intint{j,k}, [a,b], \bm \cL)$, the distribution of $\bm \cL|_{\intint{j,k}\times[a,b]}$ is that of $k-j+1$ independent rate two Brownian bridges $B^{\mrm{Br}}_j, \ldots, B^{\mrm{Br}}_k$, with $B^{\mrm{Br}}_i(x) = \cL_i(x)$ for each $x\in\{a,b\}$ and $i\in \intint{j,k}$, conditioned on $\cL_{j-1}(x) > B^{\mrm{Br}}_j(x) >  \ldots >B^{\mrm{Br}}_k(x) > \cL_{k+1}(x)$ for all $x\in[a,b]$ (with $\cL_{0}\equiv \infty$ by convention).
\end{definition}

\begin{figure}[h]
\includegraphics[scale=0.7]{figures/para-airy-le-figure0.pdf}\hspace{1.4cm}
\includegraphics[scale=0.7]{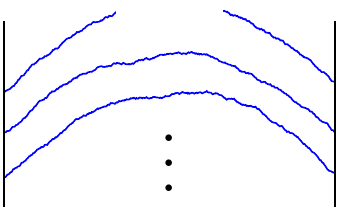}\hspace{1.4cm}
\includegraphics[scale=0.7]{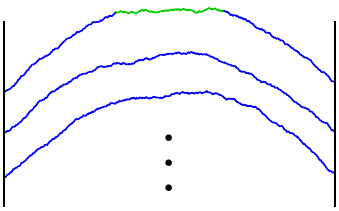}
\caption{A depiction of a special case, with $k=1$, of the Brownian Gibbs property: in the left panel is the original line ensemble $\bm\cL$, in the middle panel we condition on the data which is depicted, essentially forgetting what has been erased, and in the right panel we resample the forgotten data, which is conditionally distributed as a rate two Brownian bridge between the endpoints, conditioned to avoid the second curve $\cL_2$.}\label{f.para airy and bg}
\end{figure}

Recall now the framework for line ensemble convergence in Section~\ref{s.line ensemble hypotheses}. In particular, Theorem~\ref{t.tightness} asserts that for any sequence of discrete line ensembles $\bm L^N$ satisfying Assumptions~\ref{as.HL Gibbs} and \ref{as.one-point tightness}, the rescaled line ensembles $\bm \cL^N$ defined in \eqref{e.cL definition} forms a tight sequence, and Theorem~\ref{t.line ensemble convergence to parabolic Airy} characterizes the subsequential limits and says that, under the additional Assumption~\ref{as.GUE TW}, the original sequence converges to $\bm\cP$.

Theorem~\ref{t.line ensemble convergence to parabolic Airy} is an immediate consequence of knowing that all subsequential limit points of $\bm \cL^N$ satisfy the Brownian Gibbs property, on combining with a recent characterization result for $\bm\cP$ \cite{aggarwal2023strong}. These two statements are given precisely next. Proposition~\ref{p.limits have BG} is proved in Section~\ref{s.bg in the limit}.

\begin{proposition}\label{p.limits have BG}
Let $\bm L^N$ satisfy Assumptions~\ref{as.HL Gibbs} and \ref{as.one-point tightness}, let $\bm \cL^N$ be defined as in \eqref{e.cL definition}, and let $\bm \cL$ be any subsequential weak limit of $\bm \cL^N$ as $N\to\infty$. Then $\bm \cL$ has the Brownian Gibbs property.
\end{proposition}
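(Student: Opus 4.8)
\textbf{Proof proposal for Proposition~\ref{p.limits have BG}.}

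The plan is the standard one for passing a discrete Gibbs property to the limit, adapted to the Hall--Littlewood weight factor. Fix intervals $\intint{j,k}\subseteq\N$ and $[a,b]\subseteq\R$; by Skorokhod's representation theorem we may assume the convergence $\bm\cL^N\to\bm\cL$ holds almost surely in $\mc C(\N\times\R,\R)$, along the relevant subsequence. To test the Brownian Gibbs property it suffices to check that for bounded continuous functions $F$ of $\bm\cL|_{\intint{j,k}\times[a,b]}$ and $H$ of $\bm\cL|_{\N\times\R\setminus(\intint{j,k}\times(a,b))}$ (the latter realizing $\F=\Fext(\intint{j,k},[a,b],\bm\cL)$-measurable test functions), the identity
\begin{align*}
\E\bigl[F\cdot H\bigr] = \E\Bigl[H\cdot \tfrac{\E[F(\bmBbr)\,\one_{\mrm{ns}}\mid \F]}{\E[\one_{\mrm{ns}}\mid \F]}\Bigr]
\end{align*}
holds, where $\bmBbr=(B^{\mrm{Br}}_j,\dots,B^{\mrm{Br}}_k)$ are rate-two Brownian bridges with the prescribed endpoints and $\one_{\mrm{ns}}$ is the indicator of non-crossing among themselves and with $\cL_{j-1},\cL_{k+1}$. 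The idea is to write the analogous (exact) identity for $\bm\cL^N$ using Assumption~\ref{as.HL Gibbs}, with Bernoulli bridges $\bm B^N$ and weight factor $W(\bm B^N, L^N_{j-1}, L^N_{k+1})$ in place of the Brownian bridges and $\one_{\mrm{ns}}$, and then pass to the limit in $N$.

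The two technical inputs are: (i) a Koml\'os--Major--Tusn\'ady (KMT) coupling of the (rescaled) Bernoulli bridges $\bm B^N$ with Brownian bridges $\bmBbr$ having the limiting endpoints, with error $O(N^{-1/3}\log N)$ uniformly on $[a,b]$ with overwhelming probability --- this is standard and appears in the prior line-ensemble literature (e.g.\ \cite{corwin2014brownian,corwin2018transversal}); and (ii) the asymptotic replacement of the Hall--Littlewood weight $W(\bm B^N,L^N_{j-1},L^N_{k+1})$ by the non-crossing indicator $\one_{\mrm{ns}}(\bmBbr,\cL_{j-1},\cL_{k+1})$. For (ii) one uses the heuristic already emphasized in Section~\ref{s.line ensemble convergence}: on the scale $N^{2/3}$, when consecutive curves are separated by order $N^{1/3}$, every factor $1-q^{\Delta_i(x-1)}$ is $1-q^{\Theta(N^{1/3})}$, and there are only $O(N^{2/3})$ of them, so $W= (1-q^{\Theta(N^{1/3})})^{\Theta(N^{2/3})}\to 1$; while if two consecutive curves touch, $W=0$, matching $\one_{\mrm{ns}}$. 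To make this rigorous one needs an \emph{a priori} lower bound on the separation of the curves $L^N_{j-1},\dots,L^N_{k+1}$ over the resampling window that holds with high probability --- precisely the uniform-separation statement that the tightness proof (Theorem~\ref{t.tightness}, via Sections~\ref{s.lower tail}--\ref{s.uniform separation}) delivers. On the complementary low-probability event one bounds contributions crudely using $0\le W\le 1$ and boundedness of $F,H$. One also needs continuity/positivity of $\E[\one_{\mrm{ns}}\mid\F]$, which holds because the limiting curves $\cL_{j-1},\cL_{k+1}$ are almost surely strictly ordered at $a$ and $b$ (a consequence of the one-point tightness and the fact that the prelimit has strictly ordered curves), so the denominator is a.s.\ positive and the ratio is well-defined.

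The main obstacle I expect is step (ii): controlling the denominator $Z_N=\E^{\F_N}[W(\bm B^N,L^N_{j-1},L^N_{k+1})]$ away from $0$ uniformly, and simultaneously controlling $W$ from below along the bulk of the bridge measure, given that $W$ is \emph{not} monotone (Section~\ref{s.lack of monotonicity discussion}) and hence the usual comparison arguments are unavailable. The resolution is to quarantine the bad behavior: on the event $\msf{Sep}_N$ that all relevant curves are $\gg \log N$-separated at the scale $N^{1/3}$ (high probability by the tightness machinery), one has $W(\bm B^N,\cdot,\cdot)\ge (1-q^{c\log N})^{O(N^{2/3})}\to 1$ whenever $\bm B^N$ itself stays suitably separated, and one shows the bridge measure puts most mass on such $\bm B^N$; combining gives $Z_N\to \E[\one_{\mrm{ns}}\mid\F]>0$ and $F(\bm B^N)W/Z_N\to F(\bmBbr)\one_{\mrm{ns}}/\E[\one_{\mrm{ns}}\mid\F]$ in $L^1$. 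The remaining steps --- invoking dominated convergence (uniform integrability is trivial since everything in sight is bounded by $\|F\|_\infty\|H\|_\infty$) and matching the discrete and continuous endpoint laws via the a.s.\ convergence $\bm\cL^N\to\bm\cL$ --- are routine. A final remark: since we only need the Brownian Gibbs property for \emph{finitely} many curves on bounded intervals, and the prelimit line ensemble has infinitely many curves (Definition~\ref{d.discrete line ensemble}), the presence of the lower boundary curve $\cL_{k+1}$ causes no difficulty; the same argument applies verbatim, with $\cL_{k+1}$ playing the role of the conditioned lower curve.
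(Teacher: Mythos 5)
Your proposal is correct and follows essentially the same route as the paper: Skorokhod representation, the discrete Gibbs identity via Assumption~\ref{as.HL Gibbs}, KMT coupling to Brownian bridges, an \emph{a priori} uniform-separation input (Theorem~\ref{t.uniform separation}) to replace the Hall--Littlewood weight $W$ by the non-crossing indicator and to bound the partition function $Z_N$ away from zero, followed by dominated convergence and a $\delta\to 0$ limit. One small simplification the paper makes that you omit is to first reduce to the case $j=1$ (resampling only the top $k$ curves), which avoids dealing with an upper boundary curve in the $W$-interaction and lets it cite the separation and partition-function lemmas as stated; the general $j$ case is then recovered because non-intersecting Brownian bridges themselves satisfy the Brownian Gibbs property. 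Your handling of the general-$j$ case would work, but requires restating Lemma~\ref{l.partition function to non-int comparison} and Corollary~\ref{c.coupling with non-int bernoulli random walk bridge} for an interior block $\intint{j,k}$, which is extra bookkeeping the paper chooses to sidestep.
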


\begin{proposition}[{\cite[Corollary 2.11 (2)]{aggarwal2023strong}}]\label{p.strong characterization}
Suppose $\bm \cL$ is a line ensemble with the Brownian Gibbs property such that, for any $\varepsilon>0$, there exists a constant $C = C(\varepsilon)>0$ such that
\begin{align*}
\P\left(|\cL_1(x) + x^2| > \varepsilon |x| + C\right) \leq \varepsilon
\end{align*}
for all $x\in\R$. Then there exists a parabolic Airy line ensemble $\bm\cP$ and an independent random variable $\mf c$ such that $\cL_i(x) = \cP_i(x) + \mf c$ for all $i\in\N$ and $x\in\R$.
\end{proposition}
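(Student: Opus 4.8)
\textbf{Proof plan for Proposition~\ref{p.strong characterization}.} Since this proposition is quoted verbatim from \cite[Corollary 2.11 (2)]{aggarwal2023strong}, the cleanest approach is to cite that reference; but let me nonetheless sketch how one would prove it from more primitive facts, so as to make the paper self-contained. The statement is a uniqueness-up-to-vertical-shift result: the Brownian Gibbs property plus a quantitative one-point control pinning $\cL_1(x)$ near $-x^2$ (with fluctuations growing slower than linearly) force $\bm\cL$ to be a parabolic Airy line ensemble shifted by a single random constant $\mf c$ that is independent of the shape. The plan is to reduce to the known strong characterization of the Airy line ensemble, whose standard formulation assumes the top curve is \emph{exactly} asymptotically parabolic in a suitable tightness sense; the hypothesis here is a slightly weaker, averaged tail bound, and the first task is to show it suffices.

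First I would record that the hypothesis implies the parabolically-shifted ensemble $\bm{\tilde\cL} := (\cL_i(\bm\cdot) + (\bm\cdot)^2)_i$ is a line ensemble whose top curve is tight at every point (in fact tight uniformly in $x$, with a tail bound), and that $\bm{\tilde\cL}$ satisfies a \emph{parabolic} Brownian Gibbs property (Brownian bridges tilted by the exponential of a parabolic correction — equivalently, $\bm\cL$ itself has the plain Brownian Gibbs property, which is what is assumed). One then invokes the core characterization theorem for the parabolic Airy line ensemble from \cite{aggarwal2023strong}: any line ensemble with the Brownian Gibbs property such that $\cL_1(x) + x^2$ has uniformly (in $x$) tight one-point marginals is equal in law to $\bm\cP$ up to adding an independent random constant. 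The second step is to verify that the averaged bound $\P(|\cL_1(x)+x^2| > \varepsilon|x| + C(\varepsilon)) \le \varepsilon$ for all $x$ does indeed deliver the tightness input needed by that theorem: for fixed compact sets of $x$ it gives genuine tightness of $\cL_1(x)+x^2$, and the linear-in-$|x|$ slack only affects large $x$, where it is harmless because the characterization argument only needs control of the top curve on bounded windows together with the Gibbs resampling to propagate down to lower curves.

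The third step is to extract the independence and the identification of the constant. The characterization result yields a coupling in which $\cL_i(x) = \cP_i(x) + \mf c$ for all $i$ and $x$ simultaneously, with $\mf c$ independent of $\bm\cP$; here one uses that the shift must be the \emph{same} for every curve (otherwise the ordering $\cL_i \ge \cL_{i+1}$ or the Gibbs property would be violated on large intervals) and the same for every $x$ (a spatially varying shift would destroy the Brownian Gibbs property, since adding a non-affine function to all curves changes the conditional law of bridges). So $\mf c$ is genuinely a single scalar random variable.

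\textbf{Main obstacle.} The substantive point — and the reason this is not entirely trivial — is matching the precise form of the one-point hypothesis here to the hypothesis format of the characterization theorem in \cite{aggarwal2023strong}; the $\varepsilon|x|$ slack is a genuine weakening relative to a clean uniform tightness statement, and one must check that the proof machinery of that theorem (which combines the Brownian Gibbs property with one-point control of the top curve to bootstrap control of all curves on all compact sets) only ever uses the top-curve bound on bounded $x$-windows, so the growing slack is immaterial. In practice this is exactly why the authors phrase Proposition~\ref{p.strong characterization} as a direct quotation of \cite[Corollary 2.11 (2)]{aggarwal2023strong}: that corollary is already stated with precisely this hypothesis, so the honest proof is simply the citation, and the role of this plan is to indicate why that corollary has the form it does.
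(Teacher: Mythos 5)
The paper offers no proof of this proposition: it is stated as a direct quotation of \cite[Corollary 2.11 (2)]{aggarwal2023strong}, and you correctly identify that the honest proof is precisely that citation. Your supplementary sketch of what that corollary does internally is reasonable context but not part of the paper's argument, so your treatment matches the paper's approach.
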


We give the proof of Theorem~\ref{t.line ensemble convergence to parabolic Airy} given these two statements and Theorem~\ref{t.tightness} now.

\begin{proof}[Proof of Theorem~\ref{t.line ensemble convergence to parabolic Airy}]
By Theorem~\ref{t.tightness} and Proposition~\ref{p.limits have BG}, we know that $\{\bm\cL^N\}_{N=1}^\infty$ is tight and that all subsequential limits have the Brownian Gibbs property. Let $\bm\cL$ be such a subsequential limit. By the characterization theorem (Proposition~\ref{p.strong characterization}), it follows that $\bm\cL$ equals the parabolic Airy line ensemble up to an independent random constant shift. Then Assumption~\ref{as.GUE TW} guarantees that the constant shift is zero, completing the proof.
\end{proof}

\begin{remark}\label{r.qs22 for asep}
In Section~\ref{s.airy sheet}, we will apply the strong characterization Proposition~\ref{p.strong characterization} of \cite{aggarwal2023strong} (indirectly, through Theorem~\ref{t.line ensemble convergence to parabolic Airy}) to the line ensembles coming from S6V and ASEP. 
In the case of ASEP for $\alpha = 0$, we can circumvent Proposition~\ref{p.strong characterization}. Indeed, in Appendix~\ref{s.alt proof of ASEP sheet}, we will give an alternate proof of that case using \cite[Theorem 2.2]{quastel2022convergence} (along with Lemma~\ref{l.asep s6v comparison}) to infer that $\cL^N_1$ converges to the parabolic Airy$_2$ process and  the characterization theorem from \cite{dimitrov2021characterization} (which asserts that if two Brownian Gibbsian line ensembles have the same top curve, in law, then the entire line ensembles are equal in law) to conclude that that limiting line ensemble is the parabolic Airy line ensemble.
\end{remark}

In the next section we will define centered versions of the colored Hall-Littlewood line ensembles which are associated to S6V and ASEP, and verify that they satisfy the assumptions from Section~\ref{s.line ensemble hypotheses}.

\subsection{Verifying the assumptions for colored S6V and colored ASEP} \label{s.assumptions for S6V and ASEP}

\subsubsection{S6V} Recall that we work in the domain $\Z_{\geq 1}\times\llbracket -N, \infty\rrparen$ and that we evaluate the colored height function on the vertical line $x = \floor{\varepsilon^{-1}}$.
Fix $q\in[0,1)$, $z\in(0,1)$, and $\alpha\in(z,z^{-1})$ and recall $\mu(\alpha)$ from \eqref{e.mu and sigma} and $\smash{L^{\mrm{cHL}, (j)}_i}$ from \eqref{e.colored line ensemble definition} with parameters $q$, $z$, $N$, and $M=\floor{\varepsilon^{-1}}$. Define $\bm L^{\mrm{S6V},(j), \varepsilon}$ for $j\in\intint{1, N}$ by, for $y\in\intint{-\floor{\alpha\varepsilon^{-1}},N+M - \floor{\alpha\varepsilon^{-1}}}$,
\begin{align}\label{e.S6V line ensemble}
L^{\mrm{S6V},(j),\varepsilon}_i(y) := L^{\mrm{cHL}, (j)}_i(\floor{\alpha \varepsilon^{-1}} + y) + j - N - \floor{\mu(\alpha)\varepsilon^{-1}}.
\end{align}
Observe that, since  $h^{\mrm{S6V}}(j, 0; \bm\cdot, \floor{\varepsilon^{-1}}) \stackrel{d}{=} L^{\mrm{cHL}, (j)}_1(\bm\cdot)$ as functions on $\intint{1,N-1}$ from Proposition~\ref{p.colored line ensembles}, it follows that, as functions on $\intint{-\floor{\alpha\varepsilon^{-1}}+1, N-1-\floor{\alpha\varepsilon^{-1}}}$ and jointly over $j\in\intint{1,N}$,
\begin{align}\label{e.S6V LE and height function}
L^{\mrm{S6V},(j),\varepsilon}_1(\bm \cdot) \stackrel{d}{=} h^{\mrm{S6V}}(j, 0; \floor{\alpha \varepsilon^{-1}} + \bm \cdot, \floor{\varepsilon^{-1}}) + j - N - \floor{\mu(\alpha)\varepsilon^{-1}}.
\end{align}

\begin{lemma}\label{l.s6v line ensemble assumptions}
$\bm L^{\mrm{S6V},(1),\varepsilon}$ satisfies Assumptions~\ref{as.HL Gibbs}, \ref{as.one-point tightness}, and \ref{as.GUE TW} with $p = \mu'(\alpha)$ and $\lambda = \frac{1}{2}\mu''(\alpha)$ (as defined in \eqref{e.mu and sigma}) and with $\varepsilon^{-1}$ in place of $N$.
\end{lemma}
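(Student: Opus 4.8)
The plan is to verify each of the three assumptions in turn, translating between the line-ensemble quantities and the already-established facts about the colored S6V height function. All three are essentially a matter of unwinding definitions plus invoking results stated earlier.

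First I would check Assumption~\ref{as.HL Gibbs}. By Lemma~\ref{l.color merging cHL}, the uncolored Hall-Littlewood line ensemble $\bm L^{\mrm{cHL},(1)}$ associated to any boundary condition has the same law as the one associated to the all-ones boundary condition, i.e., the line ensemble of the uncolored $q$-Boson model; by Proposition~\ref{p.L has HL} (equivalently Lemma~\ref{l.uncolored vertex gibbs} translated to line-ensemble language), $\bm L^{\mrm{cHL},(1)}|_{\N\times\intint{0,N-1}}$ has the Hall-Littlewood Gibbs property with parameter $q$. Since $\bm L^{\mrm{S6V},(1),\varepsilon}$ is obtained from $\bm L^{\mrm{cHL},(1)}$ by the deterministic shift of spatial argument by $\floor{\alpha\varepsilon^{-1}}$ and the additive constant $1-N-\floor{\mu(\alpha)\varepsilon^{-1}}$ (see \eqref{e.S6V line ensemble}), and the Hall-Littlewood Gibbs property (Definition~\ref{d.HL Gibbs}) is manifestly invariant under such an affine reparametrization of space and under a global additive shift of all curves — the weight factor $W$ from \eqref{e.weight function} depends only on consecutive differences $\Delta_i$, and Bernoulli bridges are translation-covariant — it follows that $\bm L^{\mrm{S6V},(1),\varepsilon}$ also has the Hall-Littlewood Gibbs property with parameter $q$, on the relevant window. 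The convention on the definition of $L^N_i(x)$ outside $|x|\geq g_N N^{2/3}$ (here $N$ is really $\varepsilon^{-1}$) absorbs the boundary-of-domain issues, so this window suffices.

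Next, Assumptions~\ref{as.one-point tightness} and \ref{as.GUE TW}. Using \eqref{e.S6V LE and height function}, $L^{\mrm{S6V},(1),\varepsilon}_1(y)$ is exactly $h^{\mrm{S6V}}(1,0;\floor{\alpha\varepsilon^{-1}}+y,\floor{\varepsilon^{-1}}) + 1 - N - \floor{\mu(\alpha)\varepsilon^{-1}}$ in distribution. I would compare the rescaling implicit in \eqref{e.cL definition}–\eqref{e.scaling oeprator}, with $p=\mu'(\alpha)$ and $\lambda=\tfrac12\mu''(\alpha)$ and $\beta,\sigma$ determined by \eqref{e.scaling coefficients relation} (which one checks coincide with $\beta^{\mrm{S6V}}(\alpha)$ and $\sigma^{\mrm{S6V}}(\alpha)$ from \eqref{e.S6V spatial scaling}, \eqref{e.mu and sigma} — this is the content of the ``same heuristics as Section~\ref{s.asep scalings}'' remark after Definition~\ref{d.s6v sheet}, combined with the relation $\S^{\mrm{S6V},\varepsilon}(0;y)\stackrel{d}{=}$ rescaled $L^{\mrm{cHL},(1)}_1$), against the rescaling in the definition \eqref{e.rescaled s6v definition} of $\S^{\mrm{S6V},\varepsilon}$. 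A direct but routine computation, using the Taylor expansion $\mu\!\big(\tfrac{1}{2\varepsilon^{-1}}(\alpha\varepsilon^{-1}+\beta(\alpha)x\varepsilon^{-2/3})\big)\approx \mu(\alpha)+\mu'(\alpha)\beta(\alpha)x\varepsilon^{-2/3}(\varepsilon^{-1})^{-1}+\tfrac12\mu''(\alpha)\beta(\alpha)^2x^2\varepsilon^{-4/3}(\varepsilon^{-1})^{-2}$, shows that $\cL^{\varepsilon^{-1}}_1(x)+x^2$ (the rescaled $\bm L^{\mrm{S6V},(1),\varepsilon}$) agrees, up to an $o(1)$ additive error and the $\floor{\cdot}$ roundings (which contribute $O(N^{-1/3})$ after rescaling and hence vanish in the limit), with $\S^{\mrm{S6V},\varepsilon}(0;x)+x^2$. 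Therefore Proposition~\ref{p.s6v one-point} gives $\cL^{\varepsilon^{-1}}_1(x)+x^2\xrightarrow{d}\mrm{GUE}\text{-}\mrm{TW}$ for every fixed $x$, which is exactly Assumption~\ref{as.GUE TW}, and a fortiori the one-point tightness estimate \eqref{e.one point tightness assumption}: for any $\delta>0$ pick $M$ so that the GUE-TW distribution puts mass $>1-\delta/2$ on $[-M+1,M-1]$, and for each fixed $x$ pick $\varepsilon$ small enough (depending on $x$) that the prelimit probability is within $\delta/2$; note Assumption~\ref{as.one-point tightness} only requires, for each $\delta$, a single $M$ working for all $x$ with a per-$x$ choice of $N_0$, which is precisely what one-point convergence at each $x$ delivers (and indeed the convergence is to the same law for all $x$ after subtracting $x^2$, so the collection of subsequential limits is a single tight law, as the remark following Assumption~\ref{as.one-point tightness} observes).

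The main obstacle, such as it is, is bookkeeping: one must carefully match the two sets of scaling constants (the abstract $\beta,\sigma$ solving \eqref{e.scaling coefficients relation} versus $\beta^{\mrm{S6V}}(\alpha),\sigma^{\mrm{S6V}}(\alpha)$), track the floor functions and the constant shifts $j-N-\floor{\mu(\alpha)\varepsilon^{-1}}$ through the affine rescaling, and confirm they contribute only vanishing errors, and confirm that the spatial window $\intint{-\floor{\alpha\varepsilon^{-1}}+1,N-1-\floor{\alpha\varepsilon^{-1}}}$ on which \eqref{e.S6V LE and height function} is valid contains $[-g_N N^{2/3},g_N N^{2/3}]$ for a suitable $g_N\to\infty$ (which holds since $N$ is taken of order $\varepsilon^{-1}$ and $\alpha\in(z,z^{-1})$, giving a window of length $\asymp\varepsilon^{-1}\gg\varepsilon^{-2/3}$ with $0$ comfortably in its interior). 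None of this is conceptually hard; it is the standard verification that the problem has been set up consistently.
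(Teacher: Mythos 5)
Your proof is correct and takes essentially the same approach as the paper's (which is only three sentences long): invoke Proposition~\ref{p.L has HL} plus the invariance of the Hall--Littlewood Gibbs property under affine reparametrization and constant shifts to get Assumption~\ref{as.HL Gibbs}, then combine Propositions~\ref{p.colored line ensembles} and~\ref{p.s6v one-point} with~\eqref{e.S6V LE and height function} for Assumptions~\ref{as.one-point tightness} and~\ref{as.GUE TW}. Your version is merely more explicit about the bookkeeping (the reduction via color merging is not strictly needed, since Proposition~\ref{p.L has HL} already covers arbitrary $\sigma$; and the displayed Taylor expansion transcribes the ASEP factors of $2$ from Section~\ref{s.asep scalings} rather than the S6V ones, where the time argument is $\floor{\varepsilon^{-1}}$ not $2\gamma^{-1}\varepsilon^{-1}$ — but as you note, this is routine and does not affect the conclusion).
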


\begin{proof}
Since $\bm L^{\mrm{cHL}, (1)}$ marginally has the Hall-Littlewood Gibbs property (Proposition~\ref{p.L has HL}), and since the Gibbs property is not changed by deterministic constant shifts of the curves, it follows that $\bm L^{\mrm{S6V},(1),\varepsilon}$ has the Hall-Littlewood Gibbs property.
That Assumptions \ref{as.one-point tightness} and \ref{as.GUE TW} hold follows immediately from combining Propositions~\ref{p.colored line ensembles} and \ref{p.s6v one-point} with \eqref{e.S6V LE and height function}.
\end{proof}

\subsubsection{ASEP}
Fix $q\in[0,1)$, $\alpha\in (-1,1)$ and define $\gamma = 1-q$. Let $\theta=40$ and
$$K_{\varepsilon,\theta} = \floor{(2\gamma^{-1} \varepsilon^{-1})^{1+\theta}} - \floor{2\alpha \varepsilon^{-1}}-1.$$
Next we define line ensembles $\bm L^{\mrm{ASEP}, (j),\theta,\varepsilon}$ for $j\in\intint{1,\floor{(2\gamma^{-1}\varepsilon^{-1})^{1+\theta}}}$. Recall $\mu(\alpha) = \smash{\frac{1}{4}(1-\alpha)^2}$ from \eqref{e.mu sigma ASEP}. They are specified by setting, for $y\in\intint{1-K_{\varepsilon,\theta},N+M-K_{\varepsilon,\theta}}$, and where the parameters for the righthand side are $z=\frac{1-\varepsilon}{1-\varepsilon q}$ and $M = N = \smash{\floor{(2\gamma^{-1}\varepsilon^{-1})^{1+\theta}}}$, 
\begin{equation}\label{e.ASEP line ensemble}
\begin{split}
L^{\mrm{ASEP},(j), \theta,\varepsilon}_i(y)%
& := L^{\mrm{cHL},(j)}_i\left(K_{\varepsilon,\theta}+y\right)
- \floor{(2\gamma^{-1} \varepsilon^{-1})^{1+\theta}} + j-1 + \floor{\tfrac{1}{2}(1-\alpha)^2\varepsilon^{-1}}.
\end{split}
\end{equation}
The relation between $L^{\mrm{ASEP},(j),\theta,\varepsilon}_1$ and $\hasep$ (recall \eqref{e.approx asep height function definition} and $h^{\mrm{S6V}}(j,0;\bm\cdot,\floor{(2\gamma^{-1}\varepsilon^{-1})^{1+\theta}}) \stackrel{d}{=} L^{\smash{\mrm{cHL},(j)}}_1(\bm\cdot)$) is
\begin{align}\label{e.line ensemble asep height fn}
L^{\mrm{ASEP},(j),\theta,\varepsilon}_1(\bm\cdot) \stackrel{d}{=} -\hasep(-j, 0; \floor{2\alpha \varepsilon^{-1}}-\bm\cdot, \floor{2\gamma^{-1}\varepsilon^{-1}})  + \floor{\tfrac{1}{2}(1-\alpha)^2\varepsilon^{-1}}.
\end{align}

\begin{lemma}\label{l.asep line ensemble assumptions}
$\bm L^{\mrm{ASEP},(1),\theta,\varepsilon}$ satisfies Assumptions~\ref{as.HL Gibbs}, \ref{as.one-point tightness}, and \ref{as.GUE TW} with $p = -\frac{1}{2}(1-\alpha)$ and $\lambda = \frac{1}{8}$ and with $\varepsilon^{-1}$ in place of $N$.
\end{lemma}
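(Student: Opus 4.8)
\textbf{Proof plan for Lemma~\ref{l.asep line ensemble assumptions}.}
The plan is to mirror the proof of Lemma~\ref{l.s6v line ensemble assumptions}, substituting the colored S6V one-point input (Proposition~\ref{p.s6v one-point}) with the ASEP one-point input that accounts for the effective coupling, namely Corollary~\ref{c.approx asep tightness} (equivalently Proposition~\ref{p.asep one-point tightness} together with Lemma~\ref{l.asep s6v comparison}). First I would verify Assumption~\ref{as.HL Gibbs}: since $\bm L^{\mrm{ASEP},(1),\theta,\varepsilon}$ is obtained from $\bm L^{\mrm{cHL},(1)}$ (with the colored $q$-Boson parameters $z = \frac{1-\varepsilon}{1-\varepsilon q}$ and $N = M = \floor{(2\gamma^{-1}\varepsilon^{-1})^{1+\theta}}$) by a deterministic spatial translation and a deterministic vertical shift, and since Proposition~\ref{p.L has HL} says $\bm L^{\mrm{cHL},(1)}$ marginally has the Hall-Littlewood Gibbs property, and since that property is manifestly invariant under deterministic translations and constant shifts of all curves, the conclusion follows. (One small point to check: the restriction of the domain in Proposition~\ref{p.L has HL} to $\N\times\intint{0,N-1}$ — here one must note that the spatial window $\intint{1-K_{\varepsilon,\theta}, N+M-K_{\varepsilon,\theta}}$ on which $\bm L^{\mrm{ASEP},(1),\theta,\varepsilon}$ is defined sits inside the region where the Gibbs property is available, after translating by $K_{\varepsilon,\theta}$, since we only need the Gibbs property on $\bm L^N|_{\N\times[-g_N N^{2/3}, g_N N^{2/3}]}$ by the convention of Section~\ref{s.line ensemble hypotheses}, and $g_N N^{2/3}$ with $N = \varepsilon^{-1}$ is much smaller than the $\varepsilon^{-\theta-1}$-scale window.)

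Next, for Assumptions~\ref{as.one-point tightness} and \ref{as.GUE TW}, I would combine the distributional identity \eqref{e.line ensemble asep height fn}, which writes $L^{\mrm{ASEP},(1),\theta,\varepsilon}_1(\bm\cdot)$ in terms of $\hasep$, with the definition \eqref{e.rescaled approximate asep definition} of $\S^{\mrm{ASEP},\theta,\varepsilon}$ and Corollary~\ref{c.approx asep tightness}. Concretely, one checks that applying the scaling operator $T_{p,\lambda,\varepsilon^{-1}}$ of \eqref{e.scaling oeprator} with $p = -\tfrac12(1-\alpha) = \mu'(\alpha)$ and $\lambda = \tfrac18 = \tfrac12\mu''(\alpha)$ to $\bm L^{\mrm{ASEP},(1),\theta,\varepsilon}$ produces exactly the object $\S^{\mrm{ASEP},\theta,\varepsilon}$ (up to the spatial reflection $y \mapsto -y$ inherent in \eqref{e.line ensemble asep height fn} and the centering by $\mu(\alpha)2\varepsilon^{-1}$, which matches the additive term $\floor{\frac12(1-\alpha)^2\varepsilon^{-1}}$ in \eqref{e.ASEP line ensemble}). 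This is the same bookkeeping as in Section~\ref{s.asep scalings}: the relations \eqref{e.scaling coefficients relation} with $p,\lambda$ as above are precisely \eqref{e.asep scaling relations} with $\sigma = \sigma(\alpha)$, $\beta = \beta(\alpha)$ from \eqref{e.mu sigma ASEP}--\eqref{e.nu ASEP}. Granting the identification, Corollary~\ref{c.approx asep tightness} states $\S^{\mrm{ASEP},\theta,\varepsilon}(x;y) + (x-y)^2 \xrightarrow{d} \mrm{GUE}\text{-}\mathrm{TW}$, which gives both the one-point tightness of Assumption~\ref{as.one-point tightness} (convergence in distribution to a single law is tight, and the $x$-uniformity comes from the stationarity \eqref{e.two parameter stationarity for ASEP}, i.e. $\S^{\mrm{ASEP},\theta,\varepsilon}(x;y) \stackrel{d}{=} \S^{\mrm{ASEP},\theta,\varepsilon}(0;y-x)$, exactly as in the proof of Proposition~\ref{p.asep one-point tightness}) and the GUE Tracy–Widom convergence of Assumption~\ref{as.GUE TW} at any fixed point.

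I expect the only genuinely non-routine point — and even this is minor — to be checking the arithmetic of the centering constants and the reflection in \eqref{e.line ensemble asep height fn}: one must confirm that the floor functions in \eqref{e.ASEP line ensemble} and in $K_{\varepsilon,\theta}$ align with those in the definition \eqref{e.approx asep height function definition} of $\hasep$ and \eqref{e.rescaled approximate asep definition} of $\S^{\mrm{ASEP},\theta,\varepsilon}$ so that, after applying $T_{p,\lambda,\varepsilon^{-1}}$, the errors are $o(N^{1/3}) = o(\varepsilon^{-1/3})$ and hence vanish in the limit; this is the kind of estimate that is automatic once one tracks that $|{\floor{a}} - a| \le 1$ against the $\varepsilon^{-1/3}$ fluctuation scale. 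Everything else is a direct transcription of the S6V argument. I would therefore write the proof as: ``This is proved exactly as Lemma~\ref{l.s6v line ensemble assumptions}, using \eqref{e.line ensemble asep height fn} in place of \eqref{e.S6V LE and height function} and Corollary~\ref{c.approx asep tightness} in place of Proposition~\ref{p.s6v one-point}; the Hall-Littlewood Gibbs property follows from Proposition~\ref{p.L has HL} as before, and the choice $p = -\tfrac12(1-\alpha)$, $\lambda = \tfrac18$ makes \eqref{e.scaling coefficients relation} coincide with \eqref{e.asep scaling relations}.''
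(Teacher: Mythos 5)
Your proposal is correct and matches the paper's argument: Proposition~\ref{p.L has HL} gives Assumption~\ref{as.HL Gibbs}, and Corollary~\ref{c.approx asep tightness} (which is exactly Proposition~\ref{p.asep one-point tightness} combined with Lemma~\ref{l.asep s6v comparison}) gives Assumptions~\ref{as.one-point tightness} and \ref{as.GUE TW}. The paper's proof is simply a two-sentence deferral to the S6V case with these substitutions, so your more detailed bookkeeping of the centering constants and floor functions is a fuller account of the same route.
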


\begin{proof}
As in the case of $\bm L^{\mrm{S6V},(1),\varepsilon}$, it is immediate from Proposition~\ref{p.L has HL} that $\bm L^{\mrm{ASEP},(1),\theta,\varepsilon}$ has the Hall-Littlewood Gibbs property. It follows from Corollary~\ref{c.approx asep tightness} that Assumptions~\ref{as.one-point tightness} and \ref{as.GUE TW}~hold.
\end{proof}

Next, we set up some of the ingredients for the proof of Theorem~\ref{t.tightness}, which will also be useful in Section~\ref{s.airy sheet} on the tightness and limit of $\S^{\mrm{S6V}, \varepsilon}$ and $\S^{\mrm{ASEP},\varepsilon}$. In particular, we are working in the setup of Theorem~\ref{t.tightness}, i.e., we have a sequence of discrete line ensembles $\bm L^N$ indexed by $N$ and defined on $\N\times\Z$ that satisfy Assumptions~\ref{as.HL Gibbs} and \ref{as.one-point tightness}.

\subsection{Ingredients for proof of tightness}\label{s.tightness ingredients}

Proving tightness of the path measures requires us to establish one-point tightness, as well as control on the modulus of continuity,  for all curves. We state the estimates we need and then use them to give the proof of Theorem~\ref{t.tightness}.

\subsubsection{One-point tightness of all curves} Note that from Assumption~\ref{as.one-point tightness} we have one-point tightness for $L^N_1$ and, since line ensembles which satisfy the Hall-Littlewood Gibbs property must by definition be ordered, we therefore have control on the upper tail of $L^N_k$ for every $k$. The lower tail for the $k$\textsuperscript{th} curve for $k\geq 2$ is more delicate, however, and we record a statement for it now.

\begin{theorem}\label{t.infimum lower tail}
Fix $k\in \N$, $T > 0$, and $\varepsilon>0$. Then there exist $N_0 = N_0(k,T,\varepsilon)$ and $R = R(k,T,\varepsilon)>0$ such that, for $N\geq N_0$,
\begin{align*}
\P\left(\inf_{sN^{-2/3}\in[-T,T]} \left(L^N_k(s) - ps + \lambda s^2N^{-1}\right) \leq -RN^{1/3}\right) < \varepsilon.
\end{align*}

\end{theorem}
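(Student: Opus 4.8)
The plan is to prove Theorem~\ref{t.infimum lower tail} by induction on $k$, simultaneously establishing (as sketched in Section~\ref{s.intro.line ensemble tightness}) a uniform separation between consecutive curves. For $k=1$ the statement follows directly from Assumption~\ref{as.one-point tightness} together with a standard Gibbs-property-plus-random-walk argument controlling the modulus of continuity of the top curve, so that the pointwise lower tail upgrades to a uniform-on-$[-TN^{2/3},TN^{2/3}]$ lower tail. So assume the result is known for curves $1,\dots,k-1$, and moreover (as part of the induction hypothesis) that these curves are uniformly separated on the relevant interval, i.e.\ there is a constant $\eta>0$ and, for each $\delta$, a scale $MN^{1/3}$ below which the event $\{L^N_{i}(s) - L^N_{i+1}(s) \ge \eta N^{1/3} \text{ for all } sN^{-2/3}\in[-T',T'],\ i\le k-1\}$ holds with probability at least $1-\delta$ (with $T'$ slightly larger than $T$, to absorb boundary effects).

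First I would fix a single spatial point, say $s_0 = 0$, and argue that $L^N_k(0)$ cannot be too far below $pN^{2/3}\cdot 0 = 0$ with high probability. Here is where the weak monotonicity of Section~\ref{s.monotonicity} enters: conditioning on $\Fext(k,\intint{a,b},\bm L^N)$ for a large interval $\intint{a,b}$ of length of order $T'N^{2/3}$, the law of $L^N_k$ on $\intint{a,b}$ is that of a Bernoulli bridge reweighted by the Hall--Littlewood weight factor $W$, with upper boundary $L^N_{k-1}$ and lower boundary $L^N_{k+1}$. On the induction-hypothesis event, $L^N_{k-1}$ is uniformly bounded below by (roughly) $pxN^{2/3}-\lambda x^2N^{1/3}-CN^{1/3}$, and the boundary endpoints $L^N_k(a), L^N_k(b)$ are controlled by one-point tightness (upper tail from ordering, lower tail from the $k-1$ curve being above $L^N_k$... no---rather, I would first run the argument conditionally on $L^N_k(a),L^N_k(b)$ being not-too-low, an event I control separately by a union bound over a mesh combined with the $k=1$ or inductive estimates applied to $L^N_{k-1}$ and a crude comparison). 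Using the weak one-curve monotonicity to compare the reweighted bridge measure to an unconstrained (or simply lower-boundary-free) Bernoulli bridge up to a multiplicative constant in partition function, a Brownian-bridge lower-tail estimate then gives $\P(L^N_k(0) \le -RN^{1/3} \mid \mathcal F) \le \delta$ for $R$ large. Then I would transfer this one-point control to a small neighborhood and, by the separation of $L^N_{k-1}$ from $L^N_{k+1}$ (whose role as lower boundary I bound crudely, or which I may first need to lower-bound using the already-established one-point lower tail for $L^N_k$ combined with ordering $L^N_{k+1}\le L^N_k$), conclude that $L^N_k$ itself is separated from $L^N_{k-1}$ by $\ge \eta' N^{1/3}$ at at least one point in the interval with high probability. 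Finally, a chaining/propagation argument over the whole interval $[-TN^{2/3},TN^{2/3}]$, again using the Gibbs property on sub-intervals where the reweighting factor $W$ is essentially $1$ (because separations of order $N^{1/3}\gg \log N$ make the $1-q^d$ factors negligible) and monotone comparisons for a single curve, upgrades pointwise lower-tail and separation control to the uniform statement of the theorem. The details of this last propagation step parallel the corresponding arguments in \cite{corwin2014brownian,corwin2016kpz,corwin2018transversal}, adapted to avoid multi-curve monotonicity.

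The main obstacle will be the absence of stochastic monotonicity for collections of more than one curve, which forces every comparison to be phrased in terms of a single curve at a time. In particular, when lower-bounding $L^N_k$ I cannot simply drop the lower boundary $L^N_{k+1}$ and the upper boundary $L^N_{k-1}$ in a joint monotone coupling; instead I must carefully order the conditioning so that at each stage only one curve is being resampled, using the weak monotonicity to compare partition functions up to constants, and I must independently control the lower boundary curve $L^N_{k+1}$ — which is a priori \emph{lower} in the ensemble and hence not yet handled by the induction — before I can use it. The resolution is the interleaving of the two inductive claims (lower tail for curve $k$, then separation of curve $k$ from curve $k-1$), together with the observation that for the \emph{lower} boundary in the resampling of $L^N_k$ it suffices to have a crude lower bound on $L^N_{k+1}$, which follows from $L^N_{k+1}\le L^N_k$ and one more application of the weak monotonicity, or can be bootstrapped within the same induction. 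Managing the bookkeeping of the various "good" events (endpoint control on a mesh, boundary-curve control, the reweighting factor being close to $1$) and their probabilities across the induction, while keeping the loss of constants under control, is the technically delicate part; the actual Brownian-bridge small-ball and modulus-of-continuity estimates are routine. I expect this to occupy Sections~\ref{s.lower tail} and~\ref{s.uniform separation}.
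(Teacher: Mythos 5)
Your overall architecture matches the paper's: both establish Theorem~\ref{t.infimum lower tail} and Theorem~\ref{t.uniform separation} by a simultaneous induction, with one step giving lower-tail control of curve $k$ from curve $k-1$ (Proposition~\ref{p.lower tail induction step}) and the other giving separation of curves $k-1$ and $k$ from lower-tail control of curve $k$ (Proposition~\ref{p.uniform separation}), and both rely on single-curve weak monotonicity plus chaining. Your base case $k=1$ is also essentially right, because the top curve has no upper-boundary $W$-interaction, so Corollary~\ref{c.partition function comparison with non-int boundaries} applies cleanly and the endpoints at $\pm T'N^{2/3}$ are controlled by Assumption~\ref{as.one-point tightness}.

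However, there is a genuine gap in your inductive step, precisely at the place you flagged as delicate but resolved only by a ``crude comparison.'' You propose to get one-point control of $L^N_k$ by conditioning on a large interval $\intint{a,b}$ and on $L^N_k(a), L^N_k(b)$ being not too low. But nothing in your induction hypothesis controls $L^N_k$ from below at \emph{any} point: the hypothesis concerns $L^N_1,\ldots,L^N_{k-1}$, and the ordering $L^N_k\le L^N_{k-1}$ gives only an \emph{upper} bound on $L^N_k$. Likewise, your suggestion that $L^N_{k+1}\le L^N_k$ yields a ``crude lower bound on $L^N_{k+1}$'' is backwards --- it requires a lower bound on $L^N_k$, which is what you are proving --- and is in any case unnecessary, since the weak monotonicity removes the lower $W$-interaction entirely. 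So the proposed scheme is circular: to bound the bridge you must control its endpoints, and to control the endpoints you need the very bound you are deriving.

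The paper resolves this with two ideas you do not mention. The first (Lemma~\ref{l.low event bound}) furnishes the initial, non-circular input: it shows that the event $\msf{Low}^{[T,U]}_k(R_{\mrm{low}})$, that $L^N_k$ is \emph{uniformly} below $-R_{\mrm{low}}N^{1/3}$ on a long interval $[T,U]$, is unlikely. The mechanism is a proof by contradiction with the one-point tightness of $L^N_1$: on $\msf{Low}^{[T,U]}_k$, the lower boundary is so far down that, conditionally, $L^N_1,\ldots,L^N_{k-1}$ resample essentially as free (non-intersecting) Bernoulli bridges pinned to their parabolically-low endpoints at $\pm TN^{2/3}$ and $\pm UN^{2/3}$; at the midpoint of $[T,U]$ a free bridge tracks the chord, which is below the parabola by $\Theta\bigl((U-T)^2\bigr)N^{1/3}$, so $L^N_1$ would be unusually low at the midpoint, contradicting Assumption~\ref{as.one-point tightness}. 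This is the ``pull-down'' trick you need and do not have. The second idea (Lemma~\ref{l.Gibbs on random interval} feeding Lemma~\ref{l.midpoint lower tail}) lets one propagate from ``not uniformly low on $[T,U]$'' to a pointwise bound: by resampling $L^N_k$ on a \emph{random} interval $\intint{a'N^{2/3},b'N^{2/3}}$ defined as the last/first hitting times of the level $-(R+K)N^{1/3}$ around a target point, the boundary values are pinned by construction (so no a priori control is needed), and moreover between $a'$ and $b'$ the curve is automatically far below $L^N_{k-1}$, making $W_{\mrm{up}}$ uniformly close to $1$ --- another issue you gloss over, since the upper $W$-interaction cannot be removed by the weak monotonicity. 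Without both of these devices, the argument as you have outlined it does not close.
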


We will prove Theorem \ref{t.infimum lower tail} in Sections~\ref{s.lower tail} and \ref{s.uniform separation}. As mentioned in Sections~\ref{s.intro.line ensemble tightness} and \ref{s.lack of monotonicity discussion}, the proof inductively interweaves uniform lower tail control of the $k$\th curve over an interval with control over uniform separation between the $(k-1)$\st and $k$\th curves, both on the fluctuation scale $N^{1/3}$. Obtaining control on these two quantities forms the bulk of the technical work of the proof of Theorem~\ref{t.tightness}.

\subsubsection{Modulus of continuity}

To prove tightness of path measures, we also need control on the modulus of continuity of $L^N_k$. As in earlier works establishing tightness for line ensembles with Gibbs properties, this comes down to controlling the partition function of the Gibbs property, in order to transfer modulus of continuity estimates from the base path measure (in this case, Bernoulli random walk bridges) to the line ensemble. For an interval $[-TN^{2/3}, TN^{2/3}]$, recall from Definition~\ref{d.HL Gibbs} that the partition function of the top $k$ curves~is
\begin{align}\label{e.Z original definition}
Z^{k, [-T,T], \bm L^N(-TN^{2/3}), \bm L^N(T^{2/3}), L^N_{k+1}}_N := \EF[W(B_1, \ldots, B_k, L^N_{k+1})],
\end{align}
where $\F=\Fext(k, \intint{-TN^{2/3}, TN^{2/3}}, \bm L^N)$ and $(B_1, \ldots, B_k)$ is a collection of $k$ independent Bernoulli random walk bridges with $B_i$ from $(-TN^{2/3}, L^N_i(-TN^{2/3}))$ to $(TN^{2/3}, L^N_i(TN^{2/3}))$.

\begin{remark}
Note here that we have written certain quantities, such as $\pm TN^{2/3}$, without floors, though they are meant to be integers to be completely precise. Here as in the rest of the paper we will often drop the requisite floor symbols to ease the notation, since this will not affect our arguments.
\end{remark}

The control we need on the partition function is the following. It is proved in Section~\ref{s.partition function and non-intersection}.

\begin{proposition}\label{p.partition function strong lower bound}
Let $T\geq 2$ and $k\in\N$. Then for any $\varepsilon>0$ there exist $\delta = \delta(k,T,\varepsilon)>0$ and $N_0 = N_0(k, T,\varepsilon)$ such that, for $N\geq N_0$,
\begin{align*}
\P\left(Z^{k, [-T,T], \bm L^N(-TN^{2/3}), \bm L^N(TN^{2/3}), L^N_{k+1}}_N > \delta\right) \geq 1-\varepsilon.
\end{align*}

\end{proposition}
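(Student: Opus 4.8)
\textbf{Proof plan for Proposition~\ref{p.partition function strong lower bound}.}

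The plan is to lower bound the partition function $Z_N^{k,[-T,T],\ldots}$ by restricting the expectation defining it (over $k$ independent Bernoulli random walk bridges $B_1,\dots,B_k$ on $[-TN^{2/3},TN^{2/3}]$) to a favorable event on which the weight factor $W(B_1,\dots,B_k,L^N_{k+1})$ is bounded below by a positive constant. Concretely, $W$ is a product of factors of the form $1-q^{\Delta_i(x-1)}\one_{\Delta_i(x)=\Delta_i(x-1)-1}$, so $W\ge \prod_{i,x}(1-q^{\Delta_i(x-1)})$, and this is bounded below by, say, $\exp(-2q^{m})\cdot(\text{stuff})$ provided all consecutive gaps $\Delta_i(\cdot)$ stay at least $m\asymp \log N$ throughout the interval (and the curves do not cross). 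The key point is that if all gaps are of order $N^{1/3}\gg \log N$, then $W$ is essentially $1$ — each factor is $1-q^{\Theta(N^{1/3})}$ and there are only $O(kTN^{2/3})$ of them. So the whole task reduces to showing that with probability bounded away from $0$ (in the $\bm B$ randomness, conditionally on a good realization of $\F$), the bridges $B_1,\dots,B_k$ together with $L^N_{k+1}$ are uniformly separated on order $N^{1/3}$.

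To set this up I would first use the input estimates to control the conditioning data. By Assumption~\ref{as.one-point tightness} and the ordering of curves (upper tail of all $L^N_i$), together with Theorem~\ref{t.infimum lower tail} applied at index $k+1$ (lower tail of the $(k+1)$st curve uniformly on $[-TN^{2/3},TN^{2/3}]$), there is an event $\mathsf{G}$ of probability at least $1-\varepsilon/2$ (for $N\ge N_0$) on which: the endpoint values $L^N_i(\pm TN^{2/3})$ for $i\le k$ lie within $O(N^{1/3})$ of the parabola $p(\pm TN^{2/3})-\lambda T^2 N^{1/3}$ and are separated from each other by at least, say, $\eta N^{1/3}$ for a small $\eta>0$ (this last point needs a little care: one can get separation of endpoints at a slightly enlarged interval $[-(T+1)N^{2/3},(T+1)N^{2/3}]$ using one-point tightness of the top curve plus the Gibbs/strong-Gibbs resampling and Theorem~\ref{t.infimum lower tail}, then evaluate at $\pm TN^{2/3}$); and the curve $L^N_{k+1}$ stays below $p s - \lambda s^2 N^{-1} - R' N^{1/3}$ uniformly over $sN^{-2/3}\in[-T,T]$... actually we want it not too high, i.e. bounded above, which is immediate from one-point tightness of $L^N_1$ and ordering at the point $0$ is not enough — instead use the upper envelope from Assumption~\ref{as.one-point tightness} and ordering $L^N_{k+1}\le L^N_1$, combined with the lower tail Theorem~\ref{t.infimum lower tail} at index $k+1$ to pin $L^N_{k+1}$ inside a band of width $O(N^{1/3})$ around the parabola. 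On $\mathsf{G}$ all the relevant data is ``order $N^{1/3}$, separated''.

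Now, working on $\mathsf{G}$, I invoke the Gibbs property (Definition~\ref{d.HL Gibbs}): $Z_N^{k,[-T,T],\ldots}=\EF[W(\bm B,L^N_{k+1})]$ where $\bm B$ are independent Bernoulli bridges with the (good) endpoint data. I then lower bound this by $\EF[W(\bm B,L^N_{k+1})\one_{\mathsf{Sep}}]$ where $\mathsf{Sep}$ is the event that each $B_i$ lies within the band $\pm C N^{1/3}$ of the parabola, that $\Delta_i(\cdot)=B_i-B_{i+1}\ge c N^{1/3}$ for all $i\le k-1$ throughout, that $B_1$ stays $\ge cN^{1/3}$ below the (absent, so $+\infty$) top curve — vacuous — and that $B_k - L^N_{k+1}\ge c N^{1/3}$ throughout. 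Standard Bernoulli bridge estimates (a bridge of length $\Theta(N^{2/3})$ stays within $\Theta(N^{1/3})$ of its linear interpolation with positive probability, and ordered bridges with endpoints separated by $\gtrsim \eta N^{1/3}$ remain separated by $\gtrsim cN^{1/3}$ with positive probability — this is a Brownian-bridge-type estimate via KMT or a direct reflection/barrier argument) give $\PF(\mathsf{Sep})\ge \rho(k,T)>0$, a constant not depending on $N$. On $\mathsf{Sep}$ every gap in the weight factor is $\ge cN^{1/3}$, hence $W(\bm B,L^N_{k+1})\ge \prod(1-q^{cN^{1/3}})\ge \tfrac12$ for $N$ large. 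Therefore $Z_N^{k,[-T,T],\ldots}\ge \tfrac12\rho(k,T)=:\delta$ on $\mathsf{G}$, which has probability $\ge 1-\varepsilon$, completing the proof.

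\textbf{Main obstacle.} The delicate part is the lower bound $\PF(\mathsf{Sep})\ge \rho>0$ uniformly in $N$: one must show $k$ independent Bernoulli random walk bridges, conditioned only to have endpoints that are $\Theta(N^{1/3})$-separated, stay ordered and $\Theta(N^{1/3})$-separated (and inside an $O(N^{1/3})$-band around the parabola, and above the deterministic lower curve $L^N_{k+1}$) on the \emph{entire} scaled interval, with probability bounded below independently of $N$. This is where one uses the diffusive scaling matching between Bernoulli bridges and rate-two Brownian bridges (a KMT coupling on each curve, say), reducing to a Brownian-bridge non-crossing/barrier estimate; some bookkeeping is needed because the band and separation scales are exactly $N^{1/3}$ so the Brownian approximation is tight but not degenerate. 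A secondary technical nuisance is arranging, on the event $\mathsf{G}$, that the \emph{endpoints} $L^N_i(\pm TN^{2/3})$ are themselves $\Theta(N^{1/3})$-separated for $i\le k$; for this I would enlarge the interval, apply Theorem~\ref{t.infimum lower tail} and Assumption~\ref{as.one-point tightness} there, and use the (strong) Gibbs property together with a crude lower bound on the partition function at the larger scale — this is essentially an induction on $k$ and is the reason the proposition is naturally proved in tandem with Theorem~\ref{t.infimum lower tail} in Section~\ref{s.partition function and non-intersection}.
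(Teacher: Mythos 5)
Your plan fails at the central step: it is not true that $\P_{\F}(\mathsf{Sep}) \ge \rho(k,T) > 0$ uniformly over the good event $\mathsf{G}$ you construct. The obstruction is the slope of $L^N_{k+1}$ near the deterministic endpoints $\pm TN^{2/3}$, which $\mathsf{G}$ does not control. Concretely, the range control $|\overline L^N_{k+1}(x)+\lambda x^2N^{-1}|\le MN^{1/3}$ on $[-2T,2T]$ is perfectly consistent with $L^N_{k+1}$ having a run of $\ell\asymp N^{1/3}$ flat steps just to the right of $-TN^{2/3}$, during which $\overline L^N_{k+1}$ rises linearly by $\asymp N^{1/3}$. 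The bridge $B_k$ is pinned at $L^N_k(-TN^{2/3})$, only $\eta N^{1/3}$ above $L^N_{k+1}$; over that stretch $\overline B_k$ has zero-mean increments with total fluctuation $O(N^{1/6})$, so $\overline B_k-\overline L^N_{k+1}$ goes negative with probability $1-\exp(-cN^{1/3})$. Thus $\P_{\F}(\mathsf{Sep})$ — and indeed $Z_N$ itself — can be exponentially small in $N^{1/3}$ on a slice of $\mathsf{G}$, and your pointwise bound $Z_N\ge \tfrac12\rho$ on $\mathsf{G}$ is false. Adding a modulus-of-continuity condition on $L^N_{k+1}$ to $\mathsf{G}$ would fix this, but the only such estimate in the paper is Corollary~\ref{c.mod con for L}, which is itself a \emph{consequence} of Proposition~\ref{p.partition function strong lower bound}, so this route is circular. (Your acknowledged ``secondary nuisance'' about endpoint separation, by contrast, is fine: Theorem~\ref{t.uniform separation} is already available at this point in the paper's logical order.)

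The paper's proof handles exactly this issue with two ingredients you did not anticipate. First, rather than working on the fixed interval $[-TN^{2/3},TN^{2/3}]$, it resamples on a \emph{random} stopping domain $[U_1N^{2/3},U_2N^{2/3}]\supset[-TN^{2/3},TN^{2/3}]$ whose endpoints $U_1,U_2$ are chosen (via the deterministic Lemma~\ref{l.slope control}, under only the range bound $\msf{RangeCtrl}$) so that $\overline L^N_{k+1}$ satisfies the slope control \eqref{e.lower bdy control for partition function} there; this is what lets Lemma~\ref{l.proxy partition function lower bound} produce bridges that get clear of $L^N_{k+1}$ near the boundary with uniformly positive probability. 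Second — and this is the decisive structural idea — the paper does not lower bound $Z_N$ pointwise at all. It compares the Gibbs measure $\P_N$ on $[U_1N^{2/3},U_2N^{2/3}]$ with the modified measure $\tilde\P_N$ of \eqref{e.P^N law} in which the interaction is switched off on $[-TN^{2/3},TN^{2/3}]$; the Radon--Nikodym derivative \eqref{e.rn derivative for turned off interaction} is precisely $(\tilde Z^{k,T}_N)^{-1}Z^{k,T}_N$, so the event $\{Z_N<\delta\}$ is automatically size-biased, and one gets $\P(Z_N<\delta)\le \delta\rho^{-1}+\tfrac12\varepsilon$ once $\tilde Z^{k,T}_N\ge\rho$ on $\msf{Fav}$. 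This size-biased sampling trick (from \cite{corwin2014brownian}) is what encodes the fact that an $\F$-configuration making $Z_N$ small must itself be atypical — a fact your decomposition into $\mathsf{G}$ and $\mathsf{G}^c$ cannot see.
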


The control we need on the base path measure of Bernoulli random walk bridges is as follows. As its proof is quite standard, we defer it to Appendix~\ref{s.random gibbs}.

\begin{lemma}\label{l.random walk bridge fluctuation}
There exist $C$ and $c>0$ such that, for $n\in\N$, $p\in(-1,0)$, $B$ a Bernoulli random walk bridge from $(0,0)$ to $(n, pn)$, all $0\leq y<z \leq n$, and $M>0$,
\begin{align*}
\P\left(\sup_{x\in[y,z]}|B(x) - B(y) - p(x-y)| > M|z-y|^{1/2}\right) \leq C\exp(-cM^2).
\end{align*}

\end{lemma}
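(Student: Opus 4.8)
\textbf{Proof plan for Lemma~\ref{l.random walk bridge fluctuation}.}

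The plan is to reduce the fluctuation estimate for the Bernoulli random walk bridge to the corresponding estimate for an unconditioned random walk, and then to use a maximal inequality together with the standard sub-Gaussian tail bound for sums of bounded i.i.d.\ increments. First I would set up the unconditioned walk: let $X_1, X_2, \ldots$ be i.i.d.\ with $\P(X_i = 0) = 1+p$ and $\P(X_i = -1) = -p$ (so $\E X_i = p$, valid since $p \in (-1,0)$), and let $S_m = \sum_{i=1}^m X_i$ be the associated Bernoulli random walk started from $0$. The bridge $B$ on $\intint{0,n}$ from $(0,0)$ to $(n,pn)$ is then the conditional law of $(S_m)_{m=0}^n$ given $S_n = pn$ (here $pn$ is understood as the nearest integer; the conditioning event has probability of order $n^{-1/2}$ by the local central limit theorem, which is where the conditioning cost enters).

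The key steps, in order, are as follows. (1) Centering: write $\tilde S_m := S_m - pm$ for the centered walk, which is a martingale with increments bounded by $1$ in absolute value; the quantity to control is $\sup_{x \in [y,z]} |\tilde S_x - \tilde S_y|$, and by the reflection/Etemadi-type maximal inequality for sums of independent increments (or Doob's maximal inequality applied to the martingale), it suffices to bound the tail of $|\tilde S_z - \tilde S_y|$ up to losing a constant factor and possibly replacing $M$ by $M/3$. (2) Sub-Gaussian tail: since $\tilde S_z - \tilde S_y$ is a sum of $|z - y|$ i.i.d.\ centered increments bounded by $1$, Hoeffding's inequality gives $\P(|\tilde S_z - \tilde S_y| > t) \leq 2\exp(-2t^2/|z-y|)$, so taking $t = \tfrac{1}{3} M |z-y|^{1/2}$ yields a bound of the form $C\exp(-cM^2)$ for the unconditioned walk. (3) Removing the conditioning: on the event $\{S_n = pn\}$, whose probability is at least $c' n^{-1/2}$ by the local CLT for this lattice walk, the conditional probability of the bad event is at most $(c' n^{-1/2})^{-1}$ times the unconditional probability. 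This gives an extra polynomial factor $n^{1/2}$, which is harmless only if $M$ is not too small relative to $n$ — but for the regime where $M^2 \gtrsim \log n$ the Gaussian factor absorbs the polynomial, and for the complementary regime $M = O(\sqrt{\log n})$ the claimed bound $C\exp(-cM^2)$ is of order $1$ and hence trivially true after enlarging $C$. Actually, the cleaner route avoiding case analysis is to not condition on the exact endpoint but instead to use a direct comparison: one can bound the bridge increment over $[y,z]$ by comparing with two independent walks run from each endpoint and using that the conditional density of $S_z$ given $S_n = pn$ is comparable (uniformly, by the local CLT) to the unconditional density of $S_{z-y}$ shifted appropriately; I would phrase the argument this second way to keep constants clean.

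The main obstacle is the step of transferring the maximal inequality through the bridge conditioning: the naive bound $\P_{\mathrm{bridge}}(\mathsf A) \leq \P(\mathsf A)/\P(S_n = pn)$ loses a factor $n^{1/2}$, and while this is absorbable as noted above, making the argument uniform in all triples $0 \le y < z \le n$ and all $n$ without an unpleasant case split requires a small amount of care — specifically one wants a maximal inequality \emph{for the bridge itself}. The standard fix, which I would use, is the observation that the bridge restricted to $\intint{y,z}$ is itself (conditionally on its endpoint values $B(y), B(z)$) a Bernoulli bridge on a shorter interval, and the law of $(B(y), B(z))$ has a density with respect to the product of the marginals that is bounded above by a universal constant uniformly in $n$ and in $0 \le y < z \le n$, again by the local CLT; this lets one reduce to a bridge of length $z - y$ between deterministic endpoints whose difference from $p(z-y)$ is itself controlled, and then apply a maximal inequality (e.g.\ the one in \cite[Lemma 2.5]{corwin2018transversal} or a direct reflection argument for Bernoulli bridges) with only universal constants. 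Assembling these pieces gives the stated bound with absolute constants $C, c$, independent of $n$, $p$, $y$, $z$, and $M$.
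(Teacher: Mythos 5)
You correctly flag the main obstacle, namely transferring the free-walk estimate through the bridge conditioning, but neither of your two proposed fixes closes it as written. The case split does not work: for $M$ a large fixed constant and $n\to\infty$, the naive bound $\P_{\mathrm{bridge}}\leq n^{1/2}\P_{\mathrm{free}}\leq n^{1/2}\exp(-c_0M^2)$ diverges, whereas the target $C\exp(-cM^2)$ is a fixed constant strictly less than $1$; there is an entire intermediate range of $M$ (roughly $M^2$ between $O(1)$ and $\log n$) where the naive bound is vacuous and the target is not, so the polynomial loss cannot be absorbed by enlarging an absolute constant $C$. Your density-comparison fix is also flawed as stated: the Radon--Nikodym derivative of the joint law of $(B(y),B(z))$ with respect to the product of its marginals is not uniformly bounded. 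A local CLT computation shows it carries a polynomial prefactor of order $\sqrt{(n-y)z/((z-y)n)}$, which grows like $\sqrt{n/(z-y)}$ when $y$ and $z$ are both near $n/2$ with $z-y$ small, and this is not cancelled by the Gaussian factors at the typical values $a\approx py$, $b\approx pz$. A workable density comparison does exist for the single increment $B(z)-B(y)$, whose law is $\P(S_{z-y}=v)\P(S_{n-(z-y)}=pn-v)/\P(S_n=pn)$, bounded by $C\P(S_{z-y}=v)$ when $z-y\leq n/2$ and by $C\P(S_{n-(z-y)}=pn-v)$ otherwise; but that reintroduces exactly the case split you hoped to avoid.

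The paper bypasses all of this. Since the bridge is uniform over Bernoulli paths with the given endpoints, its increment multiset is deterministic ($|p|n$ many $-1$s, the rest $0$s) and the increments are exchangeable; hence $-(B(z)-B(y))$ is exactly a hypergeometric random variable with parameters $(n,|p|n,z-y)$. Hoeffding's theorem on hypergeometric concentration \cite[Theorems 1 and 4]{hoeffding1963probability} then gives the sub-Gaussian two-point bound directly, with no conditioning loss, no local CLT, and no case split. The supremum is handled afterwards by a dyadic chaining/union bound over scales (as in the proofs of Proposition~\ref{p.lower tail induction step} or Lemma~\ref{l.no quick drop uniform}), rather than by a martingale maximal inequality; this is the more natural order of operations here, since the bridge is not a martingale with respect to its own filtration and the maximal-inequality route is precisely what re-introduces the conditioning problem. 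The exchangeability observation is the missing idea in your plan.
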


At several locations, we will also need to use normal or Brownian bridge approximations for Bernoulli random walk bridges. We will require the Brownian bridges to have an explicit variance coming from the increment of the Bernoulli random walk bridge being approximated. For this we make use of the following Koml\'os-Major-Tusn\'ady (KMT) coupling result for Bernoulli random walk bridges, which is obtained by combining \cite[Theorem 4.5]{corwin2018transversal} with the exponential Markov inequality.

\begin{lemma}[KMT coupling, {\cite[Theorem 4.5]{corwin2018transversal}}]\label{l.KMT}
Let $p \in (-1, 0)$ and $\{z_N\}_{N\in\N}\subseteq \R$. For each $N$, let $B^N$ be a Bernoulli random walk bridge from $(0,0)$ to $(N,z_N)$. There exist positive constants  $C, c < \infty$ (depending on $p$) such that
there is a coupling between $B^N$ and a Brownian bridge $\Bbr$ with variance $p(1 - p)$ from $(0,0)$ to $(N, z_N)$ with
\begin{align*}
\P\left(\sup_{0\leq t\leq N} |\Bbr(t) - B^N(t)| > R\right) \leq C\exp\Bigl(-cR + C(\log N)^2 + N^{-1}|z_N-pN|^2\Bigr).
\end{align*}
\end{lemma}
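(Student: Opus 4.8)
\textbf{Proof proposal for Lemma~\ref{l.KMT} (KMT coupling for Bernoulli random walk bridges).}

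The statement is quoted from \cite[Theorem 4.5]{corwin2018transversal}, so the plan is essentially to recall that result in its native form and then convert the resulting moment generating function bound into the claimed tail bound via a Chernoff-type argument. The theorem in \cite{corwin2018transversal} provides, for each $N$ and endpoint $z_N$, a coupling of the Bernoulli random walk bridge $B^N$ from $(0,0)$ to $(N,z_N)$ with a Brownian bridge $\Bbr$ having the matching variance parameter $p(1-p)$ and the same endpoints, together with an estimate of the form $\E\bigl[\exp\bigl(c_0\sup_{0\le t\le N}|\Bbr(t)-B^N(t)|\bigr)\bigr] \le C_0\exp\bigl(C_0(\log N)^2 + N^{-1}|z_N-pN|^2\bigr)$ for suitable constants $c_0, C_0$ depending only on $p$ (the $(\log N)^2$ term is the usual KMT logarithmic error and the $N^{-1}|z_N-pN|^2$ term accounts for the atypicality of the endpoint relative to the drift $p$). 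First I would cite this and set up the notation, being careful that the variance normalization in \cite{corwin2018transversal} matches the one we want here (rescaling the constants $c, C$ if necessary).

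Next I would apply the exponential Markov inequality: for any $R>0$,
\begin{align*}
\P\left(\sup_{0\le t\le N}|\Bbr(t)-B^N(t)| > R\right)
&\le e^{-c_0 R}\,\E\!\left[\exp\!\left(c_0 \sup_{0\le t\le N}|\Bbr(t)-B^N(t)|\right)\right]\\
&\le C_0 \exp\!\left(-c_0 R + C_0(\log N)^2 + N^{-1}|z_N-pN|^2\right).
\end{align*}
Setting $C = C_0$ and $c = c_0$ (and relabeling) gives exactly the claimed bound. The monotone relabeling of constants to absorb the factor $C_0$ out front into the exponential, or simply to present it as a prefactor, is routine.

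There is essentially no serious obstacle here, since the hard analytic content — the KMT-type strong approximation with explicit variance — is imported wholesale from \cite[Theorem 4.5]{corwin2018transversal}. The only points requiring a little care are: (i) checking that the Brownian bridge in \cite{corwin2018transversal} is indeed normalized to have increment variance $p(1-p)$ per unit time (matching the per-step variance of a Bernoulli$(-p)$ increment, whose variance is $(-p)(1+p) = -p(1-p)$... here one should double-check the sign conventions, as the increments lie in $\{0,-1\}$ so the mean is $p\in(-1,0)$ and the variance is $|p|(1-|p|) = p(1-p)$ only under the convention $p<0$; I would state this explicitly), and (ii) ensuring the coupling is genuinely joint, i.e., $B^N$ and $\Bbr$ live on the same probability space, which is part of the cited theorem's conclusion. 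Once those bookkeeping points are settled, the proof is a one-line application of Markov's inequality to the exponential moment bound, and I would present it as such.
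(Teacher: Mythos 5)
Your proposal is correct and matches the paper's approach exactly: the paper states the lemma "is obtained by combining \cite[Theorem 4.5]{corwin2018transversal} with the exponential Markov inequality," which is precisely what you do. The bookkeeping observations about variance normalization and the joint coupling are sensible checks but do not change the argument.
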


Note that the variance of the Brownian bridge being $p(1-p)$ can be understood by the fact that the Bernoulli random walk bridge, if it has the endpoint $(N,z_N)\approx (N, pN)$, will behave like a Bernoulli random walk with parameter $p$, whose increments indeed have variance $p(1-p)$.

Assuming Proposition~\ref{p.partition function strong lower bound}, we can give the following corollary on the modulus of continuity of $L^N_k$. Recall from Section~\ref{s.line ensemble hypotheses} that we view $L^N_k$ as a $\R$-valued function via linear interpolation.

\begin{corollary}\label{c.mod con for L}
Let $T\geq 2$ and $k\in\N$. For any $\varepsilon>0$ and $\rho>0$, there exist $\delta = \delta(k,T,\varepsilon,\rho)>0$ and $N_0 = N_0(k,T,\varepsilon,\rho)$ such that, for $N\geq N_0$,
\begin{align*}
\P\left(\sup_{|x|, |y| \leq TN^{2/3}, |x-y|\leq \delta N^{2/3}} | L^N_k(x) - L^N_k(y) - p(x-y)| \geq \rho N^{1/3}\right) \leq \varepsilon.
\end{align*}

\end{corollary}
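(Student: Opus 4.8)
\textbf{Proof proposal for Corollary~\ref{c.mod con for L}.}

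The plan is to transfer a modulus of continuity estimate for Bernoulli random walk bridges to $L^N_k$ using the Hall-Littlewood Gibbs property, with the partition function lower bound from Proposition~\ref{p.partition function strong lower bound} converting conditional estimates on the bridge ensemble into unconditional ones for the line ensemble. First I would fix the interval $[-TN^{2/3}, TN^{2/3}]$ and apply the Hall-Littlewood Gibbs property for the top $k$ curves on this interval: conditionally on $\F = \Fext(k, \intint{-TN^{2/3}, TN^{2/3}}, \bm L^N)$, the law of $(L^N_1, \ldots, L^N_k)$ on the interval is that of $k$ independent Bernoulli random walk bridges $(B_1, \ldots, B_k)$ with the matching endpoints, reweighted by $W(\bm B, L^N_{k+1})/Z$ where $Z$ is the partition function in \eqref{e.Z original definition}. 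Since $0 \le W \le 1$, for any event $\msf A$ measurable with respect to the curves on the interval we have the crude but crucial bound $\PF(\msf A) \le Z^{-1} \cdot \P_{\mathrm{RWB}}(\msf A)$, where $\P_{\mathrm{RWB}}$ is the law of the $k$ independent bridges. Taking $\msf A$ to be the bad modulus-of-continuity event $\{\sup_{|x|,|y|\le TN^{2/3}, |x-y|\le \delta N^{2/3}} |L^N_k(x) - L^N_k(y) - p(x-y)| \ge \rho N^{1/3}\}$, we get $\PF(\msf A) \le Z^{-1}\,\P_{\mathrm{RWB}}(\msf A_k)$ where $\msf A_k$ is the corresponding event for the single bridge $B_k$.

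Next I would estimate $\P_{\mathrm{RWB}}(\msf A_k)$. The bridge $B_k$ runs between endpoints $L^N_k(\pm TN^{2/3})$; by Assumption~\ref{as.one-point tightness} (and the ordering of curves, which controls $L^N_k$ from above, together with Theorem~\ref{t.infimum lower tail} controlling it from below) these endpoints lie within $O(N^{1/3})$ of the parabola values $\mp pTN^{2/3} - \lambda T^2 N^{1/3}$ with probability at least $1-\varepsilon/3$, so in particular the bridge has overall slope $p + O(N^{-1/3})$. On the complement of a further event of small probability (using Theorem~\ref{t.infimum lower tail} to locate the endpoints of $B_k$), Lemma~\ref{l.random walk bridge fluctuation} applied to windows of width $\delta N^{2/3}$ — after subtracting the affine interpolant between endpoints, which differs from the line of slope $p$ by $O(N^{1/3})$ uniformly, hence is negligible relative to $\rho N^{1/3}$ once $\delta$ is small — shows that the increment of $B_k$ over any such window exceeds $\frac{1}{2}\rho N^{1/3}$ with probability at most $C\exp(-c\rho^2/\delta)$ for a single window; a union bound over $O(\delta^{-1})$ overlapping windows (it suffices to consider a net of windows since the modulus over general pairs is comparable) gives $\P_{\mathrm{RWB}}(\msf A_k) \le C\delta^{-1}\exp(-c\rho^2\delta^{-1/2})$ or similar, which for fixed $\rho$ can be made smaller than $\varepsilon^2$ by choosing $\delta$ small. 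A clean alternative is to first couple $B_k$ to a Brownian bridge of variance $p(1-p)$ via the KMT estimate Lemma~\ref{l.KMT} (the error there is $\exp(-cR + C(\log N)^2 + \ldots)$, which at scale $R = N^{1/3}$ is negligible), and then use the standard modulus of continuity of Brownian bridge.

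Finally I would assemble the pieces. Let $\msf B$ be the event $\{Z^{k,[-T,T],\ldots}_N > \delta_0\}$, which by Proposition~\ref{p.partition function strong lower bound} has probability at least $1-\varepsilon/3$ for a suitable $\delta_0 = \delta_0(k,T,\varepsilon)$ and $N$ large; note $\msf B \in \F$. Then
\begin{align*}
\P(\msf A)
&\le \P(\msf A \cap \msf B) + \P(\msf B^c)
\le \E\bigl[\one_{\msf B}\PF(\msf A)\bigr] + \tfrac{\varepsilon}{3}
\le \delta_0^{-1}\,\E\bigl[\P_{\mathrm{RWB}}(\msf A_k)\bigr] + \tfrac{\varepsilon}{3}.
\end{align*}
Here $\P_{\mathrm{RWB}}(\msf A_k)$ is itself a $\F$-measurable random variable (it depends on the random endpoints); using the endpoint-localization event described above, $\P_{\mathrm{RWB}}(\msf A_k) \le C\delta^{-1}\exp(-c\rho^2\delta^{-1/2})$ off an event of probability $\varepsilon/3$, so $\E[\P_{\mathrm{RWB}}(\msf A_k)] \le C\delta^{-1}\exp(-c\rho^2\delta^{-1/2}) + \varepsilon/3$. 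Choosing $\delta = \delta(k,T,\varepsilon,\rho)$ small enough that $\delta_0^{-1}(C\delta^{-1}\exp(-c\rho^2\delta^{-1/2}) + \varepsilon/3) \le \varepsilon/3$ (legitimate since $\delta_0$ depends only on $k,T,\varepsilon$, fixed before $\delta$) gives $\P(\msf A) \le \varepsilon$ for all $N \ge N_0$, as desired. The main obstacle I anticipate is handling the randomness of the bridge endpoints $L^N_k(\pm TN^{2/3})$ carefully — one must ensure the affine interpolant subtraction is controlled uniformly and that the $\F$-conditional estimate is integrated properly — but this is routine given Assumption~\ref{as.one-point tightness}, the ordering of curves, and Theorem~\ref{t.infimum lower tail}; there is no genuinely new idea needed beyond the partition function bound already in hand.
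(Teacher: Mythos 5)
Your proposal is correct and follows essentially the same route as the paper's proof: apply the Hall--Littlewood Gibbs property on $[-TN^{2/3},TN^{2/3}]$, use $W\le 1$ to bound $\PF(\msf{ModCon}_N(\delta,\rho,L^N_k))$ by $Z_N^{-1}\PF(\msf{ModCon}_N(\delta,\rho,B_k))$, restrict to a favorable $\F$-measurable event on which $Z_N$ is bounded below (Proposition~\ref{p.partition function strong lower bound}) and the endpoints of $B_k$ are controlled (Assumption~\ref{as.one-point tightness}, ordering, and Theorem~\ref{t.infimum lower tail}), and then invoke Lemma~\ref{l.random walk bridge fluctuation}. You spell out the window-and-union-bound chaining and the affine-interpolant correction that the paper leaves implicit (and your single-window exponent should be $\exp(-c\rho^2/\delta)$ with a union bound over $O(\delta^{-1})$ windows, a harmless slip), but the structure and the key inputs are the same.
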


\begin{remark}\label{r.event name convention}
Before giving the proof of Corollary~\ref{c.mod con for L}, we point out a notational practice that will be used many times throughout the rest of the paper: we will often introduce events (e.g., $\msf A(X)$) whose definition involves an abstract $\R$-valued process $X$ and then in the course of the proof write different processes (e.g., $L^N_k$) in place of $X$ to mean the event whose definition is that of $\msf A(X)$ with the process in question in place of $X$. In spite of a random process being part of the notation for the event, the event itself is fixed, i.e., does not depend on the instantiation of the process. This convention will be convenient because when applying the Hall-Littlewood Gibbs property we will often be working with events whose only difference is that a collection of Bernoulli random walk bridges is in place of a collection of curves of the line ensemble $\bm L^N$.
\end{remark}

\begin{proof}[Proof of Corollary~\ref{c.mod con for L}]
For a random process $X:[-TN^{2/3},TN^{2/3}]\to\R$, define
\begin{equation*}
\msf{ModCon}_N(\delta,\rho, X) := \left\{\sup_{|x|, |y| \leq TN^{2/3}, |x-y|\leq \delta N^{2/3}} | X(x) - X(y) - p(x-y)| \geq \rho N^{1/3}\right\}.
\end{equation*}
So $\msf{ModCon}_N(\delta,\rho, L^N_k)$ is the event whose probability we are trying to upper bound. Let $\F = \Fext(k, [-TN^{2/3},TN^{2/3}], \bm L^N)$,
\begin{align*}
Z_N := Z^{k, [-T,T], \bm L^N(-T N^{2/3}), \bm L^N(T N^{2/3}), L^N_{k+1}}_N,
 \end{align*}
and $\msf{Fav}(\delta', M)$  be the event defined by
\begin{align*}
\msf{Fav}(\delta', M) := \left\{Z_N > \delta'\right\} \cap \left\{|L^N_1(\pm T N^{2/3})|, |L^N_k(\pm T N^{2/3})| \leq MN^{1/3}\right\};
\end{align*}
we pick $\delta'$ and $M$ such that $\P(\msf{Fav}(\delta', M)) \geq 1-\varepsilon$, which is possible by Proposition~\ref{p.partition function strong lower bound}, one-point tightness of $L^N_1$ from Assumption~\ref{as.one-point tightness}, and Theorem~\ref{t.infimum lower tail}. Note that this event is $\F$-measurable.

Now by using this bound on $\P(\msf{Fav}(\delta', M))$, the Hall-Littlewood Gibbs property, and the fact that $W\leq 1$ ($W$ being from Definition~\ref{d.weight factor}), the probability we are bounding equals
\begin{align*}
\E\left[\PF\left(\msf{ModCon}_N(\delta,\rho, L^N_k)\right)\right]
&\leq \E\left[\PF\left(\msf{ModCon}_N(\delta,\rho, L^N_k)\right)\one_{\msf{Fav}(\delta', M)}\right] +  \varepsilon\\
&= \E\left[Z_N^{-1}\EF\left[\one_{\msf{ModCon}_N(\delta,\rho, B_k)}W\left(\bm B, L^N_{k+1}\right)\right]\one_{\msf{Fav}(\delta', M)}\right] +  \varepsilon\\
&\leq \E\left[Z_N^{-1}\PF\left(\msf{ModCon}_N(\delta,\rho, B_k)\right)\one_{\msf{Fav}(\delta', M)}\right] + \varepsilon,
\end{align*}
where  $\bm B=(B_1, \ldots, B_k)$ is a collection of $k$ independent Bernoulli random walk bridges with $B_i$'s endpoints given by $L^N_i(-TN^{2/3})$ and $L^N_i(T N^{2/3})$. Since we have a lower bound of $\delta'$ on $Z_N$ on the event $\msf{Fav}(\delta', M)$, we obtain that the righthand side of the previous display is upper bounded by
\begin{align*}
(\delta')^{-1}\cdot \E\left[\PF\left(\msf{ModCon}_N(\delta,\rho, B_k)\right)\one_{\msf{Fav}(\delta', M)}\right] + \varepsilon.
\end{align*}
By the modulus of continuity estimates for Bernoulli random walks from Lemma~\ref{l.random walk bridge fluctuation}, the probability inside the expectation in the previous line can be made smaller than $\delta'\varepsilon$ for an appropriately small choice of $\delta$. Relabeling $\varepsilon$ completes the proof.
\end{proof}

Assuming Theorem~\ref{t.infimum lower tail} on the lower tail and Proposition~\ref{p.partition function strong lower bound} (therefore Corollary~\ref{c.mod con for L}), we may quickly give the proof of tightness.

\begin{proof}[Proof of Theorem~\ref{t.tightness}]
Recall that $\bm \cL^N$ is defined by \eqref{e.cL definition}.
By an easy extension of \cite[Theorem 7.3]{billingsley2013convergence} (see also \cite[Lemma 2.4]{dimitrov2021tightness}), to show tightness of $\bm\cL^N$ it suffices to show, for every $k\in\N$:
\begin{enumerate}
  \item $\lim_{R\to\infty}\lim_{N\to\infty} \P\left(|\cL^N_k(0)| \geq R\right) = 0$ and

  \item For all $\varepsilon>0$ and $M\in\N$,
  \begin{align*}
  \lim_{\delta\to 0}\lim_{N\to\infty} \P\left(\sup_{ |x|, |y|\leq M, |x-y|\leq \delta} |\cL^N_k(x) - \cL^N_k(y)| \geq \varepsilon\right) = 0.
  \end{align*}
\end{enumerate}

The first item is immediate from Theorem~\ref{t.infimum lower tail} and Assumption~\ref{as.one-point tightness}, using the ordering of the curves, while (2) follows from Corollary~\ref{c.mod con for L}.
\end{proof}

\section{Airy sheet and tightness of S6V sheet}
\label{s.airy sheet}

In this section we address tightness and convergence to the Airy sheet of the colored height functions of our models. As in Section~\ref{s.properties of S6V and ASEP}, we restrict to packed boundary or initial conditions.

\subsection{Convergence to the Airy sheet}\label{s.convergence to Airy sheet}
We will give a single proof of Theorems~\ref{t.asep airy sheet} and \ref{t.s6v airy sheet} using a unified notation for the objects arising from both S6V and ASEP, as follows. We will use $\bm L^{(j), N}$ for line ensembles corresponding to different colors in the two models. The S6V case will correspond to identifying $\bm L^{(j), N}$ with $\bm L^{\mrm{S6V}, (j), \varepsilon}$, and the ASEP case to identifying the same with $\bm L^{\mrm{ASEP},(j), \theta, \varepsilon}$, as defined in \eqref{e.S6V line ensemble} and \eqref{e.ASEP line ensemble}, and in both cases under the identification of $N$ with $\varepsilon^{-1}$. Next recall the scaling operator $T_{p,\lambda,N}$ from \eqref{e.scaling oeprator}. For a sequence (indexed by $N$) of collections of line ensembles $\{\bm L^{(j),N}\}_j$ such that $\smash{\bm L^{(1),N}}$ satisfies Assumptions~\ref{as.HL Gibbs}, \ref{as.one-point tightness} (with parameters $p\in(-1,0)$ and $\lambda>0$), and \ref{as.GUE TW}, we define the rescaled line ensemble $\bm \cL^{(j), N}$ by
\begin{align*}
\bm \cL^{(j),N} = T_{p,\lambda,N}(\bm L^{(j),N});
\end{align*}
we will use this definition and assume that $\bm L^{(1)}$ satisfies Assumptions~\ref{as.HL Gibbs}, \ref{as.one-point tightness}, and \ref{as.GUE TW} throughout this section without further comment. Note that Theorem~\ref{t.line ensemble convergence to parabolic Airy} implies $\smash{\bm \cL^{(1), N}\stackrel{d}{\to} \bm\cP}$, the parabolic Airy line ensemble (Definition~\ref{d.parabolic Airy line ensemble}).

Recall that the top lines of the line ensembles $\bm L^{\mrm{S6V}, (j), \varepsilon}$ and $\bm L^{\mrm{ASEP},(j), \theta, \varepsilon}$ correspond to the height functions of arrows or particles of color above a threshold determined by $j$ (the threshold being $j$ for S6V and $-j$ for ASEP), and that varying $j$ under the colored coupling of the underlying models corresponds to varying the location of the start of the step initial condition.
To encode the line ensembles corresponding to the step initial condition started, in rescaled coordinates, at $x$ in a notationally unified way, we define the following. Recalling $\beta$ from \eqref{e.scaling coefficients relation},~let
\begin{align}\label{e.rescaled line ensemble with initial point}
\cL^N_{k}(x;y) := \cL^{(\floor{\beta xN^{2/3}}), N}_{k}(y).
\end{align}

Next we define $\S^N:\R^2\to\R$ by
\begin{align}\label{e.rescaled h general definition}
\S^N(x;y) := \cL^N_1(x;y)
\end{align}
for $(x,y)\in[-N^{1/6}, N^{1/6}]^2$ and in an arbitrary way such that the resulting function is continuous for all $(x,y)\not\in [-N^{1/6}, N^{1/6}]^2$; the latter will not affect our limit statements since we work in the topology of uniform convergence on compact sets.
This definition encompasses the S6V and ASEP cases by taking $(x,y)\mapsto \S^N(x;y)$ to equal $\S^{\mrm{S6V},\varepsilon}$ and $(x,y)\mapsto\S^{\mrm{ASEP},\theta,\varepsilon}(-x; -y)$ respectively (as defined in \eqref{e.rescaled s6v definition} and \eqref{e.rescaled asep definition}), with $N=\floor{\varepsilon^{-1}}$. That this is consistent with the definitions of $\S^{\mrm{S6V}, \varepsilon}$ and $\bm L^{\mrm{S6V},(j),\varepsilon}$ (and the analogs for ASEP) follows from the definitions \eqref{e.S6V line ensemble} for S6V and \eqref{e.ASEP line ensemble} for ASEP, along with Proposition~\ref{p.colored line ensembles} (relating the colored Hall-Littlewood line ensemble and the S6V height functions).

We also assume the following stationarity property for all $z$ fixed, where $g_N\leq N$ is a fixed sequence with $g_N\to\infty$ as $N\to\infty$: as processes in $x,y$ over the domain defined by $x,y,x+z,y+z\in[-g_N, g_N]$,
\begin{align}\label{e.rescaled two parameter stationarity}
\S^N(x;y) \stackrel{d}{=} \S^{N}(x+z;y+z);
\end{align}
that this property holds for the S6V and ASEP cases follows for S6V from \eqref{e.two parameter stationarity} (and the presence of the $+j$ term in the definition \eqref{e.S6V line ensemble}) along with \eqref{e.S6V LE and height function}, and for ASEP from \eqref{e.two parameter stationarity for ASEP} and \eqref{e.line ensemble asep height fn}.

We will soon need to invoke Theorem~\ref{t.approxmate LPP problem representation general} on the approximate representation of $\bm L^{\mrm{cHL}(j)}$ in terms of a last passage percolation problem in $\bm L^{\mrm{cHL}, (1)}$. To avoid a notational clash with the symbol $N$ indexing the sequence of line ensembles $\bm L^N$, in this section we will use $\oldN$ for the parameter implicit in $\bm L^{\mrm{cHL},(j)}$ from Definition~\ref{d.colored line ensemble} (and thus also the parameter defining the domain of the colored S6V model from Section~\ref{s.intro.cS6V}). We will also make the assumption that there exists $D>0$ such that
\begin{align}\label{e.old N new N inequality}
\oldN \leq N^D,
\end{align}
which we will later verify holds in our cases of interest.

The following is a convergence statement for $\S^N$ (i.e., S6V and ASEP simultaneously); it will be proved in Section~\ref{s.airy sheet convergence setup}.

\begin{theorem}\label{t.general Airy sheet convergence}
Assume that $\bm L^{(1),N}$ satisfies Assumptions~\ref{as.HL Gibbs} and \ref{as.one-point tightness}, and is such that $\bm \cL^{(1),N}\stackrel{d}\to \bm\cP$ as $N\to\infty$, in the topology of uniform convergence on compact sets of $\mc C(\N\times\R,\R)$, and that \eqref{e.rescaled two parameter stationarity} and \eqref{e.old N new N inequality} hold.
 Further assume \eqref{e.approximate LPP problem} holds with $\bm L^{(1),N},\bm L^{(j),N}$ in place of $\bm L^{\mrm{cHL},(1)}, \bm L^{\mrm{cHL},(j)}$, respectively. Then $\S^N \stackrel{\smash{d}}{\to} \S$ in $\mc C(\R^2,\R)$ in the topology of uniform convergence on compact sets as $N\to\infty$.
\end{theorem}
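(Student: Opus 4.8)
The strategy is the one already familiar from \cite{dauvergne2018directed,dauvergne2021scaling}, adapted to the present setup where the exact LPP representation is replaced by the approximate one of Theorem~\ref{t.approxmate LPP problem representation general}. We fix a compact box $[-R,R]^2$ and will show that the restriction of $\S^N$ to it converges weakly to the Airy sheet restricted to that box; since $R$ is arbitrary and all the objects are continuous, this suffices. The three inputs are: (a) the convergence $\bm\cL^{(1),N}\xrightarrow{d}\bm\cP$; (b) the approximate LPP representation \eqref{e.approximate LPP problem}, which under the scaling $T_{p,\lambda,N}$ becomes a statement that $\cL^N_1(x;y)$ is, up to an error that is $o(1)$ in probability uniformly on compacts, equal to a Pitman/LPP functional of $\bm\cL^{(1),N}$ started from the curve indexed near $\beta x N^{2/3}$; and (c) the stationarity \eqref{e.rescaled two parameter stationarity}, needed to match the precise normalization in the definition of the Airy sheet (Definition~\ref{d.airy sheet}(i)) and to move the ``initial height'' from a deep curve to an LPP value emanating from $(-\sqrt{k/2x},k)$.

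First I would set up the discrete-to-continuous dictionary: rewrite the event in \eqref{e.approximate LPP problem} after applying the scaling operator, so that for $x\in(0,R]$ and $y\in[-R,R]$ one has, with $k = k(N)\to\infty$ chosen so that $k N^{-1/3}\to\infty$ but $k = o(N^{1/3})$ (e.g.\ $k\sim N^{1/6}$; note $m = Km^{-}$ in \eqref{e.approximate LPP problem} only needs $m\ge K\log N$, which is comfortably satisfied since the prefactor $km$ and the bound $kq^{cm^2}$ allow us to take $m$ a small power of $N$ and $k$ a small power of $N$),
\[
\Bigl| \cL^N_1(x;y) - \sup_{z\le \beta^{-1} y N^{-2/3}}\bigl( \wh\cL^N_{k+1}(x;z) + \bm\cL^{(1),N}[(\text{scaled }z,k)\to(\text{scaled }y,1)] \bigr)\Bigr| \to 0
\]
in probability, uniformly over the box, where $\wh\cL^N_{k+1}(x;z)$ is the rescaled $(k{+}1)$-th curve of the color-$\lfloor\beta x N^{2/3}\rfloor$ ensemble. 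The right-hand LPP value across the top $k$ curves of $\bm\cL^{(1),N}$ converges, by (a) together with the known convergence of finite LPP functionals of $\bm\cP$ (Appendix~\ref{s.G_k asymptotics}, and the asymptotics of maximizers there), to $\bm\cP[(-\sqrt{k/2x},k)\to(y,1)]$-type quantities after the deep boundary term is handled. The deep boundary term $\wh\cL^N_{k+1}(x;z)$ is controlled using Assumption~\ref{as.one-point tightness} and the ordering of curves to locate its maximizer $z$ within $O(\sqrt{k})$ of $-\sqrt{k/2x}$ (this is exactly the statement behind \eqref{e.airy sheet increment}), and stationarity \eqref{e.rescaled two parameter stationarity} to transfer the problem to one where this maximizer contributes only a lower-order constant; the parabolic curvature $\lambda$ and diffusion constant (via \eqref{e.scaling coefficients relation}) are precisely what make the limiting curvature $1$ and diffusion $2$ so that the resulting functional of $\bm\cP$ is the Airy sheet and not a rescaling of it. For $x\le 0$ one uses \eqref{e.rescaled two parameter stationarity} to reduce to $x>0$. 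Combining, for each fixed finite set of points the finite-dimensional distributions of $\S^N$ converge to those given by the LPP-across-$\bm\cP$ prescription, which by Definition~\ref{d.airy sheet} are those of $\S$.

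To upgrade finite-dimensional convergence to weak convergence in $\mc C(\R^2,\R)$ one needs tightness of $\{\S^N\}$ in the uniform-on-compacts topology. This is where the bulk of the remaining work lies and where I expect the main obstacle: one must show a two-dimensional modulus-of-continuity bound for $\S^N$ on $[-R,R]^2$. The input is one-point tightness (Proposition~\ref{p.s6v one-point}, Corollary~\ref{c.approx asep tightness}), the approximate LPP representation, and the tightness of $\bm\cL^{(1),N}$ from Theorem~\ref{t.tightness}; the standard route (as in \cite{dauvergne2021scaling}) is to bound increments of the LPP functional of the tight line ensemble $\bm\cL^{(1),N}$ in both the $y$ variable (spatial increments of the endpoint) and the $x$ variable (which moves the starting row, i.e.\ the depth of the LPP problem), using concentration of the LPP maximizers from Appendix~\ref{s.G_k asymptotics} to show that changing $x$ by a small amount changes the relevant maximizer location, hence the LPP value, by a correspondingly small amount with high probability. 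One must also transfer the error in \eqref{e.approximate LPP problem} through these increments, but since it is uniform over $y\in[0,N+M]$ this is automatic. Once tightness is in hand, any subsequential limit is a continuous process whose finite-dimensional distributions match $\S$, hence equals $\S$ in law, and the proof is complete.

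A small technical point I would flag: the definition \eqref{e.rescaled h general definition} sets $\S^N = \cL^N_1$ only on $[-N^{1/6},N^{1/6}]^2$, so for any fixed compact $[-R,R]^2$ the box is eventually inside this region and the arbitrary continuous extension outside plays no role; similarly \eqref{e.old N new N inequality} guarantees $\log\oldN = O(\log N)$, so the constraint $m\ge K\log\oldN$ in Theorem~\ref{t.approxmate LPP problem representation general} is met by our choice of $m$ as a small power of $N$. I would state both of these reductions explicitly at the start of the argument so the rest proceeds cleanly.
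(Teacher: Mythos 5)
Your identification step is essentially the paper's: show the variational expression converges to LPP across $\bm\cP$, locate the maximizer near $-\sqrt{k/2x}$ (Proposition~\ref{p.maximizer location}), and invoke Definition~\ref{d.airy sheet}. You are, however, glossing over a subtlety that the paper handles explicitly: once you replace $Z^N_k(x,y)$ by $-\sqrt{k/2x} + \mf o(\sqrt{k})$, you only have approximate control of the maximizer location, and one needs the crossing lemma (Lemma~\ref{l.crossing lemma}) to convert the $\mf o(\sqrt{k})$ error into a one-sided inequality at $-\sqrt{k/(2(x\pm\varepsilon))}$, take $k\to\infty$ \emph{first}, and only \emph{then} send $\varepsilon\to 0$ using the continuity of $\S$. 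Without this monotonicity argument you cannot justify passing $k\to\infty$ past the perturbed maximizer.

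The genuine gap is in the tightness argument. You propose to bound two-dimensional increments of $\S^N$ by controlling how the LPP functional of $\bm\cL^{(1),N}$ varies in ``both the $y$ variable and the $x$ variable (which moves the starting row, i.e.\ the depth of the LPP problem).'' This misreads the structure of \eqref{e.scaled approximate LPP representation}: the LPP piece $\bm\cL^{(1),N}[(z,k)\to(y,1)]$ there is always across a \emph{fixed} number of rows of the single uncolored ensemble $\bm\cL^{(1),N}$ and has no $x$-dependence at all. The $x$-dependence lives entirely in the deep boundary term $\cL^N_{N^{1/6}}(x;\bm\cdot)$, which is the $N^{1/6}$-th curve of the \emph{color-$\lfloor\beta x N^{2/3}\rfloor$} Hall-Littlewood line ensemble. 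Controlling the regularity of that object in $x$ is precisely the hard part of the problem — the different color line ensembles are coupled only through the colored Gibbs property, and there is no a priori modulus of continuity across colors. Your proposal does not address this at all, and the concentration-of-maximizer estimates from Appendix~\ref{s.G_k asymptotics} cannot supply it, since they only control a fixed-depth LPP, not the deep curve. The paper avoids this obstacle entirely with a much cleaner route (Proposition~\ref{p.S_N tightness}): a deterministic quadrangle inequality $\S^N(x_1;y_1)+\S^N(x_2;y_2)\geq\S^N(x_1;y_2)+\S^N(x_2;y_1)$ (Lemma~\ref{l.quadrangle}, a direct consequence of the definition of the colored height function) sandwiches $y$-increments at generic $x$ between $y$-increments at the two boundary values $x=\pm K$, and the shift-invariance symmetry $\S^N(y;x)\stackrel{d}{=}\S^N(-x;-y)$ (Lemma~\ref{l.S_N symmetry}, via \cite{galashin2021symmetries}) transfers control of $y$-increments into control of $x$-increments. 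This reduces the two-dimensional modulus of continuity to the one-dimensional one from Corollary~\ref{c.mod con for L}, with no LPP estimate needed. You should replace your tightness argument with this one; without it, the proposal does not close.
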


\noindent We now prove Theorems~\ref{t.asep airy sheet} and \ref{t.s6v airy sheet}.

\begin{proof}[Proof of Theorem~\ref{t.asep airy sheet}]
We take $\S^N(x;y) = \S^{\mrm{ASEP},\theta, \varepsilon}(-x;-y)$ as processes in $(x,y)$ with $N=\varepsilon^{-1}$, and note from \eqref{e.ASEP N value} that $\oldN$ in this case equals $\floor{(2\gamma^{-1})^{1+\theta}\varepsilon^{-(1+\theta)}} = \floor{(2\gamma^{-1}\varepsilon^{-1})^{41}}$; so \eqref{e.old N new N inequality} holds. Analogously we take $\bm L^{(j), N} = \bm L^{\mrm{ASEP}, (j),\theta, \varepsilon}$. Now, $\bm L^{(1),N}$ satisfies Assumptions~\ref{as.HL Gibbs}, \ref{as.one-point tightness}, and \ref{as.GUE TW} by Lemma~\ref{l.asep line ensemble assumptions}, so, by Theorem~\ref{t.line ensemble convergence to parabolic Airy}, $\bm\cL^{(1),N}\stackrel{\smash{d}}{\to} \bm \cP$. Next, \eqref{e.rescaled two parameter stationarity} holds due to \eqref{e.two parameter stationarity for ASEP}, and \eqref{e.approximate LPP problem} holds due to Theorem~\ref{t.approxmate LPP problem representation general} along with \eqref{e.ASEP line ensemble}.
Thus Theorem~\ref{t.general Airy sheet convergence} yields that $\S^{\mrm{ASEP},\theta,\varepsilon}(x;y) \to \S(-x;-y)$ in distribution as continuous processes on $\R^2$ as $\varepsilon\to 0$. By Lemma~\ref{l.asep s6v comparison}, we then obtain that $\S^{\mrm{ASEP},\varepsilon}(x;y)\to\S(-x;-y)$ in the same topology as $\varepsilon\to0$ as well. That $\smash{\S(-x;-y) \stackrel{d}{=} \S(x;y)}$ by \cite[Proposition~1.23]{dauvergne2021scaling} as processes completes the proof of Theorem~\ref{t.asep airy sheet}.
\end{proof}

\begin{proof}[Proof of Theorem~\ref{t.s6v airy sheet}]
Recall that $\S^{\mrm{S6V}, \varepsilon}$ is defined in terms of a colored S6V model defined on $\Z_{\geq 1}\times\llbracket-\oldN,\infty\rrparen$. Let $\tilde\S^{\mrm{S6V}, \varepsilon}$ be the same with $\oldN = \varepsilon^{-2}$ if the original $\oldN > \varepsilon^{-2}$, and let $\tilde\S^{\mrm{S6V}, \varepsilon} = \S^{\mrm{S6V}, \varepsilon}$ otherwise; let $\tilde{\bm L}^{\mrm{S6V}, (j), \varepsilon}$ be the associated colored line ensemble from \eqref{e.S6V line ensemble}. We claim that there is a coupling of $\S^{\mrm{S6V},\varepsilon}$ and $\tilde\S^{\mrm{S6V},\varepsilon}$ such that the two agree on $[-\varepsilon^{-1/6},\varepsilon^{-1/6}]^2$ with probability $1-o(1)$ as $\varepsilon\to 0$. Assuming this claim, it is sufficient to prove that $\tilde\S^{\mrm{S6V},\varepsilon}\stackrel{\smash{d}}{\to} \S$ as $\varepsilon\to 0$, and this will follow from Theorem~\ref{t.general Airy sheet convergence} with $N=\varepsilon^{-1}$, $\S^N = \tilde\S^{\mrm{S6V}, \varepsilon}$, and $\bm L^{(j), N} = \tilde{\bm L}^{\mrm{S6V}, (j), \varepsilon}$ once its assumptions are verified. Now, $\bm L^{(1), N}$ satisfies Assumption~\ref{as.HL Gibbs}, \ref{as.one-point tightness}, and \ref{as.GUE TW} by Lemma~\ref{l.s6v line ensemble assumptions}, so, by Theorem~\ref{t.line ensemble convergence to parabolic Airy}, $\bm \cL^{(1),N}\stackrel{\smash{d}}{\to} \bm \cP$. The remaining assumptions of Theorem~\ref{t.general Airy sheet convergence} are verified since the conditions \eqref{e.rescaled two parameter stationarity}, \eqref{e.old N new N inequality}, and \eqref{e.approximate LPP problem} are satisfied by \eqref{e.two parameter stationarity}, by \eqref{e.S6V LE and height function}, and by combining Theorem~\ref{t.approxmate LPP problem representation general} with \eqref{e.S6V line ensemble}, respectively.

The claim we made on the coupling between $\S^{\mrm{S6V},\varepsilon}$ and $\tilde\S^{\mrm{S6V},\varepsilon}$ follows immediately from \cite[Lemma B.3]{aggarwalborodin}, which is a finite speed of propagation estimate: roughly speaking, it says that two colored S6V models which have boundary conditions agreeing on a large interval have a coupling such that the arrow configurations agree in a large region. In our case, the boundary conditions of the two systems agree on $\intint{-\floor{\varepsilon^{-2}}, \floor{\varepsilon^{-2}}}$ and we require the arrow configurations to agree on $\{\floor{\varepsilon^{-1}}\}\times\intint{\floor{\alpha\varepsilon^{-1}-2\varepsilon^{-5/6}}, \floor{\alpha\varepsilon^{-1}+2\varepsilon^{-5/6}}}$. \cite[Lemma B.3]{aggarwalborodin} yields that there is a coupling such that this holds with probability at least $1-\exp(-c\varepsilon^{-2})$ for some $c>0$ (take $U,V,T,K$ there to respectively be $-\floor{\frac{1}{2}\varepsilon^{-2}}$, $\floor{\frac{1}{2}\varepsilon^{-2}}$, $\floor{R\varepsilon^{-1}}$, and $\floor{\delta R^{-1}\varepsilon^{-1}}$ for some large constant $R$ and small constant $\delta>0$). This completes the proof of Theorem~\ref{t.s6v airy sheet}.
\end{proof}

\noindent In the next section we begin setting up some statements needed for the proof of Theorem~\ref{t.general Airy sheet convergence} before giving the proof at the end of the section.

\subsection{Setup for Airy sheet convergence} \label{s.airy sheet convergence setup}
For a sequence of random variables $\{X_{N,k}\}_{N,k=1}^{\infty}$ defined on a common probability space, we adopt the notation
\begin{equation}\label{e.Borel-Cantelli notation}
X_{N,k} = \mf o(r_{N,k}) \quad \text{if} \quad \sum_{k=1}^{\infty} \limsup_{N\to\infty} \P(|X_{N,k}| \geq \varepsilon r_{N,k}) < \infty
\end{equation}
for every $\varepsilon>0$, i.e., $\lim_{k\to\infty}\lim_{N\to\infty}X_{N,k}/r_{N,k} = 0$, where the inner limit should be understood as any weak limit of $\{X_{N,k}/r_{N,k}\}_{N}$ and the outer limit as an almost sure one, by the Borel-Cantelli lemma. The notation also makes sense if $X_{N,k} = X_N$ and $r_{N,k} = r_N$ for all $k$ for some sequences $X_N$, $r_N$ such that $X_{N}/r_N\to 0$ almost surely as $N\to\infty$. Also, $X_{N,k}\geq \mf o(r_{N,k})$ means that $X_{N,k}\one_{X_{N,k}\leq 0} = \mf o(r_{N,k})$, and $X_{N,k}\leq \mf o(r_{N,k})$ means $X_{N,k}\one_{X_{N,k}\geq 0} = \mf o(r_{N,k})$.

Next recall the notation \eqref{e.LPP definition} for last passage percolation (LPP) values.
In the notation \eqref{e.rescaled line ensemble with initial point} and for $\delta>0$, consider \eqref{e.approximate LPP problem} of Theorem~\ref{t.approxmate LPP problem representation general}, as assumed in Theorem~\ref{t.general Airy sheet convergence}, with $k$ set to $N^{1/6}$ and $m=\sigma \delta N^{1/3}$ with $\sigma$ as in \eqref{e.scaling coefficients relation}. A union bound over $x$ implies that, for $\delta > K(q)N^{-1/3}\log \oldN$ (which, under \eqref{e.old N new N inequality}, holds if $\delta > DK(q)N^{-1/3}\log N$),
\begin{align}\label{e.rescaled LPP}
\MoveEqLeft[28]
\P\left(\sup_{x,|y|\in (0,g_N]} \left|\S^{N}(x; y) - \max_{w\leq y} \left(\cL^N_{N^{1/6}}(x;w) + \bm \cL^{(1),N}[(w,N^{1/6}-1)\to(y,1)]\right)\right| \geq \delta N^{1/6} \right)\nonumber\\
& \leq C N^{\frac{7}{6}} q^{c\sigma^2\delta^2 N^{2/3}}.
\end{align}
Now set $\delta=N^{-1/4}$ and note that $N^{-1/11} \gg \delta N^{1/6} = N^{-1/12}$.
For fixed $q\in[0,1)$ and $k\in\N$, \eqref{e.rescaled LPP} implies that for all $x, |y| \in (0,g_N]$  and  $N\geq N_0$, with $\mf o(N^{-1/11})$ a random quantity satisfying \eqref{e.Borel-Cantelli notation} whose distribution does not depend on $x$ or $y$,
\begin{align}
\S^{N}(x; y)
&=\max_{w\leq y} \left(\cL^N_{N^{1/6}}(x;w) + \bm \cL^{(1),N}[(w,N^{1/6}-1)\to(y,1)]\right) + \mf o( N^{-1/11}),\nonumber\\
&= \max_{w\leq z\leq y} \left(\cL^N_{N^{1/6}}(x;w) + \bm \cL^{(1),N}[(w,N^{1/6}-1)\to(z,k)] + \bm \cL^{(1),N}[(z,k)\to(y,1)]\right)\nonumber\\
&\qquad + \mf o( N^{-1/11}),\label{e.scaled approximate LPP representation}
\end{align}
where in the second line we used the fact that the LPP problem is a variational problem to split it up at line $k$ (see for example \cite[Lemma~3.2]{dauvergne2018directed}).

Let $Z^N_k(x,y)$ be the maximizer (of the maximum over $z$) in \eqref{e.scaled approximate LPP representation} (the largest one in the event that there are multiple). The following records control on the location of $Z^N_k$ which will be necessary for the proofs of Theorem~\ref{t.asep airy sheet} and \ref{t.s6v airy sheet}. Because its proof is similar to \cite[Lemma 7.1]{dauvergne2018directed},  we defer it to Appendix~\ref{s.G_k asymptotics}.

\begin{proposition}\label{p.maximizer location}
Under the hypotheses of Theorem~\ref{t.general Airy sheet convergence}, it holds that, for any $x>0$ and $y\in\R$,
\begin{align*}
Z^N_k(x,y) = -\sqrt{\frac{k}{2x}} + \mf o(k^{1/2}).
\end{align*}
Moreover, $\{Z^N_k(x,y)\}_{N\in\N}$ is tight for each fixed $k\in\N$, $x>0$, and $y\in\R$.
\end{proposition}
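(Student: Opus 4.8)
The plan is to analyze the LPP variational problem in \eqref{e.scaled approximate LPP representation} whose maximizer over $z$ is $Z^N_k(x,y)$, namely
\[
\max_{w\le z\le y}\Bigl(\cL^N_{N^{1/6}}(x;w) + \bm\cL^{(1),N}[(w,N^{1/6}-1)\to(z,k)] + \bm\cL^{(1),N}[(z,k)\to(y,1)]\Bigr),
\]
by decomposing it further at the level $k$ and using the known asymptotics of LPP and maximizer values in the parabolic Airy line ensemble $\bm\cP$, to which $\bm\cL^{(1),N}$ converges. First I would recall (from Appendix~\ref{s.G_k asymptotics}, which supplies the relevant estimates) the asymptotic form of $\bm\cP[(w,k)\to(z,1)]$ and its maximizer location: the LPP value from $(w,k)$ to $(z,1)$ behaves, to leading order in $k$, like $2\sqrt{k}(z-w) - (z-w)^2 +$ lower order (after the appropriate shift), and the maximizer over intermediate start points concentrates. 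Combining this with the stationarity \eqref{e.rescaled two parameter stationarity} and the one-point parabolic decay of $\cL^N_1$, one sees that the contribution of the bottom $k$ lines, $\bm\cL^{(1),N}[(z,k)\to(y,1)]$, is of order $\sqrt{k}$ and is essentially a deterministic parabolic profile in $z$ plus $O(1)$ fluctuations; meanwhile $\cL^N_{N^{1/6}}(x;w)$ — the line ensemble for color threshold $x$ evaluated deep in the ensemble — contributes, after rescaling, a term whose dependence on $w$ is governed by the identity, from Definition~\ref{d.airy sheet}, that the increment of the Airy sheet is obtained as a limit of differences of LPP from $(-\sqrt{k/2x},k)$.

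The heart of the argument is thus a Laplace-type / concentration analysis: write the objective as a function of $z$, show that after centering it is, with high probability, within $\mf o(k^{1/2})$ of a strictly concave deterministic parabola in $z$ whose unique maximum is at $-\sqrt{k/2x}$, and conclude the maximizer $Z^N_k(x,y)$ is within $\mf o(k^{1/2})$ of $-\sqrt{k/2x}$. The key input for the location $-\sqrt{k/2x}$ is precisely the scaling in \eqref{e.airy sheet increment}: the value $-\sqrt{k/(2x)}$ is the canonical start point on row $k$ for the LPP that limits to $\S(x;\bm\cdot)$, and the parabolic curvature forces the maximizer to localize there. I would carry this out by (i) invoking the parabolic Airy LPP asymptotics from Appendix~\ref{s.G_k asymptotics} to control $\bm\cL^{(1),N}[(z,k)\to(y,1)]$ uniformly over $z$ in a window of size $o(k^{1/2})$ around $-\sqrt{k/2x}$ and to rule out maximizers outside that window (using parabolic decay to get a uniform super-linear penalty far away); (ii) using the convergence $\bm\cL^{(1),N}\to\bm\cP$ together with the tightness already established (Theorem~\ref{t.tightness}) to transfer these estimates from the limit to the prelimit along subsequences; and (iii) handling $\cL^N_{N^{1/6}}(x;w)$ via stationarity and the one-point bounds (Assumption~\ref{as.one-point tightness} and Theorem~\ref{t.infimum lower tail}) to see it does not shift the maximizer by more than $\mf o(k^{1/2})$.

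For the tightness assertion — that $\{Z^N_k(x,y)\}_N$ is tight for each fixed $k, x>0, y$ — the plan is more elementary: for fixed $k$, the LPP problem involves only finitely many ($k$, plus the bottom of the deep ensemble via $\cL^N_{N^{1/6}}$) curves whose relevant finite-dimensional statistics are tight by Assumption~\ref{as.one-point tightness}, the lower-tail control of Theorem~\ref{t.infimum lower tail}, and the modulus-of-continuity estimate Corollary~\ref{c.mod con for L}; one shows that with probability $1-\varepsilon$ the maximizing $z$ cannot be too large in absolute value, because the curves grow at most linearly (upper bound from ordering and Assumption~\ref{as.one-point tightness}) while the LPP increment along a long interval is bounded above by a quantity that eventually decreases, so the maximizer is confined to a compact set with high probability. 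The main obstacle I anticipate is step (i)--(ii): making the concentration of the maximizer uniform in $N$ while the number of curves $k = N^{1/6}\to\infty$ is implicitly involved in the definition of $\S^N$ through $\cL^N_{N^{1/6}}$, so one must be careful to decouple the "finite $k$" analysis used here from the growing intermediate index, and to quote the right uniform version of the parabolic Airy LPP asymptotics from Appendix~\ref{s.G_k asymptotics}; I expect the proof to mirror closely the argument of \cite[Lemma~7.1]{dauvergne2018directed}, adapted to our line ensembles via the already-proven convergence and tightness results, as the excerpt indicates.
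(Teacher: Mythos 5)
Your overall shape is right --- decompose the variational problem at level $k$, use the parabolic Airy LPP asymptotics from Appendix~\ref{s.G_k asymptotics}, and localize the maximizer via a concavity argument, mirroring \cite[Lemma 7.1]{dauvergne2018directed} --- but there is a genuine gap in step (iii), and it is exactly the point where this proof had to depart from the DOV argument.

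In \cite{dauvergne2018directect} the analog of the piece you call $\cL^N_{N^{1/6}}(x;w)$ plus the LPP down to row $k$ (the paper's $F^N_k(x,z)$) has directly accessible asymptotics from the Brownian LPP model. Here it does not: there is no convergence result for $\cL^N_{N^{1/6}}$, and in fact the paper's Remark at the end of Appendix~\ref{s.G_k asymptotics} points out that $F^N_k$ provably does \emph{not} have the asymptotics you would naively guess from the corresponding quantity in the Airy line ensemble (because $L^{(k)}_j$ is not a translate of $L^{(1)}_j$ for $j>1$). Your plan to control $F^N_k$ via ``stationarity and the one-point bounds (Assumption~\ref{as.one-point tightness} and Theorem~\ref{t.infimum lower tail})'' cannot work: those give pointwise control of the top curve of the deeply-indexed ensemble $\cL^N_{N^{1/6}}(x;\cdot)$, but not the functional asymptotics of the supremum $F^N_k(x,z)=\sup_{w\le z}(\cL^N_{N^{1/6}}(x;w)+\bm\cL^{(1),N}[(w,N^{1/6}-1)\to(z,k)])$ that are needed. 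The paper's proof \emph{reverses} the argument: it starts from the known convergence $G^N_k\to G_k$ of the Airy-side LPP, derives the exact $\sqrt{k}$-order asymptotics of $G_k$ (Lemma~\ref{l.G_k asymptotic}, established separately through Brownian LPP and the stationarity of $G_k$ from Lemma~\ref{l.translation relation for G}), and then bounds $F^N_k$ \emph{from above} by $\tilde F^N_k(x,z):=\inf_y(H^N(x,y)-G^N_k(z,y))$, which is controlled because both $H^N$ (one-point tightness plus stationarity) and $G^N_k$ (convergence to $G_k$) are. Without this inversion the argument does not close.

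Two smaller points. First, you gloss over the step of upgrading the pointwise-in-$z$ asymptotics of $F^N_k$ and $G^N_k$ to a bound uniform over the supremum in $z$; the paper handles this by discretizing to a grid of spacing $\Theta(\varepsilon^2\sqrt{k})$ and using the monotonicity in $z$ of both $\tilde F^N_k(x,\cdot)$ and $G^N_k(\cdot,y)$ (Lemma~\ref{l.modified LPP monotone}), which is what makes the union bound over grid points harmless. Second, for tightness of $Z^N_k(x,y)$ at fixed $k$, your proposed direct parabolic-decay argument is plausible but more laborious than what the paper does: it simply combines the already-proved asymptotic with the deterministic monotonicity $y\ge Z^N_1(x,y)\ge Z^N_2(x,y)\ge\cdots\ge Z^N_{N^{1/6}}(x,y)$ (Lemma~\ref{l.Z_k monotone}), which sandwiches each $Z^N_k$ between a constant and a tight random variable.
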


\noindent To prove Theorem~\ref{t.general Airy sheet convergence}, we also need to know that $\S^N$ forms a tight sequence in $\mc C(\R^2, \R)$, recorded in the following proposition. It will be proved in Section~\ref{s.modulus of continuity}, where we will establish the necessary modulus of continuity estimates.

\begin{proposition}\label{p.S_N tightness}
Under the hypotheses of Theorem~\ref{t.general Airy sheet convergence}, $\{\S^N\}_{N=1}^\infty$ is tight in $\mc C(\R^2, \R)$ under the topology of uniform convergence on compact sets.
\end{proposition}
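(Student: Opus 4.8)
\textbf{Proof plan for Proposition~\ref{p.S_N tightness}.}

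The plan is to reduce tightness of $\{\S^N\}$ in $\mc C(\R^2,\R)$ to three ingredients: (i) one-point tightness of $\S^N(x;y)$ at a single base point, (ii) a uniform modulus-of-continuity estimate in $y$ for fixed $x$ (horizontal regularity of the ``sheet'' rows), and (iii) a uniform modulus-of-continuity estimate in $x$ for fixed $y$ (regularity across different initial points). Since $\S^N$ is (for $N$ large) a linear interpolation of an honest random function, the standard criterion — an easy two-dimensional extension of \cite[Theorem~7.3]{billingsley2013convergence}, exactly as in the proof of Theorem~\ref{t.tightness} — says that it suffices to verify
\begin{align*}
&\lim_{R\to\infty}\limsup_{N\to\infty}\P\bigl(|\S^N(0;0)|\geq R\bigr)=0,\\
&\lim_{\delta\to0}\limsup_{N\to\infty}\P\Bigl(\sup_{\substack{|x|,|y|,|x'|,|y'|\leq M\\ |x-x'|+|y-y'|\leq\delta}} |\S^N(x;y)-\S^N(x';y')|\geq\e\Bigr)=0
\end{align*}
for each fixed $M$ and $\e>0$. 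One-point tightness follows from $\S^N(x;y)=\cL^N_1(x;y)$, Proposition~\ref{p.s6v one-point} / Corollary~\ref{c.approx asep tightness} (or, in the abstract setting, from Assumption~\ref{as.one-point tightness} together with $\bm\cL^{(1),N}\to\bm\cP$). The modulus-of-continuity claim splits, via the triangle inequality, into a bound on $|\S^N(x;y)-\S^N(x;y')|$ uniformly over $x$ and $|y-y'|\leq\delta$, and a bound on $|\S^N(x;y)-\S^N(x';y)|$ uniformly over $y$ and $|x-x'|\leq\delta$.

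For the $y$-regularity, I would use the representation \eqref{e.scaled approximate LPP representation}: up to an additive $\mf o(N^{-1/11})$ error (uniform over the base point, by the union bound already carried out in \eqref{e.rescaled LPP}), $\S^N(x;y)$ equals a last passage value in $\bm\cL^{(1),N}$ from a point near height $k=N^{1/6}$ and horizontal location $Z^N_k(x,y)\approx-\sqrt{k/(2x)}$ (Proposition~\ref{p.maximizer location}) to $(y,1)$. Changing $y$ to $y'$ changes only the endpoint of this LPP problem, and LPP values in a fixed continuous environment are Lipschitz-in-the-endpoint controlled by the modulus of continuity of the top curves $\cL^{(1),N}_1,\dots,\cL^{(1),N}_k$ near their arguments; since $\bm\cL^{(1),N}\to\bm\cP$ is tight, these moduli are tight, and Proposition~\ref{p.maximizer location} keeps the relevant $z$-window bounded. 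Concretely, $|\S^N(x;y)-\S^N(x;y')|$ is controlled by the oscillation of $\cL^{(1),N}_1$ over an interval of length $O(|y-y'|)$ plus the $\mf o$-errors, which is exactly the kind of estimate proved for $L^N_k$ in Corollary~\ref{c.mod con for L}; this gives the claim after choosing $\delta$ small.

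For the $x$-regularity — which is the genuinely new direction and, I expect, the main obstacle — one compares $\S^N(x;y)$ and $\S^N(x';y)$ with $|x-x'|\leq\delta$. Here two different colored line ensembles $\bm\cL^{(\lfloor\beta xN^{2/3}\rfloor),N}$ and $\bm\cL^{(\lfloor\beta x'N^{2/3}\rfloor),N}$ are involved, but by \eqref{e.scaled approximate LPP representation} both are expressed through LPP \emph{in the same} $\bm\cL^{(1),N}$, starting from lower endpoints $\cL^N_{N^{1/6}}(x;w)$, resp.\ $\cL^N_{N^{1/6}}(x';w)$, at line $N^{1/6}$. The strategy is: first show the ``bottom'' contribution $\cL^N_{N^{1/6}}(x;w)$ depends on $x$ in a tight, Hölder-$1/2$ way uniformly over the relevant window $w\in[Z^N_k(x,y)-C,Z^N_k(x,y)+C]$ — this is a one-point/modulus estimate for the line ensemble at level $N^{1/6}$, obtainable from the lower-tail control of Theorem~\ref{t.infimum lower tail} and the convergence $\bm\cL^{(1),N}\to\bm\cP$ restricted to the (bounded after rescaling) relevant region; second, invoke Proposition~\ref{p.maximizer location} (tightness of $Z^N_k$) so that the supremum over $w$ in \eqref{e.scaled approximate LPP representation} is effectively over a compact set; third, combine with the $y$-regularity already established. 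A clean way to organize this is to note that \eqref{e.rescaled two parameter stationarity} reduces the $x$-dependence to the behavior near a single fixed $x$, and then to couple the two LPP problems by the common environment and bound their difference by the sup over the compact $w$-window of $|\cL^N_{N^{1/6}}(x;w)-\cL^N_{N^{1/6}}(x';w)|$ plus $2\mf o(N^{-1/11})$. I would isolate the bottom-curve regularity as a lemma (analogous to Corollary~\ref{c.mod con for L} but at curve index $N^{1/6}$, on a bounded spatial window), proved by the same Gibbs-property-plus-KMT machinery. With these pieces, the two-dimensional tightness criterion is satisfied and Proposition~\ref{p.S_N tightness} follows.
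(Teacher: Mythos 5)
Your high-level reduction to (i) one-point tightness and (ii)--(iii) directional moduli of continuity is correct, and your $y$-regularity argument (fixed $x$) would work, though it is overcomplicated: for each fixed $x$, $y\mapsto\S^N(x;y)=\cL^N_1(x;y)$ is already the top curve of a rescaled Hall--Littlewood line ensemble (up to the shift from \eqref{e.rescaled two parameter stationarity}), so the modulus follows directly from Corollary~\ref{c.mod con for L} without passing through the LPP representation. The genuine gap is in the $x$-regularity.

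Your plan to compare $\cL^N_{N^{1/6}}(x;w)$ and $\cL^N_{N^{1/6}}(x';w)$ over ``the relevant window'' does not go through. First, the window for $w$ is not compact: you invoke Proposition~\ref{p.maximizer location} to say ``the supremum over $w$ in \eqref{e.scaled approximate LPP representation} is effectively over a compact set,'' but that proposition gives tightness of $Z^N_k(x,y)$ only for \emph{fixed} $k$, and $Z^N_k\approx-\sqrt{k/2x}$, so the intermediate split point is already order $\sqrt{k}$ to the left; the true argmax $w^*$ where the LPP path reaches line $N^{1/6}-1$ escapes to $-\infty$ like $-\sqrt{N^{1/6}}$ as $N\to\infty$. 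Second, $\cL^N_{N^{1/6}}(x;\cdot)$ is the curve at \emph{growing} index $N^{1/6}$; Theorem~\ref{t.infimum lower tail}, Corollary~\ref{c.mod con for L}, and the convergence $\bm\cL^{(1),N}\to\bm\cP$ are all stated for fixed curve index and fixed compact spatial windows, and none of them supply a tight Hölder estimate at index $N^{1/6}$ on a spatial window of diverging width. So the ``lemma analogous to Corollary~\ref{c.mod con for L} but at curve index $N^{1/6}$'' that you propose to isolate is not within reach of the Gibbs-plus-KMT machinery as it stands, and the $x$-regularity claim is left unproven.

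What the paper does instead avoids this completely. The quadrangle inequality (Lemma~\ref{l.quadrangle}) says that for $y_1\le y_2$, $x\mapsto\S^N(x;y_1)-\S^N(x;y_2)$ is monotone (and symmetrically in the other argument), so the supremum over $x\in[-K,K]$ of a $y$-increment is attained at $x=\pm K$, reducing the two-dimensional modulus to finitely many one-dimensional moduli. Then the distributional symmetry $\S^N(y;x)\stackrel{d}{=}\S^N(-x;-y)$ (Lemma~\ref{l.S_N symmetry}, a consequence of shift-invariance results for colored vertex models \cite{borodin2022shift,galashin2021symmetries}) converts the $x$-direction modulus into the $y$-direction modulus \emph{in distribution}, so that Corollary~\ref{c.mod con for L} handles both. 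You did not identify either of these tools; the symmetry in particular is a nontrivial algebraic input that cannot be replaced by a direct analytic estimate of the type you propose, at least not without substantially more work on high-index curves.
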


The following monotonicity statement on differences of LPP values will be useful in the proof of Theorem~\ref{t.general Airy sheet convergence} and has been proved many times in the literature; see for example \cite[Proposition 3.8]{dauvergne2018directed}, \cite[Lemma 2.10]{ganguly2023local}, or \cite[Lemma B.2]{balazs2020non}.

\begin{lemma}[Crossing lemma, \cite{dauvergne2018directed,ganguly2023local,balazs2020non}]\label{l.crossing lemma}
Let $\bm f=(f_1, \ldots, f_k)$ be a sequence of continuous functions with $f_i:\R\to\R$. Then, for any $y_1 < y_2$
\begin{align*}
z\mapsto f[(z,k)\to (y_1,1)] - f[(z,k)\to (y_2,1)]
\end{align*}
is non-increasing.
\end{lemma}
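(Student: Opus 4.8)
\textbf{Proof plan for the Crossing lemma (Lemma~\ref{l.crossing lemma}).}

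The plan is to reduce the statement to a single geometric fact about last passage paths and then argue by a path-swapping (uncrossing) argument. First I would recall that, by definition~\eqref{e.LPP definition}, for $j < 1$ we have (with $t_0$ the right endpoint and $t_k$ the left endpoint)
\begin{align*}
\bm f[(z,k)\to(y,1)] = \sup_{z = t_k < t_{k-1} < \cdots < t_1 < t_0 = y} \sum_{i=1}^k \bigl(f_i(t_{i-1}) - f_i(t_i)\bigr),
\end{align*}
so a maximizing configuration is an increasing sequence of ``jump times'' $z = t_k < \cdots < t_1 < y$; the only dependence on the left endpoint $z$ is through the single term $-f_k(z)$ once the remaining jump times $t_{k-1} > \cdots > t_1$ are fixed, and the only dependence on the right endpoint $y$ is through the term $+f_1(y)$ once $t_1, \ldots, t_{k-1}$ are fixed. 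It therefore suffices to prove the following: for $z_1 < z_2$ (and fixed $y_1 < y_2$), if $\pi_1$ is an optimal path from $(z_1,k)$ to $(y_1,1)$ and $\pi_2$ is an optimal path from $(z_2,k)$ to $(y_2,1)$, then these two paths, viewed as increasing sequences of jump times in the plane, must cross — i.e., there is a level $i$ and a point where the jump time of $\pi_1$ at level $i$ is $\ge$ the jump time of $\pi_2$ at level $i$ — simply because $\pi_1$ starts to the left of $\pi_2$ (at level $k$) but ends to the left of $\pi_2$ as well, while $y_1 < y_2$... actually the cleaner route is to directly compare the two functions of $z$ at two points $y_1 < y_2$.

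Concretely, I would set $g(z) := \bm f[(z,k)\to(y_1,1)] - \bm f[(z,k)\to(y_2,1)]$ and show $g(z_1) \ge g(z_2)$ for $z_1 < z_2$, which is equivalent to
\begin{align*}
\bm f[(z_1,k)\to(y_1,1)] + \bm f[(z_2,k)\to(y_2,1)] \ge \bm f[(z_1,k)\to(y_2,1)] + \bm f[(z_2,k)\to(y_1,1)].
\end{align*}
To prove this superadditivity-type inequality, I would take optimal paths $\pi$ for $\bm f[(z_1,k)\to(y_2,1)]$ and $\pi'$ for $\bm f[(z_2,k)\to(y_1,1)]$. Since $\pi$ goes from the left-bottom corner $(z_1,k)$ to the right-top corner $(y_2,1)$ and $\pi'$ goes from the right-bottom corner $(z_2,k)$ to the left-top corner $(y_1,1)$, and $z_1 < z_2$, $y_1 < y_2$, the two monotone staircase paths must share a common level at which their jump times coincide or can be made to coincide (a standard planarity/intermediate-value argument: the ``horizontal position'' of $\pi$ minus that of $\pi'$ changes sign between level $k$ and level $1$); at such a meeting point $(w,i)$ one can cut and re-glue: take the portion of $\pi'$ from $(z_2,k)$ to $(w,i)$ followed by the portion of $\pi$ from $(w,i)$ to $(y_2,1)$ to get an admissible path from $(z_2,k)$ to $(y_2,1)$, and symmetrically a path from $(z_1,k)$ to $(y_1,1)$. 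Because each of these two new paths is admissible, its weight is at most the corresponding LPP value; summing the two inequalities and noting that the total weight is preserved under the swap (each edge of $\pi$ and $\pi'$ appears exactly once on one of the new paths) yields the claimed inequality. Dividing through and rearranging gives $g(z_1)\ge g(z_2)$, i.e., monotonicity.

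The main obstacle is making the ``the two paths must meet at a common level'' step fully rigorous in the continuum LPP setting of Definition~\ref{d.lpp}, where paths are encoded by real jump times $t_k < \cdots < t_1$ rather than by lattice steps: one must handle the possibility that the optimal jump times of $\pi$ and $\pi'$ at a given level $i$ are never exactly equal, in which case the uncrossing happens ``between levels'' and one argues that the last level $i$ at which $\pi$ lies weakly left of $\pi'$ and the first at which it lies weakly right are adjacent, and performs the swap there (with a short argument that admissibility $t_k < \cdots < t_1$ is preserved by the concatenation since $\pi'$ is weakly left of $\pi$ below level $i$ and weakly right above it). Since this exact argument has appeared repeatedly in the literature, I would cite \cite[Proposition 3.8]{dauvergne2018directed}, \cite[Lemma 2.10]{ganguly2023local}, or \cite[Lemma B.2]{balazs2020non} for the routine details rather than reproduce them in full, and simply present the reduction and the statement of the path-swap inequality above.
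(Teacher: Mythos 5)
Your argument is correct, and it is also exactly what the cited references do; the paper itself offers no proof of Lemma~\ref{l.crossing lemma}, only the citations, so there is nothing internal to compare against. The reduction to the quadrangle inequality
\begin{align*}
\bm f[(z_1,k)\to(y_1,1)] + \bm f[(z_2,k)\to(y_2,1)] \geq \bm f[(z_1,k)\to(y_2,1)] + \bm f[(z_2,k)\to(y_1,1)]
\end{align*}
for $z_1 < z_2$, $y_1 < y_2$, followed by an uncrossing of (near-)optimal paths for the two ``crossing'' terms, is the standard route, and your observation that the swap must be performed between adjacent levels (the largest $i$ with $t_i^\pi \geq t_i^{\pi'}$ and $i+1$) is precisely the right fix for the continuum setting. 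Two small points worth tidying: you wrote ``for $j<1$'' where you mean $j=1$; and the statement that ``each edge of $\pi$ and $\pi'$ appears exactly once on one of the new paths'' is not literally true at the crossing level $i^*{+}1$, where the right endpoints of the two segments are exchanged — what is true, and what the argument actually needs, is that the sum of the four level-$(i^*{+}1)$ increments $f_{i^*+1}(t_{i^*}^{\pi'})-f_{i^*+1}(t_{i^*+1}^\pi)$, $f_{i^*+1}(t_{i^*}^{\pi})-f_{i^*+1}(t_{i^*+1}^{\pi'})$ for the new paths equals the corresponding sum for $\pi,\pi'$, so total weight is conserved.
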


Now we may give the proof of Theorem~\ref{t.general Airy sheet convergence}; given all the machinery developed by now, its proof is quite similar to that of \cite[Theorem 4.4]{dauvergne2021scaling}.

\begin{proof}[Proof of Theorem~\ref{t.general Airy sheet convergence}]
We know from Proposition~\ref{p.S_N tightness} that $\S^N$ is tight as a law on $\mc C(\R^2, \R)$ with the topology of uniform convergence on compact sets. So we only need to show that all subsequential limits equal~$\S$, equivalently, that they satisfy (i) and (ii) in Definition~\ref{d.airy sheet}.

That (i) holds follows from \eqref{e.rescaled two parameter stationarity}. We will next show that (ii) holds for any subsequential limit of $\S^N$. 
Recall from \eqref{e.scaled approximate LPP representation} that, for all $x\in [0, g_N]$, $y_i \in [-g_N,g_N]$ simultaneously, with $i\in\{1,2\}$,
\begin{align}
\S^N(x; y_i) &= \sup_{w\leq z\leq y_i} \left(\cL^N_{N^{1/6}}(x;w) + \bm \cL^{(1),N}[(w,N^{1/6}-1) \to (z,k)] + \cL^{(1),N}[(z,k) \to (y_i,1)]\right)\nonumber\\
&\qquad + \mf o(N^{-1/11}) \label{e.approximate variational problem in convergence proof}
\end{align}
holds. We assume in what follows that $y_1 < y_2$, without loss of generality.

From Proposition~\ref{p.maximizer location}, for every $k\in\N$, $x>0$ and $y_1,y_2\in\R$, there exists $Z^N_k(x,y_1)$ and $Z^N_k(x,y_2)$ which are tight sequences in $N$ (for each fixed $k$, $x$, and $y$) such that the above supremum in $z$ is attained at $Z^N_k(x,y_i)$.

Consider the tight collection of random objects
\begin{align}\label{e.tight collections}
\left\{\S^N\right\}_{N=1}^\infty, \{\bm \cL^{(1),N}\}_{N=1}^\infty, \left\{Z^N_k(x,y) : k\in\N, x\in\Q_{>0}, y\in\Q\right\}_{N=1}^{\infty};
\end{align}
for the first collection, each member is an element of $\mc C(\R^2,\R)$ endowed with the topology of uniform convergence on compact sets; for the second collection, each member is an element of $\mc C(\R\times\N, \R)$ again endowed with the topology of uniform convergence on compact sets; and for the third collection, each member is a function $\N\times\Q_{>0}\times\Q\to\R$, i.e., an element of $\R^{\N\times\Q_{>0}\times\Q}$. For this last space we have the usual topology generated by cylinders, which corresponds to convergence of all finite collections.

As we noted above, tightness for $\S^N$ is a consequence of Proposition~\ref{p.S_N tightness}. We have assumed tightness for $\bm \cL^{(1),N}$ and that it weakly converges to $\bm\cP$, the parabolic Airy line ensemble. Finally, tightness of the final collection in the previous display follows from the tightness of $Z^N_k(x,y)$ for fixed $k$, $x$, and $y$ from Proposition~\ref{p.maximizer location} (since the topology is characterized by convergence of finite collections).

As a result, we have tightness of all the collections in \eqref{e.tight collections} jointly, which means we can extract a subsequence along which all four collections converge. We assume for notational convenience that the subsequence is the whole sequence, i.e., we will continue labeling the sequence by $N$. We denote the limiting objects by
\begin{align*}
\tilde \S, \bm\cP, \left\{Z_k(x,y) : k\in\N, x\in\Q_{>0}, y\in\Q\right\},
\end{align*}
where $\bm\cP$ is the parabolic Airy line ensemble, as noted above.
That $\tilde\S(0;\bm\cdot) = \cP_1(\bm\cdot)$ follows immediately from the definition \eqref{e.rescaled h general definition} of $\smash{\S^N(0; \bm\cdot)}$, so the first part of (ii) of Definition~\ref{d.airy sheet} is verified.
Next recall from Proposition~\ref{p.maximizer location} that, almost surely,
\begin{align}\label{e.Z_k asymptotic}
\lim_{k\to\infty}\frac{Z_k(x,y_i)}{\sqrt{k}} = -\frac{1}{\sqrt{2x}}.
\end{align}
Returning to \eqref{e.approximate variational problem in convergence proof} and by the definition of $Z^N_k(x,y_2)$, we see that, for every $k\in\N$,
\begin{equation}\label{e.S difference setup}
\begin{split}
\S^N(x;y_2) &= \sup_{w\leq Z^N_k(x,y_2)} \left(\cL^N_{N^{1/6}}(x;w) + \bm \cL^{(1),N}[(w,N^{1/6}-1) \to (Z^N_k(x,y_2),k)]\right) \\
&\qquad + \bm \cL^{(1),N}[(Z^N_k(x,y_2),k) \to (y_2,1)] + \mf o(1)\\
\S^N(x;y_1) &\geq \sup_{w\leq Z^N_k(x,y_2)} \left(\cL^N_{N^{1/6}}(x;w) + \bm \cL^{(1),N}[(w,N^{1/6}-1) \to (Z^N_k(x,y_2),k)]\right)\\
&\qquad + \bm \cL^{(1),N}[(Z^N_k(x,y_2),k) \to (y_1,1)]  + \mf o(1).
\end{split}
\end{equation}
Subtracting the first from the second then yields that
\begin{align}\label{e.S increment lower bound}
\S^N(x;y_1) - \S^N(x;y_2) &\geq \bm \cL^{(1),N}\left[(Z^N_k(x,y_2), k) \to (y_1,1)\right] - \cL^{(1),N}\left[(Z^N_k(x,y_2), k) \to (y_2,1)\right]\nonumber\\
&\qquad + \mf o(1).
\end{align}
By the convergence of $Z^N_k(x,y_2)$ to $Z_k(x,y_2)$ and the uniform convergence on compact sets of $\cL^{(1),N}$ to $\bm\cP$, taking $N\to\infty$ yields
\begin{align*}
\tilde\S(x;y_1) - \tilde\S(x;y_2) \geq \bm\cP\left[(Z_k(x,y_2), k) \to (y_1,1)\right] - \bm\cP\left[(Z_k(x,y_2), k) \to (y_2,1)\right].
\end{align*}
Now, we know from the crossing lemma (Lemma~\ref{l.crossing lemma}) and since $y_1 < y_2$ that the righthand side is non-increasing in the argument which equals $Z_k(x,y_2)$; we will use this to replace $Z_k(x,y_2)$ by approximately $-\sqrt{k/(2x)}$. In more detail, fix $\varepsilon>0$. We know from \eqref{e.Z_k asymptotic} that for all large enough $k$ (depending on $\varepsilon$),
\begin{align}\label{e.upper bound on Z_k}
Z_k(x,y_2) \leq -\sqrt\frac{k}{2(x+\varepsilon)}.
\end{align}
Thus it follows that for all such $k$,
\begin{align}\label{e.S tilde and P inequality}
\tilde\S(x;y_1) - \tilde\S(x;y_2) \geq \bm\cP\left[(\sqrt{k/(2(x+\varepsilon))}, k) \to (y_1,1)\right] - \bm\cP\left[(\sqrt{k/(2(x+\varepsilon))}, k) \to (y_2,1)\right].
\end{align}
Now taking $k\to\infty$ in the righthand side and recalling the definition \eqref{e.airy sheet increment} of $\S$ increments yields that, for every $\varepsilon>0$,
\begin{align*}
\tilde\S(x;y_1) - \tilde\S(x;y_2) \geq \S(x+\varepsilon, y_1) - \S(x+\varepsilon, y_2).
\end{align*}
Taking $\varepsilon\to 0$ and invoking the continuity of $\S$ yields $\tilde\S(x;y_1) - \tilde\S(x;y_2) \geq \S(x; y_1) - \S(x; y_2)$. (Note that it is only in this step that we crucially rely on the fact that $\S$ exists, has the coupling with $\bm\cP$, and is continuous; without these facts, it would not be possible to take $\varepsilon\to 0$ in \eqref{e.S tilde and P inequality} directly to obtain the same with $\varepsilon=0$,  as one first has to take $k\to\infty$, at which point continuity is not \emph{a priori} available.)

An analogous argument using $Z^N_k(x,y_1)$ in place of $Z^N_k(x,y_2)$ will give \eqref{e.S difference setup} with an equality for $\S^N(x;y_1)$ and a lower bound for $\S^N(x;y_2)$ (with $Z^N_k(x,y_1)$ on the righthand side for both); then we obtain a lower bound for $\S^N(x;y_2) - \S^N(x;y_1)$ analogous to \eqref{e.S increment lower bound}. Since $y_1 < y_2$, a lower bound on $Z_k(x,y_1)$ in place of \eqref{e.upper bound on Z_k}, again invoking Lemma~\ref{l.crossing lemma}, and taking the limit $N\to\infty$ and then $\varepsilon\to 0$ as above yields $\tilde\S(x;y_1) - \tilde\S(x;y_2) \leq \S(x; y_1) - \S(x; y_2)$. This completes the proof after noting $\S$ itself enjoys the desired coupling with $\bm\cP$, which yields it for $\tilde \S$ as~well.
\end{proof}

\subsection{Tightness of $\S^N$}\label{s.modulus of continuity}

In this section we show tightness of $\S^N$ in $\mc C(\R^2, \R)$, i.e., prove Proposition~\ref{p.S_N tightness}; recall that $\S^N(x;y)$ is just shorthand for the objects $\S^{\mrm{S6V}, \varepsilon}(x;y)$ and $\S^{\mrm{ASEP}, \theta,\varepsilon}(-x;-y)$, as processes.
To prove tightness of the process, it is sufficient to prove one-point tightness and uniform-in-$N$ modulus of continuity estimates.
The basic idea is to reduce the modulus of continuity estimates for $\S^N$ into the same where one argument of $\S^N$ is fixed; then, we can make use of modulus of continuity estimates for line ensembles.

The first ingredient for this is the following symmetrization lemma, which follows as a  consequence of general shift-invariance results for colored vertex models \cite{borodin2022shift,galashin2021symmetries}.

\begin{figure}
  \begin{tikzpicture}[scale=0.75]

  \draw (0,-2.405) grid[step=0.6] (3,2.4);

  \node[circle, fill, inner sep=1pt] at (0,0) {};
  \node[anchor=north west, scale=0.6] at (0,0) {$0$};

  \node[circle, fill, inner sep=1pt] at (0,1.2) {};
  \node[anchor=north west, scale=0.6] at (0,1.2) {$y$};

  \node[circle, fill, inner sep=1pt] at (3, 1.8) {};
  \node[anchor=west, scale=0.6] at (3,1.8) {$x$};

  \foreach \x in {-4, ..., 4}
  {
    \node[anchor=east, scale=0.7] at (-0.27, \x*0.6) {$z_{\x}$};
    \draw[->] (-0.3, \x*0.6) -- ++(0.25,0);
  }

  \draw[->, very thick] (4,0) -- ++(2.2,0);

  \begin{scope}[shift={(8,0)}]
  \draw (0,-2.405) grid[step=0.6] (3,2.4);

  \node[circle, fill, inner sep=1pt] at (0,0) {};
  \node[anchor=north west, scale=0.6] at (0,0) {$0$};

  \node[circle, fill, inner sep=1pt] at (3,-1.2) {};
  \node[anchor=west, scale=0.6] at (3,-1.2) {$-y$};

  \node[circle, fill, inner sep=1pt] at (0, -1.8) {};
  \node[anchor=south west, scale=0.6] at (0,-1.8) {$-x$};

  \foreach \x in {-4, ..., 4}
  {
    \node[anchor=east, scale=0.7] at (-0.27, -\x*0.6) {$z_{\x}$};
    \draw[->] (-0.3, -\x*0.6) -- ++(0.25,0);
  }
  \end{scope}
  \end{tikzpicture}
\caption{A depiction of the transformation of the domain of the colored S6V model used in the proof of Lemma~\ref{l.S_N symmetry}. The $z_i$ labels at the left side indicate the spectral parameter for the colored S6V weights on the corresponding row.}\label{f.color position symmetry}
\end{figure}
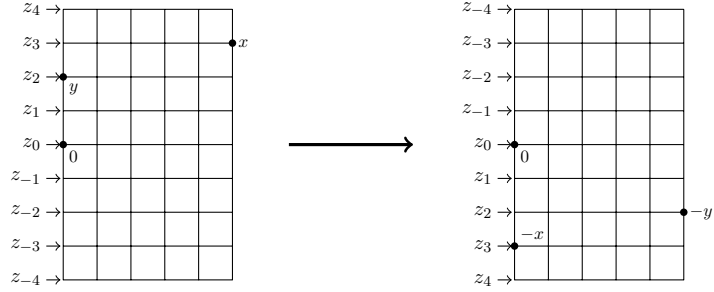

\begin{lemma}\label{l.S_N symmetry}
It holds that $\S^N(y;x) \stackrel{d}{=} \mc \S^N(-x; -y)$ as processes in $(x,y)$.
\end{lemma}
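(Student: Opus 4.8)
\textbf{Proof plan for Lemma~\ref{l.S_N symmetry}.} The statement is ultimately a statement about the colored S6V height function (the ASEP case follows from the S6V case by the effective coupling of Lemma~\ref{l.asep s6v comparison}, and in fact we expect it suffices to prove the S6V version and then read off the ASEP version from \eqref{e.line ensemble asep height fn}). Unwinding the definition \eqref{e.rescaled s6v definition}, what must be shown is that the rescaled and recentered family $\hssv(\beta x\varepsilon^{-2/3}, 0; \alpha\varepsilon^{-1} + \beta y\varepsilon^{-2/3}, \floor{\varepsilon^{-1}})$, viewed jointly over $(x,y)$, has the same law as its reflection under $(x,y)\mapsto(-y,-x)$, once the appropriate deterministic centering terms (which are themselves symmetric under this swap) are accounted for. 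So the plan is: (i) reduce to a clean distributional identity at the level of $\hssv$; (ii) observe that this identity is an instance of the shift-invariance / symmetry results for colored stochastic vertex models of Borodin--Bufetov--Wheeler and Galashin \cite{borodin2022shift,galashin2021symmetries}; (iii) check that the deterministic shifts match up.

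\textbf{Step (i): reduce to a statement about $\hssv$.} Recall $\hssv(j,0;y,\floor{\varepsilon^{-1}})$ counts arrows of color $\geq j$ exiting horizontally above height $y$ on the line $x = \floor{\varepsilon^{-1}}$, under packed boundary condition. Using Proposition~\ref{p.colored line ensembles} and the definition \eqref{e.S6V line ensemble} of $\bm L^{\mrm{S6V},(j),\varepsilon}$, the quantity $\S^N(x;y)$ is exactly $\sigma(\alpha)^{-1}\varepsilon^{1/3}$ times $\hssv(\beta x\varepsilon^{-2/3},0;\alpha\varepsilon^{-1}+\beta y\varepsilon^{-2/3},\floor{\varepsilon^{-1}}) + \beta x\varepsilon^{-2/3} - N - \mu(\alpha)\varepsilon^{-1} - \mu'(\alpha)\beta(y-x)\varepsilon^{-2/3}$. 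Swapping and negating both arguments, $\S^N(-x;-y)$ corresponds to $\hssv(-\beta y\varepsilon^{-2/3},0;\alpha\varepsilon^{-1}-\beta x\varepsilon^{-2/3},\floor{\varepsilon^{-1}})$ with the analogously reflected centerings. Since each of $\beta x\varepsilon^{-2/3} - N - \mu(\alpha)\varepsilon^{-1} - \mu'(\alpha)\beta(y-x)\varepsilon^{-2/3}$ picks up the correct reflected value (the linear-in-$(x,y)$ and constant terms behave correctly under the swap, which is the content of Step (iii)), the whole claim reduces to the equality in distribution, jointly over $(a,b)$ in the relevant integer range,
\begin{align*}
\hssv(a,0;\alpha\varepsilon^{-1}+b,\floor{\varepsilon^{-1}}) \stackrel{d}{=} \hssv(-b,0;\alpha\varepsilon^{-1}-a,\floor{\varepsilon^{-1}}) + (a+b),
\end{align*}
possibly after also invoking the two-parameter stationarity \eqref{e.two parameter stationarity} to absorb the shift of the second argument by $\alpha\varepsilon^{-1}$ into a relabeling.

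\textbf{Step (ii): apply the shift-invariance symmetry.} The identity in Step (i) is a symmetry that exchanges the role of the color coordinate (the first argument of $\hssv$) with the spatial coordinate (the second argument), combined with a reflection. For the colored S6V model under packed boundary conditions this is precisely the type of distributional identity established in \cite{borodin2022shift,galashin2021symmetries} (a "color-position symmetry"). Concretely, one realizes the packed colored S6V model in the negative quadrant — as in Figure~\ref{f.color position symmetry}, where one reflects the array so that colors entering on the left boundary from bottom to top become, after the transformation, spatial positions read from top to bottom — and quotes the relevant theorem of \cite{galashin2021symmetries} (or \cite{borodin2022shift}) to conclude that the joint law of the horizontally-exiting arrow colors on a fixed vertical line is invariant under this color$\leftrightarrow$position swap. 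Translating back via the definition \eqref{e.s6v height function} of $\hssv$ as a counting function of these exiting colors yields exactly the displayed identity (the additive $(a+b)$ term is bookkeeping coming from the difference between "counting from the top of a finite strip of $N$ rows" on the two sides). I would state this as: the identity follows from \cite[(the relevant theorem)]{borodin2022shift,galashin2021symmetries} applied to the colored S6V model with packed boundary condition, after the change of variables depicted in Figure~\ref{f.color position symmetry}.

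\textbf{Step (iii): match the deterministic centerings.} It remains to check that the centering terms $\beta x\varepsilon^{-2/3} - N - \mu(\alpha)\varepsilon^{-1} - \mu'(\alpha)\beta(y-x)\varepsilon^{-2/3}$ in $\S^N(x;y)$ transform under $(x,y)\mapsto(-y,-x)$ consistently with the $(a+b)$ shift coming from Step (ii). The term linear in $(y-x)$ is antisymmetric under the swap and exactly cancels the contribution of the arithmetic shift, the constant term $-N - \mu(\alpha)\varepsilon^{-1}$ is invariant, and the $+\beta x\varepsilon^{-2/3}$ term (which is the extra additive term present in the S6V sheet but not the ASEP sheet, included precisely to enforce $\S^{\mrm{S6V},\varepsilon}(\bm\cdot;\bm\cdot)\stackrel{d}{=}\S^{\mrm{S6V},\varepsilon}(\bm\cdot+r;\bm\cdot+r)$) becomes $-\beta y\varepsilon^{-2/3}$, which is absorbed into the reindexed height function argument. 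This is a routine computation which I would carry out in a couple of lines. The ASEP case then follows immediately from the S6V case via Lemma~\ref{l.asep s6v comparison} and the identification \eqref{e.line ensemble asep height fn}.

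\textbf{Main obstacle.} The one genuine subtlety is step (ii): extracting the precise color-position symmetry statement we need from the general (and somewhat elaborate) framework of \cite{borodin2022shift,galashin2021symmetries}. The shift-invariance results there are stated in considerable generality, and one must carefully specialize to the packed boundary condition, restrict to a single vertical line, and identify exactly which symmetry of the symmetric group action corresponds to the reflection-and-swap appearing in Lemma~\ref{l.S_N symmetry}; getting the orientation of the reflection and the bookkeeping of the finite-strip parameter $N$ exactly right (so that the additive constant comes out to precisely $a+b$, and hence cancels against the $\mu'(\alpha)$-linear centering) is where the care is needed. The rest — Step (i) and Step (iii) — is straightforward manipulation of the definitions.
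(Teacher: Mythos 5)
Your proposal takes essentially the same route as the paper: reduce to a clean distributional identity for $\hssv$, invoke the color--position symmetry of Galashin (or Borodin--Bufetov--Wheeler) after the reflection depicted in Figure~\ref{f.color position symmetry}, and check the deterministic centerings. The paper additionally uses two-parameter stationarity to reduce to a "$K=0$" version and disposes of the trivial $x<y$ case separately, and the additive constant in your Step~(i) should come out as $-(a+b)$ rather than $+(a+b)$ (a sign error in exactly the bookkeeping you flagged as the delicate point); neither affects the soundness of the overall approach.
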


\begin{proof}
Recall that $\oldN$ is the parameter controlling the dimension of the domain of the colored S6V model. To establish the lemma, it suffices to show, for fixed $K\in\intint{-\oldN,\oldN}$ and $t\in\N$, that 
\begin{align}\label{e.symmetry to show}
\hssv(y, 0;K + x, t) + y \stackrel{d}{=} \hssv(-x, 0;K-y, t) - x
\end{align}
jointly across all $x, y$ such that $K+x,K-y\in\intint{-\oldN,\oldN}$, as $\S^N(y;x)$ and $\S^N(-x;-y)$ are scaled and shifted versions of the lefthand and righthand sides, respectively, in both the cases of ASEP and S6V (recall \eqref{e.rescaled h general definition}, \eqref{e.rescaled s6v definition}, \eqref{e.rescaled approximate asep definition}). It further suffices to establish the case of $K=0$; for combining the $(K,x,y)\mapsto (0, K+x,y)$ case of \eqref{e.symmetry to show} with \eqref{e.two parameter stationarity} yields
\begin{align*}
\hssv(y, 0; K + x, t) + y \stackrel{\smash{d}}{=} \hssv(-K-x, 0; -y, t)  -K-x \stackrel{\smash{d}}{=} \hssv(-x, 0; K -y, t) -x.
\end{align*}
We show the $K=0$ case of \eqref{e.symmetry to show} now. First, in the case of $x<y$, the height functions are trivially equal: $\hssv(y, 0;x, t) = \oldN-y+1$ and $\hssv(-x, 0;-y, t) = \oldN+x+1$, since in both cases the lefthand sides are just the total number of arrows of the corresponding colors in the system. 

So we may assume $x\geq y$. We now invoke \cite[Theorem 1.6]{galashin2021symmetries}; let us explain the special case of its content that we need briefly now. Consider two colored S6V models on $\intint{1,t}\times\intint{-\oldN,\oldN}$, with spectral parameters $z_i$ at row $i$ in the first and $z_{\tau(i)}$ at row $i$ in the second, where $\tau:\intint{-\oldN, \oldN}\to\intint{-\oldN, \oldN}$ is a permutation; see Figure~\ref{f.color position symmetry}. In more detail, the measure is defined by setting the vertex weights (as in Figure~\ref{f.R weights}) of vertices in row $i$  with spectral parameter $z_i$ (or $z_{\tau(i)}$ in place of $z$, and the probability of a particular arrow configuration is the product of the vertex weights.

Then \cite[Theorem 1.6]{galashin2021symmetries} yields the following distributional equality. Let $k\in\N$ and $x_i, x_i', y_i, y_i' \in \intint{-\oldN, \oldN}$ for $i=1, \ldots, k$ with  $x_i\geq y_i$ and $x_i'\geq y_i'$, and under the additional condition that $\{z_j : y_i\leq j\leq x_i\} = \{z_{\tau(j)} : y_i' \leq j\leq x_i'\}$ (as multisets) for each $i$. Then,
\begin{equation}\label{e.galashin symmetry}
\begin{split}
\MoveEqLeft[13]
\left(\hssv_1(y_1, 0; x_1, t)+y_1, \ldots, \hssv_1(y_k, 0; x_k, t)+y_k\right)\\
&\stackrel{d}{=} \left(\hssv_2(y_1', 0; x_1', t) + y_1', \ldots, \hssv_2(y_k', 0; x_k', t) + y_k'\right),
\end{split}
\end{equation}
where the subscript corresponds to whether the statistic is in the first or second colored S6V model (that is, left or right in Figure~\ref{f.color position symmetry}) just described. (in the notation of \cite{galashin2021symmetries}, Theorem 1.6 there does not include the $+y_i$ or $+y_i'$ terms above; this is because the height function considered in \cite{galashin2021symmetries} counts the number of colored arrows below a given height and not above as in our setting. Translating from one to the other introduces the mentioned terms.)

For our situation, let $\tau(i) = -i$, i.e., we reverse the order of the formal spectral parameters, and for any choice of $x_i\geq y_i$, define $x_i' = -y_i$ and $y_i' = -x_i$. Then $x_i'\geq y_i'$ and clearly it holds that $\{z_{\tau(j)} : y_i' \leq j\leq x_i'\} = \{z_j : y_i \leq j\leq x_i\}$. Applying the distributional equality of \eqref{e.galashin symmetry} and specializing to the case that all spectral parameters are equal yields that
\begin{align*}
\left(\hssv(y, 0;x,t) + y\right)_{x,y\in\intint{-\mathtt N,\mathtt N}, x\geq y} \stackrel{d}{=} \left(\hssv(-x, 0; -y, t) -x\right)_{x,y\in\intint{-\mathtt N,\mathtt N}, x\geq y},
\end{align*}
which yields the $K=0$ case of \eqref{e.symmetry to show} and completes the proof.
\end{proof}

A second ingredient to prove the modulus of continuity estimates is the following monotonicity statement, sometimes called a quadrangle inequality, which will allow us to reduce the supremum over two arguments present in the modulus of continuity to a supremum over a single one; it is the analog of the crossing lemma (Lemma~\ref{l.crossing lemma}) for S6V and ASEP. In the context of last passage percolation models, such quadrangle inequalities have previously been used in similar ways (e.g., \cite{dauvergne2021scaling}) as well as for other purposes (e.g., \cite{basu2019fractal,balazs2020non,busani2022stationary,ganguly2023local,ganguly2023brownian}).

\begin{lemma}[Quadrangle inequality]\label{l.quadrangle}
For any $x_1 \leq x_2$ and $y_1\leq y_2$,
$$\S^N(x_1; y_1) + \S^N(x_2; y_2) \geq \S^N(x_1; y_2) + \S^N(x_2; y_1).$$
\end{lemma}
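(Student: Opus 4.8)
The quadrangle inequality (super-modularity) of $\S^N$ should follow from the corresponding statement at the level of the underlying colored height functions, which in turn is a standard monotonicity fact for the colored coupling (equivalently the basic coupling) of S6V and ASEP. The plan is therefore to reduce, exactly as in the proof of Lemma~\ref{l.S_N symmetry}, to a statement purely about $\hssv(x,0;y,t)$ (respectively $\hasep$), and then prove that inequality combinatorially using the interpretation of the height function as an LPP/current-type observable of a monotone coupling of arrow trajectories.

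First I would unwind the definitions. Recall $\S^N(x;y)$ is an affine function (with coefficients independent of $x,y$ in the relevant range, up to the linear shift $\mu'(\alpha)\beta(\alpha)(y-x)\varepsilon^{-2/3}$, which is itself super-modular with equality) of $\hssv(\beta(\alpha)x\varepsilon^{-2/3},0;\alpha\varepsilon^{-1}+\beta(\alpha)y\varepsilon^{-2/3},\floor{\varepsilon^{-1}})$ in the S6V case and of $-\hasep(-\beta(\alpha)x\varepsilon^{-2/3},0;2\alpha\varepsilon^{-1}-\beta(\alpha)y\varepsilon^{-2/3},\ldots)$ in the ASEP case (using the identification $\S^N(x;y)=\S^{\mrm{ASEP},\theta,\varepsilon}(-x;-y)$ together with Lemma~\ref{l.asep s6v comparison}). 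Since the prefactor $\sigma(\alpha)^{-1}\varepsilon^{1/3}>0$ and the linear-in-$(y-x)$ term contributes equally to both sides of the claimed inequality, it suffices to prove: in the S6V case, $(x,y)\mapsto \hssv(x,0;y,t)$ is super-modular, i.e.\ for $x_1\le x_2$ and $y_1\le y_2$,
\begin{align*}
\hssv(x_1,0;y_1,t) + \hssv(x_2,0;y_2,t) \geq \hssv(x_1,0;y_2,t) + \hssv(x_2,0;y_1,t);
\end{align*}
and in the ASEP case (after the sign flip coming from $\S^N(x;y)=\S^{\mrm{ASEP},\theta,\varepsilon}(-x;-y)$, which reverses both inequalities $x_1\le x_2$, $y_1\le y_2$ and changes a $\ge$ into a $\le$), the same super-modularity for $\hasep$, which by \eqref{e.approx asep height function definition} reduces again to super-modularity of $\hssv$. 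So both cases come down to a single combinatorial statement about $\hssv$.

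Now I would prove that combinatorial statement. Write $\hssv(x,0;y,t) = \#\{k>y : j_{(t,k)}\ge x\}$. Fix the configuration and consider the difference
\begin{align*}
\hssv(x_2,0;y_1,t) - \hssv(x_2,0;y_2,t) = \#\{k\in\intint{y_1+1,y_2}: j_{(t,k)}\ge x_2\},
\end{align*}
and similarly with $x_1$ in place of $x_2$. Since $x_1\le x_2$, the event $\{j_{(t,k)}\ge x_2\}$ implies $\{j_{(t,k)}\ge x_1\}$, so for \emph{every} configuration
\begin{align*}
\#\{k\in\intint{y_1+1,y_2}: j_{(t,k)}\ge x_1\} \geq \#\{k\in\intint{y_1+1,y_2}: j_{(t,k)}\ge x_2\},
\end{align*}
i.e.\ $\hssv(x_1,0;y_1,t)-\hssv(x_1,0;y_2,t) \ge \hssv(x_2,0;y_1,t)-\hssv(x_2,0;y_2,t)$ deterministically, which is exactly the desired super-modularity. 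Taking expectations (or rather, this holds pointwise, so in particular in distribution jointly) gives Lemma~\ref{l.quadrangle} for $\S^N$ in the S6V case; in the ASEP case one applies the same deterministic pointwise inequality to $\hssv$ appearing in \eqref{e.approx asep height function definition}, tracks the two sign reversals, and concludes via Lemma~\ref{l.asep s6v comparison} on the coupling event (which has probability one eventually, for all $x,y$ in any fixed compact set once rescaled).

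\textbf{Main obstacle.} There is essentially no analytic difficulty here: the inequality is deterministic and monotonicity-free, so unlike the failure-of-stochastic-monotonicity issues discussed in Section~\ref{s.lack of monotonicity discussion}, this needs no Gibbs-property input. The only point requiring care is bookkeeping of signs and the two distinct reductions (S6V directly, ASEP through $\hasep$ and the sign flip in $\S^N(x;y)=\S^{\mrm{ASEP},\theta,\varepsilon}(-x;-y)$), and checking that the linear shift term $\mu'(\alpha)\beta(\alpha)(y-x)\varepsilon^{-2/3}$ and any floor functions do not spoil the inequality — the shift term is additive and cancels, and floors only perturb arguments by $O(1)$, which is harmless since the inequality then holds for the floored arguments directly (they are themselves valid arguments of $\hssv$). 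Hence the ``hard part'' is purely a matter of writing the reduction cleanly rather than any genuine mathematical content.
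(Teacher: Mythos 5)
Your proof is correct and follows essentially the same route as the paper: reduce both the S6V and ASEP cases to deterministic super-modularity of $\hssv$, and then observe that $\hssv(x,0;y_1,t)-\hssv(x,0;y_2,t) = \#\{k\in\intint{y_1+1,y_2}: j_{(t,k)}\geq x\}$ is non-increasing in $x$ for every fixed arrow configuration. The only difference is that you spell out the two sign reversals in the ASEP reduction explicitly, whereas the paper leaves that bookkeeping to the reader.
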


\begin{proof}
Recall from \eqref{e.s6v height function} the definition $\hssv(x,0; y,t) = \#\left\{k> y: j_{(t,k)}\geq x\right\}$. It suffices from the definition of $\S^{N}$ in the cases of ASEP \eqref{e.rescaled approximate asep definition} and S6V \eqref{e.rescaled s6v definition} in terms of $\hssv$ to prove that $\hssv(x_1, 0; y_1, t) + \hssv(x_2, 0; y_2,t) \geq \hssv(x_1, 0; y_2,t) + \hssv(x_2,0;y_1,t)$ for any $x_1\leq x_2$, $y_1\leq y_2$, $t\in\N$. 
To that end, observe that for any $x\in\Z$ and $y_1\leq y_2$,
\begin{align*}
\hssv(x,0; y_1,t) - \hssv(x, 0; y_2,t) &= \#\bigl\{k\in \intint{y_1+1, y_2} : j_{(t,k)}\geq x\bigr\},
\end{align*}
which is clearly non-increasing in $x$, as was to be shown.
\end{proof}

 With these ingredients we give the proof of Proposition~\ref{p.S_N tightness}.

\begin{proof}[Proof of Proposition~\ref{p.S_N tightness}]
We need to prove one-point tightness of $\S^N$ as well as establish uniform-in-$N$ modulus of continuity estimates. The tightness of $\{\S^N(x;y)\}_{N=1}^\infty$ for fixed $x,y$ follows immediately from the stationarity of $\S^N$ (recall \eqref{e.rescaled two parameter stationarity}) and Assumption~\ref{as.one-point tightness}. For uniform-in-$N$ modulus of continuity, we first observe that, from Lemma~\ref{l.quadrangle}, for any $x, y\in[-K,K]$ and $x_1 \leq x_2$ and $y_1 \leq y_2$,
\begin{align*}
\S^N(K; y_1) - \S^N(K; y_2) \leq \S^N(x; y_1) - \S^N(x; y_2) \leq \S^N(-K; y_1) - \S^N(-K; y_2)\\
\S^N(x_1; K) - \S^N(x_2; K) \leq \S^N(x_1; y) - \S^N(x_2; y) \leq \S^N(x_1; -K) - \S^N(x_2; -K).
\end{align*}
In particular, we see that, for any $x_1, x_2, y_1, y_2\in[-K,K]$,
\begin{align*}
\MoveEqLeft[6]
|\S^N(x_1; y_1) - \S^N(x_2; y_2)|\\
&\leq |\S^N(x_1; y_1) - \S^N(x_1; y_2)| + |\S^N(x_1; y_2) - \S^N(x_2; y_2)|\\
&\leq \max_{z\in\{+K, -K\}}|\S^N(z; y_1) - \S^N(z; y_2)|
+ \max_{z\in\{+K, -K\}}|\S^N(x_1; z) - \S^N(x_2; z)|.
\end{align*}
By Lemma~\ref{l.S_N symmetry} the marginal distribution of the second term is the same as that of the first with $x_i$ in place of $y_i$. Finally, the modulus of continuity over a single argument, i.e., of $y\mapsto \S^N(x;y)$ for $x$ fixed, is the same as that of the top line of a rescaled line ensemble (see \eqref{e.rescaled h general definition}) and therefore follows from Corollary~\ref{c.mod con for L}. So by Corollary~\ref{c.mod con for L} and a union bound, for any $\rho>0$ and $\varepsilon>0$ there exists $\delta>0$ such that, with probability at least $1-\varepsilon$, whenever $|y_1-y_2|\leq \delta$ and $|x_1-x_2|\leq \delta$, the previous display is upper bounded by $\rho$. This completes the proof.
\end{proof}

\section{Weak monotonicity statements for one path}\label{s.monotonicity}

As indicated in Section~\ref{s.lack of monotonicity discussion}, a technical challenge in proving tightness for Hall-Littlewood Gibbs line ensembles is that the Gibbs property does not enjoy a form of stochastic monotonicity that previous proofs of tightness of prelimiting line ensembles have relied heavily upon.

This was a difficulty faced in \cite{corwin2018transversal} as well, and is one of the reasons the analysis there is restricted to the top curve. To get around the lack of monotonicity, they found a weaker form of monotonicity for a single curve with a lower boundary condition that does hold, in terms of a comparison of partition functions up to a constant multiplicative factor. Our analysis will rely on a similar weak monotonicity statement, but with the observation that it holds under slightly more general conditions (Corollary~\ref{c.partition function comparison with non-int boundaries}), namely in the presence of fixed upper and lower curves that the random curve must avoid, in addition to a possibly different lower boundary curve it interacts with via the weight factor $W$ from Definition~\ref{d.weight factor}.

In the next section, we introduce some monotonicity statements for basic objects such as non-intersecting Bernoulli random walk bridges. In Section~\ref{s.weak monotonicity} we give the weak monotonicity statements.

\subsection{Monotonicity statements for random walks and Gaussian objects}

Here we collect some well-known statements of forms of monotonicity enjoyed by objects underlying the Gibbs properties, such as non-intersecting random walks and Gaussian random variables.

The following is a straightforward computation using the form of the density of the normal distribution, and we point the reader to its proof in \cite{calvert2019brownian}. Recall the notation $\mc N(m,\sigma^2)$ for the normal distribution with mean $m$ and variance $\sigma^2$.

\begin{lemma}[{\cite[Lemma 5.15]{calvert2019brownian}}]\label{l.normal conditional prob montonicity}
Fix $r > 0$, $m \in \R$, and $\sigma^2 > 0$. Let $X$ be distributed as $\mc N(m, \sigma^2)$. Then the quantity
$\P(X \geq s + r \mid X \geq s)$ is a strictly decreasing function of $s \in \R$.
\end{lemma}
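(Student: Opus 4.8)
The claim is that for $X \sim \mc N(m,\sigma^2)$ and fixed $r>0$, the map $s \mapsto \P(X \geq s+r \mid X \geq s)$ is strictly decreasing. By translating, it suffices to treat the case $m = 0$, and by rescaling by $\sigma$ it suffices to treat $\sigma = 1$; so I would reduce to $X \sim \mc N(0,1)$ with density $\varphi$ and tail $\overline\Phi(t) := \int_t^\infty \varphi(u)\,\diff u$. The quantity of interest is then $R(s) := \overline\Phi(s+r)/\overline\Phi(s)$, and we must show $R$ is strictly decreasing in $s$.

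\textbf{Key steps.} First I would record the elementary fact that for a positive differentiable function, monotonicity can be checked via the logarithmic derivative: $R'(s) < 0$ for all $s$ is equivalent to $\frac{\diff}{\diff s}\log R(s) = \frac{\varphi(s)}{\overline\Phi(s)} - \frac{\varphi(s+r)}{\overline\Phi(s+r)} < 0$, i.e. to showing that the hazard rate $h(t) := \varphi(t)/\overline\Phi(t)$ of the standard normal is strictly increasing; then since $r>0$ we would get $h(s) < h(s+r)$ and hence $\log R$ strictly decreasing. So the heart of the matter is: the standard normal hazard rate is strictly increasing. This is a classical fact; the cleanest route is to note $h(t) = \varphi(t)/\overline\Phi(t)$ and compute $h'(t) = \frac{\varphi'(t)\overline\Phi(t) + \varphi(t)^2}{\overline\Phi(t)^2} = \frac{\varphi(t)\bigl(-t\,\overline\Phi(t) + \varphi(t)\bigr)}{\overline\Phi(t)^2}$, using $\varphi'(t) = -t\varphi(t)$. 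Since $\varphi > 0$, it remains to show $g(t) := \varphi(t) - t\,\overline\Phi(t) > 0$ for all $t \in \R$. For $t \leq 0$ this is immediate since $\varphi(t) > 0 \geq t\,\overline\Phi(t)$. For $t > 0$ I would use $g(t) \to 0$ as $t \to \infty$ together with $g'(t) = \varphi'(t) - \overline\Phi(t) - t\cdot(-\varphi(t)) = -\overline\Phi(t) < 0$, so $g$ is strictly decreasing on $(0,\infty)$ with limit $0$, hence $g(t) > 0$ there as well. This gives $h'(t) > 0$ everywhere, completing the argument.

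\textbf{Main obstacle.} There is no serious obstacle here — the statement is a standard log-concavity fact about the Gaussian (equivalently, that the Gaussian has an increasing hazard rate, which follows from log-concavity of $\varphi$ via the general principle that log-concave densities have increasing hazard rates). The only thing to be mildly careful about is the reduction: one should check that the affine change of variables $X \mapsto (X-m)/\sigma$ maps the events $\{X \geq s+r\}$ and $\{X \geq s\}$ to events of the same form $\{Y \geq s'+r'\}$, $\{Y \geq s'\}$ with $r' = r/\sigma > 0$ still positive, so that the reduced statement genuinely implies the general one. Everything else is a short calculus computation, and since the excerpt already cites \cite[Lemma 5.15]{calvert2019brownian} for the proof, I would keep the write-up brief and simply present the hazard-rate monotonicity computation outlined above.
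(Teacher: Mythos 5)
Your argument is correct. Note that the paper does not give its own proof of this lemma — it simply cites \cite[Lemma 5.15]{calvert2019brownian} — so there is no in-text argument to compare against, but your hazard-rate reduction is the standard route to this fact and it checks out: the scaling reduction preserves the form of the two events with $r' = r/\sigma > 0$; $\frac{\diff}{\diff s}\log R(s) = h(s) - h(s+r)$ where $h = \varphi/\overline\Phi$; the computation $h'(t) = \varphi(t)\bigl(\varphi(t) - t\,\overline\Phi(t)\bigr)/\overline\Phi(t)^2$ using $\varphi' = -t\varphi$ is right; and the sign analysis of $g(t) = \varphi(t) - t\,\overline\Phi(t)$ (nonnegative trivially for $t\le 0$; strictly decreasing to $0$ for $t>0$ since $g'(t) = -\overline\Phi(t)<0$) is clean and complete. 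The one small thing worth saying explicitly, if you write this up, is why $g(t)\to 0$ as $t\to\infty$ — you need $t\,\overline\Phi(t)\to 0$, which follows from the elementary bound $\overline\Phi(t) \le \varphi(t)/t$ for $t>0$, or equivalently from the already-established $g>0$ at any single point together with monotonicity; either way it deserves a half-sentence since the claim $g(\infty)=0$ is what closes the $t>0$ case.
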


The next lemma quotes a form of stochastic monotonicity in the boundary data of non-intersecting Bernoulli random walk bridges. The result has appeared in the literature a number of times, e.g., \cite[Lemma 18]{cohn2000local} or, in notation closer to ours, \cite[Lemma 7.3]{dimitrov2021tightness}. Recall the definition of a Bernoulli path from Definition~\ref{d.bernoulli path} and that of a Bernoulli random walk bridge from before Definition~\ref{d.weight factor}.

\begin{lemma}[{\cite{cohn2000local,dimitrov2021tightness}}]\label{l.random walk bridge monotonicity}
Fix an interval $\llbracket a,b\rrbracket \subseteq \Z$. Let $\vec x^{*}, \vec y^*\in \Z^k$ be such that $x^*_1 \geq  \ldots \geq x^*_k$ and $y^*_1 \geq  \ldots \geq y^*_k$ for $*\in\{\shortuparrow, \downarrow\}$. Let $f^*, g^* : \llbracket a,b\rrbracket\to \Z$ be Bernoulli paths such that $f^*(a) \geq x^*_1$, $f^*(b) \geq y^*_1$, $g^*(a) \leq x^*_k$, $g^*(b) \leq y^*_k$ for $*\in\{\shortuparrow, \downarrow\}$. Suppose $f^{\shortuparrow}(x) \geq f^{\downarrow}(x)$, $g^{\shortuparrow}(x) \geq g^{\downarrow}(x)$ for all $x\in \llbracket a,b\rrbracket$ and $x^{\shortuparrow}_i \geq x^{\downarrow}_i$, $y^{\shortuparrow}_i \geq y^{\downarrow}_i$ for all $i=1, \ldots, k$.

Let $\bm B^{*} = (B^{*}_1, \ldots,  B^{*}_k)$ be a collection of independent Bernoulli random walk bridges, with $B^*_i$ from $(a, x^*_i)$ to $(b,y^*_i)$, conditioned on $g^*(x) \leq B^*_k(x) \leq  \ldots \leq B^*_1(x) \leq f^*(x)$ for all $x\in\llbracket a,b\rrbracket$ and for $*\in\{\shortuparrow, \downarrow\}$. Then there is a coupling between $\bm B^\shortuparrow$ and $\bm B^{\downarrow}$ such that
\begin{align*}
B^{\shortuparrow}_i(x) \geq B^{\downarrow}_i(x)
\end{align*}
for all $x\in \llbracket a,b\rrbracket$ and $i=1, \ldots, k$.
\end{lemma}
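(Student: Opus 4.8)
\textbf{Proof proposal for Lemma~\ref{l.random walk bridge monotonicity} (stochastic monotonicity of conditioned Bernoulli bridges).}

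The plan is to deduce the coupling from the fact that the two conditioned laws $\bm B^\shortuparrow$ and $\bm B^\downarrow$ are both distributed according to ordered Markov chains that are \emph{stochastically monotone} in a suitable partial order on $\Z^k$, and then to invoke a standard monotone-coupling construction (Strassen-type). Concretely, I would first reduce to a one-step statement: view the pair $(\bm B^\shortuparrow, \bm B^\downarrow)$ as Markov chains in the spatial variable $x$ running over $\intint{a,b}$. Conditioned on the configuration at position $x-1$ and on the requirement of non-crossing with each other, with the upper curves $f^*$, with the lower curves $g^*$, and on reaching the prescribed right endpoints $\vec y^{*}$, the law of the slice at position $x$ is (after the standard Karlin--McGregor / Lindström--Gessel--Viennot reformulation) a determinantal/ordered-path measure. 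The key deterministic input is that an increment $\gamma(x) - \gamma(x-1) \in \{0,-1\}$ of a Bernoulli path takes only two values, so each single curve's transition kernel is a two-point distribution whose success probability is monotone in the current height relative to the confining curves; this is exactly the content made rigorous for ordered walks in \cite{cohn2000local,dimitrov2021tightness}.

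Next I would set up the partial order: for configurations $\vec u, \vec v \in \Z^k$ (the heights of the $k$ curves at a fixed position) say $\vec u \succeq \vec v$ if $u_i \geq v_i$ for all $i$. The claim to verify is that the conditional law of the slice at $x$, as a function of the slice at $x-1$ together with the boundary data $(f^*, g^*, \vec y^{*})$, is monotone: if all of $(f^\shortuparrow, g^\shortuparrow, \vec y^\shortuparrow)$ dominate $(f^\downarrow, g^\downarrow, \vec y^\downarrow)$ and the slice at $x-1$ for the $\shortuparrow$ chain dominates that of the $\downarrow$ chain, then the slice at $x$ for the $\shortuparrow$ chain stochastically dominates that of the $\downarrow$ chain. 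Granting this, one builds the global coupling inductively on $x = a, a+1, \dots, b$: couple the slices at $a$ trivially (they are deterministic, and $x^\shortuparrow_i \geq x^\downarrow_i$); assuming the slices at $x-1$ are coupled with $\bm B^\shortuparrow(x-1) \succeq \bm B^\downarrow(x-1)$, use the monotone coupling of the two conditional one-step laws to couple the slices at $x$ so that domination is preserved. Since all $2^k$ (really far fewer, given orderedness) transition probabilities are being compared, a cleaner route is to appeal directly to the cited lemmas \cite[Lemma~18]{cohn2000local} or \cite[Lemma~7.3]{dimitrov2021tightness}, which already package precisely this monotone-coupling conclusion for collections of non-intersecting Bernoulli random walk bridges confined between Bernoulli path barriers; the present statement is a minor rephrasing (the barriers $f^*$ and $g^*$ here play the role of the barriers there, and the endpoint vectors $\vec x^*, \vec y^*$ are the prescribed boundary data).

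The main obstacle, and the only genuinely non-automatic point, is checking that the hypotheses of the cited monotonicity results are actually met under our ordering assumptions --- in particular the compatibility conditions $f^*(a) \geq x_1^*$, $g^*(b) \leq y_k^*$, etc., which guarantee the conditioned measures are well-defined (have non-empty support), and the fact that domination of \emph{all} the data ($\shortuparrow$ barriers above $\downarrow$ barriers, $\shortuparrow$ endpoints above $\downarrow$ endpoints) is exactly what the cited lemmas require. I would therefore spend the bulk of the write-up translating notation: identifying the interval $\intint{a,b}$, the barrier functions, and the endpoint data in our statement with those in \cite{dimitrov2021tightness}, verifying the support non-emptiness (which follows from the stated inequalities together with $y_i \le x_i \le y_i + (b-a)$-type counting constraints implicit in ``Bernoulli random walk bridge''), and then quoting the coupling. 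No new probabilistic content is needed beyond what is already in the literature; the lemma is essentially a bookkeeping consequence.
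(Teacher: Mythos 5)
Your proposal matches the paper exactly: the paper does not prove this lemma but simply cites \cite[Lemma~18]{cohn2000local} and \cite[Lemma~7.3]{dimitrov2021tightness}, which is precisely what you conclude one should do. One small note on your preliminary sketch: the mechanism in those references (and in the paper's own Lemma~\ref{l.quantitative monotonicity for n=1}, which handles the $k=1$ case with a quantitative refinement) is a Glauber/heat-bath dynamics on the configuration space whose stationary law is the conditioned bridge measure and which preserves ordering, rather than the spatial-Markov-chain (Doob $h$-transform) route you describe; conditioning on the future endpoints makes monotonicity of the one-step spatial kernel less immediate than your sketch suggests, so the configuration-space dynamics is the cleaner argument — but since you ultimately quote the cited lemmas, this does not affect correctness.
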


\subsection{Weak monotonicity}\label{s.weak monotonicity}

We start with a deterministic lemma about the weight of certain pairs of ordered paths with the same endpoints constrained to lie above a curve $\ell_{\mrm{bot}}$. Define
$$c(q) := \prod_{i=1}^\infty(1-q^i) > 0.$$
In the following, we call a down-step an increment of $-1$ in a Bernoulli path and a flat step an increment of $0$. Flipping a down-step to a flat-step (or vice versa) means keeping the left endpoint the same and changing the increment from $-1$ to $0$ (or from $0$ to $-1$) and keeping all other increments the same. We also recall the definition of the weight factor $W$ and the notational device $W_{\mrm{low}}$ from \eqref{e.weight function}.

\begin{lemma}\label{l.determinstic weak monotonicity}
Let $\intint{a,b}\subseteq \Z$ be a fixed interval and $\ell_{\mrm{bot}}:\intint{a,b}\to\Z$ be a Bernoulli path. Let $\ell_{\downarrow}, \ell_\shortuparrow:\intint{a,b}\to\Z$ be Bernoulli paths with the same endpoints satisfying the property that there exist $t\in\intint{a,b}$ and $k\leq \min(t-a, b-t)$ such that flipping $k$ down-steps in $\intint{a,t}$ in $\ell_{\downarrow}$ to flat-steps and $k$ flat-steps in $\intint{t,b}$ in $\ell_{\downarrow}$ to down-steps yields $\ell_{\shortuparrow}$. Then
\begin{align*}
W_{\mrm{low}}(\ell_{\downarrow},\ell_{\mrm{bot}}) \leq c(q)^{-1}\cdot W_{\mrm{low}}(\ell_{\shortuparrow},\ell_{\mrm{bot}}).
\end{align*}

\end{lemma}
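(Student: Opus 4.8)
The statement compares only the lower interaction factor $W_{\mathrm{low}}(\cdot,\ell_{\mathrm{bot}})$, which by \eqref{e.weight function} is the product over $x\in\intint{a+1,b}$ of $(1-q^{\Delta(x-1)}\one_{\Delta(x)=\Delta(x-1)-1})$ where $\Delta(x) = \gamma(x) - \ell_{\mathrm{bot}}(x)$ for the relevant top curve $\gamma\in\{\ell_{\downarrow},\ell_{\shortuparrow}\}$. The first step is to record that since each factor lies in $(0,1]$ (and is $1$ unless $\Delta$ drops by $1$), we trivially have $W_{\mathrm{low}}(\ell_\shortuparrow,\ell_{\mathrm{bot}}) \ge \prod_{i\ge 1}(1-q^i) = c(q)$ using $c(q) \le 1-q^j$ for every $j\ge 1$; but this crude bound is not quite enough because we need it relative to $W_{\mathrm{low}}(\ell_\downarrow,\ell_{\mathrm{bot}})$, which could itself be small. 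The point is that the passage from $\ell_\downarrow$ to $\ell_\shortuparrow$ changes $\Delta$ only on $\intint{a,t}$ — there $\ell_\shortuparrow\ge\ell_\downarrow$ since down-steps are flipped to flat-steps — and $\Delta_\shortuparrow = \Delta_\downarrow$ on $\intint{t,b}$ (the flips on $\intint{t,b}$ exactly compensate, since $\ell_\shortuparrow$ and $\ell_\downarrow$ share endpoints and $\ell_\shortuparrow - \ell_\downarrow$ is a nonnegative function that is $0$ at $b$).

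So the factors of $W_{\mathrm{low}}$ at positions $x$ with both $x-1,x\ge t$ are literally equal for the two curves, and the factors with $x-1, x \le t$ or straddling $t$ are the only ones that change. The key comparison to carry out is: for each down-step of $\ell_\downarrow$ at some position $x_0\in\intint{a,t}$ that is flipped to a flat-step, one factor $(1-q^{\Delta_\downarrow(x_0-1)})$ in $W_{\mathrm{low}}(\ell_\downarrow,\ell_{\mathrm{bot}})$ disappears from $W_{\mathrm{low}}(\ell_\shortuparrow,\ell_{\mathrm{bot}})$ (being replaced by $1$ at that position, assuming $\ell_{\mathrm{bot}}$ does not also step down there, which needs a small case check), while the remaining factors of $W_{\mathrm{low}}(\ell_\shortuparrow,\ell_{\mathrm{bot}})$ at unchanged positions are $\ge$ the corresponding ones of $W_{\mathrm{low}}(\ell_\downarrow,\ell_{\mathrm{bot}})$ because $\Delta_\shortuparrow(x) \ge \Delta_\downarrow(x)$ on $\intint{a,t}$ and $q^{\Delta_\shortuparrow(x)} \le q^{\Delta_\downarrow(x)}$. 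Hence $W_{\mathrm{low}}(\ell_\downarrow,\ell_{\mathrm{bot}}) \le W_{\mathrm{low}}(\ell_\shortuparrow,\ell_{\mathrm{bot}}) \cdot \prod (1-q^{d_j})$ where $d_j$ ranges over the (at most countably many, in fact at most $k$) values $\Delta_\downarrow$ takes just before the flipped down-steps, and these are distinct positive integers since along a Bernoulli path $\Delta$ changes by at most $1$ per step and each flipped step is a genuine drop — actually one must be slightly careful, but one can at least bound $\prod(1-q^{d_j})^{-1} \le \prod_{i\ge 1}(1-q^i)^{-1} = c(q)^{-1}$ provided the $d_j$ are distinct. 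The cleanest way to guarantee distinctness (or more precisely, to absorb any repeats) is to observe that the full product $\prod_j (1-q^{d_j})$ with $d_j\ge 1$ is at least $\prod_{i\ge 1}(1-q^i) = c(q)$ as long as the multiset $\{d_j\}$ has each value appearing at most once; if a value repeats we instead note $\prod_j(1-q^{d_j}) \ge c(q)$ still holds when $\{d_j\}$ is a set, and handle multiplicities by a more careful accounting — this is where the argument is delicate.

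\textbf{Main obstacle.} The crux is showing $\prod_j (1-q^{d_j}) \ge c(q)$, i.e. that the integers $\Delta_\downarrow$ takes at the flipped down-steps are distinct (or that repeats can be absorbed). I would argue this by choosing the flipped steps greedily from left to right and tracking that after each flip, $\Delta$ strictly increases at that location, so consecutive flipped down-steps encountered as $x$ increases in $\intint{a,t}$ see strictly increasing values of $\Delta_\downarrow(x-1)$ — here the hypothesis that exactly $k$ down-steps on $\intint{a,t}$ are flipped and $\ell_\shortuparrow - \ell_\downarrow$ rises monotonically from $0$ to $k$ on $\intint{a,t}$ is what forces the $d_j$ to be $k$ distinct positive integers (they are precisely the values $1,2,\dots$ of $\ell_\shortuparrow-\ell_\downarrow$ seen from the $\ell_\downarrow$ side, shifted by the local gap). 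Once distinctness is in hand, $\prod_j(1-q^{d_j}) \ge \prod_{i=1}^{\infty}(1-q^i) = c(q)$ and the claim follows. A subtlety to be checked along the way: whether the weight factor at a position where $\ell_{\mathrm{bot}}$ also has a down-step behaves as claimed, since then $\Delta$ may not drop at a flipped step of the top curve; this needs a short case analysis showing that flipping such a step only removes factors $\le 1$, never creating new ones, so the bound is unaffected. I do not expect this case analysis to be hard, just slightly tedious; I would dispatch it by reducing to the statement that $W_{\mathrm{low}}$ only counts positions where the top curve steps down and the bottom curve does not.
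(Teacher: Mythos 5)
Your proposal rests on a false premise that causes the rest of the argument to collapse. You assert that $\Delta_\shortuparrow = \Delta_\downarrow$ on $\intint{t,b}$, on the grounds that $\ell_\shortuparrow$ and $\ell_\downarrow$ share endpoints and $\ell_\shortuparrow - \ell_\downarrow \geq 0$ with equality at $b$. That reasoning does not support the claim: $\ell_\shortuparrow - \ell_\downarrow$ vanishes at $a$ and $b$ but equals $k$ at $t$ and descends from $k$ to $0$ on $\intint{t,b}$, so $\Delta_\shortuparrow > \Delta_\downarrow$ on the interior of $\intint{t,b}$ whenever $k \geq 1$. Worse, $\ell_\shortuparrow$ has $k$ down-steps on $\intint{t,b}$ where $\ell_\downarrow$ has flat-steps; at each such flipped position $y_j$, the $W_{\mrm{low}}$-factor at $x = y_j + 1$ is $1$ for $\ell_\downarrow$ (no drop in $\Delta_\downarrow$) but can be $1 - q^{\Delta_\shortuparrow(y_j)} < 1$ for $\ell_\shortuparrow$. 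These are exactly the factors that can make $W_{\mrm{low}}(\ell_\shortuparrow)$ \emph{smaller} than $W_{\mrm{low}}(\ell_\downarrow)$, and they are the whole reason the lemma carries a $c(q)^{-1}$ on the right-hand side. Under your premise you would instead deduce $W_{\mrm{low}}(\ell_\downarrow) \leq W_{\mrm{low}}(\ell_\shortuparrow)$, which is strictly stronger than the lemma and false in general (take $\ell_{\mrm{bot}} \equiv 0$, $\ell_\downarrow$ constant, $k = 1$: then $W_{\mrm{low}}(\ell_\downarrow) = 1$ but $W_{\mrm{low}}(\ell_\shortuparrow) = 1 - q^m < 1$ for some $m$). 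Relatedly, the "main obstacle" you identify — distinctness of the $d_j$ — is not an obstacle in your own framework: from your displayed inequality $W_{\mrm{low}}(\ell_\downarrow) \leq \prod_j(1-q^{d_j}) \cdot W_{\mrm{low}}(\ell_\shortuparrow)$ with $\prod_j(1-q^{d_j}) \leq 1 \leq c(q)^{-1}$, the conclusion would follow with no work; you also at one point ask for $\prod_j(1-q^{d_j}) \geq c(q)$ when you would actually need the reverse. The fact that no distinctness argument should be needed under your premises, and yet the lemma cannot dispense with the $c(q)^{-1}$, is a strong signal that a hypothesis is off.

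The actual content of the lemma is the control of the new factors on $\intint{t,b}$. The paper's proof decomposes $\ell_\downarrow \to \ell_\shortuparrow$ into $k$ elementary moves $\ell_{j-1} \to \ell_j$, the $j$th of which raises the path by exactly $1$ on $\intint{x_j+1, y_j}$, where $x_1 < \cdots < x_k \leq t \leq y_k < \cdots < y_1$ so that these intervals are nested. In each move, the factor at $x_j+1$ and those on $\intint{x_j+2, y_j}$ can only weakly increase; the single factor that can decrease is the one at $y_j+1$, and the ratio of the old factor to the new is at most $(1 - q^{m_j})^{-1}$ with $m_j = 1 + \min_{x \in \intint{x_j+1,y_j}}(\ell_{j-1}(x) - \ell_{\mrm{bot}}(x))$. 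The nesting together with the raise by $1$ gives $m_{j+1} \geq m_j + 1$, so the exponents $m_j$ are $k$ distinct positive integers and $\prod_j(1-q^{m_j})^{-1} \leq \prod_{i\geq1}(1-q^i)^{-1} = c(q)^{-1}$. A distinctness argument does appear — but it is distinctness of gaps at the right ends $y_j+1$ of the nested intervals, tracked through the intermediate paths $\ell_j$, not distinctness of the $\Delta$-values at the flipped down-steps in $\intint{a,t}$ as you propose.
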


While the hypotheses on $\ell_{\downarrow}$ and $\ell_{\shortuparrow}$ imply that $\ell_{\downarrow} \leq \ell_{\shortuparrow}$ pointwise, the two are not equivalent. This is because we require all the flat-to-up switches and all the up-to-flat switches to occur on either side of a given location $t$, which is a stronger condition; an example of a pair of paths which are pointwise ordered but do not satisfy this stronger condition is given in Figure~\ref{f.counterexample for ordering}.

We also point out the important detail that Lemma~\ref{l.determinstic weak monotonicity} requires the interaction through $W$ to be only of a single path $\ell_{\downarrow}$ or $\ell_{\shortuparrow}$, and with only a lower boundary curve; in particular, the comparison of partition functions is not claimed in the presence of an upper boundary.

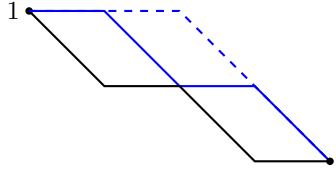
\begin{figure}
\begin{tikzpicture}
\draw[thick, blue] (0,0) -- ++(1,0) --++ (1,-1)-- ++(1,0) --++ (1,-1);
\draw[thick] (0,0) -- ++(1,-1) --++ (1,0)-- ++(1,-1) --++ (1,0);
\node[fill, circle, inner sep=1pt] at (0,0) {};
\node[fill, circle, inner sep=1pt] at (4,-2) {};
\node[anchor=east, scale=0.9] at (0,0) {$1$};

\draw[thick, blue, dashed] (0,0) -- ++(1,0) --++ (1,0)-- ++(1,-1) --++ (1,-1);

\end{tikzpicture}
\caption{The solid blue path is pointwise (weakly) larger than the black path, but the two paths do not satisfy the hypotheses of Lemma~\ref{l.determinstic weak monotonicity}; to go from black to blue, one needs to switch the black path's down-steps at positions 1 and 3 to flat-steps, and its flat-steps at positions 2 and 4 to down-steps. The dashed blue path is the changed portion of the solid blue path obtained after flipping the first down-step (at position 2) to a flat-step and the second flat-step (at position 3) to a down-step.}\label{f.counterexample for ordering}
\end{figure}

Lemma~\ref{l.determinstic weak monotonicity} was originally proved in the course of the proof of \cite[Lemma~4.1]{corwin2018transversal};  we reproduce the proof here.

\begin{proof}[Proof of Lemma~\ref{l.determinstic weak monotonicity}]
We assume that $\ell_{\downarrow}(i)\geq \ell_{\mrm{bot}}(i)$ for all $i\in \intint{a,b}$ as otherwise the lefthand side of the inequality to be shown is zero and we are done.
Let $x_1 < \ldots < x_k\leq t$ and $t\leq y_k <  \ldots < y_1$ be respectively the coordinates of the left endpoints of the down-steps that are flipped to flat-steps and of the flat-steps that are flipped to down-steps in going from $\ell_{\downarrow}$ to $\ell_{\shortuparrow}$. Define $\ell_{\downarrow} = \ell_0, \ell_1, \ldots, \ell_k = \ell_{\shortuparrow}$ by letting $\ell_j$ be obtained from $\ell_{\downarrow}$ after performing the flips at $x_1, \ldots, x_j$ and $y_1, \ldots, y_j$ only.

Now, $\ell_j$ differs from $\ell_{j-1}$ only on $\intint{x_j+1, y_j}$, where the former is raised by 1 compared to the latter. Recall that $W_{\mrm{low}}(\ell_i, \ell_{\mrm{bot}}) = \prod_{x=a+1}^b (1-\one_{\Delta_i(x-1)-\Delta_i(x) = 1}q^{\Delta_i(x-1)})$, where $\Delta_i(x) = \ell_i(x) - \ell_{\mrm{bot}}(x)$. By examining how each factor of the product changes on going from $W_{\mrm{low}}(\ell_{j-1}, \ell_{\mrm{bot}})$ to $W_{\mrm{low}}(\ell_j, \ell_{\mrm{bot}})$, it follows that:
\begin{itemize}
  \item the factor corresponding to $x=x_j+1$ changes from $1-\one_{\Delta_{j-1}(x_j)-\Delta_{j-1}(x_j+1) = 1}q^{\Delta_{j-1}(x_j)}$ to 1, i.e., it (weakly) increases;

  \item the factors corresponding to $x\in\intint{x_j+2, y_j}$ change from $1-\one_{\Delta_{j-1}(x-1)-\Delta_{j-1}(x) = 1}q^{\Delta_{j-1}(x-1)}$ to $1-\one_{\Delta_j(x-1)-\Delta_j(x) = 1}q^{\Delta_j(x-1)} = 1-\one_{\Delta_{j-1}(x-1)-\Delta_{j-1}(x) = 1}q^{\Delta_{j-1}(x-1)+1}$, i.e., they (weakly) increase;

  \item the factor corresponding to $x=y_j+1$ changes from $1-\one_{\Delta_{j-1}(y_j)-\Delta_{j-1}(y_j+1) = 1}q^{\Delta_{j-1}(y_j)}$ to $1-\one_{\Delta_{j}(y_j)-\Delta_j(y_j+1) = 1}q^{\Delta_j(y_j)} = 1-\one_{\Delta_{j-1}(y_j)-\Delta_{j-1}(y_j+1) = 0}q^{\Delta_{j-1}(y_j)+1}$.
\end{itemize}
The last change can be a decrease depending on the indicator function, but the decrease is by at most a factor of $1-q^{m_j}$, where $m_j = 1+ \min_{x\in\intint{x_j+1,y_j}}(\ell_{j-1}(x) - \ell_{\mrm{bot}}(x))$. Thus we have shown that
\begin{align*}
W_{\mrm{low}}(\ell_{j-1}, \ell_{\mrm{bot}}) \leq (1-q^{m_j})^{-1}W_{\mrm{low}}(\ell_{j}, \ell_{\mrm{bot}}).
\end{align*}
Since $\ell_j$ differs from $\ell_{j-1}$ by increasing the path by 1 on $\intint{x_j+1, y_{j}}$, we obtain $m_j \geq m_{j-1}+1$. This implies the claim.
\end{proof}

\begin{corollary}
Let $\intint{a,b}\subseteq \Z$ be a fixed interval and $\ell_{\mrm{bot}}:\intint{a,b}\to\Z$ be a Bernoulli path. Suppose $B^{\downarrow}$ and $B^{\shortuparrow}$ are random Bernoulli paths with respective laws $\P_1$ and $\P_2$ which can be coupled such that they almost surely satisfy the hypothesis of Lemma~\ref{l.determinstic weak monotonicity} (in particular $t$ and $k$ there may be random). Then
\begin{align*}
\E_1\left[W_{\mrm{low}}(B^{\downarrow}, \ell_{\mrm{bot}})\right] \leq c(q)^{-1}\E_2\left[W_{\mrm{low}}(B^{\shortuparrow}, \ell_{\mrm{bot}})\right].
\end{align*}

\end{corollary}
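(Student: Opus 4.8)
The plan is to deduce the expectation inequality from the deterministic one in Lemma~\ref{l.determinstic weak monotonicity} by working on the coupling space. First I would fix a probability space carrying a coupling $(B^{\downarrow}, B^{\shortuparrow})$ of the laws $\P_1$ and $\P_2$ under which the pair almost surely satisfies the hypotheses of Lemma~\ref{l.determinstic weak monotonicity} (with a possibly random location $t$ and a possibly random number $k$ of step-flips); such a coupling exists by assumption. On this space, $B^{\downarrow}$ has law $\P_1$ and $B^{\shortuparrow}$ has law $\P_2$, so $\E_1[W_{\mrm{low}}(B^{\downarrow}, \ell_{\mrm{bot}})]$ equals the expectation of $W_{\mrm{low}}(B^{\downarrow}, \ell_{\mrm{bot}})$ on the coupling space, and similarly for the $\P_2$ side. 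Now apply Lemma~\ref{l.determinstic weak monotonicity} pathwise: on the almost sure event on which the hypothesis holds,
\begin{align*}
W_{\mrm{low}}(B^{\downarrow}, \ell_{\mrm{bot}}) \leq c(q)^{-1}\, W_{\mrm{low}}(B^{\shortuparrow}, \ell_{\mrm{bot}}).
\end{align*}
Taking expectations of both sides over the coupling space (both quantities are bounded, being products of factors in $[0,1]$ times a constant, so the expectations exist and monotonicity of expectation applies) yields
\begin{align*}
\E\left[W_{\mrm{low}}(B^{\downarrow}, \ell_{\mrm{bot}})\right] \leq c(q)^{-1}\,\E\left[W_{\mrm{low}}(B^{\shortuparrow}, \ell_{\mrm{bot}})\right],
\end{align*}
and rewriting the two sides in terms of the marginal laws $\P_1$ and $\P_2$ gives exactly the claimed bound.

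There is essentially no obstacle here: the content is entirely in Lemma~\ref{l.determinstic weak monotonicity}, and the corollary is just its integrated form. The one point worth a sentence of care is measurability — that $\omega \mapsto W_{\mrm{low}}(B^{\downarrow}(\omega), \ell_{\mrm{bot}})$ and $\omega \mapsto W_{\mrm{low}}(B^{\shortuparrow}(\omega), \ell_{\mrm{bot}})$ are measurable functions on the coupling space — which is immediate since $W_{\mrm{low}}$ is a (continuous, indeed locally constant) function of finitely many integer-valued coordinates of the paths, and the paths are themselves measurable. I would state the proof in two or three sentences: introduce the coupling, invoke Lemma~\ref{l.determinstic weak monotonicity} almost surely, take expectations, and identify the marginals.
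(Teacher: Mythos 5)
Your proof is correct and is essentially the paper's own argument: take expectations of the pathwise inequality from Lemma~\ref{l.determinstic weak monotonicity} on the coupling space and identify the marginals. The paper states this in one line; your elaboration of the coupling and measurability points is fine but not needed.
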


\begin{proof}
This follows by taking expectations in  the inequality of Lemma~\ref{l.determinstic weak monotonicity}.
\end{proof}

The following shows that we have the comparison of partition functions in the case of paths which interact with a lower boundary via the Hall-Littlewood Gibbs weight and additionally avoid some upper and lower curves (the latter may differ from the lower boundary curve). We will use this fact heavily in our arguments as a replacement for the full monotonicity property enjoyed by line ensembles and Gibbs properties previously studied.

\begin{corollary}\label{c.partition function comparison with non-int boundaries}
Let $\intint{a,b} \subseteq \Z$ be a fixed interval and $\ell_{\mrm{bot}}:\intint{a,b}\to\Z$ be a Bernoulli path. Let $f, g: \intint{a,b}\to\Z$ be Bernoulli paths with $g(i)\leq f(i)$ for all $i\in \intint{a,b}$. Let $t\in \intint{a,b}$, $z\in\Z$, $x\in \intint{g(a), f(a)}$, and $y\in \intint{g(b), f(b)}$ be such that, with $B$ a Bernoulli random walk bridge from $(a,x)$ to $(b,y)$ (we assume there exists at least one Bernoulli path connecting those two points),
$$\P\Bigl(g(i) \leq B(i)\leq f(i) \text{ for all } i\in \intint{a,b}, \ell(t)\leq z\Bigr) > 0.$$
Let $B^{\downarrow}$ and $B^{\shortuparrow}$ be distributed as Bernoulli random walk bridges from $(a,x)$ to $(b,y)$ conditioned to satisfy $g(x) \leq B^{\downarrow}(x) \leq f(x)$ and $g(x) \leq B^{\shortuparrow}(x) \leq f(x)$ for all $x\in\intint{a,b}$, with $B^{\downarrow}$ additionally conditioned on $B^{\downarrow}(t) \leq z$. Let $\E_1$ and $\E_2$ be the expectation operators associated with their respective laws. Then it holds that
\begin{align*}
\E_1\left[W_{\mrm{low}}(B^{\downarrow},\ell_{\mrm{bot}})\right] \leq c(q)^{-1}\E_2\left[W_{\mrm{low}}(B^{\shortuparrow},\ell_{\mrm{bot}})\right].
\end{align*}

\end{corollary}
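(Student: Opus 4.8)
\textbf{Proof plan for Corollary~\ref{c.partition function comparison with non-int boundaries}.}

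The plan is to reduce the claim to the deterministic statement of Lemma~\ref{l.determinstic weak monotonicity} by exhibiting a coupling of $B^{\downarrow}$ and $B^{\shortuparrow}$ under which the hypotheses of that lemma hold almost surely (with random $t$ and $k$), and then apply its corollary. The key point is that $B^{\downarrow}$ is obtained from $B^{\shortuparrow}$ by imposing the single extra constraint $B^{\downarrow}(t)\le z$, while both are conditioned to stay between the same fixed curves $g$ and $f$ and have the same endpoints $(a,x)$ and $(b,y)$. So morally $B^{\downarrow}$ is ``$B^{\shortuparrow}$ pushed down across the vertical line $\{t\}\times\Z$'', and the structure of ``push down at a single column'' is exactly what the hypothesis of Lemma~\ref{l.determinstic weak monotonicity} encodes: a block of down-steps in $\intint{a,t}$ is flipped to flat-steps and an equal-sized block of flat-steps in $\intint{t,b}$ is flipped to down-steps, so that the path loses height $k$ precisely on $\intint{x_k+1, y_k}\supseteq\{t\}$ and agrees with the original elsewhere.

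First I would set up the coupling. Condition on the values $B^{\shortuparrow}(a)=x$, $B^{\shortuparrow}(b)=y$ and on the height $h^{\shortuparrow}:=B^{\shortuparrow}(t)$; similarly let $h^{\downarrow}:=B^{\downarrow}(t)\le z$. Conditionally on the triple $(x, h, y)$, a Bernoulli bridge from $(a,x)$ to $(b,y)$ through $(t,h)$ constrained by $g,f$ factors as an independent pair of Bernoulli bridges on $\intint{a,t}$ and $\intint{t,b}$; this is just the Markov property together with the fact that the constraints $g\le\bm\cdot\le f$ decouple across $t$. Using the standard stochastic monotonicity for constrained Bernoulli bridges (Lemma~\ref{l.random walk bridge monotonicity}, applied with $k=1$ and with $f,g$ as upper/lower curves on each of the two subintervals), we may couple so that for each choice of $h^{\downarrow}\le h^{\shortuparrow}$ the left piece of $B^{\shortuparrow}$ lies pointwise above that of $B^{\downarrow}$ on $\intint{a,t}$ and the right piece of $B^{\shortuparrow}$ lies pointwise above that of $B^{\downarrow}$ on $\intint{t,b}$. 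It remains to observe that on each subinterval, two constrained Bernoulli bridges with the same left endpoint and the one ending higher can always be matched so that the higher one is obtained from the lower one purely by flipping down-steps to flat-steps (on $\intint{a,t}$, where the right endpoint of $B^{\shortuparrow}$ is higher) or purely by flipping flat-steps to down-steps (on $\intint{t,b}$, where the left endpoint is higher and the right endpoint common); this is the elementary ``sorting'' fact that, given two Bernoulli paths with a common endpoint and ordered other endpoints, one can realize the pointwise order by a monotone sequence of single-step flips all of the same type. Concatenating, $B^{\shortuparrow}$ is obtained from $B^{\downarrow}$ exactly by the flip operation described in Lemma~\ref{l.determinstic weak monotonicity} with $k=h^{\shortuparrow}-h^{\downarrow}\ge0$ (the case $k=0$ being trivial since then the two paths coincide under the coupling, after integrating out). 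Here I should double-check the edge cases $t=a$ or $t=b$, which force $k=0$ and make the statement vacuous, and the case where the conditioning event $\{B^{\downarrow}(t)\le z\}$ already has full probability under the $f,g$-constraint, in which case again $B^{\downarrow}\stackrel{d}{=}B^{\shortuparrow}$ and there is nothing to prove.

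Having established the coupling, I would finish exactly as in the corollary to Lemma~\ref{l.determinstic weak monotonicity}: apply Lemma~\ref{l.determinstic weak monotonicity} pathwise (with its $\ell_{\downarrow}=B^{\downarrow}$, $\ell_{\shortuparrow}=B^{\shortuparrow}$, and its $\ell_{\mrm{bot}}$ the fixed Bernoulli path here) to get $W_{\mrm{low}}(B^{\downarrow},\ell_{\mrm{bot}})\le c(q)^{-1}W_{\mrm{low}}(B^{\shortuparrow},\ell_{\mrm{bot}})$ almost surely under the coupling, and take expectations, using that under the coupling $B^{\downarrow}\sim\P_1$ (i.e. $\E_1$) and $B^{\shortuparrow}\sim\P_2$ (i.e. $\E_2$) marginally. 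The main obstacle I anticipate is the purely combinatorial step of verifying that the pointwise order produced by Lemma~\ref{l.random walk bridge monotonicity} on each of the two subintervals can in fact be decomposed into flips of the single correct type (down-to-flat on the left, flat-to-down on the right), so that the \emph{global} picture matches the precise hypothesis of Lemma~\ref{l.determinstic weak monotonicity} rather than the merely-pointwise-ordered situation warned against in Figure~\ref{f.counterexample for ordering}; this is where the use of $t$ as a single splitting column is essential, and care is needed to record that all the ``lowering'' flips happen to the left of $t$ and all the ``raising'' flips to the right, with the loss-of-height interval straddling $t$.
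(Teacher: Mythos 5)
Your high-level plan matches the paper's: reduce to the case where both bridges are conditioned on their values at $t$ (and so factor into independent pieces on $\intint{a,t}$ and $\intint{t,b}$), then produce a coupling of the two bridges under which the hypotheses of Lemma~\ref{l.determinstic weak monotonicity} hold pathwise, and finally take expectations. You correctly identify, in the final paragraph, where the real difficulty lies: the coupling must place \emph{all} the down-to-flat flips to the left of $t$ and \emph{all} the flat-to-down flips to the right. But the step you offer to bridge this gap — the ``elementary sorting fact that, given two Bernoulli paths with a common endpoint and ordered other endpoints, one can realize the pointwise order by a monotone sequence of single-step flips all of the same type'' — is false as stated. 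Two Bernoulli paths on $\intint{a,t}$ sharing the left endpoint, with the upper path's right endpoint one higher, can be pointwise ordered and yet differ at three positions, via two down-to-flat switches and one flat-to-down switch. Pointwise ordering (which is all that Lemma~\ref{l.random walk bridge monotonicity} hands you) simply does not force the pure-flip structure, and Figure~\ref{f.counterexample for ordering} is precisely a warning about this. Even the sharper sandwiching $B^{\downarrow}\le B^{\shortuparrow}\le B^{\downarrow}+1$ from Lemma~\ref{l.quantitative monotonicity for n=1} does not by itself rule out a flat-to-down switch on the left subinterval.

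What is actually needed — and what the paper does — is a bespoke construction: after reducing (as you do) to the case of right-endpoint difference exactly $1$, the paper builds the coupling by induction on the interval length, using Lemma~\ref{l.quantitative monotonicity for n=1} to couple the value at $t-1$ so that it differs by $0$ or $1$, and in each of the three resulting cases (difference $1$ with both stepping down, difference $0$ with a forced down/flat split in the last slot, difference $0$ with both stepping flat) either applying the inductive hypothesis or forcing the single down-to-flat flip into the last slot. This inductive coupling is the missing content of your ``sorting fact''; it is a genuine lemma, not elementary bookkeeping, and your proof is incomplete until you supply it. Everything else in your plan — the reduction via conditioning at $t$, stochastic dominance of $B^{\shortuparrow}(t)$ over $B^{\downarrow}(t)$, the factorization across $t$, the symmetric argument on $\intint{t,b}$, and taking expectations at the end — matches the paper's argument.
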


Before proving Corollary~\ref{c.partition function comparison with non-int boundaries}, let us give an example showing how it will be used to obtain a comparison of one-point probabilities. Let $\ell_{\mrm{bot}}$, $f$, and $g$ be as in the corollary and fix $t$. Let $B$ be a Bernoulli random walk bridge from $(a,x)$ to $(b,y)$, conditioned to avoid $f$ and $g$, and let $L$ be the same but reweighted by the Radon-Nikodym derivative $W_{\mrm{low}}(L, \ell_{\mrm{bot}})/\E[W_{\mrm{low}}(L, \ell_{\mrm{bot}})]$. Then we see that
\begin{align*}
\P\left(L(t) \leq z\right) = \frac{\E[\one_{B(t)\leq z}W_{\mrm{low}}(B, \ell_{\mrm{bot}})]}{\E[W_{\mrm{low}}(B,\ell_{\mrm{bot}})]} &= \frac{\E[W_{\mrm{low}}(B, \ell_{\mrm{bot}})\mid B(t)\leq z]}{\E[W_{\mrm{low}}(B,\ell_{\mrm{bot}})]}\cdot \P\left(B(t)\leq z\right)\\
&\leq c(q)^{-1}\P\left(B(t)\leq z\right),
\end{align*}
the last inequality by Corollary~\ref{c.partition function comparison with non-int boundaries}. The final probability is simply a non-intersecting Bernoulli random walk bridge, for which tools like monotonicity (Lemma~\ref{l.random walk bridge monotonicity}) and two-point estimates for the underlying unconditioned Bernoulli random walk bridge (Lemma~\ref{l.random walk bridge fluctuation}) are available. We emphasize again, however, that while Corollary~\ref{c.partition function comparison with non-int boundaries} allows the interaction via $W_{\mrm{low}}$ to be with a lower curve $\ell_{\mrm{bot}}$, it required that there be no upper curve for that interaction (unlike the case with the full weight factor $W$); in applications, when we will have to work with the full factor $W$, we will need to make other context-specific arguments to remove the upper boundary interaction before performing the above argument to get a probability comparison. We also note that the comparison is only of one-point probabilities and is thus not simply the same up to a multiplicative constant as the form of stochastic monotonicity available for non-intersecting Bernoulli random walk bridges (Lemma~\ref{l.random walk bridge monotonicity}).

Now we turn to the proof of Corollary~\ref{c.partition function comparison with non-int boundaries}. We will need the following quantitative form of monotonicity for a single Bernoulli random walk bridge conditioned on avoiding an upper and lower boundary curve. A similar statement appeared for (multiple) non-intersecting Brownian bridges in \cite[Lemma~2.4]{LSCNB}. Its proof consists of a construction of a Markov chain on pairs of paths (respectively with the two boundary conditions) whose dynamics preserve the quantitative ordering and whose stationary distribution has the desired marginals. Similar ideas have appeared many times, e.g., \cite{corwin2014brownian,dimitrov2021tightness,LSCNB}, and thus we defer the proof to Appendix~\ref{s.random gibbs}.

\begin{lemma}\label{l.quantitative monotonicity for n=1}
Let $\intint{a,b} \subseteq \Z$ be a fixed interval and $f,g: \intint{a,b}\to\Z$ be Bernoulli paths with $g(i)\leq f(i)$ for all $i\in \intint{a,b}$. Let $x^{\shortuparrow}, x^{\downarrow}, y^{\shortuparrow}, y^{\downarrow}\in\Z$ be such that $g(a) \leq x^{\downarrow}\leq x^{\shortuparrow} \leq f(a)$ and $g(b) \leq y^{\downarrow}\leq y^{\shortuparrow} \leq f(b)$. Let $M=\max(x^{\shortuparrow}-x^{\downarrow}, y^{\shortuparrow}-y^{\downarrow})$. Let $B^{*}$ be a Bernoulli random walk bridge from $(a,x^*)$ to $(b, y^*)$ conditioned on $g(x) \leq B^*(x)\leq f(x)$ for $x\in\intint{a,b}$, for $*\in\{\shortuparrow,\downarrow\}$. There exists a coupling of $B^{\shortuparrow}$ and $B^{\downarrow}$ such that, almost surely for all $x\in\intint{a,b}$,
\begin{align*}
B^{\downarrow}(x) \leq B^{\shortuparrow}(x) \leq B^{\downarrow}(x) + M.
\end{align*}

\end{lemma}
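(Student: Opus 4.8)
\textbf{Proof proposal for Lemma~\ref{l.quantitative monotonicity for n=1}.}
The plan is to construct an explicit Markov chain on pairs of Bernoulli paths $(B^{\downarrow}, B^{\shortuparrow})$, with the correct marginals as its stationary distribution, whose transitions preserve the quantitative ordering $B^{\downarrow}(x) \leq B^{\shortuparrow}(x) \leq B^{\downarrow}(x) + M$ for all $x \in \intint{a,b}$. Running the chain from a properly ordered initial state and passing to stationarity then yields the desired coupling. First I would dispose of trivialities: if the conditioning events are incompatible (no admissible path exists) there is nothing to prove, so assume both conditioned laws are well-defined. I would also reduce to the case $M = 1$ by a telescoping argument: introduce intermediate boundary data that increase the starting/ending heights one unit at a time (staying within $[g,f]$, which is possible since $f - g$ is itself a Bernoulli-path gap and one checks admissibility is preserved), couple consecutive pairs with gap $\leq 1$, and compose the couplings. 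So the core is: given $g(a) \leq x^{\downarrow} \leq x^{\shortuparrow} \leq f(a)$ with $x^{\shortuparrow} - x^{\downarrow} \leq 1$ and similarly at $b$, produce a coupling with $B^{\downarrow} \leq B^{\shortuparrow} \leq B^{\downarrow} + 1$ pointwise.

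For the chain itself, I would use the standard single-site (or single-plaquette) heat-bath dynamics for a Bernoulli path with fixed endpoints and hard-wall constraints $g \leq \bm\cdot \leq f$: pick an interior coordinate $x$ uniformly, and resample the value $B(x)$ from its conditional distribution given $B$ on $\intint{a,b}\setminus\{x\}$ and the constraints. Since a Bernoulli path has increments in $\{0,-1\}$, the value $B(x)$ given its neighbors $B(x-1), B(x+1)$ lies in an interval of size at most $1$, so the resampling is a Bernoulli coin flip when there is a genuine choice and deterministic otherwise. This chain is irreducible and aperiodic on the (finite) set of admissible paths and has the conditioned Bernoulli-bridge law as its unique stationary distribution. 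I would run the two chains with a \emph{common} sequence of sites and a \emph{monotone coupling} of the coin flips: when at site $x$ both chains have a genuine choice, use the same uniform random variable to decide; the key verification is that if $B^{\downarrow} \leq B^{\shortuparrow} \leq B^{\downarrow}+1$ before the update, the same holds after. This is a local check over the finitely many configurations of $(B^{\downarrow}(x-1), B^{\downarrow}(x+1), B^{\shortuparrow}(x-1), B^{\shortuparrow}(x+1))$ consistent with the hypothesis, using that the conditional law of $B(x)$ is stochastically monotone in the neighboring values (which itself reduces to $P(x) = $ a probability increasing in the ``room'' available above, analogous to Lemma~\ref{l.normal conditional prob montonicity} but for the discrete case, and is elementary). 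Starting both chains from the maximal and an appropriately shifted path (or any pair satisfying the quantitative ordering, e.g.\ $B^{\shortuparrow}$ run from $B^{\downarrow}+1$ truncated at $f$), the ordering is maintained for all time, and since each chain converges to its stationary law, the limiting pair is the sought coupling.

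The main obstacle I expect is the local ordering-preservation check at an update site, especially at the boundary of the admissible window where the hard walls $f$ and $g$ become active: one must confirm that the heat-bath update cannot push $B^{\shortuparrow}$ strictly more than $1$ above $B^{\downarrow}$ nor let $B^{\downarrow}$ overtake $B^{\shortuparrow}$, in all the cases where one of the two paths is pinned by $f$ or $g$ at a neighbor while the other is not. This requires care because the availability of a ``genuine choice'' differs between the two chains exactly in those boundary situations; the resolution is that whenever $B^{\shortuparrow}$ is forced (by $f$) to take the lower value, $B^{\downarrow}$ is also forced there (being $\leq B^{\shortuparrow}$ with a wall above), and symmetrically for $g$, so the coins are either both flipped (and coupled monotonically) or at least one is deterministic in a way compatible with the ordering. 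A clean way to organize this is to present the dynamics as a coupling on the \emph{height differences} and heights simultaneously and tabulate the constant number of transition cases; I would relegate that tabulation to the appendix as indicated, since, though finite, it is tedious. The remaining steps — irreducibility, the identification of the stationary measure via detailed balance against the conditioned Bernoulli-bridge weights, and the convergence to stationarity on a finite state space — are routine and can be stated with brief justification.
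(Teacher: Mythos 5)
Your proposal is essentially the paper's argument: the paper also constructs a Markov chain on pairs of admissible paths whose single-site update uses a shared coin (a uniform $\sigma\in\{0,-1\}$ applied when the site is a local extremum, subject to staying in $[g,f]$), checks that the update preserves the quantitative ordering, and invokes convergence to stationarity. Two remarks on the differences. First, the reduction to $M=1$ by telescoping through intermediate boundary data is not needed — the paper runs the coupled chain once for general $M$ and verifies the ordering is preserved directly; your reduction also quietly requires checking admissibility of every intermediate $(x_k,y_k)$ and composing couplings via disintegration, which is correct but adds work the paper avoids. Second, and more substantively, your initialization is under-specified: ``$B^{\shortuparrow}$ run from $B^{\downarrow}+1$ truncated at $f$'' need not be a Bernoulli path with the prescribed endpoints, and ``any pair satisfying the quantitative ordering'' begs the question of whether such a pair exists. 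This is the one genuinely non-routine step, and the paper handles it carefully: it starts both chains at the \emph{minimal} admissible paths $B^{\downarrow}_0, B^{\shortuparrow}_0$ (so $B^{\downarrow}_0 \leq B^{\shortuparrow}_0$ is automatic) and proves $B^{\shortuparrow}_0 \leq B^{\downarrow}_0 + M$ by contradiction — if the gap ever reached $M+1$ at some first site $x_0$, one could flip a flat-step of $B^{\shortuparrow}_0$ at $x_0$ to a down-step and a later down-step to a flat-step, producing an admissible path strictly below $B^{\shortuparrow}_0$, contradicting minimality. You should supply that argument (or an equivalent explicit construction) rather than leave the initial pair as ``appropriately shifted.'' Finally, one simplification to your heat-bath description: the conditioned Bernoulli bridge is \emph{uniform} on admissible paths, so the single-site conditional law is a fair coin whenever two values are admissible — there is no need to invoke a discrete analogue of Lemma~\ref{l.normal conditional prob montonicity}.
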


\begin{proof}[Proof of Corollary~\ref{c.partition function comparison with non-int boundaries}]
We may assume that $z < f(t)$ as otherwise $B^{\shortuparrow}$ and $B^{\downarrow}$ have the same distribution and the claim is trivial. Next we observe that $B^{\shortuparrow}(t)$ stochastically dominates $B^{\downarrow}(t)$, using Lemma~\ref{l.random walk bridge monotonicity}.

As a consequence, it is sufficient to show that a coupling meeting the hypothesis of Lemma~\ref{l.determinstic weak monotonicity} exists between two Bernoulli random walk bridges $B^{\shortuparrow}$ and $B^{\downarrow}$ from $(a,x)$ to $(b,y)$ conditioned on avoiding both $f$ and $g$ with $B^{*}$ additionally conditioned on $B^{*}(t) = z^{*}$ for $*\in\{\downarrow,\shortuparrow\}$, where $z^{\shortuparrow} \geq z^{\downarrow}$. Indeed, with the latter statement in hand, one can obtain the desired coupling between the original $B^{\downarrow}$ and $B^{\shortuparrow}$ by first coupling $B^{\shortuparrow}(t)$ and $B^{\downarrow}(t)$ via the stochastic domination, and then making use of the previous statement which is conditioned on the value of $B^{*}(t)$.

To show the statement we have reduced to, it is sufficient to prove the case where $z^{\shortuparrow} = z^{\downarrow} + 1$, as then the full statement follows by iteration. By the Gibbs property for non-intersecting Bernoulli random walk bridges, it is further sufficient to show that there exists a coupling between $B^{\shortuparrow}$ and $B^{\downarrow}$ where the former is now a Bernoulli random walk bridge from $(a,x)$ to $(t,z)$ and the latter is one from $(a,x)$ to $(t,z-1)$, both conditioned on avoiding $f$ and $g$ on $\intint{a,t}$, with the property that $B^{\shortuparrow}$ is obtained from $B^{\downarrow}$ by flipping one down-step of the latter to a flat-step; a similar proof will then show the analogous statement on $\intint{t,b}$, and one obtains the desired coupling by concatenating the couplings on either side of $t$ (which yields the correct marginal distributions by the Gibbs property for non-intersecting Bernoulli random walk bridges).

Finally, we prove the last statement we have reduced to by induction on the interval length $k=t-a$. More explicitly, we establish by induction on $k$ that there exists a coupling between two Bernoulli random walk bridges $B^{\shortuparrow}$ and $B^{\downarrow}$, the first from $(a,x)$ to $(t,z)$ and the second from $(a,x)$ to $(t,z-1)$, each conditioned on avoiding both $f$ and $g$, such that $B^{\shortuparrow}$ is obtained from $B^{\downarrow}$ by flipping a single down-step to a flat-step. Here $z$ is assumed to be such that such paths exist.

In the case that $k=1$ the claim is trivial, as $B^{\shortuparrow}$ must consist of a single flat-step and $B^{\downarrow}$ a single down-step. Assuming the statement holds for $k$, we now show it for $k+1$. Using Lemma~\ref{l.quantitative monotonicity for n=1} (as the right endpoint of $B^{\shortuparrow}$ is higher by 1 than $B^{\downarrow}$), we first couple $B^{\shortuparrow}(t-1)$ and $B^{\downarrow}(t-1)$ so that $B^{\downarrow}(t-1)\leq B^{\shortuparrow}(t-1)\leq B^{\downarrow}(t-1)+1$. Note that there are three possibilities for the individual values of $B^{\shortuparrow}(t-1)$, and $B^{\downarrow}(t-1)$ and we give the coupling of the whole paths depending on the case:
\begin{enumerate}
  \item $B^{\shortuparrow}(t-1) = z+1$, $B^{\downarrow}(t-1) = z$: in this case both paths must follow down steps in the last slot, and we couple $B^{\shortuparrow}|_{\intint{a,t-1}}$ and $B^{\downarrow}|_{\intint{a,t-1}}$ using the induction hypothesis.

  \item $B^{\shortuparrow}(t-1) = z$, $B^{\downarrow}(t-1) = z$: in this case $B^{\shortuparrow}$ must have an flat-step in the last slot and $B^{\downarrow}$ must have a down-step, and we couple $B^{\shortuparrow}|_{\intint{a,t-1}}$ and $B^{\downarrow}|_{\intint{a,t-1}}$ to be equal.

  \item $B^{\shortuparrow}(t-1) = z$, $B^{\downarrow}(t-1) = z-1$: in this case $B^{\shortuparrow}$ and $B^{\downarrow}$ must both have a flat-step in the last slot, and we couple $B^{\shortuparrow}|_{\intint{a,t-1}}$ and $B^{\downarrow}|_{\intint{a,t-1}}$ using the induction hypothesis.
\end{enumerate}
(Note that the case of $B^{\shortuparrow}(t-1) = z+1$, $B^{\downarrow}(t-1) = z-1$ is excluded by the coupling at $t-1$.)
It is immediate that the above coupling satisfies the hypothesis of Lemma~\ref{l.determinstic weak monotonicity} and has the correct marginals. This completes the induction step and hence the proof.
\end{proof}

\section{Lower tail estimates}\label{s.lower tail}

In this section we start developing the proof of Theorem~\ref{t.infimum lower tail} on the lower tail of the $k$\th curve of $\bm L^N$ (recall the definitions and hypotheses from Section~\ref{s.tightness preliminaries}). In particular, in this section, as well as Sections~\ref{s.uniform separation}, \ref{s.partition function and non-intersection}, and \ref{s.bg in the limit}, $\bm L^N$ will be assumed to satisfy Assumptions~\ref{as.HL Gibbs} and \ref{as.one-point tightness} without explicit mention.

\subsection{The pieces for the lower tail}
Theorem~\ref{t.infimum lower tail} is proved by an induction argument. The induction hypothesis will combine the lower tail estimate for a given curve with a statement that curves remain uniformly separated; the ultimate result of uniform separation will also be useful at several other locations (in particular, in the proof of Proposition~\ref{p.limits have BG}, that weak limits have the Brownian Gibbs property) and we record it as a separate theorem.

In what follows, for a (measurable) set $A\subseteq \R$, an integer $k\in\N$, a (possibly infinite) interval $\Lambda\subseteq \N$  with $\intint{1,k+1}\subseteq \Lambda$, a stochastic process $\bm X:\Lambda\times\R\to\R$, and $\delta>0$, we define the event
\begin{equation}\label{e.Sep definition}
\msf{Sep}^N_k(\delta, A, \bm X) = \left\{\min_{i=1, \ldots, k}\inf_{xN^{-2/3}\in A}\bigl(X_i(x) - X_{i+1}(x)\bigr) \geq \delta N^{1/3}\right\}.
\end{equation}
We will sometimes drop some of the parameters from the notation, depending on the context,  when it is obvious what they are. We also recall the convention for the notation for event names mentioned in Remark~\ref{r.event name convention}. The use of the event $\msf{Sep}^N_k$ (and similar ones) is that, on it, the weight factor $W$ \eqref{e.weight function} is very close to 1 for all large enough $N$, as stated next.

\begin{lemma}\label{l.W bound}
Fix an interval $[a,b]$, $k\in\N$, $q\in[0,1)$ $\varepsilon>0$, and $\delta>0$.  Let $\bm X:\intint{1,k}\times\intint{aN^{2/3}, bN^{2/3}}$ be a collection of Bernoulli paths. There exists $N_0 = N_0(q, \varepsilon, \delta, b-a,k)$ such that, if $X_i(x) > X_{i+1}(x)+\delta N^{1/3}$ for all $x\in\intint{aN^{2/3}, bN^{2/3}}$ and $i\in\intint{1,k-1}$, then, for $N>N_0$,
$W(\bm X, \intint{aN^{2/3}, bN^{2/3}}) > 1-\varepsilon.$
\end{lemma}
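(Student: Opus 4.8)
The plan is to bound the weight factor $W(\bm X, \intint{aN^{2/3}, bN^{2/3}})$ from below by isolating the contribution of each of the at most $\#\{i : 1\le i\le k-1\} \le k$ pairs of consecutive curves and showing that, under the separation hypothesis, each such contribution is individually close to $1$. Recall from \eqref{e.weight function} that, since there is no upper boundary $f$ and no lower boundary $g$ present (we drop both by the convention after Definition~\ref{d.weight factor}),
\begin{align*}
W(\bm X, \intint{aN^{2/3}, bN^{2/3}}) = \prod_{i=1}^{k-1} \prod_{x=aN^{2/3}+1}^{bN^{2/3}} \left(1 - q^{\Delta_i(x-1)}\one_{\Delta_i(x) = \Delta_i(x-1)-1}\right),
\end{align*}
where $\Delta_i(x) = X_i(x) - X_{i+1}(x)$.

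First I would observe that by hypothesis $\Delta_i(x) > \delta N^{1/3}$ for every $x$ and every $i\in\intint{1,k-1}$, hence $\Delta_i(x-1) > \delta N^{1/3}$ as well; since $q\in[0,1)$ is fixed, each factor appearing in the product is at least $1 - q^{\delta N^{1/3}}$ (the indicator is at most $1$ and $q^{\Delta_i(x-1)} \le q^{\lceil \delta N^{1/3}\rceil}$ for $q<1$; if $q=0$ the statement is immediate since $W = 1$, so we may assume $q\in(0,1)$). Therefore
\begin{align*}
W(\bm X, \intint{aN^{2/3}, bN^{2/3}}) \ge \left(1 - q^{\lceil\delta N^{1/3}\rceil}\right)^{(k-1)\cdot (b-a)N^{2/3}}.
\end{align*}
The number of factors is at most $(k-1)(b-a)N^{2/3} + (k-1)$, which grows only polynomially in $N$, while $q^{\delta N^{1/3}}$ decays stretched-exponentially in $N$. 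Using the elementary inequality $(1-u)^m \ge 1 - mu$ for $u\in[0,1]$, we get
\begin{align*}
W(\bm X, \intint{aN^{2/3}, bN^{2/3}}) \ge 1 - (k-1)(b-a)N^{2/3}\, q^{\lceil\delta N^{1/3}\rceil} \ge 1 - Ck(b-a)N^{2/3} q^{\delta N^{1/3}}
\end{align*}
for an absolute constant $C$. Since $N^{2/3} q^{\delta N^{1/3}} = N^{2/3} \exp(-\delta N^{1/3}\log q^{-1}) \to 0$ as $N\to\infty$, there exists $N_0 = N_0(q,\varepsilon,\delta,b-a,k)$ such that the right-hand side exceeds $1-\varepsilon$ for all $N > N_0$, which is the claim.

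This lemma is essentially a routine computation and I do not anticipate any genuine obstacle; the only minor point requiring care is the bookkeeping of which indices $i$ actually contribute (the product in \eqref{e.weight function} runs over $i=0,\dots,k$, but with $f=\infty$ and $g=-\infty$ the $i=0$ and $i=k$ factors are identically $1$ by convention, so only $i\in\intint{1,k-1}$ matters), and ensuring the polynomial-versus-stretched-exponential comparison is made explicit enough to extract $N_0$ with the stated dependence. One should also note that the hypothesis as stated is about strict separation $X_i(x) > X_{i+1}(x) + \delta N^{1/3}$, which suffices to lower-bound $\Delta_i(x-1) \ge \delta N^{1/3}$ (or, if one prefers integer values, $\Delta_i(x-1) \ge \lceil \delta N^{1/3}\rceil$ when the $X_i$ are integer-valued), and this is all that is used.
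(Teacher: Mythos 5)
Your proof is correct and follows essentially the same approach as the paper's (which is a one-line argument lower-bounding $W$ by $(1-q^{\delta N^{1/3}})^{k(b-a)N^{2/3}}$ and noting this tends to $1$). Your additional bookkeeping about the $i=0$, $i=k$ factors being trivially $1$ under the $f=\infty$, $g=-\infty$ convention, and the explicit $(1-u)^m \ge 1-mu$ step, are correct elaborations of what the paper leaves implicit.
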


\begin{proof}
Under the conditions assumed, $W(\bm X, \intint{aN^{2/3}, bN^{2/3}}) > (1-q^{\delta N^{1/3}})^{k(b-a)N^{2/3}}$, from which the claim follows.
\end{proof}

Next we state the result on uniform separation. It will be proved later in this section.

\begin{theorem}[Uniform separation]\label{t.uniform separation}
Fix $k\in\N$, $T>2$, and $\varepsilon>0$. There exist $\delta = \delta(k,T,\varepsilon)>0$ and $N_0= N_0(k,T,\varepsilon)$ such that, for $N\geq N_0$,
$$\P\left(\msf{Sep}^N_k(\delta, [-T,T], \bm L^N)\right) \geq 1-\varepsilon.$$
\end{theorem}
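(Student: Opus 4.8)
The proof of Theorem~\ref{t.uniform separation} (uniform separation of the top $k$ curves) proceeds by induction on $k$, in tandem with Theorem~\ref{t.infimum lower tail} (lower-tail control of the $k$th curve); I would set up the two inductions jointly, with the statement ``for all $k' \le k$, both uniform separation of the top $k'$ curves and the lower-tail bound for the $k'$th curve hold'' as the combined induction hypothesis. The base case $k=1$ is the content of Assumption~\ref{as.one-point tightness} (together with the ordering of curves, which gives upper-tail control for all curves automatically); there is no separation to establish since $\msf{Sep}^N_1$ only involves $L^N_1 - L^N_2$, which will be handled once we have lower-tail control on $L^N_2$, i.e.\ after one step of the induction. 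So the genuine work is the inductive step: assuming uniform separation of the top $k-1$ curves and lower-tail control of $L^N_{k-1}$ on a slightly larger interval $[-T',T']$, deduce (a) lower-tail control of $L^N_k$ on $[-T,T]$, and then (b) uniform separation of the top $k$ curves on $[-T,T]$.

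For step (a), the idea is to use the Hall--Littlewood Gibbs property to resample $L^N_k$ on a large interval $[-T'N^{2/3}, T'N^{2/3}]$, conditioning on $\F = \Fext(\intint{k,k}, \cdot, \bm L^N)$. On the favorable event where $L^N_{k-1}$ does not dip too low (induction hypothesis on the lower tail of the $(k-1)$st curve, which serves as the \emph{upper} boundary for the resampled $L^N_k$) and where $L^N_{k-1}$ and $L^N_{k-2}$ are well-separated over the interval, the weight factor $W(\,\cdot\,, L^N_{k-1}, L^N_{k+1})$ restricted to the interactions not involving $L^N_{k+1}$ is close to $1$ by Lemma~\ref{l.W bound}; more precisely, I would use the weak one-curve monotonicity of Corollary~\ref{c.partition function comparison with non-int boundaries} to compare the partition function and the conditional one-point probabilities of $L^N_k$ dipping below $-RN^{1/3}$ at a given location to those of a plain non-intersecting Bernoulli random walk bridge that must avoid $L^N_{k-1}$ above and $L^N_{k+1}$ below. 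For that last object, Lemma~\ref{l.random walk bridge monotonicity} (monotonicity in boundary data) lets me drop the lower boundary $L^N_{k+1}$, and then Lemma~\ref{l.random walk bridge fluctuation} gives a Gaussian-type tail on how far below its interpolating line the walk can go. To pass from a one-point (or finite-net) bound to an infimum over the whole interval I would combine a union bound over an $O(1)$-spaced net of points in $[-T,T]$ with the modulus-of-continuity estimate (Lemma~\ref{l.random walk bridge fluctuation} again, or Corollary~\ref{c.mod con for L} once available at level $k$), so the interpolation error between net points is negligible. This requires the partition function $Z$ not to be too small, which at this inductive stage we get from the same resampling argument plus the induction hypothesis — this is the delicate point.

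For step (b), given lower-tail control of $L^N_k$ (just proved) and lower-tail control of $L^N_{k-1}$ (induction hypothesis), I would show that at \emph{some} spatial coordinate, say $x=0$, the curves $L^N_1 > L^N_2 > \dots > L^N_{k+1}$ are separated by at least $\delta_0 N^{1/3}$ with probability close to $1$: this follows because each $L^N_i(0)$ for $i\le k$ is, on a favorable event, trapped in a window of size $O(N^{1/3})$ (upper tail from ordering and Assumption~\ref{as.one-point tightness}, lower tail from the induction/previous step), and by a barrier/entropy argument using the Gibbs property the $k+1$ ordered values are unlikely to be within $o(N^{1/3})$ of each other — one resamples the top $k$ curves on a unit-order (in $N^{2/3}$) interval around $0$ and uses that non-intersecting Bernoulli bridges with macroscopically separated endpoints are very unlikely to come within $\delta_0 N^{1/3}$ at the midpoint, again comparing via Corollary~\ref{c.partition function comparison with non-int boundaries} (with appropriate work to remove the upper boundary interaction as flagged in the text). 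Once separation holds at a single point with high probability, I would propagate it across the whole interval $[-T,T]$: condition on the line ensemble outside a window containing $[-T,T]$ and the separation event at $0$, resample the top $k$ curves on $[-TN^{2/3},TN^{2/3}]$ reweighted by $W$ (which is $\approx 1$ on the good event by Lemma~\ref{l.W bound}), and use monotonicity (Lemma~\ref{l.random walk bridge monotonicity}) plus the fluctuation bound (Lemma~\ref{l.random walk bridge fluctuation}) to see that starting from well-separated endpoints the bridges stay separated by $\delta N^{1/3}$ throughout, except with small probability. Choosing constants in the right order ($T' > T$, then $\delta$, then $N_0$, all depending on $k,T,\varepsilon$) closes the induction.

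\textbf{Main obstacle.} The crux is step (a): obtaining the lower-tail bound on $L^N_k$ \emph{without} stochastic monotonicity of the full Gibbs measure. The only monotonicity available (Corollary~\ref{c.partition function comparison with non-int boundaries}) is for a \emph{single} curve interacting with a \emph{lower} boundary and with \emph{no} upper boundary, whereas the resampled $L^N_k$ interacts with the lower curve $L^N_{k+1}$ through $W$ \emph{and} must stay below the upper curve $L^N_{k-1}$; so before applying the weak monotonicity I must first peel off the upper-boundary interaction by a context-specific argument (restricting to the favorable event where $L^N_{k-1}$ stays high enough that the upper constraint is inactive near where $L^N_k$ would dip, then replacing $L^N_{k-1}$ by $+\infty$). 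Managing this, while simultaneously keeping the partition function $Z$ bounded below — which itself is what Proposition~\ref{p.partition function strong lower bound} eventually supplies but which at the inductive level must be bootstrapped from the lower bounds on lower curves — is the technical heart of the argument and is where the bulk of the work in Sections~\ref{s.lower tail} and \ref{s.uniform separation} must go.
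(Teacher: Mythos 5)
Your high-level plan — a joint induction on $k$ with alternating lower-tail control of $L^N_k$ and separation $\msf{Sep}^N_{k-1}$, and the observation that the weak monotonicity Corollary~\ref{c.partition function comparison with non-int boundaries} is the key replacement for FKG once the upper-boundary interaction has been peeled off on a favorable event — matches the paper's architecture (the joint inductive claim \eqref{e.lower tail and unif sep step} proved via Propositions~\ref{p.lower tail induction step} and~\ref{p.uniform separation}). But each of your two steps has a concrete gap in the mechanism that the paper has to work hard to supply.

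In step (a) your plan is to resample $L^N_k$ on a \emph{fixed} interval $[-T'N^{2/3},T'N^{2/3}]$ and control the one-point lower tail. This is circular: to resample one must condition on $L^N_k(\pm T'N^{2/3})$, but an \emph{a priori} bound on these boundary values is exactly what you are trying to prove. The paper escapes by first proving Lemma~\ref{l.low event bound} — that $L^N_k$ cannot be \emph{uniformly} low on a long interval $[T,U]$, because otherwise the top $k-1$ curves would be dragged down and force $L^N_1$ at the midpoint below what Assumption~\ref{as.one-point tightness} allows — and then in Lemma~\ref{l.midpoint lower tail} conditioning on a \emph{random} interval $[a'N^{2/3},b'N^{2/3}]$ chosen so that $\overline L^N_k$ equals the threshold $-(R+K)N^{1/3}$ at the endpoints. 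This requires the special Gibbs property on a random interval determined partly by the interior (Lemma~\ref{l.Gibbs on random interval}), since $[a',b']$ is not a stopping domain in the sense of Definition~\ref{d.stopping domain}. None of this is in your sketch, and without it the boundary-value problem is unresolved.

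In step (b) there are two further issues. First, you propose to establish separation at a fixed location $x=0$ by a vague ``barrier/entropy argument,'' but separation at a fixed point is not what the paper can get directly: the resampled $L^N_{k-1}$ only stays well away from $L^N_k$ near a point where $L^N_k$ has controlled growth. The deterministic Lemma~\ref{l.slope control} locates random $L^N_k$-measurable points $U_1,U_2$ where $\overline L^N_k$ grows at most linearly with slope $O(N^{-1/3})$; the one-point separation (Lemma~\ref{l.single point separation}) is then proved at those points, not at $0$. Second, your propagation step — ``condition on the line ensemble outside a window containing $[-T,T]$ and the separation event at $0$, resample the top $k$ curves on $[-TN^{2/3},TN^{2/3}]$'' — is not consistent: the separation at $0$ involves the curves being resampled, so it is not measurable with respect to the conditioning $\sigma$-algebra and cannot be fixed while resampling that interval. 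The paper instead resamples the top $k-1$ curves (with floor $L^N_k$, whose lower tail is already controlled at this stage; using floor $L^N_{k+1}$ as in your plan is not available yet) on the stopping domain $[U_1N^{2/3},U_2N^{2/3}]$, whose endpoints are $L^N_k$-measurable, and lower-bounds the partition function there by the explicit channel construction of Figure~\ref{f.denominator lower bound} — which is exactly what requires the slope control at $U_1,U_2$ to let the bridges escape above $\sup L^N_k$. Finally, Lemma~\ref{l.non-int RW closeness} is then applied to the bottom resampled bridge conditioned to stay above $L^N_k$; your appeal to Lemma~\ref{l.random walk bridge monotonicity} and Lemma~\ref{l.random walk bridge fluctuation} alone would not yield the uniform-in-location separation without an input like Lemma~\ref{l.non-int RW closeness} together with the Brownian-bridge comparison it relies on.
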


We also adopt the following convenient notational convention in the following sections: $\overline L^N_k$ will denote $L^N_k$ with the linear growth removed, i.e., with $p$ as in Assumption~\ref{as.one-point tightness}
\begin{align*}
\overline L^N_k(x) = L^N_k(x) - px.
\end{align*}

An important part of the induction step proving Theorems~\ref{t.infimum lower tail} and \ref{t.uniform separation} is captured in the next proposition.

\begin{proposition}[Lower tail inductive step]\label{p.lower tail induction step}
Fix $k\in\N$. Suppose for any $T\geq 2$ and $\varepsilon>0$, there exist $M = M(k,T,\varepsilon)$ and $\delta = \delta(k,T,\varepsilon)>0$ such that
\begin{align}\label{e.k-1 curve lower tail bound induction step}
\P\left(\inf_{sN^{-2/3}\in[-T,T]} \overline L^N_{k-1}(s) < -MN^{1/3}\right) &\leq \varepsilon
\quad\text{and} \quad
\P\left(\msf{Sep}^N_{k-2}(\delta, [-T,T], \bm L^N)^c\right) \leq \varepsilon.
\end{align}
Then for any $T\geq 2$ and $\varepsilon>0$, there exist $R = R(k, T,\varepsilon)$ and $N_0=N_0(k,T,\varepsilon)$ such that, for $N\geq N_0$,
\begin{align*}
\P\left(\inf_{sN^{-2/3}\in[-T,T]} \overline L^N_k(s) < -RN^{1/3}\right) \leq \varepsilon.
\end{align*}

\end{proposition}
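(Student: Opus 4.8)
\textbf{Proof plan for Proposition~\ref{p.lower tail induction step}.}

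The plan is to use the weak monotonicity from Corollary~\ref{c.partition function comparison with non-int boundaries} together with the Hall-Littlewood Gibbs property to reduce the lower-tail bound for $L^N_k$ to a lower-tail bound for a single non-intersecting Bernoulli random walk bridge, and then to exploit the inductive control \eqref{e.k-1 curve lower tail bound induction step} on the $(k-1)$\st curve and on the separation of the first $k-2$ curves. First I would fix a large favorable event $\msf{Fav}$ on which: (a) $\inf_{sN^{-2/3}\in[-T',T']}\overline L^N_{k-1}(s)\geq -MN^{1/3}$ for a suitable $T' > T$ (say $T' = T+1$); (b) $\msf{Sep}^N_{k-2}(\delta,[-T',T'],\bm L^N)$ holds; and (c) $|\overline L^N_1(\pm T' N^{2/3})|\leq MN^{1/3}$ (from Assumption~\ref{as.one-point tightness} and ordering of curves, enlarging $M$ if needed). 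By \eqref{e.k-1 curve lower tail bound induction step} and Assumption~\ref{as.one-point tightness}, $\P(\msf{Fav})\geq 1-\varepsilon$ for appropriate $M,\delta$. On $\msf{Fav}$ the pair $(\bm L^N_{1},\dots,\bm L^N_{k-1})$ is sandwiched between curves that are within $O(MN^{1/3})$ of the parabola (up to the $p$-slope) at the endpoints $\pm T'N^{2/3}$ and, for the first $k-2$ curves, uniformly separated; this is the input to control $L^N_k$.

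The core step is: conditionally on $\F = \Fext(1,[-T'N^{2/3},T'N^{2/3}],\bm L^N)$ applied to the single curve $L^N_k$ — or rather, conditioning appropriately so that $L^N_k$ is resampled as a Bernoulli bridge reweighted by $W$ against the upper curve $L^N_{k-1}$ and the lower curve $L^N_{k+1}$ — I want to say the one-point probability $\PF(\overline L^N_k(s) < -RN^{1/3})$ is, up to the constant $c(q)^{-1}$ from Corollary~\ref{c.partition function comparison with non-int boundaries}, bounded by the corresponding probability for a non-intersecting Bernoulli bridge $B$ from $(-T'N^{2/3}, L^N_k(-T'N^{2/3}))$ to $(T'N^{2/3}, L^N_k(T'N^{2/3}))$ conditioned only to stay below $L^N_{k-1}$ (the interaction with the lower boundary $L^N_{k+1}$ via $W_{\mrm{low}}$ being handled by that corollary, since it allows a lower interaction curve in the presence of an upper avoidance constraint, and the upper interaction factor $W_{\mrm{up}}$ can be removed by monotonicity or by the separation $L^N_{k-1} - L^N_k$ being large with the comparison run only against a flat upper curve; here the subtlety that $W$ has an \emph{upper} factor too must be dealt with, and on $\msf{Fav}$ we do not yet know $L^N_{k-1}-L^N_k$ is large — this is exactly the point that needs care). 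To circumvent the upper factor I would instead first drop the reweighting entirely using $W\leq 1$ in the numerator and a lower bound on the partition function $Z$ coming from the event that the resampled curve stays in a region of width $\gg \log N$ below $L^N_{k-1}$ and above $L^N_{k+1}$ on an interval — but since we have no a priori lower bound on $L^N_{k+1}$ at $k$, this is where \eqref{e.k-1 curve lower tail bound induction step} on the $(k-1)$\st curve and a two-point probability estimate for the Bernoulli bridge (Lemma~\ref{l.random walk bridge fluctuation}) must be combined: the bridge with endpoints pinned within $O(MN^{1/3})$ of the parabola has probability bounded below by a constant of staying within $O(MN^{1/3})$ of the parabola throughout and hence, for $R$ large, strictly above $-RN^{1/3}$ and below $L^N_{k-1}$, which gives $Z\geq \delta_0 > 0$ with probability $\geq 1-\varepsilon$; then the standard argument (as in the proof of Corollary~\ref{c.mod con for L}) bounds $\PF(\overline L^N_k(s) < -RN^{1/3})$ by $\delta_0^{-1}\P(\overline B(s) < -RN^{1/3})$, and the latter decays as $C\exp(-c(R-M)^2)$ by Lemma~\ref{l.random walk bridge fluctuation} uniformly in $s$.

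Finally I would upgrade the one-point bound to the infimum over $sN^{-2/3}\in[-T,T]$: either by a union bound over a mesh of $N^{2/3}$-many points combined with the modulus-of-continuity control for $L^N_k$ (Corollary~\ref{c.mod con for L}, whose proof only uses Proposition~\ref{p.partition function strong lower bound}, which in turn is proved later but independently — or, to avoid circularity, by a direct modulus estimate for the resampled Bernoulli bridge via Lemma~\ref{l.random walk bridge bridge fluctuation} inside the same Gibbs resampling as above), or more cleanly by using the strong Gibbs property (Proposition~\ref{p.strong gibbs}) at a stopping domain: let $\mf l$ be the last location left of $sN^{-2/3}$ where $\overline L^N_k$ exceeds $-\tfrac12 RN^{1/3}$ and $\mf r$ the first such location to the right (capped at $\pm T'N^{2/3}$), so that on the complement of the target event on $[-T',-T]$ or $[T,T']$ the bridge between $(\mf l, L^N_k(\mf l))$ and $(\mf r, L^N_k(\mf r))$ must drop by $\tfrac12 RN^{1/3}$, which is exponentially unlikely by Lemma~\ref{l.random walk bridge fluctuation}. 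The main obstacle, as indicated, is the presence of the \emph{upper} interaction factor in $W$: Corollary~\ref{c.partition function comparison with non-int boundaries} gives the partition-function comparison only with a lower interaction curve, so one must argue that on $\msf{Fav}$ (or after a short additional argument establishing a crude separation $L^N_{k-1}-L^N_k\gg\log N$ somewhere, using the one-curve weak monotonicity of Section~\ref{s.monotonicity}) the upper factor $W_{\mrm{up}}(L^N_k,L^N_{k-1})$ is harmless; I expect this to require first proving a weak one-point separation of $L^N_{k-1}$ and $L^N_k$ at a single favorable spatial coordinate, which is consistent with the inductive structure described in Section~\ref{s.intro.line ensemble tightness} and may in fact be intertwined with the proof of Theorem~\ref{t.uniform separation} rather than logically prior to it.
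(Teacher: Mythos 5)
Your diagnosis of the difficulty (the upper interaction factor $W_{\mrm{up}}$, and the lack of a lower bound on $L^N_{k+1}$) is correct, but there is a more fundamental gap that comes earlier in your argument: when you resample $L^N_k$ as a Bernoulli bridge on $[-T'N^{2/3},T'N^{2/3}]$, its endpoints are $L^N_k(\pm T'N^{2/3})$, and these are not controlled by $\msf{Fav}$ — controlling them is precisely what the proposition is trying to establish. Your claim that ``the bridge with endpoints pinned within $O(MN^{1/3})$ of the parabola has probability bounded below by a constant\ldots which gives $Z\geq\delta_0$'' and the subsequent bound $\delta_0^{-1}\P(\overline B(s)<-RN^{1/3})\leq C\exp(-c(R-M)^2)$ both silently assume $|\overline L^N_k(\pm T'N^{2/3})|\lesssim MN^{1/3}$. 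Without this the resampled bridge can sit uniformly very low and neither the partition-function lower bound nor the one-point Gaussian bound is available; the argument is circular. The paper breaks this circularity with Lemma~\ref{l.low event bound}, which is an entirely separate ingredient you do not reproduce: a ``no collapse'' statement that $L^N_k$ cannot be uniformly low on a large interval $[T,U]$. It is proved by resampling the \emph{top $k-1$ curves} (not $L^N_k$) between $T N^{2/3}$ and $UN^{2/3}$ — their endpoints \emph{are} controlled, by one-point tightness of $L^N_1$ and ordering — and observing that if $L^N_k$ were uniformly deep then the resampled curves would drop too far at the midpoint to be consistent with Assumption~\ref{as.one-point tightness}. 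Applying this on $[-U,-T]$ and $[T,U]$ supplies the endpoint control that your plan needs but does not have.

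On the upper factor, the route you gesture at (``first proving a weak one-point separation of $L^N_{k-1}$ and $L^N_k$ at a single favorable spatial coordinate'', possibly intertwined with Theorem~\ref{t.uniform separation}) is not what the paper does, and following it would risk a different circularity since Proposition~\ref{p.uniform separation} is applied \emph{after}, not before, this lower-tail step in the induction scheme of \eqref{e.lower tail and unif sep step}. The paper instead applies the Gibbs property on the random interval of Lemma~\ref{l.Gibbs on random interval}: given that $\overline L^N_k(tN^{2/3})<-(R+K)N^{1/3}$ while somewhere to the left and right it exceeds $-RN^{1/3}$, it resamples on the maximal excursion interval $[a'N^{2/3},b'N^{2/3}]$ where $\overline L^N_k<-(R+K)N^{1/3}$, conditional on $\inf\overline L^N_{k-1}>-(R-\delta)N^{1/3}$. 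On that interval the gap between $L^N_{k-1}$ and the resampled bridge is at least $(K+\delta)N^{1/3}$ pointwise, so $W_{\mrm{up}}\geq(1-q^{(K+\delta)N^{1/3}})^{(b-a)N^{2/3}}\geq\tfrac12$ for $N$ large — no separation estimate is needed. Then Corollary~\ref{c.partition function comparison with non-int boundaries} removes the $W_{\mrm{low}}$ factor (with its $c(q)^{-1}$ cost) without any control on $L^N_{k+1}$. Your final chaining step is in the right spirit, but you should run it directly against Lemma~\ref{l.midpoint lower tail}-type one-point estimates rather than via Corollary~\ref{c.mod con for L}, which as you note would be circular.
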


We will prove this statement in Section~\ref{s.proof of lower tail induction}.
To complete the induction step we also need a bound on the separation between the $k$ and $(k+1)$\textsuperscript{th} curves. This is the next proposition.

\begin{proposition}[Uniform separation inductive step]\label{p.uniform separation}
Let $k\in\N$ and, for any $j\in\N$, let $\msf{Sep}^N_{j}$ be as in \eqref{e.Sep definition}. Suppose for any $\varepsilon>0$ and $T\geq 2$ there exist $\eta = \eta(k,T,\varepsilon)>0$, $M = M(k,T,\varepsilon)$, and $N_0=N_0(k,T,\varepsilon)$ such that, for $N\geq N_0$,
\begin{align}\label{e.separation induction hypothesis}
\P\Bigl(\msf{Sep}^N_{k-2}(\eta, [-T,T], \bm L^N)^c\Bigr) < \varepsilon \quad\text{and}{\quad}
\P\left(\inf_{xN^{-2/3}\in[-T,T]} \overline L^N_{k}(x) < -MN^{1/3}\right) \leq \varepsilon.
\end{align}
Then for any $\varepsilon>0$ and $T\geq 2$, there exist $\delta = \delta(k, T, \varepsilon) > 0$ and $N_0=N_0(k,T,\varepsilon)$ such that, for $N\geq N_0$
\begin{align*}
\P\left(\msf{Sep}^N_{k-1}(\delta, [-T,T], \bm L^N)\right) \geq 1-\varepsilon.
\end{align*}

\end{proposition}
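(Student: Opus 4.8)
\textbf{Plan for the proof of Proposition~\ref{p.uniform separation} (uniform separation inductive step).}

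The strategy is to first establish separation at a single spatial location, then propagate it over the whole interval using the Gibbs property and the weak monotonicity tools of Section~\ref{s.monotonicity}. Concretely, under the induction hypothesis \eqref{e.separation induction hypothesis} we have (i) uniform separation of the first $k-2$ pairs of curves on $[-T,T]$ (at scale $\eta N^{1/3}$), and (ii) a uniform lower-tail bound on $\overline L^N_k$ on $[-T,T]$ (at scale $MN^{1/3}$); from ordering of the curves we also have an upper-tail bound on $\overline L^N_{k-1}$ (via Assumption~\ref{as.one-point tightness}), so $L^N_{k-1}$ is sandwiched: $-MN^{1/3} \lesssim \overline L^N_k \le \overline L^N_{k-1} \lesssim M'N^{1/3}$ on $[-T,T]$, with high probability. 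The first step is to show that at the single point $x_0 := 0$ (say) we have $L^N_{k-1}(0) - L^N_k(0) \ge \delta_0 N^{1/3}$ with probability at least $1-\varepsilon$ for some $\delta_0>0$. This should follow by applying the Hall-Littlewood Gibbs property to the pair of curves $(L^N_{k-1},L^N_k)$ on a macroscopic interval, say $[-2,2]$ in rescaled coordinates, conditioning on $\Fext(\intint{k-1,k},\intint{-2N^{2/3},2N^{2/3}},\bm L^N)$. On the favorable event (partition function bounded below, as in Proposition~\ref{p.partition function strong lower bound}, or an ad hoc lower bound via Lemma~\ref{l.W bound} using the separation of the $(k-1)$-st pair from the $(k-2)$-nd curve above), the conditional law is a reweighting of two non-intersecting Bernoulli bridges; for these, the probability that the gap at $0$ is at least $\delta_0 N^{1/3}$ is bounded below by a constant depending only on the (order $N^{2/3}$) interval length and the ($O(N^{1/3})$) boundary data, using the random-walk-bridge fluctuation estimate Lemma~\ref{l.random walk bridge fluctuation} together with the monotonicity of non-intersecting bridges Lemma~\ref{l.random walk bridge monotonicity} and the weak-monotonicity partition-function comparison Corollary~\ref{c.partition function comparison with non-int boundaries}, which lets us compare to the case where the lower curve $L^N_{k+1}$ is pushed far down. (Here we must first deal with the upper interacting curve $L^N_{k-2}$: on the separation event \eqref{e.separation induction hypothesis} the weight $W_{\mathrm{up}}$ is $\ge 1-\varepsilon$ by Lemma~\ref{l.W bound}, so it can be removed at the cost of a constant factor.)

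The second, main step is to propagate the pointwise gap at $0$ to a uniform gap on $[-T,T]$. The idea, following the method discussed in Section~\ref{s.intro.line ensemble tightness}, is: suppose the gap $L^N_{k-1} - L^N_k$ were smaller than $\delta N^{1/3}$ somewhere in $[-T,T]$; then, using the modulus of continuity of the bridges (Lemma~\ref{l.random walk bridge bridge fluctuation}, i.e.\ Lemma~\ref{l.random walk bridge fluctuation}) — transferred to the line ensemble through the Gibbs property and the partition-function lower bound Proposition~\ref{p.partition function strong lower bound} exactly as in the proof of Corollary~\ref{c.mod con for L} — the two curves stay within $o(N^{1/3})$ of each other over a sub-interval of length of order $\delta^{2}$ in rescaled coordinates around that location. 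One then runs the pointwise argument of the first step but on a deterministic mesh of $O(T/\delta^2)$ points across $[-T,T]$: a union bound shows that with high probability the gap at \emph{every} mesh point is at least $\delta_0 N^{1/3}$; combined with the modulus of continuity, this forces the gap to be at least $\tfrac12\delta_0 N^{1/3}$ everywhere on $[-T,T]$, contradicting the assumed small gap once $\delta$ is chosen small relative to $\delta_0$. This yields $\msf{Sep}^N_{k-1}(\tfrac12\delta_0,[-T,T],\bm L^N)$ with probability $\ge 1-\varepsilon$. To make the union bound over the mesh work with a fixed $\varepsilon$, one should prove the single-point gap lower bound with a probability that can be taken as close to $1$ as desired (rather than just bounded below by a constant); this is achieved by iterating/strengthening the first step — e.g.\ partitioning $[-2,2]$ into many blocks and using the Gibbs property on each, or invoking the strong Gibbs property (Proposition~\ref{p.strong gibbs}) on a stopping domain where the gap first drops below threshold — so that each of the $O(T/\delta^2)$ mesh events fails with probability at most $\varepsilon\delta^2/T$.

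The step I expect to be the main obstacle is establishing the pointwise gap lower bound with high enough probability while the lower interacting curve $L^N_{k+1}$ is not controlled from below: unlike in monotone Gibbs settings, we cannot simply dominate by the measure with the lower curve removed. The workaround is precisely Corollary~\ref{c.partition function comparison with non-int boundaries}: it gives a comparison (up to the constant $c(q)^{-1}$) of the partition function with an arbitrarily low auxiliary lower curve, which translates into a one-point probability comparison $\P(L^N_{k-1}(0) \le z) \le c(q)^{-1}\P(B^N_{k-1}(0)\le z)$ for a non-intersecting bridge $B^N$, and symmetrically a comparison controlling how far down $L^N_{k-1}$ can be pushed — but it only works for a \emph{single} curve and only for a lower interaction. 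Since we genuinely need to compare the \emph{gap} between two curves, some care is needed: one approach is to first fix (condition on) $L^N_k$ using the Gibbs property for the single curve $L^N_{k-1}$ above the (now fixed) lower boundary $L^N_k$, apply the one-point comparison Corollary~\ref{c.partition function comparison with non-int boundaries} to get $L^N_{k-1}(0)$ high relative to $L^N_k(0)$ with good probability (using that $L^N_k(0) \ge -MN^{1/3}$ and $L^N_k$ has an $O(N^{1/3})$ upper bound from ordering and Assumption~\ref{as.one-point tightness}), then average over $L^N_k$. The bookkeeping of constants (ensuring the $c(q)^{-1}$ factors and the partition-function lower bounds combine to leave a probability $\ge 1-\varepsilon$) is the delicate part, but it is exactly the kind of argument carried out in Section~\ref{s.partition function and non-intersection} and in the proof of Corollary~\ref{c.mod con for L}, so it should go through.
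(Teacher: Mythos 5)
Your high-level plan (separation at one point, then propagate over the interval) matches the paper's strategy, and you correctly identify the central obstacle: the lack of monotonicity means one cannot dominate by the measure with the lower curve removed, and the weak monotonicity of Corollary~\ref{c.partition function comparison with non-int boundaries} is the workaround. But there are two genuine gaps in your plan, plus a soft circularity.

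The first gap concerns where the one-point separation is established. You propose to establish the gap at the \emph{fixed} point $0$ and then union-bound over a deterministic mesh. The paper instead first locates \emph{random}, $L^N_k$-measurable points $U_1\in[-2T,-\tfrac32 T]$ and $U_2\in[\tfrac32 T,2T]$ at which $\overline L^N_k$ does not rise too steeply over a macroscopic window (Lemma~\ref{l.slope control}, applied through the one-point separation Lemma~\ref{l.single point separation}), and only at such points does it prove the gap estimate. This slope control is not cosmetic: when one resamples $L^N_{k-1}$ on an interval with one endpoint near a place where $L^N_k$ is rising steeply, the bridge cannot escape upward and the conditional probability of a gap is not bounded below. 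Without picking the resampling endpoints where $L^N_k$ behaves well (and using the "no quick drops" Lemma~\ref{l.no quick drop uniform} for the upper curve $L^N_{k-2}$), the partition-function lower bound you want in your first step is simply false in the worst case. Nothing in your sketch substitutes for this ingredient.

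The second gap is in the propagation step. You propose a union bound over $O(T/\delta^2)$ mesh points and acknowledge that this requires each single-point event to fail with probability $\varepsilon\delta^2/T$; you say this "should" be achievable by iterating the first step but leave it unworked. The paper avoids this entirely: having located $U_1,U_2$ where both the pointwise gap and the slope control hold, it treats $[U_1 N^{2/3}, U_2 N^{2/3}]$ as a stopping domain for $(L^N_1,\dots,L^N_{k-1})$ and applies the strong Gibbs property (Proposition~\ref{p.strong gibbs}), then lower bounds $\EF[W(\bm B, L^N_k)]$ by an explicit channel construction, and finally invokes Lemma~\ref{l.non-int RW closeness} to get \emph{uniform} separation on $[-T,T]$ in a single resampling. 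This sidesteps both the mesh and the need for per-point probabilities tending to $1$.

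Finally, you invoke Proposition~\ref{p.partition function strong lower bound} for the partition-function lower bound in your first step. That proposition is proved using Theorem~\ref{t.uniform separation}, which is exactly what Proposition~\ref{p.uniform separation} is feeding into via the interwoven induction — so using it here would be circular. You do note an "ad hoc" alternative via Lemma~\ref{l.W bound}, which is the right instinct, but making that alternative work is precisely what forces the slope-control and channel-construction machinery you omit.
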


We will prove Proposition~\ref{p.uniform separation} in Section~\ref{s.uniform separation}. Next we will prove Theorems~\ref{t.infimum lower tail} and \ref{t.uniform separation} together using Propositions~\ref{p.lower tail induction step} and \ref{p.uniform separation}.

\begin{proof}[Proof of Theorems~\ref{t.infimum lower tail} and \ref{t.uniform separation}]
We prove the following statement by induction, which implies both theorems: for any $\varepsilon>0$ there exists $M = M(\varepsilon, k, T)$ and $\delta = \delta(\varepsilon, k, T)$ such that
\begin{align}\label{e.lower tail and unif sep step}
\P\left(\inf_{xN^{-2/3}\in[-T,T]} \overline L^N_k(x) \geq -MN^{1/3}\right) \geq 1-\varepsilon \quad\text{and} \quad \P\left(\msf{Sep}^N_{k-1}(\delta, [-T,T], \bm L^N)\right)\geq 1-\varepsilon.
\end{align}
By convention, $L^N_0 \equiv \infty$ so the $k=0$ case is trivially true. We now show the induction statement that if the displayed statement holds for $k-1$, it also holds for $k$.

Indeed, the induction hypothesis says that the hypotheses \eqref{e.k-1 curve lower tail bound induction step} of Proposition~\ref{p.lower tail induction step} hold, which yields that for any $\varepsilon>0$ and $T\geq 2$, there exists $M$ such that the first inequality in \eqref{e.lower tail and unif sep step} holds. Now the first hypothesis in \eqref{e.separation induction hypothesis} of Proposition~\ref{p.uniform separation} holds by the induction hypothesis, and the second hypothesis in \eqref{e.separation induction hypothesis} holds by the deduction just made. Thus we obtain the second inequality in \eqref{e.lower tail and unif sep step} from Proposition~\ref{p.uniform separation}, completing the induction step and thus the proof.
\end{proof}

\subsection{Setting up the proof of Proposition~\ref{p.lower tail induction step}}\label{s.proof of lower tail induction}

We start by stating that the $k$\textsuperscript{th} curve cannot be uniformly low on a large interval. This is the first step in the proof of Proposition~\ref{p.lower tail induction step}, and will be proved in Section~\ref{s.kth curve not uniformly low}.
For an interval $I\subseteq \R$ and $R>0$, we define the event
\begin{equation}\label{e.Low event definition}
\msf{Low}^{I}_k(R) := \left\{\sup_{sN^{-2/3}\in I} \overline L^N_k(s) \leq -RN^{1/3}\right\}.
\end{equation}

\begin{lemma}\label{l.low event bound}
Let $k\in\N$ and let $\msf{Sep}^N_{k-2}(\delta, \{T, U\}, \bm L^N)$ be as in \eqref{e.Sep definition}.
Suppose for any $\varepsilon>0$, $T$, and $U$ there exist $\delta = \delta(\varepsilon, k, T,U) >0$, $M = M(\varepsilon, k, U)$, and $N_0=N_0(k,T,\varepsilon)$ such that, for $N\geq N_0$,
$$\P\left(\inf_{sN^{-2/3}\in[-U,U]} \overline L^N_{k-1}(s) < -MN^{1/3}\right) \leq \varepsilon \quad\text{and}\quad \P\left(\msf{Sep}^N_{k-2}(\delta, \{T, U\}, \bm L^N)^c\right) \leq \varepsilon.$$
Then for any $\varepsilon>0$ and $t>0$, there exist $U = U(\varepsilon, T)$, $R_{\mrm{low}} = R_{\mrm{low}}(T,U,\varepsilon, k)$, and $N_0 = N_0(k, U, \varepsilon)$ such that, for $N\geq N_0$,
\begin{align*}
\P\left(\msf{Low}^{[T,U]}_k(R_{\mrm{low}})\right) \leq \varepsilon.
\end{align*}
\end{lemma}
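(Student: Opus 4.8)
\textbf{Proof proposal for Lemma~\ref{l.low event bound}.}

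The plan is to argue by contradiction with the one-point tightness Assumption~\ref{as.one-point tightness}: if the top curve $L^N_k$ (hence the higher curves $L^N_1, \ldots, L^N_{k-1}$ too, by ordering) were forced to be uniformly low on a long interval $[T,U]$ with non-negligible probability, then at a \emph{fixed} point $x_0 N^{2/3}$ with $x_0 \in [T,U]$, the curve $L^N_1(x_0 N^{2/3})$ would be far below the parabola $p x_0 N^{2/3} - \lambda x_0^2 N^{1/3}$ with non-negligible probability, contradicting \eqref{e.one point tightness assumption}. The mechanism converting ``uniformly low on $[T,U]$'' into ``low at a fixed point'' is the Hall-Littlewood Gibbs property together with the weak-monotonicity machinery of Section~\ref{s.monotonicity}: conditioning on $\msf{Low}^{[T,U]}_k(R_{\mrm{low}})$ and on $\Fext(k, \intint{TN^{2/3}, UN^{2/3}}, \bm L^N)$, the top $k$ curves are a reweighting (by $W$) of $k$ non-intersecting Bernoulli random walk bridges pinned at heights $\le -R_{\mrm{low}} N^{1/3}$ (up to the linear tilt) at both endpoints $TN^{2/3}$ and $UN^{2/3}$, lying above $L^N_{k+1}$. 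Such bridges, by the fluctuation estimate Lemma~\ref{l.random walk bridge fluctuation}, stay $O(\sqrt{(U-T)N^{2/3}}) = O(N^{1/3})$ close to the straight line joining their endpoints throughout $[TN^{2/3}, UN^{2/3}]$, so in particular at the fixed interior point $x_0 N^{2/3}$ the curve $L^N_1$ is at height at most $-R_{\mrm{low}} N^{1/3} + C(U-T)^{1/2} N^{1/3}$ (again up to the linear term and the correct parabolic centering). Choosing $R_{\mrm{low}}$ large compared to the one-point-tightness constant $M(\delta)$ from Assumption~\ref{as.one-point tightness} for the relevant $\delta$, and compared to $(U-T)^{1/2}$, forces $L^N_1(x_0 N^{2/3})$ below $-M(\delta)N^{1/3}$ relative to the parabola, a contradiction.

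Concretely, first I would fix a target $\varepsilon > 0$ and pick $x_0 = T$ (or any convenient point $\ge T$). I would choose $U$ large enough — though actually for this particular lemma $U$ only needs to exceed $T$ by a bounded amount; the statement allows $U = U(\varepsilon, T)$ but a fixed choice like $U = T+1$ works, the freedom to enlarge $U$ is presumably for the companion arguments. Next, I would apply the hypotheses of the lemma: on a high-probability event $\msf G$ (probability $\ge 1-2\varepsilon$), the curve $L^N_{k-1}$ satisfies $\inf_{sN^{-2/3}\in[-U,U]} \overline L^N_{k-1}(s) \ge -MN^{1/3}$ and $\msf{Sep}^N_{k-2}(\delta, \{T,U\}, \bm L^N)$ holds. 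These give a quantitative \emph{upper} bound on the endpoint heights of $L^N_1, \ldots, L^N_{k-1}$ at $TN^{2/3}$ and $UN^{2/3}$ (the separation control propagates downward from the fixed one-point bound on $L^N_1$, while the lower bound on $L^N_{k-1}$ controls how low the $(k-1)$-st endpoints can be) — these are needed so that the reweighted bridge picture is non-degenerate and so that $W$ is effectively $1$ by Lemma~\ref{l.W bound}. Then I condition on $\msf{Low}^{[T,U]}_k(R_{\mrm{low}}) \cap \msf G \cap \Fext(k, \intint{TN^{2/3}, UN^{2/3}}, \bm L^N)$ and apply the Gibbs property: the conditional law of $(L^N_1|_{[TN^{2/3},UN^{2/3}]}, \ldots, L^N_k|_{[TN^{2/3},UN^{2/3}]})$ is $W$-reweighted non-intersecting Bernoulli bridges with the given (low) endpoints. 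Using that $W \le 1$ and the event $\msf{Low}$ fixes the endpoints to be low, I bound the conditional probability that $L^N_1(x_0 N^{2/3})$ is \emph{not} far below by comparison to the bridge measure and Lemma~\ref{l.random walk bridge fluctuation}: it is at least $1 - Ce^{-cR_{\mrm{low}}'^2}$ for an appropriate $R_{\mrm{low}}'$ depending on $R_{\mrm{low}}$ and $U-T$. Unwinding, $\P(\msf{Low}^{[T,U]}_k(R_{\mrm{low}}) \cap \msf G) \le (1 - Ce^{-c R_{\mrm{low}}'^2})^{-1}\cdot \P(\overline L^N_1(x_0N^{2/3}) + \lambda x_0^2 N^{1/3} < -R_{\mrm{low}}'' N^{1/3})$, and the latter is $\le \varepsilon$ by Assumption~\ref{as.one-point tightness} once $R_{\mrm{low}}''$ (hence $R_{\mrm{low}}$) is large. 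Hence $\P(\msf{Low}^{[T,U]}_k(R_{\mrm{low}})) \le 3\varepsilon$; relabeling $\varepsilon$ finishes the proof.

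One subtlety deserves care: the reweighting by $W$ in the Gibbs property could \emph{a priori} push the conditional law of $L^N_1(x_0N^{2/3})$ \emph{upward} away from the bridge behavior, but since $W \le 1$ this reweighting can only \emph{decrease} the probability of events by a bounded factor once a lower bound on the partition function is available — and here the cleaner route is to avoid needing a partition-function lower bound at all by instead using the weak-monotonicity Corollary~\ref{c.partition function comparison with non-int boundaries} applied curve-by-curve, or simply by the crude bound that, conditional on the low endpoints, the probability that the \emph{top} bridge $B_1$ returns to within $CN^{1/3}$ of the parabola at $x_0N^{2/3}$ is exponentially small in a way that dominates $\P(W)$. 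I expect the main obstacle to be this interplay: ensuring that the $W$-reweighting does not spoil the bound, which forces us either to invoke a (not-yet-proved in this section) partition-function estimate or, preferably, to set up the comparison so that only the \emph{denominator}-free direction of the inequality is used — namely bounding $\P(\text{bad for } \bm L^N \mid \F) \le \P(\text{bad for } \bm B)/\EF[W]$ is \emph{not} what we want, but rather we want to bound the probability of the rare event $\msf{Low}$ itself, for which we compare $\P(\msf{Low} \text{ for } \bm L^N) = \E[Z_N^{-1}\EF[\one_{\msf{Low}(\bm B)} W(\bm B, L^N_{k+1})]]$ and note that on $\msf{Low}(\bm B)$ the endpoints are pinned low so $\one_{\msf{Low}(\bm B)} \le \one\{B_1(x_0N^{2/3}) \text{ low}\}$ via Lemma~\ref{l.random walk bridge fluctuation}, and $W \le 1$; the remaining factor $Z_N^{-1}$ is handled by restricting to a favorable event where $Z_N$ is bounded below, which follows from the separation hypothesis via Lemma~\ref{l.W bound} (giving $Z_N$ close to the non-intersection probability, itself bounded below using Lemma~\ref{l.random walk bridge monotonicity} and the endpoint control from $\msf G$). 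This is the same favorable-event bookkeeping as in the proof of Corollary~\ref{c.mod con for L}, so the argument is routine once assembled in that order.
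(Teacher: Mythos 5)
Your proposal has a fundamental flaw right at the start. You write that if $L^N_k$ were uniformly low on $[T,U]$, then ``the higher curves $L^N_1,\ldots,L^N_{k-1}$ too, by ordering'' would be low, so that $L^N_1$ at the endpoints is pinned below $-R_{\mrm{low}}N^{1/3}$. This is false: the ordering in Definition~\ref{d.discrete line ensemble} is $L^N_1 \geq L^N_2 \geq \cdots \geq L^N_k$, so $L^N_k$ being low gives no upper bound whatsoever on $L^N_1,\ldots,L^N_{k-1}$. Consequently the central claim of your sketch --- that the resampled bridges are ``pinned at heights $\le -R_{\mrm{low}}N^{1/3}$ at both endpoints'' --- has no justification, and the contradiction with one-point tightness that you aim for does not follow. (You also condition on $\Fext(k,\cdot)$, under which $L^N_k$ on the interior is itself resampled, so $\msf{Low}^{[T,U]}_k(R_{\mrm{low}})$ is not even $\F$-measurable; the correct choice is $\Fext(k-1,\cdot)$, keeping $L^N_k$ as a known lower boundary.)

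The paper's proof uses the event $\msf{Low}$ in the opposite way. The endpoint control on $L^N_1$ is an \emph{upper} bound, $\msf{NotHigh}_1$, coming straight from one-point tightness and giving $\overline L^N_1(TN^{2/3}) \leq (-\lambda T^2 + R_1)N^{1/3}$ and likewise at $U$ (the constant $R_1$ is crucially independent of $U$). The role of $\msf{Low}^{[T,U]}_k(R_{\mrm{low}})$ is not to push $L^N_1$ down directly; it is to make the Hall-Littlewood weight factor with the lower boundary $L^N_k$ close to $1$, so that conditionally the top $k-1$ curves genuinely behave like non-intersecting bridges between their endpoint data, without being pushed upward by a high lower boundary. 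Given this, the bridge $L^N_1$ at the midpoint $\frac{1}{2}(T+U)N^{2/3}$ lies near the secant of the parabola, which is a distance $\approx\frac{1}{4}\lambda(U-T)^2N^{1/3}$ below the parabola. Since $R_1$ does not depend on $U$, taking $U$ large makes this deficit dominate $R_1$, forcing $L^N_1$ at the midpoint far below what one-point tightness permits, whence $\P(\msf A_k^{[T,U]}(R_1))$ and hence $\P(\msf{Low})$ must be small. This is precisely why $U = U(\varepsilon,T)$ must be chosen large; your assertion that $U = T+1$ would suffice is incorrect --- the freedom to enlarge $U$ is the engine of the argument, not a convenience for ``companion arguments.''

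Your closing paragraph does correctly identify that the partition function $\EF[W]$ must be lower-bounded on a favorable event, and that the separation hypothesis plus the lowness of $L^N_k$ feed into this; that part of the bookkeeping matches the paper. But it is built on top of a contradiction mechanism that cannot work: without the convexity-over-a-long-interval idea, simply pinning the bridge endpoints (even if you could) would not produce a violation of one-point tightness at an interior point, because you would be trying to conclude $L^N_1$ is low at a point from $L^N_1$ being low at two other points, which is consistent with the one-point estimates.
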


Next we state a lower tail bound for the $k$\textsuperscript{th} curve at a single point on the event that there exist points on either side which are not too low, and conditioned on the $(k-1)$\textsuperscript{st} curve not being too low. This is the second step in the proof of Proposition~\ref{p.lower tail induction step}; it will be proved in Section~\ref{s.kth curve not too low at a point}.

\begin{lemma}\label{l.midpoint lower tail}
Let $[a,b]$ be an interval, $k\in\N$, and $\delta>0$. Then there exist $N_0 = N_0(\delta, b-a)$, $C = C(b-a)>0$, and $c = c(k)>0$ such that, for any $t\in[a, b]$, $R>0$, $K\geq 1$, and $N>N_0$,
\begin{align*}
\MoveEqLeft[26]
\P\left(\parbox{0.53\textwidth}{\centering$\overline L^N_k(tN^{2/3}) < -(R+K)N^{1/3}$,\\[4pt] $\displaystyle\sup_{xN^{-2/3}\in[a, t]} \overline L^N_k(x)\wedge\displaystyle\sup_{xN^{-2/3}\in[t, b]} \overline L^N_k(x) \geq -RN^{1/3}$}\!\! \Bigg| \inf_{xN^{-2/3}\in[a,b]} \overline L^N_{k-1}(x)  > -(R-\delta)N^{1/3}\right)\\
&\leq C\exp\left(-cK^2/(b-a)\right).
\end{align*}
Further, $C$ can be taken to be increasing in $b-a$, and $N_0$ can be taken to be increasing in $b-a$ and decreasing in $\delta$.
\end{lemma}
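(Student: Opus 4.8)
\textbf{Plan of proof for Lemma~\ref{l.midpoint lower tail}.}

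The plan is to condition on the external $\sigma$-algebra $\F = \Fext(k, \intint{aN^{2/3}, bN^{2/3}}, \bm L^N)$ and apply the Hall-Littlewood Gibbs property (Assumption~\ref{as.HL Gibbs}). Work on the $\F$-measurable event $\{\inf_{xN^{-2/3}\in[a,b]} \overline L^N_{k-1}(x) > -(R-\delta)N^{1/3}\}$, so that the $(k-1)$\textsuperscript{st} curve $L^N_{k-1}$ serves as an upper boundary curve staying above level roughly $-(R-\delta)N^{1/3}$ (after removing the drift $p\bm\cdot$). The first step is to remove the interaction with the lower curve $L^N_{k+1}$ via the weight factor $W$: since $W \leq 1$ always and $W\geq W_{\mrm{up}}(B_k, L^N_{k-1})W_{\mrm{low}}(B_k, L^N_{k+1})$ with the latter two factors controlled, one can lower bound the partition function $Z = \EF[W(B_k, L^N_{k-1}, L^N_{k+1})]$ by the probability that a single Bernoulli random walk bridge $B_k$ (from $(aN^{2/3}, L^N_k(aN^{2/3}))$ to $(bN^{2/3}, L^N_k(bN^{2/3}))$) avoids both $L^N_{k-1}$ and $L^N_{k+1}$ — but actually, since the event we are bounding forces $\overline L^N_k$ to dip to $-(R+K)N^{1/3}$ far below the upper curve, the cleanest route is to bound the numerator $\EF[\one_{\msf E}W]\leq \EF[\one_{\msf E}]$ where $\msf E$ is the event in question with $B_k$ in place of $L^N_k$, and to lower bound $Z$ separately. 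For the lower bound on $Z$, one uses that on the conditioning event there is room for $B_k$ to stay near its endpoints (which are themselves at height $\geq -RN^{1/3}$ on the intersection event, by the second clause) without crossing $L^N_{k-1}$ above or $L^N_{k+1}$ below; a constant lower bound $Z\geq c_0$ (depending on $b-a$ and the relevant heights) follows from standard Bernoulli bridge estimates (Lemma~\ref{l.random walk bridge fluctuation}), combined with Lemma~\ref{l.W bound} once curves are separated by order $N^{1/3}$. I would actually package this more carefully: restrict first to the subevent where the endpoint heights $L^N_k(aN^{2/3})$, $L^N_k(bN^{2/3})$ are $\geq -RN^{1/3}$ (which is implied by the suprema clause), so the boundary data for $B_k$ is pinned, and then $Z$ is bounded below by a constant uniformly.

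Having reduced to a single Bernoulli random walk bridge $B_k$ with endpoints at height $\geq -RN^{1/3}$ (after drift removal), conditioned to avoid an upper curve $L^N_{k-1}$ sitting at height $> -(R-\delta)N^{1/3}$ and a lower curve $L^N_{k+1}$, the task becomes: show that the conditional probability that $B_k$ dips below $-(R+K)N^{1/3}$ at the single point $tN^{2/3}$, while its suprema on $[aN^{2/3}, tN^{2/3}]$ and $[tN^{2/3}, bN^{2/3}]$ are $\geq -RN^{1/3}$, is at most $C\exp(-cK^2/(b-a))$. Here the conditioning on avoiding the upper curve can only help (it pushes $B_k$ down, hence the event in question more likely — wait, no): actually conditioning $B_k$ to stay \emph{below} $L^N_{k-1}$ makes dipping down \emph{more} likely, so we cannot simply drop the upper-curve conditioning. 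This is where weak monotonicity enters: by Lemma~\ref{l.random walk bridge monotonicity} (stochastic monotonicity for non-intersecting bridges), lowering the upper curve only decreases $B_k$ stochastically, but we need the reverse. So instead I would drop the \emph{lower} curve conditioning (which makes dipping less likely, hence is safe for an upper bound on the dip probability — no wait, removing the lower floor makes dipping \emph{more} likely). The correct move: the event we are bounding \emph{already} stipulates $\sup \overline L^N_k \geq -RN^{1/3}$ on each side, so the relevant comparison is a bridge pinned near height $-RN^{1/3}$ at three locations (the two endpoints/near-endpoint suprema points and a deep dip at $t$). By the Gibbs/Markov property of Bernoulli bridges, split at $tN^{2/3}$: the probability of reaching depth $-(R+K)N^{1/3}$ at $t$ and returning to $\geq -RN^{1/3}$ costs, by Lemma~\ref{l.random walk bridge fluctuation} applied on $[aN^{2/3}, tN^{2/3}]$ and $[tN^{2/3}, bN^{2/3}]$ (a deviation of size $\geq KN^{1/3}$ from the straight line over an interval of length $\leq (b-a)N^{2/3}$), a factor $C\exp(-cK^2/(b-a))$. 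The conditioning on avoiding $L^N_{k-1}$ is handled by the observation that on the event $\{\sup\overline L^N_k \geq -RN^{1/3}\}$ the bridge is already below the upper curve near its sup-achieving points, and a union-bound / reflection argument shows the avoidance conditioning changes the probability by at most a constant factor; alternatively, one uses the quantitative single-curve monotonicity Lemma~\ref{l.quantitative monotonicity for n=1} to compare with the unconditioned bridge.

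\textbf{Main obstacle.} The delicate point is handling the \emph{conditioning on avoiding the upper curve $L^N_{k-1}$} while producing an \emph{upper} bound on the downward-dip probability — since such conditioning biases the bridge downward, a naive comparison goes the wrong way. The resolution relies on the fact that the conditioning event explicitly includes $\sup_{[a,t]}\overline L^N_k \geq -RN^{1/3}$ and $\sup_{[t,b]}\overline L^N_k\geq -RN^{1/3}$, together with $\inf \overline L^N_{k-1} > -(R-\delta)N^{1/3}$: so there is a buffer of only $\delta N^{1/3}$ between where $B_k$ must reach (height $\geq -RN^{1/3}$) and the obstacle $L^N_{k-1}$ (height $> -(R-\delta)N^{1/3}$) — and one must argue that the small buffer $\delta$ does not degrade the Gaussian-type bound, which is why $N_0$ is allowed to depend on $\delta$ (increasingly as $\delta \to 0$, i.e., a larger $N$ is needed when the buffer shrinks). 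Concretely: one restricts $B_k$ to the corridor below $L^N_{k-1}$, uses that a Bernoulli bridge forced into a half-space near its typical location still satisfies Lemma~\ref{l.random walk bridge fluctuation}-type deviation bounds up to a constant factor (this is where reflection / the explicit Bernoulli bridge combinatorics, as in the proof of Lemma~\ref{l.random walk bridge fluctuation}, is invoked), and the $K^2/(b-a)$ exponent emerges from the cost of the bridge traversing a vertical distance $\gtrsim KN^{1/3}$ twice over a horizontal span $\lesssim (b-a)N^{2/3}$. The monotonicity $C$ increasing in $b-a$ and $N_0$ increasing in $b-a$, decreasing in $\delta$, are then read off from the constants in these Bernoulli estimates.
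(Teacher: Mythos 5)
Your plan misses the central structural idea of the paper's argument, and without it several of your intermediate claims do not hold.

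The paper does \emph{not} apply the Gibbs property on the fixed interval $\intint{aN^{2/3},bN^{2/3}}$ with all $k$ top curves conditioned out. Instead it defines the \emph{random} interval $\intint{a'N^{2/3},b'N^{2/3}}$, where $a'$ is the last time before $tN^{2/3}$ at which $\overline L^N_k$ reaches $-(R+K)N^{1/3}$ and $b'$ the first such time after (these exist on the event in question precisely because of the two suprema clauses), and then resamples only the single curve $L^N_k$ on $[a'N^{2/3},b'N^{2/3}]$ via the Gibbs-on-a-random-interval statement Lemma~\ref{l.Gibbs on random interval}. This is not a stopping domain — the random endpoints depend on $L^N_k$ \emph{inside} the interval — which is exactly why the paper needs that separate lemma. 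The point of this choice is that the resampled bridge $B$ is \emph{conditioned to stay below} $-(R+K)N^{1/3}$ throughout $(a'N^{2/3},b'N^{2/3})$, and on the conditioning event $\inf\overline L^N_{k-1}>-(R-\delta)N^{1/3}$ the separation from the upper curve is at least $(\delta+K)N^{1/3}$ everywhere on the interval; Lemma~\ref{l.W bound} then gives $W_{\mrm{up}}\geq \tfrac12$ for $N\geq N_0(\delta,b-a)$. Your plan never produces this guaranteed $N^{1/3}$-scale separation, because on the fixed interval $[a,b]$ the curve $L^N_k$ is allowed to come arbitrarily close to $L^N_{k-1}$, e.g.\ near the endpoints.

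Relatedly, the reduction you propose — ``restrict first to the subevent where the endpoint heights $L^N_k(aN^{2/3})$, $L^N_k(bN^{2/3})$ are $\geq -RN^{1/3}$, which is implied by the suprema clause'' — is false. The suprema clause only guarantees that \emph{some} point in each of $[a,t]$ and $[t,b]$ has $\overline L^N_k\geq -RN^{1/3}$; it says nothing about the values at $a$ and $b$. So the boundary data for your bridge is not pinned, and the lower bound on the partition function $Z$ you assert ($Z\geq c_0$ uniformly, with $c_0$ depending only on $b-a$) does not hold — $Z$ can be arbitrarily small when $L^N_k$ and $L^N_{k-1}$ are nearly touching at the endpoints, and this slack would force the final constant $C$ to depend on $\delta$, contradicting the statement ($C=C(b-a)$). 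You cannot move this defect into $N_0$, because the degeneracy is in the boundary data, not in the asymptotics.

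Finally, you gesture at a ``reflection argument'' or Lemma~\ref{l.quantitative monotonicity for n=1} to remove the avoidance conditioning on the upper curve, but neither is a substitute for what the paper actually uses. The paper first makes the upper interaction negligible (via the separation described above), which leaves only the lower interaction $W_{\mrm{low}}(B,L^N_{k+1})$ in both numerator and denominator; it then invokes the weak monotonicity Corollary~\ref{c.partition function comparison with non-int boundaries} to compare $\EF[W_{\mrm{low}}\mid \overline B(tN^{2/3})\leq -(R+2K)N^{1/3}]$ with $\EF[W_{\mrm{low}}]$, yielding the factor $c(q)^{-1}$. Your outline has no step that plays this role, and the final probability estimate — a bridge pinned at height $-(R+K)N^{1/3}$ at $a'$ and $b'$, conditioned to stay below that level, dipping by an additional $KN^{1/3}$ — is only bounded by $C\exp(-cK^2/(b-a))$ after one further uses stochastic monotonicity of bridges (Lemma~\ref{l.random walk bridge monotonicity}) to replace the stay-below conditioning by lowered endpoints, which your sketch also omits.
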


Assuming these two lemmas, getting the uniform bound Proposition~\ref{p.lower tail induction step} is a fairly standard chaining argument; we give it now before returning to the proofs of Lemmas~\ref{l.low event bound} and \ref{l.midpoint lower tail} in Sections~\ref{s.kth curve not uniformly low} and \ref{s.kth curve not too low at a point}, respectively.

\begin{proof}[Proof of Proposition~\ref{p.lower tail induction step}]

We start by proving a lower tail bound for $L^N_k$ at the boundaries and midpoint of $[-TN^{2/3},TN^{2/3}]$, and then we will turn to setting up a chaining argument to extend to the infimum over the whole interval.

For $U>T$ and $R>0$, let $\msf{Low}_k^{[-U, -T]}(\frac{1}{4}R)$ and $\msf{Low}_k^{[T, U]}(\frac{1}{4}R)$ be as in \eqref{e.Low event definition}.
Using Lemma~\ref{l.low event bound} (whose hypothesis is assumed in \eqref{e.k-1 curve lower tail bound induction step}), and the same after reflecting in the line $\{x=0\}$, we obtain $U>T$ and $R$ such that
\begin{align}\label{e.bound on P(low)}
\P\left(\msf{Low}_k^{[-U, -T]}(\tfrac{1}{4}R)\cup \msf{Low}_k^{[T, U]}(\tfrac{1}{4}R)\right) \leq \tfrac{1}{3}\varepsilon;
\end{align}
we may raise $R$ if necessary so that $\exp(-cR^2/(2U)) < \tfrac{1}{3}\varepsilon$, where $c$ is as in Lemma~\ref{l.midpoint lower tail} (which is possible by the monotonicity of the above probability in $R$).
Next, using \eqref{e.k-1 curve lower tail bound induction step}, we may raise $R$ further if necessary so that
\begin{align}\label{e.k-1 curve lower tail bound induction step in proof}
\P\left(\inf_{sN^{-2/3}\in[-U,U]} \overline L^N_{k-1}(s) \leq -\tfrac{1}{8}RN^{1/3}\right) \leq \tfrac{1}{3}\varepsilon.
\end{align}

Let $\msf{NotLow}_k = (\msf{Low}_k^{[-U, -T]}(\tfrac{1}{4}R)\cup \msf{Low}_k^{[T,U]}(\tfrac{1}{4}R))^c$. Then we see that
\begin{align*}
\MoveEqLeft[4]
\P\left(\overline L^N_k(-TN^{2/3}) < -\tfrac{1}{2}RN^{1/3} \midd \inf_{xN^{-2/3}\in[-U,U]}\overline L^N_{k-1}(x) > -\tfrac{1}{8}RN^{1/3}\right)\\
&\leq \P\left(\overline L^N_k(-TN^{2/3}) < -\tfrac{1}{2}RN^{1/3}, \msf{NotLow}_k \midd \inf_{x N^{-2/3}\in [-U,U]}\overline L^N_{k-1}(x) > -\tfrac{1}{8}RN^{1/3}\right)\\
&\qquad + \P\left(\msf{Low}_k^{[-U, -T]}(\tfrac{1}{4}R)\cup \msf{Low}_k^{[T,U]}(\tfrac{1}{4}R)\midd \inf_{xN^{-2/3}\in[-U,U]}\overline L^N_{k-1}(x) > -\tfrac{1}{8}RN^{1/3}\right)\\
&\leq \exp(-cR^2/(2U))  + \tfrac{1}{2}\varepsilon \leq \varepsilon,
\end{align*}
the penultimate inequality using Lemma~\ref{l.midpoint lower tail}, \eqref{e.bound on P(low)}, and \eqref{e.k-1 curve lower tail bound induction step in proof}. A similar argument, along with a union bound and a relabeling of $\varepsilon$, yields
\begin{align}\label{e.E_0 prob bound}
\P\left(\min_{xN^{-2/3}\in\{-T,0,T\}} \overline L^N_k(x) < -\tfrac{1}{2}RN^{1/3}\midd \inf_{xN^{-2/3}\in[-U,U]}\overline L^N_{k-1}(x) > -\tfrac{1}{8}RN^{1/3}\right) < \varepsilon.
\end{align}

Now we set up the chaining argument to obtain a bound on the lower tail of the infimum of $\overline L^N_k$. For any integer $\ell\geq 0$, let $S_\ell := \{\floor{T(-1+ i2^{-\ell})N^{2/3}}: i=0,1, \ldots, 2^{\ell+1}\}$ (we will omit the floors in the notation for convenience in the rest of the proof) and define
\begin{align*}
\msf E_\ell := \left\{\inf_{x\in S_\ell} \overline L^N_k(x) > -\alpha_\ell RN^{1/3}\right\},
\end{align*}
where
\begin{align*}
\alpha_\ell := \frac{1}{2}+ \delta\sum_{j=1}^\ell 2^{-j/4},
\end{align*}
with $\delta = \frac{1}{4}(\sum_{j=1}^\infty 2^{-j/4})^{-1}>0$, so that $\frac{1}{2}<\alpha_\ell < \frac{3}{4}$ for all $\ell\geq 0$. It is easy to see that
\begin{align*}
\left\{\inf_{xN^{-2/3}\in[-T,T]} \overline L^N_k(x) < -RN^{1/3}\right\} \subseteq \bigcup_{\ell=0}^{\ceil{\frac{1}{2}\log_2 N}} (\msf E_\ell)^c.
\end{align*}
Indeed, at $\ell = \frac{1}{2}\log_2 N$, the spacing between consecutive points in $S_\ell$ becomes equal to $TN^{2/3 - 1/2} = TN^{1/6}$. Since $L^N_k$ is a Bernoulli path and since $\alpha_\ell \leq \frac{3}{4}$, this implies that, on the complement of the righthand side of the previous display, for $R$ and $N$ sufficiently large we have
\begin{align*}
\inf_{xN^{-2/3}\in [-T, T]} \overline L^N_k(x) \geq \inf_{x\in\cup_{\ell=0}^{2^{-1}\log_2 N}S_\ell} \overline L^N_k(x) - TN^{1/6} > -\tfrac{3}{4}RN^{1/3} - TN^{1/6} > -RN^{1/3}.
\end{align*}
Further, it holds that (for convenience defining $\msf E_{-1} = \Omega$, i.e., the entire space)
\begin{align*}
\bigcup_{\ell=0}^{\ceil{\frac{1}{2}\log_2 N}} \msf E_\ell^c = \bigcup_{\ell=0}^{\ceil{\frac{1}{2}\log_2 N}} (\msf E_{\ell-1}\cap \msf E_\ell^c).
\end{align*}
Thus, since we have
\begin{align*}
\MoveEqLeft[6]
\P\left(\inf_{xN^{-2/3}\in[-T,T]} \overline L^N_k(x) < -RN^{1/3}\right)\\
&\leq \P\left(\inf_{xN^{-2/3}\in[-T,T]} \overline L^N_k(x) < -RN^{1/3} \midd \inf_{sN^{-2/3}\in[-U,U]} \overline L^N_{k-1}(s) > -\tfrac{1}{8}RN^{1/3}\right)\\
&\qquad+ \P\left(\inf_{sN^{-2/3}\in[-U,U]} \overline L^N_{k-1}(s)  \leq -\tfrac{1}{8}RN^{1/3}\right)
 \end{align*}
and the second term is bounded by $\frac{1}{3}\varepsilon$ by \eqref{e.k-1 curve lower tail bound induction step in proof},
 it is enough to bound
\begin{align*}
\sum_{\ell=0}^{\frac{1}{2}\log_2 N}\P\left(\msf E_{\ell-1}\cap \msf E_\ell^c \midd \inf_{xN^{-2/3}\in[-U, U]}\overline L^N_{k-1}(x) > -\tfrac{1}{8}RN^{1/3}\right).
\end{align*}

We have bounded $\P(\msf E_0^c \mid \inf_{xN^{-2/3}\in[-U, U]} \overline L^N_{k-1}(x) > -\tfrac{1}{8}RN^{1/3})$ by $\varepsilon$ above in \eqref{e.E_0 prob bound}. For the $\ell \geq 1$ terms, we observe that Lemma~\ref{l.midpoint lower tail} (with $R$ there equal to $\alpha_{\ell-1}R$, $K=(\alpha_\ell-\alpha_{\ell-1})R$, and $\delta = \alpha_{\ell-1}-\frac{1}{8}>0$ uniformly over $\ell$) implies that there exists $N_0$ independent of $\ell$ such that, for $N>N_0$, the $\ell$\smash{\th} term is upper bounded (using a union bound over the $2^{\ell+1}$ points in $S_\ell$ and noting that $\alpha_\ell - \alpha_{\ell-1} = \delta 2^{-\ell/4}$ and the separation between points in $S_{\ell}$ is $2^{-\ell}$) by
\begin{align*}
2^{\ell+1}\cdot C\exp\left(-c(\delta2^{-\ell/4}R)^2/2^{-\ell}\right) = 2^{\ell+1}\cdot C\exp(-c\delta^2 2^{\ell/2}R^2).
\end{align*}
Summing this from $\ell=1$ to $\infty$ yields a bound of $C\exp(-c\delta^2R^2)$, which can be made smaller than $\varepsilon$ by raising $R$ further if required. Relabeling $\varepsilon$ completes the proof.
\end{proof}

\subsection{The $k$\textsuperscript{th} curve cannot be uniformly too low on an interval} \label{s.kth curve not uniformly low}

\begin{proof}[Proof of Lemma~\ref{l.low event bound}]
The idea of the proof is the following. Under the event $\msf{Low}^{[T,U]}_k(R_{\mrm{low}})$ for a large enough interval $[T,U]$,  it will be likely that the top $k-1$ curves fall, as the $k$\th curve that typically ``supports'' them is too far down. In particular, under this conditioning, the 1\smash\st curve will with reasonable probability be at a low position around say the midpoint of $[T,U]$. This event's (unconditional) probability is known to be small using lower tail control from Assumption~\ref{as.one-point tightness}. Since the event we conditioned on turned an unlikely event into a likely one, it must have a low probability. The basic proof idea originates in \cite{corwin2014brownian}, but is made more complicated here by the lack of stochastic monotonicity of the Hall-Littlewood Gibbs property. Now we give the details.

For $R_1$ a constant to be chosen next, let
\begin{align*}
\msf{NotHigh}_1(x, R_1) := \left\{\overline L^N_{1}(xN^{2/3}) + \lambda x^2N^{1/3} \leq R_1 N^{1/3}\right\},
\end{align*}
and define, for $R_{\mrm{low}}$ to be set,
\begin{equation}\label{e.A definition}
\msf A^{[T,U]}_k(R_1) := \msf{NotHigh}_{1}(T, R_1) \cap \msf{NotHigh}_{1}(U, R_1) \cap \msf{Low}^{[T,U]}_k(R_{\mrm{low}}).
\end{equation}
Now by Assumption~\ref{as.one-point tightness}, there exist $R_1 = R_1(\varepsilon)$ and $N_0= N_0(T,U,\varepsilon)$ large enough such that, for all $U$ and for $N\geq N_0$,
\begin{align}\label{e.not high prob bound}
\P\Bigl(\msf{NotHigh}_{1}(T, R_1)^c\cup \msf{NotHigh}_{1}(U, R_1)^c\Bigr) \leq \tfrac{1}{2}\varepsilon;
\end{align}
note that $R_1$ can be set independently of $U$. We set $R_1$ to be such a value for the remainder of the proof.

Next let $\bm B = (B_1, \ldots, B_{k-1})$ be a collection of $k-1$ independent Bernoulli random walk bridges, the $i$\textsuperscript{th} one having endpoints
\begin{align}\label{e.not low B endpoints}
u_i &= (TN^{2/3}, L^N_i(TN^{2/3})) \qquad \text{and} \qquad
v_i = (UN^{2/3}, L^N_i(UN^{2/3})).
\end{align}
Note that these points are all upper bounded by $pTN^{2/3} - \lambda T^2N^{1/3} + R_1N^{1/3}$ and $pUN^{2/3} - \lambda U^2N^{1/3} + R_1N^{1/3}$, respectively, on $\msf{NotHigh}_{1}(T, R_1)\cap \msf{NotHigh}_{1}(U, R_1)$.

Let $\msf{Sep}^N_{k-2}(\delta,\{T,U\}, \bm L^N)$ be as in \eqref{e.Sep definition}.
By hypothesis we know there exists $\delta = \delta(T, U, \varepsilon, k)$ such that
\begin{align}\label{e.not too low sep}
\P\left(\msf{Sep}^N_{k-2}(\delta,\{T,U\}, \bm L^N)^c\right) \leq \tfrac{1}{2}\varepsilon.
\end{align}
Also let $R_{k-1} = R_{k-1}(T, U, \varepsilon, k)$ be such that, with
$$\msf{BdyCtrl}_{k-1}(R_{k-1}) := \left\{\min_{xN^{-2/3}\in\{T, U\}} \overline L^N_{k-1}(x) \geq -R_{k-1}N^{1/3}\right\},$$
it holds that
\begin{align}\label{e.not too low bdyctrl}
\P\left(\msf{BdyCtrl}_{k-1}(R_{k-1})^c\right) \leq \tfrac{1}{2}\varepsilon,
\end{align}
which is possible by the hypothesis of Lemma~\ref{l.low event bound} that we are working under.

We now set $R_{\mrm{low}} = R_{k-1} + U$ in the definition \eqref{e.A definition} of $\msf A_k^{[T,U]}(R_1)$. Also let $C_N(T,U)$ be the midpoint of the line joining the upper bounds on $u_1$ and $v_1$ mentioned above; more precisely, it is given by
\begin{equation}\label{e.C(r,R) definition}
\begin{split}
C_N(T,U) &:= p\cdot\tfrac{1}{2}(T+U)N^{2/3}-\tfrac{1}{2}\lambda (T^2 + U^2)N^{1/3} + R_1N^{1/3}\\
&= p\cdot\tfrac{1}{2}(T+U)N^{2/3} - \tfrac{1}{4}\lambda (T+U)^2N^{1/3} - \tfrac{1}{4} \lambda (U-T)^2N^{1/3} + R_1N^{1/3}.
\end{split}
\end{equation}

Let $\F=\Fext(k-1, \intint{TN^{2/3}, UN^{2/3}}, \bm L^N)$ be as in Definition~\ref{d.F_ext}, i.e., the $\sigma$-algebra generated by $\{L^N_i(x):(i,x)\not\in\intint{1,k-1}\times\intint{TN^{2/3}+1, UN^{2/3}-1}\}$. Note that $\msf A_k^{[T,U]}(R_1)$, $\msf{BdyCtrl}_{k-1}(R_{k-1})$, and $\msf{Sep}^N_{k-2}(\delta) := \msf{Sep}^N_{k-2}(\delta,\{T,U\}, \bm L^N)$ are $\F$-measurable. For a process $X:\R\to\R$, define the event $\msf{UpTail}(U, X)$ by
\begin{align}\label{e.uptail definition}
 \msf{UpTail}(U, X) := \left\{X(\tfrac{1}{2}(T+U)N^{2/3}) \geq C_N(T,U) +\tfrac{1}{8}\lambda  (U-T)^2N^{1/3}\right\},
\end{align}
and recall the notational convention from Remark~\ref{r.event name convention}.
Then, by the previous bounds \eqref{e.not too low sep} and \eqref{e.not too low bdyctrl} on $\P(\msf{Sep}^N_{k-2}(\delta)^c)$ and $\P(\msf{BdyCtrl}_{k-1}(R_{k-1})^c)$, respectively, and the Hall-Littlewood Gibbs property,
\begin{align}
\MoveEqLeft[4]
\P\left(\msf{UpTail}(U, L^N_1)\mid \msf A_k^{[T,U]}(R_1)\right)\nonumber\\
&\leq \P\left(\msf{UpTail}(U, L^N_1), \msf{Sep}^N_{k-2}(\delta), \msf{BdyCtrl}_{k-1}(R_{k-1})\mid \msf A_k^{[T,U]}(R_1)\right)\nonumber\\
&\qquad+ \frac{\varepsilon}{\P(\msf A_k^{[T,U]}(R_1))} \nonumber\\
&\leq \E\left[\EF\left[\one_{\msf{UpTail}(U, B_1)} W(\bm B, L^N_k)\right]\frac{\one_{\msf{Sep}^N_{k-2}(\delta), \msf{BdyCtrl}_{k-1}(R_{k-1})}}{\EF\left[W(\bm B, L^N_k)\right]}\midd \msf A_k^{[T,U]}(R_1)\right] \label{e.midpoint line bound}\\
&\qquad + \frac{\varepsilon}{\P(\msf A_k^{[T,U]}(R_1))}\nonumber,
\end{align}
where recall from \eqref{e.not low B endpoints} that $\bm B= (B_1, \ldots ,B_{k-1})$ is a family of Bernoulli random walk bridges on $[TN^{2/3}, UN^{2/3}]$, with $B_i$ starting and ending at $L^N_i(TN^{2/3})$ and $L^N_i(UN^{2/3})$, respectively, for $i\in\intint{1, k-1}$, and $W$ is as in Definition~\ref{d.HL Gibbs}.

For an interval $I$, define the event
\begin{equation}\label{e.nonint definition}
\msf{NonInt}_{k-2}(I, \bm B) := \Bigl\{B_i(x)\geq B_{i+1}(x)\text{ for all } i\in\intint{1, k-2}, x\in I\Bigr\}.
\end{equation}
In the rest of the proof we adopt the shorthand $\msf{NonInt}_{k-2}(\bm B) = \msf{NonInt}_{k-2}([TN^{2/3},UN^{2/3}], \bm B)$. Since $W(\bm B) = 0$ on $\msf{NonInt}_{k-2}(\bm B)^c$, it follows that the ratio of $\F$-conditional probabilities in \eqref{e.midpoint line bound} equals
\begin{align}\label{e.to bound in not low argument}
\frac{\EF\left[\one_{\msf{UpTail}(U, B_1)} W(\bm B)\mid \msf{NonInt}_{k-2}(\bm B)\right]}{\EF\left[W(\bm B)\mid \msf{NonInt}_{k-2}(\bm B)\right]}.
\end{align}

\medskip

\noindent\textbf{Controlling the denominator.} Next, we lower bound the denominator of \eqref{e.to bound in not low argument} on the event $\msf{Sep}^N_{k-2}(\delta) \cap \msf{BdyCtrl}_{k-1}(R_{k-1})$ and conditionally on $\msf A^{[T,U]}_k(R_1)$. We claim that there exists $\rho = \rho(\delta, T, U, k)>0$ such that, on $\msf{Sep}^N_{k-2}(\delta)$,
\begin{align}\label{e.independent RW bridge separation lower bound}
\PF\left(\min_{i=1, \ldots, k-2}\inf_{xN^{-2/3} \in [T,U]} \left(B_i(x) - B_{i+1}(x)\right)\geq \rho N^{1/3}\midd \msf{NonInt}_{k-2}(\bm B)\right)\geq \frac{1}{2}.
\end{align}
To see this, we first let $\alpha = \alpha(T,U,\delta, k)>0$ be such that $\PF(\msf{NonInt}_{k-2}(B)) \geq \alpha$ on $\msf{Sep}^N_{k-2}(\delta)$; the existence of $\alpha$ uses that the separation of $B_i$ from $B_{i+1}$ at the endpoints of $T N^{2/3}$ and $U N^{2/3}$ is at least $\delta N^{1/3}$ on $\msf{Sep}^N_{k-2}(\delta)$, and that $\msf{NonInt}_{k-2}(\bm B)$ holds if the fluctuation of each bridge is at most $\frac{1}{4}\delta N^{1/3}$ on the interval of size $(U-T)N^{2/3}$, which has positive probability, e.g., by convergence of (rescaled) $\bm B$ to independent Brownian bridges with endpoint separation at least $\delta$ (see e.g., \cite[Lemma 5.2]{dimitrov2021characterization}). Then, by a union bound and the independence of $B_i$ we obtain
\begin{align*}
\MoveEqLeft[10]
\PF\left(\min_{i=1, \ldots, k-2}\inf_{xN^{-2/3} \in [T,U]} \left(B_i(x) - B_{i+1}(x)\right)< \rho N^{1/3}\midd \msf{NonInt}_{k-2}(\bm B)\right)\\
&\leq \sum_{i=1}^{k-2} \PF\left(\inf_{xN^{-2/3} \in [T,U]} \left(B_i(x) - B_{i+1}(x)\right)< \rho N^{1/3}\midd \msf{NonInt}_{k-2}(\bm B)\right)\\
&\leq \alpha^{-1}\sum_{i=1}^{k-2} \PF\left(0\leq \inf_{xN^{-2/3} \in [T,U]} \left(B_i(x) - B_{i+1}(x)\right)< \rho N^{1/3}\right).
\end{align*}
This can be made smaller than $\frac{1}{2}$ for all large enough $N$ by setting $\rho>0$ small enough, depending on $\delta$, $T$, $U$, and $k$ (note that $\alpha$ depends on the same parameters); this can again be seen by the convergence of a rescaled version of $\bm B$ to a collection of independent Brownian bridges with endpoint separation at least $\delta$, and noting that the corresponding Brownian bridge probability can be made arbitrarily small by setting $\rho$ correspondingly. Thus \eqref{e.independent RW bridge separation lower bound} has been proved.

Recall from Definition~\ref{d.HL Gibbs} the decomposition $W = W_{\mrm{int}}\cdot W_{\mrm{low}}$ with $W_{\mrm{low}} = W_{\mrm{low}}(B_{k-1}, L^N_{k})$ being the weight associated to the lower boundary condition $L^N_{k}$ and $W_{\mrm{int}} = W_{\mrm{int}}(\bm B)$ being the weight associated to the interior interactions between the $B_i$. By Lemma~\ref{l.W bound}, on the event in \eqref{e.independent RW bridge separation lower bound}, and conditionally on $\msf{NonInt}_{k-2}(\bm B)$ (and so with conditional probability at least~$\frac{1}{2}$),
\begin{align*}
W_{\mrm{int}}(\bm B) \geq (1-q^{\rho N^{1/3}})^{(U-T)N^{2/3}\cdot(k-1)} \geq \tfrac{1}{2}
\end{align*}
for large enough $N$ (depending on $\rho$, $T-U$, and $k$, therefore on $T$, $U$, $\varepsilon$, and $k$). %
Further observe that, on $\msf A_k^{[T,U]}(R_1)\cap\msf{BdyCtrl}_{k-1}(R_{k-1})$, and with $\overline B_{k-1}(x) = B_{k-1}(x) - px$,
\begin{align*}
\min\left(\overline B_{k-1}(TN^{2/3}), \overline B_{k-1}(UN^{2/3})\right) - \sup_{xN^{-2/3}\in [T,U]} \overline L^N_k
&=(-R_{k-1} + R_{\mrm{low}})N^{1/3}
\geq UN^{1/3},
\end{align*}
where  recall $R_{k-1}$ depends on $U$ and $R_{\mrm{low}} = R_{k-1}+U$ was set earlier.

So it follows that $B_{k-1}$, with conditional probability (given $\msf{NonInt}_{k-2}(\bm B)$) at least $\frac{3}{4}$, for all $U$ large enough, stays $\frac{1}{2}UN^{1/3}$ above $L^N_{k}$ throughout the interval $[TN^{2/3},UN^{2/3}]$ (e.g., again by using weak convergence to non-intersecting Brownian bridges and since $\frac{1}{2}U \gg U^{1/2}$ for such $U$), and, on that event, by Lemma~\ref{l.W bound},
$$W_{\mrm{low}}(B_{k-1}, L^N_k) \geq (1-q^{\frac{1}{2}UN^{1/3}})^{(U-T)N^{2/3}} \geq \tfrac{1}{2}$$
for large enough $N$ (depending on $U$).

Thus we have that both $W_{\mrm{int}}(\bm B)\geq \frac{1}{2}$
 and $W_{\mrm{low}}(B_{k-1}, L^N_k)\geq \frac{1}{2}$ on an event of conditional probability (given $\msf{NonInt}_{k-2}(\bm B)$) at least $\frac{1}{4}$.
So, on $\msf{Sep}^N_{k-2}(\delta) \cap \msf{BdyCtrl}_{k-1}(R_{k-1})$ and conditionally on $\smash{\msf A^{[T,U]}_k(R_1)}$, we may lower bound $\EF[W(\bm B, L^N_k)\mid \msf{NonInt}_{k-2}(\bm B)]$ by $\frac{1}{2}\cdot\frac{1}{2}\cdot\frac{1}{4}=\frac{1}{16}$.

\medskip

\noindent\textbf{Bounding the conditional random walk bridge probability.} Thus, and also using $W(\bm B,L^N_k)\leq 1$, we see that the first term in \eqref{e.midpoint line bound} is upper bounded by
\begin{align}\label{e.original RWB upper tail prob}
16\cdot \E\left[\PF\bigl(\msf{UpTail}(U, B_1)\mid \msf{NonInt}_{k-2}(\bm B)\bigr) \midd \msf A_k^{[T,U]}(R_1)\right].
\end{align}
Recall from \eqref{e.not low B endpoints} that $B_i$ is a Bernoulli random walk bridge from $(TN^{2/3}, L^N_i(TN^{2/3}))$ to $(UN^{2/3}, L^N_i(UN^{2/3}))$ and that, on $\msf A_k^{[T,U]}(R_1)$, $\overline L^N_1(TN^{2/3}) \leq (-\lambda T^2 + R_1)N^{1/3}$ and $\overline L^N_1(UN^{2/3}) \leq (-\lambda U^2 + R_1)N^{1/3}$. Let $\tilde{\bm B} = (\tilde B_1, \ldots, \tilde B_{k-1})$ be a collection of $k-1$ Bernoulli random walk bridges with $\tilde B_i$ from
\begin{align*}
&\left(TN^{2/3}, pTN^{2/3} + (-\lambda T^2 + R_1 + (k-i) U^{1/2})N^{1/3}\right) \quad\text{to}\\
&\left(UN^{2/3}, pUN^{2/3} + (-\lambda U^2 + R_1 + (k-i) U^{1/2})N^{1/3}\right).
\end{align*}
Since these endpoints are higher than the corresponding endpoints of $\bm B$, it holds that the law of $\tilde{\bm B}$ conditionally on $\msf{NonInt}_{k-2}(\tilde{\bm B})$ stochastically dominates that of $\bm B$ conditional on $\msf{NonInt}_{k-2}(\bm B)$ by Lemma~\ref{l.random walk bridge monotonicity}. Thus it follows that \eqref{e.original RWB upper tail prob} is upper bounded by (dropping the $\F$ and the outer expectation since $\tilde{\bm B}$ does not have any dependence on data from $\F$)
\begin{align*}
16\cdot \P\left(\msf{UpTail}(U, \tilde B_1)\midd \msf{NonInt}_{k-2}(\tilde{\bm B})\right).
\end{align*}

Observe that there is an absolute constant $c>0$ such that $\P(\msf{NonInt}_{k-2}(\tilde{\bm B})) > c$ for all large enough $U$, since $\tilde B_i$ are separated by order $U^{1/2}$ at their endpoints and the non-intersection is imposed on an interval of length $U-T$. Thus the previous display is upper bounded by
\begin{align}\label{e.not low intermediate step}
16c^{-1}\cdot \P\left(\msf{UpTail}(U, \tilde B_1)\right).
\end{align}
Now we recall the definitions \eqref{e.C(r,R) definition} of $C_N(T,U)$ as the midpoint of the line joining the points $(TN^{2/3}, pTN^{2/3}-\lambda T^2N^{1/3}+R_1N^{1/3})$ and $(UN^{2/3}, pUN^{2/3}-\lambda U^2N^{1/3}+R_1N^{1/3})$. It follows that
\begin{align*}
\E\left[\tilde B_1(\tfrac{1}{2}(T+U)N^{2/3})\right] \leq C_N(T,U)+ (k-1)U^{1/2}N^{1/3}.
\end{align*}
Thus, recalling the definition \eqref{e.uptail definition} of $\msf{UpTail}$, the quantity \eqref{e.not low intermediate step} is upper bounded by
\begin{align*}
16c^{-1}\cdot \P\left(\tilde B_1\left(\tfrac{1}{2}(T+U)N^{2/3}\right) \geq \E\left[\tilde B_1\left(\tfrac{1}{2}(T+U)N^{2/3}\right)\right] - kU^{1/2}N^{1/3} +\tfrac{1}{8} (U-T)^2N^{1/3}\right)
&\leq \varepsilon
\end{align*}
for large enough $U$, the inequality using upper bounds on Bernoulli random walk bridge one-point tails (Lemma~\ref{l.random walk bridge fluctuation}).

\medskip

\noindent\textbf{Returning to $\bm{L^N_1}$.} 
Overall, returning to \eqref{e.midpoint line bound} and \eqref{e.to bound in not low argument}, we have shown that
\begin{align*}
\P\left(\msf{UpTail}(U,L^N_1) \midd \msf A_k^{[T,U]}(R_1)\right) \leq \varepsilon + \frac{\varepsilon}{\P\left(\msf A_k^{[T,U]}(R_1)\right)}.
\end{align*}
We claim that $\P(\msf A_k^{[T,U]}(R_1)) < 3\varepsilon$. Otherwise, we would have that $\smash{\P(\msf A_k^{[T,U]}(R_1))}\geq 3\varepsilon$, which would imply that $\smash{\P(\msf{UpTail}(U,L^N_1)\mid \msf A_k^{[T,U]}(R_1)) \leq \varepsilon + \frac{1}{3} \leq \frac{1}{2}}$ for large enough $U$. This would further imply (recalling the definition \eqref{e.uptail definition} of $\msf{UpTail}$) that
\begin{align*}
\MoveEqLeft[1]
\P\left(\msf A_k^{[T,U]}(R_1)\right)\\
&\leq \left(\P\left(\msf{UpTail}(U,L^N_1)^{c}\mid \msf A_k^{[T,U]}(R_1)\right)\right)^{-1}\cdot \P\left(\msf{UpTail}(U,L^N_1)^c\right)\\
&\leq \left(1-\tfrac{1}{2}\right)^{-1} \P\left(\msf{UpTail}(U,L^N_1\right)^c)\\
&= 2\cdot\P\left(L^N_{1}(\tfrac{1}{2}(T+U)N^{2/3}) - \tfrac{1}{2}p(T+U)N^{2/3} < \left(-\tfrac{1}{4}\lambda(T+U)^2 - \tfrac{1}{8}\lambda(U-T)^2 + R_1\right)N^{1/3}\right).
\end{align*}
By the one-point tightness of $L^N_1$ (Assumption~\ref{as.one-point tightness}), for large enough $U$ (depending on $R_1$ and $T$) and all $N>N_0$ (for a $N_0$ depending on $U$), this is upper bounded by $\varepsilon$, which is a contradiction; recall here the important observation made after \eqref{e.not high prob bound} that $R_1$ was chosen independently of $U$. This establishes that $\P(\msf A_k^{[T,U]}(R_1)) < 3\varepsilon$.

With these choices of $R_1$, $R_{\mrm{low}}$, $U$, and $N_0$, and recalling the definition \eqref{e.A definition} of $\msf A^{[T,U]}_k(R_1)$ and the bound \eqref{e.not high prob bound} on the probability of $\msf{NotHigh}_1^c$, we see that, for $N\geq N_0$,
\begin{equation*}
\P\left(\msf{Low}^{[T,U]}_k(R_{\mrm{low}})\right) \leq \P\left(\msf A_k^{[T,U]}(R_1)\right) + \P\Bigl(\msf{NotHigh}_1(T,R_1)^c\cup\msf{NotHight}(U,R_1)^c\Bigr) \leq 4\varepsilon.
\end{equation*}
Relabeling $\varepsilon$ completes the proof.
\end{proof}

\subsection{The $k$\textsuperscript{th} curve cannot be too low at a given point} \label{s.kth curve not too low at a point}

Here we prove Lemma~\ref{l.midpoint lower tail}. We will need a version of the Hall-Littlewood Gibbs property on a random interval. As we have seen, random intervals defined by information outside the interval satisfy a strong Gibbs property (Definition~\ref{d.stopping domain} and Proposition~\ref{p.strong gibbs}), analogous to a strong Markov property, in which case the behavior inside the interval under the resampling is the same. Here, however, we will need to resample on a random interval defined in a way that is dependent on the behavior of the process inside, and the description of the resampling is recorded here.

Let $a,b\in \Z$ be integers, let $k\in\N$ be a natural number, let $\Lambda\subseteq \Z$ be an interval containing $\intint{a,b}$, and let $\bm L:\N\times\Lambda\to\Z$ be a discrete line ensemble (Definition~\ref{d.discrete line ensemble}) with the Hall-Littlewood Gibbs property (Assumption~\ref{as.HL Gibbs}). Let $p\in(-1,0)$ and recall the notation $\overline L_k(x) = L_k(x) - px$ (note that we do not assume $\bm L$ satisfies Assumption~\ref{as.one-point tightness}, so $p$ need not be as in that assumption). Let $m\in\R$ and $t\in\intint{a,b}$, and define
\begin{equation}\label{e.a',b' definition}
\begin{split}
a' &:= \max\bigl\{x\in\intint{a, t} : \overline L_k(x) \geq m\bigr\} \quad\text{and}\quad
b' := \min\bigl\{x\in\intint{t, b} : \overline L_k(x) \geq m\bigr\}.
\end{split}
\end{equation}
Next define the $\sigma$-algebra
$$\Fext(\{k\}, \intint{a',b'}, \bm L) := \Bigl\{A\cap\{a'=x, b'=y\} : A\in\Fext(\{k\}, \intint{x,y}, \bm L), x\in\intint{a, t}, y \in \intint{t, b}\Bigr\}.$$
For any $c,d$, let $\E^{x, y, \intint{c,d}}$ be the expectation operator associated to a probability measure $\P^{x, y, \intint{c,d}}$ under which $B$ is distributed as a Bernoulli random walk bridge from $(c, x)$ to $(d,y)$, and let $\overline B(x) := B(x) - px$.
Finally, let $\Omega = \{(\ell, c,d): c\in \intint{a,t}, b\in\intint{t,d}, \ell\in \Omega_{\intint{c,d}}\}$, where $\Omega_{\intint{c,d}} := \{f:\intint{c,d}\to\Z, f \text{ a Bernoulli path}\}$ is the space of Bernoulli paths (Definition~\ref{d.bernoulli path}) on~$\intint{c,d}$.
We will give the proof of the following lemma in Appendix~\ref{s.random gibbs}.

\begin{lemma}\label{l.Gibbs on random interval}
Let $k\in\N$, let $\Lambda\subseteq \Z$ be an interval, and let $\bm L:\N\times\Lambda\to \Z$ be a discrete line ensemble satisfying Assumption~\ref{as.HL Gibbs}, and let $p\in(-1,0)$. Let $m\in\R, \intint{a, b}\subseteq \Z,$ and $t\in\intint{a,b}$, and let $a'$ and $b'$ be as in \eqref{e.a',b' definition}. Let $\F = \Fext(\{k\}, \intint{a',b'}, \bm L)$. For any measurable function $F: \Omega\to \R$, it holds that
\begin{align*}
\MoveEqLeft[2]
\E_{\F}\left[F(L_k|_{\intint{a', b'}}, a', b')\right]\\
&= Z^{-1}\cdot\E^{z_1, z_2, \intint{a',b'}}\left[F(B, a', b')W(B,L_{k-1}, L_{k+1}, \intint{a',b'}) \mid \overline B(s) < m\  \forall s\in\intint{a'+1,b'-1}\right],
\end{align*}
where $z_1 = L_k(a')$, $z_2=L_k(b')$, and $Z=\E^{z_1, z_2, \intint{a',b'}}[W(B,L_{k-1}, L_{k+1}, \intint{a',b'}) \mid \overline B(s) < m$ $\forall s\in \intint{a'+1,b'-1}]$.
\end{lemma}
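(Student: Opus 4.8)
\textbf{Proof proposal for Lemma~\ref{l.Gibbs on random interval}.}

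The plan is to reduce this ``Gibbs property on a random interval'' to the ordinary Hall-Littlewood Gibbs property (Assumption~\ref{as.HL Gibbs}) by decomposing on the deterministic values of $a'$ and $b'$, in the same spirit as the proof of the strong Gibbs property (Proposition~\ref{p.strong gibbs}), but with an extra twist: here the endpoints $a'$ and $b'$ are \emph{not} stopping-domain endpoints, since their definition in \eqref{e.a',b' definition} depends on the values of $\overline L_k$ strictly inside $\intint{a,b}$ (namely, on where $\overline L_k$ last/first exceeds $m$). So the first step is to observe that, conditionally on $\{a'=x, b'=y\}$, the event that $a'=x,b'=y$ is precisely the event that $\overline L_k(x)\geq m$, $\overline L_k(y)\geq m$, and $\overline L_k(s) < m$ for all $s\in\intint{x+1,y-1}$ (using that $\overline L_k$ is defined via a Bernoulli path so the max/min in \eqref{e.a',b' definition} is attained). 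The point is that the values $L_k(x)$ and $L_k(y)$ at the endpoints, together with the values $L_{k-1}|_{\intint{x,y}}$, $L_{k+1}|_{\intint{x,y}}$, and everything of $\bm L$ outside $\intint{x+1,y-1}$, are all measurable with respect to $\Fext(\{k\}, \intint{x,y}, \bm L)$; only the constraint $\overline L_k(s) < m$ for interior $s$ is ``new'' information about the resampled curve.

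Concretely, first I would fix $x\in\intint{a,t}$, $y\in\intint{t,b}$ and a set $A\in\Fext(\{k\}, \intint{x,y},\bm L)$, and compute $\E[F(L_k|_{\intint{a',b'}}, a',b')\one_A\one_{\{a'=x,b'=y\}}]$. On the event $\{a'=x,b'=y\}$ the first argument of $F$ is $L_k|_{\intint{x,y}}$ and the last two are the constants $x,y$, so I can write $F(L_k|_{\intint{x,y}}, x, y)$. Now condition on $\Fext(\{k\},\intint{x,y},\bm L)$ and apply the ordinary Hall-Littlewood Gibbs property for the single curve $L_k$ on the interval $\intint{x,y}$: conditionally, $L_k|_{\intint{x,y}}$ is a Bernoulli random walk bridge $B$ from $(x, L_k(x))$ to $(y, L_k(y))$ reweighted by $W(B, L_{k-1}, L_{k+1}, \intint{x,y})/Z_{x,y}$, where $Z_{x,y}=\E^{L_k(x),L_k(y),\intint{x,y}}[W(B,L_{k-1},L_{k+1},\intint{x,y})]$. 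The indicator $\one_A$ and the indicator of the endpoint constraints $\overline L_k(x)\geq m$, $\overline L_k(y)\geq m$ are $\Fext(\{k\},\intint{x,y},\bm L)$-measurable and pull out; the only thing inside the conditional expectation that concerns the resampled bridge is the factor $\one_{\{\overline B(s) < m\ \forall s\in\intint{x+1,y-1}\}}$ coming from $\{a'=x,b'=y\}$. Multiplying and dividing through by $\E^{L_k(x),L_k(y),\intint{x,y}}[W(B,L_{k-1},L_{k+1},\intint{x,y})\one_{\{\overline B(s)<m\ \forall s\in\intint{x+1,y-1}\}}]$ converts the reweighting-by-$W/Z_{x,y}$ description into the claimed conditional-on-$\{\overline B(s)<m\}$ description with normalization $Z$ (noting $Z$ as defined in the statement is exactly $\E^{z_1,z_2,\intint{a',b'}}[W(B,L_{k-1},L_{k+1},\intint{a',b'})\mid \overline B(s)<m\ \forall s]$ once $a'=x$, $b'=y$, $z_1=L_k(x)$, $z_2=L_k(y)$).

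Finally, I would sum over all admissible pairs $(x,y)$ and all $A$ generating $\Fext(\{k\},\intint{a',b'},\bm L)$; since $\{a'=x,b'=y\}$ partitions the sample space as $(x,y)$ ranges over $\intint{a,t}\times\intint{t,b}$, and on $\{a'=x,b'=y\}$ the right-hand side of the claimed identity reduces to the $(x,y)$-term just computed, the collection of these terms assembles to the asserted conditional-expectation identity. The main obstacle I anticipate is purely bookkeeping: one must be careful that the event $\{a'=x, b'=y\}$ imposes constraints of two different types — hard endpoint constraints ($\overline L_k(x)\ge m$, $\overline L_k(y)\ge m$), which are external and factor out, versus the soft interior constraint ($\overline B(s)<m$), which must be kept inside and absorbed into the conditioning of the resampled bridge — and that after reweighting by $W/Z_{x,y}$ the normalizations telescope correctly into the single constant $Z$ of the statement. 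Everything else (the partition, the single-curve Gibbs property, the definition of $W$) is already available in the excerpt, so no genuinely new input is needed; I would relegate the detailed verification to the appendix as the authors indicate.
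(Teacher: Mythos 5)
Your proposal is correct and follows essentially the same route as the paper's proof: you decompose $\{a'=x,b'=y\}$ into the $\Fext(\{k\},\intint{x,y},\bm L)$-measurable endpoint constraint $\overline L_k(x),\overline L_k(y)\ge m$ and the interior constraint $\overline L_k(s)<m$ on $\intint{x+1,y-1}$, condition on $\Fext(\{k\},\intint{x,y},\bm L)$, apply the single-curve Hall-Littlewood Gibbs property, pull out the external indicators, and multiply/divide by the constrained partition function to convert the $W/Z_{x,y}$ reweighting into the conditional-on-$\{\overline B<m\}$ form — which is precisely the paper's $\msf{BdyVal}/\msf{IntVal}$ decomposition and telescoping-normalization argument. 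The only detail you omit, which the paper handles explicitly, is the trivial boundary case $a'=b'=t$ (and the degenerate case $z_1-px<m$ or $z_2-py<m$), but these are immediate.
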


\begin{proof}[Proof of Lemma~\ref{l.midpoint lower tail}]
For notational convenience, we prove the statement with $2K$ in place of $K$. On the event
$$\msf E(t,a,b) := \left\{\parbox{4in}{\centering$\overline L^N_k(tN^{2/3}) \leq -(R+2K)N^{1/3},$\\ $\Bigl(\sup_{xN^{-2/3}\in[a, t]} \overline L^N_k(x)\Bigr)\wedge\Bigl(\sup_{xN^{-2/3}\in [t, b]} \overline L^N_k(x)\Bigr) \geq -RN^{1/3}$}\right\},$$
define the random times $a'$ and $b'$ by
\begin{align*}
a' &:= N^{-2/3}\cdot\max\left\{x\in \intint{aN^{2/3}, tN^{2/3}}: \overline L^N_k(x) \geq -(R+K)N^{1/3}\right\} \quad\text{and}\\
b' &:= N^{-2/3}\cdot\min\left\{x\in \intint{tN^{2/3}, bN^{2/3}}: \overline L^N_k(x) \geq -(R+K)N^{1/3}\right\}.
\end{align*}
Since $L^N_k$ is a Bernoulli path, it follows that $\overline L^N_k(a'N^{2/3}), \overline L^N_k(b'N^{2/3}) = \ceil{-(R+K)N^{1/3}}$ on $\msf E(t, a,b)$; in the remainder of the proof, we ignore the ceiling and assume that $\smash{\overline L^N_k(a'N^{2/3})} = \overline L^N_k(b'N^{2/3}) = -(R+K)N^{1/3}$.

Let $B$ be a Bernoulli random walk bridge from
\begin{align*}
\bigl(a'N^{2/3}, pa'N^{2/3}-(R+K)N^{1/3}\bigr) \quad\text{to}\quad
\bigl(b'N^{2/3}, pb'N^{2/3}-(R+K)N^{1/3}\bigr)
\end{align*}
conditioned to remain strictly below the line $px-(R+K)N^{1/3}$ on $[a'N^{2/3}+1, b'N^{2/3}-1]$. Let $\overline B(x) := B(x) - px$.
By Lemma~\ref{l.Gibbs on random interval}, conditional on the $\sigma$-algebra $\F := \Fext(\{k\}, \intint{a'N^{2/3},b'N^{2/3}}, \bm L^N)$, the law of $L^N_k|_{[a'N^{2/3}, b'N^{2/3}]}$ is that of $B$ reweighted by the Radon-Nikodym derivative proportional to $W(B, L^N_{k-1}, L^N_{k+1}) = W_{\mrm{up}}(B, L^N_{k-1})W_{\mrm{low}}(B, L^N_{k+1})$ (recall from Definition~\ref{d.weight factor}), where we drop the interval $\intint{a'N^{2/3}, b'N^{2/3}}$ on which $W$ is evaluated from the notation.
Then, by the Hall-Littlewood Gibbs property, the definitions of $a'$ and $b'$, and since $\smash{W_{\mrm{up}}(B,L^N_{k-1})\leq 1}$, with $Z = \EF[W(B, L^N_{k-1}, L^N_{k+1})]$,
\begin{align*}
\MoveEqLeft[0]
\P\left(\msf E(t,a,b) \midd \inf_{xN^{-2/3}\in[a,b]} \overline L^N_{k-1}(x)>-(R-\delta)N^{1/3}\right)\\
&= \E\left[\EF\!\left[\one_{\overline B(tN^{2/3}) \leq -(R+2K)N^{1/3}}W(B, L^N_{k-1}, L^N_{k+1})Z^{-1}\right]\!\!\midd\! \inf_{xN^{-2/3}\in[a,b]} \overline L^N_{k-1}(x)>-(R-\delta)N^{1/3}\right]\\
&\leq \E\left[\EF\left[\one_{\overline B(tN^{2/3}) \leq -(R+2K)N^{1/3}}W_{\mrm{low}}(B, L^N_{k+1})Z^{-1}\right]\midd \inf_{xN^{-2/3}\in[a,b]} \overline L^N_{k-1}(x)> -(R-\delta)N^{1/3}\right].
\end{align*}
Since $\overline L^N_{k-1}\geq -(R-\delta)N^{1/3}$ throughout the interval $[aN^{2/3},bN^{2/3}]$ and $\overline  B \leq -(R+K)N^{1/3}$ on the interval $[a'N^{2/3}, b'N^{2/3}]$, it follows that $W_{\mrm{up}}(B, L^N_{k-1}) \geq (1-q^{(\delta+K)N^{1/3}})^{(b-a)N^{2/3}} \geq \frac{1}{2}$ for all $N > N_0$, for an $N_0$ which can be taken to be increasing in $b-a$, decreasing in $\delta$, and independent of $K$ as well as $k$. Thus $Z\geq \frac{1}{2}\EF[W_{\mrm{low}}(B, L^N_{k+1})]$, which implies that the previous display is upper bounded~by
\begin{align*}
\MoveEqLeft[2]
2\cdot\E\left[\frac{\EF\left[\one_{\overline  B(tN^{2/3}) \leq -(R+2K)N^{1/3}}W_{\mrm{low}}(B, L^N_{k+1})\right]}{\EF\left[W_{\mrm{low}}(B, L^N_{k+1})\right]}\ \bigg|\  \inf_{xN^{-2/3}\in[a,b]} \overline L^N_{k-1}(x)>-(R-\delta)N^{1/3}\right]\\
&= 2\cdot\E\Biggl[\frac{\EF\left[W_{\mrm{low}}(B, L^N_{k+1})\mid \overline B(tN^{2/3}) \leq -(R+2K)N^{1/3}\right]}{\EF\left[W_{\mrm{low}}(B, L^N_{k+1})\right]}\\
&\qquad\times \PF\left(\overline B(tN^{2/3}) \leq -(R+2K)N^{1/3}\right)\ \bigg|\   \inf_{xN^{-2/3}\in[a,b]} \overline L^N_{k-1}(x)>-(R-\delta)N^{1/3}\Biggr].
\end{align*}
By weak monotonicity (Corollary~\ref{c.partition function comparison with non-int boundaries}) and noting that the conditional probability factor is deterministic,  the previous display is upper bounded by
\begin{align*}
2c(q)^{-1}\cdot\P\left(\overline B(tN^{2/3}) \leq-(R+2K)N^{1/3}\right).
\end{align*}
This probability is in turn upper bounded by $C\exp(-cK^2/(b'-a')) \leq C\exp(-cK^2/(b-a))$. This follows by stochastic monotonicity of Bernoulli random walk bridges (Lemma~\ref{l.random walk bridge monotonicity}): $B(tN^{2/3}) \geq B'(tN^{2/3})$ where $B'$ is a Bernoulli random walk bridge from $(a'N^{2/3}, pa'N^{2/3}-(R+\frac{3}{2}K)N^{1/3})$ to $(b'N^{2/3}, pb'N^{2/3}-(R+\frac{3}{2}K)N^{1/3})$ (i.e., lowered endpoints) conditioned on staying below $px-(R+K)N^{1/3}$, so we only need to bound
$$\P\left(\tilde B(tN^{2/3}) \leq -(R+2K)N^{1/3}\midd \sup_{xN^{-2/3}\in[a',b']} \tilde B(x)-px \leq - (R+K)N^{1/3}\right),$$
where $\tilde B$ is a Bernoulli random walk bridge with the same endpoints as $B'$. There exists $\eta = \eta(b-a)>0$ such that the probability of the event being conditioned on is at least $\eta$, and $\eta$ is decreasing in $b-a$ (the fact that $\eta$ can be taken independent of $K$ uses that $K\geq 1$); in particular, the bound is uniform in $N$. The probability of $\tilde B(tN^{2/3}) \leq -(R+2K)N^{1/3}$ is at most $\tilde C\exp(-cK^2/(b-a))$ by Lemma~\ref{l.random walk bridge fluctuation}. This completes the proof with $C=\tilde C\eta^{-1}$.
\end{proof}

\section{Uniform separation of curves}\label{s.uniform separation}

In this section we prove Proposition~\ref{p.uniform separation} which, recall, assumes control on the lower tail of the $k$\th curve and on separation between the first $k-1$ curves and obtains control on the separation between the first $k$ curves. In this section we again assume $\bm L^N$ satisfies Assumptions~\ref{as.HL Gibbs} and \ref{as.one-point tightness} without further explicit mention.

\subsection{Control on the lower curve}
We will need some control over the $k$\th curve, in particular that it does not increase too quickly, at least at some random location. For this we prove the following deterministic statement about continuous functions whose range is controlled.

\begin{lemma}\label{l.slope control}
Let $T>0$, $\delta\in(0,\frac{1}{30}T)$, and $f:[0,T]\to \R$ be continuous. Suppose $M>0$ is such that $\sup_{x,y\in [0,T]} |f(x)-f(y)|\leq M$. Then, with $P = 3MT^{-1}$, there exists $U\in [0,\frac{1}{2}T]$ such that
\begin{align*}
f(x) \leq f(U) + P(x-U) \text{ for } x\in[U, U+\delta].
\end{align*}

\end{lemma}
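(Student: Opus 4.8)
The plan is to exploit the ``total variation budget'' of $f$ on $[0,\tfrac12 T]$, which is at most $M$, to find a starting point $U$ where $f$ does not subsequently rise too fast over a short window. The key observation is a contradiction argument: if no such $U$ existed, then the condition $f(x) \le f(U) + P(x-U)$ on $[U, U+\delta]$ would fail at \emph{every} $U \in [0, \tfrac12 T]$, meaning for each $U$ there is some $x_U \in (U, U+\delta]$ with $f(x_U) > f(U) + P(x_U - U)$. I would like to iterate this: starting from $U_0 = 0$, set $U_1 = x_{U_0}$, then $U_2 = x_{U_1}$, and so on, producing a sequence $0 = U_0 < U_1 < U_2 < \cdots$ with $U_{i+1} - U_i \le \delta$ and $f(U_{i+1}) > f(U_i) + P(U_{i+1} - U_i)$.

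The iteration can be continued as long as $U_i \le \tfrac12 T$ (so that the failure hypothesis applies at $U_i$). Since each step advances by at least some positive amount but at most $\delta$, and the total advance needed to exit $[0,\tfrac12 T]$ is $\tfrac12 T$, after at most $\lceil \tfrac{T}{2\delta}\rceil$ steps we have $U_n > \tfrac12 T$ for the first time; note $U_n \le \tfrac12 T + \delta \le T$ using $\delta < \tfrac{1}{30}T$, so all $U_i$ lie in the domain $[0,T]$. Telescoping the strict inequalities gives
\begin{align*}
f(U_n) - f(U_0) > P \sum_{i=0}^{n-1} (U_{i+1} - U_i) = P\, U_n > P \cdot \tfrac12 T = \tfrac32 M,
\end{align*}
where I used $P = 3MT^{-1}$. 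But $f(U_n) - f(U_0) \le \sup_{x,y\in[0,T]}|f(x)-f(y)| \le M$, a contradiction. Hence some $U \in [0, \tfrac12 T]$ works, and since $U \le \tfrac12 T$ we also have $U + \delta \le \tfrac12 T + \delta \le T$, so the window $[U, U+\delta]$ stays inside the domain of $f$.

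The main (minor) obstacle is bookkeeping: one must make sure the iteration is well-defined, i.e.\ that at each stage $U_i$ is still $\le \tfrac12 T$ so the negation of the conclusion applies there, and that the final $U_n$ together with its window $[U_n,U_n+\delta]$ — and indeed all intermediate points — stay within $[0,T]$; this is exactly where the hypothesis $\delta < \tfrac{1}{30}T$ (any bound below $\tfrac12 T$ suffices, but the stated one is comfortably enough) is used. A small subtlety is that the $x_U$ need not be chosen canonically; one simply picks any valid witness at each step, which is fine since no measurability or continuity of the selection is required. There is also the trivial edge case where the conclusion could be checked directly, but the contradiction argument subsumes it. No appeal to the Gibbs property or anything probabilistic is needed — this is a purely deterministic real-analysis lemma feeding into the later separation estimates.
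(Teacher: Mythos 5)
Your contradiction-via-iteration is in the same spirit as the paper's chaining argument, but it has a genuine gap at the claim that the sequence $U_0 < U_1 < \cdots$ must exit $[0,\tfrac{1}{2}T]$ after at most $\lceil T/(2\delta)\rceil$ steps. The increments $U_{i+1}-U_i$ are bounded above by $\delta$, but they have no positive lower bound; nothing prevents the witnesses $x_{U_i}$ from shrinking so that $U_i$ converges to a limit strictly inside $[0,\tfrac{1}{2}T]$. In fact your own telescoped inequality $f(U_n)-f(0) > P\,U_n$, combined with the range bound $|f(U_n)-f(0)|\le M$ and $P=3M/T$, forces $U_n < T/3$ for \emph{every} $n$: the sequence provably never reaches $\tfrac{1}{2}T$, so the step ``after at most $\lceil T/(2\delta)\rceil$ steps $U_n > \tfrac{1}{2}T$'' is false, and the contradiction you invoke never materializes.

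The gap sits exactly where you wave it away: ``one simply picks any valid witness at each step, which is fine.'' It is not fine; the choice matters. One repair within your framework is to pick $U_{i+1}$ to be a maximizer of $f^-(x):=f(x)-Px$ over $[U_i,U_i+\delta]$. Then $f^-(U_{i+1})\ge f^-(x)$ for all $x\in[U_{i+1},U_i+\delta]$, so the next witness $U_{i+2}$ (which must satisfy $f^-(U_{i+2})>f^-(U_{i+1})$, as the failure hypothesis again applies at $U_{i+1}\le T/3<\tfrac{1}{2}T$) lies outside $[U_{i+1},U_i+\delta]$, i.e.\ $U_{i+2}>U_i+\delta$. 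Thus two steps advance by more than $\delta$, the sequence must exit $[0,T/3]$ in finitely many steps, and telescoping then closes the contradiction. The paper's proof is a close cousin of this repaired version: it chains maximizers of $f^-$ over a fixed overlapping family of dyadic intervals $I_i,J_i$ of length $2\delta$, which advances by a deterministic amount per step and thereby sidesteps the small-increment issue from the outset.
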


\begin{proof}
Let $f^-(x) = f(x) - Px$.
Next for $i=0,1, \ldots$, let $I_i = [i\cdot2\delta , (i+1)2\delta]$, $J_i = I_i + \delta = [(i+\frac{1}{2})2\delta, (i+\frac{3}{2})2\delta]$, $x^I_i = \argmax_{I_i} f^-$ and $x^J_i = \argmax_{J_i} f^-$ (taking the smallest in the case of multiple maximizers).

We claim that there is an $i$ such that $I_i\subseteq [0,\frac{1}{2}T]$, $J_i\subseteq [0,\frac{1}{2}T]$, and at least one of $x_i^I \in [\inf I_i, \inf I_i + \delta]$ or $x_i^J \in [\inf J_i, \inf J_i + \delta]$ holds, i.e., the maximizer lies in the  left half of the interval. Observe that this would complete the proof, since the length of the second half of $I_i$ or $J_i$ is $\delta$, and for such an $i$, by taking $U=x^I_i$  (if it is the $I_i$ case), it holds for $x\in[U, \sup I_i]$  that $f(x) - Px\leq f(x^I_i) - P(x^I_i)$, and similarly for the $J_i$ case.

Suppose our claim does not hold. Let $i_{\max}$ be the largest $i$ such that $J_i \subseteq [0,\frac{1}{2}T]$. Under our contrapositive hypothesis, $x^J_i \in [(i+1)2\delta, (i+\frac{3}{2})2\delta]$ and $x^I_i \in [(i+\frac{1}{2})2\delta, (i+1)2\delta]$ for each $i\in\intint{0,i_{\max}}$.
This implies that $x^I_{i} \in J_i$ and $x^J_i\in I_{i+1}$ for each $i$, so for each $i$,
\begin{align*}
f^-(x^J_{i}) \geq f^-(x^I_i) \geq f^-(x^J_{i-1}).
\end{align*}
Iterating, this gives that
\begin{align*}
f^-(x^J_{i_{\max}}) \geq f^-(x^J_{0}) \implies f(x^J_{i_{\max}}) \geq f(x^J_0) + P(x^J_{i_{\max}} - x^J_0).
\end{align*}
This contradicts the fact that $\sup_{x,y\in[0,T]} |f(x) - f(y)| \leq M$ since $x^J_{i_{\max}} \geq \frac{1}{2}T-2\delta$, $x^J_1 \leq 3\delta$, and $P = 3M/T$ (we implicitly use that $5\delta\leq \frac{1}{6}T$). This completes the proof.
\end{proof}

To have control on the range of $L^N_{k}$ as needed to apply Lemma~\ref{l.slope control}, we need control on the upper tail of $\smash{L^N_{k}}$ over an interval in addition to the lower tail control we have assumed. This uniform upper tail control is provided by the following bound due to \cite{corwin2018transversal} on the upper tail of $L^N_1$, and the fact that the curves in $\bm L^N$ are ordered. Recall the notation $\smash{\overline L^N_{j}(x) = L^N_j(x) - px}$.

\begin{proposition}[{\cite[Lemma 5.2]{corwin2018transversal}}]\label{p.upper tail}
Let $\varepsilon>0$ and $T>0$. There exist $N_0 = N_0(\varepsilon,T)$ and $M = M(\varepsilon,T)$ such that, for $N\geq N_0$,
\begin{align*}
\P\left(\sup_{s N^{-2/3}\in[-T,T]} \overline L^N_1(s) \geq MN^{1/3}\right) \leq \varepsilon.
\end{align*}

\end{proposition}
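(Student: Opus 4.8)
\textbf{Proof proposal for Proposition~\ref{p.upper tail} (upper tail of the top curve).}

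The plan is to bound the upper tail of the whole supremum $\sup_{sN^{-2/3}\in[-T,T]}\overline L^N_1(s)$ by combining one-point tightness of $L^N_1$ (Assumption~\ref{as.one-point tightness}) with the Hall-Littlewood Gibbs property on the slab $\intint{1}\times\intint{-TN^{2/3},TN^{2/3}}$. First I would recall that under the Hall-Littlewood Gibbs property, conditionally on $\F=\Fext(1,\intint{-TN^{2/3},TN^{2/3}},\bm L^N)$, the curve $L^N_1$ on $[-TN^{2/3},TN^{2/3}]$ is a single Bernoulli random walk bridge $B$ from $(-TN^{2/3},L^N_1(-TN^{2/3}))$ to $(TN^{2/3},L^N_1(TN^{2/3}))$ reweighted by the Radon-Nikodym derivative $W(B,L^N_2)/\EF[W(B,L^N_2)]$. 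Since $W\le 1$, for any event $\msf A$ depending only on $B$ on this interval we get the crude but effective bound $\PF(\msf A(L^N_1))\le \EF[W(B,L^N_2)]^{-1}\PF(\msf A(B))$. So the argument splits into (i) controlling the partition function $Z:=\EF[W(B,L^N_2)]$ from below with high probability, and (ii) bounding $\P(\sup_{sN^{-2/3}\in[-T,T]}\overline B(s)\ge MN^{1/3})$ where $\overline B(x)=B(x)-px$.

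For step (ii), I would condition on the endpoint values $L^N_1(\pm TN^{2/3})$: by Assumption~\ref{as.one-point tightness} (applied at $x=\pm T$), there exist $M_0$ and $N_0$ so that with probability at least $1-\varepsilon/3$ both $|\overline L^N_1(\pm TN^{2/3})+\lambda T^2 N^{1/3}|\le M_0 N^{1/3}$ hold. On this event, $L^N_1$ is (conditionally) dominated, by Lemma~\ref{l.random walk bridge monotonicity}, by a Bernoulli random walk bridge $\tilde B$ with deterministic endpoints at height $\le pTN^{2/3}+(M_0-\lambda T^2)N^{1/3}$ (and similarly on the left), and we may even drop the lower boundary interaction for an upper bound. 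Then $\P(\sup_{sN^{-2/3}\in[-T,T]}(\tilde B(s)-ps)\ge MN^{1/3})$ is bounded by Lemma~\ref{l.random walk bridge fluctuation}: after subtracting the linear interpolant (whose slope deviation from $p$ is $O(N^{-1/3})$ on this event, contributing only $O(N^{1/3})$ to the fluctuation), the supremum deviation exceeds $cMN^{1/3}$ with probability $\le C\exp(-cM^2)$ for $M$ large, uniformly in $N$. Choosing $M$ large enough (depending on $M_0$, $T$, $\varepsilon$) makes this $\le\varepsilon/3$.

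Step (i) is the main obstacle, since it is the place the lack of monotonicity could bite. However, the standard trick applies: $Z=\EF[W(B,L^N_2)]\ge\EF[W(B,L^N_2)\one_{\msf G}]$ where $\msf G$ is the event that $\overline B$ stays within $O(N^{1/3})$ of its endpoint heights and that $B$ stays a distance $\gg\log N$ above $L^N_2$ throughout; on $\msf G$, Lemma~\ref{l.W bound} gives $W(B,L^N_2)\ge\frac12$ for large $N$. The probability of $\msf G$ is bounded below by a positive constant provided the endpoints of $B$ lie a distance of order $N^{1/3}$ above $L^N_2(\pm TN^{2/3})$; this holds with probability $\ge 1-\varepsilon/3$ using the one-point tightness of $L^N_1$ together with the lower tail control on $L^N_2$ from Theorem~\ref{t.infimum lower tail} (which, in the inductive scheme of Sections~\ref{s.lower tail}--\ref{s.uniform separation}, is available for index $2$ before we need it for index $1$; alternatively, one simply invokes the already-proven Theorem~\ref{t.infimum lower tail} and Theorem~\ref{t.uniform separation}). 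Putting (i) and (ii) together: on the favorable event (of probability $\ge 1-\varepsilon$) where $Z\ge\delta$ and the endpoints are controlled, $\PF(\sup\overline L^N_1\ge MN^{1/3})\le\delta^{-1}\PF(\sup\overline B\ge MN^{1/3})$, and taking expectations and choosing $M$ large relative to $\delta,\varepsilon,T$ finishes the bound. Relabeling $\varepsilon$ completes the proof. (I note this is essentially the argument of \cite[Lemma~5.2]{corwin2018transversal}, which the paper cites for this statement; the only wrinkle here is that we work with the whole line ensemble having infinitely many curves, which changes nothing since the Gibbs property is used only on the top curve with $L^N_2$ as lower boundary.)
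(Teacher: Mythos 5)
The paper does not prove Proposition~\ref{p.upper tail}; it cites the statement directly to \cite[Lemma~5.2]{corwin2018transversal}, so there is no internal proof to compare against. Your attempt sketches a reasonable-looking structure but has a genuine gap in step~(i). You need a lower bound on the partition function $Z=\EF[W(B,L^N_2)]$, and for this you claim that ``the endpoints of $B$ lie a distance of order $N^{1/3}$ above $L^N_2(\pm TN^{2/3})$,'' asserting it follows from one-point tightness of $L^N_1$ together with lower tail control on $L^N_2$. That inference is wrong: both inputs give lower bounds on the respective curves (on $L^N_1$ from Assumption~\ref{as.one-point tightness}, on $L^N_2$ from Theorem~\ref{t.infimum lower tail}), while the only upper bound available on $L^N_2$ is the ordering $L^N_2\le L^N_1$; together these give nothing about the difference $L^N_1-L^N_2$, which could a priori be $O(1)$ at the endpoints. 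What you actually need is the order-$N^{1/3}$ separation between the top two curves, i.e., $\msf{Sep}^N_1$ from Theorem~\ref{t.uniform separation}.

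Your parenthetical alternative of invoking Theorem~\ref{t.uniform separation} (and similarly Theorem~\ref{t.infimum lower tail} with its separation half, or Proposition~\ref{p.partition function strong lower bound}) is circular: in the paper's logical scheme, Proposition~\ref{p.upper tail} is used in the proofs of Proposition~\ref{p.uniform separation} and Lemma~\ref{l.single point separation}, which drive the induction establishing Theorems~\ref{t.infimum lower tail} and~\ref{t.uniform separation}. Proposition~\ref{p.upper tail} must therefore be established as a \emph{primitive} input alongside Assumption~\ref{as.one-point tightness}, without appealing to partition-function lower bounds or separation of $L^N_1$ from $L^N_2$ beyond the trivial ordering. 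That is what \cite[Lemma~5.2]{corwin2018transversal} accomplishes: it controls the upper tail of the single top curve directly from the Gibbs property and one-point tightness, bypassing the normalizing constant that your sketch depends on. You should consult that argument rather than the partition-function route, which is precisely the one that breaks down in the absence of stochastic monotonicity.
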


The following statement uses Lemma~\ref{l.slope control} to obtain separation between $L^N_{k-1}$ and $L^N_{k}$ at a single random point; see Figure~\ref{f.one point separation}. It will be proved in Section~\ref{s.one point separation}.

\begin{figure}
\hspace*{-1.2cm}\includegraphics[width=0.85\textwidth]{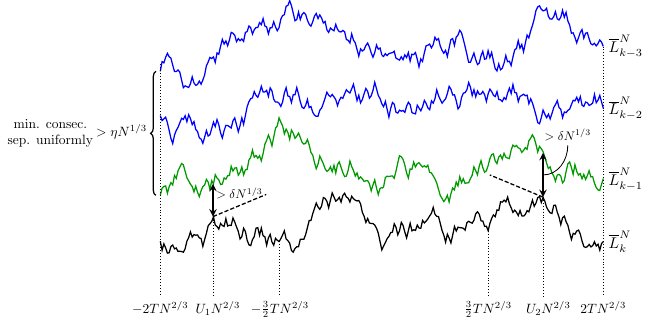}
\caption{A depiction of the setup for Lemma~\ref{l.single point separation}. The dashed slanted lines represent the lines of slope $\pm PN^{-1/3}$ upper bounding $\smash{\overline L^N_k}$ on two intervals of size $N^{2/3}$. Uniform separation between the blue curves holds with high probability by assumption, and the lemma yields random points $U_1$ and $U_2$ where separation between $\smash{\overline L^N_{k-1}}$ and $\smash{\overline L^N_k}$ holds.}\label{f.one point separation}
\end{figure}

\begin{lemma}[One-point separation]\label{l.single point separation}
Fix $k\in\N$ and $T\geq 2$, and let $\msf{Sep}^N_{k-2}(\eta, [-3T,-T]) :=$ $\msf{Sep}^N_{k-2}(\eta, [-3T,-T], \bm L^N)$ be as in \eqref{e.Sep definition} and similarly for $\msf{Sep}^N_{k-2}(\eta, [T, 3T])$.
Suppose for any $\varepsilon>0$ there exist $M= M(T, k, \varepsilon)$ and $N_0=N_0(k,T,\varepsilon)$ such that, for $N\geq N_0$, 
\begin{align}\label{e.single point sep lower tail hyp}
\P\left(\inf_{xN^{-2/3}\in[-2T,2T]} \overline L^N_{k}(x) \geq -MN^{1/3}\right) \geq 1-\varepsilon
\end{align}
and there exists $\eta = \eta(T,k, \varepsilon)>0$ such that, for $N\geq N_0$,
\begin{align}\label{e.single point separation hypothesis}
\min\Bigl\{\P\left(\msf{Sep}^N_{k-2}(\eta, [-3T,-T])\right), \P\left(\msf{Sep}^N_{k-2}(\eta, [T, 3T])\right)\Bigr\} \geq 1-\varepsilon.
\end{align}
Now fix any $\varepsilon>0$. There exists $P = P(k,T,\varepsilon)$, $\delta = \delta(k,T,\varepsilon)>0$, and $N_0= N_0(k,T,\varepsilon)$ such that the following holds for $N>N_0$. With probability at least $1-\varepsilon$, there exist random $L^N_{k}$-measurable points $U_1\in\left[-2T, -\frac{3}{2}T\right]$ and $U_2\in[\frac{3}{2}T, 2T]$ such that $L^N_{k-1}(U_iN^{2/3}) > L^N_{k}(U_iN^{2/3}) + \delta N^{1/3}$ for $i=1,2$, and
\begin{align*}
\overline L^N_{k}(x) &\leq \overline L^N_{k}(U_1N^{2/3}) + (x-U_1N^{2/3})PN^{-1/3} \quad \text{for}\quad xN^{-2/3}\in[U_1, U_1+1],\\
\overline L^N_{k}(x) &\leq \overline L^N_{k}(U_2N^{2/3}) + (U_2N^{2/3}-x)PN^{-1/3} \quad \text{for}\quad xN^{-2/3}\in[U_2-1, U_2].
\end{align*}

\end{lemma}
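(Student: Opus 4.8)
\textbf{Proof proposal for Lemma~\ref{l.single point separation}.}

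The plan is to combine Lemma~\ref{l.slope control} (which produces a point where $\overline L^N_k$ has controlled local slope) with the hypothesized uniform separation of the first $k-1$ curves on the flanking intervals, and then fix up the fact that the slope-control point and the separation point need not coincide. First I would work on the single interval $[\frac{3}{2}T, 2T]$ (the argument for $[-2T,-\frac{3}{2}T]$ is identical up to reflection, and we then union bound). On the intersection of the events in \eqref{e.single point sep lower tail hyp}, Proposition~\ref{p.upper tail} (applied with the interval $[\frac{3}{2}T,2T]$ and using the ordering of the curves so that the upper tail of $\overline L^N_k$ is controlled by that of $\overline L^N_1$), and the event $\msf{Sep}^N_{k-2}(\eta, [T,3T])$ from \eqref{e.single point separation hypothesis} — an event of probability at least $1-3\varepsilon$ — we have $\sup_{x,y\in[\frac{3}{2}TN^{2/3}, 2TN^{2/3}]} |\overline L^N_k(x) - \overline L^N_k(y)| \le M'N^{1/3}$ for some $M' = M'(k,T,\varepsilon)$. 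Applying Lemma~\ref{l.slope control} to the (interpolated, continuous) function $x\mapsto \overline L^N_k((\frac{3}{2}T + x)N^{2/3})$ on $[0,\frac{1}{2}T]$ with $M = M'N^{1/3}$ and $\delta$ there taken to be $1$ (rescaled), we obtain a point $U_2 \in [\frac{3}{2}T, \frac{7}{4}T]$ — which is $L^N_k$-measurable since Lemma~\ref{l.slope control}'s maximizer is a deterministic functional of the function — such that $\overline L^N_k(x) \le \overline L^N_k(U_2 N^{2/3}) + (x - U_2 N^{2/3})\cdot 3M' T^{-1} N^{-1/3}$ for $xN^{-2/3}\in[U_2, U_2 + 1]$. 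This gives the slope bound with $P = 3M'T^{-1}$.

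The remaining issue is that at this $U_2$ we only know $\overline L^N_k$ has controlled slope; we still need $L^N_{k-1}(U_2 N^{2/3}) > L^N_k(U_2 N^{2/3}) + \delta N^{1/3}$. But this is exactly what $\msf{Sep}^N_{k-1}$ would give — which is what we are trying to prove — so we cannot use it directly. Instead I would argue as follows: the separation hypothesis \eqref{e.single point separation hypothesis} is on $\msf{Sep}^N_{k-2}$, i.e., it controls the gap between consecutive curves only up to index $k-1$; it does not by itself control $L^N_{k-1} - L^N_k$. However, the key realization is that in Lemma~\ref{l.slope control} there is freedom in the choice of the subinterval structure, and more importantly we should apply the separation hypothesis differently: the gap $L^N_{k-1}(U_2 N^{2/3}) - L^N_k(U_2 N^{2/3})$ at the \emph{random} point $U_2$ should be lower bounded using the \emph{lower tail of $L^N_k$ combined with an upper bound on how low $L^N_{k-1}$ can dip} is not enough either. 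The cleaner route: here $\msf{Sep}^N_{k-2}(\eta,[T,3T])$ controls $\min_{i\le k-2}\inf(L^N_i - L^N_{i+1})$, so in particular the curves $L^N_1, \dots, L^N_{k-1}$ are uniformly separated and hence $L^N_{k-1}$ is not too far below $L^N_1$; meanwhile the hypothesized lower tail bound \eqref{e.single point sep lower tail hyp} and the upper tail of $L^N_1$ pin $L^N_k$ into a band. So actually what one does is: choose $\delta$ small and run Lemma~\ref{l.slope control}, then observe that the \emph{one-point separation at $U_2$} between $L^N_{k-1}$ and $L^N_k$ must hold on a further high-probability event, because if $L^N_{k-1}(U_2N^{2/3})$ and $L^N_k(U_2 N^{2/3})$ were within $\delta N^{1/3}$ of each other, then by the slope bound just established $\overline L^N_k$ would stay within $(\delta + P)N^{1/3}$ of $\overline L^N_{k-1}(U_2N^{2/3})$ over $[U_2, U_2+1]$, and then an application of the Hall–Littlewood Gibbs property (Definition~\ref{d.HL Gibbs}) together with the weak monotonicity corollary (Corollary~\ref{c.partition function comparison with non-int boundaries}) on the interval $[U_2 N^{2/3}, (U_2+1)N^{2/3}]$ would force $L^N_k$ to have a large probability of dropping well below $-MN^{1/3}$ somewhere in $[\frac32 T, 2T]$, contradicting \eqref{e.single point sep lower tail hyp}. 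Making this precise is the main obstacle: one needs the strong Gibbs property (Proposition~\ref{p.strong gibbs}) applied at the stopping domain determined by the exit of $L^N_k$ from a band around $L^N_{k-1}$, and then a random-walk-bridge fluctuation estimate (Lemma~\ref{l.random walk bridge fluctuation}) to show that a bridge pinned near $L^N_{k-1}$ and weighted by $W$ (which, by Corollary~\ref{c.partition function comparison with non-int boundaries}, can only help lower the partition function when we lower the conditioning) has a definite chance of reaching depth $-2MN^{1/3}$.

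Once the single-interval statement is in hand — on an event of probability at least $1 - C\varepsilon$ there exists an $L^N_k$-measurable $U_2 \in [\frac{3}{2}T, 2T]$ with the slope bound and the one-point separation $L^N_{k-1}(U_2N^{2/3}) > L^N_k(U_2N^{2/3}) + \delta N^{1/3}$ — I would apply the reflected version to get $U_1 \in [-2T, -\frac{3}{2}T]$, take a union bound, and relabel $\varepsilon$. I expect the slope-control step (Lemma~\ref{l.slope control} plus the two-sided tail bounds) to be entirely routine; the genuinely delicate part is the Gibbs-property argument showing that \emph{failure} of one-point separation at the slope-controlled point $U_2$ would, via resampling $L^N_k$ over the unit-scale interval $[U_2N^{2/3}, (U_2+1)N^{2/3}]$ against a Bernoulli bridge constrained below $L^N_{k-1}$, produce an anomalously deep excursion of $L^N_k$ with non-negligible probability, contradicting the assumed lower-tail control \eqref{e.single point sep lower tail hyp}. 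This is where the weak monotonicity of Corollary~\ref{c.partition function comparison with non-int boundaries} is essential, since it lets us compare the $W$-reweighted bridge law (with a lower boundary $L^N_{k+1}$) to an unconditioned one up to the constant $c(q)^{-1}$, after which standard bridge estimates close the argument.
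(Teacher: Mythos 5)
Your first step — applying Lemma~\ref{l.slope control} together with the two-sided tail control to extract an $L^N_k$-measurable point $U_2$ with the slope bound — matches the paper's opening move and is fine. The trouble is in how you propose to obtain the one-point separation $L^N_{k-1}(U_2N^{2/3}) > L^N_{k}(U_2N^{2/3}) + \delta N^{1/3}$ at that point.

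You propose to resample $L^N_k$ on an interval containing $U_2 N^{2/3}$ and argue that failure of the one-point separation would force an anomalously deep dip of $L^N_k$, contradicting \eqref{e.single point sep lower tail hyp}. There are two problems. First, a measurability obstruction: $U_2$ is $L^N_k$-measurable and is determined by the behavior of $L^N_k$ \emph{inside} the very region you want to resample (it is the slope-controlled point of $L^N_k$ on $[\frac{3}{2}T,2T]$). Consequently the interval $[U_2 N^{2/3}, (U_2+1)N^{2/3}]$ is not a stopping domain for $(L^N_1,\dots,L^N_k)$ in the sense of Definition~\ref{d.stopping domain} — $\{U_2 = v\}$ is not in $\Fext(k,\intint{a,b},\bm L^N)$ — so neither the strong Gibbs property (Proposition~\ref{p.strong gibbs}) nor Lemma~\ref{l.Gibbs on random interval} (which handles random intervals determined by the curve being resampled, but in a much more specific way) applies. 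Second, even setting that aside, the intended contradiction does not close: $\msf{OnePtSep}^c$ places $L^N_k$ \emph{close to} $L^N_{k-1}$ at $U_2$, i.e.\ it says $L^N_k$ is relatively \emph{high} there. The $W$-weighting does penalize proximity, but the resampled $L^N_k$ bridge is still floored by $L^N_{k+1}$, so there is no forced excursion below $-MN^{1/3}$ and hence no conflict with the lower-tail hypothesis.

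The paper's route inverts the roles: it fixes $L^N_k$ (and everything of index $\ge k$) by conditioning on $\F = \Fext(\{k-1\}, \intint{-3TN^{2/3},-TN^{2/3}})$, under which $U_1$ (resp.\ $U_2$) is a \emph{deterministic} point. One then resamples only $L^N_{k-1}$ as a single Bernoulli bridge squeezed between $L^N_{k-2}$ (from above) and $L^N_k$ (from below), uses Corollary~\ref{c.partition function comparison with non-int boundaries} to strip the $W$-interaction, and applies the KMT coupling of Lemma~\ref{l.KMT} plus the monotone conditional tail estimate Lemma~\ref{l.normal conditional prob montonicity} to show the conditional density of the bridge value at the single point $U_1 N^{2/3}$ is bounded away from accumulating at its floor — so the probability of landing within $\delta N^{1/3}$ of $L^N_k(U_1 N^{2/3})$ is $O(\delta)$. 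A key auxiliary input you do not mention is the ``no quick drops'' Lemma~\ref{l.no quick drop uniform}, used to set up the favorable event $\msf{Fav}$ and lower-bound $\PF(\msf{SingSep}^N_{k-2}(\eta,B))$: one needs to know that $L^N_{k-2}$ does not fall sharply near the random points $-3T$, $U_1$, $-T$, so that a positive-probability corridor for $B$ from the top boundary down to near $L^N_k(U_1 N^{2/3})$ exists. Without something playing this role, the denominator in the Gibbs resampling formula cannot be controlled and the argument stalls.
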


Note that the range of $L^N_k$ is of order $N^{1/3}$ and the interval size is of order $N^{2/3}$, so that the quantity $3MT^{-1}$ in Lemma~\ref{l.slope control} is of order $N^{-1/3}$; this is the source of the $N^{-1/3}$ factors in the previous display.

Lemma~\ref{l.single point separation} is the main step in the proof of Proposition~\ref{p.uniform separation} on uniform separation between these curves. A second technical ingredient that will be needed is an estimate about uniform separation for a Bernoulli random walk bridge, which we state next. As its proof is fairly straightforward, it will be proved in Appendix~\ref{s.random gibbs}.

\begin{lemma}[Random walk uniform separation]\label{l.non-int RW closeness}
Fix $M$, $p$, and $T\geq 2$. Let $U_1\in[-2T, -\smash{\frac{3}{2}}T]$ and $U_2\in[\smash{\frac{3}{2}}T, 2T]$. Let $\Brw$ be a Bernoulli random walk bridge from $(U_1N^{2/3}, z_1)$ to $(U_2N^{2/3}, z_2)$, for some $z_i\in\Z$ such that $z_i - pU_iN^{2/3}\in [-MN^{1/3}, MN^{1/3}]$ for $i\in\{1,2\}$;
 $\Bbr$ be a Brownian bridge of variance $p(1-p)$ with the same endpoints; and $f:[U_1N^{2/3},U_2N^{2/3}]\to \R$ be a function with $f(U_iN^{2/3})\leq z_i$ for $i\in\{1,2\}$. Let $\{\Bbr\geq f\}$ denote $\{\Bbr(x) \geq f(x) \ \forall x\in [U_1N^{2/3},U_2N^{2/3}]\}$ (and analogously for $\{\Brw\geq f\}$) and suppose, for some $\omega, \rho>0$, that
$$\P\left(\Bbr\geq f+\omega N^{1/3}\right) \geq \rho.$$
Then for any $\varepsilon>0$ there exists $\delta = \delta(\varepsilon, \rho, M, p, \omega, T)>0$ such that
\begin{align*}
\P\left(\inf_{x\in[-TN^{2/3},TN^{2/3}]}\left(\Brw(x) - f(x)\right) \leq \delta N^{1/3} \midd \Brw\geq f\right) \leq \varepsilon.
\end{align*}
\end{lemma}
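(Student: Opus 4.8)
\textbf{Proof proposal for Lemma~\ref{l.non-int RW closeness}.}

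The plan is to transfer the separation estimate from the Brownian setting to the Bernoulli random walk bridge via the KMT coupling (Lemma~\ref{l.KMT}), and along the way to replace the conditioning event $\{\Brw \geq f\}$ by the more convenient $\{\Bbr \geq f\}$ at the cost of a bounded factor. First I would record that, since $z_i - pU_iN^{2/3} \in [-MN^{1/3}, MN^{1/3}]$ and $|U_i| \leq 2T$, the one-point fluctuation $N^{-1}|z_2 - z_1 - p(U_2-U_1)N^{2/3}|^2$ appearing in the error bound of Lemma~\ref{l.KMT} is $O(M^2 N^{1/3})$, which is much smaller than $N^{1/3}$; actually one wants the coupling error at scale $o(N^{1/3})$, so I would instead invoke Lemma~\ref{l.KMT} with $R = \tfrac14\min(\omega,\delta)N^{1/3}$ for $\delta$ to be chosen, giving a coupling of $\Brw$ and a variance-$p(1-p)$ Brownian bridge $\Bbr$ with the same endpoints such that, writing $\msf{Close} := \{\sup_x |\Brw(x) - \Bbr(x)| \leq \tfrac14\min(\omega,\delta)N^{1/3}\}$, we have $\P(\msf{Close}^c) \leq C\exp(-cN^{1/3}\min(\omega,\delta) + C(\log N)^2 + CM^2N^{1/3})$, which (for $N$ large enough, depending on $\omega$, $\delta$, $M$) is at most, say, $\tfrac12\varepsilon\rho$. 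Here I should double-check that the $M^2 N^{1/3}$ term is genuinely dominated; if not, the cleaner route is to split $\Brw = \Brw - (\text{straight line through endpoints}) + (\text{line})$ and apply KMT to the centered bridge, whose endpoint displacement is zero, absorbing the deterministic line into $f$. Either way the coupling error can be made negligible.

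Next I would handle the conditioning. On $\msf{Close}$, the event $\{\Bbr \geq f + \omega N^{1/3}\}$ implies $\{\Brw \geq f + \tfrac34\omega N^{1/3}\} \supseteq \{\Brw \geq f\}$, and conversely $\{\Brw \geq f\}$ implies $\{\Bbr \geq f - \tfrac14\omega N^{1/3}\}$. From $\P(\Bbr \geq f + \omega N^{1/3}) \geq \rho$ and $\P(\msf{Close}^c) \leq \tfrac12\varepsilon\rho \leq \tfrac12\rho$ we get $\P(\Brw \geq f) \geq \P(\{\Bbr \geq f + \omega N^{1/3}\} \cap \msf{Close}) \geq \tfrac12\rho$, so the conditioning event has probability bounded below uniformly in $N$. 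Now
\begin{align*}
\MoveEqLeft \P\Bigl(\inf_{x\in[-TN^{2/3},TN^{2/3}]}(\Brw(x) - f(x)) \leq \delta N^{1/3} \;\Big|\; \Brw \geq f\Bigr)\\
&\leq \frac{\P\bigl(\inf_x(\Brw(x)-f(x)) \leq \delta N^{1/3},\ \Brw\geq f,\ \msf{Close}\bigr) + \P(\msf{Close}^c)}{\P(\Brw \geq f)}\\
&\leq \frac{2}{\rho}\,\P\Bigl(\inf_{x\in[-TN^{2/3},TN^{2/3}]}(\Bbr(x) - f(x)) \leq \tfrac54\delta N^{1/3},\ \Bbr \geq f - \tfrac14\omega N^{1/3}\Bigr) + \frac{2}{\rho}\cdot\tfrac12\varepsilon\rho,
\end{align*}
where on $\msf{Close}$ I used $\Brw \leq \Bbr + \tfrac14\delta N^{1/3}$ together with $\Brw \geq f \Rightarrow \Bbr \geq f - \tfrac14\omega N^{1/3}$. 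So it remains to bound, for the Brownian bridge $\Bbr$, the probability $\P(\inf_{x\in[-TN^{2/3},TN^{2/3}]}(\Bbr - f) \leq \tfrac54\delta N^{1/3},\ \Bbr \geq f - \tfrac14\omega N^{1/3})$ by $\tfrac14\varepsilon\rho$ for suitably small $\delta$.

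For this last Brownian estimate I would rescale by the KPZ exponents: set $\tilde\Bbr(u) := N^{-1/3}\Bbr(uN^{2/3})$ for $u \in [U_1, U_2]$ and $\tilde f(u) := N^{-1/3} f(uN^{2/3})$, so that $\tilde\Bbr$ is a variance-$p(1-p)$ Brownian bridge between endpoints at bounded height (in $[-M,M]$ after subtracting the drift line), and $\tilde f(U_i) \leq \tilde\Bbr(U_i)$. The quantity to control is $\P(\inf_{u\in[-T,T]}(\tilde\Bbr - \tilde f) \leq \tfrac54\delta,\ \tilde\Bbr \geq \tilde f - \tfrac14\omega)$, and the hypothesis gives $\P(\tilde\Bbr \geq \tilde f + \omega) \geq \rho$. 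This is a purely continuous, $N$-independent statement: on the positive-probability event $\{\tilde\Bbr \geq \tilde f + \omega\}$ the infimum of $\tilde\Bbr - \tilde f$ over $[-T,T]$ is at least $\omega > \tfrac54\delta$ once $\delta < \tfrac45\omega$, so
$$\P\Bigl(\inf_{u\in[-T,T]}(\tilde\Bbr - \tilde f) \leq \tfrac54\delta,\ \tilde\Bbr \geq \tilde f - \tfrac14\omega\Bigr) \leq \P\Bigl(0 \leq \inf_{u\in[-T,T]}(\tilde\Bbr - \tilde f) \leq \tfrac54\delta\Bigr) + \P\bigl(\tilde f - \tfrac14\omega \leq \tilde\Bbr < \tilde f\bigr);$$
wait, the two events on the right overlap awkwardly, so more carefully I would just bound the left side by $\P(-\tfrac14\omega \leq \inf_{u\in[-T,T]}(\tilde\Bbr - \tilde f) \leq \tfrac54\delta)$, and argue this tends to $0$ as $\delta \to 0$ for each fixed configuration of $\tilde f$, $\tilde\Bbr$-endpoints — or rather, uniformly over $\tilde f$ and over endpoints in the compact set $[-M,M]$. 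The uniformity is the one genuinely non-trivial point: the infimum of a Brownian bridge over an interval, minus a fixed barrier, has a density near any level that is bounded uniformly as the barrier and endpoints range over a compact family (this follows from a standard decomposition of the bridge into independent pieces and the boundedness of the density of the minimum of a Brownian bridge; see e.g.\ the arguments around \cite[Lemma 5.2]{dimitrov2021characterization}). Granting that, the supremum over $\tilde f$ and endpoints of $\P(-\tfrac14\omega \leq \inf(\tilde\Bbr - \tilde f) \leq \tfrac54\delta)$ is $o(1)$ as $\delta\to 0$ — hmm, actually even that needs the probability to vanish, not just be small; the cleanest fix is to note $\{\tilde\Bbr \geq \tilde f - \tfrac14\omega\} \cap \{\inf(\tilde\Bbr-\tilde f)\leq \tfrac54\delta\} \subseteq \{\inf(\tilde\Bbr - \tilde f) \in [-\tfrac14\omega, \tfrac54\delta]\}$ and take $\delta$ small so this interval has length $< \tfrac14\omega + 1$ say, then observe that $\P(\inf(\tilde\Bbr - \tilde f) \geq -\tfrac14\omega) \leq 1 - \rho'$ for some $\rho' > 0$ uniform in the compact family — so there's room — and conclude by the density bound. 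Choosing $\delta = \delta(\varepsilon,\rho,M,p,\omega,T)$ small enough to make this last quantity $\leq \tfrac18\varepsilon\rho$, and then $N_0$ large enough to make the KMT error $\leq \tfrac12\varepsilon\rho$ with that $\delta$, completes the proof. The main obstacle is precisely securing the uniformity (over the barrier $f$ and the endpoint heights) of the small-ball estimate for the infimum of the Brownian bridge above the barrier; everything else is bookkeeping with the KMT coupling and elementary inclusions of events.
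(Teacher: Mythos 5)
Your overall skeleton — use the KMT coupling (Lemma~\ref{l.KMT}) to transfer the question from $\Brw$ to the Brownian bridge $\Bbr$, then settle a purely continuous estimate — is the paper's. Where you diverge is in the final Brownian step, and that is exactly where your argument has a genuine gap.

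Two issues. The smaller one: in the reduction you use the closeness bound inconsistently. With $R = \tfrac14\min(\omega,\delta)N^{1/3}$ and $\delta \leq \omega$, the event $\msf{Close}$ gives $|\Brw - \Bbr| \leq \tfrac14\delta N^{1/3}$, so $\Brw \geq f$ yields the stronger inclusion $\Bbr \geq f - \tfrac14\delta N^{1/3}$, not merely $\Bbr \geq f - \tfrac14\omega N^{1/3}$. As you wrote it, you end up needing to control
$\P\bigl(\inf_{[-T,T]}(\tilde\Bbr - \tilde f) \in [-\tfrac14\omega, \tfrac54\delta]\bigr)$,
and this does \emph{not} tend to $0$ as $\delta \to 0$: the interval shrinks to $[-\tfrac14\omega, 0]$, which has fixed positive length. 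You noticed this yourself mid-proof (``hmm, actually even that needs the probability to vanish''), but the ``cleanest fix'' you propose — observing that $\P(\inf \geq -\tfrac14\omega) \leq 1-\rho'$ — does not bound the probability of landing in a small sub-interval. Using the $\delta$-closeness would at least make the interval $[-\tfrac14\delta,\tfrac54\delta]$ shrink, which is a necessary repair.

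The larger, substantive gap: even with the interval shrinking, you still need a small-ball estimate for $\inf_{[-T,T]}(\tilde\Bbr - \tilde f)$ that is \emph{uniform over all admissible barriers $\tilde f$} (with only the endpoint inequalities $\tilde f(U_i) \leq \tilde\Bbr(U_i)$ and the hypothesis $\P(\tilde\Bbr \geq \tilde f + \omega) \geq \rho$ constraining it). Invoking ``the boundedness of the density of the minimum of a Brownian bridge'' does not deliver this: the barrier $f$ is an arbitrary continuous function, not a compact family, and the law of $\inf(\Bbr - f)$ depends on $f$ in a way that is not controlled by any off-the-shelf estimate. This uniformity is precisely the content of the paper's Lemma~\ref{l.non-int Br bridge closeness}, and its proof is not a density bound in your sense: it conditions on the two bridge shapes $B^{\mrm{Br},[U_1N^{2/3},0]}$ and $B^{\mrm{Br},[0,U_2N^{2/3}]}$, observes that the residual randomness $\Bbr(0)$ given $\{\Bbr \geq f\}$ is a truncated normal above an $\F$-measurable ``corner'' random variable, and then uses the monotonicity statement Lemma~\ref{l.normal conditional prob montonicity} for truncated normals. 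That conditioning is what turns an arbitrary barrier $f$ into a single scalar threshold, making the uniformity tractable. Your proposal neither supplies this conditioning argument nor an adequate substitute, and the citation you gesture at ([dimitrov2021characterization, Lemma~5.2]) is about a different object. So the heart of the lemma remains unproven in your write-up; you should either prove the Brownian statement along the lines of Lemma~\ref{l.non-int Br bridge closeness} or cite it directly.
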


We give the proof of Proposition~\ref{p.uniform separation} assuming the previous two statements now. The proof will involve an event $\msf{Fav}$ consisting of favorable conditions on $\bm L^N$. We point out that a different event with the same name was used in the proof of Corollary~\ref{c.mod con for L}, and in fact we will use the same name for a number of different events in upcoming proofs. The name will refer to a single event locally within each proof, however, and so should not cause confusion.

\begin{proof}[Proof of Proposition~\ref{p.uniform separation}]
By Lemma~\ref{l.single point separation}, using \eqref{e.separation induction hypothesis}, there exists $P=P(\varepsilon, T, k)$ and $\eta = \eta(\varepsilon, T, k) > 0$ such that with probability at least $1-\varepsilon$ there exist random $L^N_{k}$-measurable $U_1\in[-2T,-\frac{3T}{2}]$ and $U_2\in[\frac{3}{2}T, 2T]$ such that $L^N_{k-1}(U_iN^{2/3}) > L^N_{k}(U_iN^{2/3}) + \eta N^{1/3}$ for $i=1,2$ and
\begin{equation}\label{e.separation induction proof lower curve control}
\begin{split}
\overline L^N_{k}(x) &\leq \overline L^N_{k}(U_1N^{2/3}) + (x-U_1N^{2/3})PN^{-1/3} \quad \text{for}\quad xN^{-2/3}\in[U_1, U_1+1], \quad\text{and}\\
\overline L^N_{k}(x) &\leq \overline L^N_{k}(U_2N^{2/3}) + (U_2N^{2/3}-x)PN^{-1/3} \quad \text{for}\quad xN^{-2/3}\in[U_2-1, U_2].
\end{split}
\end{equation}
By \eqref{e.separation induction hypothesis}, after decreasing $\eta>0$ if necessary (and dropping $\bm L^N$ from the notation of $\msf{Sep}^N_{k-2}$), we have that
\begin{align}\label{e.k-1 separation bound}
\P\left(\msf{Sep}^N_{k-2}(\eta, [-2T,2T])^c\right) \leq \varepsilon.
\end{align}
Let $\delta\in(0,\eta)$ be a real number, to be set precisely later, but for now we will use that $\delta<\eta$.  It clearly holds using \eqref{e.k-1 separation bound} that, since $\delta < \eta$,
\begin{align}\label{e.unif sep induction intermediate}
\P\left(\msf{Sep}^N_{k-1}(\delta,[-T,T])^c\right) \leq \P\left(\inf_{xN^{-2/3}\in[-T,T]}\left(L^N_{k-1}(x)-L^N_{k}(x)\right) \leq \delta N^{1/3}\right) + \varepsilon.
\end{align}
Let $\F = \Fext(k-1, \intint{U_1N^{2/3}, U_2N^{2/3}})$ be as in Definition~\ref{d.F_ext}, i.e., the $\sigma$-algebra generated by $\{L^N_i(x): (i,x)\not\in \intint{1,k-1}\times\intint{U_1N^{2/3}+1, U_2N^{2/3}-1}\}$. Recall that $U_i$ are $L^N_{k}$-measurable. Define the $\F$-measurable favourable event $\msf{Fav}$ by
\begin{align*}
\msf{Fav} &:= \left\{\min_{i=1, \ldots, k-1}\min_{xN^{-2/3}\in\{U_1,U_2\}} \left(L^N_i(x) - L^N_{i+1}(x)\right) > \eta N^{1/3}\right\}\\
&\qquad\cap \left\{\sup_{xN^{-2/3}\in[-2T,2T]} |\overline L^N_{k}(x)| \leq MN^{1/3}\right\}.
\end{align*}
By the definition of $U_1$ and $U_2$ and \eqref{e.k-1 separation bound}, the upper bound on the supremum (Proposition~\ref{p.upper tail}), and the upper bound on the lower tail \eqref{e.separation induction hypothesis}, there exists $M = M(\varepsilon, k, T)$ such that $\P(\msf{Fav}^c)\leq 3\varepsilon$. Let $\bm B=(B_1, \ldots, B_{k-1})$ be a $(k-1)$-tuple of independent Bernoulli random walk bridges, with $B_i$ from $(U_1N^{2/3}, L^N_i(U_1N^{2/3}))$ to $(U_2N^{2/3}, L^N_i(U_2N^{2/3}))$. Now $[U_1N^{2/3}, U_2N^{2/3}]$ forms a stopping domain (Definition~\ref{d.stopping domain}) with respect to $(L^N_1, \ldots, L^N_{k-1})$; by the strong Gibbs property (Proposition~\ref{p.strong gibbs}) and the just recorded bound on $\P(\msf{Fav}^c)$, the first term on the righthand side of \eqref{e.unif sep induction intermediate} is upper bounded by (with $W$ as in \eqref{e.weight function})
\begin{align}
\MoveEqLeft[1]
\E\left[\frac{\EF\left[\one_{\inf_{xN^{-2/3}\in[-T,T]}(B_{k-1}(x)-L^N_{k}(x)) \leq \delta N^{1/3}}W(\bm B,L^N_{k})\right]}{\EF[W(\bm B, L^N_{k})]}\one_{\msf{Fav}}\right] + 4\varepsilon\nonumber\\
&\leq \E\left[\frac{\EF\left[\one_{\inf_{xN^{-2/3}\in[-T,T]}(B_{k-1}(x)-L^N_{k}(x)) \leq \delta N^{1/3}}W(\bm B,L^N_{k})\ \bigl|\  B_{k-1}\geq L^N_{k}\right]}{\EF[W(\bm B, L^N_{k})]}\one_{\msf{Fav}}\right] + 4\varepsilon\label{e.first gibbs bound in separation induction argument},
\end{align}
where $B_{k-1}\geq L^N_{k}$ is shorthand for $B_{k-1}(x)\geq L^N_{k}(x)$ for all $xN^{-2/3}\in[U_1,U_2]$ and the inequality uses that $W(\bm B,L^N_{k}) = 0$ if $B_{k-1}(x)< L^N_{k}(x)$ for any $xN^{-2/3}\in[U_1,U_2]$.

\begin{figure}
\includegraphics[width=0.7\textwidth]{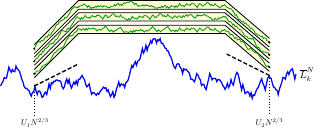}
\caption{The event we consider (after subtracting off the affine shift of slope $p$) to obtain a lower bound on the denominator in \eqref{e.first gibbs bound in separation induction argument}. The dashed lines are the lines of slope $\pm PN^{-1/3}$ that upper bound $L^N_k$ on $[U_iN^{1/3}, (U_i+1)N^{2/3}]$ for $i=1,2$ respectively, and the separation between the points in black at $U_iN^{2/3}$ is at least $\eta N^{1/3}$, implying that the separation between the resampled curves is at least $\frac{1}{3}\eta N^{1/3}$ throughout $[U_1N^{2/3}, U_2N^{2/3}]$.}
\label{f.denominator lower bound}
\end{figure}

We first lower bound the denominator of the first term of \eqref{e.first gibbs bound in separation induction argument}. We construct deterministic disjoint channels of width $\smash{\frac{2}{3}}\eta N^{1/3}$ which are separated by $\smash{\frac{1}{3}}\eta N^{1/3}$ for the paths to stay inside; see Figure~\ref{f.denominator lower bound}. Let $Y = \max(MN^{1/3}, \smash{\overline L^N_k(U_1N^{2/3})} + PN^{1/3}, \smash{\overline L^N_k(U_2N^{2/3})}+PN^{1/3})$. Let $\smash{\ell^{\mrm{L}}_i}:[U_1N^{2/3}, (U_1+1)N^{2/3}]$ be the line joining
$$\left(U_1N^{2/3}, \overline L^N_i(U_1N^{2/3})-\tfrac{1}{3}\eta N^{1/3}\right)\quad \text{and}\quad \left((U_1+1)N^{2/3}, Y+(k-i)\eta N^{1/3}\right).$$
Similarly let $\ell^{\mrm{R}}_i:[(U_2-1)N^{2/3}, U_2N^{2/3}]$ be the line joining
$$\left(U_2N^{2/3}, \overline L^N_i(U_2N^{2/3})-\tfrac{1}{3}\eta N^{1/3}\right)\quad \text{and}\quad \left((U_2-1)N^{2/3}, Y+(k-i)\eta N^{1/3}\right).$$
Note that, on $\msf{Fav}$, it holds that 
\begin{align}\label{e.ell separation}
\ell^{*}_i(x) - \ell^{*}_{i+1}(x) \geq \eta N^{1/3}
\end{align}
for each $*\in\{\mrm{L},\mrm{R}\}$ and $xN^{-2/3}\in [U_1, U_1+1]$ and $[U_2-1, U_2]$ respectively, by the fact that $L^N_i(U_jN^{2/3}) - L^N_{i+1}(U_jN^{2/3})\geq \eta N^{1/3}$ for each $j\in\{1,2\}$ on $\msf{Fav}$. For $x\smash{N^{-2/3}}\in[U_1, U_2]$ and $i\in\intint{1,k-1}$, define the set $\mrm{Chan}_i(x)\subseteq \R$ (short for channels) by
\begin{align*}
\begin{cases}
  \left[(Y+(k-i)\eta)N^{1/3}, (Y+(k-i+\tfrac{2}{3})\eta)N^{1/3}\right], & \text{if }\  xN^{-2/3}\in[U_1+1, U_2-1]\\
  \left[\ell^{\mathrm{L}}_i(x), \ell^{\mathrm{L}}_i(x) + \tfrac{2}{3}\eta N^{1/3}\right], & \text{if }\  xN^{-2/3}\in[U_1, U_1+1]\\
  \left[\ell^{\mathrm{R}}_i(x), \ell^{\mathrm{R}}_i(x) + \tfrac{2}{3}\eta N^{1/3}\right], & \text{if }\  xN^{-2/3}\in[U_2-1, U_2].
\end{cases}
\end{align*}
Note, by \eqref{e.ell separation} and the first line of the above definition of $\mrm{Chan}_i(x)$, that for each $x\smash{N^{-2/3}}\in[U_1, U_2]$ and $i\in\intint{1,k-1}$, $\mrm{Chan}_i(x)$ is an interval of size $\frac{2}{3}\eta N^{1/3}$ and 
$$\inf \mrm{Chan}_i(x) \geq \sup \mrm{Chan}_{i+1}(x) + \smash{\frac{1}{3}}\eta N^{1/3}$$
(the infimum and supremum being of the set for fixed $x$). Also observe that $\inf \mrm{Chan}_{k-1}(x) \geq \overline L^N_{k}(U_1N^{2/3})+PN^{-1/3}(x-U_1N^{2/3}) + \frac{1}{3}\eta N^{1/3}$ for all $x\in [U_1N^{2/3}, (U_1+1)N^{2/3}]$ and $\inf \mrm{Chan}_{k-1}(x) \geq \overline L^N_{k}(U_2N^{2/3})+PN^{-1/3}(U_2N^{2/3}-x) + \frac{1}{3}\eta N^{1/3}$ for all $x\in [(U_2-1)N^{2/3}, U_2N^{2/3}]$.

 Let $\overline B_i(x) = B_i(x) - px$ and define the event $\msf{Chan}$ of staying in the channels by
 $$\msf{Chan} :=\left\{\overline B_i(x)\in\mrm{Chan}_i(x) \text{ for all } x\in[U_1N^{2/3}, U_2N^{2/3}], i\in\intint{1,k-1}\right\}.$$
 Then observe $\P(\msf{Chan})$ is lower bounded by some $\alpha = \alpha(\eta, k, T) = \alpha(\varepsilon, k, T)>0$ (as can be seen using the weak convergence of $\overline{\bm B}$ to independent Brownian bridges whose endpoints are separated by at least $\eta$, after rescaling); this also uses that $\min_{i\in\intint{1,k-1}, x\in\{U_1,U_2\}} \overline L^N_i(xN^{2/3})\geq -MN^{1/3}$ holds on $\msf{Fav}$ (since $L^N_i$ are ordered). Since $\smash{\overline L^N_{k}}(x)\leq MN^{1/3}$ for $xN^{-2/3}\in[-2T,2T]$ on $\msf{Fav}$ and the channels are separated from each other and from $\smash{\overline L^N_k}$ by at least $\smash{\frac{1}{3}\eta N^{1/3}}$, it follows that, on $\msf{Chan}$, by Lemma~\ref{l.W bound},
\begin{align*}
W(\bm B, L^N_{k}) \geq (1-q^{\frac{1}{3}\eta N^{1/3}})^{4kTN^{2/3}}\geq \tfrac{1}{2}
\end{align*}
for all large enough $N$. Thus we obtain that, on $\msf{Fav}$,
\begin{align*}
\EF[W(\bm B, L^N_{k})] \geq \EF[W(\bm B, L^N_{k})\mid \msf{Chan}]\cdot \PF(\msf{Chan}) \geq \tfrac{1}{2}\alpha.
\end{align*}
Putting this back into \eqref{e.first gibbs bound in separation induction argument} and using $W(\bm B,L^{N}_{k})\leq 1$ yields that
\begin{align*}
\MoveEqLeft[3]
\P\left(\inf_{xN^{-2/3}\in[-T,T]}\left(L^N_{k-1}(x)-L^N_{k}(x)\right) \leq \delta N^{1/3}\right)\\
&\leq 2\alpha^{-1}\E\left[\PF\left(\inf_{xN^{-2/3}\in[-T,T]}\left(B_{k-1}(x)-L^N_{k}(x)\right) \leq \delta N^{1/3}\midd B_{k-1} \geq L^N_{k}\right)\one_{\msf{Fav}}\right] + 4\varepsilon.
\end{align*}
By Lemma~\ref{l.non-int RW closeness}, there exists $\delta = \delta(M,T,\varepsilon, k,\alpha) = \delta(T,\varepsilon, k)>0$ such that the conditional probability in the previous display is upper bounded by $\alpha\varepsilon$; the hypothesis on the lower bound on the probability of $\{\Bbr(x) \geq L^N_k(x) + \omega N^{1/3} \ \forall x\in[U_1,U_2]\}$ for some $\omega>0$ and with $\Bbr$ a Brownian bridge is guaranteed, using the separation of $\eta N^{1/3}$ at $U_1$, $U_2$ guaranteed on $\msf{Fav}$ and the control from \eqref{e.separation induction proof lower curve control} on $L^N_k$ in the immediate neighborhoods of $U_1$, $U_2$ included in their definition, by a construction similar to the one used to lower bound $\EF[W(\bm B,L^N_k)]$ just above, but with $k=1$; such an $\omega$ and the lower bound on the resulting probability depend on only $T, P$, and $\eta$, thus on $\varepsilon$,  $T$, and $k$. Combining with \eqref{e.unif sep induction intermediate}, this completes the proof after relabeling~$\varepsilon$.
\end{proof}

\subsection{Proof of one-point separation}\label{s.one point separation}

In this section we start the proof of Lemma~\ref{l.single point separation}.
In its proof, we will need to know that the $(k-1)$\st curve cannot drop too quickly. This is recorded in the next statement and will be proved in Section~\ref{s.no quick drops}.

\begin{lemma}[No quick drops]\label{l.no quick drop uniform}
Fix $k\in\N$.
Suppose for any $T\geq 2$ and $\varepsilon >0$ there exist $\eta = \eta(\varepsilon,k,T)>0$, $M = M(\varepsilon,k,T)$, and $N_0=N_0(\varepsilon,k,T)$ such that, for any $j\in\intint{1,k}$ (recalling the definition of $\msf{Sep}^N_{j-2}$ from \eqref{e.Sep definition}) and $N\geq N_0$,
$$\P\left(\inf_{xN^{-2/3} \in [-T,T]}\overline L^N_{j-1}(x) \geq -MN^{1/3}\right) \geq 1-\varepsilon \quad \text{and}\quad \P\left(\msf{Sep}^N_{j-2}(\eta, [-T,T], \bm L^N)\right) > 1-\varepsilon.$$
Now fix $T\geq 2$, $\eta >0$ and $\varepsilon>0$. There exist $\delta = \delta(k,T,\eta, \varepsilon)>0$  and $N_0=N_0(k,T,\varepsilon,\eta)$ such that, for any random real number $a\in[-T,T]$ that is measurable with respect to $\Fext(k-1, \Z, \bm L^N)$ (recall Definition~\ref{d.F_ext}), i.e., with respect to $(L^N_{k}, L^N_{k+1}, \ldots)$, it holds for $N\geq N_0$ that %
\begin{align*}
\P\left(\inf_{xN^{-2/3}\in[-\delta,\delta]}\overline L^N_{k-1}(aN^{2/3}+x) < \overline L^N_{k-1}(aN^{2/3}) -\eta N^{1/3}\right) \leq \varepsilon.
\end{align*}

\end{lemma}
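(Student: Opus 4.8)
\textbf{Proof proposal for Lemma~\ref{l.no quick drop uniform}.}

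The plan is to use the (strong) Hall-Littlewood Gibbs property on the random interval $\intint{(a-\delta)N^{2/3}, (a+\delta)N^{2/3}}$ for the curves $L^N_1, \ldots, L^N_{k-1}$, and to reduce the statement to a quick-drop estimate for a single Bernoulli random walk bridge. Since $a$ is measurable with respect to $(L^N_k, L^N_{k+1}, \ldots)$ and since we are resampling only the top $k-1$ curves, the interval $\intint{(a-\delta)N^{2/3}, (a+\delta)N^{2/3}}$ is a (deterministic-endpoint, once we condition on $a$) stopping domain for $(L^N_1, \ldots, L^N_{k-1})$ in the sense of Definition~\ref{d.stopping domain}; one can also simply decompose over the (at most $O(N^{2/3})$, or even a fixed mesh of) possible values of $a$ and apply the ordinary Gibbs property (Definition~\ref{d.HL Gibbs}) on each. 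First I would set up a favourable event $\msf{Fav}$ on which: (i) $|\overline L^N_i((a\pm\delta)N^{2/3})| \le MN^{1/3}$ for $i\in\intint{1,k-1}$, using the lower tail hypothesis together with Assumption~\ref{as.one-point tightness} (upper tail via Proposition~\ref{p.upper tail} and ordering); (ii) the first $k-1$ curves are $\eta'$-separated at the two endpoints $(a\pm\delta)N^{2/3}$ for some small $\eta' = \eta'(\eta,k,T) > 0$, using $\msf{Sep}^N_{k-2}$; and (iii) $\overline L^N_k$ stays above $-MN^{1/3}$ on $[a-\delta, a+\delta]$ (scaled), again by the lower tail hypothesis. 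By the hypotheses and a union bound, $\P(\msf{Fav}^c) \le C\varepsilon$ for an appropriate $M = M(\varepsilon,k,T)$ and any sufficiently small $\eta'$.

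Next, conditionally on $\F := \Fext(k-1, \intint{(a-\delta)N^{2/3}, (a+\delta)N^{2/3}}, \bm L^N)$, the law of $(L^N_1, \ldots, L^N_{k-1})$ restricted to the interval is that of independent Bernoulli random walk bridges $\bm B = (B_1, \ldots, B_{k-1})$ with matching endpoints, reweighted by $W(\bm B, L^N_k)/\EF[W(\bm B, L^N_k)]$ (Proposition~\ref{p.strong gibbs}). I would lower bound the partition function $\EF[W(\bm B, L^N_k)]$ on $\msf{Fav}$ by a constant $\alpha = \alpha(\varepsilon, k, T) > 0$, exactly as in the proof of Proposition~\ref{p.uniform separation}: construct $k-1$ disjoint deterministic channels of width $\tfrac{2}{3}\eta' N^{1/3}$, separated from each other and from $\overline L^N_k$ by at least $\tfrac{1}{3}\eta' N^{1/3}$, joining the (separated) endpoints; the event $\msf{Chan}$ that $\overline{\bm B}$ stays in the channels has probability bounded below by a positive constant (via weak convergence of $\overline{\bm B}$, rescaled, to Brownian bridges with endpoints separated by $\ge \eta'$), and on $\msf{Chan}$, Lemma~\ref{l.W bound} gives $W(\bm B, L^N_k) \ge \tfrac12$ for large $N$ (the interval has length $2\delta N^{2/3}$). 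Using $W \le 1$ in the numerator, the probability in question is then bounded by
\begin{align*}
C\varepsilon + 2\alpha^{-1}\,\E\!\left[\PF\!\left(\inf_{xN^{-2/3}\in[-\delta,\delta]} \overline B_{k-1}(aN^{2/3}+x) < \overline B_{k-1}(aN^{2/3}) - \eta N^{1/3}\right)\one_{\msf{Fav}}\right],
\end{align*}
where $\overline B_{k-1}(x) := B_{k-1}(x) - px$.

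Finally, I would bound the single Bernoulli-random-walk-bridge probability uniformly over $\msf{Fav}$. Since on $\msf{Fav}$ the endpoints of $B_{k-1}$ are within $MN^{1/3}$ of the line $y = px$ and the interval has length $2\delta N^{2/3}$, one can couple $B_{k-1}$ with a Brownian bridge of variance $p(1-p)$ via the KMT coupling (Lemma~\ref{l.KMT}) with error $O(\log N)$, and the Brownian bridge increment $\overline{\Bbr}(aN^{2/3}+x) - \overline{\Bbr}(aN^{2/3})$ over an interval of length $2\delta N^{2/3}$ has fluctuations of order $\delta^{1/2}N^{1/3}$; alternatively one can use the modulus-of-continuity estimate for Bernoulli random walk bridges (Lemma~\ref{l.random walk bridge fluctuation}) directly, which gives $\P(\sup_{|x|\le \delta N^{2/3}}|\overline B_{k-1}(aN^{2/3}+x) - \overline B_{k-1}(aN^{2/3})| > \eta N^{1/3}) \le C\exp(-c\eta^2/\delta)$. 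Choosing $\delta = \delta(k,T,\eta,\varepsilon) > 0$ small enough that this is at most $\alpha\varepsilon$ completes the bound; relabeling $\varepsilon$ finishes the proof. The main obstacle is the bookkeeping around the randomness of $a$: one must ensure the channel construction and the partition-function lower bound hold simultaneously over all realizations of $a\in[-T,T]$, which is handled cleanly by noting $a$ is $\Fext(k-1,\Z,\bm L^N)$-measurable so it may be treated as a constant after conditioning on $\F$, and by absorbing the dependence on $a$ into the uniform choice of $M$, $\alpha$, and $\delta$ (all depending only on $\varepsilon, k, T, \eta$, and $\eta'$, which in turn depends only on those).
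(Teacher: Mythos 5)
Your proposal contains a genuine gap, and it is not a matter of bookkeeping: the resampling interval you choose is too small, and this makes the final bridge estimate fail for a circular reason.

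Concretely, you condition on $\F := \Fext(k-1, \intint{(a-\delta)N^{2/3}, (a+\delta)N^{2/3}}, \bm L^N)$ and then want to apply Lemma~\ref{l.random walk bridge fluctuation} to $B_{k-1}$ on an interval of length $2\delta N^{2/3}$ to get a bound of the form $C\exp(-c\eta^2/\delta)$. But Lemma~\ref{l.random walk bridge fluctuation} controls the fluctuation of a bridge around \emph{its own slope}, not around the fixed slope $p$ from Assumption~\ref{as.one-point tightness}. The slope of your bridge is
\[
p' = p + \frac{\overline L^N_{k-1}\bigl((a+\delta)N^{2/3}\bigr) - \overline L^N_{k-1}\bigl((a-\delta)N^{2/3}\bigr)}{2\delta N^{2/3}},
\]
and on your favourable event the numerator is only constrained to lie in $[-2MN^{1/3}, 2MN^{1/3}]$. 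Thus $|p'-p|$ can be as large as $M\delta^{-1}N^{-1/3}$, and this linear drift alone contributes a deterministic change of order $MN^{1/3}$ to $\overline B_{k-1}(aN^{2/3}+x)-\overline B_{k-1}(aN^{2/3})$ as $x$ ranges over $[-\delta N^{2/3}, \delta N^{2/3}]$. Since $M$ is large and $\eta$ is a fixed small target, this swamps the $\eta N^{1/3}$ threshold regardless of how small $\delta$ is. The obvious repair — adding to $\msf{Fav}$ the condition that $\overline L^N_{k-1}$ drops by at most $\frac12\eta N^{1/3}$ between the two endpoints — is precisely the no-quick-drop statement you are trying to prove, so it is circular.

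The paper avoids this by resampling the \emph{single} curve $L^N_{k-1}$ on the large asymmetric interval $\intint{aN^{2/3}, 2TN^{2/3}}$, whose left endpoint is pinned exactly at $a$. Because the boundary values are still only $O(MN^{1/3})$ off the $px$-line but are now $\Theta(TN^{2/3})$ apart, the bridge slope differs from $p$ by only $O(MT^{-1}N^{-1/3})$, contributing $O(M\delta T^{-1})N^{1/3}$ over the short subinterval $[aN^{2/3}, (a+\delta)N^{2/3}]$ — harmless for $\delta$ small. Working with a single curve is what lets the paper invoke the one-curve weak monotonicity (Corollary~\ref{c.partition function comparison with non-int boundaries}) after stripping off the upper-boundary interaction via $\msf{SingSep}$ and Lemma~\ref{l.W bound}; note that no multi-curve analogue of weak monotonicity is available. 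Finally, the paper's argument is inductive in the curve index and goes through a one-point estimate (Lemma~\ref{l.no quick drop single point}) upgraded by chaining: the $\msf{Fav}$ event in the one-point lemma uses the no-quick-drop control on $L^N_{k-2}$ coming from the induction hypothesis to lower bound $\PF(\msf{SingSep}^N_{k-2}(2\eta_0,B))$. Your single-step channel construction would need analogous control on the local growth of $L^N_k$ near the random endpoints $(a\pm\delta)N^{2/3}$ (compare the role of $U_1, U_2$ and Lemma~\ref{l.slope control} in the proof of Proposition~\ref{p.uniform separation}), which you have not supplied — but this is a secondary issue next to the bridge-slope circularity above.
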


\begin{proof}[Proof of Lemma~\ref{l.single point separation}]
We prove the existence of such a $U_1$, as the proof for $U_2$ follows from reflection into the line $\{x=0\}$ and the whole statement then follows from a union bound.

\medskip

\noindent\textbf{Preliminaries.} Let $\msf{RangeCtrl}_{k}(M) = \{\sup_{xN^{-2/3}\in[-2T,2T]} |\overline L^N_{k}(x)|\leq MN^{1/3}\}$. By hypothesis \eqref{e.single point sep lower tail hyp} and Proposition~\ref{p.upper tail} there exists $M = M(T,k,\varepsilon)$ such that $\P(\msf{RangeCtrl}_{k}(M))\geq 1-\varepsilon$.
For a $P$ to be set, let $\msf{SlopeCtrl}_{k}$ be the event that there exists random $U_1\in[-2T,-\frac{3}{2}T]$ with
\begin{align*}
\overline L^N_{k}(x) &\leq \overline L^N_{k}(U_1N^{2/3}) + (x-U_1N^{2/3})PN^{-1/3} \quad\text{for } xN^{-2/3}\in[U_1, U_1 + 1].
\end{align*}
We define $U_1$ by taking it to be the minimum point satisfying the above if there are multiple. By Lemma~\ref{l.slope control}, there exists a deterministic $P=P(M) = P(k,T,\varepsilon)$ such that $\msf{RangeCtrl}_{k}(M)\subseteq \msf{SlopeCtrl}_{k}$ and we work on the former event for the remainder of the proof.

Next define (where the below name is short for ``single curve separation''), for $k\in\N$, an interval $I$, and a process $X: I \to \R$,
\begin{align}\label{e.singsep definition}
\msf{SingSep}^N_{k-2}(\eta,I,X) := \left\{\inf_{xN^{-2/3}\in I} \left(L^N_{k-2}(x) - X(x)\right) > \eta N^{1/3}\right\}.
\end{align}
We adopt the shorthand $\msf{SingSep}^N_{k-2}(\eta,L^N_{k-1}) = \msf{SingSep}^N_{k-2}(\eta,[-3T,-T], L^N_{k-1})$ in the rest of this proof.
By hypothesis \eqref{e.single point separation hypothesis}, we can find $\eta = \eta(T,k,\varepsilon)>0$ such that $\smash{\P(\msf{SingSep}^N_{k-2}(2\eta, L^N_{k-1})^c)} < \smash{\frac{1}{6}}\varepsilon$ (though ahead we will actually work on the event $\smash{\msf{SingSep}^N_{k-2}(\eta, L^N_{k-1})}$). Next let
\begin{align}\label{e.oneptsep definition}
\msf{OnePtSep}_k^N(\delta, L^N_{k-1}) = \left\{L^N_{k-1}(U_1N^{2/3}) > L^N_{k}(U_1N^{2/3}) + \delta N^{1/3}\right\};
\end{align}
We wish to bound the probability $\P\left(\msf{OnePtSep}_k^N(\delta, L^N_{k-1})^c\right)$. Let $\F=\Fext(\{k-1\}, \llbracket\floor{-3TN^{2/3}}$, $\floor{-TN^{2/3}}\rrbracket)$ be the $\sigma$-algebra generated by $\{L^N_i(x) : (i,x)\not\in\{k-1\}\times\intint{\floor{-3TN^{2/3}}+1, \floor{-TN^{2/3}}-1}\}$. Notice that $U_1$ and $L^N_{k+1}(U_1N^{2/3})$ are $\F$-measurable. For $\rho>0$ to be specified, define the $\F$-measurable event $\msf{Fav}$ by
\begin{align}
\msf{Fav}
&:= \left\{\sup_{xN^{-2/3}\in[-3T,-T]} \overline L^N_{k}(x) \leq MN^{1/3}, \min_{xN^{-2/3}\in\{-3T,-T\}} \left(\overline L^N_{k-2}(x) - \overline L^N_{k-1}(x)\right) > 2\eta N^{1/3}\right\}\nonumber\\
&\qquad\cap \left\{\min_{xN^{-2/3}\in\{-3T, -T\}} \overline L^N_{k-1}(x) \geq -MN^{1/3} \right\}\nonumber\\
&\quad \cap\left\{\min_{zN^{-2/3}\in\{-3T, U_1\}}\inf_{xN^{-2/3}\in[0,\rho]} \left(\overline L^N_{k-2}(z+x)-\overline L^N_{k-2}(z)\right) > -\tfrac{1}{2}\eta N^{1/3}\right\}\label{e.single point sep fav}\\
&\qquad\cap \left\{\min_{zN^{-2/3}\in\{U_1, -T\}}\inf_{xN^{-2/3}\in[0,\rho]} \left(\overline L^N_{k-2}(z-x) - \overline L^N_{k-2}(z)\right) > -\tfrac{1}{2}\eta N^{1/3}\right\}\nonumber\\
&\quad \cap \Bigl\{\overline L^N_{k-2}(U_1N^{2/3}) > \overline L^N_{k}(U_1N^{2/3})+2\eta N^{1/3}\Bigr\}.\nonumber
\end{align}
Note that the parameter in the first line of $\msf{Fav}$ is $2\eta$ and not $\eta$ as in $\smash{\msf{SingSep}^N_{k-2}}(\eta, L^N_{k-1})$ that we work on; this will be needed later to guarantee a little extra space.

\begin{figure}
\includegraphics[width=0.8\textwidth]{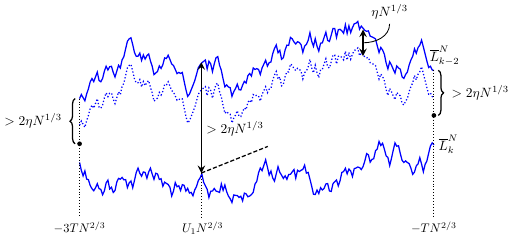}
\caption{A depiction of the properties assumed under $\msf{Fav}$. The dotted blue curve is $\smash{\overline L^N_{k-2}}$ brought down by $\eta N^{1/3}$, which is what the Bernoulli random walk bridge between the endpoints of $L^N_{k-1}$ (black dots) that have been conditioned on must avoid under $\msf{SingSep}_{k-2}$. This occurs with positive probability because we can find a corridor of width $\eta N^{1/3}$ around the starting black points and avoiding $\smash{\overline L^N_k}$ and $\smash{\overline L^N_{k-2}}$ because of the other conditions included in $\msf{Fav}$.}
\end{figure}

By Proposition~\ref{p.upper tail} (upper bound on $\sup L^N_1$), the assumed lower tail bound \eqref{e.single point sep lower tail hyp}, and the above bound on $\P(\msf{SingSep}^N_{k-2}(2\eta, L^N_k)^c)$, we can find $M$ such that the probability of the complement of the events in the first two lines is at most $\varepsilon/3$. By Lemma~\ref{l.no quick drop uniform} we can find $\rho>0$ such that the probability of the complement of the events in the third and fourth line is also at most $\varepsilon/3$; finally, the above bound on $\P(\msf{SingSep}^N_{k-2}(2\eta, L^N_k)^c)$ along with the fact that $\smash{L^N_{k-1} \geq L^N_{k}}$ guarantees that the fifth line's event's complement's probability is at most $\frac{1}{3}\varepsilon$ for the same choice of $\eta$ as above. Thus overall we have $\P(\msf{Fav}^c)\leq \varepsilon$.

Now we bound $\P(\msf{OnePtSep}_k^N(\delta, L^N_{k-1})^c)$ by
\begin{align}\label{e.one point sep intermediate}
\P\left(\msf{OnePtSep}_k^N(\delta, L^N_{k-1})^c, \msf{SingSep}^N_{k-2}(\eta, L^N_{k-1}), \msf{Fav}\right) + 2\varepsilon.
\end{align}

\medskip

\noindent\textbf{Using the Gibbs property and weak monotonicity.} Applying the Gibbs property and letting $B$ be a Bernoulli random walk bridge with endpoints $(-3TN^{2/3}$, $L^N_{k-1}(-3TN^{2/3}))$ and $(-TN^{2/3}, L^N_{k-1}(-TN^{2/3}))$, the first term in \eqref{e.one point sep intermediate} is equal to (and recalling the definition of $W$ from \eqref{e.weight function})
\begin{align}
\MoveEqLeft[6]
\E\left[\frac{\EF\bigl[\one_{\msf{OnePtSep}_k^N(\delta, B)^c, \msf{SingSep}^N_{k-2}(\eta, B)}W(B,  L^N_{k-2},L^N_{k})\bigr]}{\EF\left[W(B, L^N_{k-2}, L^N_{k})\right]}\one_{\msf{Fav}}\right]\nonumber\\
&\leq \E\left[\frac{\EF\bigl[\one_{\msf{OnePtSep}_k^N(\delta, B)^c}W(B,  L^N_{k-2}, L^N_{k})\mid \msf{SingSep}^N_{k-2}(\eta, B)\bigr]}{\EF\left[W(B, L^N_{k-2}, L^N_{k})\mid \msf{SingSep}^N_{k-2}(\eta, B)\right]}\one_{\msf{Fav}}\right].\label{e.conditioned on sep}
\end{align}
We wish to use weak monotonicity (Corollary~\ref{c.partition function comparison with non-int boundaries}) to control this expression. This requires us to remove the interaction via $W$ with the upper boundary condition $L^N_{k-2}$, but we may include a non-intersection interaction with an upper curve. The following argument first aims to use the upper non-intersection freedom to remove the upper $W$-interaction.

Write $W(B, L^N_{k-2}, L^N_{k}) = W_{\mrm{up}}(B,  L^N_{k-2})\cdot W_{\mrm{low}}(B, L^N_{k})$ (as in Definition~\ref{d.weight factor}). Then we see by Lemma~\ref{l.W bound} that, when $B$ satisfies $\msf{SingSep}^N_{k-2}(\eta, B)$,
\begin{align*}
W_{\mrm{up}}(B, L^N_{k-2}) \geq (1-q^{\eta N^{1/3}})^{2TN^{2/3}}\geq \tfrac{1}{2}
\end{align*}
almost surely for all large enough $N$.
Thus
\begin{align*}
\EF\left[W(B, L^N_{k-2}, L^N_{k})\mid \msf{SingSep}^N_{k-2}(\eta, B)\right] \geq \tfrac{1}{2}\E\left[W_{\mrm{low}}(B, L^N_{k})\mid \msf{SingSep}^N_{k-2}(\eta, B)\right].
\end{align*}
Let $B\geq L^N_{k}$ be shorthand for the event $B(x)\geq L^N_{k}(x)$ for all $xN^{-2/3}\in[-3T,-T]$. By using the previous display and that $W_{\mrm{up}}(B, L^N_{k-2}) \leq 1$, we see that \eqref{e.conditioned on sep} is upper bounded by
\begin{align}
\MoveEqLeft[0]
2\cdot \E\left[\frac{\EF\bigl[\one_{\msf{OnePtSep}_k^N(\delta, B)^c}W_{\mrm{low}}(B, L^N_{k})\mid\msf{SingSep}^N_{k-2}(\eta, B)\bigr]}{\EF\left[W_{\mrm{low}}(B, L^N_{k})\mid \msf{SingSep}^N_{k-2}(\eta, B)\right]}\one_{\msf{Fav}}\right]\nonumber\\
&= 2\cdot \E\left[\frac{\EF\bigl[\one_{\msf{OnePtSep}_k^N(\delta, B)^c}W_{\mrm{low}}(B, L^N_{k})\mid B\geq L^N_{k}, \msf{SingSep}^N_{k-2}(\eta, B)\bigr]}{\EF\left[W_{\mrm{low}}(B, L^N_{k})\mid B\geq L^N_{k}, \msf{SingSep}^N_{k-2}(\eta, B)\right]}\one_{\msf{Fav}}\right],\label{e.single point sep with both non-int conditions}
\end{align}
the equality due to the fact that $W_{\mrm{low}}(B, L^N_{k}) = 0$ if $B(x) < L^N_{k}(x)$ for any $xN^{-2/3}\in[-3T,-T]$ and since $B\geq L^N_{k}$ has positive (though \emph{a priori} possibly tending to $0$ with $N\to\infty$) probability.
We put $B\geq L^N_k$ in the conditioning because we need it to ensure that $B(U_1N^{2/3})$ does not come too close to $L^N_k(U_1N^{2/3})$ (the other interaction of $B$ with $L^N_k$ via $W$ will next be removed by weak monotonicity).

The display \eqref{e.single point sep with both non-int conditions} now equals
\begin{align*}
\MoveEqLeft[10]
2\cdot \E\biggl[\frac{\EF\bigl[W_{\mrm{low}}(B, L^N_{k})\mid B\geq L^N_{k}, \msf{SingSep}^N_{k-2}(\eta, B), \msf{OnePtSep}_k^N(\delta, B)^c\bigr]}{\EF\left[W_{\mrm{low}}(B, L^N_{k})\mid B\geq L^N_{k}, \msf{SingSep}^N_{k-2}(\eta, B)\right]}\\
&\times\PF\Bigl(\msf{OnePtSep}_k^N(\delta, B)^c\midd B\geq L^N_{k}, \msf{SingSep}^N_{k-2}(\eta, B)\Bigr)\one_{\msf{Fav}}\biggr].
\end{align*}
Note that $\msf{SingSep}^N_{k-2}(\eta, B)$ is simply an upper non-intersection boundary condition for $B$ and $\msf{OnePtSep}_k^N(\delta, B)^c$ is a one-point lower tail event at a deterministic location when conditioned on $\F$. Thus by the weak monotonicity statement Corollary~\ref{c.partition function comparison with non-int boundaries}, the first ratio in the previous display is upper bounded by $c(q)^{-1}$. Also invoking monotonicity of non-intersecting Bernoulli random walk bridges (Lemma~\ref{l.random walk bridge monotonicity}), the previous display is upper bounded by
\begin{align}\label{e.one-point separation bound intermediate}
2c(q)^{-1}\cdot\E\biggl[\PF\Bigl(\msf{OnePtSep}_k^N(\delta, B)^c\midd B(U_1N^{2/3})\geq L^N_{k}(U_1N^{2/3}), \msf{SingSep}^N_{k-2}(\eta, B)\Bigr)\one_{\msf{Fav}}\biggr].
\end{align}

 \begin{figure}
 \includegraphics[scale=1.6]{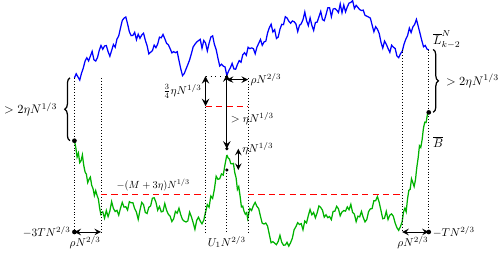}
 \caption{A depiction of the argument lower bounding \eqref{e.single sep to lower bound}. The green curve is the Bernoulli random walk bridge $B$, and the dashed red lines are curves which it must avoid. In each of the intervals of size $\rho N^{2/3}$ (one to the right of $-3TN^{2/3}$, one on either side of $U_1N^{2/3}$, and one to the left of $-TN^{2/3}$), $\smash{\overline L^N_{k-2}}$ falls from the relevant boundary point by at most $\frac{1}{2}\eta N^{1/3}$. The bottom black dot at location $U_1N^{2/3}$ is the value of $\smash{\overline L^N_k(U_1N^{2/3})}$.}\label{f.single point sep lower bound}
 \end{figure}

\medskip

\noindent\textbf{Simplifying the conditioning.} We claim, on $\msf{Fav}$, that $\PF(\msf{SingSep}^N_{k-2}(\eta, B) \mid B(U_1N^{2/3}) \geq L^N_{k}(U_1N^{2/3}))\geq \alpha$ for some $\alpha = \alpha(\varepsilon, k, T) > 0$. First, it is sufficient to lower bound
\begin{equation}\label{e.single sep to lower bound}
\PF\left(\msf{SingSep}^N_{k-2}(\eta, B), B(U_1N^{2/3}) \geq L^N_{k}(U_1N^{2/3})\right).
\end{equation}
See Figure~\ref{f.single point sep lower bound}. Recall the definition \eqref{e.single point sep fav} of $\msf{Fav}$, and that $B$ is a Bernoulli random walk bridge between $(-3TN^{2/3}$, $L^N_{k-1}(-3TN^{2/3}))$ and $(-TN^{2/3}, L^N_{k-1}(-TN^{2/3}))$. On $\msf{Fav}$, the separation of $B$ from $L^N_{k-2}$ is at least $2\eta N^{1/3}$ at $-3TN^{2/3}$ and $-TN^{2/3}$; $\smash{\overline L^N_{k-2}}$ falls from its values at $-3TN^{2/3}$, $-TN^{2/3}$, and $U_1N^{2/3}$ by at most $\frac{1}{2}\eta N^{1/3}$ in an interval of size $\rho N^{2/3}$ on either side of those values (if that interval lies inside $[-3T,-T]$); the separation of $L^N_{k-2}$ from $L^N_{k}$ at $U_1N^{2/3}$ is at least $2\eta N^{1/3}$; and $\smash{\overline L^N_{k-2}} \geq -MN^{1/3}$.

Let $\overline B(x) = B(x) - px$.
 By the above, the event $\{\msf{SingSep}^N_{k-2}(\eta, B), B(U_1N^{2/3}) \geq L^N_{k}(U_1N^{2/3})\}$ occurs if, (i) $\overline B$ falls from $\overline B(-3TN^{2/3})$ at $-3TN^{2/3}$ to $-(M+4\eta)N^{1/3}$ at $(-3T+\rho)N^{2/3}$ while not going above $\overline L^N_{k-2}(-3TN^{2/3})-\frac{3}{4}\eta N^{1/3}$ in $[-3TN^{2/3}, (-3T+\rho)N^{2/3}]$; (ii) stays below $-(M+3\eta)N^{1/3}$ on $[(-3T+\rho)N^{2/3}, (U_1-\rho)N^{2/3}]$; (iii) jumps up to lie in $[\overline L^N_{k}(U_1N^{2/3}), \overline L^N_{k}(U_1N^{2/3})+\eta N^{1/3}]$ at $U_1N^{2/3}$ while not exceeding $\overline L^N_{k-2}(U_1N^{2/3}) - \frac{3}{4}\eta N^{1/3}$ on $[(U_1-\rho)N^{2/3}, U_1N^{2/3}]$; and repeating similar properties in reverse on $[U_1N^{2/3}, -TN^{2/3}]$. Since on $\msf{Fav}$ it holds that $\smash{\overline L^N_{k}}(U_1N^{2/3}) \leq MN^{1/3}$ and $\inf_{xN^{-2/3}\in\{-3T,-T\}N^{2/3}}\overline L^N_{k-1}(x) \geq -MN^{1/3}$, it follows that the probability of this event is indeed lower bounded by a positive constant $\alpha = \alpha(M,\eta, T, \rho) = \alpha(\varepsilon, k, T)$.

 Thus \eqref{e.one-point separation bound intermediate} is upper bounded by
 \begin{align*}
 \MoveEqLeft[6]
 2c(q)^{-1}\cdot\E\left[\frac{\PF\left(\msf{OnePtSep}_k^N(\delta, B)^c\mid B(U_1N^{2/3}) \geq L^N_{k}(U_1N^{2/3})\right)}{\PF(\msf{SingSep}^N_{k-2}(\eta, B)\mid B(U_1N^{2/3}) \geq L^N_{k}(U_1N^{2/3}))}\one_{\msf{Fav}}\right]\\
 &\leq 2c(q)^{-1}\alpha^{-1}\cdot\E\Bigl[\PF\left(\msf{OnePtSep}_k^N(\delta, B)^c \mid B(U_1N^{2/3}) \geq L^N_{k}(U_1N^{2/3})\right)\one_{\msf{Fav}}\Bigr].
 \end{align*}
\smallskip

\noindent\textbf{Invoking a Gaussian comparison.} It is not hard to see that for small enough $\delta$, the previous display is smaller than $\varepsilon$. Indeed, recalling the definition \eqref{e.oneptsep definition} of $\msf{OnePtSep}^N_k$  and letting $X\sim \mc N(0,1)$, by a normal approximation Bernoulli random walk bridges (Lemma~\ref{l.KMT}), for large enough $N$ (depending on $\delta$), the previous display is upper bounded by
\begin{align}\label{e.normal approximation to bound}
2c(q)^{-1}\alpha^{-1}\E\left[\PF\left(\sigma X+\mu \leq \delta N^{1/3} \midd \sigma X+\mu >0\right)\one_{\msf{Fav}}\right] + \tfrac{1}{2}\varepsilon,
\end{align}
where
\begin{align*}
\mu &:= \frac{U_1 + 3T}{2T}\overline L^N_{k-1}(-TN^{2/3}) - \frac{T+U_1}{2T}\overline L^N_{k-1}(-3TN^{2/3}) - \overline L^N_{k}(U_1N^{2/3})\quad \text{and} \\
\sigma^2 &:= p(1-p)\cdot\frac{(U_1+3T)(-T-U_1)N^{2/3}}{2T};
\end{align*}
the factor of $p(1-p)$ in $\sigma^2$ arises, as explained after Lemma~\ref{l.KMT}, since $B$ is a Bernoulli random walk bridge with slope $p$, making its increments have variance approximately $p(1-p)$. Since $\overline L^N_{k-1}(-3TN^{2/3})$,$\overline L^N_{k-1}(-TN^{2/3}) > -MN^{1/3}$ and $\overline L^N_{k}(U_1N^{2/3})\leq MN^{1/3}$ on $\msf{Fav}$, it follows that, again on $\msf{Fav}$,
$$\mu\geq -2MN^{1/3}.$$
Since $U_1 \in [-2T, -\tfrac{3}{2}T]$, it follows that %
$$\sigma^2 \geq \tfrac{3}{8}p(1-p)TN^{2/3}.$$
Let $K = \tfrac{3}{8}p(1-p)T$. Recall from Lemma~\ref{l.normal conditional prob montonicity} that $\PF\left(X< r+\delta N^{1/3} \mid X > r\right)$ is increasing as a function of $r$ for any fixed $\delta>0$ and $N$. Since $-\sigma^{-1}\mu \leq (\frac{8}{3}(p(1-p)T)^{-1})^{1/2}2M = 2K^{-1/2}M$, \eqref{e.normal approximation to bound} is upper bounded by
\begin{align*}
\MoveEqLeft[7]
2c(q)^{-1}\alpha^{-1}\E\left[\PF\left(X  \leq -\sigma^{-1}\mu + \sigma^{-1}\delta N^{1/3} \mid X> -\sigma^{-1}\mu\right)\one_{\msf{Fav}}\right] + \tfrac{1}{2}\varepsilon,\\
&\leq 2c(q)^{-1}\alpha^{-1}\E\left[\PF\left(X  \leq -\sigma^{-1}\mu + K^{-1/2}\delta \midd X> -\sigma^{-1}\mu\right)\one_{\msf{Fav}}\right] + \tfrac{1}{2}\varepsilon\\
&\leq 2c(q)^{-1}\alpha^{-1}\P\left(X  \leq 2K^{-1/2}M + K^{-1/2}\delta \midd X> 2K^{-1/2}M\right) + \tfrac{1}{2}\varepsilon.
\end{align*}
It is now a simple computation using standard normal bounds that the first term is upper bounded by $\tfrac{1}{2}\varepsilon$ if $\delta$ is made small enough (independent of $N$). Returning to \eqref{e.one point sep intermediate}, we see that we have bounded $\P(\msf{OnePtSep}^N_k(\delta,L^N_{k-1})^c)$ by $3\varepsilon$. This completes the proof after relabeling $\varepsilon$.
\end{proof}

\subsection{No quick drops}\label{s.no quick drops}

In this section we prove Lemma~\ref{l.no quick drop uniform}.
We first establish a one-point version of this statement, which will then be upgraded via a chaining argument. To state this one-point version we introduce notation for two events.
First, we use the shorthand $\msf{SingSep}^N_{k-2}(\eta, L^N_{k-1}) := \msf{SingSep}^N_{k-2}(\eta, [-2T,2T], L^N_{k-1})$ with the latter as defined in \eqref{e.singsep definition}.
Second, for any real numbers $M, \eta_0, \delta_0>0$, let $\msf{Fav} = \msf{Fav}(M, \eta_0, \delta_0)$ be defined by
\begin{equation}\label{e.fav definition no quick drop}
\begin{split}
\msf{Fav} &:= \Bigl\{\min_{xN^{-2/3}\in\{a,2T\}}\overline L^N_{k-1}(x) > -MN^{1/3}, \inf_{xN^{-2/3}\in[-2T,2T]} \overline L^N_{k-2}(x) \geq -MN^{1/3}\Bigr\}\\
&\qquad \cap\left\{\min_{xN^{-2/3}\in\{a, 2T\}} \left(\overline L^N_{k-2}(x) - \overline L^N_{k-1}(x)\right) \geq 4\eta_0N^{1/3}\right\}\\
&\qquad\cap \left\{\inf_{xN^{-2/3}\in[0,\delta_0]} \overline L^N_{k-2}(aN^{2/3}+x) > \overline L^N_{k-2}(aN^{2/3})-\tfrac{1}{2}\eta_0 N^{1/3}\right\}\\
&\qquad\cap \left\{\inf_{xN^{-2/3}\in[0,\delta_0]} \overline L^N_{k-2}(2TN^{2/3}-x) > \overline L^N_{k-2}(2TN^{2/3})-\tfrac{1}{2}\eta_0 N^{1/3}\right\}.
\end{split}
\end{equation}

\begin{lemma}\label{l.no quick drop single point}
Fix $M$, $T\geq 2$, $\delta_0>0$ and $0 < \eta < \frac{1}{2}\eta_0$. There exist $c = c(M,T)>0$ and $C = C(M,T, \eta_0, \delta_0)$ such that, for any random real number $a\in[-T,T]$ that is measurable with respect to $(L^N_{k}, L^N_{k+1}, \ldots)$ and any $\delta>0$ and $N\geq 1$,
\begin{align*}
\MoveEqLeft[30]
\P\Bigl(\overline L^N_{k-1}((a+\delta)N^{2/3}) < \overline L^N_{k-1}(aN^{2/3}) - \eta N^{1/3}, \msf{Fav}(M,\eta_0, \delta_0), \msf{SingSep}^N_{k-2}(2\eta_0, L^N_{k-1})\Bigr)\\
&\leq C\exp\left(-c\frac{\eta^2}{|\delta|}\right).
\end{align*}

\end{lemma}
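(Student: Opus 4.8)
\textbf{Proof plan for Lemma~\ref{l.no quick drop single point}.}

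The plan is to use the (strong) Hall-Littlewood Gibbs property to resample $L^N_{k-1}$ on a stopping domain anchored at $aN^{2/3}$ and $2TN^{2/3}$, reduce the event to the analogous statement for a single Bernoulli random walk bridge, and then apply the random walk fluctuation bound (Lemma~\ref{l.random walk bridge fluctuation}). First I would set $\F := \Fext(\{k-1\}, \intint{aN^{2/3}, 2TN^{2/3}}, \bm L^N)$; note this is a (deterministic once $a$ is revealed, hence) stopping domain with respect to $L^N_{k-1}$ since $a$ is $(L^N_k, L^N_{k+1}, \ldots)$-measurable, so we may apply the strong Gibbs property (Proposition~\ref{p.strong gibbs}) with $k=1$ there (a single curve $L^N_{k-1}$ resampled, interacting with upper curve $L^N_{k-2}$ and lower curve $L^N_{k}$). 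Observe that $\msf{Fav}$, the point $a$, and the event $\msf{SingSep}^N_{k-2}(2\eta_0, L^N_{k-1})$ being conditioned-compatible are all determined by $\F$ together with $L^N_{k-1}$ on the domain; so writing $B$ for a Bernoulli random walk bridge from $(aN^{2/3}, L^N_{k-1}(aN^{2/3}))$ to $(2TN^{2/3}, L^N_{k-1}(2TN^{2/3}))$ and $W(B, L^N_{k-2}, L^N_{k})$ for the weight factor, the probability in question equals
\begin{align*}
\E\left[\frac{\EF\bigl[\one_{\msf{Drop}(B)}\one_{\msf{SingSep}^N_{k-2}(2\eta_0, B)}W(B, L^N_{k-2}, L^N_{k})\bigr]}{\EF\left[W(B, L^N_{k-2}, L^N_{k})\right]}\one_{\msf{Fav}}\right],
\end{align*}
where $\msf{Drop}(B) = \{\overline B((a+\delta)N^{2/3}) < \overline B(aN^{2/3}) - \eta N^{1/3}\}$ with $\overline B(x) = B(x) - px$.

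The next step is to bound the numerator by $\PF(\msf{Drop}(B))$ using $W\le 1$ and dropping the $\msf{SingSep}$ indicator, and to lower-bound the denominator by a positive constant on $\msf{Fav}$. For the latter, the key point is the construction already used repeatedly in this section: on $\msf{Fav}$ the endpoints of $L^N_{k-1}$ are separated from $L^N_{k-2}$ by at least $4\eta_0 N^{1/3}$, from below $\overline L^N_{k-2}$ drops by at most $\tfrac12\eta_0 N^{1/3}$ over the $\delta_0 N^{2/3}$-windows next to $a$ and $2T$, and $\overline L^N_{k-2} \ge -MN^{1/3}$, $\overline L^N_{k} \le \overline L^N_{k-1}$ at the endpoints with $\overline L^N_{k-1} \ge -MN^{1/3}$ there. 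Hence one can exhibit a deterministic corridor of width $\tfrac{2}{3}\eta_0 N^{1/3}$ separated by at least $\tfrac13\eta_0N^{1/3}$ from both $\overline L^N_{k-2}$ and $\overline L^N_{k}$, into which $\overline B$ falls with $\F$-conditional probability at least some $\alpha = \alpha(M, T, \eta_0, \delta_0) > 0$ (using weak convergence of the rescaled bridge to a Brownian bridge); on the corridor event, $W(B, L^N_{k-2}, L^N_{k}) \ge (1 - q^{\eta_0 N^{1/3}/3})^{4TN^{2/3}} \ge \tfrac12$ for large $N$ by Lemma~\ref{l.W bound}, so $\EF[W(B, L^N_{k-2}, L^N_{k})] \ge \tfrac12\alpha$ on $\msf{Fav}$. (Since the asserted bound is claimed for all $N \ge 1$, one absorbs the finitely many small-$N$ cases into the constant $C$.) Thus the probability is at most $2\alpha^{-1}\E[\PF(\msf{Drop}(B))\one_{\msf{Fav}}]$.

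Finally I would bound $\PF(\msf{Drop}(B))$. Here $B$ is a Bernoulli random walk bridge from $(aN^{2/3}, z_1)$ to $(2TN^{2/3}, z_2)$ with, on $\msf{Fav}$, $|z_i - pu_i|\le MN^{1/3}$ where $u_1 = aN^{2/3}, u_2 = 2TN^{2/3}$; the span is at most $4TN^{2/3}$. Writing $\overline B(x) = B(x) - px$, the increment $\overline B((a+\delta)N^{2/3}) - \overline B(aN^{2/3})$ has conditional mean $\tfrac{\delta}{2T - a}(z_2 - z_1 - p(2T-a)N^{2/3})$, which on $\msf{Fav}$ is $O(\delta N^{1/3})$ in absolute value (bounded by $\tfrac{\delta}{2T-a}\cdot 2MN^{1/3} \le \tfrac{2M}{T}\delta N^{1/3}$, using $a \le T \le 2T - a$); so for $\eta \ge \tfrac{2}{3}\eta_0 \cdot$ (a threshold)… more simply: either $\delta \ge c_0 \eta$ for a small absolute constant, in which case $C\exp(-c\eta^2/|\delta|)$ is bounded below by a constant and the estimate is trivial (shrink $c$, enlarge $C$), or $\delta < c_0\eta$, in which case the conditional mean is at most $\tfrac12\eta N^{1/3}$ and $\msf{Drop}(B)$ forces a deviation of at least $\tfrac12\eta N^{1/3}$ from the mean over an interval of length $\delta N^{2/3}$. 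Applying Lemma~\ref{l.random walk bridge fluctuation} (with the bridge reindexed to start at $aN^{2/3}$; the fluctuation bound there is stated for a bridge's increments over subintervals, and translation/affine shift does not affect it) with $M$ there equal to $\tfrac12\eta N^{1/3}/(\delta N^{2/3})^{1/2} = \tfrac12\eta\delta^{-1/2}N^{1/6}$... wait, that has an extra $N^{1/6}$; instead apply it with the sup over $[aN^{2/3}, (a+\delta)N^{2/3}]$ and deviation scale $|z-y|^{1/2} = (\delta N^{2/3})^{1/2}$, giving $\P(\msf{Drop}(B)) \le C\exp(-c(\tfrac12\eta N^{1/3})^2/(\delta N^{2/3})) = C\exp(-c\eta^2/(4\delta))$. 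This is exactly the claimed bound (with constants renamed); one checks that $\PF(\msf{Drop}(B))$ rather than the unconditioned probability is what Lemma~\ref{l.random walk bridge fluctuation} controls, which it does since that lemma is for the bridge itself.

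\textbf{Main obstacle.} I expect the only genuinely delicate point to be the quantitative lower bound $\EF[W] \ge \tfrac12\alpha$ on $\msf{Fav}$: one must verify that the corridor can always be fitted given the various $\eta_0, \delta_0, M$ relations (in particular that the corridor clears $\overline L^N_k$ near $a$ and $2T$, which is where the $\delta_0$-window no-drop conditions in $\msf{Fav}$ on $\overline L^N_{k-2}$, and the relation $\eta < \tfrac12\eta_0$, get used), and that $\alpha$ can be taken uniform in $N$ via the Brownian-bridge limit while the finitely many exceptional $N$ are absorbed into $C$. The rest is bookkeeping: matching the $\exp(-c\eta^2/|\delta|)$ scaling against Lemma~\ref{l.random walk bridge fluctuation}, and handling the trivial regime $\delta \gtrsim \eta$ separately.
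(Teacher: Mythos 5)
Your approach goes off the rails at the single step you yourself flagged as delicate: the attempted uniform lower bound $\EF\bigl[W(B, L^N_{k-2}, L^N_{k})\bigr] \ge \tfrac12\alpha$ on $\msf{Fav}$. The difficulty is not merely ``fitting the corridor given the $\eta_0,\delta_0,M$ relations''; it is that the corridor \emph{cannot} be fit at all, because the event $\msf{Fav}$ defined in \eqref{e.fav definition no quick drop} gives no control whatsoever on the lower curve $L^N_{k}$. It constrains $\overline L^N_{k-2}$ on all of $[-2T,2T]$ and $\overline L^N_{k-1}$ only at the endpoints $\{a,2T\}$, but says nothing about $L^N_{k}$. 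In particular, $L^N_{k}$ may coincide with $L^N_{k-1}$ at $aN^{2/3}$ and $2TN^{2/3}$ — so the resampled bridge $B$ starts and ends \emph{on} the lower boundary, and any ``corridor separated by $\tfrac13\eta_0 N^{1/3}$ from $\overline L^N_{k}$'' fails already at the endpoints. Moreover $L^N_{k}$ is unconstrained in the interior: it may rise very close to $L^N_{k-2}$, squeezing the available space to nothing. As a result $\EF[W(B,L^N_{k-2},L^N_{k})]$ has no positive lower bound that is uniform over the conditioning data on $\msf{Fav}$, and the bound $\P(\cdots) \le 2\alpha^{-1}\E[\PF(\msf{Drop}(B))\one_{\msf{Fav}}]$ does not follow. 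This absence of control on $L^N_{k}$ is not an oversight in the definition of $\msf{Fav}$: the lemma is an ingredient in the inductive scheme of Section~\ref{s.lower tail} whose hypotheses (the conditions displayed at the start of Lemma~\ref{l.no quick drop uniform}) supply control only on $\overline L^N_{0},\ldots,\overline L^N_{k-1}$, precisely because the lower tail of $L^N_k$ is what is being \emph{established} at that stage.

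The paper's proof (Section~\ref{s.no quick drops}) avoids this head-on estimate by routing the argument through the weak monotonicity statement Corollary~\ref{c.partition function comparison with non-int boundaries}. Schematically: it keeps $\msf{SingSep}^N_{k-2}(2\eta_0, B)$ in the conditioning (rather than discarding it as you do when you drop the indicator in the numerator), notes that on this event $W_{\mrm{up}}(B,L^N_{k-2})\ge\tfrac12$ for $N$ large (Lemma~\ref{l.W bound}), so $W$ may be replaced by $W_{\mrm{low}}(B,L^N_{k})$ up to a factor of $2$, and then applies Corollary~\ref{c.partition function comparison with non-int boundaries} to bound the ratio
$\EF[W_{\mrm{low}}(B,L^N_{k})\mid\msf{SingSep},\msf{Drop}]/\EF[W_{\mrm{low}}(B,L^N_{k})\mid\msf{SingSep}]$
by $c(q)^{-1}$, \emph{regardless of} where $L^N_{k}$ lies. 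The positive lower bound $\alpha$ one then still needs is only for $\PF(\msf{SingSep}^N_{k-2}(2\eta_0, B))$ on $\msf{Fav}$, and that corridor is between $B$ and $L^N_{k-2}$ — which $\msf{Fav}$ \emph{does} control — with no reference to $L^N_{k}$. This is one of the central places where the paper's substitute for (unavailable) stochastic monotonicity is essential rather than a stylistic choice; your bookkeeping of $\msf{Drop}(B)$ via Lemma~\ref{l.random walk bridge fluctuation} at the end is fine, but the step before it needs the weak-monotonicity route.
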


\begin{proof}[Proof of Lemma~\ref{l.no quick drop uniform}]
Recall that we must bound $\P(\inf_{xN^{-2/3}\in[-\delta,\delta]} L^N_{k-1}(aN^{2/3}+x) < L^N_{k-1}(x))$. We will replace the interval $[-\delta,\delta]$ by $[0,\delta]$ in the following argument; the case of $[-\delta,0]$ follows by reflection in the line $\{x=0\}$, and the case of $[-\delta,\delta]$ then follows by a union bound.

 Fix $\eta>0$. For any $j\in\N$, $\delta>0$, and random real number $a$ which is measurable with respect to $(L^N_{j+1}, L^N_{j+2}, \ldots)$, let the event $\msf{Drop}^N_j(a, \delta, \eta)$ be defined by
\begin{align*}
\msf{Drop}^N_j(a,\delta, \eta) = \left\{\inf_{xN^{-2/3}\in[0,\delta]}\overline L^N_{j}(aN^{2/3}+x) < \overline L^N_{j}(aN^{2/3}) -\eta N^{1/3}\right\}.
\end{align*}
We prove the lemma by induction on $j\in\intint{1,k-1}$. So we may assume that for any $T\geq 2$, $\varepsilon>0$, and $\eta>0$ there exists $\delta_0$ such that, for any $a\in[-2T,2T]$ that is measurable with respect to $(L^N_{j}, L^N_{j+1}, \ldots)$,
\begin{align}\label{e.no quick drop induction hypothesis}
\P\left(\msf{Drop}^N_{j-1}(a,\delta_0, \eta)\right) \leq \varepsilon.
\end{align}
Then we must show that, for any fixed $T\geq 2$, $\eta >0$ and $\varepsilon>0$, there exists $\delta>0$  such that, for any $a\in[-T,T]$ that is measurable with respect to $(L^N_{j+1}, L^N_{j+2}, \ldots)$,
\begin{align*}
\P\left(\msf{Drop}^N_{j}(a, \delta, \eta)\right) \leq \varepsilon.
\end{align*}
Note that it is sufficient to prove this for a smaller $\eta$ if needed since $\P(\msf{Drop}^N_{j}(a, \delta, \eta))$ is decreasing in $\eta$.

We prove the base case $j=1$ and the induction step simultaneously (in the case of $j=1$ the induction hypothesis \eqref{e.no quick drop induction hypothesis} holds trivially).
Our argument is by a chaining argument to upgrade the one-point bound from Lemma~\ref{l.no quick drop single point} to a uniform bound. Let $x_{i,\ell} = a+ i2^{-\ell}\delta$ for $i=0,1, \ldots, 2^{\ell}$ and define
\begin{align*}
\msf E_\ell := \bigcap_{i=1}^{2^{\ell}}\Bigl\{\overline L^N_{j}(x_{i,\ell}N^{2/3}) > \overline L^N_{j}(x_{\lfloor i/2\rfloor,\ell-1}N^{2/3})- \alpha_\ell\eta N^{1/3}\Bigr\},
\end{align*}
where $\alpha_\ell := \rho2^{-\ell/4}$
with $\rho := \frac{1}{4}(\sum_{j=1}^\infty 2^{-j/4})^{-1}$ (which is clearly positive), so that $\sum_{\ell=0}^\infty\alpha_\ell < \frac{1}{4}$.
We claim that, with $\ell_{\max} = \ceil{\frac{1}{2}\log_2 N}$,
\begin{align}\label{e.no quick drop claim}
\msf{Drop}^N_{j}(a, \delta, \eta) \subseteq \bigcup_{\ell=0}^{\ell_{\max}} (\msf E_\ell)^c.
\end{align}
Indeed, it holds that $(x_{i+1,\ell_{\max}}-x_{i,\ell_{\max}})N^{2/3}=\delta N^{2/3-1/2} = \delta N^{1/6}$; since $L^N_{j}$ is a Bernoulli path, this implies that, on the complement of the righthand side of the previous display, i.e., on $\cap_{\ell=0}^{\ell_{\max}} \msf E_\ell$, and for all large enough $N$ (depending on $\eta$, $\delta$),
\begin{align*}
\inf_{xN^{-2/3}\in[0,\delta]} \overline L^N_{j}(aN^{2/3}+x)
 &\geq \inf_{\substack{i=1, \ldots, 2^\ell\\ \ell= 0, \ldots, \ell_{\max}}} \overline L^N_{j}(x_{i,\ell}N^{2/3}) - \delta N^{1/6}\\
 &> \inf_{\substack{i=1, \ldots, 2^\ell\\ \ell= 0, \ldots, \ell_{\max}}} \overline L^N_{j}(x_{i,\ell}N^{2/3}) - \tfrac{1}{2}\eta N^{1/3}.
\end{align*}
For any $i$ and $\ell$, setting $i_{m+1} := \lfloor i_{m}/2\rfloor$ for $m\geq 1$ and $i_0 = i$, again on $\cap_{\ell=0}^{\ell_{\max}} \msf E_\ell$,
\begin{align*}
\overline L^N_{j}(x_{i,\ell}N^{2/3})
&= \sum_{m=0}^{\ell} \overline L^N_{j}(x_{i_m,\ell-m}N^{2/3}) - \overline L^N_{j}(x_{i_{m+1},\ell-m-1}N^{2/3}) + \overline L^N_{j}(aN^{2/3})\\
&\geq  \overline L^N_{j}(aN^{2/3}) -\sum_{m=0}^\ell \alpha_{\ell -m}\eta N^{1/3} \geq  \overline L^N_{j}(aN^{2/3}) -\tfrac{1}{4}\eta N^{1/3}
\end{align*}
by the definition of $\alpha_\ell$. So, on $\cap_{\ell=0}^{\ell_{\max}} E_\ell$, it indeed holds that $\inf_{xN^{-2/3}\in[0,\delta]} \smash{\overline L^N_{j}(aN^{2/3}+x)} > \smash{\overline L^N_{j}(aN^{2/3})} - \eta N^{1/3}$, i.e., \eqref{e.no quick drop claim} holds.

Let $\msf{SingSep}^N_{j-1}(2\eta_0) = \msf{SingSep}^N_{j-1}(2\eta_0, [-2T,2T], L^N_j)$ be as defined in \eqref{e.singsep definition}. Let $\eta_0>0$ be such that $\P(\msf{SingSep}^N_{j-1}(2\eta_0))$, as well as the probability of the second event in \eqref{e.fav definition no quick drop}, is at least $\smash{1-\frac{1}{8}\varepsilon}$, which is possible by the assumptions in Lemma~\ref{l.no quick drop uniform}. By \eqref{e.no quick drop induction hypothesis}, let $\delta_0>0$ be such that 
$$\P\left(\msf{Drop}^N_{j-1}(a,\delta_0, \tfrac{1}{2}\eta_0) \cup \msf{Drop}^N_{j-1}(2T,\delta_0, \tfrac{1}{2}\eta_0)\right) \leq \tfrac{1}{8}\varepsilon.$$
Let $\msf{Fav}(M,\eta_0, \delta_0)$ be as in Lemma~\ref{l.no quick drop single point}, i.e., \eqref{e.fav definition no quick drop}. Now, by the assumptions in Lemma~\ref{l.no quick drop uniform} and these choices, performing a union bound shows that we may choose $M$ such that $\P(\msf{Fav}(M,\eta_0,\delta_0)^c\cup \msf{SingSep}^N_{j-1}(2\eta_0)^c)\leq \frac{1}{2}\varepsilon$. Reduce $\eta$ if necessary so that it is less than $\frac{1}{2}\eta_0$. Then, by \eqref{e.no quick drop claim},
\begin{align*}
\P\left(\msf{Drop}^N_{j}(a, \delta, \eta)\right)
&\leq \P\left(\msf{Drop}^N_{j}(a, \delta, \eta), \msf{Fav}(M,\eta_0,\delta_0), \msf{SingSep}^N_{j-1}(2\eta_0)\right) + \tfrac{1}{2}\varepsilon\\
&\leq \sum_{\ell=0}^{\ell_{\max}} \P\left(\msf E_\ell^c, \msf{Fav}(M,\eta_0,\delta_0), \msf{SingSep}^N_{j-1}(2\eta_0)\right) + \tfrac{1}{2}\varepsilon.
 \end{align*}
By Lemma~\ref{l.no quick drop single point} and a union bound over the $2^\ell$ events in $\msf E_{\ell}^c$, we know that the sum in the previous display is upper bounded, for $\delta >0$, by
\begin{align*}
\sum_{\ell=0}^\infty 2^{\ell}C\exp\left(-c\alpha_\ell^2\eta^2/(2^{-\ell}\delta)\right) = \sum_{\ell=0}^\infty 2^{\ell}C\exp\left(-c\delta^{-1}\rho^22^{-\ell/2+\ell}\eta^2\right).
\end{align*}
This is upper bounded by $C\exp(-c\delta^{-1}\eta^2)$. 
Setting $\delta>0$ small enough such that this is upper bounded by $\frac{1}{2}\varepsilon$ completes the proof.
\end{proof}

Next we turn to establishing the one-point statement of there being no quick drops (Lemma~\ref{l.no quick drop single point}).

\begin{proof}[Proof of Lemma~\ref{l.no quick drop single point}]

Note that $a$ is measurable with respect to $(\smash{L^N_{k}, L^N_{k+1}},\ldots)$ by hypothesis, so $[aN^{2/3}, 2TN^{2/3}]$ forms a stopping domain (Definition~\ref{d.stopping domain}) with respect to $(\smash{L^N_{1}, \ldots,  L^N_{k-1}})$. Let $\F = \Fext(\{k-1\}, \intint{aN^{2/3},2TN^{2/3}}, \bm L^N)$ be as in that definition, i.e., it is the $\sigma$-algebra generated by $\{L^N_i(x) : (i,x)\not\in\{k-1\}\times \intint{aN^{2/3}+1, TN^{2/3}-1}\}$.
Note also that $\msf{Fav}$ \eqref{e.fav definition no quick drop} is $\F$-measurable, and recall $\msf{SingSep}^N_{k-2}$ from \eqref{e.singsep definition}. Let $B$ be a Bernoulli random walk bridge from $(aN^{2/3}, L^N_{k-1}(aN^{2/3}))$ to $(2TN^{2/3}, L^N_{k-1}(2TN^{2/3}))$ and $\overline B(x) = B(x) - px$. By the strong Hall-Littlewood Gibbs property (Proposition~\ref{p.strong gibbs}), the probability in the statement of the lemma equals
\begin{align*}
\MoveEqLeft[1]
\E\left[\frac{\EF\Bigl[\one_{\overline B((a+\delta)N^{2/3}) < \overline L^N_{k-1}(aN^{2/3}) - \eta N^{1/3},\msf{SingSep}^N_{k-2}(2\eta_0, B)}W(B,L^N_{k-2}, L^N_{k})\Bigr]}{\EF[W(B,L^N_{k-2}, L^N_{k})]}\one_{\msf{Fav}}\right]\\
&\leq \E\left[\frac{\EF[\one_{\overline B((a+\delta)N^{2/3}) < \overline L^N_{k-1}(aN^{2/3}) - \eta N^{1/3},}W(B,L^N_{k-2}, L^N_{k})\mid \msf{SingSep}^N_{k-2}(2\eta_0, B)]}{\EF[W(B, L^N_{k-2}, L^N_{k})\mid \msf{SingSep}^N_{k-2}(2\eta_0, B)]}\one_{\msf{Fav}}\right]\\
&= \E\biggl[\frac{\EF[W(B, L^N_{k-2}, L^N_{k})\mid \msf{SingSep}^N_{k-2}(2\eta_0, B), \overline B((a+\delta)N^{2/3}) < \overline L^N_{k-1}(aN^{2/3}) - \eta N^{1/3}]}{\EF[W(B,L^N_{k-2}, L^N_{k})\mid \msf{SingSep}^N_{k-2}(2\eta_0, B)]}\\
&\qquad\qquad\times \PF\left(\overline B((a+\delta)N^{2/3}) < \overline L^N_{k-1}(aN^{2/3}) - \eta N^{1/3}\mid \msf{SingSep}^N_{k-2}(2\eta_0, B)\right)\one_{\msf{Fav}}\biggr].
\end{align*}
Now, on $\msf{SingSep}^N_{k-2}(2\eta_0, B)$, it holds by Lemma~\ref{l.W bound} that $W_{\mrm{up}}(B, L^N_{k-2}) \geq \frac{1}{2}$ (recall from Definition~\ref{d.weight factor}) for all large enough $N$, and $W_{\mrm{up}}(B, L^N_{k-2}) \leq 1$ trivially. Thus the previous display is upper bounded by
\begin{align*}
\MoveEqLeft[6]
2\cdot\E\biggl[\frac{\EF[W(B, L^N_{k})\mid \msf{SingSep}^N_{k-2}(2\eta_0, B), \overline B((a+\delta)N^{2/3}) < \overline L^N_{k-1}(aN^{2/3}) - \eta N^{1/3}]}{\EF[W(B, L^N_{k})\mid \msf{SingSep}^N_{k-2}(2\eta_0, B)]}\\
&\times \PF\left(\overline B((a+\delta)N^{2/3}) < \overline L^N_{k-1}(aN^{2/3}) - \eta N^{1/3}\mid \msf{SingSep}^N_{k-2}(2\eta_0, B)\right)\one_{\msf{Fav}}\biggr].
\end{align*}
By Corollary~\ref{c.partition function comparison with non-int boundaries}, the first factor in the previous line is upper bounded by $c(q)^{-1}$, yielding that the previous display is upper bounded by
\begin{align*}
2c(q)^{-1}\E\biggl[\PF\Bigl(\overline B\bigl((a+\delta)N^{2/3}\bigr) < \overline L^N_{k-1}(aN^{2/3}) - \eta N^{1/3} \midd \msf{SingSep}^N_{k-2}(2\eta_0, B)\Bigr)\one_{\msf{Fav}}\biggr].
\end{align*}
We wish to remove the conditioning event, for which we first lower bound its probability, on $\msf{Fav}$. Now, on $\msf{Fav}$, the separation between $\smash{\overline B}$ and $\smash{\overline L^N_{k-2}}$ at $aN^{2/3}$ and $2TN^{2/3}$ is at least $4\eta_0 N^{1/3}$. Further, $\smash{\overline L^N_{k-2}}$ drops by at most $\smash{\frac{1}{2}\eta_0 N^{1/3}}$ in $[aN^{2/3}, (a+\delta_0)N^{2/3}]$ and $[(2T-\delta_0)N^{2/3}, 2TN^{2/3}]$ and remains above $-MN^{1/3}$ on all of $[-2TN^{2/3}, 2TN^{2/3}]$. Thus for $\overline B$ to maintain a separation of $2\eta_0N^{1/3}$ with $\smash{\overline L^N_{k-2}}$ on $[-2TN^{2/3}, 2TN^{2/3}]$, it is sufficient if it does the following (similar to Figure~\ref{f.single point sep lower bound} in the proof of Lemma~\ref{l.single point separation}): drops to or below $-(M+4\eta_0)N^{1/3}$ in $[aN^{2/3}, (a+\delta_0)N^{2/3}]$ and never goes above $\smash{\overline L^N_{k-1}}(aN^{2/3})+\frac{1}{2}\eta_0 N^{1/3}$ in the same interval, stays below $-(M+3\eta_0)N^{1/3}$ on $[(a+\delta_0)N^{2/3}, (2T-\delta_0)N^{2/3}]$, and jumps up to $\smash{\overline L^N_{k-1}}(2TN^{2/3})$ at $2TN^{2/3}$ while staying below $\overline L^N_{k-1}(2TN^{2/3}) + \frac{1}{2}\eta_0 N^{1/3}$ on $[(2T-\delta_0)N^{2/3}, 2TN^{2/3}]$. The probability that $\overline B$ satisfies these conditions is lower bounded by some deterministic $\alpha = \alpha(T, \delta_0, M, \eta_0) > 0$ (since the same is true for the Brownian bridge weak limit of the rescaled $\overline B$), using also that $\smash{\overline L^N_{k-1}}(aN^{2/3}), \smash{\overline L^N_{k-1}}(2TN^{2/3}) \in [-MN^{1/3}, MN^{1/3}]$ on $\msf{Fav}$. This yields that the previous display is upper bounded by
\begin{align*}
2\alpha^{-1}c(q)^{-1} \E\left[\PF\Bigl(\overline B((a+\delta)N^{2/3}) < \overline L^N_{k-1}(aN^{2/3}) - \eta N^{1/3}\Bigr)\one_{\msf{Fav}}\right].
\end{align*}
Since $\overline B(2TN^{2/3}), \overline B(aN^{2/3}) \in [-MN^{1/3}, MN^{1/3}]$ (so that the endpoints of $B$ define a line of slope $p+O(1)MN^{-1/3}$), it follows by one-point bounds on Bernoulli random walk bridges (Lemma~\ref{l.random walk bridge fluctuation}) that, for all $\delta>0$, the previous display is upper bounded by
\begin{align*}
C\alpha^{-1}c(q)^{-1}\exp\left(-c\delta^{-1}\eta^2\right).
\end{align*}
This completes the proof.
\end{proof}

\section{Partition function and non-intersection probability estimates}\label{s.partition function and non-intersection}

In this section we prove Proposition~\ref{p.partition function strong lower bound}. The argument is fairly standard given what we have already done; it first appeared in \cite{corwin2014brownian}, and has been used in a number of subsequent works \cite{corwin2016kpz,corwin2018transversal,dimitrov2021tightness,dimitrov2021tightness,barraquand2023spatial}.
It relies on sampling from a measure on paths on a larger interval where the interaction between paths is turned off in the interval of interest and considering the Radon-Nikodym derivative of this measure with the original one; if one has a probability lower bound on the partition function of the new measure, it can be made into a high probability lower bound on the partition function of the original measure, by an argument that can be understood as a form of ``size-biased'' sampling.

However, our arguments are slightly more complicated than in earlier works as the larger interval we work on is $[U_1, U_2]$ (defined as in Lemma~\ref{l.single point separation} such that there is control on the growth of the lower curve near $U_i$) and so is random. This is because we need some such control on the lower curve at the boundaries of the interval to obtain a lower bound on the partition function for the new measure mentioned above. In the earlier works, this lower bound was achieved  on a deterministic interval, using the stochastic monotonicity and other  features of the Gibbs property that were available in those models.

As in the last two sections, we assume here that $L^N_k$ satisfies Assumptions~\ref{as.HL Gibbs} and \ref{as.one-point tightness} without explicitly mentioning it in the statements.

\subsection{Lower bounding the partition function}

Fix $T\geq 2$. Let $U_1 \in [-2T,-\frac{3}{2}T]$ be the largest point and $U_2\in[\frac{3}{2}T,2T]$ be the smallest point such that
\begin{equation}\label{e.lower bdy control for partition function}
\begin{split}
\overline L^N_{k+1}(x) &\leq \overline L^N_{k+1}(U_1N^{2/3}) + (x-U_1N^{2/3})PN^{-1/3} \quad\text{for}\quad xN^{-2/3}\in[U_1, U_1+1]\\
\overline L^N_{k+1}(x) &\leq \overline L^N_{k+1}(U_2N^{2/3}) + (U_2N^{2/3}-x)PN^{-1/3} \quad\text{for}\quad xN^{-2/3}\in[U_2-1, U_2],
\end{split}
\end{equation}
for a $P$ to be specified later. In the event that such a $U_1$ does not exist, we set it to $-2T$, and in the event that such a $U_2$ does not exist, we set it to $2T$.

Fix an interval $[-\frac{3}{2}T, \frac{3}{2}T]\subseteq [a,b]\subseteq [-2T,2T]$ and $\bm x, \bm y\in\{\bm z\in\R^k:z_1> \ldots >z_k\}$. We define a discrete line ensemble (Definition~\ref{d.discrete line ensemble}) $\smash{\tilde{\bm L}^N = (\tilde L^N_1, \ldots, \tilde L^N_k)}$, where each $\tilde L^N_i: \intint{aN^{2/3}, bN^{2/3}}\to \R$ with $\tilde L^N_i(aN^{2/3}) = x_i$ and $\tilde L^N_i(bN^{2/3}) = y_i$, as follows. The law  $\tilde\P^{[a,b], \bm x, \bm y}_N$ of $\tilde{\bm L}^N$  is specified via its Radon-Nikodym derivative with respect to the law $\P^{k,[a,b],\bm x, \bm y}_{\mrm{free},N}$ of $k$ independent Bernoulli random walk bridges on $\intint{aN^{2/3}, bN^{2/3}}$, the $j$\th from $(aN^{2/3},x_j)$ to $(bN^{2/3},y_j)$ for each $j$: for $\bm \gamma:\intint{1,k}\times\intint{aN^{2/3}, bN^{2/3}}\to \Z$ a collection of $k$ Bernoulli paths with endpoints $(aN^{2/3},x_j)$ and $(bN^{2/3},y_j)$ for $j=1, \ldots, k$,
\begin{align}\label{e.P^N law}
\frac{\diff \tilde\P^{[a,b], \bm x, \bm y}_N}{\diff \P^{k,[a,b],\bm x, \bm y}_{\mrm{free}, N}}(\bm \gamma) \propto \prod_{i=1}^k \prod_{x\in I_{N,T}} \left(1-q^{\Delta_i(x)}\one_{\Delta_i(x) = \Delta_i(x-1)-1}\right)
\end{align}
where $I_{N,T} = \intint{U_1N^{2/3}, U_2N^{2/3}}\setminus \intint{-TN^{2/3},TN^{2/3}}$ and $\Delta_i(x) = \gamma_i(x) - \gamma_{i+1}(x)$ with $\gamma_{k+1} = L^N_{k+1}$ by convention. In words, $\tilde\P^{[a,b], \bm x, \bm y}_N$ is the usual Hall-Littlewood Gibbs measure but where the interaction is absent on $\intint{-TN^{2/3}, TN^{2/3}}$.

Writing in the just introduced notation, the definition \eqref{e.Z original definition} reads
\begin{align}\label{e.Z definition}
Z^{k, [-T,T], \bm x, \bm y, g}_N = \E^{k,[-T,T], \bm x, \bm y}_{\mrm{free}, N}[W(\bm B, g)]
\end{align}
(recall $W$ from \eqref{e.weight function}), where, under $\E^{k,[-T,T], \bm x, \bm y}_{\mrm{free}}$, $\bm B=(B_1, \ldots,  B_k)$  is a collection of $k$ independent Bernoulli random walk bridges, with $B_i$ from $(-TN^{2/3},x_i)$ to $(TN^{2/3},y_i)$.

\begin{lemma}\label{l.proxy partition function lower bound}
Suppose $g:[aN^{2/3},bN^{2/3}]\to\R$ satisfies (with $\overline g(x) := g(x) - px$)
\begin{equation}\label{e.g control}
\begin{split}
\overline g(x) &\leq \overline g(aN^{2/3}) + (x-aN^{2/3})PN^{-1/3} \quad\text{for}\quad xN^{-2/3}\in[a, a+1]\\
\overline g(x) &\leq \overline g(bN^{2/3}) + (bN^{2/3}-x)PN^{-1/3} \quad\text{for}\quad xN^{-2/3}\in[b-1, b],
\end{split}
\end{equation}
Suppose also that $M$ is such that $\smash{\sup_{xN^{-2/3}\in[a,b]}|g(x)| \leq MN^{1/3}}$, and that there exists $\eta>0$ such that $x_i > x_{i+1}+\eta \smash{N^{1/3}}$ and $y_i > y_{i+1}+\eta \smash{N^{1/3}}$ for each $i=1, \ldots, k$, with $x_{k+1} = g(a\smash{N^{2/3}})$ and $y_{k+1} = g(bN^{2/3})$. Then there exist $\delta>0$, $\varepsilon>0$, and $N_0$, all depending on $k, M, T, \eta$, such that, for all $N\geq N_0$,
\begin{align*}
\tilde\P^{[a,b], \bm x, \bm y}_N\left(Z^{k, [-T,T], \tilde{\bm L}^N(-TN^{2/3}), \tilde{\bm L}^N(TN^{2/3}), g}_N > \delta\right) > \varepsilon,
\end{align*}
where $\tilde{\bm L}^N(TN^{2/3})$ and $\tilde{\bm L}^N(-TN^{2/3})$ in the superscript respectively indicate $(\tilde L^N_i(TN^{2/3}))_{i=1}^k$ and $(\tilde L^N_i(-TN^{2/3}))_{i=1}^k$.
\end{lemma}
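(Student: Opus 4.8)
The plan is to lower bound the partition function $Z^{k,[-T,T],\tilde{\bm L}^N(-TN^{2/3}),\tilde{\bm L}^N(TN^{2/3}),g}_N = \E^{k,[-T,T],\tilde{\bm L}^N(-TN^{2/3}),\tilde{\bm L}^N(TN^{2/3})}_{\mathrm{free},N}[W(\bm B,g)]$ by exhibiting, with positive $\tilde\P^{[a,b],\bm x,\bm y}_N$-probability, a favorable configuration of the endpoints $\tilde{\bm L}^N(\pm TN^{2/3})$ and of the lower curve $g$ near $\pm TN^{2/3}$, and then, on that event, producing explicit disjoint corridors of width of order $N^{1/3}$ for the Bernoulli bridges $\bm B$ to travel in, staying separated from each other and from $g$, so that $W(\bm B,g)\ge \tfrac12$ on the corridor event by Lemma~\ref{l.W bound}. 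Concretely, first I would observe that under $\tilde\P^{[a,b],\bm x,\bm y}_N$ the curves $\tilde L^N_i$ on $[-TN^{2/3},TN^{2/3}]$ are, conditionally on their values at $\pm TN^{2/3}$ and on the rest, just $k$ independent Bernoulli random walk bridges with no interaction (that is the point of the definition \eqref{e.P^N law}, which removes the weight factor exactly on $\intint{-TN^{2/3},TN^{2/3}}$); moreover the pair $(\tilde L^N_i(-TN^{2/3}),\tilde L^N_i(TN^{2/3}))_{i=1}^k$ is itself the marginal at $\pm TN^{2/3}$ of a reweighted collection of Bernoulli bridges on $[aN^{2/3},bN^{2/3}]$.

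Next I would prove a uniform positive lower bound on the $\tilde\P^{[a,b],\bm x,\bm y}_N$-probability of a favorable event $\msf{Fav}$ on which: (i) $|\overline{\tilde L}^N_i(\pm TN^{2/3})|\le M'N^{1/3}$ for all $i$, for some $M'=M'(k,M,T,\eta)$; and (ii) consecutive curves are separated by at least $\tfrac12\eta N^{1/3}$ at $\pm TN^{2/3}$ and the bottom curve $\tilde L^N_k$ is at least $\tfrac12\eta N^{1/3}$ above $g$ at $\pm TN^{2/3}$. Item (i) follows because the reweighting factor in \eqref{e.P^N law} is bounded by $1$, so the $\tilde\P$-law of the bridges on $[aN^{2/3},bN^{2/3}]$ is absolutely continuous with respect to the free law with a density bounded below by the partition function; combined with one-point fluctuation bounds for Bernoulli bridges (Lemma~\ref{l.random walk bridge fluctuation}) and the fact that the free partition function of bridges with endpoints separated by $\eta N^{1/3}$ and lower boundary $g$ satisfying \eqref{e.g control} is bounded below (this is where the control \eqref{e.g control} on the growth of $g$ near the endpoints of $[a,b]$ is used, to build a corridor for $\tilde L^N_k$ to stay above $g$), one gets (i) with $\tilde\P$-probability bounded away from $0$. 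Item (ii) is arranged similarly by a corridor construction: the endpoint separations $x_i-x_{i+1}, y_i-y_{i+1}\ge\eta N^{1/3}$ and $x_k-g(aN^{2/3}), y_k-g(bN^{2/3})\ge\eta N^{1/3}$ are given by hypothesis, and with positive free-probability the bridges fluctuate by at most $\tfrac14\eta N^{1/3}$ over the relevant sub-intervals, which survives the bounded-density reweighting.

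On $\msf{Fav}$ I would then, conditionally on $\F := \Fext(k,[-TN^{2/3},TN^{2/3}],\tilde{\bm L}^N)$, consider the free law $\E^{k,[-T,T],\tilde{\bm L}^N(-TN^{2/3}),\tilde{\bm L}^N(TN^{2/3})}_{\mathrm{free},N}$ of $\bm B$ and restrict to the event $\msf{Chan}$ that each $\overline B_i$ stays in a deterministic (given $\F$) corridor of width $\tfrac23\cdot\tfrac12\eta N^{1/3}$, the corridors being separated by at least $\tfrac16\eta N^{1/3}$ from one another and from $\overline g$ on all of $[-TN^{2/3},TN^{2/3}]$; this is possible because of the endpoint separations and upper bound $|\overline g|\le MN^{1/3}$ guaranteed on $\msf{Fav}$, and by weak convergence of the rescaled free bridges to independent Brownian bridges with separated endpoints, $\P^{\cdots}_{\mathrm{free},N}(\msf{Chan})\ge\alpha$ for some $\alpha=\alpha(k,M,T,\eta)>0$ and all large $N$. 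On $\msf{Chan}$, Lemma~\ref{l.W bound} gives $W(\bm B,g)\ge(1-q^{\frac16\eta N^{1/3}})^{2kTN^{2/3}}\ge\tfrac12$ for $N\ge N_0$, whence $Z^{k,[-T,T],\tilde{\bm L}^N(-TN^{2/3}),\tilde{\bm L}^N(TN^{2/3}),g}_N\ge\tfrac12\alpha =: \delta$ on $\msf{Fav}$. Since $\tilde\P^{[a,b],\bm x,\bm y}_N(\msf{Fav})\ge\varepsilon$ for some $\varepsilon>0$, this gives the claim. The main obstacle I anticipate is carefully controlling the $\tilde\P$-law at the endpoints $\pm TN^{2/3}$ — the reweighting \eqref{e.P^N law} is only a one-sided (upper) bound on the density, so one must feed in the lower bound on the free partition function of $k$ bridges above $g$ (using \eqref{e.g control}) to make the absolute-continuity argument quantitative and uniform in $N$; assembling the corridor constructions so the constants $M', \delta, \varepsilon$ depend only on $k,M,T,\eta$ (and not on $N$ or the specific $\bm x,\bm y,g$) is the bookkeeping-heavy part.
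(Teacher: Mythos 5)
Your proposal is correct and follows essentially the same strategy as the paper's proof: construct corridors for the free Bernoulli bridges on the outer region $I_{N,T}=\intint{U_1N^{2/3},U_2N^{2/3}}\setminus\intint{-TN^{2/3},TN^{2/3}}$ that keep curves separated from each other and from $g$, use Lemma~\ref{l.W bound} to lower-bound the reweighting factor in \eqref{e.P^N law} on the corridor event by $\tfrac12$, and use $\tilde Z\le 1$ to conclude $\tilde\P^{[a,b],\bm x,\bm y}_N(\msf{Fav})$ is bounded below; then, conditionally on the endpoint data at $\pm TN^{2/3}$, build a second corridor inside the strip to lower-bound $Z^{k,T,g}_N$.

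Two minor differences are worth flagging. First, the paper's favorable event requires the free bridges to be pushed \emph{above} $2MN^{1/3}$ at $\pm TN^{2/3}$ (using \eqref{e.g control} to let $\overline B_i$ outrun $\overline g$ near $aN^{2/3}$ and $bN^{2/3}$), which makes the second corridor trivial — the bridges inside the strip just stay above $MN^{1/3}\ge\sup|\overline g|$; your choice $|\overline{\tilde L}^N_i(\pm TN^{2/3})|\le M'N^{1/3}$ forces the inner corridor to possibly rise above $MN^{1/3}$ in the interior and then return, which still has positive probability but is more bookkeeping. Second, you bound $\tilde\P(\msf{Fav})$ by complementing and appealing to a lower bound on $\tilde Z$ plus one-point fluctuation bounds; the paper sidesteps the need for a lower bound on $\tilde Z$ by only ever using $\tilde Z\le 1$ (i.e., $\tilde\P(\msf{Corr})\ge\E^{\mathrm{free}}[\one_{\msf{Corr}}W']\ge\tfrac12\P^{\mathrm{free}}(\msf{Corr})$), which is simpler. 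Also note the phrase \emph{density bounded below by the partition function} should be corrected to \emph{bounded above by the reciprocal of the partition function} (equivalently, $\tfrac{\diff\P^{\mathrm{free}}}{\diff\tilde\P}\ge\tilde Z$), as stated it reads backward.
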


\begin{proof}
We adopt the shorthand $Z^{k,T,g}_N = Z^{k, [-T,T], \tilde{\bm L}^N(-TN^{2/3}), \tilde{\bm L}^N(TN^{2/3}), g}_N$.
With some positive probability $2\varepsilon>0$ (independent of $N$) it holds that $k$ independent Bernoulli random walk bridges $\bm B=(B_1, \ldots, B_k)$ starting from $(aN^{2/3},\bm x)$ and going to $(bN^{2/3}, \bm y)$ reach points at $(a+1)N^{2/3}$ and $(b-1)N^{2/3}$ which are all above $2MN^{1/3}$ and are separated by at least $N^{1/3}$; reach points at $\pm TN^{2/3}$ which have the same properties; and do so while staying separated (from each other as well as from $g$) by at least $\frac{1}{2}\eta N^{1/3}$ on $[aN^{2/3},-TN^{2/3}]$ and $[TN^{2/3}, bN^{2/3}]$; this uses the condition \eqref{e.g control} controlling $g$ and the initial separation of $\eta N^{1/3}$ between the endpoints of $\bm B$. Call this event $\msf{Corr}$ (for corridors).

On $\msf{Corr}$, it holds that the righthand side of \eqref{e.rn derivative for turned off interaction} is lower bounded by $\frac{1}{2}$ for all large enough $N$ by lower bounding it as in the proof of Lemma~\ref{l.W bound}; and the implicit normalization factor in \eqref{e.rn derivative for turned off interaction} is trivially upper bounded by $1$. Thus with $\smash{\tilde \P^{[a,b], \bm x, \bm y}_N}$-probability at least $\varepsilon$, we have that the entries of $\smash{\tilde{\bm L}^N(-TN^{2/3})}$, $\smash{\tilde{\bm L}^N(TN^{2/3})}$ are separated by at least $N^{1/3}$ and are all above $2MN^{1/3}$.

We claim that there exists $\delta>0$ such that, on the event that the entries of $\tilde{\bm L}^N(\pm TN^{2/3})$ have the just mentioned properties,  $\smash{Z^{k, T,g}_N} > \delta$. Indeed, with some positive probability $2\delta$, $k$ independent Bernoulli random walk bridges from $(-TN^{2/3}, \tilde{\bm L}^N(-TN^{2/3}))$ to $(TN^{2/3}, \tilde{\bm L}^N(TN^{2/3}))$ will maintain separation at least $\frac{1}{2}N^{1/3}$ throughout $[-TN^{2/3},TN^{2/3}]$ while staying above $MN^{1/3}$. On this event, the weight function $W$ is again lower bounded by $\smash{\frac{1}{2}}$ for all large enough $N$ by Lemma~\ref{l.W bound}. This completes the proof.
\end{proof}

\begin{proof}[Proof of Proposition~\ref{p.partition function strong lower bound}]
We adopt the shorthand $Z^{k,T}_N = Z^{k, [-T,T], \bm L^N(-TN^{2/3}), \bm L^N(TN^{2/3}), L^N_{k+1}}_N$.
 By Theorem~\ref{t.infimum lower tail} (lower tail control) and Proposition~\ref{p.upper tail} (upper tail control), there exists $M = M(\varepsilon,k,T)$ such that $\P(\msf{RangeCtrl}) \geq 1-\frac{1}{4}\varepsilon$, where
 $$\msf{RangeCtrl} = \left\{\sup_{xN^{-2/3}\in [-2T,2T]} |L^N_{k+1}(x)| \leq MN^{1/3}\right\}.$$
 Let $U_1$ and $U_2$ be as in \eqref{e.lower bdy control for partition function}. On $\msf{RangeCtrl}$, by Lemma~\ref{l.slope control}, there exists $P= P(\varepsilon, k, T, M) = P(\varepsilon, k, T)$ such that $U_1\neq -2T$ and $U_2\neq 2T$, i.e., \eqref{e.lower bdy control for partition function} holds. Next, by Theorem~\ref{t.uniform separation}, let $\eta = \eta(\varepsilon,k,T)>0$ be such that $\P(\msf{Sep}_k^N(\eta, \{U_1,U_2\}, \bm L^N)) > 1-\frac{1}{4}\varepsilon$. Let $\msf{Fav} = \msf{RangeCtrl}\cap \msf{Sep}_k^N(\eta, \{U_1,U_2\}, \bm L^N)$, so that $\P(\msf{Fav}^c) \leq \frac{1}{2}\varepsilon$.

Let $\F=\Fext(k,\intint{U_1N^{2/3},U_2N^{2/3}}, \bm L^N)$, and note that $\intint{U_1N^{2/3},U_2N^{2/3}}$ forms a stopping domain with respect to $(L^N_1, \ldots, L^N_k)$ (Definition~\ref{d.stopping domain}), since $U_1, U_2$ are measurable with respect to $L^N_{k+1}$. Conditionally on $\F$, let $\P_N$ be the law of $\smash{\bm L^N|_{\intint{1,k}\times\intint{U_1N^{2/3}, U_2N^{2/3}}}}$ and adopt the shorthand $\smash{\tilde \P_N}$ for the law defined by \eqref{e.P^N law}. Let $\smash{\tilde{\bm L}^N \sim \tilde \P_N}$. Again conditionally on $\F$, define $\tilde{\bm L}^{N,\mrm{res}}$ and $\bm L^{N,\mrm{res}}$ (``res'' short for restricted)~by
$$\tilde L^{N,\mrm{res}}_i := \tilde L^N_i|_{\intint{U_1N^{2/3},U_2N^{2/3}}\setminus\intint{-TN^{2/3},TN^{2/3}}}$$
and $L^{N,\mrm{res}}_i :=  L^N_i|_{\intint{U_1N^{2/3}, U_2N^{2/3}}\setminus\intint{-TN^{2/3},TN^{2/3}}}$ for $i\in\intint{1, k}$, and let $\tilde \P^{\mrm{res}}_N$ and $\P^{\mrm{res}}_N$, respectively, be their laws. Note that, for any collection $\bm\gamma:\intint{1,k}\times\intint{U_1N^{2/3},U_2N^{2/3}}\setminus\intint{-TN^{2/3},TN^{2/3}}\to\Z$ of Bernoulli paths,
\begin{align}\label{e.rn derivative for turned off interaction}
\frac{\diff \P^{\mrm{res}}_N}{\diff \tilde \P^{\mrm{res}}_N}(\bm \gamma) = (\tilde Z^{k,T}_N)^{-1} Z^{k,T}_N(\bm\gamma),
\end{align}
where $Z^{k,T}_N(\bm\gamma) := Z^{k, [-T,T], \bm \gamma(-TN^{2/3}), \bm \gamma(TN^{2/3}), L^N_{k+1}}_N$ as in \eqref{e.Z definition} and $\tilde Z^{k,T}_N := \tilde \E^{\mrm{res}}_N[Z^{k,T}_N(\bm\gamma)]$. Since $Z^{k,T}_N(\bm\gamma)$ is a function of $\bm\gamma(\pm TN^{2/3})$ ($L^N_{k+1}$ is conditioned on), which have the same joint distribution when $\bm\gamma$ is distributed as $\smash{\tilde \P^{\mrm{res}}_N}$ or $\smash{\tilde \P_N}$, it holds that $\tilde Z^{k,T}_N = \tilde \E_N[Z^{k,T}_N(\bm\gamma)]$.
Similarly, $\smash{Z^{k,T}_N = Z^{k,T}_N(\bm L^N)}$ has the same distribution under $\P_N$ and $\P^{\mrm{res}}_N$. Thus, since $\msf{Fav}$ is $\F$-measurable,
\begin{align*}
\P\left(Z^{k,T}_N < \delta\right)
\leq \E\left[\PF\left(Z^{k,T}_N < \delta\right)\one_{\msf{Fav}}\right] + \P\left(\msf{Fav}^c\right)
&= \E\left[\P^{\mrm{res}}_N\left(Z^{k,T}_N < \delta\right)\one_{\msf{Fav}}\right] + \tfrac{1}{2}\varepsilon.
\end{align*}
Now by \eqref{e.rn derivative for turned off interaction},  the previous display equals
\begin{align}\label{e.size biased sampling intermediate step}
\E\left[(\tilde Z^{k,T}_N)^{-1}\tilde\E^{\mrm{res}}_N\left[\one_{Z^{k,T}_N < \delta}Z^{k,T}_N\right]\one_{\msf{Fav}}\right] + \tfrac{1}{2}\varepsilon.
\end{align}
On $\msf{Fav}$, $(L^N_i(\pm TN^{2/3}))_{i=1}^k$ and $L^N_{k+1}$ satisfy the conditions in Lemma~\ref{l.proxy partition function lower bound}. So we have that, for some $\delta',\varepsilon'>0$ (the $\delta$, $\varepsilon$ provided by Lemma~\ref{l.proxy partition function lower bound}),
\begin{align*}
\tilde Z^{k,T}_N = \tilde \E[Z^{k,T}_N] \geq \rho :=\delta' \varepsilon',
\end{align*}
This yields that \eqref{e.size biased sampling intermediate step} is upper bounded~by
\begin{align*}
\rho^{-1}\cdot\E\left[\tilde\E^{\mrm{res}}_N\left[\one_{Z^{k,T}_N < \delta}Z^{k,T}_N\right]\right] + \tfrac{1}{2}\varepsilon \leq \delta \rho^{-1} + \tfrac{1}{2}\varepsilon.
\end{align*}
Picking $\delta = \frac{1}{2}\varepsilon \rho$ completes the proof.
\end{proof}

\section{Brownian Gibbs property in the limit}\label{s.bg in the limit}

In this section we prove that all weak limit points of $\bm \cL^N$ as defined in \eqref{e.cL definition} possess the Brownian Gibbs property, thus establishing Proposition~\ref{p.limits have BG}. The first step is to compare the partition function associated to the Hall-Littlewood Gibbs property with the probability of separation (its analog for the Brownian Gibbs property); we do this in the next section. Then we will give the proof of Proposition~\ref{p.limits have BG} in Section~\ref{s.proof of BG}. As in the last few sections, we assume, without explicitly stating it again, that $\bm L^N$ satisfies Assumptions~\ref{as.HL Gibbs} and \ref{as.one-point tightness}.

\subsection{Comparing the partition function and the separation probability}

To emphasize the separation required with the lower boundary condition, in this section we extend the notation \eqref{e.Sep definition} as follows: for a set $A\subseteq \R$, an integer $k\in\N$, $\Lambda\subseteq \N$ a (possibly infinite) interval with $\intint{1,k}\subseteq \Lambda$, a  process $\bm X:\Lambda\times\R\to\R$, a process $Y:\R\to\R$, and $\delta>0$, we define the event
\begin{equation}\label{e.Sep extended definition}
\begin{split}
\msf{Sep}^N_k(\delta, A, \bm X, Y) &= \left\{\min_{i=1, \ldots, k-1}\inf_{xN^{-2/3}\in A}\bigl(X_i(x) - X_{i+1}(x)\bigr) \geq \delta N^{1/3}\right\}\\
&\qquad\cap \left\{\inf_{xN^{-2/3}\in A}\bigl(X_{k}(x) - Y(x)\bigr) \geq \delta N^{1/3}\right\},
\end{split}
\end{equation}
i.e., in the notation \eqref{e.Sep definition}, if we define $\tilde{\bm X}:\intint{1,k+1}\times\R\to\R$ by $\tilde X_i(\bm\cdot) = X_i(\bm\cdot)$ for $i=1, \ldots, k$ and $\tilde X_{k+1}(\bm\cdot) = Y(\bm\cdot)$, then $\msf{Sep}^N_k(\delta, A, \bm X, Y) = \msf{Sep}^N_k(\delta, A, \tilde{\bm X})$. At times we will drop the set $A$ from the notation, but this will be explicitly stated.

For the next statement, recall $W$ from Definition~\ref{d.weight factor}.

\begin{lemma}\label{l.partition function to non-int comparison}
Fix $a>b$ and $k\in \N$. Let $\F=\Fext(k, \intint{aN^{2/3}, bN^{2/3}}, \bm L^N)$ and $\bm B= (B_1, \ldots, B_k)$ be a collection of $k$ independent Bernoulli random walk bridges, with $B_i$ from $(aN^{2/3}, L^N_i(aN^{2/3}))$ to $(bN^{2/3}, L^N_i(bN^{2/3}))$. For any $\varepsilon>0$, there exist $N_0 = N_0(\varepsilon,k,b-a)$ and $\delta = \delta(\varepsilon,k, b-a)>0$ such that, for all $N\geq N_0$,
\begin{align*}
\P\left(\frac{\EF[W(\bm B, L^N_{k+1}, \intint{aN^{2/3}, bN^{2/3}})]}{\PF(\msf{Sep}^N_k(\delta,[a,b],\bm  B, L^N_{k+1}))} \in [1-\varepsilon, 1+\varepsilon]\right) \geq 1-\varepsilon.
 \end{align*}

\end{lemma}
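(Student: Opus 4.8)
The statement compares the Hall--Littlewood partition function $\EF[W(\bm B, L^N_{k+1})]$ with the probability $\PF(\msf{Sep}^N_k(\delta,[a,b],\bm B, L^N_{k+1}))$ that the Bernoulli bridges stay $\delta N^{1/3}$-separated from each other and from the lower curve. The core idea is that, for a good choice of $\delta$, both quantities are close to $1$ with high probability, because each is trapped between the probability of a moderately strong separation event and $1$. The plan is to show that (i) $W\le 1$ always, so $\EF[W(\bm B,L^N_{k+1})]\le 1$ and $\PF(\msf{Sep}^N_k)\le 1$ trivially; (ii) on the separation event $\msf{Sep}^N_k(\delta)$ the weight $W$ is within $\varepsilon$ of $1$ by Lemma~\ref{l.W bound}, which gives a lower bound $\EF[W]\ge (1-\varepsilon)\PF(\msf{Sep}^N_k(\delta))$; and (iii) the reverse direction, bounding $\EF[W]$ from above in terms of $\PF(\msf{Sep}^N_k(\delta))$ plus a small error, which requires showing that the bulk of the mass of $W$ comes from configurations that are in fact $\delta N^{1/3}$-separated. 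The point is then to choose $\delta$ small enough and invoke uniform separation (Theorem~\ref{t.uniform separation}), one-point tightness (Assumption~\ref{as.one-point tightness}), and lower-tail control (Theorem~\ref{t.infimum lower tail}) to ensure that $\PF(\msf{Sep}^N_k(\delta))$ is itself close to $1$ on a high-probability $\F$-event, which forces both numerator and denominator close to $1$ and hence their ratio into $[1-\varepsilon,1+\varepsilon]$.

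In detail: first I would fix the favorable $\F$-measurable event $\msf{Fav}$ on which $\min_{i\le k}\sup_{xN^{-2/3}\in\{a,b\}}|\overline L^N_i(x)|\le MN^{1/3}$, the curves $L^N_1,\dots,L^N_{k+1}$ at $\{aN^{2/3},bN^{2/3}\}$ are pairwise separated by at least $2\eta N^{1/3}$ for some fixed $\eta>0$ (obtained from Theorem~\ref{t.uniform separation} applied at the two points $\{a,b\}$, combined with the lower-tail bound on $L^N_{k+1}$ from Theorem~\ref{t.infimum lower tail}), and the supremum of $\overline L^N_{k+1}$ over $[a,b]$ is bounded. By Proposition~\ref{p.upper tail}, Theorem~\ref{t.infimum lower tail}, and Theorem~\ref{t.uniform separation} we can make $\P(\msf{Fav}^c)\le\varepsilon/2$. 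On $\msf{Fav}$, a standard corridor/barrier construction (as in the proof of Proposition~\ref{p.uniform separation}: build $k$ disjoint channels of width $\tfrac23\eta N^{1/3}$ separated by $\tfrac13\eta N^{1/3}$, sitting above $\overline L^N_{k+1}+MN^{1/3}$ say) shows that $\PF(\msf{Sep}^N_k(\rho,[a,b],\bm B, L^N_{k+1}))\ge \alpha$ for a constant $\alpha=\alpha(\eta,k,b-a)>0$ and some $\rho=\rho(\eta,k)>0$; more is true — by the convergence of the rescaled $\bm B$ to independent Brownian bridges with endpoint separation $\ge 2\eta$, for any $\varepsilon'>0$ there is $\rho'=\rho'(\varepsilon',\eta,k,b-a)>0$ with $\PF(\msf{Sep}^N_k(\rho',[a,b],\bm B,L^N_{k+1}))\ge 1-\varepsilon'$ on $\msf{Fav}$, for $N\ge N_0$ (this is because the corresponding Brownian event of separation $\ge\rho'$ has probability $\to 1$ as $\rho'\to 0$ when the endpoints are separated, and one-point/modulus control transfers through the KMT coupling of Lemma~\ref{l.KMT}). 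Fix such a $\rho'$ and set $\delta:=\rho'$.

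With $\delta$ fixed, the two inequalities follow cleanly. For the lower bound: $\EF[W(\bm B, L^N_{k+1})]\ge \EF[W\mathbbm 1_{\msf{Sep}^N_k(\delta,\bm B, L^N_{k+1})}]\ge (1-q^{\delta N^{1/3}})^{k(b-a)N^{2/3}}\PF(\msf{Sep}^N_k(\delta))$ by Lemma~\ref{l.W bound}, and the prefactor exceeds $1-\varepsilon$ for $N$ large, so $\EF[W]\ge(1-\varepsilon)\PF(\msf{Sep}^N_k(\delta))$. For the upper bound: write $\EF[W] = \EF[W\mathbbm 1_{\msf{Sep}^N_k(\delta)}] + \EF[W\mathbbm 1_{\msf{Sep}^N_k(\delta)^c}] \le \PF(\msf{Sep}^N_k(\delta)) + \PF(\msf{Sep}^N_k(\delta)^c)$ using $W\le 1$; and on $\msf{Fav}$ the second term is $\le\varepsilon'$ by the choice of $\delta=\rho'$ above. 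Hence on $\msf{Fav}$, $\PF(\msf{Sep}^N_k(\delta))\ge 1-\varepsilon'$ and $\PF(\msf{Sep}^N_k(\delta))\le \EF[W] \le \PF(\msf{Sep}^N_k(\delta))+\varepsilon'$, which immediately gives $\EF[W]/\PF(\msf{Sep}^N_k(\delta))\in[1,1+\varepsilon'/(1-\varepsilon')]\subseteq[1-\varepsilon,1+\varepsilon]$ (combined with the $(1-\varepsilon)$ lower bound to handle the left endpoint cleanly) for $\varepsilon'$ small in terms of $\varepsilon$. Taking a union bound with $\P(\msf{Fav}^c)\le\varepsilon/2$ and relabeling $\varepsilon$ finishes the proof.

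\textbf{Main obstacle.} The routine parts are the two sandwiching inequalities; the real work is step (ii)--(iii)'s quantitative input, namely producing, on the favorable event, a $\delta>0$ for which $\PF(\msf{Sep}^N_k(\delta,[a,b],\bm B,L^N_{k+1}))\ge 1-\varepsilon'$ uniformly in $N$. This is where one must combine the endpoint separation $2\eta N^{1/3}$ (from uniform separation, Theorem~\ref{t.uniform separation}) with a Brownian-bridge approximation (via the KMT coupling of Lemma~\ref{l.KMT}) and the boundedness of $\overline L^N_{k+1}$ on $[a,b]$, and check that the limiting non-intersecting Brownian bridge system has separation from its own (rescaled, bounded, and appropriately regular) lower boundary curve with probability $\to1$ as the separation parameter $\to0$ — a statement of the flavor of \cite[Lemma 5.2]{dimitrov2021characterization}. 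One subtlety is that $L^N_{k+1}$ restricted to $[a,b]$ need only be a Bernoulli path with controlled range, not a nice function, so the lower-boundary avoidance probability must be controlled uniformly over such boundary data; this is handled by noting that a Bernoulli path of range $O(N^{1/3})$ over an interval of length $O(N^{2/3})$ rescales to a uniformly (Hölder-$1/2$) controlled function, so the family of possible boundary curves is precompact and the Brownian avoidance probability is bounded below uniformly.
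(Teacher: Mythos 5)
Your proof has a genuine gap, and it is precisely the step you flagged as ``the real work.'' You assert that, on the favorable event with endpoint separation $\geq 2\eta N^{1/3}$, one can choose $\delta>0$ so small that $\PF\bigl(\msf{Sep}^N_k(\delta,[a,b],\bm B, L^N_{k+1})\bigr)\geq 1-\varepsilon'$. This is false. The $\bm B$ appearing in $\EF[W(\bm B, L^N_{k+1})]$ are \emph{independent} (un-reweighted) Bernoulli random walk bridges, not the curves $\bm L^N$ themselves. Independent Brownian (or Bernoulli) bridges with endpoints separated by a fixed constant $2\eta$ on the $N^{1/3}$ scale, over an interval of length of order $N^{2/3}$, intersect each other with probability bounded away from zero; the non-intersection probability is of the form $1-e^{-c\eta^2}$, which does not tend to $1$ as $\delta\to 0$. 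The same issue arises for $B_k$ crossing $L^N_{k+1}$. So the separation probability $\PF(\msf{Sep}^N_k(\delta,\bm B, L^N_{k+1}))$ is bounded below by a positive constant $\alpha(\eta,k,b-a)$ but is not close to $1$, and your upper bound $\EF[W]\leq \PF(\msf{Sep}^N_k(\delta))+\varepsilon'$ therefore does not control the ratio $\EF[W]/\PF(\msf{Sep}^N_k(\delta))$.

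The paper's proof sidesteps this entirely by using the Gibbs property in the ratio, rather than trying to make $\PF(\msf{Sep}^N_k(\delta,\bm B, L^N_{k+1}))$ itself large. One picks $\delta$ from Theorem~\ref{t.uniform separation} so that $\P(\msf{Sep}^N_k(\delta, \bm L^N, L^N_{k+1}))\geq 1-\varepsilon^2$, which by averaging gives $\PF(\msf{Sep}^N_k(\delta, \bm L^N, L^N_{k+1}))\geq 1-\varepsilon$ with $\F$-probability at least $1-\varepsilon$. The key observation is then that, by the Hall--Littlewood Gibbs property,
\begin{align*}
\PF\bigl(\msf{Sep}^N_k(\delta, \bm L^N, L^N_{k+1})\bigr)
= \frac{\EF\bigl[\one_{\msf{Sep}^N_k(\delta, \bm B, L^N_{k+1})}W(\bm B, L^N_{k+1})\bigr]}{\EF\bigl[W(\bm B, L^N_{k+1})\bigr]}
\leq \frac{\PF\bigl(\msf{Sep}^N_k(\delta, \bm B, L^N_{k+1})\bigr)}{\EF\bigl[W(\bm B, L^N_{k+1})\bigr]},
\end{align*}
which rearranges to $\EF[W]\leq (1-\varepsilon)^{-1}\PF(\msf{Sep}^N_k(\delta,\bm B, L^N_{k+1}))$ on that high-probability event. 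Your lower bound (from Lemma~\ref{l.W bound}) is the same as the paper's. Note that it is the \emph{reweighted} curves $\bm L^N$, where the weight $W$ penalizes intersection, that maintain separation with probability close to $1$; the $W$-factor, which you tried to bound crudely by $1$, is exactly what makes this happen, and the Gibbs identity converts that into the ratio control you need.
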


\begin{proof}
Adopt the shorthand $\msf{Sep}^N_k(\delta,\bm  B, L^N_{k+1}) = \msf{Sep}^N_k(\delta,[a,b],\bm  B, L^N_{k+1})$ and drop the interval $\intint{aN^{2/3}, bN^{2/3}}$ from the $W$ notation. We first pick $\delta>0$ such that, by Theorem~\ref{t.uniform separation},
\begin{align*}
\P(\msf{Sep}^N_k(\delta, \bm L^N,  L^N_{k+1})) \geq 1-\varepsilon^2.
\end{align*}
Since the lefthand side of this inequality equals $\E[\PF(\msf{Sep}^N_k(\delta, \bm L^N,  L^N_{k+1}))]$, and the conditional probability is a random variable taking values in $[0,1]$, it follows that, with probability at least $1-\varepsilon$,
\begin{align*}
\PF(\msf{Sep}^N_k(\delta, \bm L^N,  L^N_{k+1})) \geq 1-\varepsilon.
\end{align*}
We work on the event that the previous inequality holds. We have the following string of inequalities, using $W(\bm B, L^N_{k+1})\leq 1$ in the first inequality, the Hall-Littlewood Gibbs property to go from the second to the third line, and the previous display for the inequality in the last line:
\begin{align*}
\PF(\msf{Sep}^N_k(\delta, \bm B, L^N_{k+1}))
&\geq \EF[\one_{\msf{Sep}^N_k(\delta, \bm B, L^N_{k+1})}W(\bm B,L^N_{k+1})]\\
&= \frac{\EF[\one_{\msf{Sep}^N_k(\delta,  \bm B, L^N_{k+1})}W(\bm B,L^N_{k+1})]}{\EF[W(\bm B,L^N_{k+1})]} \cdot \EF[W(\bm B,L^N_{k+1})]\\
&= \PF(\msf{Sep}^N_k(\delta, \bm L^N,  L^N_{k+1}))\cdot \EF[W(\bm B,L^N_{k+1})]
\geq (1-\varepsilon) \EF[W(\bm B, L^N_{k+1})].
\end{align*}
It also trivially holds that $\EF[W(\bm B, L^N_{k+1})] \geq \EF[W(\bm B,\smash{L^N_{k+1}})\one_{\msf{Sep}^N_k(\delta, \bm B, \smash{L^N_{k+1}})}]$ and that, on the event $\smash{\msf{Sep}^N_k}(\delta,  \bm B, \smash{L^N_{k+1}})$ and by Lemma~\ref{l.W bound}, $W(\bm B, \smash{L^N_{k+1}})\geq 1-\varepsilon$ for all large enough $N$. This yields that, for such $N$,
\begin{align*}
\EF[W(\bm B, L^N_{k+1})] \geq (1-\varepsilon)\PF(\msf{Sep}^N_k(\delta, \bm B, L^N_{k+1})).
\end{align*}
This completes the proof after relabeling $\varepsilon$.
\end{proof}

\begin{corollary}\label{c.coupling with non-int bernoulli random walk bridge}
Fix $\varepsilon\in(0,1)$, $k\in\N$, $M>0$, and $a<b$. Let $\F= \Fext(k, [aN^{2/3},bN^{2/3}]$, $\bm L^N)$  and $\bm B=(B_1, \ldots, B_k)$ be a collection of $k$ independent Bernoulli random walk bridges with $B_i$ from $(aN^{2/3},L^N_i(aN^{2/3}))$ to $(bN^{2/3}, L^N_i(bN^{2/3}))$. There exist $N_0 = N_0(\varepsilon, k, b-a, M)$ and $\delta= \delta(\varepsilon,k,b-a, M)>0$ such that, for any $N\geq N_0$, the following holds with probability at least $1-\varepsilon$. For any bounded measurable function $F: \mc C(\intint{1,k}\times[aN^{2/3},bN^{2/3}], \R)\to \R$ such that $\sup|F| \leq M$,
\begin{align*}
\left|\frac{\EF\left[F(\bm B)W(\bm B,L^N_{k+1}, \intint{aN^{2/3}, bN^{2/3}})\right]}{\EF[W(\bm B,L^N_{k+1},\intint{aN^{2/3}, bN^{2/3}})]} - \EF\left[F(\bm B) \mid \msf{Sep}^N_k(\delta, [a,b], \bm B, L^N_{k+1})\right]\right| \leq \varepsilon.
\end{align*}
%
\end{corollary}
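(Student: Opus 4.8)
The plan is to derive Corollary~\ref{c.coupling with non-int bernoulli random walk bridge} from Lemma~\ref{l.partition function to non-int comparison} by a direct manipulation. First I would invoke Lemma~\ref{l.partition function to non-int comparison} with an error parameter $\varepsilon'$ to be chosen in terms of $\varepsilon$ and $M$: this provides $\delta = \delta(\varepsilon', k, b-a) > 0$ and $N_0$ such that, with probability at least $1-\varepsilon'$, the event
\begin{align*}
\msf{Good} := \left\{\frac{\EF[W(\bm B, L^N_{k+1})]}{\PF(\msf{Sep}^N_k(\delta, [a,b], \bm B, L^N_{k+1}))} \in [1-\varepsilon', 1+\varepsilon']\right\}
\end{align*}
holds (dropping the interval $\intint{aN^{2/3}, bN^{2/3}}$ from the $W$ notation for brevity, as in the earlier proofs). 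I would also, via Theorem~\ref{t.uniform separation}, further shrink $\delta$ if needed and enlarge $N_0$ so that $\PF(\msf{Sep}^N_k(\delta, [a,b], \bm B, L^N_{k+1})) \geq 1-\varepsilon'$ with probability at least $1-\varepsilon'$ (this uses the same argument as in the proof of Lemma~\ref{l.partition function to non-int comparison}: the unconditional probability of $\msf{Sep}$ being close to $1$ forces the conditional probability to be close to $1$ with high probability). I would work on the intersection of these two high-probability events, which has probability at least $1 - 2\varepsilon'$.

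The key step is then a decomposition of the Radon-Nikodym quotient. Write $Z := \EF[W(\bm B, L^N_{k+1})]$ and $P := \PF(\msf{Sep}^N_k(\delta, [a,b], \bm B, L^N_{k+1}))$. Using $W \leq 1$ and $W \geq (1-\varepsilon')$ on $\msf{Sep}$ (by Lemma~\ref{l.W bound}, for $N$ large), I would bound
\begin{align*}
\bigl|\EF[F(\bm B) W(\bm B, L^N_{k+1})] - \EF[F(\bm B)\one_{\msf{Sep}}]\bigr| &\leq \EF\bigl[|F(\bm B)|\,\one_{\msf{Sep}}\,(1 - W(\bm B, L^N_{k+1}))\bigr] + \EF\bigl[|F(\bm B)|\,\one_{\msf{Sep}^c}\bigr]\\
&\leq M\varepsilon' P + M(1-P) \leq M\varepsilon' + M\varepsilon',
\end{align*}
the last inequality on the event that $P \geq 1-\varepsilon'$. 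Combining with $Z \in [(1-\varepsilon')P, P]$ (which follows from $\msf{Good}$, or directly) I would write $\EF[F(\bm B)W]/Z = \EF[F(\bm B)\one_{\msf{Sep}}]/Z + O(M\varepsilon'/Z)$, and then note $\EF[F(\bm B)\one_{\msf{Sep}}]/Z = (P/Z)\,\EF[F(\bm B)\mid\msf{Sep}]$ with $P/Z \in [1, (1-\varepsilon')^{-1}]$. Since $|\EF[F(\bm B)\mid\msf{Sep}]| \leq M$ and $Z \geq (1-\varepsilon')(1-\varepsilon') \geq 1/2$ for $\varepsilon'$ small, collecting all the error terms yields a bound of the form $C(M)\varepsilon'$ for an absolute constant times $M$; choosing $\varepsilon' = \varepsilon/C(M)$ (and also $\varepsilon' \leq \varepsilon/2$ to handle the probability bookkeeping) gives the claim with a relabeling of $\varepsilon$.

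I do not expect a serious obstacle here: this is a routine bookkeeping argument once Lemma~\ref{l.partition function to non-int comparison} is in hand, and the only mild subtlety is making sure the dependence of $\delta$ and $N_0$ on $M$ is tracked correctly (it enters only through the final choice $\varepsilon' = \varepsilon / C(M)$, and through Lemma~\ref{l.W bound} where the relevant $N_0$ depends on $\varepsilon'$, hence on $M$). The uniformity of the bound over all $F$ with $\sup|F|\leq M$ is automatic because the high-probability event $\msf{Good} \cap \{P \geq 1-\varepsilon'\}$ does not depend on $F$, and all subsequent estimates used only $\sup|F| \leq M$. One should also confirm that $W(\bm B, L^N_{k+1}) \geq 1-\varepsilon'$ on $\msf{Sep}^N_k(\delta,[a,b],\bm B, L^N_{k+1})$ requires $N \geq N_0(\varepsilon', k, b-a)$ from Lemma~\ref{l.W bound} applied with the separation parameter $\delta$, which is exactly the form in which that lemma is stated, so there is no circularity.
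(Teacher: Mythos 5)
Your argument has a gap where you invoke Theorem~\ref{t.uniform separation} to conclude $\PF(\msf{Sep}^N_k(\delta, [a,b], \bm B, L^N_{k+1})) \geq 1-\varepsilon'$ with high probability. That theorem controls $\P(\msf{Sep}^N_k(\delta, [-T,T], \bm L^N))$ --- separation of the \emph{line ensemble curves themselves} --- whereas $P := \PF(\msf{Sep}^N_k(\delta, [a,b], \bm B, L^N_{k+1}))$ concerns the \emph{unreweighted, independent} Bernoulli random walk bridges $\bm B$. Conditionally on $\F$, $\bm B$ is the ``free'' bridge ensemble: there is no Radon--Nikodym factor enforcing non-intersection, and $k$ independent bridges whose endpoint separations are of order $N^{1/3}$ over an interval of length $(b-a)N^{2/3}$ will intersect one another (and dip below $L^N_{k+1}$) with conditional probability bounded away from zero. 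So $P$ is generically \emph{not} close to $1$; the term $M(1-P)$ in your decomposition is of order $M$ rather than $M\varepsilon'$, and the estimate collapses. The Markov-type argument from the proof of Lemma~\ref{l.partition function to non-int comparison} that you cite upgrades $\P(\msf{Sep}(\bm L^N))\geq 1-\varepsilon^2$ to $\PF(\msf{Sep}(\bm L^N))\geq 1-\varepsilon$ w.h.p., but it says nothing about $\PF(\msf{Sep}(\bm B))$.

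There are two ways out. The paper assumes $F\geq 0$ (harmless since $F$ is bounded), then writes $\EF[FW]/\EF[W] \geq \EF[F\one_{\msf{Sep}}W]/\EF[W] \geq (1-\varepsilon')\EF[F\one_{\msf{Sep}}]/\EF[W]$, discarding the $\msf{Sep}^c$ contribution purely by positivity, applies Lemma~\ref{l.partition function to non-int comparison} to replace $\EF[W]$ by $\PF(\msf{Sep})$ up to a factor $1+\varepsilon'$, and obtains the reverse inequality by running the argument with $M-F$; no lower bound on $P$ ever enters. Alternatively, your two-sided route can be repaired by retaining the $W$ factor on $\msf{Sep}^c$: instead of crudely bounding $W\one_{\msf{Sep}^c}\leq\one_{\msf{Sep}^c}$, use that on the event from Lemma~\ref{l.partition function to non-int comparison} (together with Lemma~\ref{l.W bound}) one has $\EF[W\one_{\msf{Sep}^c}] = Z - \EF[W\one_{\msf{Sep}}] \leq (1+\varepsilon')P - (1-\varepsilon')P = 2\varepsilon'P$, so $\EF[|F|W\one_{\msf{Sep}^c}]/Z \leq 2M\varepsilon'P/Z \leq 4M\varepsilon'$ for $\varepsilon'\leq\tfrac{1}{2}$ --- with no lower bound on $P$ itself. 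With this substitution the rest of your bookkeeping goes through, and the extra appeal to Theorem~\ref{t.uniform separation} becomes unnecessary beyond its use inside Lemma~\ref{l.partition function to non-int comparison}.
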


\begin{proof}
We again adopt the shorthand $\msf{Sep}^N_k(\delta,\bm  B, L^N_{k+1}) = \msf{Sep}^N_k(\delta,[a,b],\bm  B, L^N_{k+1})$ and drop the interval $\intint{aN^{2/3}, bN^{2/3}}$ from the $W$ notation. We may assume that $F\geq 0$ as we have assumed $F$ is bounded and adding a constant to $F$ does not change the lefthand side of the inequality we are proving.
Let $\varepsilon' = \varepsilon M^{-1}$. Pick $\delta>0$ from Lemma~\ref{l.partition function to non-int comparison} with $\varepsilon$ there equal to $\varepsilon'$, and let $N_0 = N_0(\varepsilon,k, b-a, M)$ be such that $W(\bm B,L^N_{k+1}) \geq 1-\varepsilon'$ on $\msf{Sep}^N_k(\delta, \bm B, L^N_{k+1})$ for $N>N_0$ (by Lemma~\ref{l.W bound}). Then
\begin{align*}
\frac{\EF\left[F(\bm B)W(\bm B,L^N_{k+1})\right]}{\EF[W(\bm B,L^N_{k+1})]}
&\geq \frac{\EF\left[F(\bm B)\one_{\msf{Sep}^N_k(\delta, \bm B, L^N_{k+1})}W(\bm B,L^N_{k+1})\right]}{\EF[W(\bm B,L^N_{k+1})]}\\
&\geq (1-\varepsilon')\frac{\EF\left[F(\bm B)\one_{\msf{Sep}^N_k(\delta, \bm B, L^N_{k+1})}\right]}{\EF[W(\bm B,L^N_{k+1})]}.
\end{align*}
Now invoking Lemma~\ref{l.partition function to non-int comparison} and our choice of $\delta$, with probability at least $1-\varepsilon$, the previous display is lower bounded by
\begin{align*}
\frac{1-\varepsilon'}{1+\varepsilon'}\cdot\frac{\EF\left[F(\bm B)\one_{\msf{Sep}^N_k(\delta, \bm B, L^N_{k+1})}\right]}{\PF(\msf{Sep}^N_k(\delta, \bm B, L^N_{k+1}))}
 = \frac{1-\varepsilon'}{1+\varepsilon'}\cdot \EF[F(\bm B)\mid \msf{Sep}^N_k(\delta, \bm B, L^N_{k+1})].
\end{align*}
Rearranging this gives one side of the claimed inequality after relabeling $\varepsilon$, since $F$ is bounded by $M$, and the other is obtained by replacing $F$ with $M-F$, and rearranging appropriately again.
\end{proof}

\subsection{Brownian Gibbs property in the limit}\label{s.proof of BG}

Throughout this section, we recall the definition of the Brownian Gibbs property from Definition~\ref{d.bg}.

\begin{lemma}\label{l.RWB to BB}
For each $N\in\N$, let $B^N$ be a Bernoulli random walk bridge from $(0,0)$ to $(N, z_N)$, and suppose $z_N/N\to p \in(-1, 0)$ as $N\to\infty$. Then
\begin{align*}
N^{-1/2}(B^N(Nt) - z_Nt) \stackrel{d}{\to} \Bbr(t)
\end{align*}
as $N\to\infty$ as processes on $\mc C([0,1], \R)$ with the topology of uniform convergence, where $\Bbr$ is a Brownian bridge on $[0,1]$ with zero endpoints and variance $p(1-p)$.
\end{lemma}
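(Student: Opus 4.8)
The statement is a functional central limit theorem (Donsker-type invariance principle) for a Bernoulli random walk bridge with endpoint slope tending to $p$, and the natural route is to deduce it from the corresponding statement for the unbridged walk together with the convergence of the bridge's fluctuations around its endpoint line. First I would set up the comparison with an i.i.d.\ sum: a Bernoulli random walk bridge from $(0,0)$ to $(N,z_N)$ can be realized by taking $N$ i.i.d.\ Bernoulli$(-p)$-type increments $\xi_1,\dots,\xi_N$ (taking values in $\{0,-1\}$ with mean $p$), forming the partial sums $S_k=\sum_{i\le k}\xi_i$, and conditioning on $S_N=z_N$. Equivalently, one can use the exchangeability of the increments: the law of $(B^N(0),\dots,B^N(N))$ is that of a uniformly random permutation of a fixed multiset of increments determined by $z_N$ (namely $|z_N|$ down-steps and $N-|z_N|$ flat steps). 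The quantity we wish to control is $\widehat B^N(t):=N^{-1/2}(B^N(Nt)-z_N t)$, the bridge recentered by the straight line joining its endpoints.

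Second, I would invoke the classical invariance principle for random walk bridges. Since the increments are i.i.d., bounded, with mean $p$ and variance $p(1-p)$, and since $z_N/N\to p$ (so the conditioning event $\{S_N=z_N\}$ is a ``typical'' event, i.e.\ $(z_N-pN)/\sqrt N$ stays bounded along the relevant scaling — actually one only needs $z_N/N\to p$ for the statement as phrased, but one should note that the recentering is by $z_N t$, not $pNt$), the standard result (e.g.\ Liggett's invariance principle for exchangeable sequences, or the bridge version of Donsker's theorem) gives that $N^{-1/2}(S_{\lfloor Nt\rfloor}-z_N t)$ conditioned on $S_N=z_N$ converges in $\mathcal C([0,1],\mathbb R)$, with the topology of uniform convergence, to a Brownian bridge with zero endpoints and variance $p(1-p)$. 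A clean way to organize this without re-deriving the invariance principle: write $\widehat B^N$ in terms of the unconditioned recentered walk $\widehat S^N(t)=N^{-1/2}(S_{\lfloor Nt\rfloor}-pNt)$, which converges to a Brownian motion $W$ of variance $p(1-p)$ by Donsker; then the bridge is obtained by the standard bridge transform $t\mapsto \widehat S^N(t)-t\widehat S^N(1)$ plus a deterministic correction of size $N^{-1/2}(z_N-pN)(\cdot)$ which either vanishes or (if one only assumes $z_N/N\to p$) must be handled — but note the recentering in the claim is by $z_N t$, precisely matching the bridge transform's endpoint, so the deterministic correction is exactly zero and $\widehat B^N(t)=N^{-1/2}(B^N(Nt)-z_N t)$ has the law of the bridge transform applied to the conditioned walk. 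Applying the continuous mapping theorem to the map $f\mapsto f-\cdot f(1)$ on $\mathcal C([0,1],\mathbb R)$, together with the fact that $W(t)-tW(1)$ is a Brownian bridge of variance $p(1-p)$, yields the result.

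Third, I would also note the linear-interpolation caveat: $B^N$ is defined on integers and we regard it as a continuous function by linear interpolation; the discrepancy between the interpolated process evaluated at $Nt$ and the value at $\lfloor Nt\rfloor$ is at most $1$ in sup norm, hence $O(N^{-1/2})$ after rescaling, which is negligible. Likewise the difference between $z_N\lfloor Nt\rfloor/N$ and $z_N t$ is $O(N^{-1/2})$ since $z_N=O(N)$. These are routine and I would dispatch them in a sentence.

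\textbf{Main obstacle.} The only genuinely non-routine point is justifying the bridge invariance principle under the weak hypothesis $z_N/N\to p$ rather than the stronger $(z_N-pN)=O(\sqrt N)$; in the regime $|z_N-pN|\gg\sqrt N$ the conditioning is a large-deviation event and one cannot simply quote Donsker plus the bridge transform. However, the key observation that saves the argument is that the recentering in the statement is by $z_N t$ (the actual chord), which is exactly what the bridge transform subtracts, so no matter how atypical $z_N$ is, the recentered process has the exact law of ``exchangeable increments conditioned on their sum, recentered by the chord,'' and for exchangeable $\{0,-1\}$-valued increments with a fixed number $|z_N|$ of down-steps this is precisely the setting of the classical combinatorial CLT / invariance principle for sampling without replacement, whose limit is a Brownian bridge with variance $\theta(1-\theta)$ where $\theta=|z_N|/N\to -p$. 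So the hard part is really just citing the correct version of the invariance principle (Hoeffding / Rosén for the combinatorial CLT, upgraded to a functional statement) and matching the variance constant $\theta(1-\theta)=(-p)(1+p)=-p-p^2$... which equals $p(1-p)$ only when one is careful about signs: with increments in $\{0,-1\}$ and mean $p<0$ we have $\theta:=\mathbb P(\xi=-1)=-p$, and $\mathrm{Var}(\xi)=\theta(1-\theta)=(-p)(1+p)=-p-p^2=p(1-p)$ after noting $p<0$ makes $-p-p^2 = p(1-p)$ — indeed $p(1-p)=p-p^2$ and $-p-p^2\ne p-p^2$ in general, so I would double-check this sign computation carefully: $\mathrm{Var}(\xi)=\mathbb E[\xi^2]-(\mathbb E\xi)^2=\theta-\theta^2$ with $\theta=-p$, giving $-p-p^2$; meanwhile the claimed variance is $p(1-p)=p-p^2$. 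These agree iff $-p=p$, i.e.\ only at $p=0$. Hence I would instead define the increments to take values in $\{0,1\}$ with mean $-p>0$ if that matches the paper's Bernoulli-path convention better, or simply track that the relevant variance is that of the increment of the Bernoulli path, $|p|(1-|p|)=(-p)(1+p)$ for $p\in(-1,0)$, and reconcile with the paper's stated $p(1-p)$ — which, for $p\in(-1,0)$, equals $(-p)(1+p)$ since $p(1-p) = -|p|(1+|p|)<0$ is impossible as a variance, so the paper must intend $p(1-p)$ to be read with the understanding that it is positive, forcing the reading $|p|(1-|p|)$; I would add a remark clarifying this convention (consistent with the remark following Lemma~\ref{l.KMT} in the text, which already explains the variance $p(1-p)$ as the increment variance of a Bernoulli walk with parameter $p$). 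Modulo this bookkeeping, the proof is a direct application of the functional CLT for exchangeable/bridge increments plus continuous mapping.
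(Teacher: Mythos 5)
Your proof is correct, but it takes a genuinely different route from the paper's. The paper's proof of Lemma~\ref{l.RWB to BB} is a one-liner: apply the KMT strong coupling of Lemma~\ref{l.KMT} (quoted from \cite{corwin2018transversal}) with the slope parameter there set to $z_N/N$ rather than the limiting $p$. This choice makes the $N^{-1}|z_N-pN|^2$ term in the KMT error bound vanish identically, so the coupling error is $C\exp(-cR + C(\log N)^2)$, negligible after taking $R\asymp(\log N)^2$; the coupled Brownian bridge has variance $(z_N/N)(1-z_N/N)\to p(1-p)$, and rescaling by $N^{-1/2}$ finishes. Your argument instead exploits the exchangeability of the bridge increments — a Bernoulli bridge from $(0,0)$ to $(N,z_N)$ is a uniformly random arrangement of $|z_N|$ down-steps among $N$ steps — and invokes the functional combinatorial CLT for sampling without replacement (Billingsley's Theorem~24 or Liggett's exchangeable invariance principle). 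This is more classical and avoids the strong coupling machinery, and your key observation — that recentering by the chord $z_N t$ rather than $pNt$ makes the argument insensitive to whether $|z_N-pN|$ is $O(\sqrt N)$ or much larger — correctly dispatches the apparent large-deviation obstacle; the same device, used differently, is why the paper substitutes $p\mapsto z_N/N$ inside KMT. Your variance bookkeeping is also right, and your guess about the convention is confirmed: throughout the paper, ``variance $p(1-p)$'' for $p\in(-1,0)$ is shorthand for the increment variance $|p|(1-|p|)=(-p)(1+p)$ of a Bernoulli path whose down-step probability is $|p|$, as the remark following Lemma~\ref{l.KMT} already indicates; the literal expression $p(1-p)$ would be negative. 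The one thing the KMT route buys that yours does not is a quantitative coupling (with explicit rate), which the paper reuses elsewhere; for the present lemma, weak convergence suffices and your approach is perfectly adequate.
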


\begin{proof}%
This follows from the KMT coupling Lemma~\ref{l.KMT} (with the $p$ there equal to $z_N/N$ here).
\end{proof}

\begin{proof}[Proof of Proposition~\ref{p.limits have BG}]
It is sufficient to prove the resampling property when resampling the top $k$ curves for any $k\in\N$, i.e, prove Definition~\ref{d.bg} holds with $j=1$, as the conditional distribution in the case of $j>1$ is obtained from the $j=1$ case along with the fact that non-intersecting Brownian bridges have the Brownian Gibbs property themselves \cite[Lemma~2.13]{dimitrov2021characterization} (this is stated in the case of no lower boundary condition, but the proof also applies if there is one). More precisely, given an interval $[a,b]$ and knowledge that the Brownian Gibbs property holds for a line ensemble $\bm\cL$ when $j=1$ (i.e., when conditioned on $\Fext(k, [a,b], \bm\cL)$), to obtain the same for $j>1$, one further conditions on  $\Fext(\intint{j,k}, [a,b], \bm\cL)$ and makes use of \cite[Lemma~2.13]{dimitrov2021characterization} along with the fact that iterative conditioning is commutative \cite[Theorem 8.15]{Kallenberg}.

Now we turn to prove the $j=1$ case. For $k\in\N$ and $a<b$, let $\bm{\mf B}:\intint{1,k}\times[a,b]\to\R$ with $\bm{\mf B} = (\mf B_1, \ldots, \mf B_k)$ be a collection of $k$ rate two independent Brownian bridges, with $\mf B_i(x) = \cL_i(x)$ for $i\in\intint{1,k}$ and $x\in \{a,b\}$. Similar to \eqref{e.Sep extended definition}, we extend the definition \eqref{e.nonint definition} to emphasize the lower boundary condition: for an interval $I$, define
\begin{align*}
\msf{NonInt}_k(I, \bm{\mf B},\cL_{k+1}) := \Bigl\{\mf B_1(x) > \mf B_2(x) > \ldots  > \mf B_k(x) > \cL_{k+1}(x) \text{ for all } x\in I\Bigr\}.
\end{align*}
In the rest of the proof we take $I=[a,b]$ and omit it from the notation. Let $\F = \Fext(k,[a,b], \bm\cL)$. For any $k\in\N$, $a<b$, and $F:\mc C(\intint{1,k}\times[a,b], \R) \to \R$ bounded and continuous, we have to show that
\begin{align*}
\E\left[F(\bm\cL) \mid \F\right] = \EF[F(\bm{\mf B})\mid \msf{NonInt}(\bm{\mf B}, \cL_{k+1})],
\end{align*}
where $F(\bm X)$ is shorthand for $F(\bm X|_{\intint{1,k}\times[a,b]})$.

By the definition of conditional expectations, and a monotone class argument, the previous display will be proven if we show that, for any $j\in\N$, $c<d$, and $G:\mc C(\intint{1,j}\times[c,d] \setminus (\intint{1,k}\times[a,b]), \R) \to \R$ bounded and continuous (with $G(\bm X)$ shorthand for $G(\bm X|_{\intint{1,j}\times[c,d] \setminus (\intint{1,k}\times[a,b])})$),
\begin{align}\label{e.BG to show}
\E\left[F(\bm \cL)\cdot G(\bm \cL) \right] = \E\bigl[\EF[F(\bm{\mf B})\mid \msf{NonInt}(\bm{\mf B}, \cL_{k+1})]\cdot G(\bm\cL)\bigr].
\end{align}
By Skorokhod's representation theorem and by the weak convergence of $\bm \cL^N$ to $\bm \cL$, we may assume that we work on a probability space such that $\bm \cL^N\to \bm \cL$ uniformly on compact sets almost surely. Then we obtain, with $\F^N := \Fext^N(k,\intint{\beta^{-1}aN^{2/3},\beta^{-1}bN^{2/3}}, \bm L^N)$, by the dominated convergence theorem and the Hall-Littlewood Gibbs property, that
\begin{align}
\E\left[F(\bm \cL)\cdot G(\bm \cL) \right]
&= \lim_{N\to\infty} \E\left[F(\bm \cL^N)\cdot G(\bm \cL^N)\right] \nonumber\\
&= \lim_{N\to\infty}\E\left[\E[F(\bm \cL^N)\mid \F^N]\cdot G(\bm \cL^N)\right]\nonumber\\
&= \lim_{N\to\infty}\E\left[\frac{\E_{\mc F^N}[F(\bm{\mf B}^N) W(\bm B^N, L^N_{k+1})]}{\E_{\mc F^N}[W(\bm B^N, L^N_{k+1})]}\cdot G(\bm \cL^N)\right],\label{e.BG convergence step}
\end{align}
the last line using the Hall-Littlewood Gibbs property and with $\bm{\mf B}^N=(B^N_1, \ldots,  B^N_k):\intint{1,k}\times[a,b]\to\R$ given by (recall $p$ from Assumption~\ref{as.one-point tightness}, $\beta$ and $\sigma$ from \eqref{e.scaling coefficients relation}, and the definition of $\bm \cL^N$ in terms of $\bm L^N$ from \eqref{e.cL definition}) $\bm{\mf B}^N = T_{p,\lambda,N}(\bm B^N)$, i.e.,
$$\mf B^N_i(x) = \sigma^{-1}N^{-1/3}\left(B^N_i(\beta xN^{2/3}) - p\beta xN^{2/3}\right)$$
with ${\bm B}^N=(B^N_1, \ldots, B^N_k)$ a family of $k$ independent Bernoulli random walk bridges, with $B^N_i$ from $(\beta^{-1} aN^{2/3}, L^N_i(\beta^{-1} aN^{2/3}))$ to $(\beta^{-1} bN^{2/3}, L^N_i(\beta^{-1} bN^{2/3}))$. Now, by Lemma~\ref{l.RWB to BB}, it holds that $\bm{\mf B}^N$ converges weakly as $N\to\infty$ to $\bm{\mf B} = (\mf B_1, \ldots, \mf B_k)$, as above, a collection of $k$ independent rate two Brownian bridges with $\mf B_i$ going from $(a, \cL_i(a))$ to $(b, \cL_i(b))$; the fact that the rate of the Brownian bridges is $2$ is a consequence of the relation \eqref{e.scaling coefficients relation} between $\beta$ and $\sigma$ in the definitions of $\bm\cL^N$ and thus $\bm{\mf B}^N$.

Recall the extended definition of $\msf{Sep}$ from \eqref{e.Sep extended definition}. Because it requires separation of order $N^{1/3}$, which should not be present for the limiting objects, we define an event $\msf{ScSep}$ (scaled separation) for an interval $[a,b]\subset \R$, an integer $k\in\N$, a (possibly infinite) interval $\Lambda\subseteq \N$ such that $\intint{1,k}\subseteq \Lambda$, and  processes $\bm X:\Lambda\times\R\to\R$ and $Y:\R\to\R$, by
\begin{align*}
\msf{ScSep}_k(\delta, [a,b], \bm X, Y) &:= \left\{\min_{i=1, \ldots, k-1}\inf_{x\in[a,b]} \left(X_i(x) - X_{i+1}(x)\right) \geq \delta\right\}\\
&\qquad \cap \left\{\inf_{x\in[a,b]} \left(X_{k+1}(x) - Y(x)\right) \geq \delta\right\}.
\end{align*}
Then observe that, by the definitions of $\bm{\mf B}^N$ and $\bm\cL^N$, it holds that
\begin{align*}
\msf{Sep}^N_k(\delta\sigma, [\beta^{-1}a,\beta^{-1}b], \bm B^N, L^N_{k+1}) = \msf{ScSep}_k(\delta, [a,b], \bm{\mf B}^N, \cL^N_{k+1}).
\end{align*}

By Corollary~\ref{c.coupling with non-int bernoulli random walk bridge} and the previous display, we obtain that there is a sequence $\varepsilon_{N,\delta}\downarrow \varepsilon_\delta$ as $N\to\infty$, where $\varepsilon_\delta$ is a sequence which goes to $0$ as $\delta\downarrow 0$, such that \eqref{e.BG convergence step} equals
\begin{align*}
\MoveEqLeft[4]
\lim_{N\to\infty}\E\left[\left(\E_{\mc F^N}[F(\bm{\mf B}^N)\mid \msf{ScSep}_k(\delta, [a,b], \bm{\mf B}^N, \cL^N_{k+1})] + \varepsilon_{N,\delta}\right)\cdot G(\bm \cL^N)\right]\\
&= \E\left[\lim_{N\to\infty}\left(\E_{\mc F^N}[F(\bm B^N)\mid \msf{ScSep}_k(\delta, [a,b], \bm{\mf B}^N, \cL^N_{k+1})] + \varepsilon_{N,\delta}\right)\cdot \lim_{N\to\infty}G(\bm \cL^N)\right]\\
&= \E\left[(\E_{\mc F}[F(\bm{\mf B})\mid \msf{ScSep}_k(\delta, [a,b], \bm{\mf B}, \cL_{k+1})]+\varepsilon_\delta)\cdot G(\bm \cL)\right];
\end{align*}
the penultimate equality used the dominated convergence theorem since $F$ and $G$ are bounded and continuous, and the final equality used the Portmanteau theorem along with the fact that $\P(\msf{ScSep}_k(\delta, [a,b], \bm{\mf B}, \cL_{k+1}))$ $> 0$ (since $\cL_{k+1}$ is a continuous function and with probability $1$ there is strictly positive separation between the endpoints of $\mf B_i$ and $\mf B_{i+1}$ as well as between those of $\mf B_{k}$ and $\mf B_{k+1}$, as follows from Theorem~\ref{t.uniform separation}) to also take the limit of the conditioning.

Finally taking $\delta\to 0$ and using that $\E_{\mc F}[F(\bm{\mf B})\mid \msf{ScSep}_k(\delta, [a,b], \bm{\mf B}, \cL_{k+1})] \to \E_{\mc F}[F(\bm{\mf B})\mid \msf{NonInt}(\bm{\mf B}, \cL_{k+1})]$ completes the proof.
\end{proof}

\appendix

\section{The Yang-Baxter equation, S6V, and the $q$-Boson model}\label{s.yang-baxter}

In this section we give a self-contained proof of the partition function expression in \eqref{e.q-boson measure} and of Proposition~\ref{p.colored line ensembles} on the distributional relation of the top line of the colored Hall-Littlewood line ensemble and the colored height function of the colored S6V model. First, recall the description of the colored $q$-Boson model from Section~\ref{s.intro.vertex model} and the colored S6V model from Section~\ref{s.intro.cS6V}, with the domain restriction performed as in Section~\ref{s.line ensemble to colored S6V}. In particular, in both models we have a spectral parameter $z\in(0,1)$ fixed and a boundary condition $\sigma:\intint{1,N}\to\intint{1,N}$ where the arrow horizontally entering $(1,k)$ (for colored S6V) and the arrow horizontally entering $(-K,k)$ for all large enough $K$ (for colored $q$-Boson) are both of color $\sigma(k)$, for each $k\in\intint{1,N}$.
To explain the relation between the colored $q$-Boson model and the colored S6V model, we need the Yang-Baxter relation enjoyed by these models, which we introduce next. Then in Section~\ref{s.partition function expression} we establish the expression for the partition function of the colored $q$-Boson model and in Section~\ref{s.proof of matching} we give the proof of Proposition~\ref{p.colored line ensembles}.

\subsection{Yang-Baxter equation} The Yang-Baxter equation is an equality of partition functions of configurations consisting of three vertices. In our case, the first vertex has vertex weight given by the weights of the colored S6V model (Figure~\ref{f.R weights}) and the other two vertices have weights given by the $\bigL_u$ weights (Figure~\ref{f.L weights}). Let us write the weights from Figure~\ref{f.R weights} with spectral parameter $u\in(0,1)$ as $\bigR_u(a, i; b, j)$ with $a,i,b,j\in\intint{0,N}$, where recall $a,i,b,j$ respectively represent the colors of the arrows vertically entering, horizontally entering, vertically exiting, and horizontally exiting from the vertex. Unlike the convention in Section~\ref{s.intro.cS6V}, here we will take $0$ to represent the absence of an arrow, so as to match the setup of the colored $q$-Boson model. In this notation, the Yang-Baxter equation can be written as follows; it was originally due to \cite{Kulish1981,BAZHANOV1985321,Jimbo1986}, but the notation we have adopted is from \cite{borodin2018coloured} (it can also be verified directly from the formulas for the weights).

\begin{lemma}[Yang-Baxter equation, {\cite[Proposition 2.3.1]{borodin2018coloured}}]\label{l.yang-baxter}
Fix $a_1, i_1, b_2, j_2\in\intint{0,N}$ and $\bm I, \bm J\in \Z_{\geq 0}^N$. Let $x,y\in(0,1)$ and let $z=y/x$. Then it holds that

\begin{align*}
\MoveEqLeft[8]
\sum_{\substack{b_1, j_1\in\intint{0,N},\\\bm K\in\Z_{\geq 0}^N}} \bigR_{y/x}(a_1, i_1; b_1, j_1)\, \bigL_x(\bm I, j_1; \bm K, j_2)\, \bigL_y(\bm K, b_1; \bm J, b_2)\\
&= \sum_{\substack{b_1, j_1\in\intint{0,N},\\\bm K\in\Z_{\geq 0}^N}}\,  \bigL_y(\bm I, a_1; \bm K, b_1)\,\bigL_x(\bm K, i_1; \bm J, j_1)\, \bigR_{y/x}(b_1, j_1; b_2, j_2);
\end{align*}
here $\bigL_u$ and $\bigR_u$ are zero if their arguments do not satisfy colored arrow conservation.
\end{lemma}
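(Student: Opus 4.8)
\textbf{Plan for the proof of the Yang--Baxter equation (Lemma~\ref{l.yang-baxter}).}

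The statement is a fixed, finite identity: for each choice of boundary data $a_1,i_1,b_2,j_2\in\intint{0,N}$ and $\bm I,\bm J\in\Z_{\geq 0}^N$, both sides are finite sums of products of three explicit rational functions of $q$ and the spectral parameters $x,y$ (with $z=y/x$). So in principle the lemma is a routine, if lengthy, verification. The plan is to reduce the verification to a manageable number of cases using the two conservation laws present: horizontal colored arrow conservation and total arrow conservation. First I would note that on the left side the outer $\bigR$-vertex fixes $b_1+j_1$ (as a multiset with $a_1,i_1$) and the $\bigL$-vertices enforce colored arrow conservation, so the summation variables $b_1,j_1\in\intint{0,N}$ and $\bm K\in\Z_{\geq 0}^N$ are in fact constrained to a finite set; the same holds on the right. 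Summing the total arrow content over all three vertices shows that both sides vanish unless the total incoming arrows $\{a_1,i_1\}\cup\bm I$ equal the total outgoing arrows $\{b_2,j_2\}\cup\bm J$ (as multisets with multiplicity), so we may assume this compatibility.

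The core reduction is the standard one for colored vertex models: by color-merging (the analogue of \eqref{e.color merging weights} for both $\bigL$ and $\bigR$, which holds since $\bigR$ is stochastic and $\bigL_u$ satisfies the merging identity quoted in the proof of Lemma~\ref{l.q-Boson color merging}), it suffices to prove the identity when at most two distinct colors appear among all the arrow labels. Indeed, an identity of this shape, once verified after merging down to any pair of colors, lifts back to the full $N$-color statement by a telescoping argument over successive merges, exactly as in the proof of Lemma~\ref{l.q-Boson color merging}. With only two colors (say colors $1<2$, plus $0$ for ``no arrow''), the $\bigR$-weights are the five entries of Figure~\ref{f.R weights} and the $\bigL_u$-weights are the six entries of Figure~\ref{f.L weights}; the boundary data $(a_1,i_1,b_2,j_2)$ now ranges over a small explicit set, and for each such choice both sides are short sums that one checks by direct substitution of the rational formulas, using $z=y/x$ to cancel. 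I would organize this finite check by the value of the pair $(i_1,a_1)$ entering the outer structure, and within each case by whether $\bm I,\bm J$ carry $0,1$ or $2$ extra arrows of each color.

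The main obstacle is purely bookkeeping: even after reducing to two colors there are enough cases (indexed by the incoming horizontal/vertical colors and by the small integer multiplicities in $\bm I,\bm J$, $\bm K$) that one must be systematic to avoid errors, and several cases produce sums of two or three rational terms whose equality relies on a non-obvious algebraic identity in $q,x,y$. I would handle this by exploiting the symmetries of the weights — in particular the fact (used already in Remark~\ref{r.s6v color merging}) that $\bigR_u$ depends only on the order relation of the colors, which collapses many ostensibly distinct cases, and the observation that the ``$\bm K$ summation'' is typically supported on one or two values once conservation is imposed. Alternatively, since the identity is already recorded in the literature (\cite[Proposition~2.3.1]{borodin2018coloured}), one may simply cite it; but the self-contained route above is the one I would carry out, and none of its steps requires anything beyond the explicit weight tables and the merging identity already established in the paper.
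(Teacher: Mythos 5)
The paper does not actually prove Lemma~\ref{l.yang-baxter}: it is stated with a citation to \cite[Proposition 2.3.1]{borodin2018coloured}, together with a parenthetical remark that the identity can be checked directly from the weight tables. You are therefore proposing a self-contained verification that the paper itself omits, which is fine in principle, but the central reduction step in your plan is not sound.

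The color-merging identity \eqref{e.color merging weights} (and its easy $\bigR$ analogue) is a one-way projection: fixing the $N$-color input data and the merged output data, it sums the $N$-color weight over preimages of the merged output to recover the merged weight. This lets one derive a $k$-color identity from an already-established $N$-color one with $k<N$ — precisely what the proof of Lemma~\ref{l.q-Boson color merging} does — but it contains no mechanism for going the other way. Your claim that an identity, ``once verified after merging down to any pair of colors, lifts back to the full $N$-color statement by a telescoping argument over successive merges, exactly as in the proof of Lemma~\ref{l.q-Boson color merging}'', misreads that proof: it only projects downward, never lifts. Concretely, if the boundary data $a_1,i_1,b_2,j_2,\bm I,\bm J$ involves three or more distinct colors, no merging map collapses the configuration to two colors while keeping the $N$-color weight you need to evaluate fixed, and there is no identity expressing the $N$-color $\bigL_u$ weight (which carries factors of the form $q^{A_{[i+1,N]}}$) as a linear combination of two-color weights. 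So the two-color verification you outline does not, by itself, give the $N$-color Yang--Baxter equation. You must either cite the result, as the paper does, or carry out the verification with general $N$; and note that even your reduced check is not literally finite, since $\bm I,\bm J,\bm K$ have unbounded multiplicities, so what needs to be established is a family of rational-function identities in $q,x,y$ and the variables $q^{I_j}$, not a list of small-integer cases.
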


Diagrammatically, the Yang-Baxter equation can be written as follows (we adopt a style of depiction similar to \cite{borodin2018coloured}).
\begin{align}\label{e.yang-baxter}
\sum_{\substack{b_1, j_1\in\intint{0,N},\\\bm K\in\Z_{\geq 0}^N}}
\vcenter{\hbox{
\tikz[scale=0.7]{
\draw[densely dotted] (-1.25,0.25) arc (-45:45:{1/(2*sqrt(2))});
\draw[line width=1pt,->, >=stealth]
(-2.5,1) node[above,scale=0.7] {$i_1$} -- (-2,1)  -- (-1,0) node[below,scale=0.7] {$j_1$} -- (1,0) node[right,scale=0.7] {$j_2$};
\draw[line width=1pt,->, >=stealth]
(-2.5,0) node[above,scale=0.7] {$a_1$} -- (-2,0) -- (-1,1) node[above,scale=0.7] {$b_1$} -- (1,1) node[right,scale=0.7] {$b_2$};
\draw[line width=2pt,->, >=stealth]
(0,-1) node[below,scale=0.7] {$\bm I$} -- (0,2) node[above,scale=0.7] {$\bm J$};
\path[line width = 2pt, tips, -stealth] (0,-1) -- (0,0.7);
\node[scale=0.7] at (0.5, 0.5) {$\bm K$};
\node[left, scale=0.9] at (-2.6,1) {$x \rightarrow$};
\node[left, scale=0.9] at (-2.6,0) {$y \rightarrow$};
}
}
}
=
\sum_{\substack{b_1, j_1\in\intint{0,N},\\\bm K\in\Z_{\geq 0}^N}}
\vcenter{\hbox{
\tikz[scale=0.7]{
\draw[densely dotted] (0,0.25) arc (-45:45:{1/(2*sqrt(2))});
\draw[line width=1pt,->, >=stealth]
(-2.5,1) node[above,scale=0.7] {$i_1$} -- (-0.75,1) node[above,scale=0.7] {$j_1$}  -- (0.25,0)  -- (1,0) node[right,scale=0.7] {$j_2$};
\draw[line width=1pt,->, >=stealth]
(-2.5,0) node[above,scale=0.7] {$a_1$} -- (-0.75,0) node[below,scale=0.7] {$b_1$} -- (0.25,1) -- (1,1) node[right,scale=0.7] {$b_2$};
\draw[line width=2pt,->, >=stealth]
(-1.75,-1) node[below,scale=0.7] {$\bm I$} -- (-1.75,2) node[above,scale=0.7] {$\bm J$};
\node[scale=0.7] at (-1.25, 0.5) {$\bm K$};
\path[line width = 2pt, tips, -stealth] (-1.75,-1) -- (-1.75,0.7);
\node[left, scale=0.9] at (-2.6,1) {$x \rightarrow$};
\node[left, scale=0.9] at (-2.6,0) {$y \rightarrow$};
}
}
}
\end{align}
Here, the left and righthand sides mean the sum over the weights of the depicted configurations of the three vertices, and the arc between two edges indicates which of those two edges corresponds to exiting vertically (upper end of arc) and horizontally (lower end of arc).
The weights of the two vertices at the intersection of the two horizontal lines and the thick vertical line are the $\bigL_u$ weights (so they can hold an arbitrary number of arrows) with spectral parameter $u$ as indicated by $x$ or $y$ at the beginning of the line (which should be followed straight through the cross), while the weight of the vertex at the center of the cross is the $\bigR_z$ weights with spectral parameter $z = y/x$.

\newcommand{\Nval}{3}
\newcommand{\Mval}{4}
\newcommand{\thelength}{6.5}

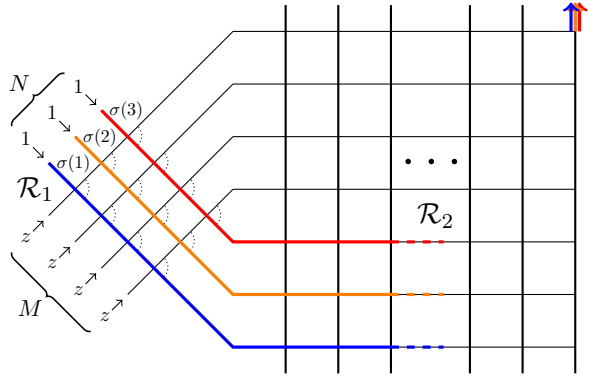
\begin{figure}[h]

\begin{tikzpicture}[scale=0.7]

  \foreach \x in {1,..., \Nval}
    \draw (-1+0.5*\x, 0.5*\x) -- ++(\Mval - 0.5*\x,-\Mval + 0.5*\x);

  \foreach \y in {1,..., \Mval}
    \draw (-1+0.5*\y, -0.5*\y) -- ++(\Mval - 0.5*\y, \Mval - 0.5*\y);

  \foreach \i in {1,...,\Mval}
  {
    \foreach \j in {1,...,\Nval}
      \draw[densely dotted] (0.5*\i+0.5*\j-1, -0.5*\i+0.5*\j) + (-40:{0.25)})  arc (-45:45:{0.25});
  }

  \foreach \y in {1,..., \Mval}
    \draw (\Mval-1, -\Mval + \y) -- ++(\thelength,0);

  \foreach \y in {1,..., \Nval}
   \draw (\Mval-1, \Nval - \y + 1) -- ++(\thelength,0);

  \foreach \x in {1,..., 3, 4.5, 5.5, ...,\thelength}
    \draw[thick] (\Mval-1 + \x, -\Mval + 0.5) -- ++ ($(0,\Mval+\Nval)$);

  \node[scale=1.8] at (\Mval-1+3.85, 0.5) {$\cdots$};

  \foreach \x in {1,..., \Nval}
    \draw[->]  (-1+0.5*\x - 0.3, 0.5*\x+ 0.3) -- node[scale=0.7, anchor = south east]{$1$} ++(0.2, -0.2);

  \foreach \x in {1,..., \Mval}
    \draw[->]  (-1+0.5*\x - 0.3, -0.5*\x- 0.3) -- node[scale=0.7, anchor = north east]{$z$} ++(0.2, 0.2);

  \foreach \x in {1, 2, 3}
    \node[scale=0.62, anchor=south] at  (-1+0.5*\x+0.475, 0.5*\x-0.3) {$\sigma(\x)$};

  \foreach \x/\thecolor in {1/blue,2/orange, 3/red}
  {
    \draw[\thecolor, very thick] (-1+0.5*\x, 0.5*\x) -- ++(\Mval - 0.5*\x,-\Mval + 0.5*\x) -- ++(3,0);
    \draw[\thecolor, very thick, dashed] (\Mval+2, -\Mval+\x) -- ++(1,0);
  }

  \draw[very thick, red, ->] (\Mval-1 + \thelength+0.08, \Nval) -- ++ (0,0.55);
  \draw[very thick, orange, ->] (\Mval-1 + \thelength, \Nval) -- ++ (0,0.55);
  \draw[very thick, blue, ->] (\Mval-1 + \thelength-0.08, \Nval) -- ++ (0,0.55);

  \draw [decorate,decoration={brace, mirror}, semithick]
  (-1+0.5 - 0.7, -0.5- 0.7) --node [pos=0.5, anchor = north east, scale=0.8] {$M$}  (-1+0.5*\Mval - 0.7, -0.5*\Mval- 0.7) ;

  \draw [decorate,decoration={brace}, semithick]
  (-1+0.5 -0.7, 0.5 +0.7) --node [pos=0.5, anchor = south east, scale=0.8] {$N$}  (-1+0.5*\Nval - 0.7, 0.5*\Nval + 0.7) ;

  \node[scale=1] at (-0.75, 0) {$\mc R_1$};
  \node[scale=1] at (6.85, -0.5) {$\mc R_2$};
\end{tikzpicture}
\caption{The model with the $M\times N$ rectangle of $\bigR_z$-weight vertices attached to the left. If any arrow tries to deviate from the configuration depicted except for finitely many columns at the right end (i.e., the portion to the right of the $\bm\cdots$), some arrow must travel infinitely far horizontally in the top $M$ rows, where the weight of a single arrow moving horizontally (with no other arrows incident) across a vertex is $z\in(0,1)$; thus any such configuration has weight zero and the depicted configuration must occur in all but finitely many columns (that is, to the left of the $\bm\cdots$). The arrows paths have not been shown in the region to the right of the $\bm\cdots$, where their trajectories are random.}
\label{f.cross on left}
\end{figure}

\subsection{Partition function of the colored $q$-Boson model} \label{s.partition function expression}
With the Yang-Baxter equation in hand, we may proceed to derive the expression for the partition function of the colored $q$-Boson model given in \eqref{e.q-boson measure}. To do this we consider an augmented vertex model, as in Figure~\ref{f.cross on left}, by the following informal description (see the next paragraph for a more formal one). We attach a copy of an $M\times N$ rectangular grid (which we denote $\mc R_1$), each of whose vertices has vertex weights given by the $\bigR_z$-weights (i.e., with spectral parameter $z$ as that of the colored $q$-Boson model), to the left of the original domain $\Z_{\leq 0}\times\intint{1,N+M}$ (which we denote $\mc R_2$). The augmented model has boundary conditions given by an arrow of color $\sigma(i)$ horizontally entering at the $i$\th row of the just added rectangle, no arrows vertically entering, no arrows horizontally exiting, and all arrows vertically exiting at $(0, N+M)$ in the original domain.

We are being somewhat informal in saying to attach the $M\times N$ rectangle $\mc R_1$ to the ``left'' of the semi-infinite domain $\mc R_2$; more formally, a configuration of the augmented model (with boundary condition $\sigma$) consists of an assignment of arrow configurations to the vertices of $\mc R_1$  as well as $\mc R_2$ such that (i) consistency and colored arrow conservation are satisfied within each, (ii) the entering boundary condition for $\mc R_1$ is given by $\sigma$, (iii) the exiting boundary condition for $\mc R_2$ is that $(\#\sigma^{-1}(\{j\}))_{j=1}^N$ vertically exits from $(0,N+M)$, and (iv) the exiting arrow configurations of $\mc R_1$ equals the boundary condition of $\mc R_2$ as shown in Figure~\ref{f.cross on left}, i.e., for all large enough $K$ and all $y\in\intint{1,N+M}$, the arrow configuration at $(-K, y)\in\mc R_2$ equals $(0^N, \tau_y; 0^n, \tau_y)$, where $\tau_y\in\intint{0,N}$ is the color of the arrow exiting $\mc R_1$ from the $y$\th vertex on its right and top boundaries (counted counterclockwise).

We first observe that the horizontally entering arrows to $\mc R_1$ are forced to go horizontally straight through: if any arrow does not pass straight through, at least one arrow will enter $\mc R_2$ through one of the top $M$ lines and then travel an infinite horizontal distance; any such configuration of arrows has weight zero since, under the $\bigL_z$ weights, the individual vertex weight of passing horizontally through a given vertex in the top $M$ rows is $z < 1$. Thus the partition function $\mc Z^{\mrm{aug}}_{\sigma}$ of the augmented model is the partition function $\mc Z^{\mrm{cB}}_\sigma$ of the original colored $q$-Boson model (with boundary condition $\sigma$)  multiplied by the weight of the configuration of arrows passing horizontally straight through $\mc R_1$; by Figure~\ref{f.R weights}, the latter is $\left((1-z)/(1-qz)\right)^{NM}$, so that
\begin{align}\label{e.Zaug relation}
\mc Z^{\mrm{aug}}_{\sigma} = \left(\frac{1-z}{1-qz}\right)^{NM}\cdot \mc Z^{\mrm{cB}}_\sigma.
\end{align}

Next, observe that there are $MN$ vertices present in the rectangle $\mathcal{R}_1$ in the augmented model. We move each of these vertices to the right, one at a time (starting at the rightmost corner vertex of the rectangle and ending at the leftmost one), all the way across $\mathcal{R}_2$. By the Yang-Baxter equation Lemma~\ref{l.yang-baxter} (see also \eqref{e.yang-baxter}), this does not change the partition function $\mathcal{Z}_{\sigma}^{\mathrm{aug}}$. Upon doing so, we obtain the situation depicted in Figure~\ref{f.cross on right}.
Observe that again the only configuration of arrows with non-zero weight is the one depicted, in which the arrows pass horizontally straight through (this uses that an arrow is entering at every one of the top $N$ rows). By Figure~\ref{f.L weights}, the configuration in $\mc R_2$ has weight $1$, and from Figure~\ref{f.R weights}, the empty configuration in $\mc R_1$ also has weight $1$. Thus, for any $\sigma$,
\begin{align}\label{e.Zaug = 1}
\mc Z^{\mrm{aug}} = 1.
\end{align}

Note that the partition function was not changed between Figures~\ref{f.cross on left} and \ref{f.cross on right}. In other words, we can conclude from \eqref{e.Zaug relation} and \eqref{e.Zaug = 1} that $\mc Z^{\mrm{cB}}_\sigma = \left((1-qz)/(1-z)\right)^{NM}$ for any $\sigma$, completing the proof of the claim in \eqref{e.q-boson measure}.

\begin{figure}[h]
\begin{tikzpicture}[scale=0.7]

  \foreach \x in {1,..., \Nval}
    \draw (\Mval+\thelength, \x) -- ++(0.5*\Mval + 0.5*\x, -0.5*\Mval - 0.5*\x);

  \foreach \y in {1,..., \Mval}
    \draw (\Mval+\thelength, -\y+1) -- ++(0.5*\Mval + 0.5*\y-0.5, 0.5*\Mval + 0.5*\y-0.5);

  \foreach \i in {1,...,\Mval}
  {
    \foreach \j in {1,...,\Nval}
    \draw[densely dotted] (\Mval+\thelength+0.5*\i+0.5*\j-0.5, -0.5*\i+0.5*\j+0.5) + (-40:{0.25)})  arc (-45:45:{0.25});
  }

  \foreach \y in {1,..., \Mval}
    \draw (\Mval-1, -\Mval + \y) -- ++(\thelength+1,0);

  \foreach \y in {1,..., \Nval}
   \draw (\Mval-1, \Nval - \y + 1) -- ++(\thelength+1,0);

  \foreach \x in {1,..., 3, 4.5, 5.5, ...,\thelength}
    \draw[thick] (\Mval-1 + \x, -\Mval + 0.5) -- ++ ($(0,\Mval+\Nval)$);

  \node[scale=1.8] at (\Mval-1+3.85, 0.5) {$\cdots$};

  \foreach \x in {1,..., \Nval}
    \draw[->]  (\Mval-1.5, \x) -- node[scale=0.7, anchor = east, left=2pt]{$1$} ++(0.35, 0);

  \foreach \x in {1,..., \Mval}
    \draw[->]  (\Mval-1.5, \x-1 -\Nval) -- node[scale=0.7, anchor = east, left=2pt]{$z$} ++(0.35, 0);

  %
  \foreach \x/\thecolor in {1/blue, 2/orange, 3/red}
    \draw[\thecolor, very thick, ->] (\Mval-1, \x) -- ++(\thelength + 0.08*\x-0.08*2,0) -- ++(0, \Nval-\x) -- ++(0,0.5);

  %

  %
  \foreach \x in {1,..., \Nval}
    \node[scale=0.65, anchor=south] at  (\Mval - 0.8, \x) {$\sigma(\x)$};

  %
  %
  %

  \draw [decorate,decoration={brace, mirror}, semithick]
  (\Mval-2.2, \Nval) --node [pos=0.5, anchor = east, scale=0.8, left=2pt] {$N$}  ++(0, -\Nval+1) ;

  \draw [decorate,decoration={brace, mirror}, semithick]
  (\Mval-2.2, 0) --node [pos=0.5, anchor = east, scale=0.8, left=2pt] {$M$}  ++(0, -\Mval+1) ;

  \node[scale=1] at (6.75, -0.5) {$\mc R_2$};
  \node[scale=1] at (14.5, 0) {$\mc R_1$};
\end{tikzpicture}
\caption{The configuration after all the crosses of $\mc R_1$ have been pulled all the way through $\mc R_2$ to the right by applying \eqref{e.yang-baxter} repeatedly. We note that now, due to the location where all the arrows must exit and by the condition that at most one arrow can be present on each horizontal edge, the arrows are forced into the single configuration of moving horizontally straight through till the very last column, where they move vertically.}
\label{f.cross on right}
\end{figure}
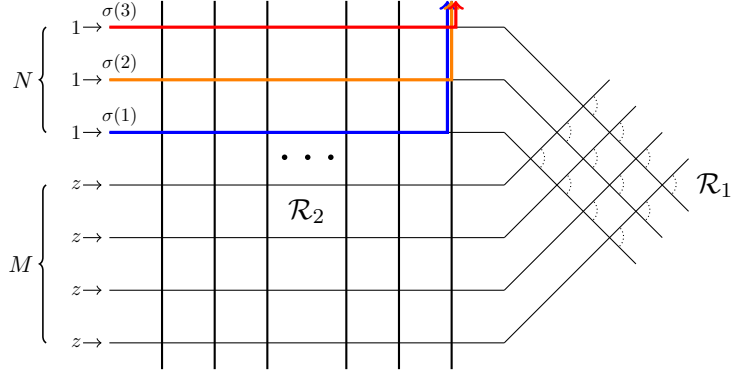

\subsection{Proof of Proposition~\ref{p.colored line ensembles}} \label{s.proof of matching}
Fix $M,N\in\N$ and a boundary condition $\sigma:\intint{1,N}\to\intint{1,N}$, and recall $h^{\mrm{S6V}}$ from \eqref{e.s6v height function}. We define a variant height function $\tilde h^{\mrm{S6V}}$ as follows. Let $(j^{\mrm{S6V}}_v, b^{\mrm{S6V}}_v)_{v\in\mc R_1}$ be sampled from the colored S6V measure on $\mc R_1$ with boundary condition $\sigma$, where (as in Section~\ref{s.s6v configuration}) $j^{\mrm{S6V}}_v$ and $b^{\mrm{S6V}}_v$ are the colors of the arrows exiting horizontally and vertically, respectively, from the vertex $v$. For $k\in\intint{1,N}$ and $y\in\intint{0,N-1}$, define
\begin{equation}\label{e.tilde h first definition}
\tilde h^{\mrm{S6V}}(k; y) := \#\left\{i > y : j^{\mrm{S6V}}_{(M,i)} \geq k\right\} = h^{\mrm{S6V}}(k,0; y, M).
\end{equation}
For $k\in\intint{1,N}$ and $y\in\intint{N,N+M}$, define
\begin{align*}
\tilde h^{\mrm{S6V}}(k; y) := \#\left\{i < N+M-y : b^{\mrm{S6V}}_{(i,N)} \geq k\right\}.
\end{align*}
In words, $\tilde h^{\mrm{S6V}}(k; y)$ counts the number of arrows of color at least $k$ which exit from the right and top boundaries of $\intint{1,M}\times\intint{1,N}$ from a site which is $(y+1)$\st or later when counted counterclockwise from the bottom right corner.

 Consider a sequence of Bernoulli paths $L^{(k)}:\intint{0,N+M}\to\Z$ such that, (i) $L^{(k)}(0) = \#\sigma^{-1}(\intint{k,N})$ and $L^{(k)}(N+M) = 0$, and (ii) $L^{(k)}(\bm\cdot) - L^{(k+1)}(\bm\cdot)$ is also a Bernoulli path. We will show~that 
\begin{equation}\label{e.matching to prove}
\begin{split}
\MoveEqLeft[8]
\P\left((\tilde h^{\mrm{S6V}}(k; y))_{k\in\intint{1,N}, y\in\intint{0,N+M}} = (L^{(k)}(y))_{k\in\intint{1,N}, y\in\intint{0,N+M}}\right)\\
&= \P\left((L^{\mrm{cHL}, (k)}_1(y))_{k\in\intint{1,N}, y\in\intint{0,N+M}} = (L^{(k)}(y))_{k\in\intint{1,N}, y\in\intint{0,N+M}}\right);
\end{split}
\end{equation}
this implies Proposition~\ref{p.colored line ensembles} by a projection of $y$ onto $\intint{1,N-1}$ and \eqref{e.tilde h first definition}.

To establish \eqref{e.matching to prove}, we first define an exiting boundary condition $(j'_{(-1, y)})_{y\in\intint{1,N+M}}$ in terms of $\smash{(L^{(k)}(y))_{k\in\intint{1,N}, y\in\intint{0,N+M}}}$ as follows: letting $\Delta(L,y) = L(y-1)-L(y) \in \{0,1\}$ for a Bernoulli path $L:\intint{0, N+M}\to \Z$, for $y\in\intint{1,N+M}$,
\begin{align*}
j'_{(-1,y)} := \min\left\{k\in\intint{1,N}: \Delta(L^{(k+1)}, y) = \Delta(L^{(k)}, y)-1\right\},
\end{align*}
where by convention $\Delta(L^{(N+1)}, y) = 0$ for all $y$ and $\min\emptyset = 0$.

Let $(j^{\mrm{cB}}_{v})_{v\in\mc R_2}$ be sampled according to the colored $q$-Boson measure (as in Section~\ref{s.higher spin configuration} and Definition~\ref{d.colored q-Boson}) with boundary condition $\sigma$.
 It follows from Definition~\ref{d.colored line ensemble} of $\bm L^{\mrm{cHL}, (k)}$ that the righthand side of \eqref{e.matching to prove} equals
\begin{align*}
\P\left(j^{\mrm{cB}}_{(-1,y)} = j'_{(-1,y)} \text{ for all } y\in\intint{1,N+M}\right).
\end{align*}
 This in turn equals the ratio of two partition functions, $\mc Z^{\mrm{cB}}_{\sigma}(\bm j')/\mc Z^{\mrm{cB}}_{\sigma}$ (see the left panel of Figure~\ref{f.partition functions}): the numerator is the partition function $\smash{\mc Z^{\mrm{cB}}_{\sigma}(\bm j')}$ of the colored $q$-Boson measure with (incoming) boundary condition $\sigma$ as well as the (outgoing) boundary condition $(j'_{(-1,y)}: y\in\intint{1,N+M})$ at $\{-1\}\times\intint{1,N+M}$, and the denominator is the partition function $\smash{\mc Z^{\mrm{cB}}_{\sigma} = ((1-qz)/(1-z))^{NM}}$ of the whole measure, as derived in Section~\ref{s.partition function expression}.

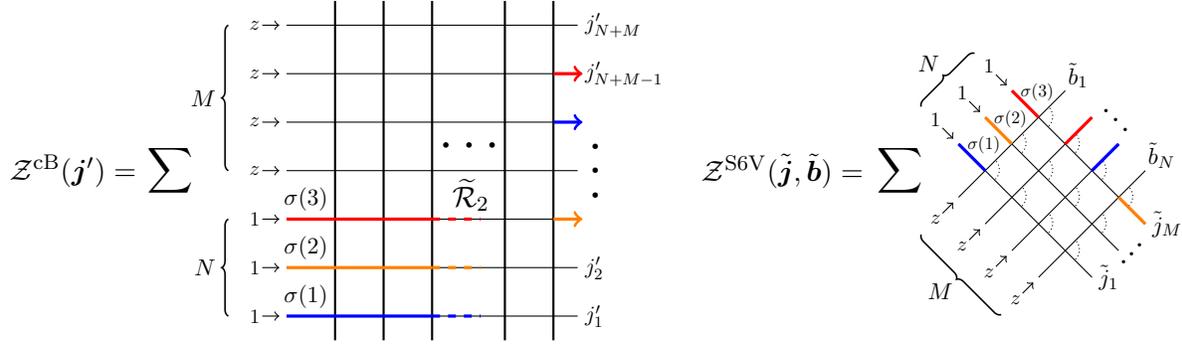
\begin{figure}[t]
\begin{tikzpicture}[scale=0.645]

  \node[scale=1] at (\Mval-5.4, 0) {$\mc Z^{\mrm{cB}}(\bm j')=$};
  \node[scale=1.1] at (\Mval-3.4,0) {$\displaystyle\sum$};

  \foreach \y in {1,..., \Mval}
    \draw (\Mval-1, -\Mval + \y) -- ++(\thelength-0.5,0);

  \foreach \y in {1,..., \Nval}
   \draw (\Mval-1, \Nval - \y + 1) -- ++(\thelength-0.5,0);

  \foreach \x in {1,..., 3, 4.5, 5.5}
    \draw[thick] (\Mval-1 + \x, -\Mval + 0.5) -- ++ ($(0,\Mval+\Nval)$);

  \node[scale=1.8] at (\Mval-1+3.85, 0.5) {$\cdots$};

  \foreach \x in {1,..., \Mval}
    \draw[->]  (\Mval-1.5, \x-1) -- node[scale=0.7, anchor = east, left=2pt]{$z$} ++(0.35, 0);

  \foreach \x in {1,..., \Nval}
    \draw[->]  (\Mval-1.5, \x-1 -\Nval) -- node[scale=0.7, anchor = east, left=2pt]{$1$} ++(0.35, 0);

  \foreach \x in {1,..., \Nval}
    \node[scale=0.8, anchor=south] at  (\Mval - 0.6, \x-\Nval-1) {$\sigma(\x)$};

  \foreach \x/\thecolor in {1/blue,2/orange, 3/red}
  {
    \draw[\thecolor, very thick] (\Mval - 1,-\Mval + \x) -- ++(3,0);
    \draw[\thecolor, very thick, dashed] (\Mval+2, -\Mval+\x) -- ++(1,0);
  }

  \foreach \y/\thecolor in {3/orange, 5/blue, 6/red}
    \draw[very thick, \thecolor, ->] (8.5, -4+\y) -- ++(0.6,0);
  %
  %
  %

  \draw [decorate,decoration={brace, mirror}, semithick]
  (\Mval-2.2, \Nval) --node [pos=0.5, anchor = east, scale=0.8, left=2pt] {$M$}  ++(0, -\Mval+1) ;

  \draw [decorate,decoration={brace, mirror}, semithick]
  (\Mval-2.2, -1) --node [pos=0.5, anchor = east, scale=0.8, left=2pt] {$N$}  ++(0, -\Nval+1) ;

  \node[scale=1] at (6.8, -0.5) {$\widetilde{\mc R}_2$};

  \foreach \y/\thelabel in {1/1,2/2,6/N+M-1,7/N+M}
    \node[scale=0.75, anchor=west] at (\Mval+\thelength-1.5, -4+\y) {$j'_{\thelabel}$};

  \foreach \y in {-1,0,1}
    \node[scale=1.2, anchor=west] at (\Mval+\thelength-1.5, \y*0.5) {$\bm .$};

\begin{scope}[shift={(17.4,0)}, scale=1.1]

  \node[scale=1] at (\Mval-7.8, 0) {$\mc Z^{\mrm{S6V}}(\tilde{\bm j}, \tilde{\bm b})=$};
  \node[scale=1.1] at (\Mval-5.6,0) {$\displaystyle\sum$};

  \foreach \x in {1,..., \Nval}
    \draw (-1+0.5*\x, 0.5*\x) -- ++(\Mval-1.5, -\Mval+1.5);

  \foreach \y in {1,..., \Mval}
    \draw (-1+0.5*\y, -0.5*\y) -- ++(\Mval - 2, \Mval - 2);

  \foreach \y/\thelabel in {1/1, 4/N}
    \node[anchor=south west, scale=0.8] at (\Mval-3+0.5*\y-0.1, \Mval-2-0.5*\y-0.1) {$\tilde b_\thelabel$};

  \foreach \x/\thelabel in {1/1, 3/M}
    \node[anchor=north west, scale=0.8, right=-0.5pt] at (\Mval-2.5+0.5*\x, -\Mval+1.5+0.5*\x) {$\tilde j_\thelabel$};

  \foreach \y in {-1,0,1}
    \node[scale=1.1, anchor=west] at (\Mval-2.5 + 0.95 + 0.15*\y, -\Mval+1.5 + 0.95 +\y*0.15) {$.$};

  \foreach \y in {-1,0,1}
    \node[scale=1.1, anchor=west] at (\Mval-2 + 0.15 + 0.18*\y, \Mval-3 -0.05 - \y*0.18) {$.$};

  \foreach \i in {1,...,\Mval}
  {
    \foreach \j in {1,...,\Nval}
      \draw[densely dotted] (0.5*\i+0.5*\j-1, -0.5*\i+0.5*\j) + (-40:{0.25)})  arc (-45:45:{0.25});
  }

  \foreach \x in {1,..., \Nval}
    \draw[->]  (-1+0.5*\x - 0.3, 0.5*\x+ 0.3) -- node[scale=0.7, anchor = south east]{$1$} ++(0.2, -0.2);

  \foreach \x in {1,..., \Mval}
    \draw[->]  (-1+0.5*\x - 0.3, -0.5*\x- 0.3) -- node[scale=0.7, anchor = north east]{$z$} ++(0.2, 0.2);

  \foreach \x in {1, 2, 3}
    \node[scale=0.6, anchor=south] at  (-1+0.5*\x+0.475, 0.5*\x-0.3) {$\sigma(\x)$};

  \foreach \x/\thecolor in {1/blue,2/orange, 3/red}
    \draw[\thecolor, very thick] (-1+0.5*\x, 0.5*\x) -- ++(0.5,-0.5);

  \draw[red, very thick] (1.5,0.5) -- ++(0.5,0.5);
  \draw[blue, very thick] (2,0) -- ++(0.5,0.5);
  \draw[orange, very thick] (2.5,-0.5) -- ++(0.5,-0.5);

  \draw [decorate,decoration={brace, mirror}, semithick]
  (-1+0.5 - 0.7, -0.5- 0.7) --node [pos=0.5, anchor = north east, scale=0.8] {$M$}  (-1+0.5*\Mval - 0.7, -0.5*\Mval- 0.7) ;

  \draw [decorate,decoration={brace}, semithick]
  (-1+0.5 -0.7, 0.5 +0.7) --node [pos=0.5, anchor = south east, scale=0.8] {$N$}  (-1+0.5*\Nval - 0.7, 0.5*\Nval + 0.7) ;

\end{scope}

\end{tikzpicture}
\caption{The definitions of the partition functions $\mc Z^{\mrm{cB}}_{\sigma}(\bm j')$ (left) and $\mc Z^{\mrm{S6V}}_{\sigma}(\tilde{\bm b}, \tilde{\bm j})$ (right); here the summation means to add the weights of all configurations which satisfy the depicted boundary conditions defined by $\sigma$ and $\bm j'$ in the left panel and $\sigma$, $\tilde{\bm b}$, and $\tilde{\bm j}$ in the right panel.}\label{f.partition functions}
\end{figure}

Recall that $(b^{\mrm{S6V}}_{(x,N)},j^\mrm{S6V}_{(M,y)}:x\in\intint{1,M}, y\in\intint{1,N})$ are sampled according to the colored S6V model with boundary condition $\sigma$. Let $\smash{\tilde b_{x} := j'_{(-1,N+M+1-x)}}$ for $x\in\intint{1,M}$ and $\smash{\tilde j_{y} := j'_{(-1,y)}}$ for $y\in\intint{1,N}$. Then the lefthand side of \eqref{e.matching to prove} equals
\begin{align*}
\P\left(b^{\mrm{S6V}}_{(x,N)} = \tilde b_{x}, j^\mrm{S6V}_{(M,y)} = \tilde j_{y} \text{ for all } x\in\intint{1,M}, y\in\intint{1,N}\right),
\end{align*}
which in turn equals the partition function $\mc Z^{\mrm{S6V}}_{\sigma}(\tilde{\bm b},\tilde{\bm j})$ (see the right panel of Figure~\ref{f.partition functions}) for colored S6V with (incoming) boundary condition $\sigma$ and the (outgoing) boundary condition $((\tilde b_{(x,N)}, \tilde j_{(M,y)}) : x\in\intint{1,M}, y\in\intint{1,N})$ (we do not divide by the  normalization constant of the model as a whole since it is $1$).

Thus, to establish \eqref{e.matching to prove}, we must show that 
\begin{equation}\label{e.line ensemble matching relation to show}
\frac{\mc Z^{\mrm{cB}}_{\sigma}(\bm j')}{\mc Z^{\mrm{cB}}_{\sigma}} = \mc Z^{\mrm{S6V}}_{\sigma}(\tilde{\bm b},\tilde{\bm j}).
\end{equation}
We establish this by using the Yang-Baxter equation \eqref{e.yang-baxter} via a similar argument as in the last section.
We start from an augmented model with $\mc R_1$ to the left of $\smash{\widetilde{\mc R}_2:= \Z_{\leq -1}\times\intint{1,N+M}}$, with (incoming) boundary conditions $\sigma$ and (outgoing) boundary conditions $(j'_{(-1,y)})_{y\in\intint{1,N+M}}$; see the left side of Figure~\ref{f.cross almost there}. By reasoning as in Section~\ref{s.partition function expression}, the partition function of this model is $\smash{((1-z)/(1-qz))^{NM}\cdot \mc Z^{\mrm{cB}}_{\sigma}(\bm j') = \mc Z^{\mrm{cB}}_{\sigma}(\bm j')/\mc Z^{\mrm{cB}}_{\sigma}}$.

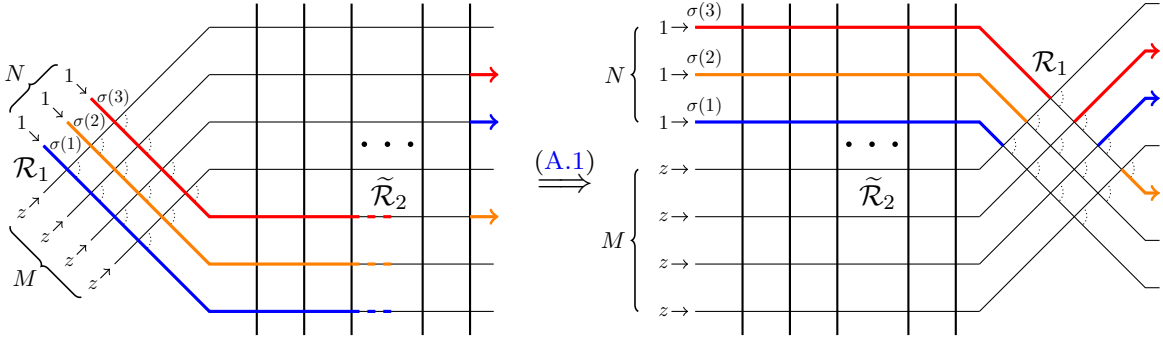
\begin{figure}[h]
\begin{tikzpicture}[scale=0.63]

  \foreach \x in {1,..., \Nval}
    \draw (-1+0.5*\x, 0.5*\x) -- ++(\Mval - 0.5*\x,-\Mval + 0.5*\x);

  \foreach \y in {1,..., \Mval}
    \draw (-1+0.5*\y, -0.5*\y) -- ++(\Mval - 0.5*\y, \Mval - 0.5*\y);

  \foreach \i in {1,...,\Mval}
  {
    \foreach \j in {1,...,\Nval}
      \draw[densely dotted] (0.5*\i+0.5*\j-1, -0.5*\i+0.5*\j) + (-40:{0.25)})  arc (-45:45:{0.25});
  }

  \foreach \y in {1,..., \Mval}
    \draw (\Mval-1, -\Mval + \y) -- ++(\thelength-0.5,0);

  \foreach \y in {1,..., \Nval}
   \draw (\Mval-1, \Nval - \y + 1) -- ++(\thelength-0.5,0);

  \foreach \x in {1,..., 3, 4.5, 5.5}
    \draw[thick] (\Mval-1 + \x, -\Mval + 0.5) -- ++ ($(0,\Mval+\Nval)$);

  \node[scale=1.8] at (\Mval-1+3.85, 0.5) {$\cdots$};

  \foreach \x in {1,..., \Nval}
    \draw[->]  (-1+0.5*\x - 0.3, 0.5*\x+ 0.3) -- node[scale=0.7, anchor = south east]{$1$} ++(0.2, -0.2);

  \foreach \x in {1,..., \Mval}
    \draw[->]  (-1+0.5*\x - 0.3, -0.5*\x- 0.3) -- node[scale=0.7, anchor = north east]{$z$} ++(0.2, 0.2);

  \foreach \x in {1, 2, 3}
    \node[scale=0.6, anchor=south] at  (-1+0.5*\x+0.475, 0.5*\x-0.3) {$\sigma(\x)$};

  \foreach \x/\thecolor in {1/blue,2/orange, 3/red}
  {
    \draw[\thecolor, very thick] (-1+0.5*\x, 0.5*\x) -- ++(\Mval - 0.5*\x,-\Mval + 0.5*\x) -- ++(3,0);
    \draw[\thecolor, very thick, dashed] (\Mval+2, -\Mval+\x) -- ++(1,0);
  }

  \foreach \y/\thecolor in {3/orange, 5/blue, 6/red}
    \draw[very thick, \thecolor, ->] (8.5, -4+\y) -- ++(0.6,0);

  \draw [decorate,decoration={brace, mirror}, semithick]
  (-1+0.5 - 0.7, -0.5- 0.7) --node [pos=0.5, anchor = north east, scale=0.8] {$M$}  (-1+0.5*\Mval - 0.7, -0.5*\Mval- 0.7) ;

  \draw [decorate,decoration={brace}, semithick]
  (-1+0.5 -0.7, 0.5 +0.7) --node [pos=0.5, anchor = south east, scale=0.8] {$N$}  (-1+0.5*\Nval - 0.7, 0.5*\Nval + 0.7) ;

  \node[scale=1] at (-0.75, 0) {$\mc R_1$};
  \node[scale=1] at (6.8, -0.5) {$\widetilde{\mc R}_2$};

\node[scale=1.2] at (10.5,0) {$\stackrel{\eqref{e.yang-baxter}}{\Longrightarrow}$};


\begin{scope}[shift={(10.25,0)}]

  \foreach \x in {1,..., \Nval}
    \draw (\Mval+\thelength-1.5, \x) -- ++(\Mval-0.5, -\Mval +0.5 ) -- ++(0.3,0);

  \foreach \y in {1,..., \Mval}
    \draw (\Mval+\thelength-1.5, -\y+1) -- ++(\Mval-0.5, \Mval - 0.5) -- ++(0.3,0);

  \foreach \i in {1,...,\Mval}
  {
    \foreach \j in {1,...,\Nval}
    \draw[densely dotted] (\Mval+\thelength+0.5*\i+0.5*\j-2, -0.5*\i+0.5*\j+0.5) + (-40:{0.25)})  arc (-45:45:{0.25});
  }

  \foreach \y in {1,..., \Mval}
    \draw (\Mval-1, -\Mval + \y) -- ++(\thelength-0.5,0);

  \foreach \y in {1,..., \Nval}
   \draw (\Mval-1, \Nval - \y + 1) -- ++(\thelength-0.5,0);

  \foreach \x in {2,..., 4, 5.5, 6.5, ...,\thelength}
    \draw[thick] (\Mval-1 + \x-1, -\Mval + 0.5) -- ++ ($(0,\Mval+\Nval)$);


  \node[scale=1.8] at (\Mval-1+3.825, 0.5) {$\cdots$};

  \foreach \x in {1,..., \Nval}
    \draw[->]  (\Mval-1.5, \x) -- node[scale=0.7, anchor = east, left=2pt]{$1$} ++(0.35, 0);

  \foreach \x in {1,..., \Mval}
    \draw[->]  (\Mval-1.5, \x-1 -\Nval) -- node[scale=0.7, anchor = east, left=2pt]{$z$} ++(0.35, 0);

    \foreach \x/\thecolor in {1/blue, 2/orange, 3/red}
    \draw[\thecolor, very thick] (\Mval-1, \x) -- ++(\thelength-0.5,0) -- ++(0.5*\x, -0.5*\x);

  \foreach \x in {1,..., \Nval}
    \node[scale=0.65, anchor=south] at  (\Mval-0.8, \x) {$\sigma(\x)$};

  \draw[very thick, red, ->] (\Mval+\thelength+0.5*\Mval+0.5*1-2, -0.5*\Mval+0.5*1+2.5) -- ++(1.5,1.5) -- ++(0.3,0);
  \draw[very thick, blue, ->] (\Mval+\thelength+0.5*\Mval+0.5*2-2, -0.5*\Mval+2.5) -- ++(1,1) -- ++(0.3,0);
  \draw[very thick, orange, ->] (\Mval+\thelength+0.5*\Mval+0.5*3-2, -0.5*\Mval - 0.5 + 2.5) -- ++(0.5,-0.5) -- ++(0.3,0);

  \draw [decorate,decoration={brace, mirror}, semithick]
  (\Mval-2.2, \Nval) --node [pos=0.5, anchor = east, scale=0.8, left=2pt] {$N$}  ++(0, 1-\Nval) ;

  \draw [decorate,decoration={brace, mirror}, semithick]
  (\Mval-2.2, 0) --node [pos=0.5, anchor = east, scale=0.8, left=2pt] {$M$}  ++(0, -\Mval+1) ;

  \node[scale=1] at (10.5, 2.25) {$\mc R_1$};
  \node[scale=1] at (6.85, -0.5) {$\widetilde{\mc R}_2$};\end{scope}
\end{tikzpicture}
\caption{Left: The augmented model with outgoing boundary conditions. Right: the same when $\mc R_1$ is moved all the way through $\widetilde{\mc R}_2$ using repeated applications of the Yang-Baxter equation \eqref{e.yang-baxter}; again, only a single configuration of arrows till $\mc R_1$ has non-zero weight.}
\label{f.cross almost there}
\end{figure}

Now we move the crosses in $\mc R_1$ to the right of $\widetilde{\mc R}_2$ using \eqref{e.yang-baxter} again and obtain the situation in the right side of Figure~\ref{f.cross almost there}.  Now, as depicted, there is a single configuration of non-zero weight for the arrows in $\widetilde{\mc R}_2$: they go horizontally straight through and are incident on $\mc R_1$ in the same way as the latter having incoming boundary condition $\sigma$, and outgoing boundary condition given by $(\tilde{\bm b},\tilde{\bm j})$. Thus the partition function of the right side of Figure~\ref{f.cross almost there} is simply $\mc Z^{\mrm{S6V}}_\sigma(\tilde b, \tilde j)$, since the weight of the fixed configuration in the $\smash{\widetilde{\mc R}_2}$ region is $1$. Since the partition function is not modified by applying \eqref{e.yang-baxter}, we obtain \eqref{e.line ensemble matching relation to show} and the proof is complete.

\section{Proofs of miscellaneous lemmas}\label{s.random gibbs}

In this section we prove various miscellaneous statements in the main text whose proofs were deferred. Lemmas~\ref{l.asep finite speed of propagation} and \ref{l.s6v finite pseed of propagation} are proved in Section~\ref{s.finite speed of propagation proofs}; Lemma \ref{l.random walk bridge fluctuation} in Section~\ref{s.two point fluc proof}; Lemma \ref{l.quantitative monotonicity for n=1} in Section~\ref{s.quant mono proof}; Lemma \ref{l.Gibbs on random interval} in Section~\ref{s.random gibbs proof}; and Lemma~\ref{l.non-int RW closeness} in Section~\ref{s.rw separation}. An alternative proof of Theorem~\ref{t.asep airy sheet} when $\alpha=0$ (as mentioned in Remark~\ref{r.qs22 for asep}) is given in Section~\ref{s.alt proof of ASEP sheet}.

\subsection{Finite speed of propagation bounds for colored ASEP and colored S6V}\label{s.finite speed of propagation proofs}

\begin{proof}[Proof of Lemma~\ref{l.asep finite speed of propagation}]
The lemma follows from the claim that, with probability at least $1-\exp(-ct)$, no particle in colored ASEP (with packed initial condition) of color greater than $-\floor{2t}$ is at any location to the right of $\floor{4t}$ by time $t$. To see this, we merge all colors greater than or equal to $-\floor{2t}$ to be particles and all particles of color less than $-\floor{2t}$ to be holes, resulting in uncolored ASEP with step initial condition from location $\floor{2t}$, in which setting we must show the probability that the rightmost particle at time $t$ is at location greater than $\floor{4t}$ is at most $\exp(-ct)$. By ignoring left jump attempts, and letting $\smash{(X_i)_{i=1}^{\floor{2t}-1}}$ be a collection of independent and identically distributed exponential rate 1 random variables, this probability is upper bounded by $\smash{\P(\sum_{i=1}^{\floor{2t}-1} X_i < t)}$, which is indeed upper bounded by $\exp(-ct)$ by standard concentration results (e.g., \cite[Theorem 5.1]{janson2018tail}).
\end{proof}

\begin{proof}[Proof of Lemma~\ref{l.s6v finite pseed of propagation}]
By monotonicity of $\hssv(x,0;\bm\cdot, t)$ and since $\hssv(x,0;\bm\cdot, t)\leq \floor{\delta^{-1}t}-x+1$ for all $x$, it is sufficient to prove the case $z=\floor{4t}$. The event in the lemma with $z=\floor{4t}$ contains the event that no arrow of color at least $-\floor{2t}$ exits horizontally from a vertex in $I_t:=\{\floor{\delta^{-1}t}\}\times \intint{-\floor{2t},\floor{\delta^{-1}t} - \floor{4t}}$. To lower bound the probability of this event, we first merge all arrows of color at least $-\floor{2t}$ to color 1, and all other arrows to color $0$, obtaining an uncolored S6V model with step initial condition at $-\floor{2t}$. Then, equivalently, we need to upper bound the probability that the lowest arrow, i.e., the one starting at $(1, -\floor{2t})$, exits horizontally from $I_t$; we call this event $\msf E_t$. It can be described as the bottom-most arrow traveling at least $\floor{\delta^{-1}t}$ horizontally before traveling $\floor{\delta^{-1}t}-\floor{2t}$ vertically. The trajectory of the bottom-most arrow can be broken up into alternating intervals of horizontal travel and intervals of vertical travel. Note that the $i$\th interval of horizontal travel begins with a vertex configuration as in the fourth panel of Figure~\ref{f.R weights} and is followed by a random number $X_i$ of configurations as in the third panel of Figure~\ref{f.R weights}, where $X_i$ is a geometric random variable, with distribution given by $\P(X_i = k) = \delta^k(1-\delta)$ for $k\in\Z_{\geq 0}$; thus the $i$\th interval of horizontal travel has length $1+X_i$. The $X_i$ are independent for different $i$, and there are at most $\floor{\delta^{-1}t}-\floor{2t}$ many intervals of horizontal travel, so $\P(\msf E_t) \leq \P(\sum_{i=1}^{\smash{\floor{\delta^{-1}t} - \floor{2t}}}(1+ X_i) > \floor{\delta^{-1}t}) = \P(\sum_{i=1}^{\smash{\floor{\delta^{-1}t}-\floor{2t}}}X_i > \floor{2t})$. Using that $\smash{\E[\sum_{i=1}^{\floor{\delta^{-1}t}-\floor{2t}}X_i] = t(1-2\delta)/(1-\delta) < 2t}$, this is upper bounded by $\exp(-ct)$ for a $c>0$ independent of $\delta$, again by standard concentration inequalities (e.g., \cite[Theorem 2.1]{janson2018tail}).
\end{proof}

\subsection{Two-point fluctuation bound for Bernoulli random walk bridge}\label{s.two point fluc proof}
\begin{proof}[Proof of Lemma~\ref{l.random walk bridge fluctuation}]
It is enough to prove the two-point bound without the supremum, i.e., to prove that there exist $C$, $c>0$ such that for all $M>0$,
\begin{align*}
\P\left(|B(x) - B(y) - p(x-y)| > M|x-y|^{1/2}\right) \leq C\exp(-cM^2).
\end{align*}
From here the claim is proved by a standard chaining argument, i.e., doing a union bound over dyadically decreasing scales and using the above two point bound at each scale; e.g., we may apply \cite[Lemma~2.3]{dauvergne2023wiener} or imitate the proofs of Proposition~\ref{p.lower tail induction step} or Lemma~\ref{l.no quick drop uniform}.

Assume without loss of generality that $x>y$.
Observe that $\{B(w)-B(w-1)\}_{w=1}^{n}$ is a collection of $pn$ many $-1$s and $(1-p)n$ many 0s in uniformly random order. Clearly, $B(x)-B(y)$ is the number of $-1$s lying in positions $y+1, \ldots, x$. Observe that this has the same distribution as the number of $-1$s in the first $k:=x-y$ positions, which is the number $X$ of $-1$s drawn without replacement in the first $k$ draws from a collection of $n$ numbers of which $K := pn$ are $-1$s. In other words, $X$ is a hypergeometric distribution with parameters $n$, $K$, and $k$. By a theorem of Hoeffding \cite[Theorems 1 and 4]{hoeffding1963probability}, it follows that
\begin{align*}
\P\left(|X - pk| \geq Mk^{1/2}\right) \leq 2\exp(-2M^2),
\end{align*}
which completes the proof.
\end{proof}

\subsection{Alternative proof of ASEP sheet to Airy sheet convergence at $\bm{\alpha=0}$}\label{s.alt proof of ASEP sheet}

In this subsection we provide the alternate proof of Theorem~\ref{t.asep airy sheet} in the case of $\alpha=0$, that was mentioned in Remark~\ref{r.qs22 for asep}.

\begin{proof}[Alternative proof of Theorem~\ref{t.asep airy sheet} for $\alpha=0$] %
As in the original proof in Section~\ref{s.convergence to Airy sheet}, we again take $\S^N(x;y) = \S^{\mrm{ASEP},\theta, \varepsilon}(-x;-y)$ as processes in $(x,y)$ with $N=\varepsilon^{-1}$, and $\bm L^{(j), N} = \bm L^{\mrm{ASEP}, (j), \theta, \varepsilon}$. As in the proof of Theorem~\ref{t.asep airy sheet} in Section~\ref{s.convergence to Airy sheet}, \eqref{e.approximate LPP problem}, \eqref{e.rescaled two parameter stationarity}, and \eqref{e.old N new N inequality} hold and $\bm L^{(1),N}$ satisfies Assumptions~\ref{as.HL Gibbs} and \ref{as.one-point tightness}. So by Theorem~\ref{t.tightness} and Proposition~\ref{p.limits have BG}, $\bm \cL^{(1), N}$ is tight and all subsequential limits satisfy the Brownian Gibbs property; call such a subsequential limit $\bm\cL^{(1)}$. Combining Lemma~\ref{l.asep s6v comparison} with \cite[Theorem 2.2 (2)]{quastel2022convergence} yields that $\cL^{\smash{(1)}}_1 \stackrel{\smash{d}}{=} \cP_1$ (this uses $\alpha=0$), and, since $\bm \cL^{\smash{(1)}}$ has the Brownian Gibbs property, \cite[Theorem 1.1]{dimitrov2021characterization} implies that $\bm \cL^{(1)} \stackrel{\smash{d}}{=} \bm \cP$. Since every subsequential limit is the same, we obtain that $\bm \cL^{(1), N} \xrightarrow{\smash{d}} \bm \cP$. Invoking Theorem~\ref{t.general Airy sheet convergence} and the fact that $\mathcal{S} (x; y) \stackrel{\smash{d}}{=} \mathcal{S}(-x; -y)$, as processes on $\R^2$, from \cite[Proposition~1.23]{dauvergne2021scaling} completes the proof.
\end{proof}

\subsection{Quantitative monotonicity for non-intersecting Bernoulli random walk bridge}\label{s.quant mono proof}

\begin{proof}[Proof of Lemma~\ref{l.quantitative monotonicity for n=1}]
We construct a Markov chain whose value at any time $t$ is a pair of Bernoulli paths $(B^{\shortuparrow}_t, B^{\downarrow}_t)$ such that
\begin{align}\label{e.quantitative monotonicity}
B^{\downarrow}_t(x) \leq B^{\shortuparrow}_t(x) \leq B^{\downarrow}_t(x) + M
\end{align}
holds almost surely and the large $t$ marginal distributions of the chain are respectively the marginal distributions of $B^{\shortuparrow}$ and $B^{\downarrow}$. Any subsequential limiting coupling so produced will establish the lemma. We may assume $M\geq 1$ as otherwise there is nothing to prove.

For $*\in\{\shortuparrow,\downarrow\}$, define $B^{*}_0$ to be the minimal Bernoulli path from $(a,x^*)$ to $(b, y^*)$ which satisfies  $g(x) \leq B^*_0(x)\leq f(x)$ for all $x\in\intint{a,b}$ (the assumptions guarantee at least one such path exists). We claim that \eqref{e.quantitative monotonicity} holds at $t=0$. That the first inequality holds is immediate by minimality of the paths.

For the second inequality, suppose to the contrary that there exists $x_0\in\intint{a,b}$ such that $\smash{B^{\shortuparrow}_0(x_0) = B^{\downarrow}_0(x_0) + M+1}$, and we take $x_0$ to be the smallest such point. Let $y_0 > x_0$ be the first such that $\smash{B^{\shortuparrow}_0(y_0) = B^{\downarrow}_0(y_0) + M}$; it exists since the final separation of the paths at $b$ is at most $M$ and the separation changes by an element of $\{-1,0,1\}$ at each step. Now, by minimality of $x_0$ and $y_0$, $B^\shortuparrow$ has a flat-step starting at $x_0$ and a down-step starting at $y_0-1$. Let $\smash{\tilde B^\shortuparrow_0}$ be the path obtained from $B_0$ by flipping the flat-step at $x_0$ to a down-step and flipping the down-step at $y_0-1$ to a flat-step; then $\smash{\tilde B^\shortuparrow_0}$ is lower than $B^{\shortuparrow}_0$ and does not intersect $g$ (as $\smash{\tilde B^\shortuparrow_0(x)\geq B^\downarrow_0(x)\geq g(x)}$ for all $x\in\intint{x_0, y_0}$), contradicting the minimality of $\smash{B^{\downarrow}_0}$. Thus our claim is established, i.e., \eqref{e.quantitative monotonicity} holds at $t=0$.

Now we specify the Markov dynamics. At each time $t\geq 1$, an $X\in\intint{a,b}$ and $\sigma\in\{0, -1\}$ are chosen independently and uniformly at random. For each $*\in\{\shortuparrow,\downarrow\}$, if $B^*_{t-1}(X+1) = B^*_{t-1}(X-1) - 1$ (i.e., there is a down-step followed by a flat-step or vice versa from $X-1$ to $X+1$ in $B^*_{t-1}$), we set $B^*_t(X) = B^*_{t-1}(X-1) + \sigma$ and $B^*_t(x) = B^*_{t-1}(x)$ for all $x\neq X$, as long as the resulting $B^*_t$ satisfies $g \leq B^*_t\leq f$; if not, no change is made.

It is easy to check that \eqref{e.quantitative monotonicity} is satisfied under these dynamics, and that the dynamics are aperiodic and irreducible. It is also straightforward that the marginal distributions of $B^\shortuparrow$ and $B^{\downarrow}$ are stationary for the marginal Markov chains; thus the limiting stationary distribution has the correct marginals. This completes the proof.
\end{proof}

\subsection{Gibbs property on a certain random interval}\label{s.random gibbs proof}

\begin{proof}[Proof of Lemma~\ref{l.Gibbs on random interval}]
For a stochastic process $X:\intint{a,b}\to\Z$ taking values in $\Omega_{\intint{a,b}}$ (the space of Bernoulli paths on $\intint{a,b}$, as defined before Lemma~\ref{l.Gibbs on random interval}),  $t\in\intint{a,b}$, and $p\in(-1,0)$, let $\overline X(x) = X(x)-px$, $a'(X):= \max\{x\in\intint{a,t}:\overline X(x)\geq m\}$, and $b'(X):= \min\{x\in\intint{t,b}: \overline X(x) \geq m\}$.
With the notational convention for event names from Remark~\ref{r.event name convention} in mind, for such a process $X$, define $\msf E(x,y,z_1, z_2, X) := \msf{BdyVal}(x,y,z_1, z_2, X)\cap \msf{IntVal}(x,y, X)$, where
\begin{align*}
\msf{BdyVal}(x,y,z_1, z_2, X) &:= \{X(x) = z_1, X(y)=z_2\}, \quad\text{and}\\
\msf{IntVal}(x,y, X) &:= \{a'(X)=x, b'(X)=y\}.
\end{align*}
Let  $\F_{x,y} := \Fext(\{k\}, \intint{x,y}, \bm L)$, $a':= a'(L_k)$, $b' := b'(L_k)$, and $\msf E(x,y,z_1, z_2) := \msf E(x,y,z_1, z_2, L_k)$ for short. In the case that $a' = b' = t$, there is nothing to prove, so we my assume that this does not hold. By the definition of  $\F$ from Lemma~\ref{l.Gibbs on random interval}, we must show that, for any $x\in\intint{a,t}$, $y\in\intint{t,b}$, $z_1,z_2\in\Z$, and $\msf A\in\F_{x,y}$ (recalling the notation $\E^{z_1,z_2,\intint{x,y}}$ introduced just before Lemma~\ref{l.Gibbs on random interval}),
\begin{equation}\label{e.conditional statement to prove}
\begin{split}
\MoveEqLeft[0]
\E\left[F(L_k|_{\intint{a',b'}}, a', b') \one_{\msf A\cap \msf E(x,y,z_1,z_2)}\right]\\
&= \E\left[\frac{\E^{z_1,z_2,\intint{x,y}}\left[F(B,x,y)W(B, L_{k-1}, L_{k+1})\mid \overline B(s) < m \ \forall s\in\intint{x+1,y-1}\right]}{\E^{z_1,z_2,\intint{x+1,y-1}}\left[W(B, L_{k-1}, L_{k+1})\mid \overline B(s) < m \ \forall s\in\intint{x+1,y-1}\right]}\one_{\msf A\cap \msf E(x,y,z_1,z_2)}\right],
\end{split}
\end{equation}
where the $W$ factors are evaluated on the interval $\intint{x,y}$.
First, if $z_1-px < m$ or $z_2-py < m$, $\msf E(x,y,z_1,z_2) = \emptyset$  by definition of $a'$ and $b'$, in which case the equality trivially holds. So we assume $z_1-px, z_2-py \geq m$. By conditioning on $\F_{x,y}$, we see that the lefthand side of the previous display equals
\begin{align}\label{e.F conditional expression}
\E\Bigl[\E_{\F_{x,y}}\bigl[F(L_k|_{\intint{x,y}}, x, y) \one_{\msf E(x,y,z_1,z_2)}\bigr]\one_{\msf A}\Bigr].
\end{align}
Adopt the shorthand $\msf{BdyVal}(x,y,z_1, z_2) :=\msf{BdyVal}(x,y,z_1, z_2, L_k)$  and recall that $\msf E(x,y,z_1,z_2) = \msf{BdyVal}(x,y,z_1,z_2)\cap\msf{IntVal}(x,y,z_1,z_2, L_k)$. Applying the Hall-Littlewood Gibbs property shows that the (inner) conditional expectation in \eqref{e.F conditional expression} equals 
\begin{align*}
\frac{\E^{z_1,z_2,\intint{x,y}}\left[F(B, x,y)\one_{\msf{IntVal}(x,y, B)} W(B, L_{k-1}, L_{k+1})\right]}{\E^{z_1,z_2,\intint{x,y}}\left[W(B, L_{k-1}, L_{k+1})\right]}\one_{\msf{BdyVal}(x,y,z_1, z_2)}.
\end{align*}
Note that $\msf{IntVal}(x,y, B) = \{\overline B(s) < m \ \forall s\in\intint{x+1,y-1}\}$ when $\msf{BdyVal}(x,y,z_1, z_2)$ holds, since $z_1-px, z_2-py \geq m$. Thus we can rewrite the previous display as (with the shorthand $\overline B<m$ for $\overline B(s) < m \ \forall s\in\intint{x+1,y-1}$)
\begin{align*}
\MoveEqLeft[0]
\frac{\E^{z_1,z_2,\intint{x,y}}\left[F(B, x,y) W(B, L_{k-1}, L_{k+1}) \mid \overline B < m \right]}{\E^{z_1,z_2,\intint{x,y}}\left[W(B, L_{k-1}, L_{k+1})\right]}\cdot \PF\left(\overline B < m \right)\one_{\msf{BdyVal}(x,y,z_1, z_2)}\\
&= \frac{\E^{z_1,z_2,\intint{x,y}}\left[F(B, x,y) W(B, L_{k-1}, L_{k+1}) \mid \overline B < m \right]}{\E^{z_1,z_2,\intint{x,y}}\left[W(B, L_{k-1}, L_{k+1}) \mid \overline B< m\right]}\\
&\qquad\qquad\times \frac{\E^{z_1,z_2,\intint{x,y}}\left[W(B, L_{k-1}, L_{k+1})\one_{\overline B< m}\right]}{\E^{z_1,z_2,\intint{x,y}}\left[W(B, L_{k-1}, L_{k+1})\right]} \one_{\msf{BdyVal}(x,y,z_1, z_2)}\\
&= \frac{\E^{z_1,z_2,\intint{x,y}}\left[F(B, x,y) W(B, L_{k-1}, L_{k+1}) \mid \overline B < m \right]}{\E^{z_1,z_2,\intint{x,y}}\left[W(B, L_{k-1}, L_{k+1}) \mid \overline B< m\right]}\\
&\qquad\qquad\times \P_{\F_{x,y}}\left(\overline L_k(s) < m\ \forall s\in\intint{x,y}\right)\one_{\msf{BdyVal}(x,y,z_1, z_2)}\\
&= \frac{\E^{z_1,z_2,\intint{x,y}}\left[F(B, x,y) W(B, L_{k-1}, L_{k+1}) \mid \overline B < m \right]}{\E^{z_1,z_2,\intint{x,y}}\left[W(B, L_{k-1}, L_{k+1}) \mid \overline B< m\right]}\cdot \P_{\F_{x,y}}\left(\msf{IntVal}(x,y, L_k)\right)\one_{\msf{BdyVal}(x,y,z_1, z_2)}.
\end{align*}
Since all the factors outside the $\PF$ factor are $\F_{x,y}$-measurable, and since by definition $\msf E(x,y,z_1,z_2) = \msf{IntVal}(x,y, L_k)\cap\msf{BdyVal}(x,y,z_1, z_2)$, the tower property of conditional expectation yields that the previous display equals
\begin{align*}
\E_{\F_{x,y}}\left[\frac{\E^{z_1,z_2,\intint{x,y}}\left[F(B, x,y) W(B, L_{k-1}, L_{k+1}) \mid \overline B < m \right]}{\E^{z_1,z_2,\intint{x,y}}\left[W(B, L_{k-1}, L_{k+1}) \mid \overline B< m\right]}\one_{\msf E(x,y,z_1, z_2)}\right].
\end{align*}
Substituting this in \eqref{e.F conditional expression}, using the tower property again, and that $\msf A$ is $\F_{x,y}$ measurable, yields \eqref{e.conditional statement to prove}. This completes the proof.
\end{proof}

\subsection{Uniform separation for Brownian and random walk bridges}\label{s.rw separation}

In this section we establish Lemma~\ref{l.non-int RW closeness} which, recall, states that conditional on a Bernoulli random walk bridge staying above a lower boundary curve $f$, it will maintain some on-scale uniformly positive separation from $f$ with high probability. The argument goes by proving a similar statement for Brownian bridges and then using the KMT coupling to obtain it for the Bernoulli case.

The Brownian bridge case is the following.

\begin{lemma}[Brownian bridge uniform separation]\label{l.non-int Br bridge closeness}
Fix $M>0$, $\sigma>0$, $p>0$ and $T\geq 2$. Let $U_1\in[-2T, -\frac{3}{2}T]$ and $U_2\in[\frac{3}{2}T, 2T]$. Let $\Bbr$ be a Brownian bridge from $(U_1N^{2/3}, z_1)$ to $(U_2N^{2/3}, z_2)$ of variance $\sigma^2>0$, for some $z_i$ such that $z_i - pU_iN^{2/3}\in[-MN^{1/3}, MN^{1/3}]$ for each $i\in\{1,2\}$. Also let $f:[U_1N^{2/3},U_2N^{2/3}]\to \R$ be a function with $f(U_iN^{2/3})\leq z_i$. Let $\{\Bbr\geq f\}$ denote $\{\Bbr(x) \geq f(x) \ \forall x\in [U_1,U_2]\}$ and suppose, for some $\rho >0$, that
\begin{align*}
\P\left(\Bbr \geq f\right) \geq \rho.
\end{align*}
Then for any $\varepsilon>0$ there exists $\delta = \delta(\varepsilon, \rho, M, \sigma, T)$ such that
\begin{align*}
\P\left(\inf_{x\in[-TN^{2/3},TN^{2/3}]}\left(\Bbr(x) - f(x)\right) \leq \delta N^{1/3} \midd \Bbr\geq f\right) \leq \varepsilon.
\end{align*}

\end{lemma}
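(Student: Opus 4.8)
\textbf{Proof plan for Lemma~\ref{l.non-int Br bridge closeness}.}

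The plan is to work in rescaled coordinates: after dividing the spatial variable by $N^{2/3}$ and the vertical variable by $N^{1/3}$, a Brownian bridge of variance $\sigma^2$ on an interval of length $\Theta(N^{2/3})$ with drift $p$ rescales exactly to a Brownian bridge of variance $\sigma^2$ (up to the drift $pN^{2/3}\cdot N^{-1/3}=pN^{1/3}$ which is linear and hence cancels once we subtract endpoints). More precisely, set $\widehat B(x) := N^{-1/3}(\Bbr(N^{2/3}x) - p N^{2/3}x)$ and $\widehat f(x) := N^{-1/3}(f(N^{2/3}x) - pN^{2/3}x)$ for $x\in[U_1,U_2]$; then $\widehat B$ is a Brownian bridge of variance $\sigma^2$ on $[U_1,U_2]$ with endpoints bounded by $M$ in absolute value, $\widehat f(U_i)\le \widehat B(U_i)$, and the events $\{\Bbr\ge f\}$ and $\{\inf_{[-T,T]}(\Bbr-f)\le \delta N^{1/3}\}$ become $\{\widehat B\ge \widehat f\}$ and $\{\inf_{[-T,T]}(\widehat B-\widehat f)\le\delta\}$ respectively. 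Crucially, the \emph{law} of $\widehat B$ does not depend on $N$ (once the endpoints are fixed), and the hypothesis is $\P(\widehat B\ge\widehat f)\ge\rho$. So it suffices to prove: for any $\varepsilon>0$ there is $\delta>0$ (depending only on $\varepsilon,\rho,M,\sigma,T$) so that for all choices of $U_1\in[-2T,-\tfrac32 T]$, $U_2\in[\tfrac32 T,2T]$, endpoints $z_i$ with $|z_i-pU_i|\le M$ (I abuse notation and keep writing $z_i$ for the rescaled endpoints), and any function $\widehat f$ on $[U_1,U_2]$ with $\widehat f(U_i)\le z_i$ and $\P(\widehat B\ge \widehat f)\ge\rho$, one has $\P(\inf_{[-T,T]}(\widehat B-\widehat f)\le \delta\mid \widehat B\ge\widehat f)\le\varepsilon$.

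The key step is a restart/coupling argument at the level of the (non-$N$-dependent) Brownian bridge. Fix $\delta>0$. On the event $\msf G_\delta := \{\inf_{x\in[-T,T]}(\widehat B(x)-\widehat f(x))\le\delta\}\cap\{\widehat B\ge\widehat f\}$, let $\tau$ be the first $x\in[-T,T]$ at which $\widehat B(x)\le \widehat f(x)+\delta$; note $\tau\ge U_1+c_0$ for some $c_0=c_0(T)>0$ since $\widehat B(U_1)-\widehat f(U_1)=z_1-\widehat f(U_1)$ but a bridge started at height $\le M+$(something) has to travel down, and more carefully, regardless of endpoint, $\P(\inf_{[U_1,U_1+c_0]}\widehat B$ far below its endpoint$)$ is controlled uniformly; I would simply not worry about this and allow $\tau$ to be anywhere in $[U_1,U_2]$ by enlarging the interval of the infimum slightly, since all we need is that at time $\tau$ the gap is $\le\delta$ and $\tau$ is in a compact set. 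By the strong Markov property of the Brownian bridge (conditioning on $(\tau,\widehat B(\tau))$, the process $\widehat B$ on $[\tau,U_2]$ is a Brownian bridge of variance $\sigma^2$ from $(\tau,\widehat B(\tau))$ to $(U_2,z_2)$), and using that $\widehat B(\tau)\le\widehat f(\tau)+\delta$ together with planarity/monotonicity of Brownian bridges (a bridge started lower stays stochastically below one started higher, with identical right endpoint — the analogue of Lemma~\ref{l.random walk bridge monotonicity} for Brownian bridges, which follows from the Gaussian computation or appears in, e.g., \cite{corwin2014brownian}), we get that conditionally on $\msf G_\delta$ the probability that $\widehat B$ recovers to satisfy $\widehat B\ge\widehat f$ on all of $[\tau,U_2]$ is at most $\sup$ over admissible configurations of $\P(\text{bridge from }(\tau, \widehat f(\tau)+\delta)\text{ to }(U_2,z_2)\text{ stays}\ge \widehat f)$. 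This last quantity I bound by a Girsanov/absolute-continuity comparison: shifting the left endpoint up by $\delta$ changes the bridge law by a Radon--Nikodym factor that is $O(1)$ as $\delta\to0$ and tends to the identity, while the event $\{\text{bridge from }(\tau,\widehat f(\tau))\text{ stays}\ge\widehat f\}$ has probability $0$ (a bridge pinned exactly at the barrier at an interior time cannot stay weakly above, almost surely — here we use the law of the iterated logarithm / the fact that a Brownian bridge oscillates on both sides of any interior point). Hence for $\delta$ small enough this conditional recovery probability is $\le \varepsilon\rho$ uniformly in the configuration, which combined with $\P(\widehat B\ge\widehat f)\ge\rho$ gives $\P(\msf G_\delta)\le\varepsilon\rho/\rho$... let me restate this cleanly in the writeup via $\P(\widehat B\ge\widehat f\mid \msf G_\delta)\le\varepsilon\rho$ and Bayes: $\P(\msf G_\delta\mid \widehat B\ge\widehat f)=\P(\widehat B\ge\widehat f\mid\msf G_\delta)\P(\msf G_\delta)/\P(\widehat B\ge\widehat f)\le \varepsilon\rho\cdot 1/\rho=\varepsilon$, as desired.

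The main obstacle I anticipate is making uniform in the arbitrary barrier $\widehat f$ the statement ``a bridge pinned at the barrier at an interior point stays above with probability $0$, and perturbing the pin upward by $\delta$ makes this probability $\le\varepsilon$''. This requires care because $\widehat f$ is only assumed to satisfy $\widehat f(U_i)\le z_i$ and $\P(\widehat B\ge\widehat f)\ge\rho$ — no regularity. The resolution is to \emph{localize}: instead of requiring recovery on all of $[\tau,U_2]$, require only that $\widehat B$ stays $\ge\widehat f$ on a short sub-interval $[\tau,\tau+h]$ where $h=h(T)$ is chosen so that $[\tau,\tau+h]\subseteq[U_1,U_2]$; since on that interval $\widehat B(\tau)-\widehat f(\tau)\le\delta$ and $\widehat f(\tau+s)\ge \widehat f(\tau)-|\,\widehat f(\tau+s)-\widehat f(\tau)|$, and the fluctuation of $\widehat f$ over $[\tau,\tau+h]$ is controlled \emph{on the event $\widehat B\ge\widehat f$} by comparison with the fluctuation of $\widehat B$ (itself a bridge, hence with sub-Gaussian modulus of continuity, Lemma~\ref{l.random walk bridge fluctuation}'s Brownian analogue), one reduces to: a Brownian path of variance $\sigma^2$ started at height $\le\delta$ above a curve of small oscillation stays above it for time $h$. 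Being pinned only at the left end (not a bridge — use the strong Markov property to forget the right endpoint, absorbing a bounded R--N factor), this probability is, by the reflection principle, $O(\delta/\sqrt h)+$ (oscillation terms), which is $\le \varepsilon\rho$ for $\delta$ small and the oscillation event excluded with small probability. I would organize the final writeup around this localized version; the bookkeeping of the $O(1)$ Radon--Nikodym factors and the modulus-of-continuity exclusions is routine but is where the real work sits. Once Lemma~\ref{l.non-int Br bridge closeness} is in hand, Lemma~\ref{l.non-int RW closeness} follows by the KMT coupling (Lemma~\ref{l.KMT}): couple $\Brw$ with a Brownian bridge $\Bbr$ of variance $p(1-p)$ with the same endpoints so that $\sup|\Brw-\Bbr|\le RN^{1/3}$ on the whole interval off an event of probability $\le C\exp(-cR+C(\log N)^2+O(M^2))$; choosing $R$ a small multiple of the target $\delta$, the hypothesis $\P(\Bbr\ge f+\omega N^{1/3})\ge\rho$ transfers (with $\omega$ replaced by $\omega-R>0$) to a hypothesis on $\Brw$ of the form needed for Lemma~\ref{l.non-int Br bridge closeness}, and the conclusion of the latter transfers back to $\Brw$ with $\delta$ replaced by $\delta-R$; absorbing constants and relabeling completes the proof.
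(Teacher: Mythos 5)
The approach you take — restarting at the first time $\tau$ that the gap drops to $\delta$ and arguing the conditional recovery probability is small uniformly — has a genuine gap, and I do not see how to close it. The central claim that you need, namely that the probability of a bridge launched from $(\tau,\widehat f(\tau)+\delta)$ to $(U_2,z_2)$ staying weakly above $\widehat f$ on $[\tau,U_2]$ tends to $0$ uniformly over admissible $(\tau,\widehat f)$ as $\delta\to0$, is simply false. The lemma makes no regularity assumption on $f$, and the barrier may drop abruptly just to the right of $\tau$: take for instance $\widehat f$ equal to a constant $c$ on a fine grid $\{x_j\}$ in $[-T,T]$ and extremely negative in between, with $c$ chosen so that $\P(\widehat B\geq\widehat f)\geq\rho$. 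Sending $\delta\to0$ with the grid mesh $h$ held fixed, the worst-case recovery probability tends to a positive limit of order $\sqrt{h/L}$, not to $0$. The argument ``a bridge pinned exactly at the barrier at an interior time cannot stay weakly above, almost surely, by LIL'' only applies when $\widehat f$ is comparable to $\widehat f(\tau)$ immediately after $\tau$ — which you have no way to guarantee. Your localization fix does not repair this: the statement ``the fluctuation of $\widehat f$ over $[\tau,\tau+h]$ is controlled on $\{\widehat B\geq\widehat f\}$ by the fluctuation of $\widehat B$'' is only true in the useless direction (it bounds the \emph{upward} excursions of $\widehat f$, which make recovery harder; the problematic \emph{downward} excursions are entirely unconstrained by the avoidance event). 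The only way the restart computation could be saved is by bounding not the worst-case conditional recovery probability but the product $\P(\msf G_\delta')\cdot Q(\delta)$ — i.e., by exploiting that when recovery is easy the walk rarely lands in a window of width $\delta$ — but this trade-off is not addressed, and making it precise for an arbitrary $\widehat f$ is a substantial task.

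The paper's proof circumvents the uniformity problem entirely by conditioning on a different $\sigma$-algebra $\F$, namely the one generated by the two ``shape'' processes obtained by affinely recentering $\Bbr$ on $[U_1N^{2/3},0]$ and on $[0,U_2N^{2/3}]$ so that both vanish at the endpoints. Conditionally on $\F$ the only remaining randomness is the single Gaussian coordinate $\Bbr(0)$ (of deterministic mean and variance), the avoidance constraint $\{\Bbr\ge f\}$ becomes a one-dimensional constraint $\{\Bbr(0)\ge\mrm{Cor}\}$ for an $\F$-measurable threshold $\mrm{Cor}$, and the ``gap $\le\delta N^{1/3}$ somewhere in $[-T,T]$'' event is contained in $\{\Bbr(0)\le\mrm{Cor}+3\delta N^{1/3}\}$ because raising $\Bbr(0)$ raises the reconstructed bridge by at least one third of the increment throughout $[-TN^{2/3},TN^{2/3}]$ (using $U_1\in[-2T,-\tfrac32T]$, $U_2\in[\tfrac32T,2T]$). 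The lemma then reduces to the monotonicity in the threshold of $s\mapsto\P(X\leq s+r\mid X\geq s)$ for a Gaussian $X$ (the paper's Lemma~\ref{l.normal conditional prob montonicity}), together with an exponential tail bound on $\mrm{Cor}$ supplied by the hypothesis $\P(\Bbr\ge f)\ge\rho$. This decomposition removes $\widehat f$ from the picture as soon as one conditions on $\F$, which is exactly the uniformity you were struggling to obtain. Your reduction of Lemma~\ref{l.non-int RW closeness} to the Brownian statement via the KMT coupling is, by contrast, correct and matches the paper.
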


Using the previous lemma and Lemma~\ref{l.KMT}, we give the straightforward proof of Lemma~\ref{l.non-int RW closeness}.

\begin{proof}[Proof of Lemma~\ref{l.non-int RW closeness}]
We will eventually set $\delta<\omega$ and will use this fact in the coming arguments. By Lemma~\ref{l.KMT}, there exists $N_0 = N_0(\varepsilon, \delta, T, M)$ such that there is a coupling between $\Brw$ and $\Bbr$ (of variance lower bounded by a positive constant depending on only $p$) such that, for $N\geq N_0$, with probability at least $1-\varepsilon\rho$, $\sup_{[U_1N^{2/3},U_2N^{2/3}]}|B_k -\Bbr| \leq \delta N^{1/3}$. Thus
\begin{align*}
\MoveEqLeft[2]
\P\left(\inf_{x\in[-TN^{2/3},TN^{2/3}]}\left(\Brw(x) - f(x)\right) \leq \delta N^{1/3} \midd \Brw\geq f\right)\\
&= \frac{\P\left(\inf_{x\in[-TN^{2/3},TN^{2/3}]}\left(\Brw(x) - f(x)\right) \leq \delta N^{1/3}, \Brw\geq f\right)}{\P\left(\Brw\geq f\right)}\\
&\leq \frac{\P\left(\inf_{x\in[-TN^{2/3},TN^{2/3}]}\left(\Bbr(x) - f(x)\right) \leq 2\delta N^{1/3}, \Bbr\geq f-\delta N^{1/3}\right) + \varepsilon\rho}{\P\left(\Bbr\geq f+\delta N^{1/3}\right) - \varepsilon\rho}.
\end{align*}
Since $\delta <\omega$, $\P\left(\Bbr\geq f+\delta N^{1/3}\right) \geq \rho$ by hypothesis. So (assuming $\varepsilon < \frac{1}{2}$ without loss of generality) the previous display is upper bounded by
\begin{align*}
2\rho^{-1}\cdot\P\left(\inf_{x\in[-TN^{2/3},TN^{2/3}]}\left(\Bbr(x) - f(x)\right) \leq 2\delta N^{1/3}, \Bbr\geq f-\delta N^{1/3}\right) + 2\varepsilon.
\end{align*}
By Lemma~\ref{l.non-int Br bridge closeness} we may pick $\delta < \omega$ with $\delta = \delta(\varepsilon, \rho, M, p)$ such that the probability is less than $\rho\varepsilon$, which completes the proof after relabeling $\varepsilon$.
\end{proof}

Next we prove Lemma~\ref{l.non-int Br bridge closeness} on the uniform on-scale separation of a Brownian bridge from its lower boundary condition. The idea is to condition on the entire bridge except for its value at zero; more precisely, one conditions on the shape of the curve so that, given the value of $\Bbr(0)$, the whole process is determined by an affine shift. The conditional distribution of $\Bbr(0)$ is explicit and the proof comes down to a calculation about the probability of this random variable being close to the lower edge of its support.

Similar ideas were introduced in \cite{hammond2016brownian} and have been used in a number of subsequent works, including \cite{calvert2019brownian,corwin2023exceptional,ganguly2023brownian}.

\begin{proof}[Proof of Lemma~\ref{l.non-int Br bridge closeness}]
We assume that $\sigma^2=1$ and $p=0$ without loss of generality. Recall that $B^{\mrm{Br}}$ is a Brownian bridge defined on $[U_1N^{2/3}, U_2N^{2/3}]$. For an interval $[a,b]\subseteq [U_1N^{2/3}, U_2N^{2/3}]$, we adopt the notation  $B^{\mrm{Br}, [a,b]};[a,b]\to\R$ to be $B^{\mrm{Br}}$ affinely shifted to equal zero at $a$ and $b$, i.e., for $x\in[a,b]$,
\begin{align}\label{e.bridge definition}
B^{\mrm{Br}, [a,b]}(x) := B^{\mrm{Br}}(x) - \frac{x-a}{b-a}B^{\mrm{Br}}(b) - \frac{b-x}{b-a}B^{\mrm{Br}}(a).
\end{align}

We let $\tilde \P$ be the probability measure $\P(\bm\cdot\mid \Bbr \geq f)$, so that under $\tilde \P$, $\Bbr$ is distributed as a Brownian bridge conditioned to stay above $f$. We consider the $\sigma$-algebra $\F$ generated by $B^{\mrm{Br},[U_1N^{2/3},0]}(\bm\cdot)$ and $B^{\mrm{Br},[0,U_2N^{2/3}]}(\bm\cdot)$. Then, conditionally on $\tilde \P_\F$, the only remaining randomness is in the value $\Bbr(0)$. The distribution of this random variable is that of a normal random variable of mean $\mu = \frac{1}{2}(z_1+z_2)$ and variance $\widetilde\sigma^2 = |U_1|\cdot U_2 N^{2/3}/(U_2+|U_1|)$ conditioned to stay above an $\F$-measurable random variable $\mrm{Cor}$ (short for corner, a name given to a similar quantity in \cite{hammond2016brownian}) defined by
\begin{align*}
\mrm{Cor} := \inf\left\{w: B^{\mrm{Br},w}(x) \geq f(x) \text{ for all }x\in[U_1N^{2/3},U_2N^{2/3}]\right\},
\end{align*}
where $B^{\mrm{Br},w}$ is the reconstruction of $\Bbr$ from $B^{\mrm{Br},[U_1N^{2/3},0]}$ and $B^{\mrm{Br},[0,U_2N^{2/3}]}$ when $B(0) = w$,~i.e.,
\begin{align*}
B^{\mrm{Br},w}(x) := \begin{cases}
\frac{U_1-xN^{-2/3}}{U_1}w + \frac{xN^{-2/3}}{U_1}z_1 + B^{\mrm{Br}, [U_1N^{2/3},0]}(x) & x\in[U_1N^{2/3},0]\\
\frac{U_2-xN^{-2/3}}{U_2}w + \frac{xN^{-2/3}}{U_2}z_2 + B^{\mrm{Br}, [0,U_2N^{2/3}]}(x) & x\in[0,U_2N^{2/3}].
\end{cases}
\end{align*}
Notice that for any $R$,
\begin{align*}
\tilde\P\left(\mrm{Cor} > RN^{1/3}\right)\leq \tilde\P\left(\Bbr(0) > RN^{1/3}\right)
&= \P\left(\Bbr(0) > RN^{1/3}\midd \Bbr \geq f\right)\\
&\leq \rho^{-1}\P\left(\Bbr(0) > RN^{1/3}\right)\\
&= \rho^{-1}\P\left(\mc N(\mu, \widetilde\sigma^2) > RN^{1/3}\right).
\end{align*}
Since $\mu\geq -MN^{1/3}$ and $\widetilde\sigma^2 \leq TN^{2/3}$,  there is $R = R(\varepsilon, \rho, M, T)$ (and since $T\geq 1$) such that the previous display is at most $\varepsilon$. We fix such a value of $R$ for the rest of the proof.

Now observe that, if $w\geq \mrm{Cor} + 3\delta N^{1/3}$, then
\begin{align*}
\inf_{x\in[-TN^{2/3},TN^{2/3}]} \left(B^{\mrm{Br}, w}(x) - f(x)\right) \geq \delta N^{1/3}.
\end{align*}
This is because the lefthand side is at least $0$ at $w=\mrm{Cor}$ by definition of $\mrm{Cor}$, $B^{\mrm{Br}, w}(x)$ increases with $w$, and  $\frac{U_1-xN^{-2/3}}{U_1}\cdot 3\delta N^{1/3} \geq \delta N^{1/3}$   when $x\in[-TN^{2/3},TN^{2/3}]$ since $U_1\in[-2T,-\frac{3}{2}T]$, and similarly for $U_2$. Thus it follows that (recall $\F$ from right after \eqref{e.bridge definition})
\begin{align*}
\MoveEqLeft[12]
\tilde\P\left(\inf_{x\in[-TN^{2/3},TN^{2/3}]}\left(\Bbr(x) - f(x)\right) \leq \delta N^{1/3}\right)\\
&= \tilde \E\left[\tilde \P_\F\left(\inf_{x\in[-TN^{2/3},TN^{2/3}]}\left(\Bbr(x) - f(x)\right) \leq \delta N^{1/3}\right)\right]\\
&\leq \tilde \E\left[\tilde \P_\F\left(\Bbr(0) \leq \mrm{Cor} + 3\delta N^{1/3}\right)\right].
\end{align*}
Letting $X\sim \mc N(0, 1)$, the previous display equals
\begin{align*}
\MoveEqLeft[5]
\tilde \E\left[\tilde \P_\F\left(\widetilde\sigma X+\mu \leq \mrm{Cor} + 3\delta N^{1/3} \mid \widetilde\sigma X+\mu \geq \mrm{Cor}\right)\right]\\
&\leq \tilde \E\left[\tilde \P_\F\left(X \leq \widetilde\sigma^{-1}(\mrm{Cor} -\mu) + 3\widetilde\sigma^{-1}\delta N^{1/3} \mid X \geq \widetilde\sigma^{-1}(\mrm{Cor} -\mu)\right)\one_{\mrm{Cor} \leq R N^{1/3}}\right] + \varepsilon.
\end{align*}
Now by Lemma~\ref{l.normal conditional prob montonicity}, the conditional probability in the previous display is upper bounded by the same with $\widetilde\sigma^{-1}(\mrm{Cor}-\mu)$ replaced by an upper bound on it. Since $\widetilde\sigma^2 \geq cTN^{2/3}$ and $\mu\geq -MN^{1/3}$, on the event $\mrm{Cor}\leq RN^{1/3}$, it holds that $\widetilde\sigma^{-1}(\mrm{Cor}-\mu) \leq c^{-1/2}T^{-1/2}(R+M)$. By the upper bound on $\widetilde\sigma^{-1}$, we also see that $3\widetilde\sigma^{-1}\delta N^{1/3} \leq 3c^{-1/2}T^{-1/2}\delta$. Thus, the previous display is upper bounded (using the bound on $3\widetilde\sigma^{-1}\delta N^{1/3}$ for the first line and Lemma~\ref{l.normal conditional prob montonicity} for the second) by
\begin{align*}
\MoveEqLeft[6]
\tilde \E\left[\tilde \P_\F\left(X \leq \widetilde\sigma^{-1}(\mrm{Cor} - \mu) + 3c^{-1/2}T^{-1/2}\delta \midd X \geq \widetilde\sigma^{-1}(\mrm{Cor}-\mu)\right)\one_{\mrm{Cor}\leq RN^{1/3}}\right] + \varepsilon\\
&\leq \tilde \E\left[\tilde \P_\F\left(X \leq c^{-1/2}T^{-1/2}(R + M + 3\delta) \midd X \geq c^{-1/2}T^{-1/2}(R + M)\right)\right] + \varepsilon.
\end{align*}
It is now clear using standard estimates on normal probabilities that, for $\delta$ small enough depending on $R$, $M$, and $T$ (therefore on $\varepsilon$, $\rho$,  $M$, and $T$), the previous display is upper bounded by $2\varepsilon$. This completes the proof.
\end{proof}

\section{LPP maximizer location asymptotics}\label{s.G_k asymptotics}

In this section, we give the proof of Proposition~\ref{p.maximizer location}. Before doing so, we 
set up some preliminary lemmas that will be needed. Recall the definition of $Z^N_k$ as the maximizer of the maximum over $z$ of \eqref{e.scaled approximate LPP representation}.

\subsection{Monotonicity properties}
We need a monotonicity property of $Z^N_k$ and of LPP values. We start with the first; its proof is the same as that of \cite[Lemma 3.6]{dauvergne2018directed} and is omitted.

\begin{lemma}\label{l.Z_k monotone}
For fixed $N$, $x$, and $y$, it almost surely holds that $Z^N_k(x,y) \geq Z^N_{k+1}(x,y)$ for all $k\in\intint{1,N^{1/6}}$.
\end{lemma}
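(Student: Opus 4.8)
\textbf{Proof plan for Lemma~\ref{l.Z_k monotone}.}
The statement asserts a monotonicity in $k$ of the maximizer locations $Z^N_k(x,y)$ coming from the iterated Pitman/LPP decomposition \eqref{e.scaled approximate LPP representation}. The reference points to \cite[Lemma 3.6]{dauvergne2018directed}, whose proof structure I would follow almost verbatim, since $Z^N_k(x,y)$ is defined as the maximizer over $z$ of a sum of two LPP quantities in the common environment $\bm\cL^{(1),N}$ (together with the lower boundary curve $\cL^N_{N^{1/6}}(x;\bm\cdot)$). The heart of the argument is a standard ``ordering of maximizers'' lemma for last passage percolation across a line ensemble: if one splits the LPP problem at two nested levels $k$ and $k+1$, the resulting optimal crossing points are automatically ordered because the crossing weight from $(z,k)$ to $(y,1)$ through level $k+1$ satisfies the crossing lemma (Lemma~\ref{l.crossing lemma} / \cite[Proposition~3.8]{dauvergne2018directed}), i.e.\ the map $z \mapsto \bm f[(z,k)\to(y,1)] - \bm f[(z,k+1)\to(y,1)]$ has the relevant monotonicity.

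Concretely, first I would write out $\PT^{(k)}$ and $\PT^{(k+1)}$ via the representation \eqref{e.PT LPP representation}, so that
$\S^N(x;y)$ (up to the $\mf o(N^{-1/11})$ error, which plays no role here since $Z^N_k$ is defined purely as a maximizer of the exact variational expression) equals $\max_{w\le z\le y}$ of $\cL^N_{N^{1/6}}(x;w) + \bm\cL^{(1),N}[(w,N^{1/6}-1)\to(z,k)] + \bm\cL^{(1),N}[(z,k)\to(y,1)]$ for level $k$, and the analogous expression at level $k+1$. Using the metric composition property of LPP across a line ensemble (again \cite[Lemma~3.2]{dauvergne2018directed}), the level-$k$ problem is obtained from the level-$(k+1)$ problem by maximizing $\bm\cL^{(1),N}[(z,k)\to(y,1)]$ over where the top $k$-to-$(k+1)$ transition happens; this exhibits $Z^N_k$ and $Z^N_{k+1}$ as maximizers of two ``stacked'' variational problems that differ by peeling off one line. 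Then I would invoke the crossing/ordering lemma at the relevant interface to conclude $Z^N_k(x,y)\ge Z^N_{k+1}(x,y)$: the $(k+1)$-indexed maximizer cannot lie to the right of the $k$-indexed one, because doing so would contradict the monotonicity of the increment of LPP values. This is exactly the mechanism of \cite[Lemma~3.6]{dauvergne2018directed}.

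Since everything is pathwise and the environment $\bm\cL^{(1),N}$ is a fixed line ensemble for each $N$, there are no probabilistic subtleties; the only care needed is to use the largest maximizer convention (stated right after \eqref{e.scaled approximate LPP representation}) consistently, so that ties are broken in the direction that preserves the inequality. I do not anticipate a genuine obstacle here: the main (mild) bookkeeping point is translating the Pitman-transform language of \eqref{e.PT^(j) definition}--\eqref{e.PT LPP representation} into the LPP-across-a-line-ensemble language in which the crossing lemma is phrased, after which the argument of \cite[Lemma~3.6]{dauvergne2018directed} applies directly. Accordingly I would simply write ``its proof is the same as that of \cite[Lemma 3.6]{dauvergne2018directed}, using Lemma~\ref{l.crossing lemma} and the composition law for LPP across line ensembles, and is omitted,'' which is precisely what the excerpt does.
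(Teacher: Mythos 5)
Your bottom line---cite \cite[Lemma 3.6]{dauvergne2018directed} and omit---matches exactly what the paper does, so the approach is the same. However, your sketch of the mechanism misattributes the role of the crossing lemma: Lemma~\ref{l.crossing lemma} concerns the increment as the \emph{endpoint} $y$ varies, and the map $z\mapsto \bm f[(z,k)\to(y,1)] - \bm f[(z,k+1)\to(y,1)]$ you write down is not monotone in general, so it is not the right tool here. The monotonicity actually underlying \cite[Lemma 3.6]{dauvergne2018directed} is the one recorded here as Lemma~\ref{l.modified LPP monotone}, which makes the (centered) prefix quantity $F^N_k(x,\bm\cdot)$ non-decreasing and the (centered) suffix quantity $G^N_k(\bm\cdot,y)$ non-increasing; one then writes both the level-$k$ and level-$(k+1)$ decompositions of $\S^N(x;y)$ as the same maximization of $F^N_{k+1}(x,a) + d(c) + G^N_k(b,y)$ over ordered triples $a\le c\le b$, with $d := \cL^{(1),N}_{k+1} - \cL^{(1),N}_{k}$, identifies $Z^N_{k+1}$ and $Z^N_k$ as the largest optimal $a$ and largest optimal $b$ respectively (the level-$(k+1)$ problem corresponds to triples with $c=b$, the level-$k$ problem to triples with $a=c$), and slides any optimal $(a,t,t)$ up to $(t,t,t)$ using the non-decrease of $F^N_{k+1}$, giving $Z^N_k \ge t \ge a$. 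Since what you would actually write is cite-and-omit, this is a clarification of your explanation rather than a gap in the proof; but I would also drop the parenthetical reference to Lemma~\ref{l.crossing lemma} from your proposed sentence, since the paper's own sentence does not include it and, as noted, it is not the relevant monotonicity.
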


\begin{lemma}[{\cite[Lemma~3.3]{dauvergne2018directed}}]\label{l.modified LPP monotone}
Let $f=(f_1, \ldots, f_n)$ be a sequence of continuous functions with $f_i:\R\to\R$. Then for any $x\in\R$,
\begin{align*}
z\mapsto f[(z,k)\to (x,1)] + f_k(z)
\end{align*}
is non-increasing.
\end{lemma}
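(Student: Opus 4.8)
The plan is to unfold the definition of the last passage value and read off a telescoping cancellation that removes the $z$-dependence from the integrand, leaving it only in the constraint region. By Definition~\ref{d.lpp} applied with $u=z$, $v=x$, and $j=1$ (so that $t_0=x$ and $t_k=z$ are the fixed endpoints),
\begin{align*}
f[(z,k)\to(x,1)] = \sup_{z<t_{k-1}<\cdots<t_1<x}\ \sum_{i=1}^k\bigl(f_i(t_{i-1})-f_i(t_i)\bigr).
\end{align*}
The $i=k$ summand equals $f_k(t_{k-1})-f_k(t_k)=f_k(t_{k-1})-f_k(z)$, so adding $f_k(z)$ exactly cancels that term:
\begin{align*}
f[(z,k)\to(x,1)]+f_k(z) = \sup_{z<t_{k-1}<\cdots<t_1<x}\ \Bigl(f_k(t_{k-1})+\sum_{i=1}^{k-1}\bigl(f_i(t_{i-1})-f_i(t_i)\bigr)\Bigr).
\end{align*}
First I would record that, with $t_0=x$ still held fixed, the functional inside the supremum does not involve $z$ at all; the variable $z$ enters only through the single lower constraint $z<t_{k-1}$ defining the admissible tuples $(t_1,\ldots,t_{k-1})$.

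The monotonicity is then immediate: for $z_1<z_2$ one has the inclusion
\begin{align*}
\{(t_1,\ldots,t_{k-1}): z_2<t_{k-1}<\cdots<t_1<x\}\ \subseteq\ \{(t_1,\ldots,t_{k-1}): z_1<t_{k-1}<\cdots<t_1<x\},
\end{align*}
so the supremum of the ($z$-free) functional over the larger set, which is $f[(z_1,k)\to(x,1)]+f_k(z_1)$, dominates its supremum over the smaller set, which is $f[(z_2,k)\to(x,1)]+f_k(z_2)$. Hence $z\mapsto f[(z,k)\to(x,1)]+f_k(z)$ is non-increasing. The degenerate situation where the admissible set becomes empty (e.g.\ $z\ge x$) is handled by the usual convention $\sup\emptyset=-\infty$, which remains consistent with non-increasingness.

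I do not expect a real obstacle here; this is essentially a one-line computation once the boundary conventions $t_{j-1}=v$ and $t_k=u$ of Definition~\ref{d.lpp} are written out. The only point deserving care is that the added term $f_k(z)$ is precisely $f_k(t_k)$, so the cancellation is exact and genuinely strips every $z$ out of the integrand. Conceptually this is the same mechanism as the crossing lemma (Lemma~\ref{l.crossing lemma}): both statements follow from the variational form of LPP together with a monotone dependence---here of the feasible region, there of the integrand---on the parameter being varied.
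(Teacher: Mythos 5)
Your proof is correct and is essentially the argument from the cited reference \cite[Lemma~3.3]{dauvergne2018directed}; the paper itself supplies no proof, deferring entirely to that citation. The key observation — that adding $f_k(z)$ exactly cancels the $-f_k(t_k)=-f_k(z)$ term and leaves a functional that is $z$-free, so that $z$ enters only through the nested constraint region $\{z<t_{k-1}<\cdots<t_1<x\}$ — is precisely the telescoping cancellation used there, just phrased in terms of shrinking feasible sets rather than extending near-optimal paths; the two phrasings are trivially equivalent. Your remark about the empty-set convention and the analogy to the crossing lemma are both correct and apt.
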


\subsection{Prelimiting quantities}

Define $F^N_k$, $G^N_k$ and $H^N$ by
\begin{align*}
F^N_k(x,z) &:= \sup_{w\leq z}\left(\cL^N_{N^{1/6}}(x; w) + \bm \cL^{(1), N}[(w,N^{1/6}-1)\to (z,k)]\right) - \cL^{(1),N}_{k}(z)\\
G^N_k(z,y) &:= \bm \cL^{(1),N}[(z,k)\to(y,1)] + \cL^{(1),N}_{k}(z) \\
H^N(x,y) &:= \S^N(x;y) = \cL^N_{1}(x;y);
\end{align*}
note that the definition of $G^N_k(z,y)$ requires $z\leq y$. Also define
\begin{equation}\label{e.limiting FGH}
\begin{split}
G_k(z,y) &:= \bm\cP[(z,k)\to(y,1)] + \cP_{k}(z) \\
H(x,y) &:= \S(x;y).
\end{split}
\end{equation}

 Observe that $G^N_k\to G^k$ (by Theorem~\ref{t.line ensemble convergence to parabolic Airy}). The quantities in \eqref{e.limiting FGH} are defined in this way to obtain the monotonicity stated in Lemma~\ref{l.modified LPP monotone}, i.e., $G_k(z,y)$ is non-increasing in $z$ by Lemma~\ref{l.modified LPP monotone}, a property not possessed by $\bm\cP[(z,k)\to(y,1)]$.

The proof of Proposition~\ref{p.maximizer location} will also need some asymptotic information about $G_k$, which we record in the following lemma. We defer its proof to Section~\ref{s.actual G_k asymptotics} as it requires the introduction of the prelimiting model of Brownian LPP. Recall the meaning of the notation $\mf o$ from \eqref{e.Borel-Cantelli notation}.

\begin{lemma}\label{l.G_k asymptotic}
Fix $z<y$. Then it holds that
\begin{align*}
G_k\left(z\sqrt{k},y\sqrt{k}\right) = -y^2k - \frac{\sqrt{k}}{y-z} + \mf o(\sqrt{k}).
\end{align*}
\end{lemma}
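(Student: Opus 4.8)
\textbf{Proof plan for Lemma~\ref{l.G_k asymptotic}.} The plan is to first transfer the asymptotics question from the parabolic Airy line ensemble $\bm\cP$ to the prelimiting model of Brownian last passage percolation (LPP), where one has an explicit combinatorial description of weights, and then to analyze the Brownian LPP maximizer by elementary large-deviations and variational considerations. Concretely, recall from the discussion in Section~\ref{s.intro} that $\bm\cP$ arises as the KPZ-scaling limit of Dyson Brownian motion (equivalently, as the dual environment in Brownian LPP). So $G_k(z,y) = \bm\cP[(z,k)\to(y,1)] + \cP_k(z)$, with the extra $\cP_k(z)$ term present precisely to make $G_k$ match the monotonicity of Lemma~\ref{l.modified LPP monotone}, corresponds (under the inverse KPZ scaling and the stationary/parabolic change of variables $\cP_i(x) = \mc A_i(x) - x^2$) to an LPP value across $k$ rows in the Brownian environment, starting at a point whose vertical coordinate grows like $k$ and whose horizontal coordinate is scaled by $\sqrt{k}$.

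First I would set up the scaling carefully. Write $z = z_0\sqrt{k}$, $y = y_0\sqrt{k}$ with $z_0 < y_0$ fixed. Under the standard relation between $\bm\cP$ and Brownian LPP (as in \cite{dauvergne2018directed}, or via the exponential/Poissonian comparisons of \cite{dauvergne2023uniform,dauvergne2021scaling}), the quantity $\bm\cP[(z_0\sqrt{k},k)\to(y_0\sqrt{k},1)]$, after adding the parabolic correction, is captured to leading order by the \emph{law of large numbers} for Brownian LPP between two points separated by a large time-like distance and a macroscopic space-like distance. The natural route is: (i) use the finite-$k$ LPP law-of-large-numbers/concentration for Brownian LPP (the ``shape theorem'', with its known quadratic shape function) to get the leading $-y_0^2 k$ term together with the subleading $-\sqrt{k}/(y_0-z_0)$ correction; (ii) control the error term at order $\mf o(\sqrt{k})$ using the transversal fluctuation bounds and one-point fluctuation bounds for Brownian LPP (of order $k^{1/3}$), which are $o(\sqrt k)$; and (iii) invoke uniform convergence on compact sets of the rescaled Brownian LPP landscape to $\bm\cP$ (this is exactly the content of \cite{dauvergne2018directed}) to transfer the estimate back to $G_k$. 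The $-y_0^2 k$ term is simply the parabolic shift $\cP_1(y_0\sqrt k) = \mc A_1(y_0\sqrt k) - y_0^2 k$ contributing its deterministic part, while the $-\sqrt k/(y_0-z_0)$ term is the Brownian-LPP shape correction coming from the fact that a time-$k$, space-$(y_0-z_0)\sqrt k$ geodesic incurs a cost proportional to (spatial displacement)$^2$/(temporal displacement) $= (y_0-z_0)^2 k / k = (y_0-z_0)^2$ at the KPZ scale, which after the $\sqrt k$ rescaling of space produces the stated $\sqrt k/(y_0-z_0)$.

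A cleaner variant, which I would actually try to push through, avoids invoking Brownian LPP asymptotics directly and instead argues within $\bm\cP$ itself using a variational/interpolation argument: the geodesic realizing $\bm\cP[(z_0\sqrt k,k)\to(y_0\sqrt k,1)]$ must pass through the intermediate rows, and by the Gibbs (Brownian Gibbs) property of $\bm\cP$ one can compare the LPP value to that of a deterministic affine path, picking up the parabolic cost of $\mc A_i(x) - x^2$ along the way; summing the parabolic penalties $-x^2$ along a near-straight-line path from $(z_0\sqrt k, k)$ to $(y_0\sqrt k,1)$ and optimizing over the $k-1$ intermediate breakpoints yields, by a Riemann-sum computation, exactly $-y_0^2 k - \sqrt k/(y_0-z_0) + o(\sqrt k)$, with the stochastic parts $\mc A_i$ contributing only $O(k^{1/3}\log k) = o(\sqrt k)$ fluctuations by a union bound over the $k$ curves using the one-point tail of the Airy line ensemble. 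This is essentially the same computation as in the proof of the analogous \cite[Lemma~7.1]{dauvergne2018directed}; I would model the argument on that.

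\textbf{Main obstacle.} The delicate point is obtaining the $-\sqrt k/(y_0-z_0)$ correction \emph{exactly}, with the right constant, rather than merely an $O(\sqrt k)$ bound: this requires showing that the optimal break-point schedule across the $k$ rows is asymptotically the affine one and that the discrete sum $\sum_{i=1}^{k}(\text{position}_i)^2$ along it converges, after the $\sqrt k$ scaling, to the correct integral, while simultaneously ruling out that the stochastic parts $\mc A_i$ conspire to shift the optimizer by a macroscopic amount (they cannot, because their fluctuations are only $k^{1/3+o(1)}$, but this needs the transversal-fluctuation-type control of geodesics in $\bm\cP$). Handling this cleanly is precisely why the corresponding statement in \cite{dauvergne2018directed} is proved via the explicit Brownian LPP shape function, and I expect the bulk of the work here to be a careful adaptation of that machinery — keeping track of which error terms are genuinely $\mf o(\sqrt k)$ in the sense of \eqref{e.Borel-Cantelli notation}.
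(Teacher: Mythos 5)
Your route (ii) — summing the parabolic penalties $-x^2$ along a near-straight path and claiming that the optimizer produces the $-\sqrt{k}/(y_0-z_0)$ correction, with the stochastic parts $\mc A_i$ contributing only $O(k^{1/3}\log k)$ — is based on a miscalculation. Write $G_k(z\sqrt k, y\sqrt k) = \bm\cP[(z\sqrt k,k)\to(y\sqrt k,1)] + \cP_k(z\sqrt k)$ and expand $\cP_i(x)=\mc A_i(x)-x^2$. For any choice of jump times $t_k=z\sqrt k<t_{k-1}<\cdots<t_0=y\sqrt k$, the parabolic part of $\sum_{i=1}^k(\cP_i(t_{i-1})-\cP_i(t_i))$ is $\sum_i(t_i^2 - t_{i-1}^2) = t_k^2 - t_0^2 = (z^2-y^2)k$, independently of the intermediate breakpoints; adding $\cP_k(z\sqrt k) = \mc A_k(z\sqrt k) - z^2 k$ telescopes the parabolic contribution to exactly $-y^2 k$. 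So there is no "Riemann-sum" correction of order $\sqrt k$ coming from the parabola: the entire $-\sqrt k/(y-z)$ term must come from the $\mc A_i$ parts (and the term $\mc A_k(z\sqrt k)$, which is itself of order $-k^{2/3}$, so the $\mc A_i$ contributions are far from negligible). Similarly, the dimensional heuristic in your route (i), "(spatial displacement)$^2$/(temporal displacement) $=(y_0-z_0)^2\mapsto\sqrt k/(y_0-z_0)$", does not hold up: it produces the wrong sign in the exponent of $(y_0-z_0)$ and there is no explanation for how $(y_0-z_0)^2$ turns into $\sqrt k/(y_0-z_0)$. Neither route as written leads to the stated asymptotic.

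The paper's actual argument is different and is worth contrasting. It does pass through Brownian LPP, as you suggest, but not via a shape theorem or interpolation. Instead it defines rescaled Brownian-LPP analogs $F^{N,\mrm{Br}}_k$, $G^{N,\mrm{Br}}_k$, $H^{N,\mrm{Br}}$ satisfying the exact variational identity $H^{N,\mrm{Br}}(x,y)=\sup_z\bigl(F^{N,\mrm{Br}}_k(x,z)+G^{N,\mrm{Br}}_k(z,y)\bigr)$, then quotes the precise asymptotic $F^{N,\mrm{Br}}_k(x,z\sqrt k)=(2\sqrt{2x}+2xz)\sqrt k + \mf o(\sqrt k)$ from \cite[Proposition~6.1]{dauvergne2018directed} and the Airy-sheet asymptotic $H^{N,\mrm{Br}}(x,y)=-(x-y)^2+\mf o(\sqrt k)$. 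Subtracting gives an upper bound on $G^{N,\mrm{Br}}_k$ for every fixed $z$. For the matching lower bound, it shows the variational maximizer $z^{*,N}_k$ equals $z_0+\mf o(1)$ (and is tight), which turns the upper bound into an equality at $z^{*,N}_k$, and then transfers from $z^{*,N}_k$ to $z_0$ using two structural facts about $G_k$: monotonicity in its first argument (Lemma~\ref{l.modified LPP monotone}) and a translation/stationarity identity (Lemma~\ref{l.translation relation for G}). This "reverse the variational duality" approach is remarked upon explicitly in the text: in \cite{dauvergne2018directed}, $F_k$ asymptotics were used to bound $G_k$; here that inference is run again — but with the extra monotonicity/stationarity input — to pin down $G_k$ exactly at the desired point. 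If you want to salvage your plan, you should drop the interpolation/LLN heuristics and instead prove (or cite) sharp asymptotics for the dual quantity $F_k$ and exploit the variational identity and the monotonicity/stationarity of $G_k$; the two properties in Lemmas~\ref{l.modified LPP monotone} and~\ref{l.translation relation for G} are the steps your outline is missing.
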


Now we give the proof of Proposition~\ref{p.maximizer location}, which is an adaptation of the argument from \cite[Lemma 7.1]{dauvergne2018directed}.

\begin{proof}[Proof of Proposition~\ref{p.maximizer location}]
Note that $H^N(x,y) \stackrel{d}{=}  \cL^{(1),N}_1(y-x)$ for fixed $x,y$ by \eqref{e.rescaled two parameter stationarity} and $\cL^{(1),N}_1\stackrel{d}{\to} \cP_1$ as $N\to\infty$ by Theorem~\ref{t.line ensemble convergence to parabolic Airy}. We added $\cL^{(1),N}_{k}(z)$ in the definition of $G^N_k$ because we will need $G^N_k$ to converge to $G^k$. We subtracted the same quantity from $F^n_k$ so that the following holds, which follows from \eqref{e.scaled approximate LPP representation}:
\begin{align}\label{e.FGH triangle}
H^N(x,y) &\geq F^N_k(x,z) + G^N_k(z,y) + \mf o(N^{-1/11}).
\end{align}
Now the left hand side is tight in $N$ and independent of $k$ by the tightness of $\cL^{(1)}_N$ (Theorem~\ref{t.line ensemble convergence to parabolic Airy}). For any $\varepsilon>0$, we claim that if $|z+\sqrt{k/2x}| > \varepsilon\sqrt{k}$, then the right hand side is at most $-\varepsilon^2\sqrt{kx}/2$. By \eqref{e.FGH triangle} and the tightness of $H^N(x,y)$, thus would imply that $Z^N_k$, the maximizer over $z$, must lie within $\varepsilon\sqrt{k}$ of $-\sqrt{k/2x}$, establishing the proposition.

For $z$ and $y$ in a fixed compact subset independent of $N$ (but which may depend on $k$), we know from Theorem~\ref{t.line ensemble convergence to parabolic Airy} that $\smash{G^N_k(z,y) \stackrel{d}{\to} G_k(z,y)}$ as $N\to\infty$. Further, we know from Lemma~\ref{l.G_k asymptotic} and by the convergence of $G^N_k$ to $G_k$ that for each fixed $z$ and $y$
\begin{align}\label{e.G_N upper bound}
G^N_k\left(z\sqrt{k},y\sqrt{k}\right) = -y^2k - \frac{\sqrt{k}}{y-z} + \mf o(\sqrt{k}).
\end{align}

We can use this information to obtain an upper bound on $F^N_k(x,z\sqrt{k})$. First, we know from \eqref{e.scaled approximate LPP representation} and the definitions of $F^N_k$, $G^N_k$, and $H^N$ that, for all $x,y,z$,
\begin{align*}
F^N_k(x,z\sqrt{k}) \leq H^N(x, y\sqrt{k}) - G^N_k(z\sqrt{k}, y\sqrt{k}) +  \mf o(N^{-1/11}).
\end{align*}
Define
\begin{equation}\label{e.tilde F definition}
\tilde F^N_k(x,z\sqrt{k}) := \inf_{y} \left(H^N(x, y\sqrt{k}) - G^N_k(z\sqrt{k}, y\sqrt{k})\right),
\end{equation}
so that $z\mapsto \tilde F(x,z\sqrt{k})$ is non-decreasing, since, by Lemma~\ref{l.modified LPP monotone}, $G^N_k(z\sqrt{k}, y\sqrt{k})$ is monotonically non-increasing in $z$. Note also that
\begin{equation}\label{e.tilde F and F relation}
F^N_k(x,z\sqrt{k}) \leq \tilde F^N_k(x,z\sqrt{k}) + \mf o(N^{-1/11})
\end{equation}
for all $x$ and $z$. Our next step is to upper bound $\tilde F^N_k(x,z\sqrt{k})$.

Since for fixed $x,y$ we know $H^N(x,y)\stackrel{d}{\to} \cP_1(y-x)$ as $N\to\infty$, $x\mapsto\cP_1(x)+x^2$ is stationary, and exponentially decaying tail bounds for $\cP_1(0)$ (i.e., the GUE Tracy-Widom distribution) are well-known (e.g., \cite[Exercise 3.8.3]{anderson2010introduction}), it follows that $H^N(x, y\sqrt{k}) = -(x-y\sqrt{k})^2 + \mf o(\sqrt{k})$. Combining this with the estimate from \eqref{e.G_N upper bound} of $G^N_k(z\sqrt{k}, y\sqrt{k})$ yields
\begin{align*}
\tilde F^N_k(x, z\sqrt{k})
&= \inf_y \left(-(x-y\sqrt{k})^2 - \left(-y^2k - \frac{\sqrt{k}}{y-z}\right)\right) + \mf o(\sqrt{k})\\
&= \inf_y \left(2xy\sqrt{k} + \frac{\sqrt{k}}{y-z}\right) + \mf o(\sqrt{k}),
\end{align*}
where in the last equality we used that $x$ is uniformly bounded, independently of $k$.
The optimizer of the infimum is $y=z+\frac{1}{\sqrt{2x}}$. Substituting this into the previous display yields that, for each fixed $x$ and $z$,
\begin{align}\label{e.F_N upper bound}
\tilde F^N_k(x, z\sqrt{k}) \leq 2\sqrt{2kx} + 2xz\sqrt{k} + \mf o(\sqrt{k}).
\end{align}
With this information, we can show that the maximizer in \eqref{e.scaled approximate LPP representation} equals $-\sqrt{k/2x} + \mf o(\sqrt{k})$. Indeed,
\eqref{e.scaled approximate LPP representation} can be written as
\begin{align*}
H^N(x,y) = \sup_{z\leq y}\left(F^N_k(x,z) + G^N_k(z,y)\right) + \mf o(N^{-1/11}).
\end{align*}
 Now, since $x$ and $y$ are bounded independently of $k$, the lefthand side of the previous display is  $\mf o(\sqrt{k})$ (by Theorem~\ref{t.line ensemble convergence to parabolic Airy}). So at $z=Z^N_k$ the maximizer, we must have
\begin{align*}
F^N_k(x,Z^N_k) + G^N_k(Z^N_k,y) = \mf o(\sqrt{k}).
\end{align*}
We will show that if $|z+\sqrt{k/2x}| > \varepsilon\sqrt{k}$, then $F^N_k(x,z) + G^N_k(z,y)$ is  smaller then $-c\varepsilon^2\sqrt{k}$. More precisely, we will show that for some absolute constant $c>0$,
\begin{align*}
\sup_{\substack{z: z\leq y\\|z + \sqrt{k/2x}| > \varepsilon\sqrt{k}}} \tilde F^N_k(x,z) + G^N_k(z,y) \leq -c\varepsilon^2 \sqrt{k} + \mf o(\sqrt{k});
\end{align*}
this suffices to establish our claim since, by \eqref{e.tilde F and F relation}, $F^N_k(x,z) \leq \tilde F^N_k(x,z) + \mf o(N^{-1/11})$. Recall from just after \eqref{e.tilde F definition} that $\tilde F^N_k(x, z\sqrt{k})$ is monotonically non-decreasing in $z$.
We first reduce the previous display to a supremum over a finite set using the monotonicity in $z$ of $\tilde F^N_k$ and $G^N_k$. Letting
\begin{align*}
\floor{x}_{N,k} &:= \max\left\{ w< x: w\in \tfrac{1}{4}\sqrt{\frac{k}{x}}\varepsilon^2\Z \right\}\\
\ceil{x}_{N,k} &:= \min\left\{ w> x: w\in \tfrac{1}{4}\sqrt{\frac{k}{x}}\varepsilon^2\Z \right\},
\end{align*}
the monotonicity in $z$ of $\tilde F^N_k$ and $G^N_k$ implies that it is sufficient to show that
\begin{align}\label{e.F G bound sufficient to show}
\sup_{\substack{z: z\leq y\\|z + \sqrt{k/2x}| > \varepsilon\sqrt{k}}} \tilde F^N_k(x,\ceil{z}_{N,k}) + G^N_k(\floor{z}_{N,k},y) \leq -c\varepsilon^2 \sqrt{k}.
\end{align}
Combining the bounds on $G^N_k$ and $\tilde F^N_k$ from \eqref{e.G_N upper bound} and \eqref{e.F_N upper bound} respectively (taking $y$ in the $G^N_k$ bound to be $yk^{-1/2}$ here, and using the fact that $y$ is bounded independently of $k$), we obtain, for each fixed $z$,
\begin{align}\label{e.F G final inequality}
\tilde F^N_k(x,\ceil{z}_{N,k}) + G^N_k(\floor{z}_{N,k},y) \leq 2\sqrt{2kx} + 2x(z+\tfrac{1}{4}\varepsilon^2x^{-1/2}\sqrt{k}) +\frac{k}{z-\tfrac{1}{4}\varepsilon^2\sqrt{k/x}} + \mf o(\sqrt{k}).
\end{align}
If one substitutes $\varepsilon=0$, the righthand side is maximized at $z=-\sqrt{k/2x}$ (under the condition $z\leq y$), at which value it equals $0$. Further, by the concavity of the righthand side as a function in $z$ (and the negativity of the third derivative) and using Taylor's theorem, if $|z+\sqrt{k/2x}|>\varepsilon\sqrt{k}$, the righthand side is at most $-c\varepsilon^2\sqrt{k}$. Since these bounds also apply simultaneously for all the finite number of $z$'s in \eqref{e.F G bound sufficient to show}, we obtain the asymptotic for $Z^N_k$.

The tightness of $\{Z^N_k(x,y)\}_{N=1}^\infty$ for each $k$ follows from the asymptotic and the monotonicity property $y \geq Z^N_1(x,y) \geq Z^N_2(x,y) \geq  \ldots \geq Z^N_{N^{1/6}}$ from Lemma~\ref{l.Z_k monotone}.
\end{proof}

\begin{remark}
Let remark briefly on the difference of the above argument compared to \cite{dauvergne2018directed}. There, an analog of the $F^N_k$, $G^N_k$ and $H^N_k$ quantities were defined in a model known as Brownian last passage percolation (introduced here in Section~\ref{s.actual G_k asymptotics} ahead). In their context, asymptotics for $F^N_k$ were available (via estimates for a limiting analog $F_k$), and these were used to derive an upper bound on $G^N_k$ to then upper bound the analog of \eqref{e.F G final inequality}. Here, we do not \emph{a priori} have such estimates for $F^N_k$ as we do not know it weakly converges to $F_k$; instead, we know the convergence of $G^N_k$ to its analog in the parabolic Airy line ensemble, where the previously mentioned estimates for $F_k$ apply. These will lead to the estimates on $G_k$ recorded in Lemma~\ref{l.G_k asymptotic}, yielding the estimates for $G^N_k$ that were used. We essentially reversed the argument in \cite{dauvergne2018directed} to obtain estimates for $F^N_k$ that allowed us to then bound \eqref{e.F G final inequality}.
\end{remark}

\begin{remark}

Recall from \eqref{e.rescaled h general definition} and \eqref{e.rescaled two parameter stationarity} that $L_1^{N, (k)}$ is equal in law to a translation of $L_1^{N, (1)}$, which was important for our arguments in Section~\ref{s.airy sheet}. This statement is false for the lower-indexed curves, namely, $L_j^{N, (k)}$ does not have the same law as that of the corresponding translation of $\smash{L_j^{N, (1)}}$ for $j > 1$. This is closely related to the fact that, as noted in Remark~\ref{r.no color merging with zero}, we cannot merge colors 1 and 0 in the colored $q$-Boson model. It can also be seen from the fact that, if such an equality in law were true, then it would imply precise asymptotics for $F_k^N$ (since the asymptotics of $\smash{L_j^{(N)}}$, converging to the $j$\th curve of the Airy line ensemble, are well understood, e.g., \cite[Corollary 5.3]{dauvergne2018basic}), which are actually in disagreement with those shown in \eqref{e.tilde F and F relation} and \eqref{e.F_N upper bound}.
\end{remark}

\subsection{$\bm{G_k}$ asymptotics}\label{s.actual G_k asymptotics}
 Here we give the proof of Lemma~\ref{l.G_k asymptotic}. In order to access a certain variational formula, we prove its analog in the model of Brownian LPP, and use the convergence of the latter to the Airy sheet to obtain the result for $G_k$ itself. We briefly recall the model.

An up-right path from $(x,j)$ to $(y,k)$ for $x<y$ and $j>k$ is a function $\gamma:[x,y]\to\intint{k,j}$ which is non-increasing and c\`adl\`ag (right continuous with left-limits) with $\gamma(x) = j$ and $\gamma(y) = k$. The jump times $(t_1, \ldots, t_{N-1})$ of a path from line $N$ to line $1$ defined by $t_i$ being the time $\gamma$ jumps from line $i+1$ to line $i$, i.e., the time $t$ such that $\gamma(t^-) = i+1$ and $\gamma(t) = i$. A pair of up-right paths $\pi$ and $\pi'$ from $(x, j)$ to $(y,k)$ and $(x',j')$ to $(y', k')$, respectively, are said to be disjoint if either $\pi(t) > \pi' (t)$ for all $t \in (x, y) \cap (x', y')$ (i.e., excluding endpoints), or $\pi (t) < \pi' (t)$ for all $t \in (x, y) \cap (x', y')$.

 Let $\bm B = (B_1, \ldots, B_N)$ be a collection of $N$ independent and identically distributed Brownian motions on $[0,\infty)$. The weight of a path $\gamma$ from $(x,N)$ to $(y,1)$ with jump times $(t_1, \ldots, t_{N-1})$ is given by (with the convention $t_0=y$ and $t_N = x$)
\begin{align*}
\bm B[\gamma] = \sum_{i=1}^{N} \bigl(B_i(t_{i-1}) - B_i(t_i)\bigr).
\end{align*}
Let $\bm W\!\bm B = (W\!B_1, \ldots, W\!B_N)$ be the curves of the \emph{watermelon}, defined by
\begin{align*}
\sum_{i=1}^k(W\!B)_i(t) = \bm B[(0,N)^k\to(t,1)^k]
\end{align*}
for each $k=1, \ldots, N$, where $\bm B[(0,N)^k\to(t,1)^k]$ is the maximum weight over all collections of $k$ disjoint paths $(\gamma_1, \ldots, \gamma_k)$, with $\gamma_i$ from $(0,N-i+1)$ to $(t,1)$, with the weight of a collection of paths being the sum of the weights.

Recall the LPP notation from \eqref{e.LPP definition} (which agrees with the above convention with $k=1$). Let $x_N = 2xN^{2/3}$, and similarly for $y_N$ and $z_N$. Define
\begin{align*}
F^{N,\mrm{Br}}_k(x,z) &= N^{-1/3}\left(\bm W\!\bm B[(x_N,N)\to (N+z_N,k)]  - (W\!B)_k(N+z_N) + 2xN^{2/3}\right)\\
G^{N,\mrm{Br}}_k(z,y) &= N^{-1/3}\left(\bm W\!\bm B[(N+z_N,k)\to(N+y_N,1)] + (W\!B)_k(N+z_N) - 2N-2yN^{2/3}\right)\\
H^{N,\mrm{Br}}(x,y)&= N^{-1/3}\left(\bm W\!\bm B[(x_N, N)\to(N+y_N,1)] - 2N - 2(y-x)N^{2/3}\right).
\end{align*}
It follows from \cite[Proposition 2.2]{dauvergne2018directed} or \cite[Proposition 4.1]{corwin2014brownian} that $G^{N,\mrm{Br}}_k(z,y)\stackrel{d}{\to} G_k(z,y)$ for any fixed $z$ and $y$ (which may depend on $k$).

We will also need a certain stationarity property of $G_k$:

\begin{lemma}\label{l.translation relation for G}
Fix $\varepsilon\in \R$ and $z<y$. Then
\begin{align*}
G_k(z\sqrt{k}, y\sqrt{k}) \stackrel{d}{=} G_k((z+\varepsilon)\sqrt{k}, (y+\varepsilon)\sqrt{k})) + (\varepsilon^2+2\varepsilon y)k.
\end{align*}
\end{lemma}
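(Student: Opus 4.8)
The statement is a stationarity-type identity for the object $G_k(z,y) := \bm\cP[(z,k)\to(y,1)] + \cP_k(z)$ under a simultaneous translation of both spatial arguments (with a shift of the value to compensate for the parabolic curvature). The plan is to prove it at the discrete level, in Brownian last passage percolation, and then pass to the limit using $G_k^{N,\mathrm{Br}}(z,y)\xrightarrow{d}G_k(z,y)$ as recorded just before Lemma~\ref{l.translation relation for G}. The source of such an identity is the translation invariance of the underlying i.i.d.\ Brownian motions $\bm B=(B_1,\ldots,B_N)$ together with the exact form of the watermelon curves: applying the time-shift $t\mapsto t+\varepsilon_N$ (with $\varepsilon_N = 2\varepsilon N^{2/3}$) to the Brownian environment on $[\text{relevant interval}]$ leaves the law of the LPP values invariant up to boundary terms, and these boundary terms produce exactly the deterministic parabolic shift $(\varepsilon^2+2\varepsilon y)k$ after the KPZ rescaling.

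Concretely, first I would recall the definition of $G_k^{N,\mathrm{Br}}(z,y)$ and write $\bm W\!\bm B[(N+z_N,k)\to(N+y_N,1)]$ as a $k$-fold disjoint-path LPP value. I would use the decomposition that relates $\bm W\!\bm B[(N+z_N,k)\to(N+y_N,1)] + (W\!B)_k(N+z_N)$ to a difference of two watermelon sums, namely $\sum_{i=1}^k (W\!B)_i(N+y_N) - \sum_{i=1}^{k-1}(W\!B)_i(N+z_N)$ — this is precisely why the $+(W\!B)_k(N+z_N)$ correction term is built into the definition of $G_k^{N,\mathrm{Br}}$, and it makes $G_k^{N,\mathrm{Br}}(z,y)$ a genuine increment of watermelon sums between times $N+z_N$ and $N+y_N$. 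Then the key step is the Brownian stationarity: the law of the watermelon increment $\big(\sum_{i\le k}(W\!B)_i(N+y_N) - \sum_{i<k}(W\!B)_i(N+z_N)\big)$ built from $\bm B$ started at time $0$ equals, up to an explicit deterministic and Gaussian-mean shift, the corresponding increment shifted by $\varepsilon_N$; this follows from the increment-stationarity of Brownian motion and the representation of $(W\!B)_i$ as last passage values, which only depend on increments of the $B_j$'s. Carrying out the bookkeeping of the centering constants ($2N$, $2yN^{2/3}$, $2zN^{2/3}$, and the extra $(W\!B)_k$ term) under the substitution $z\mapsto z+\varepsilon$, $y\mapsto y+\varepsilon$ produces the claimed $(\varepsilon^2+2\varepsilon y)k$ after multiplying by $N^{-1/3}$ and using $z_N,y_N \propto \sqrt{k}$-scaled coordinates; here one substitutes $z\sqrt k$, $y\sqrt k$ for $z,y$ throughout, exactly as in the statement.

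Finally I would pass to the $N\to\infty$ limit: since $G_k^{N,\mathrm{Br}}(z\sqrt k,y\sqrt k)\xrightarrow{d} G_k(z\sqrt k,y\sqrt k)$ and $G_k^{N,\mathrm{Br}}((z+\varepsilon)\sqrt k,(y+\varepsilon)\sqrt k)\xrightarrow{d} G_k((z+\varepsilon)\sqrt k,(y+\varepsilon)\sqrt k)$ (both by the cited convergence of Brownian LPP to the Airy sheet / parabolic Airy line ensemble), and the shift $(\varepsilon^2+2\varepsilon y)k$ is a deterministic constant, the discrete identity $G_k^{N,\mathrm{Br}}(z\sqrt k,y\sqrt k)\stackrel{d}{=}G_k^{N,\mathrm{Br}}((z+\varepsilon)\sqrt k,(y+\varepsilon)\sqrt k)+(\varepsilon^2+2\varepsilon y)k + o(1)$ passes to the limit to give the stated equality in law. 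An alternative, perhaps cleaner, route avoiding the prelimit entirely would be to argue directly from the shift-invariance and parabolic-curvature properties of the parabolic Airy line ensemble $\bm\cP$ itself: writing $\cP_i(x) = \mc A_i(x) - x^2$ with $\bm{\mc A}$ stationary, and using that $\bm{\mc A}[(z,k)\to(y,1)]$ transforms covariantly under joint translation of $z$ and $y$, one collects the parabola terms $-x^2$ from $\cP_k(z)$ and from the endpoint of the LPP value to get the $(\varepsilon^2+2\varepsilon y)k$ correction (the factor $k$ arising because translating a $k$-path LPP value reveals $k$ copies of the curvature correction). The main obstacle is the careful accounting of exactly which parabola/centering terms appear and with what multiplicity — getting the coefficient $(\varepsilon^2+2\varepsilon y)$ and the factor $k$ right — rather than any deep probabilistic input; the stationarity itself is essentially immediate from either the Brownian environment's increment-stationarity or the defining stationarity of $\bm{\mc A}$.
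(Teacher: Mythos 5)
The paper in fact uses the route you called "alternative, perhaps cleaner": it works directly with the parabolic and stationary Airy line ensembles $\bm\cP$, $\bm{\cA}$, never returning to the Brownian prelimit. But your description of that route contains a genuine error which, if carried out, would produce the wrong coefficient. You say "the factor $k$ arising because translating a $k$-path LPP value reveals $k$ copies of the curvature correction." This is not how the parabolic corrections behave. For any up-right path $\gamma$ from $(z,k)$ to $(y,1)$ with jump times $y=t_0>t_1>\cdots>t_k=z$, substituting $\cP_i(x)=\cA_i(x)-x^2$ into $\bm\cP[\gamma]=\sum_{i=1}^k\bigl(\cP_i(t_{i-1})-\cP_i(t_i)\bigr)$ makes the $-t_i^2$ terms telescope: $\sum_{i=1}^k(t_{i-1}^2-t_i^2)=y^2-z^2$, so $\bm\cP[\gamma]=\bm\cA[\gamma]-y^2+z^2$, with the curvature appearing only at the \emph{two endpoints}, never $k$ times. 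Combining with $\cP_k(z)=\cA_k(z)-z^2$, the paper gets $G_k(z,y)=\bm\cA[(z,k)\to(y,1)]-y^2+\cA_k(z)$, applies stationarity of $\bm\cA$ to shift $z\mapsto z+\varepsilon$, $y\mapsto y+\varepsilon$, and converts back, producing the $k$-free identity $G_k(z,y)\stackrel{d}{=}G_k(z+\varepsilon,y+\varepsilon)+\varepsilon^2+2\varepsilon y$. The factor $k$ in the statement then comes \emph{entirely} from substituting $z\sqrt{k}$, $y\sqrt{k}$, $\varepsilon\sqrt{k}$ into this identity: $(\varepsilon\sqrt{k})^2+2(\varepsilon\sqrt{k})(y\sqrt{k})=(\varepsilon^2+2\varepsilon y)k$. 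If the correction genuinely appeared $k$ times, you would end up with an extra factor of $k$ after the rescaling.

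Your primary route — proving the identity for $G_k^{N,\mrm{Br}}$ via stationarity of the Brownian environment and passing to the limit — is not what the paper does and would require extra work (one must justify the passage to the limit and, more substantively, the claimed exact decomposition $\bm W\!\bm B[(s,k)\to(t,1)]+(W\!B)_k(s)=\sum_{i\le k}(W\!B)_i(t)-\sum_{i<k}(W\!B)_i(s)$ is not obviously an identity; the $(W\!B)_k(z)$ term in the definition of $G_k^{N,\mrm{Br}}$ is there to produce the monotonicity of Lemma~\ref{l.modified LPP monotone}, not to telescope the melon). In that route you do correctly attribute the factor $k$ to the $\sqrt{k}$-scaled substitution; carry that same accounting into the Airy-ensemble argument and it becomes exactly the paper's proof.
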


\begin{proof}
Recall that $G_k(z,y) = \bm\cP[(z,k)\to (y,1)] + \cP_k(z)$ and that $\cP_i(x) = \cA_i(x) -x^2$ for all $i\in\N$ and $x\in\R$, where $\bm\cA$ is the Airy line ensemble and is stationary. Also note that for any up-right path $\gamma$ from $(z,k)$ to $(y,1)$, $\bm\cP[\gamma] = \bm\cA[\gamma]-y^2 + z^2$. Thus it follows that
\begin{align*}
G_k(z,y) &= \bm\cA[(z,k) \to (y,1)] -y^2 + z^2 + \cA_k(z) - z^2\\
&\stackrel{d}{=} \bm\cA[(z+\varepsilon,k) \to (y+\varepsilon,1)] -y^2 + \cA_k(z+\varepsilon) \\
&= \bm\cP[(z+\varepsilon,k) \to (y+\varepsilon,1)] - y^2 + (y+\varepsilon)^2 + \cP_k(z+\varepsilon).
\end{align*}
This equals $G(z+\varepsilon, y+\varepsilon) + \varepsilon^2+2\varepsilon y$. Replacing $z$ by $z\sqrt{k}$, $y$ by $y\sqrt{k}$, and $\varepsilon$ by $\varepsilon\sqrt{k}$ completes the proof.
\end{proof}

\begin{proof}[Proof of Lemma~\ref{l.G_k asymptotic}]
In the proof we will prove the statement with $z_0$ in place of $z$ as we will need to use $z$ as a generic variable. As mentioned, we will prove that
$$G^{N,\mrm{Br}}_k(z_0\sqrt{k}, y\sqrt{k}) = -y^2k -\sqrt{k}/(y-z_0) + \mf o(\sqrt{k});$$
then since $G^{N,\mrm{Br}}_k(z_0\sqrt{k}, y\sqrt{k})\stackrel{d}{\to} G_k(z_0\sqrt{k}, y\sqrt{k})$ as $N\to\infty$, we will be done.

We know from \cite[Proposition~6.1]{dauvergne2018directed} that $F^{N,\mrm{Br}}_k(x, z\sqrt{k}) = (2\sqrt{2x} + 2xz)\sqrt{k} + \mf o(k^{1/2})$ for any $0<x<z$, and from \cite[Theorem 1.4, Proposition 4.1]{dauvergne2018directed} that $\smash{H^{N,\mrm{Br}}(x,y) \stackrel{d}{\to} \S(x;y)}  = -(x-y)^2 + \mf o(\sqrt{k})$.
By the variational definition of the LPP values, we also know that
\begin{align}\label{e.FGH variational}
H^{N,\mrm{Br}}(x,y) = \sup_{z\in\R} F^{N,\mrm{Br}}_k(x,z) + G^{N,\mrm{Br}}_k(z,y).
\end{align}
 Let $x_0 = 1/(2(y-z_0)^2)$. The previous display implies that there is a random $z^{*,N}_k$ such that
\begin{equation}\label{e.FGH equality at maximizer}
F^{N,\mrm{Br}}_k(x_0, z^{*,N}_k\sqrt{k}) + G^{N,\mrm{Br}}_k(z^{*,N}_k\sqrt{k}, y\sqrt{k}) = H^{N,\mrm{Br}}(x_0, y\sqrt k) = -(x_0-y\sqrt{k})^2 + \mf o(\sqrt{k}).
\end{equation}
We claim that
\begin{equation}\label{e.z^* value claim}
z^{*,N}_k = -\sqrt{\frac{1}{2x_0}} + y + \mf o(1) = z_0 + \mf o(1)
\end{equation}
and that $\{z^{*,N}_k\}_{N=1}^\infty$ is tight. By \eqref{e.FGH variational}, we know
$$G^{N,\mrm{Br}}_k(z\sqrt{k}, y\sqrt{k})\leq H^{N,\mrm{Br}}(x,y\sqrt{k}) - F^{N,\mrm{Br}}_k(x, z\sqrt{k})$$
for all $x, y, z$. Taking $x=1/(2(y-z)^2)$ and the asymptotics $H^{N,\mrm{Br}}(x,y\sqrt{k}) = -(x-y\sqrt{k})^2 + \mf o(\sqrt{k})$, $F^{N,\mrm{Br}}_k(x, z\sqrt{k}) = 2\sqrt{2kx} + 2\sqrt{k}xz + \mf o(\sqrt k)$ yields that
\begin{equation}\label{e.G_k upper bound}
G^{N,\mrm{Br}}_k(z\sqrt{k}, y\sqrt{k}) \leq -y^2k - \frac{\sqrt{k}}{y-z} + \mf o(\sqrt{k});
\end{equation}
by taking $N\to\infty$ and using $G^{N,\mrm{Br}}_k(z\sqrt{k}, y\sqrt{k}) \to G_k(z\sqrt{k}, y\sqrt{k})$ in distribution we also obtain the upper bound half of the overall equality we are aiming to prove. Using this upper bound and the same asymptotics on $H^{N,\mrm{Br}}$ and $F^{N,\mrm{Br}}_k$ in \eqref{e.FGH equality at maximizer} yields that the latter cannot hold if the claimed \eqref{e.z^* value claim} does not hold. As with the tightness of $\smash{Z^N_k}$ in Proposition~\ref{p.maximizer location}, the tightness of $\smash{z^{*,N}_k}$ follows from the just established asymptotic and the monotonicity of the sequence in $k$ from Lemma~\ref{l.Z_k monotone}.

It remains to prove a lower bound on $G_k(z\sqrt{k}, y\sqrt{k})$ matching the upper bound in \eqref{e.G_k upper bound}. With \eqref{e.z^* value claim} we see that
\begin{align*}
G^{N,\mrm{Br}}_k(z^{*,N}_k\sqrt{k}, y\sqrt{k})
&= H^{N,\mrm{Br}}(x_0, y\sqrt{k}) - F^{N,\mrm{Br}}_k(x_0, z^{*,N}_k\sqrt{k})\\
&=-(x_0-y\sqrt{k})^2 - (2\sqrt{2x_0} + 2x_0z^{*,N}_k)\sqrt{k} + \mf o(\sqrt{k})\\
&= -y^2k + 2x_0y\sqrt{k} - (2\sqrt{2x_0} + 2x_0(-(2x_0)^{-1/2} + y))\sqrt{k} + \mf o(\sqrt{k}).
\end{align*}
By simplifying, the previous line equals
\begin{align}\label{e.G_k(z^*) asymptotic}
G^{N,\mrm{Br}}_k(z^{*,N}\sqrt{k}, y\sqrt{k}) = -y^2k -\sqrt{2x_0k} + \mf o(\sqrt{k})
&= -y^2k - \frac{\sqrt{k}}{y-z_0} + \mf o(\sqrt{k}).
\end{align}
By using the tightness of $z^{*,N}_k$ and the weak convergence of $G^{N,\mrm{Br}}_k$ to $G_k$, we obtain the existence of a $z^*_k$ satisfying \eqref{e.z^* value claim} and the previous display with $G_k$ and $z^*_k$ in place of their prelimiting analogs.

To go from $G_k(z^*_k\sqrt{k}, y\sqrt{k})$ to $G_k(z_0\sqrt{k}, y\sqrt{k})$, we use that $G_k(\bm\cdot, y)$ is monotonically decreasing by Lemma~\ref{l.modified LPP monotone}, as well as the stationarity property from Lemma~\ref{l.translation relation for G}. In the case that $z^*_k \geq z_0$, the monotonicity of $G_k(\bm\cdot, y\sqrt{k})$ along with \eqref{e.G_k(z^*) asymptotic} immediately yields that
\begin{align}\label{e.G_k lower bound one case}
G_k(z_0\sqrt{k}, y\sqrt{k}) \geq G_k(z^*_k\sqrt k, y\sqrt k) \geq -y^2k - \frac{\sqrt{k}}{y-z_0} + \mf o(\sqrt{k}).
\end{align}
In the case that $z^*_k < z_0$, we know from above that $z^*_k > z_0 - \mf o(1)$, i.e, for any $\varepsilon>0$ it holds that $\sum_k\P\left(z^*_k< z_0 - \varepsilon\right) < \infty$. We fix $\varepsilon > 0$, and apply Lemma~\ref{l.translation relation for G} along with monotonicity of $G_k(\bm\cdot, y\sqrt{k})$ again to obtain that, for any deterministic sequence $M_k$,
\begin{align*}
 \P\left(G_k(z_0\sqrt{k}, y\sqrt{k}) < M_k - \varepsilon\sqrt{k}\right)
 &= \P\left(G_k((z_0-\varepsilon)\sqrt{k}, (y-\varepsilon)\sqrt{k})  < M_k -(\varepsilon^2-2\varepsilon y)k - \varepsilon\sqrt{k}\right)\\
 &\leq \P\left(G_k(z^*\sqrt{k}, (y-\varepsilon)\sqrt{k}) < M_k -(\varepsilon^2-2\varepsilon y)k - \varepsilon\sqrt{k}\right)\\
 &\qquad + \P\left(z^*_k < z_0-\varepsilon\right).
\end{align*}
Now by \eqref{e.G_k(z^*) asymptotic}, when $M_k = -(y-\varepsilon)^2k - \sqrt{k}/(y-\varepsilon-z_0) + (\varepsilon^2-2\varepsilon y)k$, the first term in the previous display is summable, and we already noted that the second is also summable. We can write
\begin{align*}
M_k = -y^2k - \frac{\sqrt{k}}{y-z_0} - \varepsilon \sqrt{k}\cdot\left(\frac{1}{(y-z_0)(y-z_0-\varepsilon)}\right),
\end{align*}
and the parenthetical term is positive and $O(1)$. Thus we have established that $\P(G_k(z_0\sqrt{k}, y\sqrt{k}) < -y^2k - \sqrt{k}/(y-z_0) - \varepsilon\sqrt{k})$ is summable for every $\varepsilon>0$, i.e., $G_k(z_0\sqrt{k}, y\sqrt{k}) \geq -y^2k - \sqrt{k}/(y-z_0) + \mf o(\sqrt{k})$.
This, along with \eqref{e.G_k lower bound one case} and \eqref{e.G_k upper bound}, completes the proof.
\end{proof}

\section{Directed landscape and KPZ fixed point convergence}\label{s.general initial condition}

In this section we prove Corollaries~\ref{c.asep general initial condition} (ASEP) and \ref{c.s6v landscape} (S6V), using an idea communicated to us by Shalin Parekh; it was also used by him in the work \cite{parekh2023convergence} establishing joint convergence of ASEP in the weakly asymmetric regime under multiple initial conditions with the basic coupling to the KPZ equation with multiple initial conditions driven by the same noise. We prove Corollary~\ref{c.asep general initial condition} in Section~\ref{s.asep to DL} and Corollary~\ref{c.s6v landscape} in Section~\ref{s.s6v to dl}. The latter relies on an approximate monotonicity result (Proposition~\ref{p.s6v approximate monotonicity}) which is proved in Section~\ref{s.proof of approximate monotonicity}, and a finite speed of discrepancy bound (Lemma~\ref{l.finite speed of prop for s6v basic coupling}) proved in Section~\ref{s.finite speed of prop s6v basic coupling}.

\subsection{ASEP to directed landscape} \label{s.asep to DL}

We start by giving a lemma on height monotonicity for ASEP under the basic coupling. Recall the definition from below \eqref{e.height to config} of the ASEP height function $h^{\mrm{ASEP}}(h_0, 0; \bm\cdot, t)$ when started from an arbitrary height function $h_0$. 

\begin{lemma}[Height monotonicity of ASEP]\label{l.asep height mono}
Let $H\in\Z$ be an integer and $h_0, h_0':\Z\to\Z$ be Bernoulli paths such that $h_0(x) +H \geq h_0'(x)$ for all $x\in\Z$. Then, under the basic coupling, almost surely, $h^{\mrm{ASEP}}(h_0, 0; x,t) +H \geq h^{\mrm{ASEP}}(h_0', 0; x,t)$ for all $x\in\Z$ and $t>0$.
\end{lemma}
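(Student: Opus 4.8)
The statement is a standard attractivity/monotonicity fact for ASEP, but phrased in terms of height functions differing by an arbitrary integer $H$ rather than the usual hypothesis that one particle configuration dominates the other. The first observation is that the constant shift $H$ only affects the ``reference level'' of the height function and is preserved by the dynamics: by the definition of $h^{\mrm{ASEP}}(h_0,0;\bm\cdot,t)$ given after \eqref{e.height to config}, the process $t \mapsto h^{\mrm{ASEP}}(h_0,0;\bm\cdot,t)$ only changes by $\pm 1$ at a single site when a particle jumps, and this change depends only on the particle configuration $\eta_{h_0}$ (through the clocks $\bm\xi$), not on the additive constant. Hence it suffices to track the evolution of $h^{\mrm{ASEP}}(h_0,0;\bm\cdot,t) + H - h^{\mrm{ASEP}}(h_0',0;\bm\cdot,t)$ and show it stays $\geq 0$; since $H$ is constant in time, this is equivalent to showing that $g_t(x) := h^{\mrm{ASEP}}(h_0,0;x,t) - h^{\mrm{ASEP}}(h_0',0;x,t)$ stays $\geq -H$ for all $x$, given that $g_0(x) \geq -H$ for all $x$.

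\textbf{Key steps.} First, reduce to the case $H = 0$ by relabeling: replace $h_0$ by $h_0 + H$, which is again a Bernoulli path with the same associated configuration $\eta_{h_0+H} = \eta_{h_0}$ (the discrete derivative is unchanged), and note $h^{\mrm{ASEP}}(h_0+H,0;\bm\cdot,t) = h^{\mrm{ASEP}}(h_0,0;\bm\cdot,t) + H$ for all $t$ by the additive-constant-invariance of the dynamics just noted. So we may assume $h_0(x) \geq h_0'(x)$ for all $x$ and must show $h^{\mrm{ASEP}}(h_0,0;x,t) \geq h^{\mrm{ASEP}}(h_0',0;x,t)$ for all $x,t$. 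Second, this is the classical height monotonicity of ASEP under the basic coupling: one argues that the difference $g_t = h^{\mrm{ASEP}}(h_0,0;\bm\cdot,t) - h^{\mrm{ASEP}}(h_0',0;\bm\cdot,t)$ is nonnegative for all time, which can be proven by checking that the dynamics cannot create a first site where $g$ becomes negative. Concretely, fix a site $y$ and a jump time $r$ at which a clock rings; between jump times $g$ is constant, so it suffices to check that if $g_{r^-}(x) \geq 0$ for all $x$ then $g_r(x) \geq 0$ for all $x$. The quantity $g$ can only decrease at a site where exactly one of the two coupled ASEPs executes a jump changing the height there; one examines the finitely many cases (a $\xi^{\mrm R}_y$ or $\xi^{\mrm L}_y$ clock ring, and which of the two configurations has a successful jump), using that $\eta_{h_0}(x) = h^{\mrm{ASEP}}(h_0,0;x-1,t) - h^{\mrm{ASEP}}(h_0,0;x,t) \in \{0,1\}$ and likewise for $h_0'$, together with the hypothesis $g_{r^-}(x-1), g_{r^-}(x), g_{r^-}(x+1) \geq 0$, to conclude $g_r$ remains $\geq 0$ at the affected site. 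Third, to make the ``first time $g$ becomes negative'' argument rigorous on the infinite lattice $\Z$, one invokes the graphical construction and a finite-speed-of-propagation / local-update argument: over any finite time horizon and any finite spatial window, only finitely many clock rings are relevant (by the standard Harris construction, cf.\ \cite[Section 10]{harris1978additive}, which is how the dynamics were defined in Section~\ref{s.asep dynamics}), so the evolution of $g$ on that window over that horizon is determined by finitely many updates, each of which preserves nonnegativity by the case check; letting the window and horizon grow gives the claim almost surely.

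\textbf{Main obstacle.} There is no serious obstacle: this is a routine and well-known monotonicity statement (it underlies the very definition of the basic coupling and the colored coupling in Section~\ref{s.asep basic coupling}), and the only mild care needed is (i) the bookkeeping of the additive constant $H$, which I handle by the relabeling in the first step, and (ii) making the ``no first violation'' argument precise on the infinite lattice, which is handled by the finite-speed-of-propagation structure of the graphical construction. I would present the proof as: reduce to $H=0$ via $h_0 \mapsto h_0 + H$; observe that the height differences can only change at jump times; perform the (short) case analysis on the local height update showing nonnegativity of $g$ is preserved; and conclude by a finite-window approximation using the graphical construction. The whole argument is a couple of paragraphs once these reductions are in place.
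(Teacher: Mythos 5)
Your proposal is correct and takes essentially the same approach the paper has in mind: the paper's entire proof is the one-line observation that the claim "follows immediately from the definitions of the basic coupling and $h^{\mrm{ASEP}}(h_0,0;\bm\cdot,t)$," and your writeup simply supplies the elided details — the reduction to $H=0$ by replacing $h_0$ with $h_0+H$ (which has the same particle configuration), the local case check that a single clock ring cannot create a first violation of $g_t\geq 0$, and the Harris-construction/finite-speed argument to handle the infinite lattice.
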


\begin{proof}
This follows immediately from the definitions of the basic coupling in Section~\ref{s.asep basic coupling} and $h^{\mrm{ASEP}}(h_0, 0; \bm\cdot, t)$ from below \eqref{e.height to config}.
\end{proof}

We will also need to use the following basic observation, whose proof is trivial and omitted.

\begin{lemma}\label{l.X=Y}
Let $X,Y$ be random variables such that $\E[|X|], \E[|Y|] < \infty$. Suppose $X\geq Y$ almost surely and $\E[X] = \E[Y]$. Then $X=Y$ almost surely.
\end{lemma}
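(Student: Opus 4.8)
The plan is to reduce the claim to the standard measure-theoretic fact that a non-negative integrable random variable with vanishing expectation is almost surely zero. First I would set $Z := X - Y$. Since $\E[|X|]$ and $\E[|Y|]$ are both finite, $Z$ is integrable, and by linearity of expectation $\E[Z] = \E[X] - \E[Y] = 0$. By hypothesis $Z \geq 0$ almost surely.

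Next I would show that any integrable $Z$ with $Z \geq 0$ a.s.\ and $\E[Z] = 0$ satisfies $Z = 0$ a.s. For this, write $\{Z > 0\} = \bigcup_{n \in \N} \{Z \geq 1/n\}$. For each fixed $n$, Markov's inequality gives $\P(Z \geq 1/n) \leq n\,\E[Z] = 0$, hence $\P(Z \geq 1/n) = 0$; taking the union over $n$ and using countable subadditivity yields $\P(Z > 0) = 0$. Unwinding the definition of $Z$, this is exactly the assertion that $X = Y$ almost surely.

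I do not expect any genuine obstacle here: the only points requiring care are that the finiteness of $\E[|X|]$ and $\E[|Y|]$ is what makes $\E[X] - \E[Y]$ well defined and what justifies applying Markov's inequality to $Z$, and both are part of the hypothesis. The argument is entirely elementary and uses no input from earlier in the paper.
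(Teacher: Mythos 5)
Your proof is correct and is the standard argument; the paper actually omits the proof altogether, remarking only that it is trivial, so there is no alternative approach in the paper to compare against.
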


\begin{proof}[Proof of Corollary~\ref{c.asep general initial condition} (1) \& (2)]
Corollary~\ref{c.asep general initial condition} (2) follows immediately from (1), so we only need to establish the latter. 
The convergence of $(\mc L^{\mrm{ASEP},\varepsilon}(\bm\cdot, s; \bm\cdot, t) : (s,t)\in\mc T^2_<)$ to $(\mc L(\bm\cdot, s; \bm\cdot, t) : (s,t)\in\mc T^2_<)$ in the space $\smash{\mc C(\R^2,\R)^{\mc T^2_<}}$ is equivalent to the convergence 
\begin{align}\label{e.finite-dimensional convergence to show}
\left(\mc L^{\mrm{ASEP},\varepsilon}(\bm\cdot, s; \bm\cdot, t): (s,t)\in\mc U^2_<\right) \xrightarrow{\smash{d}} \left(\mc L(\bm\cdot, s; \bm\cdot, t): (s,t)\in\mc U^2_<\right)
\end{align}
for every finite subset $\mc U\subseteq \mc T$. Fix such a $\mc U$ and write $\mc U = \{s_i : i\in\intint{1,n}\}$ with $0 \leq s_1 <  \ldots  < s_n$ for some $n\in\N$.

We first observe that $(\mc L^{\mrm{ASEP},\varepsilon}(\bm\cdot, s; \bm\cdot, t): (s,t)\in\mc U^2_<)$ is a tight sequence in $\varepsilon$. This follows from combining the fact that,  by Definitions~\ref{d.asep sheet} and \ref{d.asep landscape},
\begin{equation}\label{e.asep prelimiting scaling relation}
\mc L^{\mrm{ASEP},\varepsilon}(x, s; y, t) \stackrel{\smash{d}}{=} (t-s)^{1/3}\S^{\mrm{ASEP},(t-s)^{-1}\varepsilon}((t-s)^{-2/3}x; (t-s)^{-2/3}y)
\end{equation}
as processes in $(x,y)$, with Theorem~\ref{t.asep airy sheet} and the relation of $\mc L$ with $\S$ from Definition~\ref{d.directed landscape} (i), namely, for any $t>0$, and as processes in $x$ and $y$,
\begin{align}\label{e.cL scale invariance}
\mc L(x, s; y, t) \stackrel{d}{=} (t-s)^{1/3}\S((t-s)^{-2/3}x, (t-s)^{-2/3}y).
\end{align}

Let $(\tilde {\mc L}(\bm\cdot, s; \bm\cdot, t): (s,t)\in\mc U^2_<)$ be any subsequential limit point of the lefthand side of \eqref{e.finite-dimensional convergence to show}. Then we further see from Theorem~\ref{t.asep airy sheet}, \eqref{e.asep prelimiting scaling relation}, and \eqref{e.cL scale invariance} that, for each $(s,t)\in\mc U^2_{<}$, $\tilde{\mc L}(\bm\cdot, s; \bm\cdot, t) \stackrel{\smash{d}}{=} \mc L(\bm\cdot, s; \bm\cdot, t)$. 
We next observe that $(\tilde{\mc L}(\bm\cdot, s_i; \bm\cdot, s_{i+1}) : i\in\intint{1,n-1}) \stackrel{\smash{d}}{=}  (\mc L(\bm\cdot, s_i; \bm\cdot, s_{i+1}) : i\in\intint{1,n-1})$, since each of the components of the lefthand vector are independent, and the same for the righthand vector.

With these facts in hand, \eqref{e.finite-dimensional convergence to show} is reduced to showing that, for any $r,s,t\in\mc U$ with $r<s<t$ fixed, almost surely, for all $x,y\in\R^2$,
\begin{align}\label{e.variational to show}
\tilde{\mc L}(x,r; y,t) = \sup_{z\in\R}\left(\tilde{\mc L}(x,r; z,s) + \tilde{\mc L}(z,s; y,t)\right);
\end{align}
this suffices since the righthand side is a deterministic function of the independent quantities $\tilde{\mc L}(\bm\cdot, r; \bm\cdot,s)$ and $\tilde{\mc L}(\bm\cdot,s; \bm\cdot,t)$ and the same relation holds with $\mc L$ in place of $\tilde{\mc L}$.
We show \eqref{e.variational to show} by returning to the prelimit. Fix $r,s,t$ as above. We first claim that, 
almost surely, for all $x,y,z\in\Z$ (recall $\gamma = 1-q$),
\begin{equation}\label{e.common time asep inequality}
\begin{split}
\MoveEqLeft[20]
h^{\mrm{ASEP}}(x, 2\gamma^{-1} r\varepsilon^{-1}; z, 2\gamma^{-1} s\varepsilon^{-1}) + h^{\mrm{ASEP}}(z, 2\gamma^{-1} s\varepsilon^{-1}; y, 2\gamma^{-1} s\varepsilon^{-1})\\
&\geq h^{\mrm{ASEP}}(x, 2\gamma^{-1} r\varepsilon^{-1}; y, 2\gamma^{-1} s\varepsilon^{-1}).
\end{split}
\end{equation}
This holds for $z\geq y$ as both sides are equal at $z=y$, the first term on the lefthand side is non-increasing and decreases by at most 1 in going from $z$ to $z+1$, and the second term on the lefthand side is
$$h^{\mrm{ASEP}}(z, 2\gamma^{-1} s\varepsilon^{-1}; y, 2\gamma^{-1} s\varepsilon^{-1}) = (z-y)\one_{y\leq z}$$
and so increases by $1$ on going from $z$ to $z+1$. Similarly, for $z\leq y$, we have equality in \eqref{e.common time asep inequality} at $z=y$, the first term on the lefthand side weakly increases on going from $z$ to $z-1$, and the second term on the lefthand side is $0$ for all $z\leq y$.

We now view \eqref{e.common time asep inequality} as an ordering of initial conditions: for each $x,z$ fixed, we regard the first term $h^{\mrm{ASEP}}(x, 2\gamma^{-1} r\varepsilon^{-1}; z, 2\gamma^{-1} s\varepsilon^{-1})$ as a (random) constant, and $h^{\mrm{ASEP}}(z, 2\gamma^{-1} s\varepsilon^{-1}; \bm\cdot, 2\gamma^{-1} s\varepsilon^{-1})$ and $h^{\mrm{ASEP}}(x, 2\gamma^{-1} r\varepsilon^{-1}; \bm\cdot, 2\gamma^{-1} s\varepsilon^{-1})$ both as initial conditions. Evolving both for time $2\gamma^{-1} (t-s)\varepsilon^{-1}$ under the basic coupling, Lemma~\ref{l.asep height mono} guarantees that, almost surely, for all $x,y,z\in\Z$,
\begin{align*}
\MoveEqLeft[22]
h^{\mrm{ASEP}}(x, 2\gamma^{-1} r\varepsilon^{-1}; z, 2\gamma^{-1} s\varepsilon^{-1}) + h^{\mrm{ASEP}}(z, 2\gamma^{-1} s\varepsilon^{-1}; y, 2\gamma^{-1} t\varepsilon^{-1})\\
&\geq h^{\mrm{ASEP}}(x, 2\gamma^{-1} r\varepsilon^{-1}; y, 2\gamma^{-1} t\varepsilon^{-1}).
\end{align*}
Centering and rescaling according to Definition~\ref{d.asep landscape}, and taking $\varepsilon\to 0$, the previous display yields that, almost surely, for all $x,y\in\R$,
\begin{align}\label{e.subsequential inequality}
\tilde{\mc L}(x,r; y,t) \geq \sup_{z\in\R}\left(\tilde{\mc L}(x,r; z,s) + \tilde{\mc L}(z,s; y,t)\right).
\end{align}
However, we know that both sides of this inequality are marginally distributed as the corresponding expression with $\mc L$ in place of $\tilde{\mc L}$ and, further, we know that, almost surely, for all $x,y\in\R$,
\begin{align}\label{e.L variational}
\mc L(x,r; y,t) = \sup_{z\in\R}\Bigl(\mc L(x,r; z,s) + \mc L(z,s; y,t)\Bigr).
\end{align}
Thus, we know that the expectations of the lefthand and righthand sides of \eqref{e.subsequential inequality} are equal for every fixed $x,y$. Applying Lemma~\ref{l.X=Y} with \eqref{e.subsequential inequality} yields that \eqref{e.variational to show} holds for all $x,y\in\Q^2$ on a single probability one event; since both sides are continuous in $x$ and $y$, we obtain that \eqref{e.variational to show} holds on a probability one event for all $x,y\in\R^2$. This completes the proof.
\end{proof}

\begin{proof}[Proof of Corollary~\ref{c.asep general initial condition} (3)]
We start by recalling $\mf h^{\mrm{ASEP}, \varepsilon}(h^{(i), \varepsilon}_0; \bm\cdot, t)$ from Definition~\ref{d.asep general initial condition}, that $\mf h^{(i),\varepsilon}_0(x) := 2\varepsilon^{1/3}(x\varepsilon^{-2/3}-h^{(i),\varepsilon}_0(2x\varepsilon^{-2/3}))$, and that $\mf h^{(i),\varepsilon}_0\to \mf h^{(i)}_0$ for each $i\in\intint{1,k}$ uniformly on compact sets. As in the proof of Corollary~\ref{c.asep general initial condition} (1) above, it is sufficient to prove that, for any finite $\mc U\subseteq \mc T$, as $\varepsilon\to 0$,
\begin{align*}
\left(\mf h^{\mrm{ASEP}, \varepsilon}(h^{(i), \varepsilon}_0; \bm\cdot, t) : t\in\mc U\right) \stackrel{d}{\to} \left(\mf h(\mf h^{(i)}_0; \bm\cdot, t) : t\in\mc U\right),
\end{align*}
where the righthand side is coupled via a common directed landscape $\mc L$ as in \eqref{e.kpz fixed point}. Fix such a $\mc U$. We will prove that
\begin{equation}\label{e.kpz fixed point fdd to show}
\begin{split}
\MoveEqLeft[10]
\left(\mf h^{\mrm{ASEP}, \varepsilon}(h^{(i), \varepsilon}_0; \bm\cdot, t), \mc L^{\mrm{ASEP},\varepsilon}(\bm\cdot, 0; \bm\cdot, t) : i\in\intint{1,k}, t\in\mc U\right)\\
&\stackrel{d}{\to}
 \left(\mf h(\mf h^{(i)}_0; \cdot, t), \mc L(\bm\cdot, 0; \bm\cdot, t) : i\in\intint{1,k}, t\in\mc U\right).
\end{split}
\end{equation}
First, $(\mf h^{\mrm{ASEP},\varepsilon}(h^{(i),\varepsilon}_0; \bm\cdot, t):i\in\intint{1,k}, t\in\mc U)$ is a tight sequence in $\varepsilon$, and for each $i\in\intint{1,k}, t\in\mc U$, we have the marginal convergence of $\mf h^{\mrm{ASEP}, \varepsilon}(\smash{h^{(i), \varepsilon}_0}; \bm\cdot, t)$ to $\mf h(\mf h_0^{(i)}; \bm\cdot, t)$; the former follows from the latter, which in turn is given by \cite[Theorem 2.2 (2)]{quastel2022convergence}. We also know from Corollary~\ref{c.asep general initial condition} (1) that
\begin{align*}
\left(\mc L^{\mrm{ASEP},\varepsilon}(\bm\cdot, 0; \bm\cdot, t): t\in\mc U\right)
&\stackrel{d}{\to}
\left(\mc L(\bm\cdot, 0; \bm\cdot, t): t\in\mc U\right).
\end{align*}

Let $(\tilde{\mf h}(\mf h^{(0)}_i; \bm\cdot, t), \mc L(\bm\cdot, 0; \bm\cdot, t): t\in\mc U)$ be any subsequential limit point of the lefthand side of \eqref{e.kpz fixed point fdd to show}, so that the $\mc L$ terms are coupled as the directed landscape and each $\smash{\tilde{\mf h}(\mf h^{(0)}_i; \bm\cdot, t)}$ is marginally distributed as the KPZ fixed point $\smash{\mf h(\mf h^{(i)}_0; \bm\cdot, t)}$ for each fixed $t\in\mc U$ and $i\in\intint{1,k}$.
Now, \eqref{e.kpz fixed point fdd to show} is reduced to showing that, for any $t\in\mc U$ and $i\in\intint{1,k}$ fixed, almost surely, for all $y\in\R^2$,
\begin{align}\label{e.variational to show fixed point}
\tilde{\mf h}(\mf h^{(i)}_0; y,t) = \sup_{z\in\R}\left(\mf h^{(i)}_0(z) + \mc L(z,0; y,t)\right).
\end{align}
As in the proof of Corollary~\ref{c.asep general initial condition} (1), we show \eqref{e.variational to show fixed point} by returning to the prelimit.  We first claim that, almost surely, for every $y,z\in\R$ and $i\in\intint{1,k}$,
\begin{align}\label{e.asep easy inequality fixed point}
h^{(i),\varepsilon}_0(z) + h^{\mrm{ASEP}}(z, 0; y, 0) \geq h^{(i),\varepsilon}_0(y).
\end{align}
As in the proof of Corollary~\ref{c.asep general initial condition} (1), this holds for $z\geq y$ as both sides are equal at $z=y$, the first term on the lefthand side is non-increasing and decreases by at most 1 in going from $z$ to $z+1$, and  $h^{\mrm{ASEP}}(z, 0; y, 0) = (z-y)\one_{y\leq z}$, so the second term on the lefthand side increases by $1$ on going from $z$ to $z+1$. Similarly, for $z\leq y$, we have equality at $z=y$, the first term on the lefthand side weakly increases on going from $z$ to $z-1$, and $h^{\mrm{ASEP}}(z, 0; y, 0) = 0$ for all such $z$.

We now view \eqref{e.asep easy inequality fixed point} as an ordering of initial conditions: for each $z$ fixed, we regard $h^{(i),\varepsilon}_0(z)$ as a (random) constant and both $h^{\mrm{ASEP}}(z, 0; \bm\cdot, 0)$ and $\smash{h^{(i),\varepsilon}_0(\bm\cdot)}$ as initial conditions. Evolving both for time $2\gamma^{-1} t\varepsilon^{-1}$ (recall $\gamma = 1-q$) under the basic coupling, Lemma~\ref{l.asep height mono} guarantees that, almost surely, for all $y,z\in\Z$ and $i\in\intint{1,k}$,
\begin{align*}
h^{(i),\varepsilon}_0(z) + h^{\mrm{ASEP}}(z, 0; y, 2\gamma^{-1} t\varepsilon^{-1}) \geq h^{\mrm{ASEP}}(h_0^{(i), \varepsilon}, 0; y, 2\gamma^{-1} t\varepsilon^{-1}).
\end{align*}
Centering and rescaling according to Definition~\ref{d.asep general initial condition}, and taking $\varepsilon\to 0$, the previous display yields that, almost surely, for all $y\in\R$,
\begin{align}\label{e.subsequential inequality fixed point}
\tilde{\mf h}(\mf h^{(i)}_0; y,t) \geq \sup_{z\in\R}\left(\mf h^{(i)}_0(z) + \mc L(z,0; y,t)\right).
\end{align}
However, we know that both sides of this inequality are marginally distributed as $\mf h(\mf h^{(i)}_0; y, t)$ (for the righthand side, using \eqref{e.kpz fixed point}).
Thus, the expectations of the lefthand and righthand sides of \eqref{e.subsequential inequality fixed point} are equal for every fixed $y$, so Lemma~\ref{l.X=Y} implies that the two sides are equal on a probability 1 event for all $y\in\Q$ and $i\in\intint{1,k}$; since both sides are continuous in $y$, this yields equality for all $y\in\R$ and $i\in\intint{1,k}$, completing the proof.
\end{proof}

\subsection{S6V to directed landscape}\label{s.s6v to dl}

The basic coupling for S6V does not have the height monotonicity properties it does in ASEP (Lemma~\ref{l.asep height mono}). Nevertheless, an approximate version holds, as recorded next. It will be proved in Section~\ref{s.proof of approximate monotonicity}.

\begin{proposition}[Approximate height monotonicity of S6V]\label{p.s6v approximate monotonicity}
Fix $N\in\N$. Let $H\in\Z$ and let $\smash{h_0^{(1)}, h_0^{(2)}}:\intray{-N-1}\to\Z_{\geq 0}$ be two Bernoulli paths such that $\smash{h_0^{(1)}(x)} +H \geq \smash{h_0^{(2)}(x)}$ for all $x\in\intray{-N-1}$, and, for $i=1,2$, $\smash{h_0^{(i)}}(-N-1)\leq N$ and $\smash{h_0^{(i)}}(x) = 0$ for all large $x$. There exist absolute constants $C, c>0$ such that, for $t\in\N$ and under the basic coupling, the following holds. For any $M\geq (\log N)^2$, with probability at least $1-C\exp(-cM)$, $\hssv(h_0^{(1)}; x,t) +H \geq \hssv(h_0^{(2)}; x,t) - M$ for all $x\in\intray{-N-1}$.
\end{proposition}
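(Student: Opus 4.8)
The natural approach is to run both colored $q$-Boson-induced S6V height functions under the basic coupling and track the region where the two height functions disagree beyond the ``allowed'' discrepancy $H$. Recall that under the basic coupling the arrow configurations in the two S6V models are determined deterministically by the common randomness $\{X^{\shortuparrow}_v, X^{\shortrightarrow}_v\}$; the only source of discrepancy propagation is at a vertex where one model has an arrow and the other does not along the same incoming edge. Such a vertex is precisely one adjacent to the boundary of the ``discrepancy region''. The key observation (analogous to Lemma~\ref{l.asep height mono} but only approximate for S6V) is that at time $t=0$ we do have the \emph{clean} inequality $h^{\mrm{S6V}}(h_0^{(1)}; x, 0) + H \geq h^{\mrm{S6V}}(h_0^{(2)}; x, 0)$ for all $x$, by the hypothesis on $h_0^{(1)}, h_0^{(2)}$ together with the fact that $h^{\mrm{S6V}}(h_0,0;\bm\cdot,0)$ just reads off the initial height function; the failure of monotonicity then only accumulates through the finite number of disagreement events, each contributing $O(1)$ to the error, so the total error $M$ is controlled by the \emph{number} of such disagreement propagation events within distance $\sim t$ of the relevant rows. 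Since there are only finitely many arrows (at most $N$ in each system, all entering in rows $\intint{-N,N}$-type sets), this count is itself finite, and the point is to bound it by $O(\log N)$ with high probability, with an exponential tail beyond that.

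First I would set up the precise monotonicity-up-to-discrepancies statement: letting $D_t := \{x : j_{(t,x)}^{(1)} \neq j_{(t,x)}^{(2)}\}$ (where the superscripts refer to the two coupled S6V models), one shows by induction on the antidiagonals $x+y = n$ that whenever the two models agree on the incoming arrow at a vertex, they agree on the outgoing configuration, so discrepancies can only be created at vertices where the incoming arrow differs, i.e., at vertices adjacent to an existing discrepancy ``path''. This means the set of rows $k$ for which $h^{\mrm{S6V}}(h_0^{(1)}; x,t) + H < h^{\mrm{S6V}}(h_0^{(2)}; x,t)$ can be violated is controlled by how far the discrepancy region at time $t$ has spread, which by finite speed of propagation is within $O(t)$ of the rows where the two initial conditions differed. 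Then I would invoke (or re-derive, using the same geometric arguments as in Lemma~\ref{l.s6v finite pseed of propagation} and in the proof of Lemma~\ref{l.asep s6v comparison}) a finite-speed-of-propagation bound: the probability that the discrepancy between the two coupled systems spreads more than $M$ vertices beyond the ``light cone'' determined by the initial disagreement is at most $C\exp(-cM)$, obtained by bounding, as in Lemma~\ref{l.s6v finite pseed of propagation}, a sum of independent geometric random variables (the number of consecutive vertices a discrepancy-carrying arrow moves horizontally/vertically straight through) and applying a standard concentration inequality such as \cite[Theorem~2.1]{janson2018tail}.

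Concretely, the chain of inequalities is: at $t=0$, $h^{(1)}(x,0)+H \geq h^{(2)}(x,0)$ everywhere; then at each later time, the violation at row $x$ of the inequality $h^{(1)}(x,t)+H\geq h^{(2)}(x,t)$ is bounded by the number of discrepancy-creation events that have occurred ``above'' row $x$, and each such event changes each of the two height functions by at most $1$ and hence changes the difference by at most $1$; summing, the total violation is at most the number $\mathcal{N}_t$ of disagreement vertices encountered, which is bounded by the number of vertices in the discrepancy region. Since the discrepancy region, starting from a light-cone of $O(1)$ vertices' worth of initial difference (the two Bernoulli paths differing, after the shift by $H$, only where the discrete derivatives differ — and the relevant statistic only depends on finitely many rows), spreads at a rate controlled by geometric increments, $\mathbb{P}(\mathcal{N}_t \geq M) \leq C\exp(-cM)$ once $M \geq (\log N)^2$ (the $(\log N)^2$ threshold being needed exactly as in the KMT-type estimates, e.g.\ Lemma~\ref{l.KMT} and Lemma~\ref{l.asep s6v comparison}, to absorb the $\log N$-many potential sources of discrepancy arising from the $O(N)$ arrows in the system).

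\textbf{Main obstacle.} The delicate point is not the concentration estimate but the combinatorial bookkeeping that the failure of S6V monotonicity is genuinely ``local'' — i.e., that a discrepancy at a single vertex cannot cause the two height functions to diverge by more than $O(1)$ \emph{at that vertex}, and that the cumulative divergence is therefore bounded by the \emph{size} of the discrepancy region rather than, say, its size times the number of arrows. This requires carefully analyzing the $\mathtt{R}$-weights under the basic coupling of Section~\ref{s.s6v basic coupling}: one must check that whenever two coupled S6V configurations agree on both incoming edges at a vertex they agree on both outgoing edges (immediate from rules (1)--(3) there), and that when they disagree on exactly one incoming edge, the outgoing discrepancy is again confined to exactly one edge (so the discrepancy set is a union of up-right lattice paths, not a growing blob). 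Given this structural fact, the propagation of discrepancies is exactly a finite-speed-of-propagation phenomenon identical in spirit to Lemmas~\ref{l.asep finite speed of propagation} and \ref{l.s6v finite pseed of propagation}, and the exponential bound follows. I expect the write-up of this structural/combinatorial lemma — making precise ``discrepancies form up-right paths and each path contributes $O(1)$ to the height difference'' — to be the bulk of the actual work, with everything downstream being routine concentration.
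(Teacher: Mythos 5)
Your plan has a genuine gap: the quantity you propose to bound, the size of the discrepancy region $D_t$, is simply too large. The two initial conditions $h_0^{(1)}, h_0^{(2)}$ can differ at $O(N)$ sites (e.g. $h_0^{(1)}$ has arrows at even positions and $h_0^{(2)}$ at odd positions, with $H=0$), so the discrepancy set already has $O(N)$ elements at time zero and cannot possibly be controlled by $M \asymp (\log N)^2$. The "finite speed of discrepancy propagation" you invoke (in the spirit of Lemmas~\ref{l.asep finite speed of propagation}, \ref{l.s6v finite pseed of propagation}, \ref{l.finite speed of prop for s6v basic coupling}) governs how far a discrepancy can travel \emph{spatially} from its source; it says nothing about the size of an order-$N$ discrepancy region that is present from time zero. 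Your bookkeeping claim that the height-function violation is controlled by "the number of disagreement vertices encountered" is therefore either false or at best an $O(N)$ bound, and the observation that discrepancies propagate along up-right paths (which is correct) does not rescue it, since the number of such paths can itself be $O(N)$.

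The paper's proof uses a different and more refined mechanism. It assigns labels to arrows via a local pairing rule (Figure~\ref{f.arrow pairings}) so that each arrow has a well-defined trajectory, and declares two arrows (one in each system) \emph{coupled} once their trajectories share an edge; coupled arrows agree forever after. The reduction is then: the height-function violation at time $t$ is at most $M$ provided that, for each $k$, the $k$\textsuperscript{th}-from-bottom \emph{uncoupled} arrow in system 2 does not pass the $(k+M)$\textsuperscript{th} uncoupled arrow in system 1. So what must be bounded is the number of \emph{overtaking} events of a single uncoupled arrow --- a configuration where one arrow passes vertically straight through a vertex while the other passes horizontally straight through --- not the number of disagreement sites. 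The key probabilistic input is that every such crossing occurs at a "potentially coupling" vertex, where, conditionally on the past along the antidiagonal filtration, the two arrows couple with probability $1-\delta$ (with $\delta = b^{\shortrightarrow}b^{\shortuparrow} + (1-b^{\shortrightarrow})(1-b^{\shortuparrow}) < 1$). Hence $M$ consecutive failures have probability at most $\delta^M$, and the $(\log N)^2$ threshold serves only to absorb the union bound over the $\leq N$ uncoupled starting positions --- it has nothing to do with KMT. Your proposal identifies the right structural fact that discrepancies form up-right paths, but it is missing the central idea that the error is a per-arrow overtaking count, measured against the ordered labels of the other system's uncoupled arrows, and that each potential overtaking costs an independent geometric factor.
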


We will also need a finite speed of discrepancy estimate for the S6V model under the basic coupling. Its straightforward proof will be given in Section~\ref{s.finite speed of prop s6v basic coupling}.

\begin{lemma}[Finite speed of discrepancy]\label{l.finite speed of prop for s6v basic coupling}
For any real numbers $0\leq b^{\shortuparrow}\leq b^{\shortrightarrow}< 1$, there exist constants $c>0$ and $C > 1$ such that the following holds. Let $m\in\N$ be an integer, and for $k\in\intint{1,m}$ and $i\in\{1,2\}$, let $s_k\in \Z_{\geq 1}$, $N_i\in\N$, and $h^{(i), k}_0 : \intray{-N_i} \to \Z$ be Bernoulli paths which equal $0$ for all large $x$. Assume that $N_1 \leq N_2$ and, for $k\in\intint{1,m}$, $h^{(1), k}_0(x) = h^{(2), k}_0(x)$ for all $x\in\intint{-N_1, N_1}$. For $i\in\{1,2\}$ and $k\in\intint{1,m}$, let $(j^{(i), k}_{(x,y)} : (x,y)\in\intray{-N_i}\times\intray{s_k})$ denote samples from stochastic six-vertex models with boundary conditions given by $\smash{h^{(i), k}_0}$ at time $s_k$ (recall from Section~\ref{s.s6v basic coupling}), all coupled via the basic coupling. Then, for any $T\in\N$ such that $T\leq \frac{1}{4}(1-b^{\shortrightarrow})N_1$, with probability at least $1-Cme^{-cN_1}$, we have that $j^{(1), k}_{(x,y)} = j^{(2), k}_{(x,y)}$ for all $k\in\intint{1,m}$ and $(x,y)\in\intint{1,T}\times\intint{-\floor{\frac{1}{2}N_1}, \floor{\frac{1}{2}N_1}}$.
\end{lemma}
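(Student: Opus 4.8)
\textbf{Proof plan for Lemma~\ref{l.finite speed of prop for s6v basic coupling}.} The plan is to prove this via a single-model finite speed of propagation argument for the S6V basic coupling, applied once for each $k$ and then combined by a union bound; the factor of $m$ in the probability bound reflects exactly this union over the $m$ pairs of models. First I would fix $k$ and drop it from the notation, so that we have two S6V models on $\intray{-N_1}$ and $\intray{-N_2}$ (with $N_1\le N_2$) started at time $s_k$ from Bernoulli paths agreeing on $\intint{-N_1,N_1}$, coupled via the common Bernoulli randomness $\{X^{\shortuparrow}_v, X^{\shortrightarrow}_v\}$ of Section~\ref{s.s6v basic coupling}. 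The key observation is that, under the basic coupling, the deterministic update rules (1)--(3) of Section~\ref{s.s6v basic coupling} have the property that the outgoing configurations at a vertex $v$ in the two models differ only if the incoming configurations already differ; in other words, a discrepancy between the two models can only propagate up and to the right. So a discrepancy at a vertex $(x,y)\in\intint{1,T}\times\intint{-\floor{\tfrac12 N_1},\floor{\tfrac12 N_1}}$ must have originated from a discrepancy in the initial/boundary data, which lives at horizontal coordinate $s_k$ (i.e.\ at ``time zero'' for this model) on rows outside $\intint{-N_1,N_1}$, and then traveled right and up to reach row $y$.

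Second, I would quantify ``traveled up'': a discrepancy entering on row $i$ with $|i|>N_1$ (say $i<-N_1$; the case $i>N_1$ is symmetric, or one can use that arrows only have finitely many entries so discrepancies from very high rows cannot reach the relevant window in bounded horizontal distance) must climb at least $N_1 - \floor{\tfrac12 N_1} \ge \tfrac14 N_1$ rows before it can affect a vertex in the window. But an arrow (or discrepancy) moving vertically through a vertex with no other incident arrow does so with probability $b^{\shortuparrow}\le b^{\shortrightarrow}$, so between consecutive upward steps it must traverse a random number of horizontal edges whose count is stochastically at least a geometric variable with success probability $1-b^{\shortrightarrow}$; this is exactly the structure exploited in Lemmas~\ref{l.asep finite speed of propagation} and \ref{l.s6v finite pseed of propagation}. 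Hence to climb $\tfrac14 N_1$ rows within horizontal distance $T$ we would need a sum of roughly $\tfrac14 N_1$ independent (stochastically dominated) geometric variables with mean $(1-b^{\shortrightarrow})^{-1}$ to be at most $T$; since $T\le \tfrac14(1-b^{\shortrightarrow})N_1$, this sum would have to be at most $(1-b^{\shortrightarrow})$ times its mean, which has exponentially small probability by standard concentration for sums of geometric random variables (e.g.\ \cite[Theorem~2.1]{janson2018tail}), giving a bound $Ce^{-cN_1}$. One then does a union bound over the entrance rows $i$ (of which there are only $O(N_1)$ relevant ones, absorbed into the constant, or one truncates using that the number of incoming arrows is finite), yielding $P(\text{some discrepancy in the window}) \le C e^{-cN_1}$ for this single $k$. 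Finally, summing over $k\in\intint{1,m}$ gives the claimed $1-Cme^{-cN_1}$.

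Third, a minor technical point I would be careful about: the models are started at different horizontal coordinates $s_k$, but this only shifts the relevant region and does not affect the geometric argument (the finite speed of propagation is translation invariant in the horizontal coordinate, and the basic coupling randomness is shared), so I would simply note this and proceed. I would also note that the hypothesis $h^{(i),k}_0(x)=0$ for all large $x$ ensures only finitely many arrows are present, so all the height functions and discrepancy statements are well-defined, and the number of entrance rows carrying a potential discrepancy is finite.

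\textbf{Main obstacle.} I expect the main difficulty to be making the ``discrepancy propagates up-right, and climbing requires geometrically many horizontal steps'' heuristic fully rigorous under the basic coupling---in particular, verifying from the update rules (1)--(3) that a single discrepancy cannot spontaneously multiply or jump rows, and correctly setting up the stochastic domination of the vertical-climb process by a sum of independent geometric variables (the subtlety being that the coupling randomness is shared between the two models, so one must argue on the level of the ``discrepancy process'' rather than either model individually). Once that coupling/domination is in place, the concentration estimate and union bounds are routine, exactly as in the proofs of Lemmas~\ref{l.asep finite speed of propagation} and \ref{l.s6v finite pseed of propagation}.
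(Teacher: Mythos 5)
Your proof plan is on the right track and shares the overall framework with the paper's argument: restrict to $m=1$ and union bound over $k$; observe that under the basic coupling a discrepancy can only propagate up and to the right; quantify the cost of climbing roughly $\tfrac12 N_1$ rows via a sum of stochastically dominated geometric random variables; and apply a standard concentration bound.

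The main difference from the paper is in how you control the multiplicity of discrepancy sources. You propose a union bound over entrance rows $i < -N_1$ and assert there are ``only $O(N_1)$ relevant ones.'' This is not quite right: the number of rows carrying uncoupled arrows in system 2 is $N_2 - N_1$, which can be arbitrarily large. Your union bound can still be made to work, but not by truncating to $O(N_1)$ rows; rather, one should use that an arrow starting at row $-r$ must climb $r - \lfloor\tfrac12 N_1\rfloor$ rows, so the per-row probability decays exponentially in $r$ and the tail sum $\sum_{r > N_1} e^{-c(r - N_1/2 - \mu)}$ is itself $\leq C e^{-c' N_1}$ (since the mean vertical drift $\mu \leq \tfrac14 N_1$). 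The paper sidesteps the union bound entirely: it uses the arrow-trajectory convention introduced in the proof of Proposition~\ref{p.s6v approximate monotonicity} (under which uncoupled arrows preserve their initial ordering), so it suffices to track the single topmost uncoupled arrow in system 2 below row $-N_1$, and the discrepancy event in the window is contained in the event that this one arrow crosses row $-\lfloor\tfrac12 N_1\rfloor$ before column $T$. This makes the reduction to a one-arrow climbing estimate cleaner and avoids the tail sum.

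One smaller imprecision: you say the case $i > N_1$ is ``symmetric, or one can use that arrows only have finitely many entries.'' In fact this case is vacuous rather than symmetric, because arrows in the S6V model only move up and to the right, so a discrepancy at a row above $N_1$ can never influence any vertex at row $\leq \lfloor\tfrac12 N_1\rfloor$. You should also be more careful about what ``discrepancy can only propagate up-right'' means under the basic coupling: the paper establishes this by defining a trajectory pairing convention (Figure~\ref{f.arrow pairings}) that both identifies coupled arrows and preserves the ordering of the uncoupled ones; without something like this, your climbing-cost estimate applies to an individual arrow rather than directly to the discrepancy set, and the ``cannot spontaneously multiply'' observation you flag as a potential obstacle is indeed the thing that needs the trajectory convention to be made rigorous.
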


With these two statements in hand, we can give the proof of Corollary~\ref{c.s6v landscape} along very similar lines as the proof of Corollary~\ref{c.asep general initial condition} (1).

\begin{proof}[Proof of Corollary~\ref{c.s6v landscape}]
Item (2) is an immediate consequence of (1), so we only need to prove the latter. We first claim that we may assume that $N \leq \varepsilon^{-3}$, where recall $\varepsilon$ is the parameter in Definition~\ref{d.s6v landscape} of $\mc L^{\mrm{S6V}, \varepsilon}$. More precisely, if $N > \varepsilon^{-3}$, we may couple an ASEP landscape $\tilde{\mc L}^{\mrm{S6V}, \varepsilon}$ with that value of $N$ to a copy ${\mc L}^{\mrm{S6V}, \varepsilon}$ with $N=\varepsilon^{-3}$ such that, with probability at least $1-C\exp(-c\varepsilon^{-3})$, ${\mc L}^{\mrm{S6V}, \varepsilon}(x,s;y,t) = \tilde{\mc L}^{\mrm{S6V}, \varepsilon}(x,s;y,t)$ or all $|x|,|y|, |s|, |t|\leq \varepsilon^{-1/6}$. As in the proof of Theorem~\ref{t.s6v airy sheet} in Section~\ref{s.airy sheet convergence setup}, the existence of this coupling amounts to a finite speed of discrepancy bound; because we need it for the basic coupling and not the colored coupling as there, we use Lemma~\ref{l.finite speed of prop for s6v basic coupling}. We record the parameters we will apply Lemma~\ref{l.finite speed of prop for s6v basic coupling} with: $T = \ceil{\varepsilon^{-7/6}}$, $N_1 = \varepsilon^{-3}$, $N_2$ the value of $N$ associated to $\tilde{\mc L}^{\mrm{S6V},\varepsilon}$, and $m=T\cdot (2N_1+1)$. The height functions we consider are, at each time $s\in \intint{1, T}$, the collection of step initial conditions starting from $x$ for $x\in \intint{-N_1, N_1}$, where the step initial condition from $x$ means, for $i\in\{1,2\}$ and in system $i$, particles at every location in $\intint{x,N_i}$ (note that this indeed yields $m$ many height functions).
Lemma~\ref{l.finite speed of prop for s6v basic coupling} then yields the claimed coupling. In the remainder of the proof we assume $N=\varepsilon^{-3}$ by working on the event of the above coupling holding, and we denote by $\mc L^{\mrm{S6V}, \varepsilon}$ the associated S6V landscape. Since the probability of the coupling holding goes to $1$ as $\varepsilon\to 0$, it suffices to show that $(\mc L^{\mrm{S6V},\varepsilon}(\bm\cdot, s; \bm\cdot, t) : (s,t)\in\mc T^2_<) \stackrel{\smash{d}}{\to} (\mc L(\bm\cdot, s; \bm\cdot, t) : (s,t)\in\mc T^2_<)$.

Now, the convergence of $(\mc L^{\mrm{S6V},\varepsilon}(\bm\cdot, s; \bm\cdot, t) : (s,t)\in\mc T^2_<)$ to $(\mc L(\bm\cdot, s; \bm\cdot, t) : (s,t)\in\mc T^2_<)$ in the space $\smash{\mc C(\R^2,\R)^{\mc T^2_<}}$ is equivalent to the convergence 
\begin{align}\label{e.finite-dimensional convergence to show s6v}
\left(\mc L^{\mrm{S6V},\varepsilon}(\bm\cdot, s; \bm\cdot, t): (s,t)\in\mc U^2_<\right) \xrightarrow{\smash{d}} \left(\mc L(\bm\cdot, s; \bm\cdot, t): (s,t)\in\mc U^2_<\right)
\end{align}
for every finite subset $\mc U\subseteq \mc T$. Fix such a $\mc U$ and write $\mc U = \{s_i : i\in\intint{1,n}\}$ with $0 \leq s_1 <  \ldots  < s_n$ for some $n\in\N$.

We observe that $(\mc L^{\mrm{S6V},\varepsilon}(\bm\cdot, s; \bm\cdot, t): (s,t)\in\mc U^2_<)$ is a tight sequence in $\varepsilon$. This follows from combining the fact that,  by Definitions~\ref{d.s6v sheet} and \ref{d.s6v landscape},
\begin{equation}\label{e.s6v prelimiting scaling relation}
\mc L^{\mrm{S6V},\varepsilon}(x, s; y, t) \stackrel{\smash{d}}{=} (t-s)^{1/3}\S^{\mrm{S6V},(t-s)^{-1}\varepsilon}((t-s)^{-2/3}x; (t-s)^{-2/3}y)
\end{equation}
as processes in $(x,y)$, with Theorem~\ref{t.s6v airy sheet} and the relation of $\mc L$ with $\S$ noted in \eqref{e.cL scale invariance}.

Let $(\tilde {\mc L}(\bm\cdot, s; \bm\cdot, t): (s,t)\in\mc U^2_<)$ be any subsequential limit point of the lefthand side of \eqref{e.finite-dimensional convergence to show s6v}. Then we further see from Theorem~\ref{t.s6v airy sheet}, \eqref{e.cL scale invariance}, and \eqref{e.s6v prelimiting scaling relation} that, for each $(s,t)\in\mc U^2_{<}$, $\tilde{\mc L}(\bm\cdot, s; \bm\cdot, t) \stackrel{\smash{d}}{=} \mc L(\bm\cdot, s; \bm\cdot, t)$. 
We next observe that $(\tilde{\mc L}(\bm\cdot, s_i; \bm\cdot, s_{i+1}) : i\in\intint{1,n-1}) \stackrel{\smash{d}}{=}  (\mc L(\bm\cdot, s_i; \bm\cdot, s_{i+1}) : i\in\intint{1,n-1})$, since each of the components of the lefthand vector are independent, and the same for the righthand vector.

With these facts in hand, \eqref{e.finite-dimensional convergence to show s6v} is reduced to showing that, for any $r,s,t\in\mc U$ with $r<s<t$ fixed, almost surely, for all $x,y\in\R^2$,
\begin{align}\label{e.variational to show s6v}
\tilde{\mc L}(x,r; y,t) = \sup_{z\in\R}\left(\tilde{\mc L}(x,r; z,s) + \tilde{\mc L}(z,s; y,t)\right);
\end{align}
this suffices since the righthand side is a deterministic function of the independent quantities $\tilde{\mc L}(\bm\cdot, r; \bm\cdot,s)$ and $\tilde{\mc L}(\bm\cdot,s; \bm\cdot,t)$ and the same relation holds with $\mc L$ in place of $\tilde{\mc L}$.
We show \eqref{e.variational to show s6v} by returning to the prelimit. Fix $r,s,t$ as above. We first claim that almost surely, for all $x,y,z\in\intray{-N-1}$,
\begin{equation}\label{e.common time s6v inequality}
\begin{split}
h^{\mrm{S6V}}(x, r\varepsilon^{-1}; z, s\varepsilon^{-1}) + h^{\mrm{S6V}}(z, s\varepsilon^{-1}; y, s\varepsilon^{-1})
\geq h^{\mrm{S6V}}(x, r\varepsilon^{-1}; y, s\varepsilon^{-1}).
\end{split}
\end{equation}
This holds for $z\geq y \geq -N-1$ as both sides are equal at $z=y$, the first term on the lefthand side is non-increasing and decreases by at most 1 in going from $z$ to $z+1$, and the second term on the lefthand side is
$$h^{\mrm{S6V}}(z, s\varepsilon^{-1}; y, s\varepsilon^{-1}) = (z-y)\one_{-N-1\leq y\leq z}$$
and so increases by $1$ on going from $z$ to $z+1$. Similarly, for $-N-1\leq z\leq y$, we have equality in \eqref{e.common time s6v inequality} at $z=y$, the first term on the lefthand side weakly increases on going from $z$ to $z-1$, and the second term on the lefthand side is $0$ for all $-N-1\leq z\leq y$.

We now view \eqref{e.common time s6v inequality} as an ordering of initial conditions: for each $x,z$ fixed, we regard the first term $h^{\mrm{S6V}}(x, r\varepsilon^{-1}; z, s\varepsilon^{-1})$ as a (random) constant, and $h^{\mrm{S6V}}(z, s\varepsilon^{-1}; \bm\cdot, s\varepsilon^{-1})$ and $h^{\mrm{S6V}}(x, r\varepsilon^{-1}; \bm\cdot, s\varepsilon^{-1})$ both as initial conditions. Evolving both for time $(t-s)\varepsilon^{-1}$ under the basic coupling, Proposition~\ref{p.s6v approximate monotonicity} and a union bound over $x, z \in \intint{-N-1,N}$ guarantees that there exist $c,C>0$ such that, with probability at least $1-C\exp(-c(\log N)^2)$, for all $x,y,z\in\intint{-N-1, N}$,
\begin{align*}
h^{\mrm{S6V}}(x, r\varepsilon^{-1}; z, s\varepsilon^{-1}) + h^{\mrm{S6V}}(z, s\varepsilon^{-1}; y, t\varepsilon^{-1})
\geq h^{\mrm{S6V}}(x, r\varepsilon^{-1}; y, t\varepsilon^{-1}) - (\log N)^2.
\end{align*}
Next we center and rescale according to Definition~\ref{d.s6v landscape} to obtain $\mc L^{\mrm{S6V}, \varepsilon}$ and note that since $N \geq 2\alpha \varepsilon^{-1}$, the arguments of $\hssv$ still lie within the domain where the previous display holds. Recall also that the rescaling involves both sides being multiplied by $\varepsilon^{1/3}$, and that $N= \varepsilon^{-3}$; thus, the $(\log N)^2$ term vanishes in the $\varepsilon\to 0$ limit. So by taking $\varepsilon\to 0$, the previous display yields that, almost surely, for all $x,y\in\R$,
\begin{align*}%
\tilde{\mc L}(x,r; y,t) \geq \sup_{z\in\R}\left(\tilde{\mc L}(x,r; z,s) + \tilde{\mc L}(z,s; y,t)\right).
\end{align*}
However, we know that both sides of this inequality are marginally distributed as the corresponding expression with $\mc L$ in place of $\tilde{\mc L}$, so we further know from \eqref{e.L variational} that the expectations of the two sides are equal. Now, Lemma~\ref{l.X=Y} yields that \eqref{e.variational to show s6v} holds for all $x,y\in\Q^2$ on a single probability one event; since both sides are continuous in $x$ and $y$, we obtain that \eqref{e.variational to show s6v} holds on a probability one event for all $x,y\in\R^2$. This completes the proof.
\end{proof}

\subsection{Approximate monotonicity for S6V}\label{s.proof of approximate monotonicity}
Here we give the proof of Proposition~\ref{p.s6v approximate monotonicity}.

\begin{figure}[h]

\begin{tikzpicture}[scale=0.65]
  \draw[red, thick] (-1,0) -- ++(1,0);
  \draw[red, thick, ->] (0,0) -- ++(1,0);
  \draw[red, thick] (0,-1) -- ++(0,1);
  \draw[red, thick, ->] (0,0) -- ++(0,1);

  \node[anchor=east, scale=0.75] at (-1,0) {$p^{(1)}$};
  \node[anchor=north, scale=0.75] at (0,-1) {$r^{(1)}$};

  \node[anchor=west, scale=0.75] at (1,0) {$r^{(1)}$};
  \node[anchor=south, scale=0.75] at (0,1) {$p^{(1)}$};  

  \begin{scope}[shift={(3.95,0)}]
  \draw[blue, thick] (-1,0) -- ++(1,0);
  \draw[blue, thick, ->] (0,0) -- ++(1,0);
  \draw[blue, thick] (0,-1) -- ++(0,1);
  \draw[blue, thick, ->] (0,0) -- ++(0,1);

  \node[anchor=east, scale=0.75] at (-1,0) {$p^{(2)}$};
  \node[anchor=north, scale=0.75] at (0,-1) {$r^{(2)}$};

  \node[anchor=west, scale=0.75] at (1,0) {$r^{(2)}$};
  \node[anchor=south, scale=0.75] at (0,1) {$p^{(2)}$};  
  \end{scope}

  \draw[dotted, thick] (6.25,-1.85) -- ++(0,3.7);

\begin{scope}[shift={(8.5,0)}]
  \draw[thick] (0,-1) -- ++(0,1);
  \draw[thick] (0,0) -- ++(0,1);
  \draw[red, thick] (-1,0) -- ++(1,0);
  \draw[red, thick, ->] (0,0) -- ++(1,0);

  \node[anchor=east, scale=0.75] at (-1,0) {$p^{(1)}$};

  \node[anchor=west, scale=0.75] at (1,0) {$p^{(1)}$};

  \begin{scope}[shift={(3.95,0)}]
  \draw[blue, thick] (-1,0) -- ++(1,0);
  \draw[blue, thick, ->] (0,0) -- ++(1,0);
  \draw[blue, thick] (0,-1) -- ++(0,1);
  \draw[blue, thick, ->] (0,0) -- ++(0,1);

  \node[anchor=east, scale=0.75] at (-1,0) {$p^{(2)}$};
  \node[anchor=north, scale=0.75] at (0,-1) {$r^{(2)}$};

  \node[anchor=west, scale=0.75] at (1,0) {$p^{(2)}$};
  \node[anchor=south, scale=0.75] at (0,1) {$r^{(2)}$};  
  \end{scope}

  \draw[dotted, thick] (6.25,-1.85) -- ++(0,3.7);
\end{scope}

\begin{scope}[shift={(17,0)}]
  \draw[red, thick] (-1,0) -- ++(1,0);
  \draw[thick] (0,0) -- ++(1,0);
  \draw[thick] (0,-1) -- ++(0,1);
  \draw[red, thick, ->] (0,0) -- ++(0,1);

  \node[anchor=east, scale=0.75] at (-1,0) {$p^{(1)}$};

  \node[anchor=west, scale=0.75] at (1,0) {$p^{(1)}$};

  \begin{scope}[shift={(3.95,0)}]
  \draw[blue, thick] (-1,0) -- ++(1,0);
  \draw[blue, thick, ->] (0,0) -- ++(1,0);
  \draw[blue, thick] (0,-1) -- ++(0,1);
  \draw[blue, thick, ->] (0,0) -- ++(0,1);

  \node[anchor=east, scale=0.75] at (-1,0) {$p^{(2)}$};
  \node[anchor=north, scale=0.75] at (0,-1) {$r^{(2)}$};

  \node[anchor=west, scale=0.75] at (1,0) {$r^{(2)}$};
  \node[anchor=south, scale=0.75] at (0,1) {$p^{(2)}$};  
  \end{scope}
\end{scope}
\end{tikzpicture}
\caption{The left subfigure in each panel, with red arrows, represents a vertex in system 1, and the right subfigure, with blue arrows, represents a vertex in system 2; notice that each system is a uncolored, so here blue and red do not represent colors (in the sense of priority) associated to the arrows as in the main text of the paper. Black edges represent an absence of an arrow. For $i\in\{1,2\}$, in system $i$, the label of the horizontally incoming arrow is $p^{(i)}$, and that of the vertically incoming arrow is $r^{(i)}$. The outgoing labels are assigned as depicted.}\label{f.arrow pairings}

\end{figure}
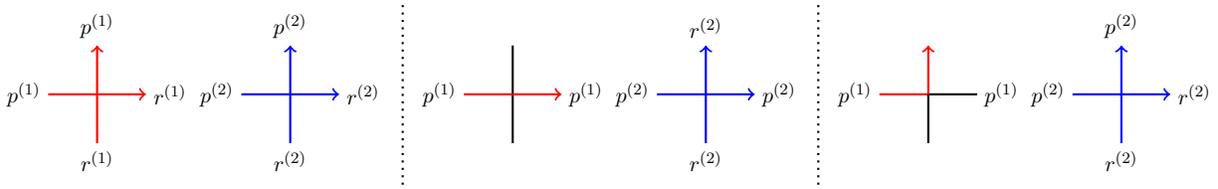

\begin{proof}[Proof of Proposition~\ref{p.s6v approximate monotonicity}]
We break up the proof into a few parts. For $i\in\{1,2\}$, we will refer to the S6V model with boundary condition $\smash{h^{(i)}_0}$ as ``system $i$'' (though note that the two systems are coupled via the basic coupling and so use the same Bernoulli random variables). Throughout this proof, we assume that $H=0$; otherwise we may place $H$ arrows at $+\infty$ in system 1 at time $0$ if $H>0$, or $-H$ arrows at $+\infty$ in system 2 if $H<0$.

\medskip

\noindent\textbf{Specification of trajectories.} Given a sample from the S6V model started from $h^{(1)}_0$ and $h^{(2)}_0$ coupled via the basic coupling, we define arrow trajectories as follows. We define them in a local way: for every pair of input and output configurations of arrows at a vertex, we define how to pair the inputs with the outputs; this extends to define arrow trajectories globally. We assume that the incoming arrows have labels, and we specify the assignment of the same labels to the outgoing arrows. For $i\in\{1,2\}$, we use the notation $\smash{\ell^{(i), *}_{\mrm{h}}}$ for $*\in\{\mrm{in}, \mrm{out}\}$ for the label of the arrow horizontally incoming and horizontally outgoing, respectively, in the $i$\th system and $\smash{\ell^{(i), *}_{\mrm{v}}}$ for $*\in\{\mrm{in}, \mrm{out}\}$ for the label of the arrow vertically incoming and vertically outgoing, respectively, in the $i$\th system.

 In the case that in each system a single arrow or no arrow is incident at a vertex, the pairing is obvious. If two arrows are incident at a vertex in both, then we assign $\ell^{(i),\mrm{out}}_{\mrm{h}} = \ell^{(i),\mrm{in}}_{\mrm{v}}$ and $\smash{\ell^{(i),\mrm{out}}_{\mrm{v}} = \ell^{(i),\mrm{in}}_{\mrm{h}}}$ for each $i\in\{1,2\}$; see the left panel of Figure~\ref{f.arrow pairings}. If in one system no arrows are incident at a vertex $v$ and in the other two arrows are incident at $v$, we do the pairing in the latter system (system $j$) as in the case where two arrows are present in both systems, i.e., $\ell^{(j),\mrm{out}}_{\mrm{h}} = \ell^{(j),\mrm{in}}_{\mrm{v}}$ and $\smash{\ell^{(j),\mrm{out}}_{\mrm{v}} = \ell^{(j),\mrm{in}}_{\mrm{h}}}$.

 This leaves the case that one arrow is incident  at a vertex $v$ in one system and two arrows are incident at $v$ in the other. We give the pairing in the case that the first system (with one arrow incident) is system 1 and the second system is system 2, and the analogous pairing is prescribed in the other case. If the arrow enters and exits horizontally in system $1$, then we set $\ell^{(i), \mrm{out}}_{\mrm{h}} = \ell^{(i), \mrm{in}}_{\mrm{h}}$ for $i\in\{1,2\}$ (this is the only option in the $i=1$ case) and $\smash{\ell^{(2), \mrm{out}}_{\mrm{v}} = \ell^{(2), \mrm{in}}_{\mrm{v}}}$; see middle panel of Figure~\ref{f.arrow pairings}. If the arrow enters horizontally and exits vertically in system $1$, then we set $\smash{\ell^{(i), \mrm{out}}_{\mrm{v}} = \ell^{(i), \mrm{in}}_{\mrm{h}}}$ for $i\in\{1,2\}$ and $\ell^{(2), \mrm{out}}_{\mrm{h}} = \ell^{(2), \mrm{in}}_{\mrm{h}}$; see right panel of Figure~\ref{f.arrow pairings}. If the arrow in system 1 enters vertically and exits horizontally, we set $\smash{\ell^{(i), \mrm{out}}_{\mrm{h}} = \ell^{(i), \mrm{in}}_{\mrm{v}}}$ for $i\in\{1,2\}$ and $\smash{\ell^{(2), \mrm{out}}_{\mrm{v}} = \ell^{(2), \mrm{in}}_{\mrm{h}}}$, and if it enters vertically and exits vertically, we set $\smash{\ell^{(i), \mrm{out}}_{\mrm{v}} = \ell^{(i), \mrm{in}}_{\mrm{v}}}$ for $i\in\{1,2\}$ and $\smash{\ell^{(2), \mrm{out}}_{\mrm{h}} = \ell^{(2), \mrm{in}}_{\mrm{h}}}$.

 The definition is made such that if the trajectory of an arrow in system 1 coincides at some edge with the trajectory of an arrow in system 2, then the two trajectories agree from then onwards, as can be easily checked; we will refer to this situation as the two arrows ``coupling.''

\medskip

\noindent\textbf{Reduction to claim about arrow position ordering.} %
For $t\in\N$ fixed and $i\in\{1,2\}$, consider the set $\mrm{Uncoupled}_i$ of arrows in system $i$ which have not coupled with any arrow of the other system by time $t$, and let $U_i =  \# \mrm{Uncoupled}_i$. Note that $U_1\geq U_2$. For $s<t$, $i\in\{1,2\}$, and $k\in\intint{1,U_i}$, let us label by $X^{(i)}_s(k)$ the location at time $s$ of the arrow in $\mrm{Uncoupled}_i$ which, among arrows from the same set, is $k$\th from the bottom in the initial configuration $\eta^{(i)}_0$. Note that $X^{(1)}_0(k) \geq X^{(2)}_0(k)$ for all $k\in\intint{1, U_2}$.

We first claim that, if $X^{(2)}_t(k) \leq X^{(1)}_t(k+M)$ for all $k\in\intint{1, \min(U_1 - M, U_2)}$, then $\hssv(h^{(2)}_0; x,t) \leq \hssv(h^{(1)}_0; x,t) + M$ for all $x\in\intray{-N-1}$. Indeed, since arrows coupled by time $t$ are counted in the height functions of both systems,
\begin{align*}
\hssv(h^{(2)}_0; x,t) - \hssv(h^{(1)}_0; x,t)
&= \#\left\{k\in\intint{1,U_2}: X^{(2)}_t(k) > x\right\} - \#\left\{k\in\intint{1,U_1}: X^{(1)}_t(k) > x\right\}\\
&\leq \#\left\{k\in\intint{1,\min(U_1 - M, U_2)} : X^{(1)}_t(k+M) > x\right\}\\
&\qquad - \#\left\{k\in\intint{1,U_1}: X^{(1)}_t(k) > x\right\} + M
\leq M.
\end{align*}
So it suffices to show that there exist $c>0$ such that, for any fixed $k$ and $M$,
\begin{align}\label{e.overtaking to show}
\P\left(X^{(2)}_t(k) > X^{(1)}_t(k+M)\right) \leq N\exp(-cM);
\end{align}
indeed, then for any $M\geq (\log N)^2$ we may do a union bound over $k\in\intint{1, \min(U_1-M, U_2)}$, noting that the hypothesis that $\smash{h^{(i)}_0(-N-1)} \leq N$ for $i\in\{1,2\}$ implies that $U_1, U_2\leq N$ (and using that $N^2\exp(-\frac{1}{2}c(\log N)^2)$ is bounded by an absolute constant for all $N$).

\medskip

\noindent\textbf{Reduction to overtaking event for totally uncoupled arrow.} We introduce some terminology to show \eqref{e.overtaking to show}.  We will say an arrow in system 2 ``overtakes'' an arrow in system 1 at a location $v$ when the following situation occurs:  $v$ lies on the trajectory of the specified arrow in system 2 and
\begin{align}\label{e.overtake definition}
(a_v^{(1)}, i_v^{(1)}; b_v^{(1)}, j_v^{(1)}) = (0, 1; 0, 1)\quad \text{and} \quad(a_v^{(2)}, i_v^{(2)}; b_v^{(2)}, j_v^{(2)}) = (1, 0; 1, 0),
\end{align}
where for $i\in\{1,2\}$, $(a_v^{(i)}, i_v^{(i)}; b_v^{(i)}, j_v^{(i)})$ are the vertically entering, horizontally entering, vertically exiting, and horizontally exiting arrows, respectively, at vertex $v$ in system $i$. Note that this implies that the arrows in both systems are uncoupled up to the point of exiting $v$. 

We claim that, on the event in \eqref{e.overtaking to show}, it must be the case that at at least $M$ vertices, the arrow in system 2 corresponding to $X^{\smash{(2)}}_t(k)$ overtook an arrow in system $1$. Indeed, this follows from the observation that, for any $j$ such that $\smash{X^{(2)}_0(k) \leq X^{(1)}_0(j)}$, if the arrow corresponding to $\smash{X^{\smash{(2)}}_t(k)}$ does not overtake the arrow corresponding to $X_t^{(1)}(j)$ at any vertex, then $X^{(2)}_t(k) \leq X^{(1)}_t(j)$; this is because uncoupled arrows cannot cross except by one overtaking the other as in \eqref{e.overtake definition}.

Note that the terminology of ``overtaking'' does not include cases such as $(a_v^{(1)}, i_v^{(1)}; b_v^{(1)}, j_v^{(1)}) = (0, 1; 0, 1)$ and $\smash{(a_v^{(2)}, i_v^{(2)}; b_v^{(2)}, j_v^{(2)})} = (1, 1; 1, 1)$ (see, e.g., arrows labeled $p^{(1)}$ and $r^{(2)}$ in middle panel of Figure~\ref{f.arrow pairings}) or $(a_v^{(1)}, i_v^{(1)}; b_v^{(1)}, j_v^{(1)}) = (1, 1; 1, 1)$ and $(a_v^{(2)}, i_v^{(2)}; b_v^{(2)}, j_v^{(2)}) = (1, 0; 1, 0)$ which are perhaps also suggested by the terminology; this is due to the fact that such configurations correspond to the crossing of a coupled arrow with an uncoupled one, which is not relevant to our arguments (which only concern the relative orderings of the uncoupled paths in the systems).

\medskip

\noindent\textbf{Reduction to overtaking event for an initially uncoupled arrow.} Fix an $x\in\intray{-N}$ such that $\eta_0^{(2)}(x) = 1$ and $\eta_0^{(1)}(x) = 0$. Consider the arrow in system 2 starting from $x$ at time $0$; in particular, the arrow is initially uncoupled (but may couple later). We will refer to it as ``arrow $x$.'' We will show the following: there exists $c>0$ such that
\begin{align}\label{e.reduced to show}
 \P\Bigl(\exists v_1, \ldots,  v_M \in \Z_{\geq 1}\times \intray{-N}: \eqref{e.overtake definition} \text{ occurs at each } v_i \text{ for arrow } x\Bigr) \leq \exp(-cM).
 \end{align}
Taking a union bound over the at most $N$ values of $x$ with the just imposed condition will then yield \eqref{e.overtaking to show}, since the arrow corresponding to $X^{\smash{(2)}}_t(k)$ must be arrow $x$ for some such $x$.

We call a vertex $v$ ``potentially coupling'' for arrow $x$ if $(a_v^{(1)}, i_v^{(1)}) = (0, 1)$ and $(a_v^{(2)}, i_v^{(2)}) = (1, 0)$, i.e., in system 1 an arrow enters $v$ horizontally and no arrow enters vertically, and in system 2 arrow $x$  enters $v$ vertically and no arrow enters horizontally. 
We say that arrow  $x$ in system 2 passes through a potentially coupling vertex $v$ without coupling if $v$ is potentially coupling for arrow $x$ and $(B_v^{\shortuparrow}, B_v^{\shortrightarrow}) \in \{(1,1), (0,0)\}$, i.e, the output arrow configurations at $v$ differ in the two systems: the arrow in system $1$ exits horizontally and arrow $x$ in system $2$ exits vertically, or vice versa. Recall that if arrow $x$ in system 2 couples at a potentially coupling vertex $v$, i.e., the output direction is the same at $v$ in both systems, then for the rest of arrow $x$'s trajectory it agrees with the arrow in system 1 incident at $v$; in particular, it cannot overtake any further arrow in system $1$ (as defined in \eqref{e.overtake definition}).

We see that the event whose probability we are bounding is contained in the event that there exist at least $M$ potentially coupling vertices for arrow $x$ and it does not couple at at least the first $M$ of them; here, ``first'' is under the usual component-wise partial order, i.e., $w<v$ for $v, w\in\Z_{>0}^2$ (with $v=(v_1,v_2)$ and $w=(w_1,w_2)$) if (i) $w_1<v_1$ or (ii) $w_1=v_1$ and $w_2 < v_2$. Observe that the event of a given vertex $v$ being potentially coupling for arrow $x$ is determined by the Bernoulli random variables associated to vertices $w < v$. That is, $\{v \text{ is potentially coupling for arrow } x\} \in \sigma(\{B_w^{\shortrightarrow}, B_w^{\shortuparrow} : w < v\})$. A consequence is that, conditional on $v$ being potentially coupling for arrow $x$, the random variables $(B_{v}^{\shortrightarrow}, B_{v}^{\shortuparrow})$ are distributed as a pair of independent Bernoulli random variables of parameters $b^{\shortrightarrow}$ and $b^{\shortuparrow}$, respectively (as they are independent of the random variables $(B_{w}^{\shortrightarrow}, B_{w}^{\shortuparrow})_{w < v}$).

\medskip

\noindent\textbf{The probability bound.} Let $\tau_1 < \tau_2 < \ldots$ be the (random) coordinates of arrow $x$'s potentially coupling vertices in order of appearance along its trajectory; if there is no $j$\th potentially coupling vertex, we define $\tau_j = (\infty,\infty)$. Define the $\sigma$-algebra $\F_i$ by
\begin{align*}
\F_i := \Bigl\{\msf E\cap \{\tau_i = v\} : v\in\Z_{>0}^2,  \msf E\in\sigma\bigl(\{B_w^{\shortuparrow}, B_w^{\shortrightarrow}: w\leq v\}\bigr)\Bigr\}.
\end{align*}
Let $\msf{NC}_i$ (short for ``no coupling'') be the event that $\tau_i\neq (\infty,\infty)$ and arrow $x$ passes through $\tau_i$ without coupling, and let $\msf{NC}_{\intint{1,j}} = \cap_{i=1}^j \msf{NC}_i$ for each $j$. Note that $\msf{NC}_i, \tau_i \in \F_i$ for each $i$. Next note that 
$$\P(\msf{NC}_1) \leq b^\shortrightarrow b^{\shortuparrow} + (1-b^{\shortrightarrow})(1-b^{\shortuparrow}) =: \delta < 1.$$
We claim that $\P(\msf{NC}_i \mid \msf{NC}_{\intint{1,i-1}}) \leq \delta$ for each $i$. By the tower property of conditional expectations, this is implied by the stronger inequality 
$$\P(\msf{NC}_i \mid \F_{i-1})\one_{\msf{NC}_{\intint{1,i-1}}} \leq \delta \one_{\msf{NC}_{\intint{1,i-1}}}.$$
The latter holds since, conditional on $\F_{i-1}$, $\{\tau_i = v\}$ is determined by the collection of random variables $\{B_w^{\shortrightarrow}, B_w^{\shortuparrow} : \tau_{i-1} \leq w < v\}$; in particular, letting $\msf E_v$ be the event that $(B_v^{\shortrightarrow}, B_v^{\shortuparrow}) = (1,1)$ or $(0,0)$, we see that $\msf E_v$ is conditionally independent of $\{\tau_i = v\}$ given $\F_{i-1}$, so
\begin{align*}
\P\left(\msf{NC}_i \mid \F_{i-1}\right)\one_{\msf{NC}_{\intint{1,i-1}}}
&= \sum_{v > \tau_{i-1}}\P\left(\msf E_v, \tau_i = v \mid \F_{i-1}\right)\one_{\msf{NC}_{\intint{1,i-1}}}\\
&= \sum_{v > \tau_{i-1}}\P\left(\msf E_v\mid \F_{i-1}\right) \cdot \P\left(\tau_i = v\mid \F_{i-1}\right)\one_{\msf{NC}_{\intint{1,i-1}}}\\
&= \delta \sum_{v > \tau_{i-1}} \P\left(\tau_i = v\mid \F_{i-1}\right)\one_{\msf{NC}_{\intint{1,i-1}}} \leq \delta \one_{\msf{NC}_{\intint{1,i-1}}}.
\end{align*}
Thus it holds that
\begin{align*}
\P\left(\msf{NC}_{\intint{1,M}}\right) = \P(\msf{NC}_1)\cdot \prod_{i=2}^M\P\left(\msf{NC}_i\mid \msf{NC}_{\intint{1,i-1}}\right)  \leq \delta^M.
\end{align*}
This establishes \eqref{e.reduced to show} and completes the proof.
\end{proof}

\subsection{Finite speed of discrepancy for S6V under basic coupling}\label{s.finite speed of prop s6v basic coupling}

Here we give the proof of Lemma~\ref{l.finite speed of prop for s6v basic coupling}; it bears similarities to that of \cite[Proposition 2.17]{aggarwal2020limit}.

\begin{proof}[Proof of Lemma~\ref{l.finite speed of prop for s6v basic coupling}]
We will prove the statement for the case of $m=1$ and use a union bound to obtain the case of general $m\in\N$. Since we are in the $m=1$ case, we may assume without loss of generality that $s_1 = 1$, and we drop the second superscript in the boundary conditions and write them, for $i\in\{1,2\}$, as $h_0^{(i)}$, and analogously we write $(j^{(i)}_{(x,y)})_{(x,y)\in\intray{-N_i}\times\intray{s}}$, for $i\in\{1,2\}$, for the samples from the S6V models with respective boundary conditions $\smash{h_0^{(i)}}$ (coupled via the basic coupling).
We adopt the convention for arrow trajectories in each system (i.e., the two S6V models with initial conditions $h_0^{(1)}$ and $h_0^{(2)}$) from the beginning of the proof of Proposition~\ref{p.s6v approximate monotonicity} from Section~\ref{s.proof of approximate monotonicity} (recall Figure~\ref{f.arrow pairings}): in particular, all arrows in the two systems which start from the same positions (which recall we refer to as being ``coupled'') evolve first, and then the initially uncoupled arrows evolve subsequently, ``bouncing'' off of other uncoupled arrows and bouncing off or passing through coupled arrows. Recall that arrows which are uncoupled at least until a given time in their trajectories maintain their initial ordering until at least that time. Observe that all arrows which initially start in either system in $\intint{-N_1,N_1}$ are coupled by assumption.

Let $k$ be the largest value in $\intint{-N_2, -N_1-1}$ such that there is an uncoupled arrow present at $(1,k)$ in the configuration $\smash{\eta_{h_0^{(2)}}}$ corresponding to $h_0^{(2)}$ (as in \eqref{e.s6v particle from h}) (thus there is no arrow at $(1,k)$ in $\eta_{h_0^{(1)}}$). Now, we see that the event that $j^{(1)}_{(x,y)} \neq j^{(2)}_{(x,y)}$ for some $(x,y)\in\intint{1,T}\times\intint{-\frac{1}{2}N_1, \frac{1}{2}N_1}$ is contained in the event that the uncoupled arrow starting initially at $(1,k)$ in $\smash{\eta_{h_0^{(2)}}}$ is incident on a vertex in $\intint{1,T}\times\{-\floor{\frac{1}{2}N_1}\}$. 

For any $y,t\in\N$, consider the event $\msf E_{y,t}$ that the uncoupled arrow exits horizontally from $(t,x)$ and then exits horizontally from $(t+1,x+y')$ for some $y'\geq y$, i.e., it travels vertical distance at least $y$ in the time step $t$ to $t+1$. Let $n$ be the number of coupled arrows that exit horizontally between $(t,x+1)$ and $(t,x+y-1)$ (inclusive). Now, if $\msf E_{y,t}$ occurs, it implies that all $n$ of these coupled arrows passed straight through the vertex they horizontally entered on the vertical line $t+1$, and the uncoupled arrow passed vertically through at least $y-n -1$ many vertices which had no coupled arrow incident (it must also have not passed horizontally through $(t+1,x)$, but we will not include this in the probability upper bound). Since all these sub-events are independent, the probability of $\msf E_{y,t}$ occurring is at most $(b^{\shortrightarrow})^{n}(b^{\shortuparrow})^{y-n-1} \leq (b^{\shortrightarrow})^{y-1}$, since $b^{\shortuparrow}\leq b^{\shortrightarrow}$.

Thus the distance the arrow starting at $(1,k)$ travels vertically in the horizontal interval $\intint{1,T}$ is bounded by a sum of $T$ independent random variables $(X_i)_{i\in\intint{1,T}}$, each distributed as $\P\left(X_i \geq r\right) = (b^{\shortrightarrow})^{r-1}$ for $r\in\N$. This sum has expectation $(1-b^{\shortrightarrow})^{-1}T$. So the probability that this sum exceeds $\frac{1}{2}N_1$, under the assumed condition that $T\leq \frac{1}{4}(1-b^{\shortrightarrow})N_1$, is bounded by $\exp(-cN_1)$ by standard concentration inequalities for sums of independent geometric random variables (e.g., \cite[Theorem 2.1]{janson2018tail}), completing the proof.
\end{proof}

\section{Decoupling along characteristics}\label{s.proofs of asymptotic independence}

In this section we give the proofs of Corollaries~\ref{p.asymptotic independence ASEP} and \ref{p.covariance matrix}. The former will be proved in Section~\ref{s.asymptotic independence of kpz fixed point} as a consequence of an analog of it for the limiting objects, KPZ fixed points (Proposition~\ref{p.asymptotic independence of kpz fixed point}), which will also be proven in the same section. Corollary~\ref{p.covariance matrix} will be proven in Section~\ref{s.space-time covariance proofs}, using an asymptotic independence statement for the stationary horizon (Lemma~\ref{l.sh representation}), which in turn will be proven in Section~\ref{s.stationary horizon asymptotic independence}.

\subsection{Asymptotic independence for the KPZ fixed point}\label{s.asymptotic independence of kpz fixed point}
Let $\h^{(i)}(x,t) = \h(\h_0^{(i)}, 0; x,t)$ be the KPZ fixed point started from $\h_0^{(i)}$, coupled via the directed landscape, i.e., for $i\in\{1,2\}$,
\begin{align}\label{e.kpz fixed point variational formula}
\h^{(i)}(x,t) = \sup_{y\in\R}\left(\h^{(i)}_0(y) + \mc L(y,0; x,t)\right).
\end{align}
The following is the analogous statement of Corollary~\ref{p.asymptotic independence ASEP} for the limiting objects, namely, KPZ fixed points coupled via the directed landscape. It will be proven later in this section.

\begin{proposition}\label{p.asymptotic independence of kpz fixed point}
Fix $R>0$ and $T\geq 1$. For any $\slope\in\R$, $\varepsilon>0$, and $i\in\{1,2\}$, suppose $\smash{\h^{(1)}_0, \h^{(2)}_0} :\R\to\R$ are independent (in particular, they may be deterministic), continuous, and satisfy \eqref{e.uniform slope condition} with $\slope$ and $R$ almost surely. There exist constants $C, \delta>0$ depending on $R$ and $T$ such that the following holds. (1) There exist independent processes $\tilde \h^{(1)}, \tilde \h^{(2)} : \R\times[T^{-1}, T]\to\R$ and a coupling with $(\h^{(1)}, \h^{(2)})$ such that, for $i\in\{1,2\}$,
\begin{equation}\label{e.asymptotic independence}
\lim_{|\slope|\to\infty} \P\left(\sup_{s\in[T^{-1},T]}\sup_{|x|\leq \delta|\slope|^{1/2}}\left|\h^{(i)}(x,s) - \tilde \h^{(i)}(x,s)\right| \geq C|\slope|^{-1/12}\log|\slope|\right) = 0.
\end{equation}
(2) There exist independent processes $\tilde \h^{(1)}_{\mrm{char}}, \tilde \h^{(2)}_{\mrm{char}}:\R\times[T^{-1}, T]\to\R$ and a coupling with $(\h^{(1)}, \h^{(2)})$ such that
\begin{equation}\label{e.asymptotic independence along characteristic}
\begin{split}
\lim_{|\slope|\to\infty}\P\left(\sup_{s\in[T^{-1},T]}\sup_{|x|\leq \delta|\slope|^{1/2} t}\left|\h^{(1)}(x,s) - \tilde \h^{(1)}_{\mrm{char}}(x,s)\right| \geq C|\slope|^{-1/12}\log|\slope|\right) &= 0 \quad\text{and}\\
\lim_{|\slope|\to\infty}\P\left(\sup_{s\in[T^{-1},T]}\sup_{|x+\slope s|\leq \delta\slope^{1/2} t}\left|\h^{(2)}(x,s) - \tilde \h^{(2)}_{\mrm{char}}(x,s)\right| \geq C|\slope|^{-1/12}\log|\slope|\right) &= 0.
\end{split}
\end{equation}
\end{proposition}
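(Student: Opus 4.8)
The statement is a decoupling result for two KPZ fixed points $\h^{(1)}, \h^{(2)}$ that are coupled through a common directed landscape $\mc L$, but whose initial data $\h^{(1)}_0, \h^{(2)}_0$ have (asymptotically) widely separated characteristic lines: $\h^{(1)}_0$ has slope roughly $0$, while $\h^{(2)}_0 - 2\slope y$ is bounded, so $\h^{(2)}_0$ has slope roughly $2\slope$. The key geometric fact, which I would extract from \cite{dauvergne2018directed} and \cite{NTM}, is that the variational problem \eqref{e.kpz fixed point variational formula} for $\h^{(i)}(x,s)$ with $s\in[T^{-1},T]$ and $x$ in the relevant window is, with overwhelming probability, \emph{localized}: the supremum over $y\in\R$ is attained at a point $y^{(i)}_*(x,s)$ lying within a window of size $O(|\slope|^{1/2}\,\mathrm{polylog}\,|\slope|)$ around $0$ (for $i=1$) or around $2\slope s$ (for $i=2$). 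This follows from the parabolic decay of $\mc L(y,0;x,t)$ in $y$ — namely $\mc L(y,0;x,t)\approx -\tfrac{(x-y)^2}{t}$ plus fluctuations of order $|y|^{1/3}$ — combined with the at-most-linear-with-slope-$\slope$ growth of $\h^{(i)}_0$ encoded in \eqref{e.uniform slope condition}: optimizing $\h^{(i)}_0(y) + \mc L(y,0;x,s)$ pins $y$ near the characteristic. Once the two maximizers are confined to disjoint windows $\mc W^{(1)} = [-\delta'|\slope|^{1/2}, \delta'|\slope|^{1/2}]$ and $\mc W^{(2)} = 2\slope s + [-\delta'|\slope|^{1/2}, \delta'|\slope|^{1/2}]$ (disjoint once $|\slope|$ is large, since their centers differ by $\approx 2\slope s \gg |\slope|^{1/2}$), I would replace $\h^{(i)}(x,s)$ by
\begin{align*}
\tilde\h^{(i)}(x,s) := \sup_{y\in \mc W^{(i)}} \left(\h^{(i)}_0(y) + \mc L(y,0;x,s)\right),
\end{align*}
which agrees with $\h^{(i)}(x,s)$ off an event of probability $o(1)$ as $|\slope|\to\infty$ (this is where the $C|\slope|^{-1/12}\log|\slope|$ error, or rather the probability bound, comes from — one needs a quantitative version of the localization, controlling the loss incurred in the supremum by restricting $y$, uniformly over $x$ in the stated window and $s\in[T^{-1},T]$; the polylog and the power $-1/12$ are artifacts of the tail bounds used).

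The second step is to make $\tilde\h^{(1)}$ and $\tilde\h^{(2)}$ genuinely independent. After the restriction, $\tilde\h^{(i)}(x,s)$ depends on the directed landscape only through $\mc L$ restricted to $\{(y,0;x,s): y\in\mc W^{(i)}, x \text{ in the window}, s\in[T^{-1},T]\}$ — and on $\h^{(i)}_0$, which are independent of each other and of $\mc L$. For $i=1$ the relevant $\mc L$-increments involve starting points $y$ near $0$; for $i=2$ they involve starting points $y$ near $2\slope s$. These are increments of $\mc L$ over \emph{spatially disjoint} sets of starting points (at time $0$); however, $\mc L(y,0;x,s)$ for different $y$ at the \emph{same} $(x,s)$ is \emph{not} literally a function of disjoint pieces of an independent noise, so one cannot simply invoke independence of increments (which is a property in the time variable). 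Instead I would use a spatial locality/factorization property of the directed landscape: conditionally on suitable boundary data, $\mc L$ restricted to two ``boxes'' that are far apart in the spatial direction can be coupled with an independent pair up to a small error — this kind of statement appears in the literature on the directed landscape (e.g., via the metric composition law and the localization of geodesics; cf.\ the ``independence'' inputs in \cite{busani2023scaling} and \cite{NTM}). Concretely, I would realize $\tilde\h^{(1)}$ using $\mc L$ on a box around spatial location $0$ and $\tilde\h^{(2)}$ using $\mc L$ on a box around $2\slope s$, then produce independent copies $\tilde\h^{(1)}, \tilde\h^{(2)}$ by resampling $\mc L$ on the second box independently; the cost of this resampling is again $o(1)$ in probability because geodesics from starting points in $\mc W^{(2)}$ to endpoints near $2\slope s$ stay, with high probability, inside the second box and hence do not see the part of $\mc L$ that was resampled.

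For part (2), the argument is identical but one shifts the endpoint window: for $\h^{(1)}$ one evaluates at $|x|\leq\delta|\slope|^{1/12}s$ (near the characteristic of $\h^{(1)}_0$, which emanates from $0$ with slope $0$, so stays near $0$), and for $\h^{(2)}$ one evaluates at $|x+\slope s|\leq\delta|\slope|^{1/2}s$ (near the characteristic of $\h^{(2)}_0$ through $0$ with slope $-\slope$, which at time $s$ sits near $-\slope s$); in both cases the maximizer localization still holds — note the maximizer $y^{(2)}_*$ is near $2\slope s$ while the endpoint is near $-\slope s$, consistent with $\mc L(2\slope s, 0; -\slope s, s)$ being the relevant LPP value along the characteristic — and disjointness of the two maximizer windows persists. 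Then Corollary~\ref{p.asymptotic independence ASEP} follows by transferring this to the prelimit: by Corollary~\ref{c.asep general initial condition}(3), $(\h^{(1),\varepsilon}, \h^{(2),\varepsilon})$ converges jointly (as $\varepsilon\to0$) to $(\h^{(1)},\h^{(2)})$ coupled via the directed landscape, uniformly on compacts and for $s$ in a finite set $\mc T$; one applies Skorokhod representation, uses that the finite $\mc T\subseteq[T^{-1},T]$ and compact spatial windows are handled by the convergence, and reads off \eqref{e.asymptotic independence asep}, \eqref{e.asymptotic independence along characteristic asep} from \eqref{e.asymptotic independence}, \eqref{e.asymptotic independence along characteristic} — the independent processes $\tilde\h^{(i),\varepsilon}$ being constructed from the prelimiting analog of the spatially-resampled landscape (or, more simply, one observes that independence is a closed property under weak convergence of the relevant couplings, so that the limiting independent coupling can be pulled back). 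The main obstacle I anticipate is the second step: establishing, with the quantitative $o(1)$-in-probability control needed, that $\mc L$ over two spatially far-separated families of point-to-point increments can be coupled to be independent. This requires a careful use of geodesic localization (transversal fluctuation bounds of order $(\text{length})^{2/3}$, so of order $|\slope|^{2/3}$ over a spatial gap of order $|\slope|$) to confine the geodesics to disjoint regions, together with a conditional-independence/resampling argument for $\mc L$; the bookkeeping of error probabilities, uniformly over the endpoint windows and over $s\in[T^{-1},T]$, is the technical heart of the proof.
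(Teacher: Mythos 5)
Your proposal correctly identifies the two-stage structure (localize the maximizers of the KPZ fixed-point variational problems to well-separated spatial windows, then show the directed landscape can be coupled to an independent pair on well-separated regions), and you correctly flag the central difficulty: that $\mc L(y,0;x,s)$ for $y$ in two far-apart spatial windows but with a \emph{common} endpoint $(x,s)$ is not a function of disjoint pieces of the landscape, since the geodesics coalesce near $(x,s)$. However, your proposed resolution of this difficulty---``resample $\mc L$ on the second box''---does not work as stated, and the heuristic you offer to justify it (that geodesics from starting points in $\mc W^{(2)}$ to ``endpoints near $2\slope s$'' stay inside the second box) rests on a misreading of the geometry. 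In part (1) the endpoints for both height functions lie in the same window $|x|\leq\delta|\slope|^{1/2}$, so the geodesics for $\h^{(2)}$ travel transversally a distance of order $\slope$ from the initial window (near $\slope s$, not $2\slope s$ --- the maximizer of $2\slope y - (x-y)^2/s$ is $y=x+\slope s$) all the way to the endpoint near $0$, and hence overwhelmingly overlap with the $\h^{(1)}$ fan; they do not remain inside a box around $\slope s$. The resampling argument as you describe it would corrupt the $\h^{(2)}$ values. Part (2) has the dual problem: there the \emph{endpoints} are far apart but the \emph{maximizers} both sit in a window of size $O(\slope^{1/2})$ around $0$, so one cannot separate at time $0$ either.

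The missing ingredient, which is the crux of the paper's argument, is an \emph{intermediate-time split}. For part (1) one inserts a supremum over $z$ at time $s(1-\slope^{-1/2})$ via the metric composition law, writing $\mc L(y,0;x,s) = \sup_z (\mc L(y,0;z,s(1-\slope^{-1/2})) + \mc L(z,s(1-\slope^{-1/2});x,s))$. Transversal-fluctuation bounds for directed landscape geodesics confine $z$ to a window $|z|\lesssim\slope^{1/2}t$ for $\h^{(1)}$ and to $z\in[\tfrac34\slope^{1/2}t,\tfrac54\slope^{1/2}t]$ for $\h^{(2)}$; crucially these are \emph{disjoint}. One then shows that the contribution $\mc L(z,s(1-\slope^{-1/2});x,s)$ from the thin time strip of width $\slope^{-1/2}s$ equals its deterministic parabola $-s^{-1}\slope^{1/2}(x-z)^2$ up to an error of size $O(\slope^{-1/12}\log\slope)$ with probability $1 - \exp(-c\slope^{1/8})$ (this is precisely where the power $-1/12$ and the $\log$ come from, via \cite[Proposition 10.6]{dauvergne2018directed}). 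After peeling off this thin strip, $\h^{(1)}$ and $\h^{(2)}$ depend (up to the controlled error) on $\mc L$ over two spacetime boxes with \emph{both} starting and ending windows disjoint, and \cite[Proposition 2.6]{NTM} then supplies exactly the independent pair $\tilde{\mc L}_1,\tilde{\mc L}_2$ you postulated. For part (2) the same device is used but the thin strip is taken just above time $0$ (split at height $\slope^{-1/2}s$), since there the initial windows coincide while the endpoint windows are far apart. Without this intermediate-time decomposition your step 2 has a genuine gap.
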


Now we give the proof of Corollary~\ref{p.asymptotic independence ASEP}, which is essentially an immediate consequence of Proposition~\ref{p.asymptotic independence of kpz fixed point}.

\begin{proof}[Proof of Corollary~\ref{p.asymptotic independence ASEP}]
We prove \eqref{e.asymptotic independence asep}; an analogous argument holds for \eqref{e.asymptotic independence along characteristic asep}. First recall from Corollary~\ref{c.asep general initial condition} that, for $i\in\{1,2\}$, $\h^{(i),\varepsilon}$ converges in distribution (in the product topology, across $\mc T$, of uniform convergence on compact sets), as $\varepsilon\to0$, to $\h^{(i)}$ as given by \eqref{e.kpz fixed point variational formula}, coupled across $i\in\{1,2\}$ via the same directed landscape $\mc L$. Note that \eqref{e.asymptotic independence asep} is a statement about the $\varepsilon\to 0$ limit, so we may define any processes $\tilde\h^{(i), \varepsilon}$ and coupling of $(\h^{(i), \varepsilon})_{i\in\{1,2\}}$ with $(\tilde\h^{(i), \varepsilon})_{i\in\{1,2\}}$ for positive $\varepsilon$ as long as the respective limiting processes $(\h^{(i)})_{i\in\{1,2\}}$ and $(\tilde\h^{(i)})_{i\in\{1,2\}}$ are coupled so that \eqref{e.asymptotic independence} is satisfied and $\tilde\h^{(1)}$ and $\tilde\h^{(2)}$ are independent. By Proposition~\ref{p.asymptotic independence of kpz fixed point}, there exist such limiting processes $(\tilde{\h}^{(i)})_{i \in \{1,2\}}$ with such a coupling with $(\h^{(i)})_{i \in \{1,2\}}$.
We define $\tilde \h^{(i),\varepsilon} = \tilde \h^{(i)}$ for all $\varepsilon>0$ and $i\in\{1,2\}$. Since, as elements of $\mc C([-\delta\slope^{1/2}, \delta\slope^{1/2}])^{\mc T}$, $\smash{(\h^{(i),\varepsilon}, \tilde \h^{(i), \varepsilon}) = (\h^{(i),\varepsilon}, \tilde \h^{(i)}) \stackrel{d}{\to} (\h^{(i)}, \tilde \h^{(i)})}$ as $\varepsilon\to0$, and since $\mc T$ is finite, the Portmanteau theorem yields that \eqref{e.asymptotic independence asep} is upper bounded by
\begin{align*}
 \lim_{|\slope|\to\infty}\,\P\left(\max_{s\in\mc T}\sup_{|x|\leq \delta|\slope|^{1/2}} \left|\h^{(i)}(x,s) - \tilde \h^{(i)}(x,s)\right| \geq C|\slope|^{-1/12}\log|\slope|\right),
\end{align*}
which equals zero by Proposition~\ref{p.asymptotic independence of kpz fixed point}.
\end{proof}

Next we turn to Proposition~\ref{p.asymptotic independence of kpz fixed point}. The following localization statement will be used in the proposition's proof and its proof in turn will be given after. It restricts the time domain to $[t,t+1]$ for arbitrary $t>0$; as we will see in the proof of  Proposition~\ref{p.asymptotic independence of kpz fixed point}, this will suffice for our purposes by scale invariance properties of the limiting objects.

\begin{lemma}\label{l.localization of maximizer}
Let $\h_0$ satisfy the second inequality in \eqref{e.uniform slope condition} for some $\slope\in\R$ and let $\h(x,t) = \sup_{y\in\R}(\mc L(y,0;x,t) + \h_0(y))$. Let $R$ be as in \eqref{e.uniform slope condition}. The supremum has a minimum maximizer $y^*(x,t)$ and, for each $t>0$ fixed, $x\mapsto y^*(x,t)$ is non-decreasing.
Additionally, for any $K\geq \smash{(1+|\slope|)^{-1/4}}$ and any fixed $x,t$ with $|x|\leq K(1+|\slope|)t$, there exist constants $C,c>0$ depending only on $R$ and $t$ such that the following holds. With probability at least $1-C\exp(-cK^{3/2}(1+|\slope|^{3/2}))$, 
\begin{align}\label{e.h^(2) variational formula}
\sup_{s\in[t,t+1]}|y^*(x,s) - x - \slope s| \leq K(1+|\slope|) t.
\end{align}
\end{lemma}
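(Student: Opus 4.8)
The statement has two parts: a monotonicity/existence claim for the maximizer $y^*(x,t)$, and a quantitative localization bound \eqref{e.h^(2) variational formula}. For the first part, the plan is standard: existence of a maximizer follows from the variational formula $\h(x,t) = \sup_{y}(\mc L(y,0;x,t) + \h_0(y))$ together with the fact that $\mc L(\bm\cdot,0;x,t)$ is continuous and has a strict parabolic decay $\mc L(y,0;x,t) \le -c|y|^2 + C$ (a consequence of the Airy sheet marginal and one-point tail bounds, cf.\ the use of such bounds in the proof of Proposition~\ref{p.maximizer location}), while $\h_0$ grows at most linearly by \eqref{e.uniform slope condition}; so the supremum is attained on a compact set, and we take $y^*$ to be the minimal maximizer. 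Monotonicity of $x\mapsto y^*(x,s)$ is a consequence of the metric composition law (Definition~\ref{d.directed landscape}(iii)) and the standard ``crossing'' argument: if $x_1 < x_2$ but $y^*(x_1,s) > y^*(x_2,s)$, one derives a contradiction by swapping the geodesic portions, exactly as in \cite[Lemma 7.1]{dauvergne2018directed} (this is the same monotonicity phenomenon recorded for the prelimit in Lemma~\ref{l.asep height mono} and for LPP in Lemma~\ref{l.crossing lemma}).

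For the quantitative part, the idea is to compare the value of the functional at $y = x + \slope s$ (the ``expected'' location of the characteristic) with its value at a point $y$ far from $x+\slope s$. Fix $x,s$ with $|x|\le K(1+|\slope|)t$ and $s\in[t,t+1]$. At $y = x+\slope s$ we have $\mc L(x+\slope s, 0; x, s) = O(1)$ fluctuations around $-(\slope s)^2/s = -\slope^2 s$ by the Airy sheet marginal (Definition~\ref{d.directed landscape}(i)) and GUE-TW tail bounds, while $\h_0(x+\slope s) \ge 2\slope(x+\slope s) - |x+\slope s| - R|\slope|$ by the second inequality in \eqref{e.uniform slope condition}; adding, the lower bound on $\h(x,s)$ from this choice is $\slope^2 s + 2\slope x - |x+\slope s| - R|\slope| - O(1)$ fluctuation (using $\mc L(x+\slope s,0;x,s) + (\slope s)^2/s = \mathcal S$-type term of order $O(1)$ with exponential tails, after rescaling by $s^{1/3}$). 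On the other hand, for any candidate $y$ with $|y - x - \slope s| \ge K(1+|\slope|)t$, we upper bound $\mc L(y,0;x,s) \le -(y-x)^2/s + |\text{fluctuation}|$ and $\h_0(y) \le 2\slope y + |y| + R|\slope|$; the quadratic term $-(y-x)^2/s$ dominates and, after completing the square in $y$, forces the total to be strictly smaller than the lower bound above by a margin of order $K^2(1+|\slope|)^2 t$ — unless the fluctuation terms of $\mc L$ are anomalously large, which costs probability $\exp(-cK^{3/2}(1+|\slope|)^{3/2})$ by the standard upper-tail bound for $\mc L$ (or $\mathcal S$) increments; see the analogous estimate in Appendix~\ref{s.G_k asymptotics} and the one-point tail input used in Proposition~\ref{p.maximizer location}. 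A union bound over $s$ in a net of $[t,t+1]$ together with the modulus-of-continuity of $\mc L$ in its time argument upgrades this to the uniform-in-$s$ statement \eqref{e.h^(2) variational formula}; the $K \ge (1+|\slope|)^{-1/4}$ hypothesis is what makes the exponent $K^{3/2}(1+|\slope|)^{3/2}$ large enough to survive the union bound and the modulus-of-continuity correction.

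The main technical obstacle is handling the supremum over the region $\{y : |y-x-\slope s| \ge K(1+|\slope|)t\}$, which is unbounded, and extracting a uniform tail bound over $s\in[t,t+1]$ rather than pointwise. The cleanest way is a dyadic/chaining decomposition of the far region into annuli $A_j = \{2^j K(1+|\slope|)t \le |y - x - \slope s| < 2^{j+1} K(1+|\slope|)t\}$: on $A_j$ the quadratic gain is of order $4^j K^2(1+|\slope|)^2 t$, while the supremum of the fluctuation of $\mc L(\bm\cdot,0;x,s)$ over $A_j$ has, by the parabolic-Airy-type tail bounds and a standard Borell–TIS or Gaussian-comparison argument (as used for LPP in the references cited in the excerpt, e.g.\ \cite{dauvergne2018directed}), upper tail $\exp(-c\,r^{3/2})$ at level $r\cdot 2^{j/2}(K(1+|\slope|)t)^{1/2}$ roughly; summing the resulting geometric series in $j$ gives the claimed single exponential $\exp(-cK^{3/2}(1+|\slope|)^{3/2})$. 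The modulus-of-continuity-in-time estimate needed for the union bound over $s$ is again classical for the directed landscape and contributes only a lower-order polynomial factor absorbed into the constant $C$. I expect the bookkeeping of constants (ensuring $R$- and $t$-dependence only, and that the exponent is genuinely $K^{3/2}(1+|\slope|)^{3/2}$ and not merely $K^{3/2} + |\slope|^{3/2}$) to be the fiddly part, but no genuinely new idea beyond the parabolic-decay-plus-chaining scheme is required.
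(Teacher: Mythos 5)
Your proposal takes essentially the same route as the paper: compare the variational value at $y = x + \slope s$ to candidate $y$ far from $x+\slope s$, exploit the quadratic dominance of $-(y-x)^2/s$ after reducing $\h_0$ to linear growth around slope $2\slope$, control the Airy-sheet fluctuation $\mc A(y,0;x,s) = \mc L(y,0;x,s)+(x-y)^2/s$ uniformly in $y$ by a parabolic tail bound, and then upgrade to uniformity in $s\in[t,t+1]$ via a time modulus-of-continuity estimate. The paper cites off-the-shelf results for each of these ingredients rather than re-deriving them: \cite[Lemma 3.8]{IGCI} for monotonicity of the minimal maximizer (in place of your crossing/quadrangle argument, which does prove the same thing), \cite[Proposition 9.5]{ganguly2022sharp} for $\P\bigl(\sup_y(\mc A(y,0;x,t) - |y-x|) > r\bigr) \le C\exp(-cr^{3/2})$ (which packages exactly the dyadic/annulus chaining you sketch), and \cite[Proposition 10.5]{dauvergne2018directed} for the modulus of continuity in the time argument (which replaces your net argument). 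The bookkeeping step you anticipate — verifying the exponent is $K^{3/2}(1+|\slope|)^{3/2}$ rather than a sum, and checking that $K \ge (1+|\slope|)^{-1/4}$ closes the calculation — is indeed where the paper spends most of its effort, reducing first to $K(1+\slope) \ge K_0(R,t)$ and then verifying a single explicit inequality.

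One substantive caveat: your plan appeals to ``a standard Borell--TIS or Gaussian-comparison argument'' to control the supremum of the fluctuation of $\mc L(\bm\cdot,0;x,s)$ over an annulus. The parabolic Airy sheet $\mc A$ is not Gaussian, so Borell--TIS does not apply; the exponential tail bounds you need (with exponent $r^{3/2}$, not $r^2$) are genuine Airy-process facts, available from \cite{dauvergne2018directed,ganguly2022sharp}, not from Gaussian concentration. If you replace that appeal with the cited parabolic-decay bound for $\mc A$, the plan becomes correct.
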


\begin{proof}[Proof of Proposition~\ref{p.asymptotic independence of kpz fixed point}]
We give the proof of \eqref{e.asymptotic independence} in detail, and at the end we will explain the modifications needed to establish \eqref{e.asymptotic independence along characteristic}.
We assume that $\slope \geq 0$, as the argument for $\slope<0$ is analogous. By scale invariance properties of the directed landscape and KPZ fixed point (see Definition~\ref{d.directed landscape} and \cite[Theorem 4.5]{matetski2016kpz}, respectively), it suffices to prove the statement for the time interval $[t,t+1]$ instead of $[T^{-1},T]$; one then takes $t = (T^{2}-1)^{-1}$ and stretches the time dimension by a factor of $T-T^{-1}$. Finally, as the proposition only concerns the limit $\slope\to\infty$, we will assume $\slope$ is large when required. We set the value $\delta = \frac{1}{16}$, so $|x|\leq \frac{1}{16}\slope^{1/2}t$ in \eqref{e.asymptotic independence} and \eqref{e.asymptotic independence along characteristic}; however we will use $\delta$ in the proof for notational convenience instead of its explicit value.

Recall $\h^{(2)}(x,s) = \sup_{y\in\R}(\h^{(2)}_0(y) + \mc L(y,0;x,s))$.  By the monotonicity in the maximizer from Lemma~\ref{l.localization of maximizer} and since $|x| \leq \delta\slope^{1/2} t$, we know that the minimum maximizer $y^*(x,s)$ of this supremum satisfies $y^*(-\delta\slope^{1/2} t, s) \leq y^*(x,s) \leq y^*(\delta\slope^{1/2} t, s)$ for all $s\in[t,t+1]$. The probability bounds asserted in Lemma~\ref{l.localization of maximizer} can be used (with $K = \frac{1}{16}\frac{|\slope|}{1+|\slope|}$ and $c=c(R,t)$ with $R$ as in \eqref{e.uniform slope condition}) to control the left and righthand sides of these inequalities to yield that, with probability at least $1-\exp(-c\slope^{3/2})$, for all $|x|\leq \delta\slope^{1/2} t$ and $s\in[t,t+1]$,
\begin{align*}
\h^{(2)}(x,s) = \sup_{y\in[\frac{7}{8}\slope t, \frac{9}{8}\slope t]}\left(\h^{(2)}_0(y) + \mc L\bigl(y,0;x,s\bigr)\right).
\end{align*}
Next, by Definition~\ref{d.directed landscape} of the directed landscape, with probability one, it holds for all $x,y\in\R$ and $s\in[t,t+1]$ that $\mc L(y,0;x,s) = \sup_{z\in\R}(\mc L(y,0;z,s(1-\slope^{-1/2})) + \mc L(z,s(1-\slope^{-1/2}); x,s))$. So we write, on an event with probability at least $1-\exp(-c\slope^{3/2})$, for all $s\in[t,t+1]$,
\begin{align*}
\h^{(2)}(x,s) = \sup_{y\in[\frac{7}{8}\slope t, \frac{9}{8}\slope t]}\sup_{z\in\R}\left(\h^{(2)}_0(y) + \mc L\bigl(y,0;z,s(1-\slope^{-1/2})\bigr) + \mc L\bigl(z,s(1-\slope^{-1/2}); x,s\bigr)\right).
\end{align*}

We wish to put bounds on the set of $z$ where the inner supremum can be attained. For each $y$ and $s$, let $z_s^*(y,x)$ be the smallest such maximizer; it exists by the fact that $\lim_{|y|\to\infty} (\mc L(y,0;x,s) + \h_0(y)) = -\infty$ almost surely; to see the latter we use \eqref{e.uniform slope condition} and the fact that $\mc L(y,0;x,s) = -s^{-1}(x-y)^2 + \mc A(y,0;x,s)$ with $\mc A(y,0;x,s)$ being of unit order (where recall $\mc A(y,s;x,t) : \R^4_{\uparrow}\to\R$ is defined by $\mc A(y,s;x,t) = \mc L(y,s;x,t) + (x-y)^2/(t-s)$ and satisfies certain stationarity properties \cite[Lemma 10.2]{dauvergne2018directed}, e.g., its one-point distribution at any $(y,s;x,t)$ is given by the GUE Tracy-Widom distribution). 

We recognize $z_s^*(y,x)$ as the location at height $(1-\slope^{-1/2})s$ of a directed landscape geodesic from $(0,y)$ to $(x,s)$. By geodesic ordering \cite[Lemma 2.7]{bates2019hausdorff}, for each fixed $x$ and $s$, $y\mapsto z_s^*(y,x)$ is a non-decreasing function and for each fixed $y$ and $s$, $x\mapsto z_s^*(y,x)$ is also non-decreasing. Thus it is sufficient to lower bound $z_s^*(\frac{7}{8}\slope t, -\delta\slope^{1/2}t)$ and upper bound $z_s^*(\frac{9}{8}\slope t, \delta\slope^{1/2}t)$, uniformly over $s\in[t,t+1]$. By transversal fluctuation estimates on geodesics in the directed landscape which are uniform in the endpoints (see \cite[Lemma 3.11]{ganguly2022fractal}), we know that
\begin{align*}
\P\left(\sup_{s\in[t,t+1]}\left|z_s^*(\tfrac{7}{8}\slope t, -\delta\slope^{1/2}t)- (1-\slope^{-1/2})(-\delta\slope^{1/2}t) - \slope^{-1/2} \cdot\tfrac{7}{8}\slope t\right| > \tfrac{1}{16}\slope^{1/2}t\right) &\leq C\exp(-c\slope t^2),\\
\P\left(\sup_{s\in[t,t+1]}\left|z_s^*(\tfrac{9}{8}\slope t,\delta\slope^{1/2}t)- (1-\slope^{-1/2})(\delta\slope^{1/2}t) - \slope^{-1/2} \cdot\tfrac{9}{8}\slope t\right| > \tfrac{1}{16}\slope^{1/2}t\right) &\leq C\exp(-c\slope t^2)
\end{align*}
(the righthand side bounds are not optimal).
So we see that, with probability at least $1-\exp(-c\slope t^2)$, for all $s\in[t,t+1]$, $y\in[\frac{7}{8}\slope t, \frac{9}{8}\slope t]$, and $|x|\leq \delta\slope^{1/2}t$
$$\tfrac{3}{4}\slope^{1/2}t\leq z_s^*(y,x) \leq \tfrac{5}{4}\slope^{1/2}t,$$
since $\delta = \frac{1}{16}$.
In other words, absorbing $t$ into the constant $c$, with probability at least $1-\exp(-c\slope)$, for all $s\in[t,t+1]$,
\begin{align*}
\MoveEqLeft[2]
\h^{(2)}(x,s)\\
&=  \sup_{y\in[\frac{7}{8}\slope t, \frac{9}{8}\slope t]}\sup_{z\in [\frac{3}{4}\slope^{1/2}t, \frac{5}{4}\slope^{1/2}t]}\left(\h^{(2)}_0(y) + \mc L\bigl(y,0;z,s(1-\slope^{-1/2})\bigr) + \mc L\bigl(z,s(1-\slope^{-1/2}); x,s\bigr)\right).
\end{align*}

Now, applying Lemma~\ref{l.localization of maximizer} to $\h^{(1)}_0$ (recall that it satisfies \eqref{e.uniform slope condition}) with $K = \frac{1}{4}\frac{\slope}{1+\slope} t$ to bound the maximizer of the supremum of $y$ and a similar argument as above to bound the maximizer of the supremum over $z$ yields that, with probability at least $1-\exp(-c\slope)$, for all $|x|\leq \delta\slope^{1/2}t$,
\begin{align*}
\h^{(1)}(x,t)
&= \sup_{|y| \leq \frac{3}{8}\slope t}\sup_{|z| \leq \frac{1}{2}\slope^{1/2}t}\left(\h^{(1)}_0(y) + \mc L\bigl(y,0;z,s(1-\slope^{-1/2})\bigr) + \mc L\bigl(z,s(1-\slope^{-1/2}); x,s\bigr)\right).
\end{align*}
Now, $\mc L(z, s(1-\slope^{-1/2}); x,s) = -s^{-1}\slope^{1/2}(x-z)^2 + \mc A(z, s(1-\slope^{-1/2}); x,s)$, and it follows from \cite[Proposition 10.6]{dauvergne2018directed} that,  with probability at least $1-\exp(-c\slope^{1/8})$,
\begin{equation}\label{e.airy sup bound}
\sup_{s\in[t,t+1]}\sup_{|x|\leq \delta\slope^{1/2}t}\sup_{|z|\leq 2\slope^{1/2}t} |\mc A(z, s(1-\slope^{-1/2}); x,s)| \leq C\slope^{-1/12}\log\slope.
\end{equation}
Thus we see that, with probability at least $1-\exp(-c\slope^{1/8})$, for all $s\in[t,t+1]$ (and with the implicit constant in the $O$ notation below depending only on $t$),
\begin{align*}
\MoveEqLeft[0]
\h^{(1)}(x,s)\\
&= \sup_{|y|\leq \frac{3}{8}\slope t}\sup_{|z| \leq \frac{1}{2}\slope^{1/2}t}\left(\h^{(1)}_0(y) + \mc L\bigl(y,0;z,s(1-\slope^{-1/2})\bigr) - \frac{\slope^{1/2}(x-z)^2}{s}\right) + O(\slope^{-1/12}\log\slope).\\\\
\MoveEqLeft[0]
\h^{(2)}(x,s)\\
&=  \sup_{y\in[\frac{7}{8}\slope t, \frac{9}{8}\slope t]}\sup_{z\in [\frac{3}{4}\slope^{1/2}t, \frac{5}{4}\slope^{1/2}t]}\left(\h^{(2)}_0(y) + \mc L\bigl(y,0;z,s(1-\slope^{-1/2})\bigr) - \frac{\slope^{1/2}(x-z)^2}{s}\right) + O(\slope^{-1/12}\log\slope).
\end{align*}
Observe that the ranges of $y$ in the above two suprema are separated by a distance of order $\slope$, and those of $z$ are separated by distance of order $\slope^{1/2}$. Now, it follows from \cite[Proposition 2.6]{NTM} (after using scale invariance of $\mc L$) that there exist independent directed landscapes $\tilde{\mc L}_1$ and $\tilde{\mc L}_2$ such that, with probability which converges to $1$ as $\slope\to\infty$,
\begin{equation}\label{e.landscape coupling}
\begin{split}
&\mc L(u) = \tilde{\mc L}_1(u) \text{ for all } u = (y,0;z,s) \text{ with } |y|\leq \tfrac{3}{8}\slope t, |z| \leq \tfrac{1}{2}\slope^{1/2}t, s\in[0,t+1]\quad\text{and}\\
&\mc L(u) = \tilde{\mc L}_2(u) \text{ for all } u = (y,0;z,s) \text{ with } y\in[\tfrac{7}{8}\slope t, \tfrac{9}{8}\slope t], z \in[\tfrac{3}{4}\slope^{1/2}t,\tfrac{5}{4}\slope^{1/2}t], s\in[0,t+1].
\end{split}
\end{equation}
This completes the proof of \eqref{e.asymptotic independence} by taking
\begin{align*}
\tilde \h^{(1)}(x,s) &= \sup_{|y|\leq \frac{3}{8}\slope t}\sup_{|z| \leq \frac{1}{2}\slope^{1/2}t}\left(\h^{(1)}_0(y) + \tilde{\mc L_1}\bigl(y,0;z,s(1-\slope^{-1/2})\bigr) - s^{-1}\slope^{1/2}(x-z)^2\right)\quad\text{and}\\
\tilde \h^{(2)}(x,s) &= \sup_{y\in [\frac{7}{8}\slope t, \frac{9}{8}\slope t]}\sup_{z \in [\frac{3}{4}\slope^{1/2}t, \frac{5}{4}\slope^{1/2}t]}\left(\h^{(1)}_0(y) + \tilde{\mc L_2}\bigl(y,0;z,s(1-\slope^{-1/2})\bigr) - s^{-1}\slope^{1/2}(x-z)^2\right).
\end{align*}

A very similar argument also yields \eqref{e.asymptotic independence along characteristic} on the asymptotic independence of $\h^{(1)}(0, s)$ and $\h^{(2)}(-\slope s, s)$, uniformly for $s\in[t,t+1]$, in the $\slope\to\infty$ limit, i.e., evaluating each height function at a point which lies on the characteristic emanating from the origin. In more detail, by Lemma~\ref{l.localization of maximizer}, it follows that the maximizer for each height function in the variational formula $\sup_{y}(\smash{\h^{(i)}_0}(y) + \mc L(y,0; x,s))$ is, for some large constant $K$, attained inside $\smash{[-K\delta\slope^{1/2} t, K\delta\slope^{1/2} t]}$ with probability at least $1-\exp(-c\slope^{3/4})$ for all $s\in[t,t+1]$ and $|x|\leq \delta\slope^{1/2}t$. Then, analogous to what was done in the above proof of \eqref{e.asymptotic independence}, one introduces a further supremum at height $\slope^{-1/2}s$ and shows using geodesic fluctuation estimates \cite[Lemma 3.11]{ganguly2022fractal} that the maximizers of that supremum of the two height functions are separated by at least $c\slope^{1/2}$ for some $c>0$ with probability at least $1-\exp(-c\slope)$ (in the above proof the height chosen was $(1-\slope^{-1/2})s$ instead as the endpoints were close at height $s$ and not $0$). The contribution to the height functions from the temporal strip $[0,\slope^{-1/2}s]$ is again $O(\slope^{-1/12}\log\slope)$ with probability at least $1-C\exp(-c\slope^{1/8})$ as $\slope\to\infty$ and one can then again invoke the asymptotic independence from \cite[Proposition 2.6]{NTM} to complete the proof.
\end{proof}

\begin{remark}[Upgrading Proposition~\ref{p.asymptotic independence of kpz fixed point} to a quantitative statement]
An inspection of the proof of \cite[Proposition 2.6]{NTM} shows that it can be upgraded to obtain that \eqref{e.landscape coupling} holds with at least probability $1-C\exp(-c\slope^{3/2})$, i.e., a quantitative probability lower bound can be derived instead of just that the limit as $\slope\to\infty$ is $1$. With this estimate, it would follow that the limit \eqref{e.asymptotic independence} in the statement of Proposition~\ref{p.asymptotic independence of kpz fixed point} (and thus also the limit in $\slope$ in \eqref{e.asymptotic independence asep} in Corollary~\ref{p.asymptotic independence ASEP}) can be replaced by a lower bound of $1-C\exp(-c\slope^{1/8})$ (coming from \eqref{e.airy sup bound}, which then becomes the bottleneck in the overall probability estimate). To obtain the upgraded estimate on the probability of \eqref{e.landscape coupling} holding, one needs an upper bound of $C\exp(-c\slope^{3/2})$ on the probability that a geodesic in the prelimiting LPP model of Poissonian LPP has transversal fluctuations at least order $\slope^{1/2}$, the minimum separation between the areas of interest; the latter estimate is recorded in \cite[Theorem 2.6]{hammond2018modulus}.
\end{remark}

\begin{remark}[Weakening the independence assumption]\label{r.weakening independence assumption}
One can also obtain the coupling in Proposition~\ref{p.asymptotic independence of kpz fixed point} (and thus also a form of Corollary~\ref{p.asymptotic independence ASEP}) under the weaker assumption that, with $R>0$ fixed, for every $\slope\in\R$, the initial conditions $
\smash{(\h_0^{(1)}, \h_0^{(2)})}$ satisfying \eqref{e.uniform slope condition} with $\slope$ and $R$ can be coupled with independent initial conditions $(\h_0^{(1), \mrm{ind}},\h_0^{(2), \mrm{ind}})$ such that, for $i\in\{1,2\}$,
\begin{align*}
\lim_{\slope\to\infty}\P\left(\sup_{|x|\leq \slope^2} \left|\h_0^{(i)}(x) - \h_0^{(i), \mrm{ind}}(x)\right| \geq C|\slope|^{-1/12}\log|\slope|\right) = 0;
\end{align*}
such a condition is satisfied, for example, for the stationary horizon (see Lemma~\ref{l.sh representation} ahead).
Indeed, on the intersection of the complement of the event in the previous display and the event that for all $(x,t)\in[-\delta\slope^{1/2},\delta\slope^{1/2}]\times[T^{-1},T]$ the maximizers in $\h(\smash{\h_0^{(i)}}, 0; x,t) = \sup_{y\in\R}(\mc L(y,0;x,t) + \smash{\h^{(i)}_0(y)})$ and the analogous variational formula for $\h(\smash{\h_0^{(i), \mrm{ind}}}, 0; x,t)$ both lie in $[-\slope^2,\slope^2]$ for $i\in\{1,2\}$, it follows that $\h(\smash{\h_0^{(i)}}, 0; x,t)$ and $\h(\smash{\h_0^{(i), \mrm{ind}}}, 0; x,t)$ differ by at most $C|\slope|^{-1/12}\log|\slope|$ for all such $(x,t)$. One can then apply Proposition~\ref{p.asymptotic independence of kpz fixed point} to obtain  a process $\tilde \h$ which is $C|\slope|^{-1/12}\log|\slope|$ close to $\smash{\h(\h_0^{(i), \mrm{ind}}, 0; \bm\cdot,\bm\cdot)}$ and thus also to $\h(\smash{\h_0^{(i)}}, 0; \bm\cdot,\bm\cdot)$ (changing $C$ to $2C$) on an event whose probability tends to 1. The probability of the condition that the maximizers lie in $[-\slope^2,\slope^2]$ (that the above holds on) can be bounded using Lemma~\ref{l.localization of maximizer}.
\end{remark}

\begin{proof}[Proof of Lemma~\ref{l.localization of maximizer}]
In the proof we assume $\slope\geq 0$; an analogous argument applies to the case of $\slope<0$. We also note that it is sufficient to prove the result under the assumption that $K(1+\slope) \geq K_0$ for some $K_0 = K_0(R,t)$ (in addition to the already assumed $K\geq (1+\slope)^{-1/4})$; all lower values of $K$ are handled by adjusting the constant $C$ (i.e., the bound is made to be trivial in this range).

That the maximizer $y^*(x,s)$ exists again follows (as in the existence of $z^*_s(y,x)$ in the proof of Proposition~\ref{p.asymptotic independence of kpz fixed point}) from the fact that $\lim_{|y|\to\infty} (\mc L(y,0;x,s) + \h_0(y)) = -\infty$ almost surely.
\cite[Lemma 3.8]{IGCI} yields that $x\mapsto y^*(x,s)$ is non-decreasing for every fixed $s$.
Next, let 
\begin{align*}
\bar \h_0(y) = \h_0(y) - 2\slope y;
\end{align*}
then \eqref{e.uniform slope condition} implies that $|\bar \h_0(y)| \leq |y| + R\slope$ for all $y\in\R$.  Now, it is easy to calculate that $\sup_{y\in\R}(-(x-y)^2/s + 2\slope y)$ (i.e., ignoring $\bar \h_0$ and $\mc A$ in $\sup_{y\in\R} (\mc L(y,0;x,s) + \h_0(y))$) is attained at $y=\slope s + x$, where the supremum equals $\slope^2 s + 2\slope x$. By \cite[Proposition 10.6]{dauvergne2018directed}, $\sup_{s\in[t,t+1]}\mc A(\slope s + x, 0; x,s)$ is lower bounded by $-K(1+\slope)$ with probability at least $1-\exp(-cK^{3/2}(1+\slope^{3/2}))$. So we see that, with probability at least $1-\exp(-cK^{3/2}(1+\slope^{3/2}))$, for all $s\in[t,t+1]$,
\begin{align}\label{e.h^(2) lower bound}
\h(\slope s + x,s) \geq \slope^2 s + 2\slope x  -\slope (t+1) - |x| - R\slope -K(1+\slope).
\end{align}
We recall the estimate $\P(\sup_{y\in\R}(\mc A(y,0;x,t) - |y-x|)> \frac{1}{2}K(1+\slope)) \leq C\exp(-cK^{3/2}(1+\slope^{3/2}))$ from \cite[Proposition 9.5]{ganguly2022sharp} and $\P(\sup_{s\in[t,t+1]} |\mc A(y,0;x,s)-\mc A(y,0;x,t)| \geq \frac{1}{2}K(1+\slope)) \leq C\exp(-cK^{3/2}(1+\slope^{3/2}))$ from \cite[Proposition 10.5]{dauvergne2018directed}. Together these imply that 
$$\P\left(\sup_{y\in\R, s\in[t,t+1]}\left(\mc A(y,0;x,s) - |y-x|\right)> K(1+\slope)\right) \leq C\exp\left(-cK^{3/2}(1+\slope^{3/2})\right).$$
 Recall also that $\h_0(y) \leq 2\slope y + |y| +R\slope$. Then we see that, with probability at least $1-\exp(-cK^{3/2}(1+\slope^{3/2}))$, for all $s\in[t,t+1]$,
\begin{align}
\MoveEqLeft[1]
\sup_{y:|y-\slope s - x|\geq K(1+\slope) t}\bigl(\mc L(y,0;x,s) + \h_0(y)\bigr)\nonumber\\
&\leq \sup_{y:|y-\slope s - x|\geq K(1+\slope) t}\left(-\frac{(x-y)^2}{s} + 2\slope y + 2|y|\right) + \sup_{y:|y-\slope s - x|\geq K(1+\slope) t}\Bigl(\mc A(y,0;x,s) - |y| + R\slope\Bigr)\nonumber\\
&\leq \slope^2 s + 2\slope x - K^2(1+\slope)^2 \frac{t^2}{t+1} + 3|x| + 2 t (K(1+\slope)+\slope) + K(1+\slope) + R\slope. \label{e.h^(2) restricted left upper bound}
\end{align}

Thus \eqref{e.h^(2) lower bound} is strictly larger than \eqref{e.h^(2) restricted left upper bound} with probability at least $1-C\exp(-cK^{3/2}(1+\slope^{3/2}))$ as long as
\begin{align}\label{e.required K, slope inequality}
K^2(1+\slope)^2\frac{t^2}{t+1} > 4|x| + 2R\slope + 2(1+t)K(1+\slope) + 3\slope t + \slope.
\end{align}
Now, for all $K$, $\slope$ such that $K\geq (1+\slope)^{-1/4}$ and $K(1+\slope) \geq K_0$ for some fixed large constant $K_0 = K_0(R,t)$ it holds that $\slope(3 t + R+1) \leq \frac{1}{2}K^2(1+\slope)^{2}t^2/(t+1)$  (by letting $K_0\geq (4+R)^2((t+1)\cdot\max(t,1)/t^2)^2$ so that the inequality is implied by $(4+R)K^{3/2}(1+\slope)^{3/2}\max(t,1) \geq \slope(3 t + R + 1)$, which is satisfied under the assumptions). Since also $|x|\leq K(1+\slope)t$, it follows that \eqref{e.required K, slope inequality} holds for large enough $K_0(R,t)$. Thus
\eqref{e.h^(2) lower bound} 
is larger than \eqref{e.h^(2) restricted left upper bound}   with probability at least $1-C\exp(-cK^{3/2}(1+\slope^{3/2}))$. This completes the proof.
\end{proof}

\subsection{Proofs of space-time covariance statements}\label{s.space-time covariance proofs}

We begin by proving half of Corollary~\ref{p.covariance matrix}, namely the limits of the on-diagonal terms in \eqref{e.on diagonal covvariance}, which is essentially a consequence of results of~\cite{landon2023tail}.

\begin{proof}[Proof of Corollary~\ref{p.covariance matrix}, on-diagonal limits in \eqref{e.on diagonal covvariance}]
We are studying the limits of $S^{\slope,\varepsilon}_{kk}$. The $k=2$ case was established in \cite[Corollary 2.6]{landon2023tail}.
We next deal with the $k=1$ case. By color merging (Remark~\ref{r.asep color merging}), this is equivalent to the case of uncolored ASEP with stationary initial condition of density $\frac{1}{2}+\frac{1}{2}\slope \varepsilon^{1/3}$, i.e., the particle configuration is independent Bernoulli($\frac{1}{2} + \frac{1}{2}\slope \varepsilon^{1/3}$).
Now consider ASEP run from that stationary initial condition, and introduce a second-class particle at the origin. In terms of colored particles, we have a single particle of color 1 at the origin, and the color 2 particles are placed according to an independent Bernoulli($\frac{1}{2} + \frac{1}{2}\slope \varepsilon^{1/3}$) process everywhere else. Let $X^{\slope, \varepsilon}(t)$ be the position of the color $1$ particle at time $t$ and let $\eta_t$ be the particle configuration of the color 2 particles at time $t$, i.e., $\eta_t(x) = 1$ if $x$ is occupied by a color $2$ particle and $\eta_t(x) =0$ otherwise. Then by \cite[eq. (2.1)]{balazs2009fluctuation}, for all $t>0$ and $x\in\Z$,
\begin{align*}
\P\left(X^{\slope,\varepsilon}(t) = x\right) = 4(1 - \slope^2\varepsilon^{2/3})^{-1}\cdot\Cov\left(\eta_t(x), \eta_0(0)\right) = 4(1-\slope^2\varepsilon^{1/3})S^{\slope, \varepsilon}_{11}(t, x);
\end{align*}
note that this implies $S^{\slope, \varepsilon}_{11}(t, x)\geq 0$. For $g:\Z\to\R$, let $\Delta_i g(i) = g(i+1) - g(i)$. We now perform the integration against a smooth compactly supported test function $f:\R\to\R$. We define $f_\varepsilon:\Z\to\Z$ by $f_\varepsilon(x) = f(\varepsilon^{2/3}x)$ and let $M$ be such that $f(x) = 0$ on $[-M,M]^c$.  It follows from the previous display that, for all $\varepsilon>0$ small enough,
\begin{align*}
\MoveEqLeft[2]
\left|\sum_{i\in\Z}S^{\slope,\varepsilon}_{11}(2\gamma^{-1}\varepsilon^{-1}, \floor{2x\varepsilon^{-2/3}} + i) f_\varepsilon(i)\right|\\
&= 4\left(1-\slope^2\varepsilon^{2/3}\right)^{-1} \left|\sum_{i= -\floor{M\varepsilon^{-2/3}}}^{\floor{M\varepsilon^{-2/3}}}\P\left(X^{\slope,\varepsilon}(2\gamma^{-1}\varepsilon^{-1}) = \floor{2x\varepsilon^{-2/3}} + i\right) f_\varepsilon(i)\right|\\
&\leq 8\cdot \sup_{x\in\R}|f(x)| \sum_{i= -\floor{M\varepsilon^{-2/3}}}^{\floor{M\varepsilon^{-2/3}}} \P\left(X^{\slope,\varepsilon}(2\gamma^{-1}\varepsilon^{-1}) = \floor{2x\varepsilon^{-2/3}}+i\right).
\end{align*}
The sum of probabilities is upper bounded by $\P(X^{\slope,\varepsilon}(2\gamma^{-1}\varepsilon^{-1}) \geq \floor{(2x-M)\varepsilon^{-2/3}})$, which in turn can be upper bounded, by adding and subtracting the location $2\slope\varepsilon^{-2/3}$ on the characteristic and using \cite[Theorem~2.5]{landon2023tail}, by
\begin{align*}
\P\left(|X^{\slope,\varepsilon}(2\gamma^{-1}\varepsilon^{-1}) +2\slope \varepsilon^{-2/3} | \geq \floor{(2x-M)\varepsilon^{-2/3}} +2\slope \varepsilon^{-2/3}\right)
&\leq C\exp(-c\slope^{3}),
\end{align*}
which tends to 0 as $\slope\to\infty$. This completes the proof.
\end{proof}

In the remainder of this section we focus on proving the second half of Corollary~\ref{p.covariance matrix}, namely, \eqref{e.space-time covariance vanishes}.
The following is a consequence of summation by parts and will be useful in its proof. Here $h_0^{(\ell)}$ and $h^{\mrm{ASEP}}(h^{(k)}_0, 0; x,t)$ are the ASEP height functions associated to $\eta^{(\ell)}_0$ and $\eta^{(k)}_t$, respectively, as defined in Section~\ref{s.asep basic coupling}.

\begin{lemma}\label{l.summation by parts}
Let $f:\Z\to\R$ be finitely supported and $x,y\in\Z$. Then for any $t>0$, $M\in\N$, and $\{k,\ell\} = \{1,2\}$,
\begin{align*}
\MoveEqLeft[5]
\sum_{i\in\Z} \Cov\left(\eta^{(k)}_{t}(x+i), \eta^{(\ell)}_0(y)\right) f(i)\\
&= -(6M+1)^{-1}\sum_{|j|\leq 3M}\sum_{i\in\Z} \Cov\left(h^{\mrm{ASEP}}(h_0^{(k)},0; x+i, t), h_0^{(\ell)}(y+j)\right) (\Delta^2 f)(i-j),
\end{align*}
where $(\Delta^2 f)(x) = f(x+1)+f(x-1)-2f(x)$ is the discrete Laplacian. 
\end{lemma}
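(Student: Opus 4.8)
The statement is a summation-by-parts identity relating the two-point covariance of the configurations $\eta^{(k)}_t, \eta^{(\ell)}_0$ to the covariance of the height functions $h^{\mrm{ASEP}}(h_0^{(k)}, 0; \bm\cdot, t), h_0^{(\ell)}(\bm\cdot)$. The starting observation is that the height functions recover the configurations via a discrete derivative. Recall from \eqref{e.height to config} that $\eta_{h_0}(z) = h_0(z-1) - h_0(z)$; analogously, from the definition of $h^{\mrm{ASEP}}(h_0, s; y, t)$ given below \eqref{e.height to config}, one has for all $z \in \Z$ and $t > s$ that
\begin{align}\label{e.eta from h}
\eta^{(k)}_t(z) = h^{\mrm{ASEP}}(h_0^{(k)}, 0; z-1, t) - h^{\mrm{ASEP}}(h_0^{(k)}, 0; z, t),
\end{align}
since a jump of a particle from $z$ to $z+1$ (resp.\ $z$ to $z-1$) changes exactly this discrete derivative consistently with the particle moving. (One should check this carefully against the conventions around \eqref{e.height to config}, being mindful of whether the increment is taken on the left or right and the sign; this is the one place where a short but genuine verification is needed.) Likewise $\eta^{(\ell)}_0(y) = h_0^{(\ell)}(y-1) - h_0^{(\ell)}(y)$ by \eqref{e.height to config}.

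With these two identities, the left-hand side of the claimed equation becomes a double difference applied inside the covariance. First I would substitute \eqref{e.eta from h} and the analogous identity for $\eta^{(\ell)}_0(y)$ into $\sum_i \Cov(\eta^{(k)}_t(x+i), \eta^{(\ell)}_0(y)) f(i)$, using bilinearity of $\Cov$, to write the sum as a linear combination of $\Cov(h^{\mrm{ASEP}}(h_0^{(k)}, 0; x+i+a, t), h_0^{(\ell)}(y+b))$ over $a, b \in \{0, -1\}$, weighted by $f(i)$. Then I would perform summation by parts in the index $i$ (shifting the index to move the differences off $h$ and onto $f$). The first difference, coming from the $\eta^{(k)}_t$ side, produces $-\Delta f$ (a backward difference composed with the shift); the second difference, coming from the $\eta^{(\ell)}_0(y)$ side, must be handled differently because it is a difference in the $y$-variable, not in $i$. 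The trick encoded in the factor $(6M+1)^{-1} \sum_{|j| \le 3M}$ is to average over translates: since $h_0^{(\ell)}$ appears only through $\Cov(\cdot, h_0^{(\ell)}(y+j))$ and we are free to telescope $h_0^{(\ell)}(y-1) - h_0^{(\ell)}(y)$, one rewrites the $y$-difference as a difference in a shifted index $j$ and again sums by parts in $j$ over the window $|j| \le 3M$; the window size $3M$ must be chosen large enough (relative to the support of $f$, of size contained in $[-M, M]$) that all the boundary terms of this second summation-by-parts vanish, i.e.\ the shifted arguments of $\Delta^2 f$ stay within the range where $f$ (hence $\Delta^2 f$) is nonzero or the telescoping closes up. Combining the two differences gives $\Delta^2 f = \Delta_+ \Delta_- f$ evaluated at $i - j$, with an overall sign $-1$ and the normalizing factor $(6M+1)^{-1}$ to compensate for having introduced the redundant average over $j$.

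Concretely, the key steps in order are: (i) record the two discrete-derivative identities \eqref{e.eta from h} and the $\eta^{(\ell)}_0$ analogue, verifying signs against the conventions around \eqref{e.height to config}; (ii) substitute into the left-hand sum and expand using bilinearity of covariance; (iii) sum by parts in $i$ to transfer the first difference onto $f$, checking that the tail terms vanish because $f$ is finitely supported; (iv) insert the identity $1 = (6M+1)^{-1}\sum_{|j|\le 3M} 1$ and, for each fixed $j$, rewrite the $y$-difference of $h_0^{(\ell)}$ as the corresponding difference in the averaging index, then sum by parts in $j$, choosing $3M$ large enough that the boundary terms of this second summation vanish (this is where the precise constant $3M$ versus the support radius $M$ of $f$ enters); (v) collect the two first-order differences into the discrete Laplacian $\Delta^2 f$ evaluated at $i - j$, track the overall sign, and read off the claimed formula.

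\textbf{Main obstacle.} The genuinely delicate point is the second summation by parts (step (iv)): one must set up the averaging over $|j| \le 3M$ so that the difference in $y$ turns into a clean difference in $j$ that telescopes, and then argue that no boundary contributions survive. This requires being careful that the combined shift of the argument of $\Delta^2 f$, namely $i - j$ with $|i|$ controlled by $\mathrm{supp}(f) \subseteq [-M,M]$ and $|j| \le 3M$, never forces us to evaluate $\Delta^2 f$ or the telescoped $h_0^{(\ell)}$ increments outside the regime where the cancellation is exact — this is precisely why the window $3M$ (rather than, say, $M$ or $2M$) is the right choice, and pinning down that arithmetic is the crux of the proof. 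Everything else is bookkeeping with bilinearity of covariance and discrete integration by parts.
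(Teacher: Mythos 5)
Your broad outline—express $\eta$ as a discrete derivative of the height function, sum by parts once in $i$, introduce the $j$-average, sum by parts again in $j$—does follow the paper's route. But the step you flag as the ``crux'' (your step (iv)) is genuinely wrong, and fixing it requires an idea your proposal doesn't contain. The boundary terms of the second summation by parts do \emph{not} vanish, and no choice of $M$ relative to $\mathrm{supp}(f)$ can make them vanish. After summing by parts over the finite window $|j|\le 3M$, the two boundary pieces are (schematically) $\mp(6M+1)^{-1}\sum_i \Cov\bigl(h^{\mrm{ASEP}}(h_0^{(k)},0;x+i,t),\,h_0^{(\ell)}(y\pm(3M+\mathrm{const}))\bigr)\,\Delta^{\mathrm{f}}_i f\bigl(i\mp(3M+\mathrm{const})\bigr)$; the factor $\Delta^{\mathrm{f}}_i f(i\mp(3M+\mathrm{const}))$ is a nonzero function of $i$ for \emph{any} finitely supported $f$ and \emph{any} $M$, since $f$'s support simply gets translated along with the argument. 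Indeed the lemma is stated for arbitrary $M\in\N$, unrelated to the support radius of $f$—if the identity required $3M$ large relative to $\mathrm{supp}(f)$ it could not be stated this way. The actual mechanism is that the two boundary terms \emph{cancel each other}: one shifts the $i$-index by $\pm(3M+1)$ (respectively $\mp 3M$) in the two sums so that the $\Delta^{\mathrm{f}}_i f$ factors coincide, and then the covariance factors agree because the joint law of $(\eta_0,\eta_t)$ is translation-invariant, hence so is the covariance of the corresponding height-function pair after shifting both spatial arguments by the same amount. This second invocation of stationarity is essential and is missing from your plan.

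A related point: your plan does the $\eta\to h$ substitution first and only then inserts the average $(6M+1)^{-1}\sum_{|j|\le 3M}$; but the $y$-argument of $\eta_0^{(\ell)}(y)$ is \emph{fixed}, so without stationarity there is nothing to sum by parts against in the $j$ variable. The paper therefore uses stationarity \emph{first}, writing $\Cov(\eta^{(k)}_t(x+i),\eta^{(\ell)}_0(y))=\Cov(\eta^{(k)}_t(x+i+j+1),\eta^{(\ell)}_0(y+j+1))$ and averaging over $j$, which is what makes $y+j$ a live index that the second summation by parts can act on. So stationarity is used twice—once to set up the $j$-average, once to cancel the boundary terms—and both uses are absent from your outline.
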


\begin{proof}
We first note that, by the stationarity of the particle configuration, $\Cov(\eta^{(k)}_{t}(x+i), \eta^{(\ell)}_0(y)) = \Cov(\eta^{(k)}_{t}(x+i+j), \eta^{(\ell)}_0(y+j))$ for any $x,y, i, j\in\Z$. Thus for any $M\in\N$,
\begin{align*}
\MoveEqLeft[8]
\sum_{i\in\Z} \Cov\left(\eta^{(k)}_{t}(x+i), \eta^{(\ell)}_0(y)\right) f(i)\\
&= (6M+1)^{-1}\sum_{|j|\leq 3M}\sum_{i\in\Z} \Cov\left(\eta^{(k)}_{t}(x+i+j+1), \eta^{(\ell)}_0(y+j+1)\right) f(i).
\end{align*}
Next, for any function $g:\Z\to\R$ and $z\in\Z$, let $\Delta^{\hspace{-0.1em}\mrm{f}} g(z) = g(z+1) - g(z)$ be the discrete forward derivative. Then it holds by definition that $\eta_0^{(\ell)}(z+1) = -\Delta^{\hspace{-0.1em}\mrm{f}} h^{(\ell)}_0(z)$ and $\eta_t^{(k)}(z+1) = -\Delta^{\hspace{-0.1em}\mrm{f}} h^{\mrm{ASEP}}(h^{(k)}_0, 0; z, t)$.
We now do the change of variable $i\mapsto i-j$ and apply summation by parts on $i$ to obtain that the previous display equals (the boundary terms disappear since $f$ is finitely supported, and $\Delta^{\hspace{-0.1em}\mrm{f}}_i$ indicates the discrete forward derivative with respect to $i$)
\begin{align*}
-(6M+1)^{-1}\sum_{|j|\leq 3M}\sum_{i\in\Z} \Cov\left(h^{\mrm{ASEP}}(h^{(k)}_0, 0; x+i, t), \eta^{(\ell)}_0(y+j+1)\right) \Delta^{\hspace{-0.1em}\mrm{f}}_i(f(i-j-1)).
\end{align*}
Changing the order of summation (they are both finite sums) and applying summation by parts with respect to $j$ yields
\begin{align*}
\MoveEqLeft
(6M+1)^{-1}\sum_{i\in\Z} \sum_{|j|\leq 3M} \Cov\left(h^{\mrm{ASEP}}(h^{(k)}_0, 0; x+i, t), h^{(\ell)}_0(y+j)\right) \Delta^{\hspace{-0.1em}\mrm{f}}_j\Delta^{\hspace{-0.1em}\mrm{f}}_i(f(i-j))\\
&-(6M+1)^{-1}\sum_{i\in\Z} \Cov\left(h^{\mrm{ASEP}}(h^{(k)}_0, 0; x+i, t), h^{(\ell)}_0(y+3M+1)\right) \Delta^{\hspace{-0.1em}\mrm{f}}_i(f(i-3M-1))\\
&+ (6M+1)^{-1}\sum_{i\in\Z} \Cov\left(h^{\mrm{ASEP}}(h^{(k)}_0, 0; x+i, t), h^{(\ell)}_0(y-3M)\right) \Delta^{\hspace{-0.1em}\mrm{f}}_i(f(i+3M)).
\end{align*}
The final two sums are equal and so cancel each other, as can be seen by performing the change of variable $i\mapsto i+3M+1$ and $i\mapsto i-3M$ in the first and second, respectively, and then invoking the stationarity of the covariance. This completes the proof after noting that $\Delta^{\hspace{-0.1em}\mrm{f}}_i\Delta^{\hspace{-0.1em}\mrm{f}}_j(f(i-j)) = -(\Delta^2 f)(i-j)$.
\end{proof}

We will also need to know an asymptotic independence statement for the stationary horizon, the limit of the height functions associated to the stationary colored particle configuration. The following will be proven in Section~\ref{s.stationary horizon asymptotic independence}.

\begin{lemma}\label{l.sh representation}
For every $\slope>0$, there exists a pair of independent standard two-sided Brownian motions $B_1$ and $B_2$, a random process $M_\slope:\R\to\R$, and a coupling of $(\mf h_0^0, \mf h_0^\slope)$ of the stationary horizon with parameter $(0,\slope)$ with $B_1, B_2, M_\slope$ such that the following holds:
(i) $(\h_0^0(x), \h_0^\slope(x)) = (2^{1/2}B_1(x), 2^{1/2}B_2(x) + 2\slope x + M_\slope(x))$ for all $x\in\R$, and (ii) $M_\slope(x)$ satisfies, for some absolute constants $C,c>0$ and all $K, R, \slope>0$,
\begin{align}\label{e.M_beta uniform bound}
\P\left(\sup_{|x|\leq R\slope^2}|M_\slope(x)|> K\slope^{-1}\right) \leq CR\slope^4\exp(-cK).
\end{align}
In particular, by taking $K = \slope^{1/2}$, $\h_0^0(x)$ and $\h_0^\slope(x) - 2\slope x$ are asymptotically independent as $\slope\to\infty$.
\end{lemma}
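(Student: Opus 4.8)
\textbf{Proof proposal for Lemma~\ref{l.sh representation}.}
The plan is to extract the required structure of the stationary horizon from its known explicit description as a coupled pair of Brownian-with-drift processes together with a reflected (``queue-type'') correction, and then to bound the correction uniformly on a large interval via an exponential tail estimate for the driving reflection map. First I would recall (e.g.\ from \cite[Appendix D]{busani2022scaling} or \cite{busani2021diffusive,busani2022stationary}) that the stationary horizon $(G_\xi)_{\xi\in\R}$ evaluated at two parameters $\xi_1<\xi_2$ admits a representation of the form $G_{\xi_1}(x) = \sqrt 2\,B_1(x)$ and $G_{\xi_2}(x) = \sqrt 2\,B_2(x) + 2(\xi_2-\xi_1)x + M(x)$, where $B_1,B_2$ are independent two-sided Brownian motions and $M$ is a (pathwise) functional of $B_1$ (and an independent Poisson/exponential ingredient, depending on the chosen construction) obtained by a one-dimensional reflection/last-passage operation. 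Translating to our normalization $(\xi_1,\xi_2)=(0,\slope)$, this is exactly item (i) with $M_\slope = M$; the key structural input is that the correction $M_\slope(x)$ arises from a reflection of $\sqrt2 B_1$ against a process with drift $2\slope$, so its increments over an interval of length $L$ are stochastically dominated by (the supremum of) a Brownian motion with negative drift of order $\slope$, hence are of order $\slope^{-1}$ on unit scales. I would make this precise by writing $M_\slope(x) - M_\slope(0)$ as (a difference of) quantities of the form $\sup_{0\le u\le x}\big(\sqrt2(B_1(u)-B_1(0)) - 2\slope u + (\text{independent increment})\big)$, which is the standard queueing/Pitman representation of the stationary horizon's second coordinate relative to its first.

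Next I would prove the uniform bound \eqref{e.M_beta uniform bound}. By the previous paragraph, on any dyadic subinterval $[j,j+1]$ with $|j|\le R\slope^2$, the oscillation of $M_\slope$ is controlled by the supremum over $[j,j+1]$ of a Brownian motion with drift $-2\slope$ (plus an independent rate-$\Theta(\slope)$ exponential ingredient in the discrete/Poissonian construction, which likewise has mean $\Theta(\slope^{-1})$ and exponential tails). A Brownian motion with drift $-2\slope$ run for unit time has running maximum with tail $\P(\sup_{[0,1]} > K\slope^{-1}) \le C\exp(-cK)$ for all $K,\slope>0$ (this is a direct computation with the reflection principle and the Gaussian tail, uniform in $\slope\ge$ any fixed constant, and for small $\slope$ one absorbs the constant), and the same exponential tail holds for the exponential ingredient. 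A union bound over the $O(R\slope^2)$ unit subintervals of $[-R\slope^2,R\slope^2]$, together with a further union bound over the at most $O(\slope^2)$ increments needed to pass from $M_\slope(0)$ to any point (or, more simply, over the at most $2R\slope^2+1$ integer anchor points) gives the displayed bound $CR\slope^4\exp(-cK)$ after relabeling constants; the extra factor $\slope^2$ is harmless since we only care about $K = \slope^{1/2}$, for which $R\slope^4 e^{-c\slope^{1/2}}\to 0$. Finally, taking $K=\slope^{1/2}$ in \eqref{e.M_beta uniform bound} shows $\sup_{|x|\le R\slope^2}|M_\slope(x)| \le \slope^{-1/2}$ with probability $\to 1$ as $\slope\to\infty$, so that $\h_0^\slope(x)-2\slope x = \sqrt2 B_2(x) + M_\slope(x)$ is, uniformly on $[-R\slope^2,R\slope^2]$, within $\slope^{-1/2}$ of $\sqrt2 B_2(x)$, which is independent of $\h_0^0 = \sqrt2 B_1$; this is the asymptotic independence claim, and it is exactly the hypothesis needed in Remark~\ref{r.weakening independence assumption}.

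The main obstacle I anticipate is purely bookkeeping: pinning down the precise form of the correction $M_\slope$ from whichever construction of the stationary horizon one cites (the Busani construction via reflected/queued Brownian motions versus the Busani--Sepp\"al\"ainen--Sorensen description, or the discrete stationary-ASEP description in \cite{martin2020stationary,aggarwal2023colored}), and verifying that its oscillation really is controlled by a drift-$(-2\slope)$ object \emph{uniformly in} $\slope$ down to small $\slope$ (where one must be careful that the constants $C,c$ in \eqref{e.M_beta uniform bound} can be taken absolute, by allowing $C$ large). Once the representation is fixed, the tail estimate and the union bound are routine. It may be cleanest to quote a ready-made increment-tail bound for the stationary horizon's second coordinate if one is available in \cite{busani2023scaling} or \cite{busani2022scaling}, in which case steps (i) and (ii) both reduce to citations plus the elementary union bound and the choice $K=\slope^{1/2}$.
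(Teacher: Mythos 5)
Your overall strategy matches the paper's: write $\h_0^0$ and $\h_0^\slope$ using the explicit Busani construction (a Brownian motion plus a reflection/queue correction $M_\slope$), observe that $M_\slope$ is built from suprema of drift-$(-2\slope)$ Brownian motions so has exponential tails at a fixed point with the natural $\slope^{-1}$ scale, and finish by a union bound over a mesh of $[-R\slope^2, R\slope^2]$. Both the one-point tail identity $\P(\sup_{x\ge 0}(\sigma B(x)-2\slope x)>m)=\exp(-4m\slope/\sigma^2)$ and the explicit two-supremum formula for $M_\slope$ from \cite[eq.~(D.1)]{busani2022stationary} appear exactly as you anticipate.

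The gap is in how you control the oscillation of $M_\slope$ and, consequently, in the mesh scale for the union bound. You assert that on each \emph{unit} interval $[j,j+1]$ the oscillation of $M_\slope$ is controlled by the supremum of a drift-$(-2\slope)$ Brownian motion, and so is $O(\slope^{-1})$. But the natural increment bound for $M_\slope$ (obtained from the two-supremum representation, exactly as in the paper) contains a term of the form $\sup_{|w|\le |z|}|B(w)|$, where $z$ is the size of the increment and $B$ is a standard Brownian motion; this term is $\Theta(|z|^{1/2})$, which is $O(\slope^{-1})$ only when $|z|\lesssim \slope^{-2}$. The fact that $M_\slope(x)$ is $\Theta(\slope^{-1})$ at each fixed $x$ does not by itself give an $O(\slope^{-1})$ bound on the supremum over a unit interval without controlling the modulus of continuity at the right scale. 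So the correct mesh is intervals of length $\slope^{-2}$ covering $[-R\slope^2,R\slope^2]$, giving $O(R\slope^4)$ intervals and thus the claimed factor $R\slope^4$. Your ``further union bound over $O(\slope^2)$ increments'' gestures toward this, but the alternative you offer (integer anchor points, giving only $R\slope^2$) would not produce the right bound, and the role of the $\slope^{-2}$ scale is never articulated. To repair the argument, replace the unit mesh by a $\slope^{-2}$ mesh and prove the increment tail $\P(\sup_{|z|\le\slope^{-2}}|M_\slope(x)-M_\slope(x+z)|>K\slope^{-1})\le C\exp(-cK)$ by combining (a) the one-point tail of the drifted-BM supremum, via the Markov property at the endpoint of the small interval, and (b) the Gaussian tail of $\sup_{|w|\le\slope^{-2}}|B(w)|$; this is precisely what the paper does, and the rest of your outline (union bound, taking $K=\slope^{1/2}$, and the conclusion of asymptotic independence) then goes through as written.
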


\begin{proof}[Proof of Corollary~\ref{p.covariance matrix}, \eqref{e.space-time covariance vanishes} and off-diagonal terms of \eqref{e.on diagonal covvariance}]
The limits of the off-diagonal terms of \eqref{e.on diagonal covvariance} being zero is implied by \eqref{e.space-time covariance vanishes}, so it is sufficient to prove the latter. We start under the assumed condition that $\{k,\ell\} = \{1,2\}$; we will later treat the two cases $(k,\ell) = (1,2)$ and $(k,\ell) = (2,1)$ separately.
Let $f:\R\to\R$ be a smooth compactly supported function, and let $M>0$ be such that $f$ is supported on $[-M,M]$. Define $f_\varepsilon : \Z\to\Z$ by $f_\varepsilon(x) = f(\varepsilon^{2/3}x)$ for all $\varepsilon>0$; so $f_\varepsilon$ is supported on $\intint{-\ceil{M\varepsilon^{-2/3}}, \ceil{M\varepsilon^{-2/3}}}$.
For shorthand, let $h^{(k)}_{2\gamma^{-1}\varepsilon^{-1}t}(x) = h^{\mrm{ASEP}}(h_0^{(k)},0; x, 2\gamma^{-1}\varepsilon^{-1}t)$. We note that $(h_0^{(1)}, h_0^{(2)})$ are the height functions at time 0 for the multi-color stationary ASEP with parameters $\bm\rho = (\frac{1}{2} \beta \varepsilon^{1/2}, \frac{1}{2})$ as defined in Section~\ref{s.asep stationary horizon}.

By Lemma~\ref{l.summation by parts}, taking $M$ there to be $\floor{M\varepsilon^{-2/3}}$,
\begin{align}
\MoveEqLeft[0]
\sum_{i\in\Z}S^{\slope, \varepsilon}_{kl}(2\gamma^{-1}\varepsilon^{-1}, \floor{2x\varepsilon^{-2/3}}+i)f_\varepsilon(i)\nonumber\\
&=-\frac{\varepsilon^{2/3}}{6\ceil{M\varepsilon^{-2/3}}+1}\sum_{|j|\leq 3\ceil{M\varepsilon^{-2/3}}}\sum_{i\in\Z} \Cov\left(\varepsilon^{1/3}h^{(k)}_{2\gamma^{-1}\varepsilon^{-1}t}(\floor{2x\varepsilon^{-2/3}}+i), \varepsilon^{1/3}h_0^{(\ell)}(\floor{2y\varepsilon^{-2/3}}+j)\right)\nonumber\\
&\hspace{6cm}\times(\varepsilon^{-2/3})^2(\Delta^2 f_\varepsilon)(i-j).\label{e.covariance sum}
\end{align}
In our case, since $h_0^{(k)}$ and $h_0^{(\ell)}$ are (marginally) stationary for (uncolored) ASEP, we know by Cauchy-Schwarz along with the tail probability bounds of \cite[Theorem 2.4]{landon2023tail} that, for any deterministic sequences $(i_\varepsilon)_{\varepsilon>0}$ and $(j_\varepsilon)_{\varepsilon>0}$ such that $i_\varepsilon, j_\varepsilon\in[-CM\varepsilon^{-2/3}, CM\varepsilon^{-2/3}]$ for each $\varepsilon>0$ and any fixed~$C$,
\begin{align*}
\MoveEqLeft[10]
\varepsilon^{1/3}\left(h^{(k)}_{2\gamma^{-1}\varepsilon^{-1}t}(\floor{2x\varepsilon^{-2/3}}+i_{\varepsilon}) + \floor{x\varepsilon^{-2/3}} + \tfrac{1}{2}i_\varepsilon - \tfrac{1}{2}t\varepsilon^{-1}\right)\\
&\times \varepsilon^{1/3}\left(h_0^{(\ell)}(\floor{2y\varepsilon^{-2/3}}+j_\varepsilon) + \floor{y\varepsilon^{-2/3}} + \tfrac{1}{2}j_\varepsilon\right)
\end{align*}
is uniformly integrable as a sequence in $\varepsilon$. Thus the convergence in distribution asserted by Corollaries~\ref{c.asep general initial condition} and \ref{c.SH convergence} implies convergence of the covariance, i.e., we have, letting $z_i = \frac{1}{2}\lim_{\varepsilon\to 0} \varepsilon^{2/3}i_\varepsilon$ and $z_j = \frac{1}{2}\lim_{\varepsilon\to 0} \varepsilon^{2/3}j_\varepsilon$,
\begin{align*}
\MoveEqLeft[18]
\lim_{\varepsilon\to 0}\Cov\left(\varepsilon^{1/3}h^{(k)}_{2\gamma^{-1}\varepsilon^{-1}t}(\floor{2x\varepsilon^{-2/3}}+i_{\varepsilon}), \varepsilon^{1/3}h_0^{(\ell)}(\floor{2y\varepsilon^{-2/3}}+j_\varepsilon)\right)\\
&= \tfrac{1}{4}\cdot\Cov\left(\h(\h_0^{(k)},0; x+z_i, t), \h_0^{(\ell)}(y + z_j)\right),
\end{align*}
where $(\mf h_0^{(1)}, \mf h_0^{(2)})$ are the stationary horizon with parameters $(\slope, 0)$, and the $\smash{\frac{1}{4}}$ comes from extra scaling factors of $\frac{1}{2}$ needed for each of $h^{(k)}_{2\gamma^{-1}\varepsilon^{-1}t}$ and $h_0^{(\ell)}$ to converge to $\h(\h^{(k)}_0, 0;\cdot, t)$ and $\h^{(\ell)}_0$, respectively; see Definition~\ref{d.asep general initial condition}.

Since $(\Delta^2 f_\varepsilon)(i_\varepsilon- j_\varepsilon) = f(\varepsilon^{2/3}(i_\varepsilon-j_\varepsilon) + \varepsilon^{2/3}) + f(\varepsilon^{2/3}(i_\varepsilon-j_\varepsilon)-\varepsilon^{2/3}) - 2f(\varepsilon^{2/3}(i_\varepsilon-j_\varepsilon))$, it is also immediate that $(\varepsilon^{-2/3})^2(\Delta^2 f_\varepsilon)(i_\varepsilon-j_\varepsilon) \to f''(2(z_i-z_j))$ as $\varepsilon\to 0$. It thus follows that the righthand side of \eqref{e.covariance sum} converges, as $\varepsilon\to0$, to
\begin{align*}
\frac{1}{24M}\int_{\R\times[-3M,3M]} \Cov\left(\h(\h_0^{(k)},0; x+z_i, t), \h_0^{(\ell)}(y + z_j)\right) f''(2(z_i - z_j))\,\diff z_i\,\diff z_j.
\end{align*}
Since $f$ is smooth and supported on $[-M,M]$ by assumption, $|f''(2(z_i-z_j))|$ is bounded and supported on $|z_i-z_j| \leq \frac{1}{2}M$. Also, the covariance function in the integrand is stationary, so we may shift both its arguments by $-z_i$. So the proof is complete if we show that 
\begin{equation}\label{e.sup cov bound}
\sup_{z\in[-M,M]}\Cov\left(\h(\h_0^{(k)},0; x, t), \h_0^{(\ell)}(y + z)\right)\to 0
\end{equation}
as $\slope\to\infty$; recall also from our assumptions that $|x|,|y|$ are allowed to be as large as $\slope^2$. The previous display in turn is a consequence of Lemma~\ref{l.sh representation}, as we explain next.

We first address the case $(k,\ell) = (2,1)$; the case $(k,\ell) = (1,2)$ is analogous and will be handled after.
For notational convenience, let $\h^{(2)}_t(z) = \h(\h^{(2)}_0, 0; z, t)$ for all $z$. Since $(\h_0^{(1)}, \h_0^{(2)})$ is distributed as the stationary horizon with parameter $(\slope, 0)$, by the coupling from Lemma~\ref{l.sh representation}, we can write  $(\h^{(1), \slope}_0(z), \h^{(2)}_0(z)) = (2^{1/2}B_1(z)+2\slope z + M_\slope(z), 2^{1/2}B_2(z))$ for all $z$, where $(B_1$, $B_2$) are two independent standard two-sided Brownian motions and $M_\slope$ is a process on $\R$. Then we can write, for every $\slope >0$,
\begin{align*}
\MoveEqLeft[6]
\Cov\left(\h^{(2)}_t(x), \h_0^{(1)}(y+z)\right)\\
&= \Cov\left(\sup_{w\in\R}\Bigl(\mc L(w,0; x, t) + B_2(w)\Bigr), B_1(y+z) + 2\slope(y+z) + M_\slope(y+z)\right)\\
&= \Cov\left(\sup_{w\in\R}\Bigl(\mc L(w,0; x, t) + B_2(w)\Bigr), M_\slope(y+z)\right),
\end{align*}
the second equality by the independence of $B_1$, $B_2$, and $\mc L$. Now, by Cauchy-Schwarz, it is enough to show that $\E[M_\slope(y+z)^2]\to 0$ as $\slope\to\infty$ since all moments of $\sup_{y\in\R}(\mc L(y,0; x, t) + B_2(y))$ are uniformly bounded in $\slope$ (as follows, for instance, from \cite[Theorem 1.1]{ferrari2021upper}). That $\E[M_\slope(y+z)^2]\to 0$ as $\slope\to\infty$ follows from Lemma~\ref{l.sh representation}, using that $|y+z|\leq 2\slope^2$ for all large $\slope$. This bound is uniform over $z$ so \eqref{e.sup cov bound} is established in the $(k,\ell) = (2,1)$ case.

Finally we turn to the case of $(k,\ell) = (1,2)$. For notational convenience we let $\h^{(1)}_t(z) = \h(\h^{(1), \slope}_0, 0; z, t)$ for all $z$. We also have by Lemma~\ref{l.sh representation} that $(\h^{(1), \slope}_0, \h^{(2)}_0)$ can be coupled with $B_1$, $B_2$, $M_\slope$ such that $(\h^{(1), \slope}_0(z), \h^{(2)}_0(z)) = (2^{1/2}B_1(z) + 2\slope z + M_\slope(z), 2^{1/2}B_2(z))$ for all $z$. Then we may write, for all $\slope>0$,
\begin{align*}
\Cov\left(\h^{(1)}_t(x), \h_0^{(2)}(y+z)\right)
&= \Cov\left(\sup_{w\in\R}\Bigl(\mc L(w,0; x, t) + \h_0^{(1)}(w)\Bigr), \h_0^{(2)}(y+z)\right).
\end{align*}
Now, it follows from Lemma~\ref{l.localization of maximizer} (since $\h_0^{(1)}$ satisfies the second inequality in \eqref{e.uniform slope condition}, i.e., the one written there for $\h_0^{(2)}$) that the event $\msf{LocalizedMax}$ that the supremum in the first argument of the covariance term is attained within $w\in[-\slope^2, \slope^2]$ has probability at least $1-C\exp(-c\slope^3)$. Recall $\E[\h_0^{(2)}(y+z)] = 0$ since it is a two-sided Brownian motion. %
So we may write
\begin{align}
\MoveEqLeft[2]
\Cov\left(\sup_{w\in\R}\Bigl(\mc L(w,0; x, t) + \h_0^{(1), \slope}(w)\Bigr), \h^{(2)}_0(y+z)\right)\nonumber\\
&= \E\left[\left(\sup_{w\in\R}\Bigl(\mc L(w,0; x, t) + \h_0^{(1), \slope}(w)\Bigr)\right)\h_0^{(2)}(y+z)\left(\one_{\msf{LocalizedMax}} + \one_{\msf{LocalizedMax}^c}\right)\right]\nonumber\\
&\leq \E\left[\Bigl|\sup_{w\in\R}\Bigl(\mc L(w,0; x, t) + \h_0^{(1), \slope}(w)\Bigr)\Bigr|^3\right]^{1/3}\E\left[|\h_0^{(2)}(y+z)|^3\right]^{1/3}\P(\msf{LocalizedMax}^c)^{1/3}\nonumber\\
&\qquad + \E\left[\left(\sup_{|w|\leq \slope^2}\Bigl(\mc L(w,0; x, t) + \h_0^{(1), \slope}(w)\Bigr)\right)\h_0^{(2)}(y+z)\one_{\msf{LocalizedMax}}\right],\label{e.covariance breakup}
\end{align}
using the general form of the H\"older inequality. Now, the first expectation in the first term of \eqref{e.covariance breakup} grows at most polynomially in $\slope$: this follows by the triangle inequality since we can write $\sup_{w\in\R}(\mc L(w,0; x, t) + \h_0^{(1), \slope}(w)) \stackrel{\smash{d}}{=} \sup_{w\in\R}(\mc L(w,0; x, t) + 2^{1/2}\tilde B(w) + 2\slope w)$ for $\tilde B$ a standard two-sided Brownian motion; it holds as a process in $w$ that $\mc L(w,0; x, t) + 2\slope w \stackrel{\smash{d}}{=} \mc L(w-\slope t,0; x,t) + 2\slope x + \slope^2t$ \cite[Lemma 10.2]{dauvergne2018directed}; it holds as a process in $w$ that $\tilde B(w) = B(w-\slope t) + (\slope t)^{1/2}X$ for $B$ another standard two-sided Brownian motion and $X\sim\mc N(0,1)$ an independent standard normal random variable; and since we have exponentially decaying tails for $\sup_{ w\in\R}(\mc L(w,0; x, t) + 2^{1/2}B(w))$ \cite{ferrari2021upper}. 
Next, the second expectation in the first factor of \eqref{e.covariance breakup} does not depend on $\slope$, and $\P(\msf{LocalizedMax}^c)\to 0$ superexponentially quickly in $\slope$ as $\slope\to\infty$, so the first term in \eqref{e.covariance breakup} vanishes in the limit.  Using Lemma~\ref{l.sh representation}, we write the second term as
\begin{equation}\label{e.rewritten covariance}
\begin{split}
\MoveEqLeft[2]
\E\left[\left(\sup_{|w| \leq \slope^2}\Bigl(\mc L(w,0; x, t) + B_1(w) + 2\slope w + M_\slope(w)\Bigr)\right)B_2(y+z)\right]\\
&- \E\left[\left(\sup_{|w|\leq\slope^2}\Bigl(\mc L(w,0; x, t) + B_1(w) + 2\slope w + M_\slope(w)\Bigr)\right)B_2(y+z)\one_{\msf{LocalizedMax}}^c\right].
\end{split}
\end{equation}
The second term in the previous display tends to $0$ as $\slope\to\infty$ by a similar argument as just above, using the H\"older inequality (as for the first term of \eqref{e.covariance breakup}) and using that the third moment of the first factor in the expectation grows at most polynomially while $\P(\msf{LocalizedMax}^c)$ decays at least exponentially, both in $\slope$. Next we note that the quantity inside the first expectation of the previous display is upper and lower bounded, respectively, by
\begin{align*}
&\sup_{|w| \leq \slope^2}\Bigl(\mc L(w,0; x, t) + B_1(w) + 2\slope w\Bigr)B_2(y+z) + \sup_{|w|\leq \slope^2} |M_\slope(w)|\cdot |B_2(y+z)| \quad\text{and}\\
&\sup_{|w| \leq \slope^2}\Bigl(\mc L(w,0; x, t) + B_1(w) + 2\slope w\Bigr)B_2(y+z) - \sup_{|w|\leq \slope^2} |M_\slope(w)|\cdot |B_2(y+z)|.
\end{align*}
By independence of $\mc L$, $B_1$, and $B_2$, the expectation of the first terms in both these quantities is zero, and, by Cauchy-Schwarz and \eqref{e.M_beta uniform bound}, the expectation of the second term in both quantities tends to zero as $\slope\to\infty$. Putting this together yields that \eqref{e.rewritten covariance} tends to zero as $\slope\to\infty$, which implies the same for the lefthand side of \eqref{e.covariance breakup}. This establishes \eqref{e.sup cov bound} in the case of $(k,\ell) = (1,2)$ and hence completes the proof.
\end{proof}

\subsection{Asymptotic independence of the stationary horizon} \label{s.stationary horizon asymptotic independence}

Here we give the proof of Lemma~\ref{l.sh representation}.

\begin{proof}[Proof of Lemma~\ref{l.sh representation}]
It is well-known (see, e.g., \cite[Exercise 2.16]{morters2010brownian} or \cite[(5.13)]{karatzas2014brownian}) that if $B:[0,\infty)\to\R$ is a standard Brownian motion and $\sigma > 0$, then
\begin{align}\label{e.drifted bm supremum}
\P\left(\sup_{x\geq 0}(\sigma B(x) - 2\slope x) > m\right) = \exp\left(-\frac{4m\slope}{\sigma^2}\right).
\end{align}
Let $B_1$ and $B_2$ be independent standard two-sided Brownian motions. By \cite[Definition D.1 and eq. (D.1)]{busani2022stationary}, $\h_0^0(x)$ and $\h_0^\slope(x)$ jointly (as processes) have the same distribution as $2^{1/2}B_1(x)$ and
\begin{align*}
2^{1/2}B_1(x) + \sup_{-\infty < y\leq x} \left(2^{1/2}B_2(y) + 2\slope y - 2^{1/2}B_1(y)\right) - \sup_{-\infty < y\leq 0} \left(2^{1/2}B_2(y) + 2\slope y - 2^{1/2}B_1(y)\right).
\end{align*}
We will assume we work on a probability space which supports $B_1, B_2, \h_0^0$, and $\h_0^\slope$ such that the above identity in law holds as an almost sure equality on the space. Then we can rewrite $\h_0^\slope(x)-2\slope x$ as $2^{1/2} B_2(x) + M_\slope(x)$, where
\begin{equation*}%
\begin{split}
\MoveEqLeft[20]
M_\slope(x) := \sup_{-\infty < y\leq x} \left(2^{1/2}(B_2(y)- B_2(x)) + 2\slope (y-x) - 2^{1/2}(B_1(y)-B_1(x))\right)\\
&- \sup_{-\infty < y\leq 0} \left(2^{1/2}B_2(y) + 2\slope y - 2^{1/2}B_1(y)\right).
\end{split}
\end{equation*}
Now, observe that by the Markov property, invariance under time reversal, and scale invariance of $B_1$ and $B_2$, each of the suprema are for fixed $x$ (i.e., marginally) equal in distribution to 
\begin{align}\label{e.supremum marginal distribution}
\sup_{y \geq 0} \left(2 B(y) - 2\slope y\right),
\end{align}
where $B$ is a standard Brownian motion. Thus by \eqref{e.drifted bm supremum} and a union bound we see that  $M_\slope(x)$ satisfies, for an absolute constant $c>0$ and all $K>0$ and $x\in\R$,
\begin{equation}\label{e.M one-point bound}
\P(|M_\slope(x)| > K \slope^{-1})\leq 2\exp(-c K).
\end{equation}

We have to obtain a similar bound for $\sup_{x\in[-R\slope^2, R\slope^2]} |M_\slope(x)|$.  We first claim that, there exist $c>0$ and $C$ such that, for any $x \in \R$,
\begin{align}\label{e.M increment bound}
\P\left(\sup_{z\in[-\slope^{-2},\slope^{-2}]} |M_\slope(x) - M_\slope(x+z)| \geq K\slope^{-1} \right) \leq C\exp(-cK).
\end{align}
By symmetry, it suffices to bound $\sup_{z\in[-\slope^{-2}, \slope^{-2}]}M_\slope(x) - M_\slope(x+z)$. Let $B(w) = 2^{-1/2}(B_2(x-w)-B_2(x)- (B_1(x-w) - B_1(x)))$; note that it is distributed as a standard two-sided Brownian motion. Then, for $z\in[-\slope^{-2},\slope^{-2}]$,
\begin{align*}
M_\slope(x) - M_\slope(x+z)
&= \sup_{0\leq w < \infty} \bigl(2B(w) - 2\slope w\bigr) - \sup_{-z\leq w < \infty} \bigl(2B(w) - 2\slope (w+z) - 2B(-z)\bigr)\\
&\leq \sup_{0\leq w < \infty} \bigl(2B(w) - 2\slope w\bigr) - \sup_{\slope^{-2}\leq w < \infty} \bigl(2B(w) - 2\slope w + 2\slope^{-1}\bigr)\\
&\qquad+ 2\sup_{w\in[-\slope^{-2},\slope^{-2}]} |B(-w)|.
\end{align*}
Now, $\sup_{z\in[-\slope^{-2},\slope^{-2}]} |B(-z)|$ is at most $K\slope^{-1}$ with probability at least $1-\exp(-cK^2)$. 
Next, by \eqref{e.drifted bm supremum}, the first supremum is at most $K\slope^{-1}$ with probability at least $1-\exp(-cK)$. We can write the second supremum as
\begin{align*}
\sup_{\slope^{-2}\leq w < \infty} \Bigl(2\bigl(B(w) - B(\slope^{-2})\bigr) - 2\slope (w-\slope^{-2})\Bigr) + 2B(\slope^{-2});
\end{align*}
by the Markov property, the supremum is equal in distribution to \eqref{e.supremum marginal distribution}. Thus, by \eqref{e.drifted bm supremum}, Gaussian tail bounds, and a union bound, the last display is smaller than $K\slope^{-1}$ with probability at least $1-2\exp(-cK)$. By relabeling $K$, this establishes \eqref{e.M increment bound}, and, on combining with \eqref{e.M one-point bound}, we obtain $\P(\sup_{y\in[-\slope^{-2},\slope^{-2}]}|M_\slope(x+y)| > K\slope^{-1})\leq C\exp(-cK)$ for any $x\in \R$. Now doing a union bound over a selection of order $R\slope^{4}$ many intervals of size $\slope^{-2}$ which cover $[-R\slope^2,R\slope^2]$ yields that, for all $\slope>0$,
\begin{equation*}
\P\left(\sup_{x\in[-R\slope^2,R\slope^2]} |M_\slope(x)| > K\slope^{-1}\right) \leq CR\slope^4\exp(-cK). \qedhere
\end{equation*}
\end{proof}

\printbibliography

\end{document}